\documentclass[12pt, a4paper]{amsart}
\usepackage{graphics,color,hyperref,verbatim}
\usepackage{amsmath}
\usepackage{amsfonts}
\usepackage{amssymb}
\usepackage{amsthm}
\usepackage{graphicx}
\usepackage{psfrag}
\usepackage{marvosym}
\usepackage{mathrsfs}
\usepackage{enumitem}

\usepackage[a4paper, includeheadfoot, margin=2 cm, top=0.7 cm, footskip=0.7 cm]{geometry} 

\pagestyle{plain}

\usepackage{multicol} 
\usepackage[usenames,dvipsnames]{xcolor} 
\usepackage{tikz, tikz-3dplot, pgfplots}
\usepackage{tikz-cd}
\usetikzlibrary[positioning,patterns] 
\usetikzlibrary{matrix,arrows,decorations.pathmorphing}

\definecolor{light-gray}{gray}{0.7}

\renewcommand\theenumi{\arabic{enumi}}

\usepackage{ytableau}

\input xy
\xyoption{all}

\usepackage{calligra}
\usepackage{mathrsfs}
\DeclareMathAlphabet{\mathcalligra}{T1}{calligra}{m}{n}
\DeclareFontShape{T1}{calligra}{m}{n}{<->s*[1.5]callig15}{}

\newtheorem{theorem}{Theorem}[section]

\newtheorem{lemma}[theorem]{Lemma}
\newtheorem{proposition}[theorem]{Proposition}

\newtheorem{corollary}[theorem]{Corollary}

\theoremstyle{definition}
\newtheorem{definition}[theorem]{Definition}
\newtheorem{construction}[theorem]{Construction}
\newtheorem{example}[theorem]{Example}

\newtheorem{remark}[theorem]{Remark}
\newtheorem{theorem-definition}[theorem]{Theorem-Definition}
\newtheorem{lemma-definition}[theorem]{Lemma-Definition}

\newtheorem{variant}[theorem]{Variant}
\newtheorem{notation}[theorem]{Notation}
\newtheorem{warning}[theorem]{Warning}

\numberwithin{equation}{section}

\makeatletter
\renewcommand\part{%
   \if@noskipsec \leavevmode \fi
   \par
   \addvspace{4ex}%
   \@afterindentfalse
   \secdef\@part\@spart}

\def\@part[#1]#2{%
    \ifnum \c@secnumdepth >\m@ne
      \refstepcounter{part}%
      \addcontentsline{toc}{part}{Part \thepart.\hspace{1em}#1}%
    \else
      \addcontentsline{toc}{part}{#1}%
    \fi
    {\parindent \z@ \raggedright
     \interlinepenalty \@M
     \normalfont
     \ifnum \c@secnumdepth >\m@ne
     \centering 
       \large\bfseries \partname\nobreakspace\thepart
       \nobreak. 
     \fi
     \large \bfseries { #2}%
     \par}%
    \nobreak
    \vskip 3ex
    \@afterheading}
\def\@spart#1{%
    {\parindent \z@ \raggedright
     \interlinepenalty \@M
     \normalfont
     \huge \bfseries #1\par}%
     \nobreak
     \vskip 3ex
     \@afterheading}
\makeatother

\renewcommand{\thepart}{\Roman{part}}


\renewcommand{\AA} {\mathbb{A}}

\newcommand{\CC} {\mathbb{C}}
\newcommand{\DD} {\mathbb{D}}
\newcommand{\EE} {\mathbb{E}}

\newcommand{\GG} {\mathbb{G}}

\newcommand{\LL} {\mathbb{L}}

\newcommand{\NN} {\mathbb{N}}

\newcommand{\PP} {\mathbb{P}}

\newcommand{\RR} {\mathbb{R}}

\newcommand{\VV} {\mathbb{V}}

\newcommand{\ZZ} {\mathbb{Z}}

\newcommand {\shB} {\mathcal{B}}
\newcommand {\shC} {\mathcal{C}}
\newcommand {\shD} {\mathcal{D}}
\newcommand {\shE} {\mathcal{E}}

\newcommand {\shH} {\mathcal{H}}

\newcommand {\shM} {\mathcal{M}}
\newcommand {\shN} {\mathcal{N}}

\newcommand {\shS} {\mathcal{S}}

\newcommand {\shP} {\mathcal{P}}

\newcommand {\sA} {\mathscr{A}}
\newcommand {\sB} {\mathscr{B}}
\newcommand {\sC} {\mathscr{C}}

\newcommand {\sE} {\mathscr{E}}
\newcommand {\sF} {\mathscr{F}}
\newcommand {\sG} {\mathscr{G}}
\newcommand {\sH} {\mathscr{H}}
\newcommand {\sI} {\mathscr{I}}

\newcommand {\sK} {\mathscr{K}}
\newcommand {\sL} {\mathscr{L}}
\newcommand {\sM} {\mathscr{M}}
\newcommand {\sN} {\mathscr{N}}
\newcommand {\sO} {\mathscr{O}}

\newcommand {\sR} {\mathscr{R}}

\newcommand {\sU} {\mathscr{U}}
\newcommand {\sV} {\mathscr{V}}
\newcommand {\sW} {\mathscr{W}}

\newcommand {\foA} {\mathfrak{A}}

\newcommand {\foC} {\mathfrak{C}}

\newcommand {\foM} {\mathfrak{M}}

\newcommand {\foX} {\mathfrak{X}}

\newcommand {\foZ} {\mathfrak{Z}}


\newcommand {\bS} {\mathbf{S}}


\newcommand{\blank}{\underline{\hphantom{A}}}

\newcommand {\codim} {\operatorname{codim}}
\newcommand {\Coker} {\operatorname{Coker}}

\newcommand {\D} {\operatorname{D}}

\newcommand {\Ext} {\operatorname{Ext}}
\newcommand{\sExt}{\mathscr{E} \kern -1pt xt}

\renewcommand {\H} {\operatorname{H}}
\newcommand {\Hom} {\operatorname{Hom}}
\newcommand {\sHom}{\mathscr{H}\kern-5pt\mathcalligra{om}}

\newcommand {\id} {\operatorname{id}}

\renewcommand {\Im} {\operatorname{Im}}

\newcommand {\Pic} {\operatorname{Pic}}
\newcommand {\Proj} {\operatorname{Proj}}

\newcommand {\pr} {\mathrm{pr}}

\newcommand {\rank} {\operatorname{rank}}
\newcommand {\rk} {\operatorname{rk}}
\newcommand {\red} {{\operatorname{red}}}

\newcommand {\Spec} {\operatorname{Spec}}

\newcommand {\Sym} {\operatorname{Sym}}

\newcommand {\Tor} {\operatorname{Tor}}
\newcommand{\sTor}{\mathscr{T} \kern -3pt or}

\newcommand {\Vect} {\operatorname{Vect}}


\newcommand {\Bl} {\operatorname{Bl}}

\newcommand{\Hilb}{\mathrm{Hilb}}



\newcommand{\Set}{\operatorname{Set}} 
\newcommand{\op}{\mathrm{op}}

\newcommand{\Perf}{\mathrm{Perf}}

\newcommand{\Dqc}{\mathrm{D}_{\mathrm{qc}}}




\newcommand{\bDelta}{\operatorname{{\bf \Delta}}}

\newcommand{\Nhc}{\operatorname{N}_{\bullet}^{\rm hc}}


\newcommand{\Sp}{\mathrm{Sp}}
\newcommand{\cn}{\mathrm{cn}}

\newcommand{\St}{\mathrm{St}}

\newcommand{\Cat}{\shC\mathrm{at}}
\newcommand{\Catinf}{\Cat_{\infty}}
\newcommand{\hatCat}{\widehat{\Cat}_{\infty}}

\renewcommand{\Pr}{\shP \mathrm{r}}

\newcommand{\Ind}{\operatorname{Ind}}
\newcommand{\Fun}{\mathrm{Fun}}
\newcommand{\FunL}{\mathrm{Fun^L}}

\newcommand{\Shv}{\shS\mathrm{hv}}

\newcommand{\Map}{\mathrm{Map}}
\newcommand{\Maps}{\mathrm{Map}}
\newcommand{\Mapsp}{\underline{\Map}}


\newcommand{\CAlg}{\operatorname{CAlg}}
\newcommand{\CAlgDelta}{\operatorname{CAlg}^\Delta}

\newcommand{\Mod}{\operatorname{Mod}}
\newcommand{\Modcn}{\operatorname{Mod}^{\rm cn}}

\newcommand {\perf}{\mathrm{perf}}
\newcommand {\proj} {\mathrm{proj}}

\newcommand{\Ch}{\operatorname{Ch}}

\newcommand{\SCRMod}{\operatorname{SCRMod}}
\newcommand{\SCRModcn}{\operatorname{SCRMod}^{\rm cn}}

\newcommand{\cl}{\mathrm{cl}}

\newcommand{\fib}{\operatorname{fib}}
\newcommand{\cofib}{\operatorname{cofib}}


\newcommand{\QCoh}{\operatorname{QCoh}}

\newcommand{\QCAlgDelta}{\operatorname{QCAlg}^\Delta}


\newcommand{\depth}{\operatorname{depth}}


\newcommand {\T} {\operatorname{T}}

\newcommand {\bwedge} {\mathbf{\Lambda}}



\title[]{Derived Projectivizations of complexes}
\author[Q.Y.\ JIANG]{Qingyuan Jiang}

\address{School of Mathematics, University of Edinburgh, James Clerk Maxwell Building, Peter Guthrie Tait Road, Edinburgh EH9 3FD, United Kingdom.}
\email{qingyuan.jiang@ed.ac.uk}


\begin{document}

\begin{abstract} In this paper, we study the counterpart of Grothendieck’s projectivization construction in the context of derived algebraic geometry. Our main results are as follows: First, we define the derived projectivization of a connective complex, study its fundamental properties such as finiteness properties and functorial behaviors, and provide explicit descriptions of their relative cotangent complexes. We then focus on the derived projectivizations of complexes of perfect-amplitude contained in $[0, 1]$. In this case, we prove a generalized Serre’s theorem, a derived version of Beilinson’s relations, and establish semiorthogonal decompositions for their derived categories. Finally, we show that many moduli problems fit into the framework of derived projectivizations, such as moduli spaces that arise in Hecke correspondences. We apply our results to these situations. 
\end{abstract}

\maketitle
\tableofcontents
 
 \section{Introduction}
 
Grothendieck’s construction of projectivizations plays a vital role in classical algebraic geometry. For example, projectivizations and their properties are crucial for defining projective schemes and morphisms and studying their properties. In this paper, we study the counterpart of Grothendieck’s construction in the framework of derived algebraic geometry. 
Our main results are as follows:

\begin{enumerate}[leftmargin=*]
	 \item For a derived scheme $X$ (or more generally, a prestack $X$) and a connective quasi-coherent sheaf $\sE$ on $X$:
	 \begin{enumerate}
		\item we will define a (relative) derived scheme $\PP(\sE)$ over $X$, called the {\em derived projectivization} of $\sE$ over $X$ (\S \ref{sec:def:dproj:qcoh}), whose underlying classical scheme agrees with Grothendieck’s construction for the classical truncation (Proposition \ref{prop:proj-classical}). 
		\item We study the fundamental properties of the derived projectivizations, including the finiteness properties (Proposition \ref{prop:proj:finite}) and their functorial behaviors under base change, tensoring with line bundles (Proposition \ref{prop:proj-4,5}), and quotient maps (Corollary \ref{cor:proj:closedimmersion}, Proposition \ref{prop:proj:PB}). We also explicitly describe their relative cotangent complexes by proving a generalized Euler sequence (Theorem \ref{thm:proj:Euler}).
\end{enumerate}
	 
	\item  In the case where $\sE$ has perfect-amplitude contained in $[0,1]$, the derived projectivization $\PP(\sE)$ enjoys special pleasant features. In this case:
		\begin{enumerate}
			\item We will prove a generalized Serre’s theorem for $\PP(\sE)$ (Theorem \ref{thm:Serre:O(d)}).
			\item We show that $\PP(\sE)$ has a natural pair of dual relative exceptional sequences which satisfies a derived version of Beilinson’s relations (Proposition \ref{prop:PG:dualexc}).
			\item We establish a semiorthogonal decomposition of the $\infty$-category $\QCoh(\PP(\sE))$ of quasi-coherent sheaves on $\PP(\sE)$ (Theorem \ref{thm:structural}).
		\end{enumerate}
		
	\item We apply the our results to various situations:
		\begin{enumerate}
			\item (Classical situations) For example, we could apply these results and obtain:
			\begin{itemize}[leftmargin=*]
				\item Semiorthogonal decompositions of the derived categories of schemes of the form $X \coprod\nolimits_{D} \PP_D^1$, where $D \subset X$ is any effective Cartier divisor, or the reducible schemes appearing as the central fiber of deformation to normal cones; see \S \ref{sec:intro:reducible}. 
				\item Descriptions of derived categories under stabilization maps of nodal curves (\S \ref{sec:intro:curves}).
				\item Semiorthogonal decompositions of derived categories of certain threefolds with rational double points (\S \ref{sec:intro:threefolds}).
				\item Examples of derived equivalences of classical schemes induced by non-trivially derived schemes (\S \ref{sec:intro:bir}).
		\end{itemize} 

			\item (Moduli problems) Many moduli spaces have natural derived enhancements which fit into the framework of derived projectivizations, such as moduli spaces that arise in Hecke correspondences and moduli of extensions. 
			
	\begin{itemize}[leftmargin=*]
		\item We study the situations of Hecke correspondences for schemes, and apply our construction and results to this situation (\S \ref{sec:Hecke}).
		\item  In \cite{J22b}, we apply the results of this paper to study the derived Abel maps for integral curves to their compactified Jacobians, and provide semiorthogonal decompositions of the derived Hilbert schemes of points on integral curves.
\end{itemize}
			
		\end{enumerate}
\end{enumerate}

\begin{remark}[Conventions] This paper works in the framework of derived algebraic geometry and uses {\em Lurie's terminology} (\S \ref{sec:DAG}, \S \ref{sec:Notations}). Most results of this paper require no conditions on the base  $X$: it could be arbitrarily stacky or derived (or singular). However, since the projectivization is a ``relative phenomenon", the readers might feel free to assume $X$ to be a nice classical (underived) scheme and still get the essence of most of the results in this introduction. If one wishes to assume that $X$ is a classical Noetherian scheme, beware the following {\em distinctions of conventions} between derived and classical algebraic geometry:

\begin{itemize}[leftmargin=*]
	\item A {\em quasi-coherent sheaf} $\sE$ on $X$ in the derived context (as in this paper) corresponds to an {\em unbounded complex} $\sE_* = [ \,\cdots \to \sE_1 \to \sE_0 \to \sE_{-1} \to \cdots \,]$ of quasi-coherent sheaves on $X$ in the classical sense. The {\em $i$th homotopy group} $\pi_i(\sE)$ corresponds to the sheaf (co)homology $\sH_i(\sE_*) = \sH^{-i}(\sE_*)$. We say $\sE$ is {\em connective} if satisfies $\pi_i(\sE)=\sH_i(\sE_*) \simeq 0$ for all $i < 0$, and we say $\sE$ is {\em discrete} if $\pi_i(\sE) =\sH_i(\sE_*) \simeq 0$ for all $i \ne 0$. In particular, {\em discrete} quasi-coherent sheaves on $X$ correspond to the usual quasi-coherent sheaves on $X$ in the classical sense.
	\item The collection of all quasi-coherent sheaves on $X$ is organized into a stable $\infty$-category $\QCoh(X)$, whose homotopy category is the usual {derived category} $\Dqc(X)$ of quasi-coherent complexes on $X$. The subcategory of discrete quasi-coherent sheaves, denoted by $\QCoh(X)^\heartsuit$, is equivalent to the (nerve of the) usual abelian category ${\rm qcoh}(X)$ in the classical setting. 
	\item $\Perf(X)$ denotes the stable $\infty$-category of perfect objects of $\QCoh(X)$, whose homotopy category is the full triangulated subcategory $\D^{\perf}(X)$ of $\Dqc(X)$ spanned by perfect complexes.
	\item A quasi-coherent sheaf $\sE \in \QCoh(X)$ is said to have {\em perfect-amplitude (contained) in $[0,1]$} if it is represented by a perfect complex $\sE_*$ on $X$ of homological Tor-amplitude in $[0,1]$ (or equivalently, of cohomological Tor-amplitude in $[-1,0]$) in the classical sense.
\end{itemize}
\end{remark}

\subsection{Derived projectivizations}

Let $X$ be a classical scheme, and let $\sE$ be a discrete quasi-coherent sheaf on $X$. Then Grothendieck’s projectivization of $\sE$ over $X$, which we shall denote as $\PP_{\cl}(\sE) = \Proj \Sym_{\cl}^* \sE$, is the scheme over $X$ which represents the following functor: for any morphism of schemes $\eta \colon T \to X$, the set of $T$-points $\PP_{\cl}(\sE)(\eta)$ over $\eta$ is the set of equivalence classes of rank-one locally free quotients of $\sE$. 

Let $X$ be a derived scheme (or more generally, a prestack), and let $\sE$ be a connective quasi-coherent sheaf on $X$. We define the {\em derived projectivization $\PP(\sE)$ of $\sE$ over $X$} via a similar manner as above, that is, it is the following functor:
\begin{itemize}
	\item For every map $\eta \colon T \to X$ from a {\em derived} scheme $T$, the space of $T$-points $\PP(\sE)(\eta)$ over $\eta$ is the space of maps of quasi-coherent sheaves $\eta^* \sE \to \sL$ which induce a surjection $\pi_0(\eta^* \sE) \twoheadrightarrow \pi_0(\sL)$
	, where $\sL$ is a line bundle on $T$.
\end{itemize}	
(See \S \ref{sec:def:dproj:qcoh} for more details.) More informally, we could think of $\PP(\sE)$ as the {\em derived} moduli space which parametrizes rank-one locally free quotients of $\sE$. 

The derived projectivization theory extends its classical counterpart in two ways:
\begin{enumerate}[label=(\alph*), leftmargin=*]
	\item We allow $X$ to be any derived scheme (or more generally, any prestack).
	\item We allow $\sE$ to be any connective quasi-coherent sheaf, that is, it needs not be discrete. 
\end{enumerate}

We show that the derived projectivization $\PP(\sE)$ enjoys the following features:

\begin{enumerate}[leftmargin=*]
	\item[$(0)$] ({\bf Underlying classical schemes}) The underlying classical scheme of $\PP(\sE)$ is canonically equivalent to Grothendieck's projectivization construction applied to its classical truncation, that is, there is a canonical equivalence $(\PP_X(\sE))_{\cl} \simeq \PP_{X_{\cl}, \cl}(\pi_0(\sE))$ (Proposition \ref{prop:proj-classical}). 
	
	For example, suppose $X$ is a classical scheme and $\sE$ is represented by a complex $\sE_*$ of discrete quasi-coherent sheaves on $X$. In this case, the derived projectivization $\PP(\sE)$ has the same underlying classical scheme as Grothendieck's construction $\PP_{\cl}(\sH_0(\sE_*))$, but usually with a {non-trivial derived} structure induced from the ``hidden higher  information" of $\sE$.
	
\end{enumerate}
	\begin{warning} Let $X$ be classical and let $\sE$ be represented by a complex $\sE_*$. The {``hidden higher information"} of $\sE$ which contributes to the derived structure of $\PP(\sE)$ includes {not only} all the higher homologies $\{\sH_i(\sE_*)\}_{i >0}$ of $\sE_*$ {but also} the higher homologies $\{\sH_i(\Sym^d(\sE_*))\}_{i>0, d>0}$ of the {\em derived symmetric powers} of $\sE_*$. For example, even in the simple case where $\sE = \sO_{p}^{\oplus 2}$ on $\AA^1$, where $p \in \AA^1$ is the origin, the derived projectivization $\PP(\sO_{p}^{\oplus 2})$ is {not} isomorphic to Grothendieck's projectivization $\PP_{\cl}(\sO_{p}^{\oplus 2}) \simeq \PP^1$ but is equipped with a non-trivial derived structure (Example \ref{eg:A^1:PFcl}). If $X$ is an integral Cohen--Macaulay scheme and $\sE$ has homological dimension one, we will provide criterion for $\PP(\sE)$ to be classical; see Lemma \ref{lem:criterion:P:classical}.
	\end{warning}

\begin{enumerate}[leftmargin=*]	
	\item[$(1)$] ({\bf Representability}) The functor $\PP(\sE)$ is representable by a relative derived scheme over $X$ (Proposition \ref{prop:proj:represent}). 
	\item[$(2)$] ({\bf Functoriality}) The formation of derived projectivizations commutes with arbitrary base change and tensoring with line bundles (Proposition \ref{prop:proj-4,5}).
	\item[$(3)$] ({\bf Finiteness properties}) If $\sE$ is perfect to order $n$ (resp. almost perfect, perfect), then the projection $\pr \colon \PP(\sE) \to X$ is locally of finite presentation to order $n$ (resp. locally almost of finite presentation, locally of ﬁnite presentation); see Proposition \ref{prop:proj:finite}.
 	\item[$(4)$] ({\bf Closed immersions}) For any map $\varphi \colon \sF \to \sE$ of quasi-coherent sheaves on $X$ which is surjective on $\pi_0$, there is a canonical closed immersion $\iota \colon \PP(\sE) \hookrightarrow \PP(\sF)$ which is compatible with the formation of universal line bundles and tautological quotients. Moreover, we have an explicit description of the closed immersion $\iota$ and its relative cotangent complex in terms of the morphism $\varphi$ (Corollary \ref{cor:proj:closedimmersion} and Proposition \ref{prop:proj:PB}).
 	\item[$(5)$] ({\bf Euler fiber sequences}) The projection $\pr \colon \PP(\sE) \to X$ admits a connective relative cotangent complex $L_{\PP(\sE)/X}$ which fits into a canonical exact triangle
			$$L_{\PP(\sE)/X}  \otimes \sO(1) \to \pr^* \sE \xrightarrow{\rho} \sO(1)$$
		(here $\rho$ denotes the tautological morphism from $\pr^* \sE$ to $\sO(1)$); see Theorem \ref{thm:proj:Euler}. We shall refer to the above sequence as the {\em Euler fiber sequence}.
\end{enumerate}

\begin{remark}	The property $(5)$ seems to provide a perfect example where an object in derived algebraic geometry behaves {\em manifestly better} than its counterpart in classical algebraic geometry. The above Euler sequence provides an explicit description of the relative cotangent complexes for {\em all} derived projectivizations $\PP(\sE)$. On the other hand, as far as the author is aware, there seems to be no general explicit description of the relative cotangent complexes for classical projectivizations (except in the special cases where $\sE$ is a vector bundle or a ``generic" discrete sheaf of homological dimension one).
\end{remark}

The construction of derived projectivization is closely related to {\em affine cones} $\VV(\sE)$, which we will define and study in \S \ref{sec:affine-cone}. Our proof of the properties of derived projectivizations and affine cones depends heavily on the properties of the {\em derived symmetric algebras}, which we will explore in detail in \S \ref{sec:SymAlg} (the local situation) and \S \ref{sec:QStk} (the global situation).

\subsection{Derived projectivizations of sheaves of perfect-amplitude in $[0,1]$} The derived projectivization $\PP(\sE)$ enjoys further pleasant properties if $\sE$ has perfect-amplitude contained in $[0,1]$. For example, in this case, (as an immediate consequence of the above properties $(3)$ and $(5)$) the projection $\pr \colon \PP(\sE) \to X$ is a quasi-smooth proper morphism.

Our main results regarding derived projectivizations of quasi-coherent sheaves of perfect-amplitude in $[0,1]$ are as follows:

\begin{enumerate}[leftmargin=*]
	\item ({\bf Generalized Serre's theorem}) Let $X$ be a derived scheme (or more generally, a prestack), and let $\sE \in \QCoh(X)$ be of perfect-amplitude contained in $[0,1]$ and rank $\delta$. If $\sE$ has rank $\delta \ge 1$, then for all integers $d$, there are canonical equivalences
	$$
	 \pr_* (\sO(d))  \simeq 
	 	\begin{cases}
	\Sym_X^d \sE  & \qquad \text{if~}  d \ge 0; \\
	0 &  \qquad \text{if ~} -\delta+1 \le d \le -1; \\
	(\Sym_X^{-d-\delta} \sE \otimes_{\sO_X}  \det \sE)^{\vee}[1-\delta]	& \qquad \text{if~} d \le -\delta,
	\end{cases}
	$$
		where $\Sym_X^d \sE$ denotes the {derived} symmetric powers of $\sE$ over $X$ (see \S \ref{sec:non-abelian}, \S \ref{sec:sym:prestack}). If $\sE$ has rank $\delta \le 0$, then there is a canonical exact triangle
			$$\Sym_{X}^d (\sE) \to \pr_*(\sO(d)) \to (\Sym_X^{-d-\delta} \sE \otimes_{\sO_X} \det \sE)^{\vee} [1-\delta]$$
		which generalizes the classical Eagon--Northcott complex; see Theorem \ref{thm:Serre:O(d)}.
	\item ({\bf Beilinson's relations}) Let $X$ be a quasi-compact, quasi-separated derived scheme (or more generally, a perfect stack (Definition \ref{def:perfstacks})), and let $\sE \in \QCoh(X)$ be of perfect-amplitude in $[0,1]$ and rank $\delta \ge 1$. Let $\sO(1)$ denote the universal line bundle on $\PP(\sE)$ and let $\sO(i)$ denote $\sO(1)^{\otimes i}$, where $i \ge 0$ is an integer. Then the sequence of line bundles
		$$\sO, \sO(1), \cdots, \sO(\delta-1)$$
 forms a relative exceptional sequence (\S \ref{sec:exc:mut}) of $\QCoh(\PP(\sE))$ over $X$, whose left dual relative exceptional sequence is given by
		$$\Sym_{\PP(\sE)}^{\delta-1}(L_{\PP(\sE)/X}(1)[1]) , \cdots, L_{\PP(\sE)/X}(1)[1], \sO,$$
	where $L_{\PP(\sE)/X}$ is the relative cotangent complex for the projection $\pr \colon \PP(\sE) \to X$, $L_{\PP(\sE)/X}(1)$ denotes $L_{\PP(\sE)/X} \otimes \sO(1)$, and $[1]$ is the degree shift; see Proposition \ref{prop:PG:dualexc}. 
	By virtue of Illusie's result (Proposition \ref{prop:symwedgegamma} \eqref{prop:symwedgegamma-4}), there are canonical equivalences
		$$\Sym_{\PP(\sE)}^{i}\big(L_{\PP(\sE)/X}(1)[1]\big) \simeq \big(\bigwedge\nolimits_{\PP(\sE)}^{i} L_{\PP(\sE)/X}\big) (i)[i],$$
	where $\Sym_{\PP(\sE)}^i$ and $\bigwedge_{\PP(\sE)}^i$ denote the $i$th {derived} symmetric powers and $i$th {derived} exterior powers over ${\PP(\sE)}$, respectively (see \S \ref{sec:non-abelian}, \S \ref{sec:sym:prestack}). Therefore we could equally write the above left dual relative exceptional sequence as 
		$$\big(\bigwedge\nolimits_{\PP(\sE)}^{\delta-1} L_{\PP(\sE)/X}\big) (\delta-1)[\delta-1], \cdots, L_{\PP(\sE)/X}(1)[1], \sO.$$
	Furthermore, for all integers $i, j \in [0, \delta-1]$, we have the following derived version of Beilinson's relations (\cite{Be}, where similar results for projective spaces are obtained):
	\begin{equation*}
	\left\{
	\begin{split}
		& \Mapsp_{\PP(\sE)/X} (\sO(i), \sO(j)) \simeq \Sym_X^{j-i} (\sE), \\ 
		&  \Mapsp_{\PP(\sE)/X}\Big(\sO(i),\Sym_{\PP(\sE)}^{j}\big(L_{\PP(\sE)/X}(1)[1]\big) \Big) \simeq \delta_{i,j} \cdot \sO_X, \\
		& \Mapsp_{\PP(\sE)/X} \Big(\big(\bigwedge\nolimits_{\PP(\sE)}^{i} L_{\PP(\sE)/X}\big) (i), \big(\bigwedge\nolimits_{\PP(\sE)}^{j} L_{\PP(\sE)/X}\big) (j)\Big) \simeq (\bigwedge\nolimits_X^{i-j} \sE)^\vee,
	\end{split}
	\right.
	\end{equation*}
	where $\Mapsp_{\PP(\sE)/X}(\sF, \sG) \in \QCoh(X)$ denotes the {mapping object} of a pair of objects $\sF \in \Perf(\PP(\sE))$ and $\sG \in \QCoh(\PP(\sE))$ (Construction \ref{constr:mappingobject}), and $\sH^\vee$ denotes the dual in $\QCoh(X)$ of an object $\sH \in \Perf(X)$;
	see Corollary \ref{cor:PVdot:relations}.
	\item ({\bf Semiorthogonal decompositions}) Let $X$ be a derived scheme (or more generally, a prestack) and $\sE$ a quasi-coherent sheaf on $X$ with perfect-amplitude in $[0,1]$ and rank $\delta \ge 0$. Then the ``shifted derived dual" $\sE^\vee [1]$ also has perfect-amplitude contained in $[0,1]$. We let $\widehat{Z}$ be the {\em universal incidence locus} of $\PP(\sE)$ and $\PP(\sE^\vee[1])$ over $X$, which is defined as the derived closed subscheme of the (derived) fiber product $\PP(\sE) \times_X \PP(\sE^\vee[1])$ which classifies paths between the composite map 
	$$\sO_{\PP(\sE)}(-1) \boxtimes \sO_{\PP(\sE^\vee[1])}(-1) [1] \to \sE^\vee \boxtimes \sE \xrightarrow{\rm ev} \sO_{\PP(\sE) \times_X \PP(\sE^\vee[1])}$$
	and the zero map; see Notation \ref{notation:hatZ}. Therefore, we have a commutative diagram
	\begin{equation*} 
	\begin{tikzcd}[row sep= 2.5 em, column sep = 4 em]
		\widehat{Z} \ar{rd}{\widehat{\pr}}\ar{r}{r_+}  \ar{d}[swap]{r_-} &  \PP(\sE) \ar{d}{\pr_+}\\
		\PP(\sE^\vee[1]) \ar{r}{\pr_-} & X.
	\end{tikzcd}
	\end{equation*}
	We will show that the functor induced by the above correspondence $\widehat{Z}$,
		$$\Phi = r_{+ \,*} \circ r_-^* \colon \QCoh(\PP(\sE^\vee[1])) \to \QCoh(\PP(\sE)),$$
	is fully faithful, and if $\delta \ge 1$, then for all integers $i \in \ZZ$, the functors 
		$$\pr_+^*(\blank) \otimes \sO(i) \colon \QCoh(X) \to \QCoh(\PP(\sE))$$
	are fully faithful. Furthermore, there are induced semiorthogonal decomposition
	 \begin{align*}
		\QCoh(\PP(\sE))& = \big\langle \Phi \big(\QCoh(\PP(\sE^\vee[1])) \big), ~ \pr_+^*(\QCoh(X)) \otimes \sO(1), \ldots, \pr_+^* (\QCoh(X)) \otimes \sO(\delta) \big\rangle, \\
	\Perf(\PP(\sE)) &= \big\langle \Phi \big(\Perf(\PP(\sE^\vee[1])) \big), ~ \pr_+^*(\Perf(X)) \otimes \sO(1), \ldots, \pr_+^* (\Perf(X)) \otimes \sO(\delta) \big\rangle;
	\end{align*}
	see Theorem \ref{thm:structural} for more details. 
\end{enumerate}

\begin{remark} Combining these semiorthogonal decompositions with the above result $(2)$, we could obtain a whole series of semiorthogonal decompositions of $\QCoh(\PP(\sE))$ (or $\Perf(\PP(\sE))$) by mutating the relative exceptional sequence $\sO(1), \sO(2), \ldots, \sO(\delta)$.
\end{remark}

\begin{remark} In the context of classical algebraic geometry, the above semiorthogonal decompositions are referred to as the {\em projectivization formula} in \cite[Theorem 3.4]{JL18}; see also \cite[Theorem 5.5]{Kuz07}, \cite[Theorem 4.6.11]{Tod3}, and \cite[Theorem 6.16]{J21}. 
\end{remark}

\subsection[Applications to Classical Situations]{Applications to classical situations} The results of the preceding sections have many interesting applications even in the classical situation. 
For this purpose, we provide in  \S \ref{sec:classical} criteria for the derived projectivizations $\PP(\sE)$ and $\PP(\sE^\vee[1])$, the universal incidence loci $\widehat{Z}$, and derived symmetric products $\Sym^d(\sE)$ and $\Sym^d(\sE^\vee[1])$ to be classical (Lemmata  \ref{lem:criterion:P:classical} and \ref{lem:criterion:Sym:classical}). For example, assume that $X$ is an integral Cohen--Macaulay scheme, and $\sE$ is cofiber of a generically injective map between vector bundles. If $\rank \sE>0$, then under the above assumption, there is a hierarchy of conditions on the codimensions of degeneracy loci of $(X, \sE)$,
	$$\eqref{eqn:condition:PF} \impliedby \eqref{eqn:condition:irr:PF} \impliedby \eqref{eqn:condition:PK} \impliedby \eqref{eqn:condition:irr:PK} \impliedby \eqref{eqn:condition:hatZ} \impliedby \eqref{eqn:condition:irr:hatZ},$$
which is in one-to-one correspondence to the following sequence of conditions:
	\begin{align*}
	&\text{$\PP(\sE)$ is classical} \impliedby \text{$\PP(\sE)$ is irreducible}  \impliedby \text{$\PP(\sE^\vee[1])$ is classical}  \\ 
	&\impliedby 
	\text{$\PP(\sE^\vee[1])$ is irreducible} 
	 \impliedby \text{$\widehat{Z}$ is classical}  \impliedby \text{$\widehat{Z}$ is  irreducible}.
	 \end{align*}
For example, $\eqref{eqn:condition:PF}$, $\eqref{eqn:condition:irr:PF}$, and $\eqref{eqn:condition:PK}$ are the conditions $\codim_X(X_i) \ge i$, $\codim_X(X_i) \ge i+1$, and $\codim_X(X_i) \ge \rank \sE + i$, for all degeneracy loci $X_i \subseteq X$, respectively. There is also a similar description if $\rank \sE =0$; see Lemma \ref{lem:criterion:P:classical} for more details. In particular, there are numerous counter-examples for each of the inverse of the above implications ``$\impliedby$", and the results of the preceding subsection have interesting consequences in these situations. 

We will only mention some of these applications in this introduction, and refer the readers to \S \ref{sec:classical} in the main text for more examples. 

\subsubsection{Derived categories of reducible schemes; see Examples \ref{eg:DinX} and \ref{eg:normalcones}} \label{sec:intro:reducible}

Let $X$ be any scheme, let $D \subseteq X$ be any effective Cartier divisor. Let $\sE = \sO_X \oplus \sO_D$. Then (locally) condition \eqref{eqn:condition:PF} is satisfied but \eqref{eqn:condition:irr:PF} is not. More concretely, $\PP(\sE) =  X \coprod\nolimits_{D} \PP_D^1$ is a classical reducible scheme, which is the union of $X$ and $\PP^1_D = D \times \PP^1$ along $D$; and $\PP(\sE^\vee[1])
= \VV_D(\sO_D(-D)[1])$ is the total space $
{\rm Tot}(\shN_{D/X}[-1])$ of ``$(-1)$-shifted normal bundle of $D$ in $X$" in the sense \cite{ACH, PTVV}, which has the same underlying classical scheme as $D$ but equipped with non-trivial derived structure everywhere. Then Theorem \ref{thm:structural} implies that there are semiorthogonal decompositions
	\begin{align*}
	\QCoh\big(X \coprod\nolimits_{D} \PP_D^1\big)  & = \big\langle \QCoh(X), ~\QCoh({\rm Tot}(\shN_{D/X}[-1]) ) \big \rangle, \\
	\Perf\big(X \coprod\nolimits_{D} \PP_D^1\big)  & = \big\langle \Perf(X), ~\Perf({\rm Tot}(\shN_{D/X}[-1])) \big\rangle.
	\end{align*}
See Examples \ref{eg:DinX} for more details. 

On the other hand, if we let $\sE' = \sO_X \oplus \sO_D(D)$, then $\PP(\sE') = X \coprod_{D} \PP_D(\shN \oplus \sO_D)$, where $\shN = \shN_{D/X}$ is the normal bundle, and $\PP(\sE'^\vee[1]) = D \times \Spec \ZZ[\varepsilon]$. Therefore we obtain 
	\begin{align*}
	\QCoh\big(X \coprod\nolimits_{D} \PP_D(\shN \oplus \sO_D)\big)  & = \big\langle \QCoh(X), ~\QCoh(D) \otimes \QCoh(\Spec \ZZ[\varepsilon])  \big \rangle, \\
	\Perf\big(X \coprod\nolimits_{D} \PP_D(\shN \oplus \sO_D)\big)  & = \big\langle \Perf(X), ~\Perf(D) \otimes \Perf(\Spec \ZZ[\varepsilon]) \big\rangle.
	\end{align*}
Here, the reducible scheme $X \coprod_{D} \PP_D(\shN \oplus \sO_D)$ is precisely the special fiber appearing in the construction of {\em deformation to normal cones} of \cite[\S 5.1]{Ful}; see Variant \ref{eg:normalcones}. 

We could regard the semiorthogonal decompositions of the above two examples as {\em blowup formulae}, where $\PP(\sE)$ and $\PP(\sE')$ are the {\em derived blowups}  in the sense of \cite{KR18, He21} of $X$ along the virtual codimension two closed derived subschemes ${\rm Tot}(\shN_{D/X}[-1]) \subseteq X$ and $D \times \Spec \ZZ[\varepsilon] \subseteq X$, respectively; see Remark \ref{rmk:egs:blowup}. 
	
\subsubsection{Contracting rational tails and bridges for curves; see Examples \ref{eg:rational_tail} and \ref{eg:rational_bridge}} \label{sec:intro:curves}

In the preceding example, if we let $X=C$ be a reduced curve over a field $\kappa$, let $p \in C(\kappa)$ be a non-singular $\kappa$-point, and let $\sE = \sO_C \oplus \sO_p$, then we obtain semiorthogonal decompositions
	\begin{align*}
	\QCoh\big(C \coprod\nolimits_{p} \PP^1\big)  & = \big\langle \QCoh(C), ~\QCoh(\Spec \kappa[\varepsilon] ) \big \rangle, \\
	\Perf\big(C \coprod\nolimits_{p} \PP^1\big)  & = \big\langle \Perf(C), ~\Perf(\Spec \kappa[\varepsilon] ) \big\rangle,
	\end{align*}
where $\kappa[\varepsilon] = \Sym^*_\kappa(\kappa[1])$ is the ring of {\em derived} dual numbers over $\kappa$ (where we might informally think of $\varepsilon$ as a generator in homological degree $1$). See \ref{eg:rational_tail} for more details. This result agrees with Kuznetsov and Shinder's in \cite[Example 5.2, Theorem 5.12]{Kuz21}.  

We might also repeat the above process by setting $C_0 = C$, $p_0 = p$, and letting $C_i = C_{i-1} \coprod_{p_{i-1}} \PP^1$, where $p_{i-1} \in C_{i-1}(\kappa)$ is any nonsingular $\kappa$-point. 
Then for any $n \ge 1$, we obtain semiorthogonal decompositions:
\begin{align*}
	\QCoh(C_n)  & = \big\langle \QCoh(C), ~\text{$n$-copies of $\QCoh(\Spec \kappa[\varepsilon])$} \big \rangle, \\
	\Perf(C_n)  & = \big\langle \Perf(C), ~\text{$n$-copies of $\Perf(\Spec \kappa[\varepsilon])$} \big\rangle.
	\end{align*}
The morphism spaces of the latter components depend (only) on the configurations of the trees of $\PP^1$'s on $C_n$. For example, 
in the case where $C_n$ is the union of $C$ and a chain $\Gamma_{n}$ of $\PP^1$'s (and $\kappa$ has characteristic zero), the endomorphism algebra (of a choice of tilting bundle on $\Gamma_n$) of the latter component is described explicitly in \cite[Theorem 2.1]{Bur}. The cases of where $C_n$ is union of $C$ and trees of $\PP^1$'s can be described similarly, in view of \cite[Theorem 4.9]{KPS}. 

On the other hand, let $C = \{xy = 0\} \subset \AA_\kappa^2$, where $\kappa$ is a field, and let $o=(0,0)$ denote the  node of $C$. 
 Then $\widetilde{C}:=\PP(\Omega_{C/\kappa}^1)$ is the total transform of $C$ along the usual blowup $\Bl_{o}(\AA^2) \to \AA^2$, and the map $\pi \colon \widetilde{C} \to C$ is the contraction of the (non-reduced) rational bridge $E=2\PP_\kappa^1$.
We obtain from Theorem \ref{thm:structural} semiorthogonal decompositions
	\begin{align*}
	\QCoh(\widetilde{C} )  & = \big\langle \pi^*\QCoh(C), ~\QCoh(\Spec (\kappa[\varepsilon] \otimes \kappa[\delta]) ) \big \rangle, \\
	\Perf(\widetilde{C} )  & = \big\langle \pi^*\Perf(C), ~\Perf(\Spec (\kappa[\varepsilon]\otimes \kappa[\delta]) ) \big\rangle;
	\end{align*}
where $\kappa[\delta] = \kappa[\delta]/(\delta^2)$ denote the ring of usual dual numbers; 
see Example \ref{eg:rational_bridge} for more details. 
In this example, one could replace $C$ by any nodal curve $C$. Notice that a recent result  \cite[Corollary 6.8]{KS22b} implies that there are no semiorthogonal decompositions for the {\em reduced} scheme $\widetilde{C}^{\rm red}$ when $C$ is projective. 

These results altogether provide a comprehensive picture of the behavior of derived categories of nodal curves under stabilization maps, since the stabilization maps of nodal curves are compositions of contracting rational tails and rational bridges (\cite[\href{https://stacks.math.columbia.edu/tag/0E7Q}{Tag 0E7Q}]{stacks-project}).

\subsubsection{Threefolds with rational double points; see Example \ref{eg:A^2:(x,y,0)}}  \label{sec:intro:threefolds}
Let $X=\AA^2 = \Spec \ZZ[x,y,z]$, and let $\sF$ denote the cokernel of the map $\sO_{\AA^2} \xrightarrow{(x,y,0)^T} \sO_{\AA^2}^{\oplus 3}.$ Then \eqref{eqn:condition:irr:PF} is satisfied but \eqref{eqn:condition:PK} is not. In this case, $\PP(\sE) = Q \subseteq \AA^2 \times \PP^2 = \AA^2 \times \Proj \ZZ[X,Y ,Z]$ is the quadric cone defined by the equation  $xX + y Y =0$, which is an irreducible threefold with a unique $A_1$-singularity at the cone point $o =((0,0), [0,0,1]) \in Q$. Theorem \ref{thm:structural} implies that there are semiorthogonal decompositions:
\begin{align*}
	\QCoh(Q)  & = \big\langle \QCoh(\Spec (\ZZ[\varepsilon])), ~~ \QCoh(\AA^2) \otimes \sO_Q(1), \QCoh(\AA^2) \otimes \sO_Q(2)  \big \rangle, \\
	\Perf(Q)  & = \big\langle \Perf(\Spec (\ZZ[\varepsilon])), ~ ~ \Perf(\AA^2) \otimes \sO_Q(1), \Perf(\AA^2) \otimes \sO_Q(2) \big\rangle,
	\end{align*}
where $\ZZ[\varepsilon] = \Sym_\ZZ^*(\ZZ[1])$ is the ring of {\em derived} dual numbers. We may replace $X=\AA^2$ by any reduced surface $S$ over a field $\kappa$, and $\sF$ by $\sO_S \oplus \sI_p$, where $p \in S(\kappa)$ is a non-singular rational point. The above semiorthogonal decompositions are locally compatible with Kawamata's results \cite[\S 7.1]{Kaw19} on quadric cones in $\PP^4$, the results of Xie \cite[Theorem 4.6]{Xie21} and Pavic and Shinder \cite[Theorem 3.6]{PS21} on nodal del Pezzo threefolds. We may also consider the global version of this example by letting $X$ be any scheme, and $\sF$ by $\sO_X \oplus \sI_Y$, where $Y \subseteq X$ is a regularly immersed closed subscheme of codimension two; see Example \ref{eg:YinX}.

\subsubsection{Derived equivalences of classical schemes induced by non-trivially derived schemes} \label{sec:intro:bir} If the condition \eqref{eqn:condition:irr:PK} is satisfied but \eqref{eqn:condition:hatZ} is not, then the schemes in the semiorthogonal decompositions are classical, but the fully faithful embedding is induced by a derived scheme. In the case where $\sE$ has rank zero, we could obtain various examples where $\PP(\sE)$ and $\PP(\sE^\vee[1])$ are birationally equivalent classical, irreducible threefolds (or fivefolds, or even higher dimensional varieties), but the universal incidence locus $\widehat{Z}$ which induces the (non-trivial) equivalences of $\infty$-categories 
	$$r_{+,*} \, r_-^* \colon \QCoh(\PP(\sE^\vee[1])) \simeq \QCoh(\PP(\sE)) \qquad r_{+,*} \, r_-^* \colon \Perf(\PP(\sE^\vee[1])) \simeq \Perf(\PP(\sE))$$
is {\em not} classical (where $\PP(\sE) \xleftarrow{r_+} \widehat{Z} \xrightarrow{r_-}  \PP(\sE^\vee[1])$ is the incidence diagram). See Example \ref{eg:A^4:birational}. 

We refer the readers to \S \ref{sec:classical} for more examples, where we only list lower dimensional cases in each situation, but all these results have direct analogues in higher dimensions.

\subsection{Applications to Hecke correspondences moduli} Many moduli problems which have the nature of parametrizing extensions naturally fit  into the framework of derived projectivizations. In \S \ref{sec:Hecke}, we will consider one such example, the Hecke correspondence moduli which parametrizes ``one-point modifications of complexes". The geometry of these moduli spaces are extensively studied in Negu{\c{t}}'s works \cite{NegW, NegHecke, NegShuffle} in the classical context.

Let $X \to S$ be a family of smooth, separated surfaces, and let $\foM^{\perf}_{[0,1]}$ denote the prestack of perfect complexes on $X$ relative to $S$ of Tor-amplitude contained in $[0,1]$. Let $\foZ^{\perf}_{[0,1]}$ denote the moduli of ``one-point modifications of objects of $\foM^{\perf}_{[0,1]}$", that is, $\foZ^{\perf}_{[0,1]}$ is the prestack which parametrizes exact triangles of the form
	$$\zeta = (\sF' \to \sF \to x_*\sL),$$
where $\sF', \sF \in \foM^{\perf}_{[0,1]}$, $\sL$ is a line bundle on $S$, and $x \colon S \to X$ is a section of the projection $X \to S$. Then there are natural forgetful maps
		$$
		\begin{tikzcd}[column sep = 3 em, row sep = 1 em]
				& \foZ^{\perf}_{[0,1]} \ar{ld}[swap]{p_+} \ar{d}{p_X} \ar{rd}{p_-} & \\
				\foM^{\perf}_{[0,1]} & X & \foM^{\perf}_{[0,1]},
		\end{tikzcd}
		$$		
		where $p_+$, $p_-$, and $p_X$ are the maps which carry the triangle $\zeta$ to $\sF' \in \foM^{\perf}_{[0,1]} $, $\sF \in \foM^{\perf}_{[0,1]} $, and $x\in X$, respectively. We show that the natural projections 
			$$\pi_\pm = p_{\pm} \times_S p_X \colon \foZ^{\perf}_{[0,1]} \to \foM^{\perf}_{[0,1]} \times_S X$$
	 are {\em derived projectivizations} of perfect-complexes of Tor-amplitude in $[0,1]$ (Proposition \ref{prop:pi_pm:foZ:proj}).
	 
\begin{remark} In fact, we will show that, for any family $X \to S$ of (smooth separated) derived schemes of any given dimension $d$, the Hecke correspondence moduli $\foZ$ fits into the framework of derived projectivizations  (Propositions \ref{prop:pi_-:foZ:proj} and \ref{prop:pi_pm:foZ:proj});  and the results are even true for any type of flag of ``one-point modifications" $\foZ_{\lambda}$ (Propositions \ref{prop:pi_-:foZ_lambda:proj} and \ref{prop:pi_pm:foZ_lambda:proj}). 
\end{remark}

In particular, we could apply all of the above-mentioned results (the generalized Serre's Theorem \ref{thm:Serre:O(d)},  Beilinson's relations Proposition \ref{prop:PG:dualexc}, Corollary \ref{cor:PVdot:relations}, and Semiorthogonal Decompositions Theorem \ref{thm:structural}) to this situation. For example, if we let   $(\foZ_2^\bullet)^{\perf}_{[0,1]}$ denote the moduli of ``two-step flags of one-point modifications with support on one point", that is, the prestack which parametrizes the pair of exact triangles 
	$$\sF_0  \xrightarrow{\phi_1} \sF_1 \to x_*(\sL_1) \qquad \sF_1  \xrightarrow{\phi_2}  \sF_2 \to x_*(\sL_2),$$
where $\sL_1, \sL_2$ are line bundles on $S$ and $x \colon S \to X$ is a section of $X \to S$. We let $(\widehat{\foZ}_2^\bullet)^{\perf}_{[0,1]}$ denote the universal incidence locus for $\pi_\pm$, that is, the locus on $(\foZ_2^\bullet)^{\perf}_{[0,1]}$ where the composition
	 $$\sL_{1}^\vee \boxtimes (\sL_{2} \otimes \sK_{X/S}) [1] \to x^*\sF_1 \otimes x^* ( \sF_1^\vee) \xrightarrow{\rm ev} \sO$$
is nullhomotopic. Therefore, we have a correspondence diagram 
	$$\foZ^{\perf}_{[0,1]}  \xleftarrow{r_+} (\widehat{\foZ}_2^\bullet)^{\perf}_{[0,1]} \xrightarrow{r_-} \foZ^{\perf}_{[0,1]}.$$
We let $\foZ^\perf_{[0,1], \rk=\delta}$ denote the subfunctor of $\foZ^\perf_{[0,1]}$ for which $\sF$ and $\sF'$ have rank $\delta$, and let $\foM^{\perf}_{[0,1], \rk=\delta}$ be defined similarly. Theorem \ref{thm:structural} implies that, if $\delta > 0$, the functor
	$$\Phi = r_{+\,*} \, r_-^* \colon \QCoh(\foZ^\perf_{[0,1], \rk=\delta}) \to \QCoh(\foZ^\perf_{[0,1], \rk=\delta}),$$
and the functors, for all $i \in \ZZ$,
				$$\Psi_i = \pr_{+}^*(\blank) \otimes \sL_{\rm univ}^{\otimes i}  \colon \QCoh(\foX) \to \QCoh(\foZ^\perf_{[0,1], \rk=\delta}) \quad \text{where} \quad \foX = \foM^{\perf}_{[0,1], \rk=\delta} \times_S X$$
 are fully faithful, and there are induced semiorthogonal decompositions
			\begin{align*}
			\QCoh(\foZ^\perf_{[0,1], \rk=\delta})  &= \big\langle \Phi( \QCoh(\foZ^\perf_{[0,1], \rk=\delta})), 
		~ \Psi_1( \QCoh(\foX) ), \cdots, \Psi_\delta (\QCoh(\foX))  \big\rangle,\\	
			\Perf(\foZ^\perf_{[0,1], \rk=\delta})  &= \big\langle \Phi( \Perf(\foZ^\perf_{[0,1], \rk=\delta})),  
			~ \Psi_1( \Perf(\foX) ), \cdots, \Psi_\delta (\Perf(\foX))  \big\rangle.	
			\end{align*}
Similarly for other cases; see Corollary \ref{cor:Hecke:surface} for more details.

\begin{remark} The general framework of \S \ref{sec:Hecke} not only unifies various situations such as the Hilbert schemes of points (Example \ref{eg:Hilbn}) and moduli of discrete Gieseker stable sheaves (Example \ref{eg:Gieseker}), but also allows us to consider moduli spaces under various types of stability conditions. For example, we expect the results of \S \ref{sec:Hecke} to be helpful for the study of the similar algebraic structures for the moduli spaces $\foM_{\sigma}(X)$ of Bridgeland (semi-)stable objects on a surface $X$ (or a family $X \to S$ of surfaces), where the surface $X$ (or the family $X \to S$ of surfaces) is equipped with (a family of) Bridgeland stability conditions $\sigma$ in the sense of 
 \cite{Bri07, Bri08, AB12, BM11, BM14} (the absolute cases) and \cite{BLM+} (the family case).
 \end{remark}

\subsection{The framework of derived algebraic geometry} \label{sec:DAG} We work in the context of derived algebraic geometry, as developed by Lurie \cite{DAG, HTT, DAGV, HA, SAG, kerodon}, To{\"e}n and Vezzosi \cite{HAGI, HAGII, ToenDAG} and many others \cite{BFN10, GRI, TVa07, PTVV}. 

There are various models for the framework of derived algebraic geometry. For definiteness and completeness, we choose to work within Lurie’s framework, and we will use Lurie’s works as the only references for all results of derived algebraic geometry in the proofs of this paper. However, we expect our arguments to work in the other models of derived algebraic geometry. 

From a {\em technical} perspective, the framework of derived algebraic geometry plays an essential role in this paper from at least the following aspects:

\begin{itemize}
	\item {\em Non-abelian derived functors.} Historically, symmetric algebras of complexes are studied either using concrete Koszul models \cite{Ei, ABW} or simplicial resolutions \cite{DP, Quillen}. However, for this paper's goal, either approach has its advantages and certain drawbacks:
	\begin{itemize}
		\item The definition using Koszul models is practical for computations but is a priori only defined for complexes of vector bundles (and might not be our desired definition if the characteristic is not zero). For example, even the fact that the symmetric algebras depend only on the quasi-isomorphism classes of complexes is a theorem rather than an immediate consequence of this definition. 
		\item On the other hand, the approach using simplicial resolutions is general and functorial, but might not be effective for concrete computations. 
		\item Illusie's results in \cite{Ill} bridge the above two approaches, but the ``bridging process" is slightly complicated and lacks sufficient functoriality for this paper's goal.
	\end{itemize}
Lurie's theory of non-abelian derived functors \cite[\S 5.5.8]{HTT} using the framework of $\infty$-categories in a way unifies these two perspectives, and enjoys the {\em best features} of both approaches. See \S \ref{sec:dSym*}, \S \ref{sec:univ:fib:seq}. 
	\item {\em Functorial exact triangles (fiber sequences).} In the classical context of triangulated categories, the construction of cones is known to be non-functorial. Therefore, it is hard to show that two non-trivial exact triangles obtained from different ways are equivalent. The framework of derived algebraic geometry (see, for example, \cite{HTT, HA, SAG}) not only solves the problem of non-functorial triangles but also explains why certain triangles should be equivalent, by ``remembering their origins".  This perspective plays a vital role in many proofs of this paper; see, for example, the proofs of generalized Serre's theorem (Theorem \ref{thm:Serre:O(d)}) and Beilinson's relations (Proposition \ref{prop:PG:dualexc} and Corollary \ref{cor:PVdot:relations}).	
	\item {\em Translating problems of algebraic geometry to problems of algebraic topology}. The framework of derived algebraic geometry (for example, the theory of general derived higher moduli (pre)stacks, as developed by Lurie in \cite{DAG, SAG}, and To{\"e}n and Vezzosi in \cite{HAGI, HAGII}) allows us to reduce certain problems of algebraic geometry to that of algebraic topology, where the $\infty$-amount of homological information that is hard to manipulate in algebraic geometry is effectively encoded by topological spaces that are easier to handle in algebraic topology. This perspective plays a vital role in proving the Euler fiber sequence (Theorem \ref{thm:proj:Euler}).
	\item {\em Phenomena only visible in the derived context.} Let $V$ be a finite-dimensional vector space (or a vector bundle over a base scheme). The classical {\em universal incidence quadric} $\shH \subseteq \PP(V) \times \PP(V^\vee)$, defined as the zero locus of the composite map
		$$\sO(-1,-1) \to V^\vee \otimes V \xrightarrow{\rm ev} \sO,$$
	plays a crucial role in classical and homological projective duality \cite{Kuz07}. Suppose that $V = [V_1 \to V_0]$ is a complex of vector spaces (or vector bundles), and $V^\vee$ is the dual of $V$. Then the role of $\shH$ is now played by the {\em universal incidence locus} $\widehat{Z} \subseteq \PP(V) \times \PP(V^\vee[1]) $ (Notation \ref{notation:hatZ}), defined as the zero locus of the composite map
		$$\sO(-1,-1)[1] \to V^\vee \otimes V \xrightarrow{\rm ev} \sO.$$
	However, since the zero loci of non-zero degree shifts of line bundles only affect the derived structure and have no effects on the underlying classical schemes, the semiorthogonal decompositions of Theorem \ref{thm:structural} induced by $\widehat{Z}$ are best understood in the derived context, even in the cases where $\PP(V)$ and $\PP(V^\vee[1])$ are classical.
\end{itemize}

\subsection{Related works} 

The classical projectivization formula is studied in \cite[Theorem 5.5]{Kuz07}, \cite{Tod2, Tod3}, \cite[Theorem 3.4]{JL18}, \cite[Theorem 6.16]{J21}, and the Chow-theoretical counterpart is studied in \cite{J19}. We expect the current work to be helpful to extend the results of \cite{J19} to the context of bivariant and motivic theories for derived schemes.

The projectivization formula is also related to the theory of homological projective duality developed by Kuznetsov \cite{Kuz07}; see also \cite{JLX17, Pe19}.

Our construction of affine cones in \S \ref{sec:affine-cone} is closely related to the construction of Jiang and Thomas in \cite{JT17} in the context of perfect obstruction theory \cite{BF, LT} and $(-1)$-shifted symplectic structure \cite{PTVV}. It would be interesting to explore the further connections between the results of this paper and the framework of perfect obstruction theory.

In \cite{Kh21}, Khan defined and studied the derived symmetric algebras (\S \ref{sec:Sym*:prestack}) on derived Artin stacks and applied them to obtain an excess intersection formula for the algebraic $K$-theory.

In \cite{He21}, Hekking constructs projective spectra for all graded simplicial commutative algebras over a derived scheme. In the case where the graded algebra $\shB_*$ is given by a derived symmetric algebra $\Sym_X^*(\sE)$, our construction of $\PP(\sE)$ in \S \ref{sec:def:dproj:qcoh} is equivalent to his $\Proj(\shB_*)$.

In  \cite{Z20}, Zhao also established a version of the generalized Serre's theorem \ref{thm:Serre:O(d)}.

The derived projectivization is closely related to the theory of categorification of wall-crossing of Donalson--Thomas moduli spaces and the d-critical birational geometry, developed by Toda in \cite{Tod1, Tod2, Tod3, Tod4, Tod5, Tod6}.

The applications in \S \ref{sec:classical} are related to the study of derived categories of nodal curves in \cite{Bur, KPS} and nodal threefolds in \cite{Kuz21, Kaw19, Xie21,PS21}. 

The study of Hecke correspondence in \S \ref{sec:Hecke} is closely related to Negu{\c{t}}'s works \cite{NegW, NegHecke, NegShuffle}; see also \cite{KV19, PS19, PZ20, Z20, Z21}.


In \cite{J22a, J23}, we extend the constructions and results presented in this paper to the context of Grassmannians in characteristic zero. In this generalization, the derived symmetric and exterior products are replaced by more general derived Schur functors.

\subsection{Notations and conventions}  \label{sec:Notations} We made an effort to keep our notations and terminologies as close to Lurie's in \cite{DAG, HTT, HA, SAG, kerodon} as possible, so that readers of Lurie's works will not need to make a significant extra effort to adjust to those of this paper.

For the readers' convenience, here is a list of notations and terminologies that we will frequently use in this paper:
\begin{itemize}
	\item We let $\bDelta$ denote the {\em simplex category} (that is, the category of  nonempty finite linearly ordered sets, with morphisms given by nondecreasing maps). We let $\Set_\Delta = \Fun(\bDelta^\op, \Set)$ denote the simplicial category of simplicial sets. For each $n \geq 0$, we let $\Delta^n \in \Set_\Delta$ denote the simplicial set that represents the functor $\Hom_{\bDelta}(\blank, [n])$.
	\item We let $\shS$ denote the {\em $\infty$-category of spaces}, that is, the homotopy-coherent nerve of simplicial category of Kan complexes.
	\item We let $\Sp$ denote the {\em $\infty$-category of spectra} (\cite[Definition 1.4.3.1]{HA}).
	\item We let $\Catinf$ denote the $\infty$-category of small $\infty$-categories, and $\widehat{\Cat}_{\infty}$ the $\infty$-category of not necessarily small $\infty$-categories (\cite[Chapter 3]{HTT}). For an $\infty$-category $\shC$ and objects $C,D \in \shC$, $\Map_\shC(C,D) \in \shS$ denotes the {\em mapping space}. For a pair of $\infty$-categories $\shC$ and $\shD$, we let $\Fun(\shC,\shD)$ denote the $\infty$-category of functors from $\shC$ to $\shD$. 
	\item For an $\infty$-category $\shC$, we let $\shC^{\simeq}$ denote the largest Kan complex contained in $\shC$, that is, the $\infty$-category obtained from $\shC$ by discarding all non-invertible morphisms.
	\item If $\shC$ is a pointed $\infty$-category which admits cofibers (resp. fibers), we let $\Sigma = \Sigma_\shC$ (resp. $\Omega = \Omega_\shC$) denote the {\em suspension} (resp. {\em loop}) functor. If a pointed $\infty$-category $\shC$ admits both cofibers and fibers, we will define a translation functor $[i]$ for each $i \in \ZZ$ by the following rule: let $[i] = \Sigma^{\circ i}$ if $i>0$, $[i] = \Omega^{\circ -i}$ if $i<0$, and $[i] = \id_\shC$ if $i=0$.
	\item We let $\CAlgDelta$ denote the $\infty$-category of simplicial commutative rings (\cite[Definition 25.1.1.1]{SAG}), and we will refer to the elements of $\CAlgDelta$ as {\em simplicial commutative rings}
	\footnote{These are also referred to as {\em animated commutative rings}; see \cite[\S 5.1]{CS}, \cite[Appendix A]{BL22}.}.
	 The $\infty$-category $\CAlgDelta$ can be obtained from ordinary category ${\rm Poly}$ of finite type polynomial rings by freely adjoining sifted colimits (that is, $\CAlgDelta = \shP_\Sigma({\rm Poly})$), and we will review this construction in \S \ref{sec:non-abelian}. For a simplicial commutative ring $R \in \CAlgDelta$, we let $\CAlgDelta_R$ denote the coslice category $(\CAlgDelta)_{/R}$ (\cite[Notation 25.1.4.4]{SAG}).
	\item Let $\shC^\otimes \to {\rm Comm}^\otimes$ be a symmetric monoidal $\infty$-category (\cite[Definition 2.0.0.7]{HA}) with underlying $\infty$-category $\shC$. We let $\CAlg(\shC) = \mathrm{Alg}_{/\mathrm{Comm}}(\shC)$ denote {\em $\infty$-category of commutative algebra objects} of $\shC$ (\cite[Definition 2.1.3.1]{HA}). For $A \in \CAlg(\shC)$, we let $\Mod_A(\shC)$ denote the {\em $\infty$-category of $A$-module objects of $\shC$} (\cite[Definition 3.3.3.8]{HA}). 
	\item We let $\CAlg = \CAlg(\Sp)$ denote the $\infty$-category of $\EE_\infty$-rings (\cite[Definition 7.1.0.1]{HA}). There is a forgetful functor $\Theta \colon \CAlgDelta \to \CAlg$. For a simplicial commutative ring $A$, we will denote the image $\Theta(A)$ as $A^\circ$ and refer to it as the {\em underlying $\EE_\infty$-ring} of $A$ (\cite[Construction 25.1.2.1]{SAG}).
	\item For any $A \in \CAlgDelta$, we will let $\Mod_A$ denote the $\infty$-category $\Mod_{A^\circ}(\Sp)$, and refer to the elements of $\Mod_A$ as {\em $A$-modules}. For a simplicial commutative ring $A$ and an $A$-module $M$, we let $\pi_i(A) = \pi_i(A^\circ)$ (resp. $\pi_i(M)$) denotes the $i$th homotopy group of the underlying spectrum of $A$ (resp. $M$). The action map $R \otimes M \to M$ endows $\pi_*(M) = \bigoplus_{m \in \ZZ} \pi_m(M)$ with the structure of a graded $\pi_*(R) $-module. In particular, $\pi_0(M)$ is a module over the ordinary commutative ring $\pi_0(R)$. We will say that $M$ is {\em connective} if 
	$\pi_i(M) = 0$ for all $i<0$. We let $\Mod_A^\cn$ denote the full subcategory of $\Mod_A$ spanned by connective $A$-modules. See \cite[\S 7.1.1]{HA} for more details. We say a simplicial commutative ring $A$ (resp. an $A$-module $M$) is {\em discrete} if $\pi_i(A)=0$ for $i \ne 0$ (resp. $\pi_i(M)=0$ for $i \ne 0$). We let $\CAlg^\heartsuit$ and $\Mod_A^\heartsuit$ denote the full subcategory of $\CAlgDelta$ and $\Mod_A$ spanned by discrete objects, respectively. 
	\item By a {\em derived scheme} we mean a pair $X = (|X|, \sO_X)$ where $|X|$ is a topological space, and $\sO_X$ is an $\CAlgDelta$-valued sheaf on $|X|$, such that the following conditions are satisfied: (i) The underlying ringed space $X_\cl = (|X|, \pi_0 \sO_X)$ is a (classical) scheme, which is called the {\em underlying classical scheme of $X$}. (ii) Each of the sheaves $\pi_n(\sO_X)$ is a quasi-coherent $\pi_0(\sO_X)$-module on $X_\cl$. (iii) The structure sheaf $\sO_X$ is hypercomplete when regarded as an object of the $\infty$-topos $\Shv_\shS(X)$ in the sense of \cite[\S 6.5.2]{HTT}. 
	\item By a {\em prestack} we mean an element of the $\infty$-category $\Fun(\CAlgDelta, \shS)$, that is, a functor $X \colon \CAlgDelta \to \shS$. A map $f \colon X \to Y$ between prestacks is a natural transformation of the functors $X, Y \colon  \CAlgDelta \to \shS$. For a prestack $X$, we let $\QCoh(X)$ denote the $\infty$-category of quasi-coherent sheaves on $X$ (which we will review in  \S\ref{sec:QCoh}), and let $\Perf(X)$ denote the full subcategory of $\QCoh(X)$ spanned by perfect objects.
\end{itemize}

Here is a list of notations and terminologies that either we introduce in this paper or for which our usage is slightly different from (some of) Lurie's works:

\begin{itemize}
	\item We use $\Sym_A^n(M)$ to denote the {\em derived $n$th symmetric product} of a connective $A$-module $M$ over $A$ (\S \ref{sec:derived_sym}), and use $\Sym_X^n(\sM)$ to denote the global version, that is, the {\em derived $n$th symmetric power} of a connective quasi-coherent sheaf $\sM$ over a prestack $X$ (\S \ref{sec:sym:prestack}). This notation agrees with Lurie's in \cite{DAG}, but is {\em different} from his in \cite{HA, SAG}. \footnote{In \cite{HA, SAG}, Lurie uses $\Sym_A^n(M)$ to denote the $A$-module obtained by taking homotopy coinvariants for the action of the symmetric group $\Sigma_n$ on the $n$-fold relative tensor product $M \otimes_A \cdots \otimes_A M$, which is generally different from the derived $n$th symmetric power unless $A$ has  characteristic zero; We will not use this construction in this paper. Our notation $\Sym_A^n(M)$ corresponds to Lurie's notation ${\rm LSym}_A^n(M)$ in \cite[\S 25.2]{SAG}.}
	\item For an ordinary commutative ring $R$ and a discrete $R$-module $M$, $S_R^d(M)= \Sym_{\cl, R}^d(M)$ denote the {\em classical symmetric products}, reviewed in \S \ref{sec:classical_sym}. Let $\rho \colon M \to M'$ be a map of discrete $R$-modules, $\bS^d(R, \rho)$ denotes the Koszul complex defined in Construction \ref{constr:Koszul:S^n}.
	\item  $\QCAlgDelta(X)$ denotes the {\em $\infty$-category of quasi-coherent algebras} on a prestack $X$; see \S\ref{sec:QCAlgDelta}. For a morphism of prestacks $f \colon X \to Y$, we let $f_{\rm alg}^* \colon \QCAlgDelta(Y) \to \QCAlgDelta(X)$ denote the pullback functor, and $f_{*}^{\rm alg}$ the right adjoint of $f_{\rm alg}^*$ (if exists).
	\item  $\Spec_X$ denotes the {\em relative spectrum} functor over a prestack $X$; see Definition \ref{def:relative:spec}.
	\item $\VV(\sE)$ or $\VV_X(\sE)$ denotes the {\em affine cone} of a connective quasi-coherent sheaf $\sE$ over a prestack $X$ (\S \ref{sec:affine-cone}); $\PP(\sE)$ or $\PP_X(\sE)$ denotes the {\em derived projectivization} of a connective quasi-coherent sheaf $\sE$ over a prestack $X$ (\S \ref{sec:def:dproj:qcoh}).
	\item The Eagon--Northcott complexes $\mathbf{EN}_d(X, \sigma)$ are defined in Remark \ref{rmk:ENcomplexes}.
	\item For an perfect stack $X$ (Definition \ref{def:perfstacks}), a $X$-linear stable $\infty$-category $\shC$, $\Mapsp_{X}(C,D) = \Mapsp_{\shC/X}(C,D) \in \QCoh(X)$ denotes the mapping object defined in Construction \ref{constr:mappingobject}. For a relative exceptional object $E \in \shC$ over $X$ (Definition \ref{def:relexcseq}), $\LL_E$ and $\RR_E$ denote the left and right mutation functors, respectively, defined in Definition \ref{def:mutations}.
	\item The universal incidence locus $\widehat{Z}$ and the incidence diagram is defined in Notation \ref{notation:hatZ}.
	\item The notations $\foM$, $\foZ$, $\foZ_\lambda$, and their variants, denote the moduli functors that appear in Hecke correspondences, which we will define and only use in \S \ref{sec:Hecke}.
\end{itemize}

\subsection*{Acknowledgment}
The author thanks Arend Bayer for numerous helpful discussions throughout this project, 
Bertrand To{\"e}n for asking him the question of viewing the projectivization formula from the DAG perspective during his visit to Toulouse two years ago,
Fei Xie for helpful discussions on derived categories of nodal curves and nodal threefolds, and Dougal Davis for discussions on DAG, carefully reading the paper, and many helpful suggestions. 
The author is grateful to Richard Thomas, Yukinobu Toda, Sasha Kuznetsov, Andrei Negu{\c{t}}, Adeel Khan,  Yunfeng Jiang, Jeroen Hekking, and Yu Zhao for helpful communications and comments on an earlier draft of this paper.
This work is supported by the Engineering and Physical Sciences Research Council [EP/R034826/1].

 \section{Derived symmetric algebras}\label{sec:SymAlg}
 
This section reviews Lurie's construction of derived symmetric algebras and studies their basic properties. The constructions and results of this section play an essential role in our later study of affine cones and derived projectivizations in this paper. 

In \S \ref{sec:classical_sym}, we briefly review the classical constructions of the symmetric, exterior, and divided powers. \S \ref{sec:non-abelian} reviews Lurie's general theory of non-abelian derived functors, and \S \ref{sec:derived_sym} reviews  the constructions and properties of the {\em derived} symmetric, exterior, and divided powers. In \S \ref{sec:dSym*}, we define {\em derived} symmetric algebras and study their basic properties; see Propositions \ref{prop:Sym*} and \ref{prop:Sym*:cofib}. Finally, in \S \ref{sec:univ:fib:seq}, we apply Lurie's machinery \S \ref{sec:non-abelian} to construct two canonical series of functorial fiber sequences associated to symmetric powers. 

\begin{remark}
The material of this section has a direct generalization to a theory of derived Schur functors, which we will explore in a forthcoming paper.
\end{remark}

\subsection{Classical symmetric, exterior and divided power algebras}\label{sec:classical_sym}  
 For the readers' convenience, we will briefly review the classical definitions of symmetric, exterior and divided power algebras. In this subsection, we make the following assumption:
 
\begin{itemize}
	\item We let $R$ be a(n ordinary) commutative ring, and $M$ a discrete $R$-module. 
\end{itemize}
 
\begin{definition}[Classical tensor algebras; {see \cite[Chapter III, \S 5]{Bou}}] For a pair of discrete $R$-modules $M$ and $N$, we let $M \otimes_R^\cl N$ denote the classical tensor product $\Tor_0^R(M,N)$ over $R$. For an integer $n \ge 0$, we let $\T^n(M)$ denote the $n$-fold classical tensor product $M \otimes_R^\cl M \otimes_R^\cl \cdots \otimes _R^\cl M$. By convention, we set $\T^0(M) = R$, $\T^1(M) = M$. Then the {\em classical tensor algebra of $M$ over $R$}, denoted by $\T^*(M)$, or $\T_{R}^*(M)$, is the (ordinary) graded associative $R$-algebra $\T^*(M) = \bigoplus _{n \ge 0} \T^n(M)$, with the multiplication map 
	$$m_{p,q} \colon \T^p(M) \times \T^q(M) \to \T^{p+q}(M),$$
 where $p, q \ge 0$, given by the formula:
	$$m_{p,q} (x_1 \otimes \cdots \otimes x_p, x_{p+1} \otimes \cdots \otimes x_{p+q}) = x_1 \otimes \cdots \otimes x_p \otimes x_{p+1} \otimes \cdots \otimes x_{p+q}.$$
\end{definition}

 \begin{definition}[{Classical symmetric algebras; see \cite[Chapter III, \S 6]{Bou}}] \label{def:cl:sym} The {\em clasical symmetric algebra of $M$ over $R$}, denoted by $S^*(M)$, $S_{R}^*(M)$, or $\Sym_\cl^*(M)$, is the (ordinary) commutative $R$-algebra obtained as the quotient of the tensor algebra $\T_R(M)$ by the two-sided ideal $\foC = \foC_M$, where the ideal $\foC$ is generated by the elements of the form $x \otimes y - y \otimes x$ for $x, y \in M$. Since $\foC$ is a graded ideal generated by homogeneous elements of degree $2$, the algebra $S_{R}^*(M)$ is graded and equipped with a direct sum decomposition $S_R^*(M) = \bigoplus _{n \ge 0} S_R^n(M)$, where $S_R^n(M) = \T^n(M)/ (\foC \cap \T^n(M))$. We will refer to the $R$-module $S_R^n(M)$ as the {\em classical $n$th symmetric power of $M$ over $R$}. From $\foC^0 = \foC^1 = 0$, we obtain canonical identifications $S_R^0(M) = R$ and $S_R^1(M) = M$. The canonical map $M=S_R^1(M)  \hookrightarrow S_R^*(M)$ is referred to as the canonical injection. 
\end{definition}

\begin{definition}[{Classical exterior algebras, see \cite[Chapter III, \S 7]{Bou}}] \label{def:cl:wedge} The {\em exterior algebra of $M$ over $R$}, denoted by $\bigwedge_\cl^*(M)$, or $\bigwedge_{\cl, R}^*(M)$, is the ordinary $R$-algebra defined as the quotient of the tensor algebra $\T_R(M)$ by the two-sided ideal $\foA = \foA_M$, where $\foA$ is generated by elements of the form $x \otimes x$ for $x \in M$. Since $\foA$ is a graded ideal generated by homogeneous elements of degree $2$, the algebra $\bigwedge_\cl^*(M)$ is graded, and we have a direct sum decomposition $\bigwedge_\cl^*(M) = \bigoplus _{n \ge 0} \bigwedge_\cl^n(M)$, where $\bigwedge_\cl^n(M) = \T^n(M)/ ( \foA \cap \T^n(M))$. We will refer to $\bigwedge_\cl^n(M)$ as the {\em classical $n$th exterior power of $M$ over $R$}. From $\foA^0 = \foA^1 = 0$, we obtain canonical identifications $\bigwedge_\cl^0(M) = R$ and $\bigwedge_\cl^1(M) = M$. We will refer to the canonical map $M=\bigwedge_\cl^1(M)  \hookrightarrow \bigwedge_\cl^*(M)$ as the canonical injection. 
\end{definition}

\begin{definition}[{Classical divided power algebras, {\cite[Chapter III, \S 1, p. 248]{Ro}} }] \label{def:cl:Gamma}  The {\em classical divided power (or P.D., puissances divis{\'e}es) algebra of $M$ over $R$}, denoted by $\Gamma_\cl^*(M)$, $\Gamma_{\cl,R}^*(M)$, or $\DD_{\cl}^*(M)$, is the (ordinary) commutative $R$-algebra obtained as the quotient algebra of the free polynomial algebra $R[\{\gamma^n(x)\}_{n\ge 0, x \in M}]$ on symbols $\{\gamma^n(x)\}_{n \ge 0, x \in M}$ by the ideal generated by following relations:
	\begin{align*}
		& \gamma^{0}(x) = 1,  & \forall x \in M; \\
		& \gamma^n(r x) = r^n \gamma^n(x) & \forall r \in R, x \in M', n \ge 0; \\
		& \gamma^m(x) \cdot \gamma^n(x) = \binom{m+n}{m} \gamma^{m+n}(x) & \forall x \in M, n \ge 0;\\
		& \gamma^n(x+y) = \sum_{i=0}^n \gamma^{i}(x) \gamma^{n-i}(y), & \forall x ,y \in M, n \ge 0.
	\end{align*}
Since the ideal generated by above relations are homogeneous, the algebra $\Gamma_\cl^*(M)$ is graded, and we have a direct sum decomposition $\Gamma_\cl^*(M) = \bigoplus_{n \ge 0} \Gamma_\cl^n(M)$, where $\Gamma_\cl^0(M) = R$ and $\Gamma_\cl^1(M) = M$. We will refer to the $R$-module $\Gamma_\cl^n (M) = \Gamma_{\cl, R}^n(M)$ as the {\em classical $n$th  divided power of $M$ over $R$}. The assignment $x \mapsto \gamma^n(x)$ defines a map of $R$-modules $\gamma^n \colon M \to \Gamma_\cl^n(M)$. Notice that $\gamma^0\equiv 1 \colon M \to R$ is the constant map, and $\gamma^1 = \id \colon M \to M$ is the identity map on $M$. For an element $x \in M$, we will also use the notation $x^{(n)} : = \gamma^n(x) \in \Gamma_\cl^n(M)$; we will refer to $x^{(n)}= \gamma^n(x)$ as the {\em $n$th divided power} of $x$.
\end{definition}

We refer the readers to \cite[Chapter III, \S 6, \S 7]{Bou} and \cite{Ro} for the basic properties of the above classical constructions. Here, we will list some of them so that the reader can easily compare them with the corresponding properties of their derived counterparts: 

\begin{enumerate}[leftmargin=*]
		\item (Base change) Let $R \to R'$ be a homomorphism of (ordinary) commutative rings, and let $M$ be a discrete $R$-module. Then there are canonical isomorphisms
			\begin{align*}
			 R' \otimes_R^\cl S_R^* (M) & \simeq S_{R'}^* (R' \otimes_R^\cl M), \\
			 R' \otimes_R^\cl \bigwedge\nolimits_{\cl, R}^* (M) & \simeq  \bigwedge\nolimits_{\cl, R'}^* (R' \otimes_R^\cl M), \\
			 R' \otimes_R^\cl \Gamma_{\cl, R}^* (M) & \simeq \Gamma_{\cl, R'}^* (R' \otimes_R^\cl M)
		\end{align*}
		of graded $R'$-algebras; see \cite[Chapter III, \S 6.4, Proposition 7, \S 7.5, Proposition 8]{Bou} and \cite[Theorem III.3, p. 262]{Ro}.	
	\item (Freeness) If $M$ is a (locally) free $R$-module of rank $m$, then for any integer $n \ge 0$, $S^n(M)$,  $\bigwedge_\cl^n M$, and $\Gamma_\cl^n(M)$ are (locally) free $R$-modules of ranks $\binom{n+m-1}{n}$, ${m \choose n}$, and $ \binom{n+m-1}{n}$, respectively. In this case, we will drop the subscript $\cl$ and denote them by $S^n(M)$,  $\bigwedge^n M$, and $\Gamma^n(M)$; these notations will be justified in the next section.
	\item (Duality) If $M$ is a locally free $R$-module, then for any integer $n \ge 0$, there are canonical isomorphisms of $R$-modules $(\bigwedge^n M)^\vee \simeq \bigwedge^n (M^\vee)$, $( S^n(M))^\vee \simeq \Gamma(M^\vee)$. (Here, $N^\vee$ denote the $R$-linear dual of a locally free $R$-module $N$.)
	\item (Right exactness) The formations of the graded algebras $S^*(M)$, $\bigwedge_{\cl}^* M$ and $\Gamma_\cl^*(M)$ are right exact in the variable $M$ in the following sense: suppose we are given a short exact sequence of discrete $R$-modules:
		$$0 \to P \to M \xrightarrow{u} N \to 0.$$
Then there are induced short exact sequences
	\begin{align*}
	0 \to I_1 \to S^*(M) \xrightarrow{S^*(u)} S^*(N) \to 0, \\
	0 \to I_2 \to \bigwedge\nolimits_\cl^* M \xrightarrow{\bigwedge\nolimits_\cl^*(u)} \bigwedge\nolimits_\cl^* N \to 0, \\
	0 \to I_3 \to \Gamma_\cl^*(M) \xrightarrow{\Gamma_\cl^*(u)} \Gamma_\cl^*(N) \to 0,
	\end{align*}
where $I_1$ is the ideal of the commutative algebra $S^*(M)$ generated by $P$ (\cite[Chapter III, \S 6.2, Proposition 4]{Bou}), $I_2$ is the two-sided ideal of the algebra $\bigwedge_\cl^*(M)$ generated by $P$  (\cite[Chapter III, \S 7.2, Proposition 3]{Bou}), and $I_3$ is the ideal of the commutative algebra $\Gamma_\cl^*(M)$ generated by elements of the form $x^{(i)}$ for $x \in P$, $i \ge 1$ (\cite[Proposition IV.8, p. 284]{Ro}), respectively. 
\end{enumerate}

\begin{remark}
A word of {\em caution}: if $n \ge 2$, the functors $M \mapsto S^n(M), \bigwedge_\cl^n(M), \Gamma_\cl^n(M)$ are {\em not} right exact in the usual sense as they are {\em not additive}. However, they are ``right exact" in a ``non-abelian sense" that they commute with reflexive coequalizers, which is one of the underlying reasons they have a well-behaved non-abelian derived theory. We will explore their derived functors in the following subsections.
\end{remark}

 \subsection{Non-abelian derived functors} \label{sec:non-abelian} 
To define and study the derived functors of symmetric, exterior, and divided powers, we will use Lurie's general theory of non-abelian derived functors as developed in \cite[\S 5.5.8]{HTT}, which we review in this subsection.

To keep our exposition brief, we introduce the following notations:
\begin{itemize}
	\item We let $\Cat_\infty^\amalg$ (resp. $\Cat_\infty^{\pi}$) denote the subcategory of $\Cat_\infty$ spanned by those $\infty$-categories which admit finite coproducts (resp. finite products) and those functors which preserve finite coproducts (resp. finite products). For $\shC, \shD \in \Cat_\infty^\amalg$ (resp. $\Cat_\infty^{\pi}$), we let $\Fun^\amalg(\shC, \shD)$ (resp. $\Fun^\pi(\shC, \shD)$) denote the full subcategory of $\Fun(\shC, \shD)$ spanned by those functors which preserve finite coproducts (resp. finite products).
	\item We let $\Cat_\infty^\Sigma$ denote the subcategory of $\Cat_\infty$ spanned by those $\infty$-categories which admit small sifted colimits and those functors which preserve small sifted colimits. For $\shC, \shD \in \Cat_\infty^\Sigma$, we let $\Fun_\Sigma(\shC, \shD)$ denote the full subcategory of $\Fun(\shC, \shD)$ spanned by those functors which preserve small sifted colimits. 
	\item We let $\widehat{\Cat}_\infty$ denote the ``very large version" of $\Cat_\infty$, that is, it is the $\infty$-category of not necessarily small $\infty$-categories. We let $\widehat{\Cat}_\infty^{\rm L}$ denote the full subcategory of $\widehat{\Cat}_\infty$ spanned by those $\infty$-categories which admit all small colimits and those functors which preserve all small colimits. For $\shC, \shD \in \widehat{\Cat}_\infty^{\rm L}$, we let $\Fun^{\rm L}(\shC, \shD)$ denote the full subcategory of $\Fun(\shC, \shD)$ spanned by those functors which preserve all small colimits. 
	\item Similarly, we let $\widehat{\Cat}_\infty^?$ denote the ``very large version" of the $\infty$-category $\Cat_\infty^?$, where $? = \amalg, \pi, \Sigma$ (that is, $\widehat{\Cat}_\infty^? \subseteq \widehat{\Cat}_\infty$ is defined in a same way as the above-defined $\Cat_\infty^?$ with the only exception that its objects are not assumed to be small).
\end{itemize}

Let $\shC \in \Cat_\infty^\amalg$ be an $\infty$-category that admits finite coproducts, and let $\shP_{\Sigma}(\shC) = \Fun^{\pi}(\shC^\op, \shS)$ denote the $\infty$-category spanned by those functors which preserve finite products. We can think of $\shP_{\Sigma}(\shC)$ as obtained from $\shC$ by formally adjoining sifted colimit. More precisely:

\begin{proposition}[Lurie {\cite[\S 5.5.8]{HTT}}] \label{prop:nonab:derived} For any $\shC \in \widehat{\Cat}_\infty^\amalg$, there exists a {\em Yoneda functor} $j \colon \shC \to \shP_\Sigma(\shC)$, such that the natural transform $\{j \colon \shC \to \shP_\Sigma(\shC)\}_{\shC}$ exhibits the functor  
	$$(\shC \in \widehat{\Cat}_\infty^\amalg) \mapsto (\shP_\Sigma(\shC) \in \widehat{\Cat}_\infty^{\rm L})$$
 	as a left adjoint to the inclusion $\widehat{\Cat}_\infty^{\rm L}\subseteq \widehat{\Cat}_\infty^\amalg$. Moreover, given any $\shC \in \Cat_\infty^\amalg$:
	\begin{enumerate}
		\item  \label{prop:nonab:derived-1} The Yoneda functor $j \colon \shC \to \shP_\Sigma(\shC)$ is fully faithful and preserves finite coproducts.
		\item  \label{prop:nonab:derived-2} For any $\shD \in \widehat{\Cat}_\infty^\Sigma$, composition with $j$ induces an equivalence of $\infty$-categories
				$$\Fun_\Sigma(\shP_\Sigma(\shC), \shD) \to \Fun(\shC, \shD).$$
			Moreover, a functor $F \colon \shP_{\Sigma}(\shC) \to \shD$ belongs to $\Fun_\Sigma(\shP_\Sigma(\shC), \shD)$ if and only if $F$ is a left Kan extension of $f = F \circ j$ along $j$.
		\item  \label{prop:nonab:derived-3} For any $\shD \in \widehat{\Cat}_\infty^{\rm L}$, composition with $j$ restricts to an equivalence of $\infty$-categories
				$$\FunL(\shP_\Sigma(\shC), \shD) \to \Fun^{\amalg}(\shC, \shD).$$
			Combining with \eqref{prop:nonab:derived-2}, we obtain that for any given functor $F \in \Fun_\Sigma(\shP_\Sigma(\shC), \shD)$, $F$ preserves all small colimits if and only if $f = F \circ j$ preserves finite coproducts.
	\end{enumerate}
\end{proposition}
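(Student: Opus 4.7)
My strategy is to reduce everything to two classical inputs: the $\infty$-categorical Yoneda lemma for the full presheaf $\infty$-category $\shP(\shC) = \Fun(\shC^{\op}, \shS)$, and the fact that sifted colimits in $\shS$ commute with finite products. The latter is the structural property that makes $\shP_\Sigma(\shC)$ closed in $\shP(\shC)$ under sifted colimits, while the former provides the free-cocompletion framework into which $\shP_\Sigma(\shC)$ embeds.

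First I would set up the embedding. The ordinary Yoneda embedding $j_0 \colon \shC \hookrightarrow \shP(\shC)$ is fully faithful, and by the universal property of coproducts each representable $\Map_\shC(-, C)$ sends finite coproducts in $\shC$ to finite products in $\shS$. Hence $j_0$ factors through $\shP_\Sigma(\shC)$, yielding a fully faithful functor $j \colon \shC \to \shP_\Sigma(\shC)$, which proves half of (1). Because sifted colimits commute with finite products in $\shS$, any sifted colimit in $\shP(\shC)$ of objects of $\shP_\Sigma(\shC)$ remains in $\shP_\Sigma(\shC)$, so the inclusion $\shP_\Sigma(\shC) \subseteq \shP(\shC)$ creates sifted colimits.

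The technical heart of the argument, and the step I expect to be the main obstacle, is the claim that every $X \in \shP_\Sigma(\shC)$ is canonically a sifted colimit of representables: the slice $\shC_{/X}$ is a sifted $\infty$-category and the tautological map $\operatorname{colim}_{(C \to X) \in \shC_{/X}} j(C) \to X$ is an equivalence. The equivalence assertion follows from $\shP(\shC)$ being freely generated under small colimits by representables; what is subtle is the siftedness of $\shC_{/X}$, which reduces, via the criterion that the diagonal $\shC_{/X} \to \shC_{/X} \times \shC_{/X}$ be cofinal, precisely to the finite-product-preservation property of $X$ itself.

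Granting this, for (2) I would define $F = \operatorname{Lan}_j f$ by the pointwise formula $F(X) = \operatorname{colim}_{\shC_{/X}} f$; the colimit exists because $\shC_{/X}$ is sifted and $\shD$ admits sifted colimits. A Fubini argument shows $F \circ j \simeq f$ and that $F$ preserves sifted colimits, since any sifted colimit in $\shP_\Sigma(\shC)$ can be rewritten, by the preceding paragraph, as an iterated sifted colimit of representables and absorbed into the defining formula for $F$. Essential surjectivity and full faithfulness of the restriction functor follow, together with the left-Kan-extension characterization, since two sifted-colimit-preserving functors agreeing on representables must agree on all of $\shP_\Sigma(\shC)$. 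Finally, for (3) and the coproduct-preservation half of (1), I would invoke the elementary principle that a functor between cocomplete $\infty$-categories preserves all small colimits if and only if it preserves both finite coproducts and sifted colimits; combined with (2) this yields the equivalence $\FunL(\shP_\Sigma(\shC), \shD) \simeq \Fun^{\amalg}(\shC, \shD)$, and applied to $\id_{\shP_\Sigma(\shC)}$ it forces $j$ itself to preserve finite coproducts.
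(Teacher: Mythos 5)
The paper's own "proof" of this proposition is simply a citation to Lurie, HTT Propositions 5.5.8.10 and 5.5.8.15, together with Corollary 5.3.6.10 to package the family of universal maps $\{j\}$ as an adjunction. Your proposal, by contrast, reconstructs Lurie's argument from scratch, and the overall architecture (factoring the Yoneda embedding through $\shP_\Sigma(\shC)$, showing $\shP_\Sigma(\shC)$ is closed under sifted colimits because sifted colimits in $\shS$ commute with finite products, showing $\shC_{/X}$ is sifted and hence $X$ is a sifted colimit of representables, then building $\operatorname{Lan}_j$ pointwise) is indeed the strategy of HTT \S 5.5.8. This is not a different route in mathematical substance, but it is a different stance: the paper delegates, you prove.

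There is one genuine logical gap in the way you sequence (1) and (3). You propose to establish the coproduct-preservation half of (1) as a \emph{consequence} of (3), applied to $\id_{\shP_\Sigma(\shC)}$. But your route to (3) goes via the principle ``$F$ preserves small colimits iff $F$ preserves sifted colimits and finite coproducts,'' combined with (2); to conclude from this that the restriction along $j$ carries $\FunL(\shP_\Sigma(\shC),\shD)$ into $\Fun^\amalg(\shC,\shD)$, you already need to know that $j$ preserves finite coproducts --- which is exactly what you are trying to deduce. The circle is easy to break, but you should break it: prove $j \in \Fun^\amalg$ directly, before touching (3), via the one-line Yoneda calculation
$$\Map_{\shP_\Sigma(\shC)}(j(C_1\sqcup C_2), X) \simeq X(C_1\sqcup C_2) \simeq X(C_1)\times X(C_2) \simeq \Map(j(C_1),X)\times \Map(j(C_2),X),$$
valid precisely because every $X\in \shP_\Sigma(\shC)$ preserves finite products by definition. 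With this in hand, your deduction of (3) from (2) and the coproducts-plus-sifted-colimits criterion goes through.

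One further caution: you call that criterion (a functor between cocomplete $\infty$-categories preserves small colimits iff it preserves finite coproducts and sifted colimits, HTT 5.5.8.17) ``elementary.'' It is not; its proof is the technical core of HTT \S 5.5.8 and is essentially on par with the result you are trying to prove. It is perfectly acceptable to quote it as a black box, but you should not present the remaining work as routine once it is invoked --- you have mostly relocated the difficulty rather than removed it. If you intend to give a genuinely self-contained argument, this lemma is what actually has to be proven, and your sketch does not address it.

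Finally, the ``moreover'' clause of (2) --- that $F\in\Fun_\Sigma$ iff $F$ is a left Kan extension of $F\circ j$ along $j$ --- needs a word of justification beyond uniqueness on representables: two sifted-colimit-preserving functors agreeing on $\shC$ agree everywhere because every object of $\shP_\Sigma(\shC)$ is a sifted colimit of representables, but to identify this with the universal property of the pointwise Kan extension you should also verify that the pointwise colimit formula actually computes a Kan extension in the $\infty$-categorical sense (HTT 4.3.2.15 and surrounding). Your Fubini argument points in the right direction, but this is worth making explicit.
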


\begin{proof} This is a reformulation of \cite[Propositions 5.5.8.10, 5.5.8.15]{HTT} in view of the adjunction provided by \cite[Corollary 5.3.6.10]{HTT}.
\end{proof}

\begin{definition} In the situation of Proposition \ref{prop:nonab:derived} \eqref{prop:nonab:derived-2} or \eqref{prop:nonab:derived-3}, we will refer to the functor $F$ as the {\em derived functor} of $f = F|\shC$. By virtue of Proposition \ref{prop:nonab:derived}, the functor $F$ is essentially uniquely determined by $f$ and preserves sifted colimits.
\end{definition}

\subsection{Derived symmetric, exterior, and divided powers} \label{sec:derived_sym} 
In this subsection, we apply Lurie's machinery of \S \ref{sec:non-abelian} to the classical functors of \S \ref{sec:classical_sym}.

\begin{notation}[{\cite[\S 25.2.1]{SAG}}] \label{notation:SCRMod} Let $A$ be a simplicial commutative ring and let $A^\circ$ be its underlying $\EE_\infty$-ring. We let $\SCRModcn: = \CAlgDelta \times_{\CAlg} \Mod^\cn$ denote the $\infty$-category of pairs $(A, M)$, where $A$ is a simplicial commutative ring and $M$ is a connective $A^\circ$-module. Then the forgetful functor $q \colon \SCRModcn \to \CAlgDelta$ is a coCartesian ﬁbration which classifies the functor $(A \in \CAlgDelta) \mapsto (\Mod^\cn_A \in \hatCat)$. Let ${\rm PolyMod}^{\rm ff} \subseteq \SCRModcn$ denote the full subcategory spanned by elements of the form $(A = \ZZ[x_1, \ldots, x_m], M = A^n)$ for $m,n \ge 0$. Then the inclusion ${\rm PolyMod}^{\rm ff} \hookrightarrow \SCRModcn$ extends to an equivalence of $\infty$-categories $\shP_\Sigma({\rm PolyMod}^{\rm ff}) \simeq \SCRModcn$ (\cite[Proposition 25.2.1.2]{SAG}).
\end{notation}

\begin{construction}[Derived symmetric, exterior and divided powers; Lurie {\cite[\S 25.2.2]{SAG}}]\label{constr:sym.wedge.gamma} In the situation of Proposition \ref{prop:nonab:derived}, we let $\shC  ={\rm PolyMod}^{\rm ff}$, $\shD = \SCRModcn$ (Notation \ref{notation:SCRMod}) and let $f \colon {\rm PolyMod}^{\rm ff} \to {\rm PolyMod}^{\rm ff} \subseteq \SCRModcn$ be one of classical functors considered in \S \ref{sec:classical_sym}. Then we obtain the {\em derived} functor of $f$. More concretely, let $n \ge 0$ be an integer:
	\begin{enumerate}
		\item Let $f$ be the functor $(R, M) \mapsto (R, S_R^n(M))$, where $S_R^n(M)$ is the classical $n$th symmetric power of $M$ over $R$ (Definition \ref{def:cl:sym}). Then the derived functor of $f$ has the form $F(A,M) = (A, \Sym_A^n M)$, where $A \in \CAlgDelta$ and $\Sym_A^n M \in \Mod_A^\cn$. The connective $A$-module $\Sym_A^n M$ is called the {\em derived $n$th symmetric power of $M$ over $A$}.
		\item Let $f$ be the functor $(R, M) \mapsto (R, \bigwedge_{\cl, R}^n(M))$, where $\bigwedge_{\cl, R}^n(M))$ is the classical $n$th exterior power of $M$ over $R$ (Definition \ref{def:cl:wedge}). Then the derived functor of $f$ has the form $F(A,M) = (A, \bigwedge_A^n M)$, where $A \in \CAlgDelta$ and $\bigwedge_A^n M \in \Mod_A^\cn$. We will refer to the connective $A$-module $\bigwedge_A^n M$ as the {\em derived $n$th exterior power of $M$ over $A$}.
		\item Let $f$ be the functor $(R, M) \mapsto (R, \Gamma_{\cl, R}^n(M))$, where $\Gamma_{\cl, R}^n(M))$ is the classical $n$th divided power of $M$ over $R$ (Definition \ref{def:cl:Gamma}). Then the derived functor of $f$ has the form $F(A,M) = (A, \Gamma_A^n M)$, where $A \in \CAlgDelta$ and $\Gamma_A^n M \in \Mod_A^\cn$. We will refer to the connective $A$-module $\Gamma_A^n M$ as the {\em derived $n$th divdied power of $M$ over $A$}.
		\end{enumerate} 
\end{construction}

For readers' convenience, we list some of the basic properties of these functors that are frequently used in this paper; all these results in our context are due to Lurie:

\begin{proposition}[Lurie {\cite[\S 25.2]{SAG}}] \label{prop:symwedgegamma} The derived symmetric, exterior, and divided power functors defined in Construction \ref{constr:sym.wedge.gamma} satisfy the following properties:
\begin{enumerate}
	\item \label{prop:symwedgegamma-1} 
		(Base change) Let $A \to B$ be a morphism of simplicial commutative rings. Then for any connective $A$-module $M$ and an integer $n \ge 0$, there are canonical equivalences:
		\begin{align*}
			 B \otimes_A \Sym_A^n (M) & \xrightarrow{\sim} \Sym_B^n (B \otimes_A M); \\
			 B \otimes_A \bigwedge\nolimits_A^n (M) & \xrightarrow{\sim} \bigwedge\nolimits_B^n (B \otimes_A M); \\
			 B \otimes_A \Gamma_A^n (M) & \xrightarrow{\sim} \Gamma_B^n (B \otimes_A M).
		\end{align*}
	\item \label{prop:symwedgegamma-2}
	(Freeness) For any simplicial commutative ring $A \in \CAlgDelta$ and an integer $n \ge 0$, if the $A$-module $M$ is (locally) free of ﬁnite rank $r$, then $\Sym_A^n M$, $\bigwedge_A^n M$ and $\Gamma_A^n M$ are (locally) free of ﬁnite ranks ${n+r -1 \choose n}$, ${r \choose n}$, and respectively ${n+r -1 \choose n}$. If $M$ is flat, then $\Sym_A^n M$, $\bigwedge_A^n M$ and $\Gamma_A^n M$ are flat.
	\item \label{prop:symwedgegamma-3}
	(Flat modules over discrete rings) If $A= R$ is an ordinary commutative ring, and $M$ is a flat $A$-module, then there are canonical isomorphisms 
		$$ \Sym_R^n M \simeq \Sym_{\cl, R}^n M, \qquad  \bigwedge\nolimits_R^n M \simeq \bigwedge\nolimits_{\cl, R}^n M, \qquad  \Gamma_R^n (M) \simeq \Gamma_{\cl, R}^n (M).$$
(In this situation, we will often abuse notation and use $\Sym_R^n M$, $\bigwedge_A^n M$ and $\Gamma_A^n M$ to denote the corresponding classical constructions.)

	\item \label{prop:symwedgegamma-4}
	(Illusie's proposition) Let $A$ be an simplicial commutative ring, then for any connective $A$-module $M$ and an integer $n \ge 0$, there are canonical equivalences of connective $A$-modules $\Sym_A^n(\Sigma M) \simeq \Sigma^n \bigwedge_A^n (M)$ and $\bigwedge_A^n(\Sigma M) \simeq \Sigma^n \Gamma_A^n(M)$.
\end{enumerate}
\end{proposition}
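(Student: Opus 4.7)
The unifying strategy is to exploit the universal property of non\hyp{}abelian derived functors (Proposition \ref{prop:nonab:derived}). By construction, each of $\Sym_A^n$, $\bigwedge_A^n$, $\Gamma_A^n$ is the left Kan extension of the corresponding classical functor along the fully faithful inclusion $j\colon{\rm PolyMod}^{\rm ff}\hookrightarrow\SCRModcn$, and in particular each commutes with sifted colimits in the pair $(A,M)$. So the plan for every part is: reduce to the ``universal'' case $(A,M)=(\ZZ[x_1,\dots,x_m],A^r)\in{\rm PolyMod}^{\rm ff}$ and invoke the classical results recalled in \S\ref{sec:classical_sym}.

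For \eqref{prop:symwedgegamma-1}, fix $A\to B$ and view both sides as functors $\Modcn_A\to\Modcn_B$ in the variable $M$. Both preserve sifted colimits: the left\hyp{}hand side because base change and the derived functors each preserve such colimits, the right\hyp{}hand side because the derived functor over $B$ preserves them and $M\mapsto B\otimes_A M$ preserves all colimits. By Proposition \ref{prop:nonab:derived}\eqref{prop:nonab:derived-2} it suffices to produce the natural equivalence when $M=A^r$ is free. In that case both sides reduce to the classical construction over the polynomial ring $\pi_0(B)$ applied to $B^r$, and we are reduced to the classical base change identities recalled in \S\ref{sec:classical_sym}. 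For \eqref{prop:symwedgegamma-2}, the case of a free module of finite rank over a polynomial ring is the classical computation of ranks, and the values already lie in ${\rm PolyMod}^{\rm ff}$; the general case of a (locally) free $A$\hyp{}module of rank $r$ follows by Zariski\hyp{}locally trivializing and using \eqref{prop:symwedgegamma-1}. For flatness, write a flat $A$\hyp{}module $M$ as a sifted (filtered) colimit of finitely generated free modules via Lazard's theorem applied to $\pi_0(M)$ over $\pi_0(A)$ (after first reducing to the discrete case using \eqref{prop:symwedgegamma-1} and the connective cover); since our functors commute with sifted colimits and flatness is stable under filtered colimits, flatness of the output follows.

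For \eqref{prop:symwedgegamma-3}, observe that when $A=R$ is discrete, the classical functors $S_R^n$, $\bigwedge_{\cl,R}^n$, $\Gamma_{\cl,R}^n$ themselves commute with filtered colimits of flat modules and agree with the derived functors on finite free modules. Lazard's theorem again expresses a flat $R$\hyp{}module as a filtered colimit of finite free modules, and passing to the colimit on both sides yields the claimed isomorphism.

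Part \eqref{prop:symwedgegamma-4} is the main obstacle, since the suspension $\Sigma M$ is genuinely derived and there is nothing to check ``classically'' in the naive sense; the identity $\Sym^n(\Sigma M)\simeq\Sigma^n\bigwedge^n M$ is the nontrivial input that mixes the derived structure with the Koszul sign convention. The plan is to construct the equivalence functorially in $(A,M)$ and then reduce to a universal case. Concretely, I would realize $\Sigma M$ as the geometric realization of the simplicial $A$\hyp{}module associated to the two\hyp{}term complex $[M\to 0]$ (equivalently the bar construction on $M$), apply $\Sym_A^n$ termwise and use the fact that $\Sym_A^n$ commutes with geometric realizations to identify $\Sym_A^n(\Sigma M)$ with the realization of a simplicial object whose associated normalized complex is the Koszul\hyp{}type complex computing $\Sigma^n\bigwedge_A^n M$. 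This is exactly Illusie's d\'ecalage isomorphism, and in the framework of \cite[\S 25.2]{SAG} it is already available as a natural transformation of functors in $(A,M)$; so one last appeal to Proposition \ref{prop:nonab:derived}\eqref{prop:nonab:derived-2} lets us check it is an equivalence after restricting to ${\rm PolyMod}^{\rm ff}$, where it is the classical statement for finite free modules. The analogous argument with $\Sym$ replaced by $\bigwedge$ and $\bigwedge$ replaced by $\Gamma$ gives the second equivalence, the only change being the sign conventions in the Koszul differential.
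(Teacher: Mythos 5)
The paper's own proof of this proposition is a pure citation to Lurie's \cite{SAG} (Propositions 25.2.3.1, 25.2.3.4, 25.2.4.2, and Corollaries 25.2.3.2, 25.2.3.3), so there is no independent argument in the text to compare yours against. Your unifying strategy --- exploit preservation of sifted colimits and the universal property of left Kan extension (Proposition \ref{prop:nonab:derived}) to reduce to the universal case in ${\rm PolyMod}^{\rm ff}$ --- is precisely the strategy of the cited references, and your reductions for parts \eqref{prop:symwedgegamma-1}--\eqref{prop:symwedgegamma-3} (finite free modules, Zariski-local trivialization, Lazard's theorem for flat modules) are sound sketches of those arguments.

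The soft spot is part \eqref{prop:symwedgegamma-4}. You say that after constructing the comparison map, Proposition \ref{prop:nonab:derived}\eqref{prop:nonab:derived-2} reduces the verification to ${\rm PolyMod}^{\rm ff}$, ``where it is the classical statement for finite free modules.'' But even when $M=A^r$ is finite free, $\Sigma M$ is not, and $\Sym_A^n(\Sigma M)$ is computed as the realization of $\Sym^n$ applied levelwise to a simplicial (bar) resolution of $\Sigma M$; the identification of that realization with $\Sigma^n\bigwedge_A^n M$ is exactly Illusie's d\'ecalage quasi-isomorphism, which is the nontrivial content, not a formal consequence of the Kan-extension framework. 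Since your sketch invokes Illusie both to construct the natural transformation and to verify it is an equivalence, your argument for \eqref{prop:symwedgegamma-4} is a citation in disguise, which is acceptable but should not be described as merely ``classical.'' A genuinely self-contained route exists inside this very paper: specializing Corollary \ref{cor:Illusie:S^n} (resp.\ Variant \ref{cor:Illusie:wedge^n}) to the zero map $\rho\colon M\to 0$ makes the Koszul complex \eqref{eqn:Koszul:S^n} collapse to the single term $\bigwedge_R^n M$ (resp.\ $\Gamma_R^n M$) in degree $n$, giving $\Sym_A^n(\Sigma M)\simeq\Sigma^n\bigwedge_A^n M$ and $\bigwedge_A^n(\Sigma M)\simeq\Sigma^n\Gamma_A^n M$ directly from the functorial fiber sequences of Construction \ref{constr:right:Sym} and Lemma \ref{lem:right:convolution}, bypassing the chain-level d\'ecalage; of course that machinery is developed after this proposition, which is presumably why the paper itself elects to cite Lurie here.
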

\begin{proof} The assertion \eqref{prop:symwedgegamma-1} is \cite[Proposition 25.2.3.1]{SAG}, the assertion \eqref{prop:symwedgegamma-2} is \cite[Corollary 25.2.3.2, Corollary 25.2.3.3]{SAG}, the assertion \eqref{prop:symwedgegamma-3} is \cite[Proposition 25.2.3.4]{SAG}, and the assertion \eqref{prop:symwedgegamma-4} is \cite[Proposition 25.2.4.2]{SAG}. 
\end{proof}

\subsection{Derived symmetric algebras}\label{sec:dSym*}
In this subsection, we review the construction of derived symmetric algebras and study their basic properties.

\begin{definition}[Derived symmetric algebras; {\cite[Construction 25.2.2.6]{SAG}}] \label{def:dsym*} Let $\SCRModcn$ be defined as in Notation \ref{notation:SCRMod}, and let $\Fun(\Delta^1, \CAlgDelta)$ denote the functor $\infty$-category whose objects are morphisms of simplicial commutative rings $A \to B$. Then the forgetful functor  
	$${\rm forg} \colon \Fun(\Delta^1, \CAlgDelta) \to \SCRModcn, \quad (A \to B) \mapsto (A, B^\circ)$$
 admits a left adjoint functor of the following form:
 	$$\SCRModcn \to \Fun(\Delta^1, \CAlgDelta), \quad (A, M) \mapsto (A, \Sym_A^*(M)).$$
The $A$-algebra $\Sym_A^*(M) \in \CAlgDelta_A$ is called {\em derived symmetric algebra of $M$ over $A$}. By virtue of {\cite[Construction 25.2.2.6]{SAG}}, there is a canonical equivalence of $A$-modules
	$$\Sym_A^*(M) \simeq \bigoplus_{n \ge 0} \Sym_A^n(M).$$
\end{definition}
 
\begin{remark}[The underlying commutative algebra objects of derived symmetric algebras] \label{rmk:CAlgofSym} Let $A$ be a simplicial commutative ring, and let $\Theta \colon \CAlgDelta \to \CAlg_\ZZ^\cn$ denote the functor of taking underlying $\EE_\infty$-rings (\cite[Construction 25.1.2.1]{SAG}). Then $\Theta$ induces a forgetful functor
	$$\Theta_A \colon \CAlgDelta_A \to \CAlg^\cn_{\Theta(A)/} \simeq \CAlg(\Mod_A^\cn),$$
where the last equivalence is given by \cite[Corollary 3.4.1.7]{HA}. For an object $B \in \CAlgDelta_A$, we will refer to its image $\Theta_A(B) \in \CAlg(\Mod_A^\cn)$ as the {\em underlying commutative algebra object in $\Mod_A^\cn$ of $B$}. For any connective $A$-module $M$, we will generally abuse notations by using the notation $\Sym_A^*(M)$ to denote the derived symmetric algebra of $M$ over $A$, its underlying commutative algebra object in $\Mod_A^\cn$, and its underlying connective $A$-module. In particular, the underlying connective $A$-module $\Sym_A^*(M) \simeq \bigoplus_{n \ge 0} \Sym_A^n(M)$ is equipped with a multiplication map
	$$m \colon \Sym_A^*(M) \otimes_A \Sym_A^*(M) \to \Sym_A^*(M)$$
which is unital, commutative, and associative up to homotopy. Furthermore, the multiplication is {\em graded}, that is, $m$ restricts to a map of connective $A$-modules
	$$\Sym_A^n(M) \otimes_A \Sym_A^{n'}(M) \to \Sym_A^{n+n'}(M).$$
(To prove this assertion, observe that since the relative tensor product $\otimes_A \colon \Mod_A^\cn \times \Mod_A^\cn \to \Mod_A^\cn$ preserves sifted colimits separately in each variable, we might regard the multiplication $m$ as a functor from $\SCRModcn$ to $\shE$, where $\shE$ is defined as in Notation \ref{notation:EandD}. Then since $m$ preserves sifted colimits, it is determined by its restrictions to ${\rm PolyMod}^{\rm ff}$, for which the desired statements are straightforward.)
\end{remark}
 
\begin{remark}[Coproducts in $\SCRModcn$ and $\Fun(\Delta^1, \CAlgDelta)$]\label{rmk:Sym*:coproducts} Let $\shC_0 = {\rm PolyMod}^{\rm ff}$ denote the full subcategory of $\SCRModcn$ spanned by objects of the form $(A = \ZZ[x_1, \ldots, x_m], M = A^n)$, as in Notation \ref{notation:SCRMod}. Then $\shC_0$ admits finite coproducts given by the formula $(A, M=A^n) \coprod (B, N= B^t) = (A \otimes B, M \boxplus N)$,  where $A=\ZZ[x_1, \ldots, x_m]$, $B=\ZZ[y_1, \ldots, y_s]$, $A \otimes B \simeq \ZZ[x_1, \ldots, x_m, y_1, \ldots, y_s]$, and $M \boxplus N \simeq (A \otimes B)^{n+t}$. By virtue of Proposition \ref{prop:nonab:derived}, the coproduct structure of $\shC_0$ extends to a coproduct structure on $\SCRModcn \simeq \shP_\Sigma(\shC_0)$ for which  the inclusion $\shC_0 \hookrightarrow \SCRModcn$ preserves finite coproducts. The forgetful functor $\SCRModcn \to \CAlgDelta$ preserves finite coproducts, therefore coproducts in $\SCRModcn$ takes the form $(A, M) \coprod (B,N) = (A \otimes B, M \boxplus N)$, where $A \otimes B$ is the pushout of the diagram of simplicial commutative rings $A \leftarrow \ZZ \rightarrow B$, and $M \boxplus N$ is an $A \otimes B$-module. We will refer $M \boxplus N \in \Mod_{A \otimes B}$ as the {\em external direct sum of $M$ and $N$}. 

We denote the coproduct of two objects $(A \to A')$ and $(B \to B')$ in $\Fun(\Delta^1, \CAlgDelta)$ by $(A \to B) \coprod (A ' \to B' ) = (A \otimes A', B \boxtimes B')$, where $A \otimes B$ is the pushout of the diagram of simplicial commutative rings $A \leftarrow \ZZ \rightarrow A'$ as before, and $B \boxtimes B' \in \CAlgDelta_{A \otimes A'}$ denotes the simplicial commutative ring $B \otimes B'$ with the induced $A \otimes A'$-algebra structure. We will refer to $B \boxtimes B' \in \CAlgDelta_{A \otimes A'}$ as the {\em external tensor product of $B$ and $B'$}. 
\end{remark} 
 
\begin{definition} \label{def:perfectfp} Let $A$ be a simplicial commutative ring and $n \ge 0$ an integer. We say an $A$-module $M$ is {\em perfect} (resp., {\em perfect to order $n$}, in the sense of \cite[Proposition 2.5.7]{DAG}) if the functor $\Map_{\Mod_A}(M, \blank)$ preserve filtered colimits (resp., if the restriction of functor $\Map_{\Mod_A}(M, \blank)$ to $\tau_{\le n} \Mod_A$ preserves filtered colimits). We $M$ is {\em almost perfect} if it is perfect to order $n$ for all $n \ge 0$. Let $\phi \colon A \to B$ be a map of simplicial commutative rings. We say $\phi$ is {\em locally of finite presentation} (resp., {\em of finite presentation to order $n$}, in the sense of \cite[Proposition 3.1.5]{DAG}) if the functor $\Map_{\CAlgDelta_A}(B, \blank)$ preserve filtered colimits (resp., if the restriction of functor $\Map_{\CAlgDelta_A}(B, \blank)$ to $\tau_{\le n} \CAlgDelta_A$ preserves filtered colimits). We say $\phi$ is {\em almost of ﬁnite presentation} if it is of finite presentation to order $n$ for all $n \ge 0$.
\end{definition}
 
The following proposition is a list of basic properties of derived symmetric algebras:

\begin{proposition}\label{prop:Sym*} Let $A$ be a simplicial commutative ring and $M$ a connective $A$-module.
	\begin{enumerate}[leftmargin=*]
		\item \label{prop:Sym*-1} (Classical truncations)
		There is a canonical equivalence of discrete $\pi_0 A$-algebras 
			$$\pi_0(\Sym_A^*(M)) \simeq \pi_0(\Sym_{\pi_0 A}^* (\pi_0 M)) \simeq S^*_{\pi_0 A} (\pi_0 M),$$
		 where $S_{\pi_0 A}^*(\pi_0 M)$ denotes the classical symmetric algebra of the discrete module $\pi_0(M)$ over the discrete commutative ring $\pi_0 A$.
		\item \label{prop:Sym*-2} (Finiteness conditions)
		The map of simplicial commutative rings $A \to \Sym_A^*(M)$ is of finite presentation to order $n$ (resp.  almost of ﬁnite presentation, locally of finite presentation) if and only if the $A$-module $M$ is perfect to order $n$ (resp. almost perfect, perfect).
		\item \label{prop:Sym*-3} (Relative cotangent complexes)
		Let $L_{\Sym_A^*(M)/A}$ (resp. $L_{A/\Sym_A^*(M)}$) denote the relative cotangent complex of the map of simplicial commutative rings $A \to \Sym_A^*(M)$ (resp. the augmentation map $\Sym_A^*(M) \to A$). Then there are canonically equivalences $L_{\Sym_A^*(M)/A} \simeq M \otimes_{A}  \Sym_A^*(M) \in \Mod_{\Sym_A^*(M)}^{\rm cn}$ and $L_{A/\Sym_A^*(M)} \simeq \Sigma \, M \in \Mod_{A}^{\rm cn}$. 
		\item \label{prop:Sym*-4} (Base-change) The functor $(A, M) \in \SCRModcn \mapsto (A, \Sym_A^* M) \in \Fun(\Delta^1, \CAlgDelta)$ carries $q$-coCartesian edges of $\SCRModcn$ to $\mathrm{ev}_{\{0\}}$-coCartesian edges of $\Fun(\Delta^1, \CAlgDelta)$. In other words, for any map $A \to A'$ in $\CAlgDelta$ and any connective $A$-module $M$, there is a canonical equivalence $A' \otimes_A \Sym_A^*(M) \simeq \Sym_{A'}^*(A' \otimes_A M) \in \CAlgDelta_{A'}$.
		\item \label{prop:Sym*-5} (Direct sums)
		For $A, B \in \CAlgDelta$, $M \in \Modcn_A$, $N \in \Modcn_B$, there is a canonical equivalence $\Sym_{A \otimes B}^*(M \boxplus N) \simeq \Sym_A^*(M) \boxtimes \Sym_B^*(N)$ in $\CAlgDelta_{A \otimes B}$. In particular, for any connective $A$-modules $M$ and $M'$, there is a canonical equivalence of algebras $\Sym_A^*(M \oplus M') \simeq \Sym_A^*(M) \otimes_A \Sym_A^*(M') \in \CAlgDelta_{A}$.
	\end{enumerate}
\end{proposition}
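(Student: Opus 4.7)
The strategy is to exploit two structural facts throughout. First, by Definition \ref{def:dsym*}, $\Sym^*$ is left adjoint to the forgetful functor $\Fun(\Delta^1, \CAlgDelta) \to \SCRModcn$, so it preserves all small colimits. Second, by Notation \ref{notation:SCRMod} and Proposition \ref{prop:nonab:derived}, the $\infty$-category $\SCRModcn$ is the non-abelian derived $\infty$-category $\shP_\Sigma({\rm PolyMod}^{\rm ff})$, and any sifted-colimit-preserving functor out of $\SCRModcn$ is determined, up to canonical equivalence, by its restriction to polynomial data $(A = \ZZ[x_1, \ldots, x_m], M = A^n)$, on which the claims reduce to classical algebra. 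These two principles will be invoked repeatedly.

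Assertions (1), (4), and (5) follow from this template. For (1), both $(A, M) \mapsto \pi_0(\Sym_A^*(M))$ and $(A, M) \mapsto S^*_{\pi_0 A}(\pi_0 M)$ preserve sifted colimits and coincide on ${\rm PolyMod}^{\rm ff}$ (where $\Sym^*$ reduces to the classical polynomial algebra); Proposition \ref{prop:nonab:derived}\eqref{prop:nonab:derived-2} then yields the canonical equivalence. For (4), both $(A, M) \mapsto A' \otimes_A \Sym_A^*(M)$ and $(A, M) \mapsto \Sym_{A'}^*(A' \otimes_A M)$ (for fixed $A \to A'$) are sifted-colimit-preserving functors that agree on polynomial data and are therefore identified via Proposition \ref{prop:nonab:derived}. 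Assertion (5) is just the statement that the left adjoint $\Sym^*$ preserves binary coproducts, combined with the explicit identification of these coproducts in $\SCRModcn$ and $\Fun(\Delta^1, \CAlgDelta)$ carried out in Remark \ref{rmk:Sym*:coproducts}.

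For (2), the defining adjunction gives $\Map_{\CAlgDelta_A}(\Sym_A^*(M), B) \simeq \Map_{\Mod_A}(M, B^\circ)$; since the forgetful functor $\CAlgDelta_A \to \Mod_A$ preserves truncations and filtered colimits, perfectness to order $n$ (respectively, almost perfectness, perfectness) of $M$ transports to the corresponding finite-presentation condition on $A \to \Sym_A^*(M)$. The main obstacle is the converse direction: I plan to invoke (3) together with Lurie's characterization of (almost) finitely-presented morphisms via the perfect-to-order-$n$ property of the relative cotangent complex, which identifies $L_{\Sym_A^*(M)/A} \simeq M \otimes_A \Sym_A^*(M)$ as perfect to order $n$ over $\Sym_A^*(M)$; base-changing along the augmentation $\Sym_A^*(M) \to A$ (using Proposition \ref{prop:symwedgegamma}\eqref{prop:symwedgegamma-1} and stability of perfectness-to-order-$n$ under base change) then yields $M \simeq A \otimes_{\Sym_A^*(M)} L_{\Sym_A^*(M)/A}$ as an $A$-module perfect to order $n$. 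The delicate point here is the inductive bootstrapping needed to establish almost-of-finite-presentation as a hypothesis for Lurie's criterion, which I intend to handle by induction on $n$ starting from the easy discrete case $n = 0$ treated by (1). Finally, for (3), I apply the universal property of the cotangent complex: for any connective $\Sym_A^*(M)$-module $N$, derivations into $N$ classify $A$-algebra sections of the square-zero extension $\Sym_A^*(M) \oplus N \to \Sym_A^*(M)$, which by the adjunction of Definition \ref{def:dsym*} correspond to $A$-linear maps $M \to N$, and hence to $\Sym_A^*(M)$-linear maps $M \otimes_A \Sym_A^*(M) \to N$. Yoneda then yields $L_{\Sym_A^*(M)/A} \simeq M \otimes_A \Sym_A^*(M)$, and the transitivity fiber sequence applied to $A \to \Sym_A^*(M) \to A$ (the second map being the augmentation), together with $L_{A/A} \simeq 0$, gives $L_{A/\Sym_A^*(M)} \simeq \Sigma M$.
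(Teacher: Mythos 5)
Your proposal is correct and follows the paper's broad strategy throughout: adjunction, non-abelian Kan extension from polynomial data, and the cotangent-complex formula. A few points of comparison are worth noting. For (1), you replace the paper's argument (which first reduces to the discrete ring $\pi_0 A$ via part (4) and then unwinds a projective simplicial resolution using right-exactness of the classical $S^*$) with a pure Kan-extension argument: two sifted-colimit-preserving functors agreeing on ${\rm PolyMod}^{\rm ff}$ are canonically identified by Proposition \ref{prop:nonab:derived}\eqref{prop:nonab:derived-2}. This is valid but requires one caveat you should make explicit: the target $\infty$-category must be taken to be $\CAlgDelta^\heartsuit$ (discrete rings), since the \emph{inclusion} $\CAlgDelta^\heartsuit \hookrightarrow \CAlgDelta$ does not preserve sifted colimits; as stated, ``preserve sifted colimits'' without specifying the codomain leaves the argument ambiguous. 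For (3), the paper simply cites \cite[Example 25.3.2.2]{SAG}, whereas you give the underlying Yoneda-style derivation argument; this is a useful expansion but the transitivity sequence step is identical to the paper's. For (5), your observation that $\Sym^*$ is a left adjoint (Definition \ref{def:dsym*}), hence preserves all colimits, is a clean shortcut over the paper's more roundabout route through Proposition \ref{prop:nonab:derived}\eqref{prop:nonab:derived-3}; you still need Remark \ref{rmk:Sym*:coproducts} to identify the binary coproducts, which you correctly cite. Finally, for (2), the ``delicate inductive bootstrapping'' you anticipate is a non-issue: the implication the paper needs (finite presentation to order $n$ $\Rightarrow$ $L_{\Sym_A^*(M)/A}$ perfect to order $n$) is a direct, hypothesis-free one-way implication from \cite[Proposition 3.2.14]{DAG}; no preliminary establishment of almost-finite-presentation, and no induction on $n$, is required. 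Your argument is fine once you drop that unnecessary worry.
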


\begin{proof} 
Assertion \eqref{prop:Sym*-5} follows from the combination of Remark \ref{rmk:Sym*:coproducts} and Proposition \ref{prop:nonab:derived}: let $\shC_0 = {\rm PolyMod^{ff}}$ be as defined in Remark \ref{rmk:Sym*:coproducts}, then it is direct to see that the restriction $(\Sym^*){|\shC_0} \colon \shC_0 \to \Fun(\Delta^1, \CAlgDelta)$ preserves finite coproducts. Therefore, by virtue of Proposition \ref{prop:nonab:derived} \eqref{prop:nonab:derived-3}, $\Sym^* \colon \SCRModcn \to \Fun(\Delta^1, \CAlgDelta)$ preserves finite coproducts. The last statement of \eqref{prop:Sym*-5} follows from base change along the multiplication map $A \otimes A \to A$.

We next prove assertion \eqref{prop:Sym*-4}. To show the canonical map $\alpha_{(M, A')} \colon A' \otimes_A \Sym_A^*(M) \to \Sym_{A'}^*(A' \otimes_A M)$ is an equivalence, arguing as the proof of \cite[Proposition 25.2.3.1]{SAG}, (since the functor $M \mapsto \alpha_{(M,A')}$ commutes with sifted colimits) we may first reduce to the case where $M = A \times_\ZZ M_0$, $M_0 \simeq \ZZ^m$. Then we have a commutative diagram
	$$
	\begin{tikzcd}
		& A' \otimes_A A \otimes_\ZZ \Sym_\ZZ^*(M_0) \ar{ld} \ar{rd} & \\
		A' \otimes_A \Sym_A^*(A \otimes_\ZZ M_0) \ar{rr}{\alpha_M} & & \Sym_{A'}^*(A' \otimes_A A \otimes_\ZZ M_0).
	\end{tikzcd}
	$$
To show $\alpha_M$ is an equivalence, it suffices to prove in the case where $A = \ZZ$. Since the functor $A' \mapsto \alpha_{M, A'}$ commutes with sifted colimits, we may further reduce to the case where $A' = \ZZ[x_1, \ldots, x_n]$. In this case, $\alpha_M$ is evidently an isomorphism of polynomial algebras.

For assertion \eqref{prop:Sym*-3}, the formula $L_{\Sym_A^*(M)/A} \simeq M \otimes_{A} \Sym_A^*(M)$ is \cite[Example 25.3.2.2]{SAG}. By virtue of \cite[Remark 25.3.2.5]{SAG}, the composition $A \to \Sym_A^*(M) \to A$ of simplicial commutative rings induces a 
fiber sequence
	$$A \otimes_{\Sym_A^*(M)} L_{\Sym_A^*(M)/A}  \to L_{A/A} \to L_{A/\Sym_A^*(M)}.$$
 The formula $L_{A/\Sym_A^*(M)} \simeq \Sigma \, M$ follows from the above fiber sequence and $L_{A/A} \simeq 0$.

For assertion \eqref{prop:Sym*-2}, if the map $A \to \Sym_A^*(M)$ is of finite presentation to order $n$ (resp.  almost of ﬁnite presentation, locally of finite presentation), then it follows from \cite[Proposition 3.2.14]{DAG} that $ L_{\Sym_A^*(M)/A}$ is perfect to order $n$ (resp., almost perfect, perfect) as an $\Sym_A^*(M)$-module. Consequently, $M = L_{\Sym_A^*(M)/A} \otimes_{\Sym_A^*(M)} A$ is perfect to order $n$ (resp., almost perfect, perfect) as an $A$-module. For the converse direction, from the canonical equivalence of spaces
	$$\Map_{\CAlgDelta_A}(\Sym_A^*(M), B) \simeq \Map_{\Mod_A}(M, B^\circ)$$
for all $B \in \CAlgDelta_A$ and the fact that $(\tau_{\le n} B)^\circ \simeq \tau_{\le n} (B^\circ)$, we deduce that if $M$ is perfect to order $n$ (resp. perfect), then $A \to \Sym_A^*(M)$ is of finite presentation to order $n$ (resp. locally of finite presentation).

Finally, we prove assertion \eqref{prop:Sym*-1}. Apply assertion \eqref{prop:Sym*-4} to the map $A \to \pi_0(A)$, we obtain an equivalence $\pi_0(A) \otimes_A \Sym_A^*(M) \simeq \Sym_{\pi_0 A}^*(\pi_0 A \otimes_A M)$. Consequently, we have
	$$\pi_0(\Sym_A^* M) \simeq \pi_0(\Sym_{\pi_0 A}^*(\pi_0 A \otimes_A M)).$$
Let $R = \pi_0 A$. It remains to show that for $N = \pi_0(A) \otimes_A M \in \Modcn_R$, there is a canonical equivalence $\pi_0(\Sym_R^* N) \simeq S_R^* (\pi_0 N)$, where $S_R^*(\pi_0 N)$ is the classical symmetric algebra. Let $P_\bullet$ be a simplicial resolution of $N$, where each $P_n$ is a projective $R$-module. 
Since the derived symmetric algebra functor $\Sym_R^*(\blank)$ preserve sifted colimits, we obtain equivalences 
	$$\Sym_R^*(M) \simeq |\Sym_R^*(P_\bullet)| \simeq |S_R^* (P_\bullet)|.$$
Consequently, $\pi_0 (\Sym_R^*(N))$ is isomorphic to the colimit of the diagram
	$$
	\begin{tikzcd}
		& S_R^*(P_1) \ar[shift left]{r}{S_R^*(d_1)} \ar[shift right]{r}[swap]{S_R^*(d_0)}  & S_R^*(P_0)
	\end{tikzcd}
	$$
(where $d_0, d_1 \colon P_1 \rightrightarrows P_0$ are the face maps). On the other hand, the colimit of the above diagram is canonically isomorphic to $S_R^*(\pi_0 N)$, by the right-exactness of the classical symmetric algebra functors (see, for example, \cite[Chapter III, \S 6.2, Proposition 4, p.499]{Bou}). 
\end{proof}

\begin{proposition} \label{prop:Sym*:cofib} Let $R$ be a simplicial commutative ring, and let $\rho \colon M' \to M$ be a map of connective $R$-modules. Then there is a pushout diagram in $\CAlgDelta_R$:
	$$
	\begin{tikzcd}
		\Sym_R^*(M') \ar{d} \ar{r}{\Sym_R^*(\rho)}& \Sym_R^*(M) \ar{d} \\
		R \ar{r} & \Sym_R^*({\rm cofib}(\rho)).
	\end{tikzcd}
	$$
\end{proposition}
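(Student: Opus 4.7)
The plan is to deduce the pushout square from the fact that the derived symmetric algebra functor is a left adjoint, hence preserves all colimits in its source. By Definition \ref{def:dsym*}, the functor $\Sym^* \colon \SCRModcn \to \Fun(\Delta^1, \CAlgDelta)$ is left adjoint to the forgetful functor carrying $(A \to B)$ to $(A, B^\circ)$. Taking the fiber over a fixed simplicial commutative ring $R \in \CAlgDelta$ (via the target projection $q$ on the source and $\mathrm{ev}_{\{0\}}$ on the target), we extract an adjunction
\[
\Sym_R^*(-) \colon \Modcn_R \rightleftarrows \CAlgDelta_R \colon (-)^\circ
\]
between the $\infty$-category of connective $R$-modules and the $\infty$-category of simplicial commutative $R$-algebras.

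Since $\Sym_R^*$ is a left adjoint, it preserves all small colimits of $\Modcn_R$. The cofiber of $\rho \colon M' \to M$ is, by definition, the pushout in $\Modcn_R$ of the diagram $M \xleftarrow{\rho} M' \to 0$ (this pushout exists in $\Modcn_R$ because it is presentable with a zero object, and cofibers of maps between connective modules are connective). Applying $\Sym_R^*$ to this pushout square therefore produces a pushout square in $\CAlgDelta_R$:
\[
\begin{tikzcd}
\Sym_R^*(M') \ar{d} \ar{r}{\Sym_R^*(\rho)}& \Sym_R^*(M) \ar{d} \\
\Sym_R^*(0) \ar{r} & \Sym_R^*(\mathrm{cofib}(\rho)).
\end{tikzcd}
\]

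It remains to identify $\Sym_R^*(0)$ with $R$ as an object of $\CAlgDelta_R$. This is immediate from the same adjunction: the zero module is the initial object of $\Modcn_R$, so its image under the left adjoint $\Sym_R^*$ is the initial object of $\CAlgDelta_R$, namely $R$ itself. (Alternatively, apply the direct-sum formula of Proposition \ref{prop:Sym*}~\eqref{prop:Sym*-5} to $0 \oplus 0 \simeq 0$ to see that $\Sym_R^*(0)$ is idempotent over $R$, and combine with the classical truncation formula of Proposition \ref{prop:Sym*}~\eqref{prop:Sym*-1} to conclude.) Substituting $\Sym_R^*(0) \simeq R$ into the square above yields the desired pushout, and completes the proof. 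There is essentially no obstacle here beyond unpacking the adjunction correctly; the only point that deserves care is verifying that the adjunction of Definition \ref{def:dsym*} restricts on fibers to the claimed adjunction between $\Modcn_R$ and $\CAlgDelta_R$, which follows formally from the compatibility of $\Sym^*$ with the two projections.
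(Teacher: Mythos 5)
Your proof takes essentially the same route as the paper: observe that $\Sym_R^*$ is a left adjoint, hence preserves colimits, and apply it to the pushout square defining $\mathrm{cofib}(\rho)$. The only difference is that you spell out the passage from the relative adjunction of Definition \ref{def:dsym*} to the fiberwise adjunction $\Modcn_R \rightleftarrows \CAlgDelta_R$, and separately identify $\Sym_R^*(0)$ with the initial object $R$; the paper leaves both steps implicit. For the fiberwise restriction, the precise ingredient you are tacitly using is that both $\mathrm{forg}$ and $\Sym^*$ carry coCartesian edges to coCartesian edges (the latter is Proposition \ref{prop:Sym*}\eqref{prop:Sym*-4}), which is indeed what makes a relative adjunction restrict on fibers — worth naming rather than waving at, but your argument is sound.
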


\begin{proof} 
Being a left adjoint (Definition \ref{def:dsym*}), the functor $\Sym_R^*$ preserves small colimits. Consequently, the desired result follows from applying $\Sym_R^*$ to the pushout square
	$$
	\begin{tikzcd}
		M' \ar{d} \ar{r}{\rho}& M \ar{d} \\
		0 \ar{r} & \cofib(\rho).
	\end{tikzcd}
	$$
\end{proof}
\subsection{Universal fiber sequences associated to symmetric powers} \label{sec:univ:fib:seq} In this subsection, we review and generalizes Lurie's construction in his proof of Illusie's result \cite[Proposition 25.2.4.1]{SAG} to obtain universal fiber sequences associated to symmetric powers.

\begin{notation} \label{notation:EandD} 
Let $\shE$ denote the $\infty$-category $\Fun(\Delta^1, \SCRModcn) \times_{\Fun(\Delta^1, \CAlgDelta)} \CAlgDelta$ whose objects are pairs $(A, \rho \colon M' \to M)$, where $A$ is a simplicial commutative ring and $\rho$ is a morphism of connective $A$-modules. Let $\shD$ denote the full subcategory of the $\infty$-category $\Fun(\Delta^1 \times \Delta^1, \SCRModcn) \times_{\Fun(\Delta^1 \times \Delta^1, \CAlgDelta)} \CAlgDelta$ spanned by objects $(A, \alpha)$, where  $A$ is a simplicial commutative ring, and $\alpha$ is a fiber sequence of connective $A$-modules:
	$$
	\begin{tikzcd}
		M' \ar{r}{\rho} \ar{d} & M \ar{d}{\varphi} \\
		0 \ar{r} & M'' .
	\end{tikzcd}
	$$
As usual, we will abbreviate the above fiber sequence as $\alpha \colon M' \to M \to M''$. Let $\theta \colon \shD \to \shE$ denote the forgetful functor which carries $(A, \alpha \colon M' \xrightarrow{\rho} M \xrightarrow{\varphi} M'')$ to $(A, \rho \colon M' \to M)$. Then by virtue of \cite[Remark 1.1.1.7]{HA}, that is, by applying \cite[Proposition 4.3.2.15]{HTT} twice, we deduce that $\theta$ is a trivial Kan fibration. 

Let $\shE_0 \subseteq \shE$ be the full subcategory spanned by those pairs $(R, \rho \colon M' \to M)$, where $R$ is a polynomial ring $\ZZ[x_1, \ldots, x_k]$ and $\rho$ fits into a short exact sequence $0 \to M' \xrightarrow{\rho} M \to M'' \to 0$ of finitely generated free $R$-modules. Similarly, let $\shD_0 \subseteq \shD$ be the full subcategory spanned by those pairs $(R, \alpha)$, where $R$ is a polynomial ring $\ZZ[x_1, \ldots, x_k]$, and $\alpha$ is a short exact sequence $0 \to M' \xrightarrow{\rho} M \xrightarrow{\rho'} M'' \to 0$ of finitely generated free $R$-modules. Then the argument of the proof of \cite[Proposition 25.2.4.1]{SAG} shows that the inclusions $\shE_0 \hookrightarrow \shE$ and $\shD_0 \hookrightarrow \shD$ induce equivalences of $\infty$-categories $\shP_\Sigma(\shE_0) \simeq \shE$ and $\shP_\Sigma(\shD_0) \simeq \shD$, respectively.
\end{notation}

\begin{construction}[Koszul complexes] \label{constr:Koszul:S^n} Let $R$ be a discrete commutative ring, and suppose we are given a morphism $\rho \colon M' \to M$ of finite generated locally free $R$-modules. Let $S_R^* M$ denote the ordinary symmetric algebra generated by $M$ over $R$. Then there is canonically a Koszul complex of locally free $S_R^*(M)$-modules of finite rank:
	\begin{align}\label{eqn:Koszul:S^*}
	\cdots \to (\bigwedge\nolimits_R^2 M') \otimes_R (S_R^* M) \to  (\bigwedge\nolimits_R^1 M') \otimes_R (S_R^* M)  \to  (\bigwedge\nolimits_R^0 M') \otimes_R (S_R^* M)
	\end{align}
associated to the cosection $M' \otimes_R S_R^*(M) \to S_R^*(M)$ induced by $\rho$. Restricting to homogeneous elements of degree $n$, we obtain a complex of locally free $R$-modules of finite rank:
	\begin{align}\label{eqn:Koszul:S^n}
	\bS^n(R, \rho \colon M' \to M) \colon \quad \bigwedge\nolimits^n_R M'  \xrightarrow{d_n}  \cdots \xrightarrow{d_2} \bigwedge\nolimits_R^1 M'  \otimes_R (S_R^{n-1} M) \xrightarrow{d_1} S_R^n M.
	\end{align}
The construction $(R, \rho) \mapsto (R, \bS^n(R, \rho \colon M' \to M))$ determines a functor from the ordinary category of pairs $(R, \rho \colon M' \to M)$, where $R$ is a (discrete) commutative ring and $\rho$ is a morphism of finite generated locally free $R$-modules, to the ordinary category category of pairs $(R, M_*)$, where $R$ is a (discrete) commutative ring and $M_*$ is a non-negatively graded chain complexes of finitely generated locally free $R$-modules. By construction, the formation of $\bS^n(R, \rho \colon M' \to M)$ commutes with base change $R \to R'$ of discrete commutative rings. For any integer $n' \ge 0$, from the $S_R^*(M)$-module structure of \eqref{eqn:Koszul:S^*}, multiplication with elements of $S_R^{n'} M$ induces a morphism of non-negatively graded chain complexes of finitely generated locally free $R$-modules:
	$$\bS^n(R, \rho \colon M' \to M) \otimes_R (S_R^{n'} M) \to \bS^{n+n'}(R, \rho \colon M' \to M).$$
\end{construction}

\begin{remark} \label{rmk:S^d:explicit}By choosing local basis for the locally free modules $M'$ and $M$, respectively, the differentials $d_i$ of the complex $\bS^n(R, \rho \colon M' \to M)$ of \eqref{eqn:Koszul:S^n} can be described explicit as follows: for all local elements $x_1, \ldots, x_i \in M'$ and $y \in S_R^{d-i} M$, we have
	\begin{align*}
		d_i (x_{1} \wedge \cdots \wedge x_{i} \otimes y) = \sum_{j=1}^i (-1)^{j-1} (x_{1} \wedge \cdots  \widehat{x_{j}} \cdots \wedge x_{i} ) \otimes (\rho(x_{j}) \cdot y) \in \bigwedge\nolimits_R^{i-1} M' \otimes_R S_R^{d-i+1} M. 
	\end{align*}
\end{remark}

\begin{variant} There is an analogous construction for exterior algebra. Let $R$ be a discrete commutative ring, and suppose we are given a morphism $\rho \colon M' \to M$ of finite generated locally free $R$-modules. For any $n \ge 0$, there is a Koszul complex of exterior type	
	\begin{equation} \label{eqn:Koszul:wedge^n}
		 \bwedge^n(R, \rho \colon M' \to M) \colon \quad \Gamma_R^n(M') \xrightarrow{d_n'}  \cdots  \xrightarrow{d_2'} \Gamma_R^1(M') \otimes_R \bigwedge\nolimits_R^{n-1}(M)   \xrightarrow{d_1'} \bigwedge\nolimits_R^{n} M
	\end{equation}
which could be defined by the following formula:
	$$ \bwedge^n(R, \rho \colon M' \to M) = \Sigma^n \, (\bS^n(R, \rho^\vee \colon M^\vee \to M'^\vee))^\vee.$$
(Here $(\blank)^\vee = \Hom_R(\blank, R) = \Ext^0_R(\blank,R)$ denotes the operation of taking duals over $R$ in the classical sense.) Concretely, the differentials $d_j'$ of the complex \eqref{eqn:Koszul:wedge^n} are described by the following  formula: for all local elements $g \in \bigwedge^{d-j} M$ and $\varepsilon_1^{(\nu_1)} \cdots \varepsilon_{m'}^{(\nu_{m'})} \in \Gamma^{j}(M')$ (where $\varepsilon_1, \ldots, \varepsilon_{m'}$ is a local basis of $M'$, $\nu_i \ge 0$ and $\sum \nu_i = j$), we have
	$$d_j'(g \otimes \varepsilon_1^{(\nu_1)} \cdots \varepsilon_{m'}^{(\nu_{m'})}) = \sum_{i=1}^{m'}  (\rho(\varepsilon_i) \wedge g ) \otimes \varepsilon_1^{(\nu_1)}  \cdots \varepsilon_{i}^{(\nu_{i} - 1)} \cdots \varepsilon_{m'}^{(\nu_{m'})} \in  \bigwedge\nolimits_R^{d-j+1} M \otimes_R \Gamma_R^{j-1} M'.$$
The formation of $\bwedge^n(R, \rho \colon M' \to M)$ enjoys similar functorial properties as the formation of $\bS^n(R, \rho \colon M' \to M)$, and all the constructions and results of this subsection have direct analogue for the exterior powers. The complex $\bwedge^n(R, \rho \colon M' \to M)$ will not play an essential role in this paper; However, it is useful for computations in various situations.
\end{variant}

\begin{remark} The totality $\bS^*(R, \rho \colon M' \to M) = \bigoplus_{n \ge 0} \bS^n(R, \rho \colon M' \to M)$ has a canonical graded $R$-Hopf algebra structure and a bi-graded dga structure; The totality $\bwedge^*(R, \rho \colon M' \to M) = \bigoplus_{n \ge 0} \bwedge^n(R, \rho \colon M' \to M)$ has a canonical bigraded $R$-Hopf algebra structure and a bi-graded dga structure. We will not use these algebra structures in this paper; but they will play a role in the theory of derived Schur functors in our forthcoming paper.
\end{remark}
	
\begin{construction} \label{constr:left:Sym} Following Lurie \cite[proof of Proposition 25.2.4.1]{SAG}, we let $R$ be a discrete commutative ring, and consider a short exact sequence of finite generated free $R$-modules
	$0 \to M' \xrightarrow{\rho} M \to M'' \to 0.$
Then the Koszul complex of \eqref{eqn:Koszul:S^*} is an exact sequence of finite generated free $S_R^*(M)$-modules that resolves $S_R^* M''$. Restricting to homogeneous elements of degree $n$, we obtain that the complex $\bS^n(R, \rho \colon M' \to M)$ of \eqref{eqn:Koszul:S^n} is a resolution of the module $S_R^n M''$. Let $f_{i,n}(R, \rho)$ denote the image of the differential $d_i$ of $\bS^n(R, \rho \colon M' \to M)$, and let $\alpha_{i,n}(R, \rho)$ denote the short exact sequence:
	$$0 \to f_{i+1,n}(R, \rho) \xrightarrow{a_i} (\bigwedge\nolimits^i_R M') \otimes_R (S_R^{n-i} M) \xrightarrow{b_i} f_{i,n}(R, \rho) \to 0.$$
Then $a_{i-1} \circ b_{i} = d_i$, and we have canonical equivalences:
	$$f_{0,n}(R, \rho \colon M' \to M) \simeq S_R^n M'' , \qquad f_{n,n}(R, \rho \colon M' \to M) \simeq \bigwedge\nolimits_R^n M'.$$
For any integer $n' \ge 0$, multiplication with elements of $S_R^{n'} M$ induces functors 
	$$\alpha_{i,n}(R, \rho)\otimes_R S_R^{n'} M  \to \alpha_{i,n+n'}(R, \rho)$$ 
for which the induced map on middle terms coincides with the map induced by the multiplication map $S_R^{n-i}M \otimes_R S_R^{n'} M \to S_R^{n+n'-i} M$.
Since $\shP_\sigma(\shE_0) \simeq \shE$, and $\shD$ admits sifted colimits, by virtue of Proposition \ref{prop:nonab:derived}, for each integer $0 \le i \le n-1$, the functors $\shE_0 \to \shE$ and $\shE_0 \to \shD$,
	$$(R, \rho \colon M' \to M) \mapsto (R, d_{i+1} \text{~of~} \bS^n(R, \rho)) \qquad (R, \rho \colon M' \to M)  \mapsto (R, \alpha_{i,n}(R, \rho))$$
have essentially unique extensions $D_{i+1,n} \colon \shE \to \shE$ and $U_{i,n} \colon \shE \to \shD$,
	$$(A, \rho \colon M' \to M) \mapsto (A, d_{i+1,n}(A, \rho)) \qquad (A, \rho \colon M' \to M) \mapsto (A, \alpha_{i,n}(A, \rho) ),$$
respectively; here $d_{i+1,n}(A, \rho)$ is a morphism of connective $A$-modules:
		$$(\bigwedge\nolimits^{i+1}_A M') \otimes_R (\Sym_A^{n-i-1} M) \to (\bigwedge\nolimits^{i}_A M') \otimes_R (\Sym_A^{n-i} M),$$
and $\alpha_{i,n}(A, \rho)$ is a fibre sequence of connective $A$-modules:
	$$F_{i+1,n}(A, \rho) \xrightarrow{a_{i,n}(A, \rho)} (\bigwedge\nolimits^i_A M') \otimes_A (\Sym_A^{n-i} M) \xrightarrow{b_{i,n}(A, \rho)} F_{i,n}(A, \rho).$$
Moreover, there are canonical equivalences 
	$$d_{i,n}(A, \rho) \circ d_{i+1,n}(A,\rho) \simeq 0, \qquad a_{i,n}(A, \rho) \circ b_{i+1,n}(A, \rho) \simeq d_{i+1,n}(A,\rho),$$
	$$F_{0,n}(A, \rho \colon M' \to M) \simeq \Sym_A^n (\cofib(\rho)), \qquad F_{0,n}(A, \rho \colon M' \to M) \simeq \bigwedge\nolimits_A^n M'.$$
Furthermore, for any integer $n' \ge 0$, there are canonical morphisms of fiber sequences 
	$$\alpha_{i,n}(A, \rho) \otimes_A \Sym_A^{n'} M  \to 
\alpha_{i,n+n'}(A, \rho)$$
for which the induced map on middle terms coincides with the map induced by the multiplication map $\Sym_A^{n-i} M \otimes_A \Sym_A^{n'} M \to \Sym_A^{n+n'-i} M$ of Remark \ref{rmk:CAlgofSym}.
\end{construction}

\begin{remark}\label{rmk:d_i:Sym:bc} The functors $D_{i,n} \colon \shE \to \shE$, $(A, \rho) \mapsto (A, d_{i,n}(A, \rho))$, where $1 \le i \le n$, by construction commute with sifted colimits, and the diagrams
	$$
	\begin{tikzcd}
		\shE \ar{r}{D_{i,n}} \ar{d}{q} & \shE \ar{d}{q} \\
		\CAlgDelta \ar{r}{\id} & \CAlgDelta
	\end{tikzcd}
	$$
commute up to canonical equivalences (here the vertical arrow $q$ is the natural forgetful map the carries $(A, \rho)$ to $A$). Furthermore, \cite[Proposition 25.2.3.1]{SAG} implies that the functor $D_{i,n} \colon \shE \to \shE$ carries $q$-coCartesian edges to $q$-coCartesian edges. 
\end{remark}

\begin{remark}[Base change] \label{rmk:left:Sym:bc} The functors $U_{i,n} \colon \shE \to \shD$, $(A, \rho) \mapsto (A, \alpha_{i,n}(A, \rho))$, where $0 \le i \le n-1$, by construction commute with sifted colimits, and the diagrams
	$$
	\begin{tikzcd}
		\shE \ar{r}{U_{i,n}} \ar{d}{q} & \shD \ar{d}{q'} \\
		\CAlgDelta \ar{r}{\id} & \CAlgDelta
	\end{tikzcd}
	$$
commute up to canonical equivalences (here the vertical arrows $q$ and $q'$ are the natural forgetful maps). We claim that each functor $U_{i,n} \colon \shE \to \shD$ carries $q$-coCartesian edges to $q'$-coCartesian edges. That is, for every morphism $\phi \colon A \to B$ of simplicial commutative rings and every morphism $\rho \colon M' \to M$ of connective $A$-modules, the canonical map
	$$B \otimes_A \alpha_{i,n}(A, \rho)  \to \alpha_{i,n}(B, B \otimes_A \rho)$$
is an equivalence in $\shD \times_{\CAlgDelta} \{B\}$. In fact, for a given $n \ge 0$, we prove by induction on $0 \le i \le n-1$. In the case $i=0$, the equivalence $B \otimes_A \alpha_0(A, \rho) \simeq \alpha_0(B, B \otimes_A \rho)$ follows from the fact that the functor $\Mod_A \to \Mod_B$, $M \mapsto B \otimes_A M$ is exact, and the formations of $\Sym_A^n M$ and $\Sym_A^n(\cofib(\rho))$ commute with base change (see \cite[Proposition 25.2.3.1]{SAG}). For the induction step, the case of $(i+1)$ follows from the case $i$ by virtue of \textit{loc. cit.} and the fact that the final vertex of $\alpha_{i+1}(A, \rho)$ coincides with the initial vertex of $\alpha_{i,n}(A, \rho)$. 
\end{remark}

\begin{construction} \label{constr:right:Sym} Let $R$ be a discrete commutative ring, and suppose we are given a short exact sequence $0 \to M' \xrightarrow{\rho} M \to M'' \to 0$ of finite generated free $R$-modules. For integers $n \ge 0$ and $0 \le i \le n$, we let $g_{i,n}(R,\rho) \in \Mod_R^\cn$ denote the class of the ``brutal" truncation 
	$$(\bigwedge\nolimits^i_R M' ) \otimes_R (S_R^{n-i} M) \xrightarrow{d_i} \cdots \to M' \otimes_R S_R^{n-1} M \xrightarrow{d_1} S_R^n M$$
 of the complex \eqref{eqn:Koszul:S^n}. Then there are canonical equivalences
 	$$g_{0,n}(R, \rho) = S_R^n M, \qquad g_{0,n}(R, \rho) \simeq S_R^n M''.$$ 
Since the $R$-modules 
	$$\Ext^s_R\left( (\bigwedge\nolimits_R^{i} M') \otimes_R (S_R^{n-i} M), (\bigwedge\nolimits_R^{j} M') \otimes_R (S_R^{n-j} M) \right)$$
vanish for all $s \ne 0$ and $i,j \in \ZZ$ (here $M, M'$ are finite generated free $R$-modules), the exact sequence \eqref{eqn:Koszul:S^n} determines essentially unique fiber sequences
	$$\beta_{i,n}(R, \rho) \colon \quad \Sigma^{i} (\bigwedge\nolimits_R^{i+1} M') \otimes_R (S_R^{n-i-1} M) \to g_{i,n}(R, \rho) \to g_{i+1}(R, \rho).$$
For any integer $n' \ge 0$, multiplication with elements of $S_R^{n'} M$ induces functors 
	$$\beta_{i,n}(R, \rho) \otimes_R S_R^{n'} M \to \beta_{i,n+n'}(R, \rho)$$
for which the induced map on middle terms coincides with the map induced by the multiplication map $S_R^{n-i-1}M \otimes_R S_R^{n'} M \to S_R^{n+n'-i-1} M$. 
As in Construction \ref{constr:left:Sym}, by virtue of Proposition \ref{prop:nonab:derived}, the functors $\shE_0 \to \shD$,
	$$(R, \rho \colon M' \to M) \mapsto (R, \beta_{i,n}(R, \rho))$$
extend essentially uniquely to functors $V_{i,n} \colon \shE \to \shD$,
	$$(A, \rho \colon M' \to M) \mapsto (A, \beta_{i,n}(A, \rho) ),$$
where $\beta_{i,n}(A, \rho)$ is a fibre sequence of connective $A$-modules:
	$$\Sigma^{i} (\bigwedge\nolimits_A^{i+1} M') \otimes_A (\Sym_A^{n-i-1} M) \xrightarrow{c_{i,n}(A, \rho)} G_{i,n}(A, \rho) \to G_{i+1,n}(A, \rho).$$
Let $e_{i,n}(A, \rho)$ denote the morphism $G_{i+1,n}(A, \rho) \to \Sigma^{i+1} (\bigwedge\nolimits_A^{i+1} M') \otimes_R (\Sym^{n-i-1} M)$ obtained from $\beta_{i,n}(A, \rho)$. Then the composition of the natural maps
	$$\Sigma^{i} (\bigwedge\nolimits_A^{i+1} M') \otimes_A (\Sym_A^{n-i-1} M) \xrightarrow{c_{i,n}(A, \rho)} G_{i,n}(A, \rho)  \xrightarrow{e_{i-1,n}(A, \rho)} \Sigma^{i}  (\bigwedge\nolimits_A^{i} M') \otimes_A (\Sym_A^{n-i} M)$$
is canonically equivalent to $\Sigma^i \circ d_{i+1, n}(A, \rho)$, and there are canonical equivalences:
	$$G_{0,n}(A, \rho \colon M' \to M) \simeq \Sym_A^n M, \qquad G_{n,n}(A, \rho \colon M' \to M) \simeq \Sym_A^n (\cofib(\rho)).$$
For any integer $n' \ge 0$, there are canonical morphisms of functors 
	$$ \beta_{i,n}(A, \rho) \otimes_A \Sym_A^{n'} M \to 
\beta_{i,n+n'}(A, \rho)$$
for which the induced map on the initial vertices coincides with the map induced by the multiplication $\Sym_A^{n-i-1} M \otimes_A \Sym_A^{n'} M \to \Sym_A^{n+n'-i-1} M$ of Remark \ref{rmk:CAlgofSym}.
\end{construction}

\begin{remark}[Base change] \label{rmk:right:Sym:bc} For all $0 \le i \le n-1$, by construction the functors $V_{i,n} \colon \shE \to \shD$, $(A, \rho) \mapsto (A, \beta(A, \rho))$ commute with sifted colimits, and the following diagrams
	$$
	\begin{tikzcd}
		\shE \ar{r}{V_{i,n}} \ar{d}{q} & \shD \ar{d}{q'} \\
		\CAlgDelta \ar{r}{\id} & \CAlgDelta
	\end{tikzcd}
	$$
commute up to canonical equivalences (where the vertical arrows $q$ and $q'$ are the natural forgetful maps). By a similar argument as in Remark \ref{rmk:left:Sym:bc}, we deduce that each functor $V_{i,n} \colon \shE \to \shD$ carries $q$-coCartesian edges to $q'$-coCartesian edges. In other words, for every morphism $\phi \colon A \to B$ of simplicial commutative rings and every morphism $\rho \colon M' \to M$ of connective $A$-modules, the canonical map
	$$B \otimes_A \beta_{i,n}(A, \rho)  \to \beta_{i,n}(B, B \otimes_A \rho)$$
is an equivalence in $\shD \times_{\CAlgDelta} \{B\}$. 
\end{remark}

\begin{lemma}\label{lem:right:convolution}  Let $\shC$ be a stable $\infty$-category (or a prestable $\infty$-category in the sense of \cite[Deﬁnition C.1.2.1]{SAG}), let $n \ge 1$ be an integer, and suppose we are given a composable sequence of morphisms in $\shC$: 
	$$C_n \xrightarrow{d_n} C_{n-1} \xrightarrow{d_{n-1}} \cdots \to C_{1} \xrightarrow{d_1} C_0$$
for which the successive compositions satisfy $d_{i} \circ d_{i+1} \simeq 0$, $1 \le i \le n-1$. Assume that
	$$\Ext^{-k}_\shC(C_{i+j}, C_i) : = \pi_k (\Map_\shC(C_{i+j}, C_{i})) \simeq 0 \quad \text{for} \quad i \ge 0, j \ge 1, k \ge 1.$$
Then there are essentially unique objects $\{D_i \in \shC \}_{0 \le i \le n}$ that fit into essentially unique cofiber sequences $\beta_i \colon \Sigma^i C_{i+1} \xrightarrow{f_i} D_i \xrightarrow{g_i} D_{i+1}$ such that $D_0 = C_0$, $f_0 = d_1$, and $h_{i-1} \circ f_i \simeq \Sigma^i \, d_{i+1}$, where $h_i \colon D_{i+1} \to \Sigma^{i+1} C_{i+1}$ denotes the connecting morphism determined by $\beta_i$. 
\end{lemma}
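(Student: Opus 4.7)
The plan is to construct the tower $(D_i, f_i, g_i, h_i)$ by induction on $i$: take $D_0 = C_0$, $f_0 = d_1$, and at each stage set $D_{j+1} := \cofib(f_j)$ with its canonical structure maps $g_j$ and $h_j$. The construction of $f_j$ for $j \ge 1$ will be carried out in parallel with the following auxiliary vanishing claim:
\begin{equation*}
(\star_j) \qquad \Ext^{\ell}_\shC(C_a, D_j) = 0 \quad \text{for all } a > j \text{ and } \ell \le -j-1.
\end{equation*}
The case $j = 0$ of $(\star_j)$ is precisely the hypothesis of the lemma. The implication $(\star_j) \Rightarrow (\star_{j+1})$ follows from the long exact sequence of Ext-groups associated to the cofiber sequence $\Sigma^j C_{j+1} \to D_j \to D_{j+1}$: for $a > j+1$ and $\ell \le -j-2$, the bracketing terms $\Ext^\ell_\shC(C_a, D_j)$ and $\Ext^{\ell+j+1}_\shC(C_a, C_{j+1})$ both vanish, the first by $(\star_j)$ and the second by the hypothesis of the lemma.

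To produce $f_j$ for $j \ge 1$, apply $\Map_\shC(\Sigma^j C_{j+1}, -)$ to the extended cofiber sequence $D_{j-1} \to D_j \xrightarrow{h_{j-1}} \Sigma^j C_j \xrightarrow{\Sigma f_{j-1}} \Sigma D_{j-1}$: this yields a fiber sequence of spaces along which we seek to lift $\Sigma^j d_{j+1}$. The obstruction is the class of $\Sigma(f_{j-1} \circ \Sigma^{j-1} d_{j+1})$ in $\pi_0 \Map_\shC(\Sigma^j C_{j+1}, \Sigma D_{j-1}) \cong \Ext^{1-j}_\shC(C_{j+1}, D_{j-1})$. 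For $j=1$ it equals $\Sigma(d_1 \circ d_2) \simeq 0$ by hypothesis. For $j \ge 2$, the inductive identity $h_{j-2} \circ f_{j-1} \simeq \Sigma^{j-1} d_j$ together with $d_j \circ d_{j+1} \simeq 0$ places the obstruction class in the image of $\Ext^{1-j}_\shC(C_{j+1}, D_{j-2}) \to \Ext^{1-j}_\shC(C_{j+1}, D_{j-1})$ coming from the cofiber sequence $D_{j-2} \to D_{j-1} \to \Sigma^{j-1} C_{j-1}$, and the source vanishes by $(\star_{j-2})$. Essential uniqueness of $f_j$ amounts to contractibility of the space of lifts, which is a torsor over $\Map_\shC(\Sigma^j C_{j+1}, D_{j-1})$; this mapping space is contractible by $(\star_{j-1})$ for $j \ge 2$ and by the hypothesis for $j = 1$. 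Setting $D_{j+1} = \cofib(f_j)$ with its canonical cofiber sequence $\beta_j$ then closes the induction.

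The main technical point is the tight bookkeeping in $(\star_j)$: the exponent $-j-1$ is precisely what is needed at the next stage so that the lifting obstruction (descending via the cofiber sequence $D_{j-1} \to D_j \to \Sigma^j C_j$ to a class in $\Ext^{-j}_\shC(C_{j+2}, D_{j-1})$) is killed by $(\star_{j-1})$, and the space of lifts is simultaneously rendered contractible by $(\star_j)$. Since the entire argument uses only cofiber sequences in $\shC$ and the resulting fiber sequences of mapping spaces, it applies uniformly in the stable and prestable settings of \cite[Definition C.1.2.1]{SAG}.
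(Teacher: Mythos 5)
Your proof is correct and takes essentially the same approach as the paper's: build the $D_i$ inductively as cofibers, lift the next differential along the connecting morphism in the resulting fiber sequence of mapping spaces, and use the vanishing of the negative Ext groups for both existence (obstruction) and uniqueness (contractibility of the space of lifts). The paper only works out the case $n=2$ explicitly and defers the general induction; your auxiliary invariant $(\star_j)$ is a clean way to track exactly which vanishing ranges propagate, and supplies the details the paper leaves implicit.
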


\begin{proof} This result is well-known in the context of triangulated categories (see, for example, \cite[Lemma 1.5]{Orlov97}), and the same proof strategy works here. We will illustrate with the case $n=2$; the general situation follows from a similar argument and induction. Let $D_1$ be the cofiber of $d_1 \colon C_1 \to C_0$, let $\beta_0 \colon C_1 \xrightarrow{f_0=d_1} C_0 \xrightarrow{g_0} D_1$ denote the corresponding cofiber sequence, and let $h_0 \colon D_1 \to \Sigma C_1$ denote the essentially unique connecting morphism determined by $\beta_0$. From the fiber sequences of spaces
	$$\Map_\shC(\Sigma C_2, D_1) \xrightarrow{h_0 \circ} \Map_\shC(\Sigma C_2, \Sigma C_1) \xrightarrow{\Sigma d_1 \circ} \Map_{\shC}(\Sigma C_2, \Sigma C_0),$$
there exists an element $f_1 \in \Map_\shC(\Sigma C_2, D_1)$ lifting the element $\Sigma d_2 \in \Map_\shC(\Sigma C_2, \Sigma C_1)$. The space $\Omega \Map(\Sigma C_2, C_0)$ of such liftings is contractible by our assumption and Whitehead's theorem (\cite[p.16]{HTT}). Therefore, $f_1$ is determined up to contractible choices. Let $D_2$ be the cofiber of $f_1$, and $\beta_1$ the corresponding cofiber sequence. Then $D_2$ (resp. $\beta_1$) exists and is determined up to contractible choices. This proves the desired assertion.
\end{proof}

\begin{remark}
We see from the above proof Lemma \ref{lem:right:convolution} that the conditions of the lemma are much stronger than necessary (for example, only the conditions $\Ext_\shC^{-k}(C_2, C_0)\simeq 0$, $k \ge 1$, were used in the proof of the case $n=2$). However, the above version is enough for the purpose of applications in this paper; in a forthcoming paper, we plan to include a more systematical study of these systems and stronger versions of Lemma \ref{lem:right:convolution}.
\end{remark}

\begin{corollary}[Illusie] \label{cor:Illusie:S^n} Let $R$ be a discrete commutative ring, and let $\rho \colon M' \to M$ be a morphism of finitely generated locally free $R$-modules. Let $n \ge 0$ be an integer, and let $\bS^n(R, \rho \colon M' \to M)$ denote the Koszul complex \eqref{eqn:Koszul:S^n}. Then there is a canonical equivalence
	$$[\bS^n(R, \rho \colon M' \to M)] \simeq \Sym_R^n ( \cofib(M' \xrightarrow{\rho} M)) \in  \Mod_R^\cn$$
where $[\bS^n(R, \rho \colon M' \to M)]$ denotes the image of the complex $\bS^n(R, \rho \colon M' \to M)$ under the canonical equivalence $\shD_{\ge 0}(\Ch (\Mod_R^\heartsuit)) \simeq \Mod_R^\cn$ of \cite[Proposition 7.1.1.15]{HA}.
\end{corollary}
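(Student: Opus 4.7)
The plan is to identify both $[\bS^n(R, \rho \colon M' \to M)]$ and $\Sym^n_R(\cofib(\rho))$ with a common iterated cofiber built from the Koszul complex, using the convolution machinery of Lemma \ref{lem:right:convolution}.

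First, since $R$ is discrete and $M, M'$ are flat, Proposition \ref{prop:symwedgegamma} \eqref{prop:symwedgegamma-3} identifies all derived symmetric and exterior powers in sight with their classical counterparts. Hence each term $C_i := (\bigwedge\nolimits^i_R M') \otimes_R (\Sym_R^{n-i} M)$ of the Koszul complex \eqref{eqn:Koszul:S^n} is a finitely generated flat (hence projective) $R$-module, regarded as a discrete object of $\Mod_R^\cn$. Therefore $\Ext^{-k}_{\Mod_R}(C_{i+j}, C_i) = \pi_k \Map_{\Mod_R}(C_{i+j}, C_i) = 0$ for all $j \ge 1$ and $k \ge 1$, so the hypotheses of Lemma \ref{lem:right:convolution} are satisfied for the composable sequence $C_n \xrightarrow{d_n} C_{n-1} \to \cdots \xrightarrow{d_1} C_0 = \Sym_R^n M$ formed by the Koszul differentials. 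The lemma then produces essentially unique cofiber sequences $\Sigma^i C_{i+1} \to D_i \to D_{i+1}$ with $D_0 = C_0$, and the resulting filtration $\{D_i\}$ is precisely the one computing the image of the chain complex $\bS^n(R,\rho)$ under the Dold--Kan equivalence $\shD_{\ge 0}(\Ch(\Mod_R^\heartsuit)) \simeq \Mod_R^\cn$. Consequently $D_n \simeq [\bS^n(R, \rho)]$.

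On the other hand, Construction \ref{constr:right:Sym} applied to $\rho$ produces fiber sequences
\[
\beta_{i,n}(R, \rho) \colon \quad \Sigma^i (\bigwedge\nolimits^{i+1}_R M') \otimes_R (\Sym^{n-i-1}_R M) \xrightarrow{c_{i,n}} G_{i,n}(R, \rho) \to G_{i+1,n}(R, \rho)
\]
for $0 \le i \le n-1$, with $G_{0,n}(R, \rho) \simeq \Sym^n_R M$ and $G_{n,n}(R, \rho) \simeq \Sym^n_R(\cofib(\rho))$. Furthermore, the construction records the compatibility $e_{i-1,n} \circ c_{i,n} \simeq \Sigma^i \, d_{i+1,n}(R, \rho)$; under our flatness hypothesis, $d_{i+1,n}(R, \rho)$ coincides with the classical Koszul differential $d_{i+1}$ of $\bS^n(R, \rho)$, since the functor $D_{i+1,n}$ of Construction \ref{constr:left:Sym} is the essentially unique sifted-colimit-preserving extension of the classical Koszul differential from finitely generated free modules over polynomial rings. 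Thus the sequences $\beta_{i,n}(R, \rho)$ exhibit $\{G_{i,n}(R, \rho)\}$ as another solution to the same convolution problem addressed in Lemma \ref{lem:right:convolution}. By the uniqueness clause of that lemma, $G_{i,n}(R, \rho) \simeq D_i$ for all $i$; combining with the endpoint identification at $i = n$, we obtain
\[
[\bS^n(R, \rho \colon M' \to M)] \simeq D_n \simeq G_{n,n}(R, \rho) \simeq \Sym^n_R(\cofib(\rho)),
\]
as desired.

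The main technical obstacle I anticipate is verifying that the cofiber sequences $\beta_{i,n}(R, \rho)$ really do satisfy the compatibility $e_{i-1,n} \circ c_{i,n} \simeq \Sigma^i d_{i+1,n}$ on the nose with sufficient functoriality to trigger the uniqueness assertion of Lemma \ref{lem:right:convolution}. This is built into Construction \ref{constr:right:Sym} via left Kan extension from the full subcategory $\shE_0$ of split short exact sequences of finitely generated free modules over polynomial rings, where the identity is tautological (the Koszul complex is a classical resolution there); but one should trace through carefully to ensure that the relevant Ext-vanishing required by Lemma \ref{lem:right:convolution} is preserved after Kan extension to the situation at hand, which follows from the flatness of the modules $C_i$.
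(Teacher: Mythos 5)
Your proof is correct and takes essentially the same approach as the paper: both invoke Lemma \ref{lem:right:convolution} to identify the Dold--Kan image of the Koszul complex (via its brutal-truncation filtration) with the sequence $G_{i,n}(R,\rho)$ of Construction \ref{constr:right:Sym}, and then use the endpoint identification $G_{n,n}(R,\rho) \simeq \Sym_R^n(\cofib\rho)$ supplied by that construction. You spell out the Ext-vanishing and the compatibility $e_{i-1,n}\circ c_{i,n}\simeq \Sigma^i d_{i+1,n}$ in more detail than the paper, which simply cites the lemma and construction, but the mathematical content is the same.
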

\begin{proof} By virtue of Lemma \ref{lem:right:convolution}, we have a canonical equivalence $[\bS^n(R, \rho \colon M' \to M)] \simeq G_{n,n}(R, \rho \colon M' \to M)$ (where $G_{i,n}(R, \rho)$ is defined as in Construction \ref{constr:right:Sym}). Then the desired assertion follows from the properties of the fiber sequences $\beta_i(A,\rho)$ of Construction \ref{constr:right:Sym}.
\end{proof}

\begin{variant}[Illusie] \label{cor:Illusie:wedge^n} In the same situation of as Corollary \ref{cor:Illusie:S^n}, let $\bwedge^n(R, \rho \colon M' \to M)$ denote the Koszul complex of exterior type \eqref{eqn:Koszul:wedge^n}. Then there is a canonical equivalence
	$$[\bwedge^n(R, \rho \colon M' \to M)] \simeq \bigwedge\nolimits_R^n ( \cofib(M' \xrightarrow{\rho} M)) \in  \Mod_R^\cn.$$
\end{variant}
\begin{proof} The proof is completely analogous to the proof of Corollary \ref{cor:Illusie:S^n}.
\end{proof}

\subsection{Filtrations of symmetric powers} \label{sec:filtration:fib:seq} 
In this subsection, we review and slightly generalizes Lurie's \cite[Construction 25.2.5.4]{SAG}. Let $R$ be a discrete commutative ring, and suppose we are given a short exact sequence of finite generated free $R$-modules:
	$$0 \to M' \xrightarrow{\varphi} M \to M'' \to 0.$$
For each non-negative integer $n$ and an integer $0 \le i \le n$, there are natural morphisms $(S_R^{n-i}M') \otimes_R (S_R^{i} M) \to S_R^{n} M$, $(\bigwedge_R^{n-i} M') \otimes_R (\bigwedge_R^{i} M) \to \bigwedge_R^{n} M$, and $(\Gamma_R^i M') \otimes_R (\Gamma_R^{n-i} M) \to \Gamma_R^{n} M$ given by the respective multiplication maps; We let $\overline{F}^i(R, \varphi) \subseteq S_R^{n} M$,  $\overline{G}^i(R, \varphi) \subseteq \bigwedge_R^{n} M$, and $\overline{H}^i(R, \varphi) \subseteq \Gamma_R^n M$ denote the corresponding images. Then we obtain finite filtrations
	\begin{align*}
		&0 =  \overline{F}^{-1}(R, \varphi) \subseteq \overline{F}^{0}(R, \varphi) \subseteq \cdots \subseteq  \overline{F}^{n}(R, \varphi) = S_R^n (M), \\
		&0 =  \overline{G}^{-1}(R, \varphi) \subseteq \overline{G}^{0}(R, \varphi) \subseteq \cdots \subseteq  \overline{G}^{n}(R, \varphi) = \bigwedge\nolimits_R^n (M), \\
		&0 =  \overline{H}^{-1}(R, \varphi) \subseteq \overline{H}^{0}(R, \varphi) \subseteq \cdots \subseteq  \overline{H}^{n}(R, \varphi) = \Gamma_R^n (M),
	\end{align*}
respectively, such that the induced maps to the respective successive quotients
	\begin{align*}
	S_R^{n-i}M' \otimes S_R^{i} M'' &\to \overline{F}^i(R, \varphi)/ \overline{F}^{i-1}(R, \varphi),\\
	 \bigwedge\nolimits_R^{n-i} M' \otimes \bigwedge\nolimits_R^{i} M'' & \to \overline{G}^i(R, \varphi)/ \overline{G}^{i-1}(R, \varphi), \\ 
	 \Gamma_R^{n-i} M' \otimes \Gamma_R^{i} M'' &\to  \overline{H}^i(R, \varphi)/ \overline{H}^{i-1}(R, \varphi)
	\end{align*}
are isomorphisms. In particular, for each $0 \le i \le n$, we obtain canonical fiber sequences 
	\begin{align*}
		&\alpha^i(R, \varphi) \colon \qquad \overline{F}^{i-1}(R, \varphi) \to \overline{F}^i(R, \varphi) \to (S_R^{n-i}M') \otimes_R (S_R^{i} M''), \\
		&\beta^i(R, \varphi) \colon \qquad \overline{G}^{i-1}(R, \varphi) \to \overline{G}^i(R, \varphi) \to (\bigwedge\nolimits_R^{n-i}M') \otimes_R (\bigwedge\nolimits_R^{i} M''),\\
		&\gamma^i(R, \varphi) \colon \qquad \overline{H}^{i-1}(R, \varphi) \to \overline{H}^i(R, \varphi) \to (\Gamma_R^{n-i}M') \otimes_R (\Gamma_R^{i} M'').
	\end{align*}
Let $\shE_0, \shE, \shD$ be the $\infty$-categories defined in  \S \ref{sec:univ:fib:seq}, Notation \ref{notation:EandD}, then by virtue of Proposition \ref{prop:nonab:derived}, the above functors extend essentially uniquely to functors
	$$\shE \to \Fun(\Delta^n, \SCRModcn) \times_{\Fun(\Delta^n, \CAlgDelta)} \CAlgDelta,$$
	\begin{align*}
		&(A, \varphi \colon M' \to M) \mapsto (A, F^0(A, \varphi) \xrightarrow{d_1} F^1(A, \varphi) \to \cdots \to F^n(A, \varphi)) \\
		&(A, \varphi \colon M' \to M) \mapsto (A, G^0(A, \varphi) \xrightarrow{e_1} G^1(A, \varphi) \to \cdots \to G^n(A, \varphi)) \\
		&(A, \varphi \colon M' \to M) \mapsto (A, H^0(A, \varphi) \xrightarrow{f_1} H^1(A, \varphi) \to \cdots \to H^n(A, \varphi))
	\end{align*}
and functors $\shE \to \shD$,
		\begin{align*}
		&(A, \varphi \colon M' \to M) \mapsto (A, \alpha^i(A, \varphi) \colon F^{i-1}(A, \varphi) \xrightarrow{d_{i}} F^i(A, \varphi) \to (\Sym_A^{n-i}M') \otimes_A (\Sym_A^{i} M'')), \\
		&(A, \varphi \colon M' \to M) \mapsto (A, \beta^i(A, \varphi) \colon  G^{i-1}(A, \varphi) \xrightarrow{e_{i}} G^i(A, \varphi) \to (\bigwedge\nolimits_A^{n-i}M' )\otimes_A (\bigwedge\nolimits_A^{i} M'')), \\
		&(A, \varphi \colon M' \to M) \mapsto (A, \gamma^i(A, \varphi) \colon H^{i-1}(A, \varphi) \xrightarrow{f_{i}} H^i(A, \varphi) \to (\Gamma_A^{n-i}M') \otimes_A (\Gamma_A^{i} M'')),
	\end{align*}
where $\alpha_{i,n}(A, \varphi)$, $\beta_{i,n}(A,\varphi)$, and $\gamma_i(A, \varphi)$ are cofiber sequences, and $M''$ is the cofiber of $\rho$. Moreover, we have canonical equivalences:
	\begin{align*}
		&F^0(A, \varphi) = \Sym^n_A M'  & F^n(A, \varphi) = \Sym^n_A M, \\
		&G^0(A, \varphi) = \bigwedge\nolimits^n_A M' & G^n(A, \varphi) =  \bigwedge\nolimits^n_A M, \\
		&H^0(A, \varphi) = \Gamma^n_A M' & H^n(A, \varphi) = \Gamma^n_A M.
	\end{align*}

\begin{example} Let $A$ be a simplicial commutative ring, and $\rho \colon M \to M'$  a morphism of connective $A$-modules such that $\cofib(A, \varphi)$ is a line bundle (that is, a locally free module of rank $1$). Then for each $n \ge 1$, we obtain a fiber sequence:
	$$\beta^1(A, \rho) \colon \qquad \bigwedge\nolimits_A^n M' \xrightarrow{\bigwedge^n \varphi} \bigwedge\nolimits_A^n M \to \left(\bigwedge\nolimits_A^{n-1} M'\right) \otimes_A  \cofib(A, \varphi).$$
\end{example}

\section{Quasi-coherent sheaves and algebras on prestacks}\label{sec:QStk}

In this section, we first review Lurie's machinery in \cite[\S 6.2.1]{SAG}, which allows one to ``globalize" constructions from affine cases to the cases of general prestacks (\S \ref{sec:shvprestacks}). In \S \ref{sec:QCoh}, we review the theory of quasi-coherent sheaves on prestacks. In \S \ref{sec:QCAlgDelta}, we apply the machinery of \S \ref{sec:shvprestacks} to construct the {\em $\infty$-category of quasi-coherent algebras $\QCAlgDelta(X)$} on a prestack $X$, study its basic properties and define {\em relative spectra}. Finally, in \S \ref{sec:sym:prestack} and \S \ref{sec:Sym*:prestack}, we apply the machinery of \S \ref{sec:shvprestacks} to globalize the constructions and results of \S \ref{sec:SymAlg} on symmetric, exterior, and divided powers, and symmetric algebras, respectively, to the case of prestacks.

\subsection{$\infty$-categories of sheaves of objects on prestacks} \label{sec:shvprestacks}
Let $q \colon \shC \to \CAlgDelta$ be a coCartesian fibration. In this subsection, we will apply the machinery developed by Lurie in \cite[\S 6.2.1]{SAG} to construct an $\infty$-category $\underline{\shC}(X)$ of ``sheaf of objects of $\shC$ on a prestack $X$". Recall that a prestack $X$ is functor from the $\infty$-category of simplicial commutative rings $\CAlgDelta$ to the $\infty$-category of spaces $\shS$. By virtue of \cite[Proposition 3.3.2.5]{HTT}, a prestack $X$ is classified by a left fibration which we denote by $\int_{\CAlgDelta} X \to \CAlgDelta$. By virtue of \cite[Construction 6.2.1.7]{SAG},  the coCartesian fibration $q \colon \shC \to \CAlgDelta$ determines a functor 
	$$\Phi(q) \colon \Fun(\CAlgDelta, \shS)^{\op} \to \widehat{\Cat}_\infty$$ 
which carries a prestack $X \in \Fun(\CAlgDelta, \shS)$ to the $\infty$-category
	$$\underline{\shC}(X) = \Fun_{/ \CAlgDelta}^{\rm CCart}(\int_{\CAlgDelta} X, \shC),$$
where $\Fun_{/ \shE}^{\rm CCart}(\int_{\CAlgDelta} X, \shC)$ denotes the $\infty$-category of functors from $\int_{\CAlgDelta} X$ to $\shC$ which commute with their projections to $\CAlgDelta$ and carry each edge of $\int_{\CAlgDelta} X$ to a coCartesian edge of $q \colon \shC \to \CAlgDelta$ (see \cite[Definition 6.2.1.1]{SAG}). 

\begin{remark} \label{rmk:shvprestacks:description} By virtue of \cite[Remark 6.2.1.8]{SAG}, more informally, $\underline{\shC}(X)$ can be described as the limit $\varprojlim_{A, \eta \in X(A)} \shC_{A}$, where $\shC_{A}$ is the fiber of $q$ over $A \in \CAlgDelta$. In other words, we can think of an object of $C \in \underline{\shC}(X)$ as a functor which assigns each $A \in \CAlgDelta$ and $A$-point $\eta \in X(A)$ an object $C_\eta \in \shC$. The assignment $C_\eta$ depend functorially on $A$ in the following strong sense: if $\phi \colon A \to B$ is a map of simplicial commutative rings, and $\eta' \in X(B)$ is the image of $\eta$ under $\phi$, then we obtain a q-coCartesian morphism $C_{\eta} \to C_{\eta'}$ in $\shC$.
\end{remark}

\begin{remark} \label{rmk:shvprestacks:properties} The above construction of $\underline{\shC}(X)$ has the following properties:
\begin{enumerate}
	\item   \label{rmk:shvprestacks:properties-1}
	A map $f \colon X \to Y$ of prestacks (i.e., a natural transformation from $X$ to $Y$) by construction determines a pullback functor $f^*_{\shC} \colon \underline{\shC}(Y) \to \underline{\shC}(X)$;
	\item  \label{rmk:shvprestacks:properties-2}
	If $F \colon \shC \to \shC'$ is a morphism of coCartesian fibrations over $\CAlgDelta$ (that is, $p' \colon \shC' \to \CAlgDelta$ is another coCartesian fibration, and $F$ is a functor that commutes with their respective projections to $\CAlgDelta$ up to canonically equivalences and carries $p$-coCartesian edges to $q$-coCartesian edges), then $F$ induces a natural transformation from $\Phi(q)$ to $\Phi(q')$. In particular, $F$ determines a functor $F_X \colon \underline{\shC}(X) \to \underline{\shC'}(X)$ for each prestack $X$, and the formation of $F_X$ commutes with base change (that is, for any map of prestacks $f \colon X \to Y$, there is a canonical equivalence of functors $F_Y \circ f_{\shC}^* \simeq f_{\shC'}^* \circ F_X$).
	\item  \label{rmk:shvprestacks:properties-3}
	(\cite[Proposition 6.2.1.9]{SAG}) Let $\chi \colon \CAlgDelta \to \widehat{\Cat}_\infty$ be the functor classified by the coCartesian fibration $q \colon \shC \to \CAlgDelta$, then $\Phi(q) \colon \Fun(\CAlgDelta, \shS)^{\op} \to \widehat{\Cat}_\infty$ is the right Kan extension of $\chi$ along the Yoneda embedding $j \colon \CAlgDelta \to \Fun(\CAlgDelta, \shS)^{\op}$. (Notice by \cite[Lemma 5.1.5.5]{HTT}, this is equivalent to $\Phi(q)$ commutes with small limits, and there is a canonical equivalence $\Phi(q) \circ j \simeq \chi$.) In particular, for any $A \in \CAlgDelta$, there is a canonical equivalence $\underline{\shC}(\Spec A) \simeq \shC_A$.
	\end{enumerate}
\end{remark}

\subsection{Quasi-coherent sheaves on prestacks} \label{sec:QCoh} This subsection reviews the construction and basic properties of the functor $\QCoh$. Many materials of this subsection parallel Lurie's exposition in \cite[\S 6.2-6.4]{SAG}, which studies the functor $\QCoh$ in the spectral setting. 

\begin{construction}[Quasi-coherent sheaves; compare with {\cite[Deﬁnition 6.2.2.1]{SAG}}] Let $q \colon \SCRMod \to \CAlgDelta$ denote the coCartesian fibration which classifies the functor $(A \in \CAlgDelta) \mapsto (\Mod_A \in \widehat{\Cat}_\infty)$. Apply the construction \S \ref{sec:shvprestacks} to $q$, we obtain a functor 
		$$\QCoh \colon  \Fun(\CAlgDelta, \shS)^\op \to \widehat{\Cat}_{\infty},$$
	which carries each $X  \in  \Fun(\CAlgDelta, \shS) $ to the $\infty$-category
		$$\QCoh(X) = \Fun_{/ \CAlgDelta}^{\rm CCart}(\int_{\CAlgDelta} X, \SCRMod).$$
For a prestack $X \in  \Fun(\CAlgDelta, \shS)$, we will refer to $\QCoh(X)$ as the {\em $\infty$-category of quasi-coherent sheaves on $X$} (or {\em $\infty$-category of quasi-coherent $\sO_X$-modules)}, and elements $\sF \in \QCoh(X)$ as {\em quasi-coherent sheaves on $X$} (or {\em quasi-coherent $\sO_X$-modules}).
\end{construction}

\begin{remark}[Compare with {\cite[Remark 6.2.2.7]{SAG}}] By virtue of \cite[Remark 6.2.1.8]{SAG} (reviewed in Remark \ref{rmk:shvprestacks:description}), more informally, we can describe $\QCoh(X)$ as the limit of stable $\infty$-categories $\varprojlim_{R \in \CAlgDelta, \eta \in X(R)} \Mod_R$ (hence $\QCoh(X)$ is a stable $\infty$-category). In other words, we can think of an object $\sF \in  \QCoh(X)$ as an assignment $(R \in \CAlgDelta, \eta \in X(R)) \mapsto (\sF(\eta) \in \Mod_R)$ which depends functorially on $R$ in the following strong sense: if $\phi \colon R \to R'$ is a map of simplicial commutative rings such that $\eta' \in X(R')$ is the image of $\eta$ under $\phi$, then we obtain a $q$-coCartesian morphism $(R, \sF(\eta)) \to (R', \sF(\eta'))$ in $\SCRMod$, which we will view as an equivalence of $R'$-modules $R' \otimes_R \sF(\eta) \xrightarrow{\sim} \sF(\eta')$. 
\end{remark}

\begin{remark} \label{rmk:QCoh:properties} By virtue of Remark \ref{rmk:shvprestacks:properties},  $\QCoh$ satisfies the following properties:
\begin{enumerate}
	\item A map $f \colon X \to Y$ of prestacks determines a pullback functor $f^* \colon \QCoh(Y) \to \QCoh(X)$. Moreover, (since $\QCoh$ preserves all small limits) $\QCoh(X)$ and $ \QCoh(Y)$ admit small colimits and $f^*$ preserves small colimits.
	\item $\QCoh \colon \Fun(\CAlgDelta, \shS)^{\op} \to \widehat{\Cat}_\infty$ is the right Kan extension of the functor $R \mapsto \Mod_R$ along the Yoneda embedding $\CAlgDelta \to \Fun(\CAlgDelta, \shS)^\op$. In particular, for any $A \in \CAlgDelta$, there is a canonical equivalence $\QCoh(\Spec A) \simeq \Mod_A$.
	\end{enumerate}
\end{remark}

\begin{remark}[Symmetric monoidal structure; compare with {\cite[\S 6.2.6]{SAG}}] Since the functor $(R \in \CAlgDelta) \mapsto (\Mod_R \in \widehat{\Cat}_\infty)$ factors as a composition	
	$$\CAlgDelta \xrightarrow{U} \CAlg(\widehat{\Cat}_\infty) \xrightarrow{\theta} \widehat{\Cat}_\infty,$$
		where $U$ assigns each $R \in \CAlgDelta$ to the symmetric monoidal $\infty$-category $\Mod_{R^\circ}^{\otimes}$ of \cite[\S 4.5.3]{HA}. Let $\QCoh^{\otimes} \colon \Fun(\CAlgDelta, \shC) \to  \CAlg(\widehat{\Cat}_\infty) $ be a right Kan extension of $U$, then $\QCoh^{\otimes}$ assigns to each functor $X \in \Fun(\CAlgDelta, \shC)$ a symmetric monoidal $\infty$-category $\QCoh(X)^{\otimes}$ whose underlying $\infty$-category can be identified with $\QCoh(X)$. We will denote the tensor product on $\QCoh(X)$ by $\sF \otimes_{\sO_X} \sF'$ (or simply by $\sF \otimes \sF'$ whenever there is no confusion). More informally, the tensor product on $\QCoh(X)$ is given point-wise by $(\sF \otimes_{\sO_X} \sF')(\eta) \simeq \sF(\eta) \otimes_R \sF'(\eta)$. The {\em unit} of $\QCoh(X)^{\otimes}$ is given by the {\em structure sheaf} $\sO_X$, which assigns to each point $\eta \in X(R)$ the module $R$ (regarded as a module over itself).
\end{remark}

\begin{theorem}[Lipman--Neeman, Lurie]\label{thm:Neeman-Lipman-Lurie} Let $f \colon X \to Y$ be a morphism of prestacks which is quasi-compact, quasi-separated relative derived scheme (or algebraic space). Let $f^* \colon \QCoh(Y) \to \QCoh(X)$ denote the pullback functor.
\begin{enumerate}[leftmargin=*]
	\item \label{thm:Neeman-Lipman-Lurie-1}
	\begin{enumerate}[label=(\theenumi\alph*), ref=\theenumi\emph{\alph*}]
		\item \label{thm:Neeman-Lipman-Lurie-1i}
		The pullback functor $f^*$ admits a right adjoint $f_* \colon \QCoh(X) \to \QCoh(Y)$, called pushforward functor, which preserves small colimits; 
		\item For every pullback diagram of prestacks
			$$
			\begin{tikzcd}
				X' \ar{d}{f'} \ar{r}{g'} & X \ar{d}{f} \\
				Y' \ar{r}{g} & Y,
			\end{tikzcd}
			$$
	the canonical base change transformation (the Beck--Chevalley transformation) $g^* f_* \to f'_* g'^*$ is an equivalence of functors from $\QCoh(X)$ to $\QCoh(Y')$.
		\item \label{thm:Neeman-Lipman-Lurie-1ii}
		For every pair of objects $\sF \in \QCoh(X)$ and $\sG \in \QCoh(Y)$, the canonical map $\sF \otimes f_* \sG \to f_*(f^* \sF \otimes \sG)$ is an equivalence.
	\end{enumerate}
	\item \label{thm:Neeman-Lipman-Lurie-2}
	If we furthermore assume that $f$ is proper, locally almost of finite presentation and locally of finite Tor-amplitude. Then:
		\begin{enumerate}[label=(\theenumi\alph*), ref=\theenumi\emph{\alph*}]
		\item \label{thm:Neeman-Lipman-Lurie-2i}
		The pushforward functor $f_* \colon \QCoh(X) \to \QCoh(Y)$ preserves perfect objects, and admits a right adjoint $f^! \colon \QCoh(Y) \to \QCoh(X)$ which preserves small colimits.
		\item \label{thm:Neeman-Lipman-Lurie-2ii}
		For every pullback diagram of prestacks
			$$
			\begin{tikzcd}
				X' \ar{d}{f'} \ar{r}{g'} & X \ar{d}{f} \\
				Y' \ar{r}{g} & Y,
			\end{tikzcd}
			$$
	the canonical base change transformation $g'^* f^! \to f'^!g'^*$ is an equivalence of functors from $\QCoh(Y)$ to $\QCoh(X')$.
		\item \label{thm:Neeman-Lipman-Lurie-2iii}
		For every pair of objects $\sF, \sG \in \QCoh(Y)$, the canonical map $f^!\sF \otimes f^* \sG \to f^!(\sF \otimes \sG)$ is an equivalence.
		\item \label{thm:Neeman-Lipman-Lurie-2iv}
		The pullback functor $f^*$ admits a left adjoint $f_! \colon \QCoh(X) \to \QCoh(Y)$ which preserves perfect objects and whose restriction to $\Perf(X)$ is given by the formula $f_!(\sF) = (f_* \sF^\vee)^\vee$.
		\item \label{thm:Neeman-Lipman-Lurie-2v}
		For every pullback diagram of prestacks
			$$
			\begin{tikzcd}
				X' \ar{d}{f'} \ar{r}{g'} & X \ar{d}{f} \\
				Y' \ar{r}{g} & Y,
			\end{tikzcd}
			$$
	the canonical base change transformation $f'_! g'^*\to g^*f_!$ is an equivalence of functors from $\QCoh(X)$ to $\QCoh(Y')$.
		\item \label{thm:Neeman-Lipman-Lurie-2vi}
		For every pair of objects $\sF \in \QCoh(X)$ and $\sG \in \QCoh(Y)$, the canonical map $f_!(f^* \sF \otimes \sG) \to \sF \otimes f_! \sG$ is an equivalence.
		\item \label{thm:Neeman-Lipman-Lurie-2vii}
		Let $\omega_f = \omega_{X/Y}$ denote the {\em relative dualizing sheaf}, that is, the object $f^! \sO_Y \in \QCoh(X)$. Then there are canonical natural isomorphisms of functors
				$$f^!(\blank) \simeq f^*(\blank) \otimes \omega_f \quad \text{and} \quad f_!(\blank) \simeq f_*(\blank \otimes \omega_f).$$
		\end{enumerate}
	\item \label{thm:Neeman-Lipman-Lurie-3}
	In the situation of \eqref{thm:Neeman-Lipman-Lurie-2}, if we furthermore assume that the relative dualizing sheaf $\omega_f$ is perfect, then each member of the adjunction sequence $f_! \dashv f^* \dashv f_*\dashv f^!$ preserves perfect objects, almost perfect objects, and locally bounded almost perfect objects (in particular, in the case where $X$ and $Y$ are locally Noetherian, each of the above functors preserves locally bounded coherent sheaves).
\end{enumerate}
\end{theorem}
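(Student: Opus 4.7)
The plan is to reduce everything to the affine case on the base $Y$, then quote the corresponding affine results of Lurie from \cite{SAG}, and finally bootstrap the structural identities using compact generation and the projection formula. Since all the statements (existence of adjoints, base change equivalences, projection formulas) can be checked locally on $Y$ by the universal property given in Remark \ref{rmk:shvprestacks:properties}~\eqref{rmk:shvprestacks:properties-3}, it suffices to prove them when $Y = \Spec A$ for $A \in \CAlgDelta$, in which case $X$ is a quasi-compact, quasi-separated relative derived scheme (or algebraic space) over $\Spec A$, and $\QCoh(Y) \simeq \Mod_A$.

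For part \eqref{thm:Neeman-Lipman-Lurie-1}, I would first note that for quasi-compact, quasi-separated $X$, the $\infty$-category $\QCoh(X)$ is presentable and compactly generated (by a \v{C}ech descent argument reducing to the affine case, where $\Mod_A$ is compactly generated by $A$ itself). Hence $f^* \colon \QCoh(Y) \to \QCoh(X)$, being a symmetric monoidal left adjoint that preserves small colimits, admits a right adjoint $f_*$ by the adjoint functor theorem. The key observation is that $f^*$ preserves compact objects (it sends perfect to perfect), so $f_*$ preserves small colimits; this is Neeman's criterion. Base change \eqref{thm:Neeman-Lipman-Lurie-1ii} and the projection formula follow by standard compact-generator arguments: both sides of $g^* f_* \to f'_* g'^*$ and $\sF \otimes f_* \sG \to f_*(f^*\sF \otimes \sG)$ preserve colimits in each variable, so it suffices to check them on a compact generator, where one reduces to the affine (or a \v{C}ech-cover) situation and the result is tautological. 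All of this is carried out in \cite[\S 6.2, \S 6.3]{SAG}.

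For part \eqref{thm:Neeman-Lipman-Lurie-2}, the additional hypotheses (proper, locally almost of finite presentation, locally of finite Tor-amplitude) give relative compactness: $f_*$ preserves perfect objects, which is the derived enhancement of the classical Illusie--SGA6 result (see \cite[\S 6.4]{SAG}). Consequently $f_*$ preserves compact objects in a compactly generated category, hence itself admits a right adjoint $f^!$ preserving colimits. One then defines $f_!$ on perfect objects by $f_!(\sF) = (f_*\sF^\vee)^\vee$ and extends to all of $\QCoh(X)$ by colimits; a formal calculation using the projection formula from \eqref{thm:Neeman-Lipman-Lurie-1ii} shows $f_!$ is left adjoint to $f^*$. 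Base change and projection formulas \eqref{thm:Neeman-Lipman-Lurie-2ii}, \eqref{thm:Neeman-Lipman-Lurie-2iii}, \eqref{thm:Neeman-Lipman-Lurie-2v}, \eqref{thm:Neeman-Lipman-Lurie-2vi} are verified by reducing to compact (perfect) objects and using the corresponding statements for $f_*$. For the dualizing-sheaf identifications \eqref{thm:Neeman-Lipman-Lurie-2vii}, one produces a natural map $f^*(\blank) \otimes \omega_f \to f^!(\blank)$ via adjunction from the counit $f_*\omega_f = f_* f^! \sO_Y \to \sO_Y$ combined with the projection formula; both functors preserve colimits, so one checks the map is an equivalence on $\sO_Y$, which is tautological. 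The dual formula for $f_!$ then follows by passing to left adjoints.

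Part \eqref{thm:Neeman-Lipman-Lurie-3} is then an immediate consequence of \eqref{thm:Neeman-Lipman-Lurie-2vii}: if $\omega_f$ is perfect, then tensoring by $\omega_f$ (respectively $\omega_f^\vee$) preserves perfectness, almost perfectness and boundedness of Tor-amplitude, so the identities $f^!(\blank) \simeq f^*(\blank)\otimes \omega_f$ and $f_!(\blank)\simeq f_*(\blank\otimes \omega_f)$ reduce the preservation claims for $f^!$ and $f_!$ to the corresponding ones for $f^*$ and $f_*$, already recorded in \eqref{thm:Neeman-Lipman-Lurie-1} and \eqref{thm:Neeman-Lipman-Lurie-2i}. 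The main technical obstacle is the first nontrivial assertion in \eqref{thm:Neeman-Lipman-Lurie-2i}, namely that properness plus locally-almost-finite-presentation plus finite Tor-amplitude implies $f_*$ preserves perfect complexes; but this is exactly the content of Lurie's derived version of the Illusie--Lipman--Neeman theorem in \cite[Theorem 6.1.3.2]{SAG} and its globalizations in \cite[\S 6.4]{SAG}, which I would cite directly rather than reprove.
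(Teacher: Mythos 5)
Your proposal is correct and takes essentially the same route as the paper: the paper's proof is a flat citation to Lurie's results in \cite[\S 6.3--6.4]{SAG} (specifically Propositions 6.3.4.1, 6.4.2.1, 6.4.5.3, 6.4.5.4, Lemma 6.4.2.2, Corollary 6.3.4.3) for the spectral setting, with the remark that the argument carries over with minimal modification to the simplicial-commutative-ring setting; you likewise defer the hard content --- that properness plus almost-finite-presentation plus finite Tor-amplitude forces $f_*$ to preserve perfect complexes --- to \cite[Theorem 6.1.3.2, \S6.4]{SAG} and supply the formal bootstrap (compact generation, adjoint functor theorem, projection formula) around it. The one point worth making explicit, which you elide, is that your appeal to ``Neeman's criterion'' ($f^*$ preserves compacts $\Rightarrow f_*$ preserves colimits) silently uses that $\Perf(X)$ consists of compact objects of $\QCoh(X)$ for $X$ quasi-compact quasi-separated, which is itself a nontrivial theorem (\cite[Proposition 9.6.1.1]{SAG} in this generality); Lurie's own proof of Proposition 6.3.4.1 instead proceeds by reduction to affines and descent, so if you want to avoid any appearance of circularity you should either cite the compact-generation theorem up front or follow Lurie's reduction directly.
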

\begin{proof} In the situation of classical algebraic geometry where $X$, $Y$ are quasi-compact, quasi-separated schemes (and the involved base change are Tor-independent), the assertions are consequences of the results of \cite{LN, Nee, Nee10, Lip}; see also our summary in \cite[Theorem 3.1]{J21}. In the context of spectral algebraic geometry, assertion \eqref{thm:Neeman-Lipman-Lurie-1} is a consequence of \cite[Proposition 6.3.4.1]{SAG} and \cite[Corollary 6.3.4.3]{SAG}, assertions \eqref{thm:Neeman-Lipman-Lurie-2i} through \eqref{thm:Neeman-Lipman-Lurie-2vii} follows from a combination of \cite[Proposition 6.4.2.1, Lemma 6.4.2.2, Proposition 6.4.5.3, and Proposition 6.4.5.4]{SAG}, and assertion \eqref{thm:Neeman-Lipman-Lurie-3} is a direct consequence of \eqref{thm:Neeman-Lipman-Lurie-2}. The same proof strategy of the above results work in our setting with minimal modifications.
\end{proof}

\subsection{Quasi-coherent algebras on prestacks and relative spectra} \label{sec:QCAlgDelta}  In this subsection, we apply the construction of \cite[\S6.2.1, Construction 6.2.1.7]{SAG} (which we have reviewed in \S \ref{sec:shvprestacks}) to study the $\infty$-categories of quasi-coherent algebras on prestacks. 

\begin{construction}[Quasi-coherent algebras] Let $\mathrm{ev}_{\{0\}} \colon \Fun(\Delta^1, \CAlgDelta) \to \CAlgDelta$ denote the evaluation map $(A \to B) \mapsto A$. Then $\mathrm{ev}_{\{0\}}$ is the coCartesian fibration which classifies the functor $(R \in \CAlgDelta) \mapsto (\CAlgDelta_{R} \in \widehat{\Cat}_\infty)$ (where for a map of simplicial commutative rings $R \to R'$, the induced functor $\CAlgDelta_R \to \CAlgDelta_{R'}$ is the base change map $A \mapsto A \otimes_R R'$ of \cite[Notation 25.1.4.4]{SAG}; In other words, an edge $(R, A) \to (R', A')$ is a $\mathrm{ev}_{\{0\}}$-coCartesian edge if and only if the induced map $A \otimes_R R' \to A'$ is an equivalence of $R'$-algebras). Apply the construction \S \ref{sec:shvprestacks} to the coCartesian fibration $\mathrm{ev}_{\{0\}}$, we obtain a functor 
		$$\QCAlgDelta \colon  \Fun(\CAlgDelta, \shS)^{\op} \to \widehat{\Cat}_{\infty},$$
	which carries each $X  \in  \Fun(\CAlgDelta, \shS)$ to an $\infty$-category
		$$\QCAlgDelta(X) = \Fun_{/ \CAlgDelta}^{\rm CCart}\big(\int_{\CAlgDelta} X, \Fun(\Delta^1, \CAlgDelta)\big).$$
For a prestack $X \in  \Fun(\CAlgDelta, \shS)$, we will refer to $\CAlgDelta(X)$ as the {\em $\infty$-category of quasi-coherent algebras over $X$} (or {\em $\infty$-category of quasi-coherent $\sO_X$-algebras}), and elements $\sF \in  \QCAlgDelta(X)$ as {\em quasi-coherent algebras over $X$} (or {\em quasi-coherent $\sO_X$-algebras}).	
\end{construction}

\begin{remark} By virtue of Remark \ref{rmk:shvprestacks:description}, more informally, we can describe $ \QCAlgDelta(X)$ as the limit $\varprojlim_{R \in \CAlgDelta, \eta \in X(R)} \CAlgDelta_R$. In other words, we can think of an object $\sA \in  \QCAlgDelta(X)$ as an assignment $(R \in \CAlgDelta, \eta \in X(R)) \mapsto (\sA(\eta) \in \CAlgDelta_R)$ which depends functorially on $R$ in the following strong sense: if $\phi \colon R \to R'$ is a map of simplicial commutative rings such that $\eta' \in X(R')$ is the image of $\eta$ under $\phi$, then we obtain a $\mathrm{ev}_{\{0\}}$-coCartesian morphism $(R, \sA(\eta)) \to (R', \sA(\eta'))$ in $\Fun(\Delta^1, \CAlgDelta)$, which we will view as an equivalence of simplicial $R'$-algebras $R' \otimes_R \sA(\eta) \xrightarrow{\sim} \sA(\eta')$. 
\end{remark}

\begin{remark} \label{rmk:QCAlgDelta:properties} By virtue of Remark \ref{rmk:shvprestacks:properties},  $\QCAlgDelta$ satisfies the following properties:
\begin{enumerate}
	\item A map $f \colon X \to Y$ of prestacks determines a pullback functor $f^*_{\rm alg} \colon \QCAlgDelta(Y) \to \QCAlgDelta(X)$. Moreover, since $\QCAlgDelta$ preserves all small limits, we obtain that $\QCAlgDelta(X)$ and $ \QCAlgDelta(Y)$ admit small colimits and $f^*_{\rm alg}$ preserves small colimits.
	\item $\QCAlgDelta \colon \Fun(\CAlgDelta, \shS)^{\op} \to \widehat{\Cat}_\infty$ is the right Kan extension of the functor $R \mapsto \CAlgDelta_R$ along the Yoneda embedding $\CAlgDelta \to \Fun(\CAlgDelta, \shS)^\op$. In particular, for any $A \in \CAlgDelta$, there is a canonical equivalence $\QCAlgDelta(\Spec A) \simeq \CAlgDelta_A$.
	\end{enumerate}
\end{remark}

\begin{proposition}[{Compare with \cite[Proposition 6.2.3.1]{SAG}}] \label{prop:QCAlg:fppf} Let $\Shv_{\rm fpqc}$ the full subcategory of $\Fun(\CAlgDelta, \shS)$ spanned by those functors which are sheaves with respect to the fpqc topology, and let $L \colon \Fun(\CAlgDelta, \shS) \to \Shv_{\rm fpqc}$ denote a left adjoint to the inclusion functor.
\begin{enumerate}
	\item \label{prop:QCAlg:fppf-1} The canonical maps $\QCoh \circ L \to \QCoh$ and $\QCAlgDelta \circ L \to \QCAlgDelta$ are equivalences of functors from $ \Fun(\CAlgDelta, \shS)^{\op}$ to $\widehat{\Cat}_\infty$. 
	\item \label{prop:QCAlg:fppf-2} The restrictions $\QCoh|_{\Shv_{\rm fpqc}^{\op}} \colon \Shv_{\rm fpqc}^{\op} \to \widehat{\Cat}_\infty$, and $\QCAlgDelta|_{\Shv_{\rm fpqc}^{\op}} \colon \Shv_{\rm fpqc}^{\op} \to \widehat{\Cat}_\infty$ preserve small limits, respectively.
\end{enumerate}
\end{proposition}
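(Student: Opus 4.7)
The plan is to deduce both parts from two ingredients: \textbf{(i)} fpqc descent for modules and for simplicial commutative algebras at the affine level, and \textbf{(ii)} the universal property of right Kan extensions, which $\QCoh$ and $\QCAlgDelta$ enjoy by construction (Remarks \ref{rmk:QCoh:properties} and \ref{rmk:QCAlgDelta:properties}).

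First I would record (i) in the form that every representable prestack $\Spec R$ is an fpqc sheaf, or equivalently, that for each fpqc hypercover $A \to B^\bullet$ in $\CAlgDelta$ the natural maps
\[
\Mod_A \to \Tot(\Mod_{B^\bullet}), \qquad \CAlgDelta_A \to \Tot(\CAlgDelta_{B^\bullet})
\]
are equivalences of $\infty$-categories. The first is the simplicial-commutative counterpart of the $\EE_\infty$-ring fpqc descent theorem of \cite[\S D.6.3]{SAG}; it can be deduced from the spectral case via the conservative forgetful functor $\CAlgDelta \to \CAlg$, noting that a simplicial-commutative algebra structure on a connective module is additional structure that descends. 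The second follows from the first by passing to commutative algebra objects (or directly from the universal property of $\Sym_A^*$ together with descent for modules). Equivalently, this descent statement asserts that the unit $\Spec R \to L\,\Spec R$ is an equivalence for every $R \in \CAlgDelta$.

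For part \eqref{prop:QCAlg:fppf-1}, I would exploit the universal property of $\Fun(\CAlgDelta, \shS)$ as a free cocompletion: every prestack $X$ is a small colimit of representables, hence, in the opposite category $\Fun(\CAlgDelta, \shS)^{\op}$, is a small \emph{limit} of representables. The functors $\QCoh$ and $\QCAlgDelta$ preserve small limits on $\Fun(\CAlgDelta, \shS)^{\op}$ (Remarks \ref{rmk:QCoh:properties} and \ref{rmk:QCAlgDelta:properties}). Since $L$ is a left adjoint, it preserves small colimits in $\Fun(\CAlgDelta, \shS)$ and hence small limits when regarded as a functor between opposite categories; consequently the composites $\QCoh \circ L$ and $\QCAlgDelta \circ L$ also preserve small limits. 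By (i), the canonical natural transformations $\QCoh \circ L \to \QCoh$ and $\QCAlgDelta \circ L \to \QCAlgDelta$ are equivalences on the full subcategory of representables. Being morphisms of limit-preserving functors which are equivalences on a limit-dense subcategory, they are equivalences everywhere.

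For part \eqref{prop:QCAlg:fppf-2}, I would observe that $\Shv_{\rm fpqc} \hookrightarrow \Fun(\CAlgDelta, \shS)$ is right adjoint to $L$, so small colimits in $\Shv_{\rm fpqc}$ are computed by applying $L$ to the corresponding colimits in $\Fun(\CAlgDelta, \shS)$; equivalently, small limits in $\Shv_{\rm fpqc}^{\op}$ arise by applying $L$ to small limits taken in $\Fun(\CAlgDelta, \shS)^{\op}$. Combining with \eqref{prop:QCAlg:fppf-1} and the limit-preservation of $\QCoh$, $\QCAlgDelta$ on the full prestack category yields the claim. The main obstacle will be ingredient (i): isolating the correct formulation or reference for fpqc descent of modules and algebras over simplicial commutative rings (rather than over $\EE_\infty$-rings); everything else is formal manipulation of right Kan extensions and the sheafification adjunction.
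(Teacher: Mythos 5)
Your overall strategy — reduce to an affine descent statement plus limit-preservation of the Kan extension — is in the right spirit and close to the paper's, but the argument for part~\eqref{prop:QCAlg:fppf-1} has a circularity that the paper avoids by invoking the abstract right-Kan-extension machinery of \cite[Proposition 1.3.1.7]{SAG}.

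The gap is in the sentence ``Since $L$ is a left adjoint, it preserves small colimits in $\Fun(\CAlgDelta, \shS)$\ldots{} consequently the composites $\QCoh \circ L$ and $\QCAlgDelta \circ L$ also preserve small limits.'' The sheafification $L$ preserves colimits only as a functor $\Fun(\CAlgDelta, \shS) \to \Shv_{\rm fpqc}$, \emph{not} as an endofunctor of $\Fun(\CAlgDelta, \shS)$: the inclusion $\Shv_{\rm fpqc} \hookrightarrow \Fun(\CAlgDelta, \shS)$ is a right adjoint and does not preserve colimits, so the composite $\iota \circ L$ is not colimit-preserving. If you unwind what ``$\QCoh \circ L$ preserves small limits on $\Fun(\CAlgDelta, \shS)^{\op}$'' actually requires, you will find it says precisely that $\QCoh$ restricted to $\Shv_{\rm fpqc}^{\op}$ preserves limits of diagrams obtained by sheafifying — that is, it is essentially part~\eqref{prop:QCAlg:fppf-2}. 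So your derivation of~\eqref{prop:QCAlg:fppf-1} implicitly presupposes~\eqref{prop:QCAlg:fppf-2}, which you then derive from~\eqref{prop:QCAlg:fppf-1}; the argument is circular. The paper's proof breaks this circle: it first establishes~\eqref{prop:QCAlg:fppf-1} directly by citing \cite[Proposition 1.3.1.7]{SAG} (which says that if the local functor $R \mapsto \CAlgDelta_R$ satisfies flat descent, then its right Kan extension to prestacks inverts the sheafification), and only then deduces~\eqref{prop:QCAlg:fppf-2} as a consequence.

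A second, smaller issue: you assert that ``every representable prestack $\Spec R$ is an fpqc sheaf'' is \emph{equivalent} to ``$\Mod_A \to \Tot(\Mod_{B^\bullet})$ and $\CAlgDelta_A \to \Tot(\CAlgDelta_{B^\bullet})$ are equivalences.'' These are not equivalent: subcanonicality of the topology is a considerably weaker statement than descent for module or algebra $\infty$-categories. Your argument for part~\eqref{prop:QCAlg:fppf-1} then in fact only uses the weaker subcanonicality (to see that the unit $\Spec R \to L\,\Spec R$ is an equivalence), so the full descent statement — which is exactly what the abstract lemma needs — is never actually fed into the proof. To fix the argument, you should replace the ``limit-preservation of $\QCoh \circ L$'' step by a direct invocation of a result of the form of \cite[Proposition 1.3.1.7]{SAG}, and then verify the hypothesis (flat descent for $R \mapsto \CAlgDelta_R$) concretely, which the paper does via the two conditions of \cite[Proposition 5.2.2.36]{HA} (conservativity of the base-change functor along a faithfully flat map, and effectivity of descent data, the latter reducing to the subcanonicality statement you did record).
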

\begin{proof} The proof for the functor $\QCoh$ is completely analogous to the spectral case \cite[Proposition 6.2.3.1]{SAG}; we only need to prove the assertions for $\QCAlgDelta$. By our construction, $\QCAlgDelta$ preserves limits (see \cite[Lemma 6.2.1.12]{SAG}). By virtue of \cite[Proposition 1.3.1.7]{SAG}, to prove the assertion \eqref{prop:QCAlg:fppf-1}, we only need to show that the functor $(R \in \CAlgDelta) \mapsto (\CAlgDelta_R \in \widehat{\Cat}_\infty)$ satisfies flat descent: in other words, if $R \to R_\bullet$ is the \v{C}ech nerve of a faithfully flat map $R \to R_0$ of simplicial commutative rings, then the canonical map
	$$\theta \colon \CAlgDelta_{R} \to \varprojlim_{[n] \in \bDelta_s} \CAlgDelta_{R_n}$$
is an equivalence of $\infty$-categories, where $\bDelta_s$ is the subcategory of $\Delta$ whose morphisms are injective maps $[m] \to [n]$ of ﬁnite linearly ordered sets. Invoking \cite[Proposition 5.2.2.36]{HA}, it will suﬃce to verify the following assertions: 
	\begin{enumerate}
		\item[$(i)$] The functor $\theta$ is conservative. That is, if $\alpha \colon A \to B$ is a morphism in $\CAlgDelta_R$ such that each of the induced maps $\alpha_n \colon R_n \otimes_R A \to R_n \otimes_R B$ is an equivalence, then $\alpha$ is an equivalence. This follows from the fully faithfulness of $R \to R_0$.
		\item[$(ii)$]  Suppose we are given objects $A_n \in \CAlgDelta_{R_n}$ such that  for each injective map $[m] \to [n]$ in $\bDelta$, the (flat) map $R_m \to R_n$ induces an equivalence $A_n \simeq R_n \otimes_{R_m} A_m$,  then there exists a $R$-algebra $A = \varprojlim A_\bullet$ such that each of the natural maps $R_n \otimes_R A \to A_n$ is an equivalence. Since each map $A_m \to A_n$ is flat, the existence of $A = \varprojlim A_\bullet$ follows from fact that the identify functor $\CAlgDelta \to \CAlgDelta$ is a fpqc sheaf (see \cite[Theorem D.6.3.5]{SAG} for the corresponding result in spectral setting; The analogous statement implies that the fpqc topology on the $\infty$-category $\CAlg^{\Delta, \op}$ is subcanonical).
	\end{enumerate}
The assertion \eqref{prop:QCAlg:fppf-2} is a consequence of assertion \eqref{prop:QCAlg:fppf-1}.
\end{proof}

\begin{proposition}[Affine morphisms]\label{prop:affine} Let $f \colon Y \to X$ be an affine morphism in  $\Fun(\CAlgDelta, \shS)$.
	\begin{enumerate}
		\item \label{prop:affine-1} The pullback functor $f^*_{\rm alg} \colon \QCAlgDelta(X) \to \QCAlgDelta(Y)$ admits a right adjoint 
			$$f_{*}^{\rm alg} \colon  \QCAlgDelta(Y) \to \QCAlgDelta(X).$$
			Moreover, for each map $\Spec R \to X$, let $Y_R = Y \times_X \Spec R$ denote the fiber product, then the induced pullback diagram of $\infty$-categories
		\begin{equation*}
				\begin{tikzcd}
					\QCAlgDelta(X) \ar{r}{f^*_{\rm alg}} \ar{d}& \QCAlgDelta(Y) \ar{d}\\
					\QCAlgDelta(\Spec R) \ar{r}{(f_{R})_{\mathrm{alg}}^*}&  \QCAlgDelta(Y_R)
				\end{tikzcd}
			\end{equation*}
		is right adjointable.
		\item \label{prop:affine-2} 
		(Compare with \cite[Proposition 6.3.4.5]{SAG}) We let $\mathrm{Aff}_{/X}$ denote the full subcategory of $\Fun(\CAlgDelta, \shS)_{/X}$ spanned by affine morphisms $f \colon Y \to X$. Then the construction  
		$$((f \colon Y \to X) \in \mathrm{Aff}_{/X})  \mapsto (\sA(Y/X) = f_{*}^{\rm alg} (\sO_Y) \in \QCAlgDelta(X))$$ 
induces an equivalence of $\infty$-categories $\mathrm{Aff}_{/X} \simeq  \QCAlgDelta(X)^{\op}$.
		\item \label{prop:affine-3} 
		Let $f \colon Y \to X$, $g \colon Z \to X$ be morphisms in $\Fun(\CAlgDelta, \shS)$. Assume $f$ is affine and set $\sA = f_{*}^{\rm alg} (\sO_Y) \in \QCAlgDelta(X)$. Then the canonical map 
				$$\theta \colon \Map_{\Fun(\CAlgDelta, \, \shS)_{/X}}(Z, \Spec \sA) \to \Map_{\QCAlgDelta(X)}(g^*_{\rm alg} (\sA), \sO_Z)$$ 
is a homotopy equivalence.
	\end{enumerate}
\end{proposition}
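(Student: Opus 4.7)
The overall strategy is to reduce each claim to the affine base case via the limit description $\QCAlgDelta(X) \simeq \varprojlim_{\eta \colon \Spec R \to X} \CAlgDelta_R$ from Remark \ref{rmk:QCAlgDelta:properties}, and then exploit that $\mathrm{ev}_{\{0\}} \colon \Fun(\Delta^1, \CAlgDelta) \to \CAlgDelta$ is both a Cartesian and a coCartesian fibration: base change of simplicial commutative algebras $B \mapsto B \otimes_R A$ along $R \to A$ admits as right adjoint the restriction-of-scalars functor.

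For (1), reduce first to the case $X = \Spec R$, $Y = \Spec A$, with $f$ given by a map $R \to A$ in $\CAlgDelta$. Here $f^*_{\mathrm{alg}}$ is precisely the base change functor $\CAlgDelta_R \to \CAlgDelta_A$, which admits as right adjoint the restriction of scalars along $R \to A$. For general $X$, affineness is stable under base change, so each fibre $Y_R := Y \times_X \Spec R$ is affine over $\Spec R$; the fibrewise right adjoints produced in the affine case are compatible with further base change along $\Spec R' \to \Spec R$ (this is the interplay of base change and restriction of scalars for simplicial commutative algebras), so they assemble along the limit diagram defining $\QCAlgDelta$ to a global right adjoint $f^{\mathrm{alg}}_*$. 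The right adjointability of the square then follows tautologically from this pointwise construction.

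For (2), construct an inverse functor $\Spec_X \colon \QCAlgDelta(X)^{\op} \to \mathrm{Aff}_{/X}$ by defining, for each $\sA \in \QCAlgDelta(X)$, the prestack $\Spec_X \sA$ so that a $B$-point of $\Spec_X \sA$ over a given $\eta \colon \Spec B \to X$ is a map $\phi \colon \sA(\eta) \to B$ in $\CAlgDelta_B$. By construction $(\Spec_X \sA) \times_X \Spec R \simeq \Spec \sA(\eta)$ for each $\eta \colon \Spec R \to X$, so $\Spec_X \sA \to X$ is an affine morphism. The two round trips then reduce pointwise over each affine $\Spec R \to X$ to the classical equivalence $\Spec \colon \CAlgDelta_R^{\op} \simeq \mathrm{Aff}_{/\Spec R}$, and Proposition \ref{prop:QCAlg:fppf} ensures that these pointwise equivalences glue to a global equivalence of $\infty$-categories.

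Statement (3) then follows readily from (2): a morphism $Z \to \Spec \sA$ over $X$ is equivalently a section of the pulled-back affine morphism $\Spec \sA \times_X Z \to Z$. Since $\Spec \sA \times_X Z$ corresponds under (2) applied with base $Z$ to the algebra $g^*_{\mathrm{alg}} \sA \in \QCAlgDelta(Z)$, such sections are in canonical bijection with maps $g^*_{\mathrm{alg}} \sA \to \sO_Z$ in $\QCAlgDelta(Z)$; tracing through identifications, this bijection is precisely $\theta$. The main obstacle lies in (2), namely verifying the coherent gluing of the fibrewise equivalences $\CAlgDelta_R^{\op} \simeq \mathrm{Aff}_{/\Spec R}$ into a global equivalence; this is handled by the fpqc-sheaf property of $\QCAlgDelta$ from Proposition \ref{prop:QCAlg:fppf} together with the fact that affine morphisms descend in the fpqc topology, reducing the check to the classical affine statement.
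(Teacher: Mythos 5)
Your proposal is correct and follows essentially the same route as the paper: write $X$ as a colimit of representable functors $\Spec R$, establish the adjunction and right adjointability fibrewise via the base-change/restriction-of-scalars adjunction $\CAlgDelta_R \rightleftarrows \CAlgDelta_A$, assemble along the limit (the paper invokes \cite[Corollary 4.7.4.18]{HA} for this step in part (1)), and for (3) reduce to the fully affine case, which is handled by (2).

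One small misstep worth flagging in your treatment of (2): you invoke Proposition \ref{prop:QCAlg:fppf} (fpqc descent for $\QCAlgDelta$) to glue the fibrewise equivalences $\CAlgDelta_R^{\op} \simeq \mathrm{Aff}_{/\Spec R}$ into a global equivalence. What is actually needed is only the weaker and more elementary fact that $\QCAlgDelta$ carries colimits of prestacks to limits of $\infty$-categories, which is built into the construction of \S\ref{sec:shvprestacks} (Remark \ref{rmk:QCAlgDelta:properties}) as a right Kan extension along the Yoneda embedding. This is the route the paper takes. Citing descent here is not only overkill but slightly off-target: Proposition \ref{prop:QCAlg:fppf} concerns fpqc sheaves, whereas the statement at hand is for an arbitrary prestack $X$; the reduction to $X = \Spec R$ uses only the canonical colimit presentation $X \simeq \varinjlim_{\Spec R \to X} \Spec R$, valid for every functor $\CAlgDelta \to \shS$, together with limit preservation of $\QCAlgDelta$ and of $\mathrm{Aff}_{/(\blank)}$.
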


\begin{proof} 
To prove the assertion \eqref{prop:affine-1}, we use a similar strategy as the proof of Proposition \ref{prop:Sym*:prestack}. Write $X$ as a colimit of a diagram $q \colon K \to \Fun(\CAlgDelta, \shS)$ where $q(s) \simeq \Spec A_s$ is affine. Then by our assumption $Y_s = \Spec A_s$, where $A_s$ is an $R_s$-algebra. For each edge $e \colon s \to s'$ in $K$, since $A_s \simeq A_{s'} \otimes_{R_{s'}} R_s$, the associated pullback diagram of $\infty$-categories 
			\begin{equation*}
				\begin{tikzcd}
						\QCAlgDelta(\Spec R_{s'}) \simeq \CAlgDelta_{R_{s'}}\ar{r}\ar{d}{q(e)^*_{\rm alg}}&    \QCAlgDelta(\Spec A_{s'}) \simeq \CAlgDelta_{A_{s'}} \ar{d}{q(e)^*_{\rm alg}}\\
				\QCAlgDelta(\Spec R_s) \simeq \CAlgDelta_{R_s} \ar{r}  & \QCAlgDelta(\Spec A_s)^\cn \simeq \CAlgDelta_{A_s}
				\end{tikzcd}
			\end{equation*}
	is evidently right adjointable. Then \cite[Corollary 4.7.4.18]{HA} implies the assertion \eqref{prop:affine-1}.

For assertion \eqref{prop:affine-2}, writing X as a colimit of representable functors again, we can reduce to the case where $X = \Spec R$. Each affine morphism $Y \to X$ has the form $Y = \Spec A$, where $R \to A$ is a morphism of simplicial commutative rings, and $f_{*}^{\rm alg} (A) = A \in \CAlgDelta_R$ by virtue of \eqref{prop:affine-1}. Then $\lambda$ is the evident homotopy inverse of the functor $\Spec \colon \CAlgDelta_R \to {\rm Aff}_{/\Spec R}^{\op}$.

To prove the assertion \eqref{prop:affine-3}, we use the same strategy as the proof of \cite[Lemma 9.2.1.2]{SAG}. When regarded as functors of $Z$, both the domain and codomain of $\theta$ carry colimits of functors to limits of spaces. Writing $Z$ as a colimit of corepresentable functors, we may reduce to the case $Z = \Spec R$. Replacing $X$ by $Z$ and $Y$ by $Y \times_X Z$, we may further reduce to the affine case where $Y = \Spec A$, $X=Z = \Spec R$, and $R \to A$ is a map of simplicial commutative rings. In this case, the desired result follows from assertion \eqref{prop:affine-2}.
\end{proof}

\begin{definition}[Relative Spectrum; compare with {\cite[Construction 2.5.1.3]{SAG}}] \label{def:relative:spec} Let $X \in \Fun(\CAlgDelta, \shS)$, and let $\lambda$ denote the homotopy equivalence of Proposition \ref{prop:affine} \eqref{prop:affine-2}. We let 
	$$\Spec_X \colon \QCAlgDelta(X) \to \mathrm{Aff}_{/X} $$
denote a fixed a homotopy inverse to the equivalence $\lambda$. Given a quasi-coherent algebra $\sA$ over $X$, we let $\pr \colon \Spec_X \sA \to X$ denote the corresponding affine morphism, and we will refer $\Spec_X \sA$ as the {\em relative spectrum of $\sA$ over $X$}. 
\end{definition}

Let $\pr \colon \Spec_X \sA \to X$ denote the natural projection, then by construction:
	\begin{enumerate}	
		\item There is a canonical equivalence $\sA \xrightarrow{\sim} \pr_*^{\rm alg}(\sO_{\Spec_X \sA}) \in \QCAlgDelta(X)$.
		\item For any base change $\phi \colon X' \to X$, there is a canonical identification 
			$$\Spec_{X'}(\phi^* \sA) = (\Spec_X \sA) \times_X X'.$$
	 	\end{enumerate}

\begin{remark}[Non-affine pushforwards of quasi-coherent algebras] In general, the pullback functor $f^*_{\rm alg}$ admits a right adjoint $f_{*}^{\rm alg} \colon \QCAlgDelta(Y) \to \QCAlgDelta(X)$ only under mild assumptions. For example, if $X = \Spec R$, then $f_{*}^{\rm alg} \colon \QCAlgDelta(Y) \to \CAlgDelta_A$ is the {\em simplicial global section functor}: regarding each object $\sA \in \QCAlgDelta(Y)$ as a functorial assignment $\eta \colon \Spec A \to X$, $\sA(\eta) \in \CAlgDelta_{A}$, then $f_{*}^{\rm alg}(\sA) = \varprojlim_{\eta} \sA(\eta) \in \CAlgDelta_{R}$. Beware that if $f$ is not affine, then generally the underlying sheaf of $f_{*}^{\rm alg}(\sA)$ is {\em not} equivalent to the pushforward $f_*(\sA)$ of the underlying sheaves, even in the case $\sA = \sO_Y$ (for example, $f_{*}^{\rm alg}(\sA)$ is always connective, but $f_*(\sA)$ might not). It is worth studying the functor $f^*_{\rm alg}$ for general non-affine morphisms, but we will not pursue this direction further in this paper.
\end{remark}

\subsection{Derived symmetric powers and universal fiber sequences on prestacks} \label{sec:sym:prestack}
In this subsection, we apply the construction of \S \ref{sec:shvprestacks} to globalize the constructions and results of \S \ref{sec:univ:fib:seq} and obtain universal fiber sequences associated to symmetric powers on prestacks. 

\begin{construction}\label{constr:symwedgegamma:prestack} Let $n \ge 0$ be an integer, and let $\Sym_A^n(M)$, $\bigwedge_A^n(M)$ and $\Gamma_A^n(M)$ be the derived symmetric powers, derived exterior power and derived divided powers, respectively, constructed in Construction \ref{constr:sym.wedge.gamma}. By virtue of \cite[Proposition 25.2.3.1]{SAG}, each of three functors $\Sym^n, \bigwedge^n, \Gamma^n \colon \SCRModcn \to \SCRModcn$ sends $q$-coCartesian edges to $q$-coCartesian morphisms. Therefore we could apply Remark \ref{rmk:shvprestacks:properties}\eqref{rmk:shvprestacks:properties-2} to these functors and obtain natural transformations of functors
	$$\Sym^n, \bigwedge\nolimits^n, \Gamma^n \colon \QCoh^\cn \to \QCoh^\cn$$
from $\QCoh^\cn \colon \Fun(\CAlgDelta, \shS)^\op \to \widehat{\Cat}_{\infty}$ to itself. Consequently, for any given prestack $X \in \Fun(\CAlgDelta, \shS)$ and a connective quasi-coherent sheaf $\sF$ on $X$, we obtain connective connective quasi-coherent sheaves $\Sym_X^n(\sF)\in \QCoh(X)^\cn$, $\bigwedge\nolimits_X^n(\sF) \in \QCoh(X)^\cn$ and $\Gamma_X^n(\sF) \in \QCoh(X)^\cn$, respectively. We will refer to the connective quasi-coherent sheaves $\Sym_X^n(\sF)$, $\bigwedge\nolimits_X^n(\sF)$, and $\Gamma_X^n(\sF)$ as the {\em derived $n$th symmetric powers}, {\em derived $n$th exterior powers}, and {\em derived $n$th divided powers} of $\sF$ over $X$, respectively. 
\end{construction}
\begin{remark}
	\begin{enumerate}
		\item The formations of $\Sym_X^n(\sF)$, $\bigwedge\nolimits_X^n (\sF)$ and $\Gamma_X^n(\sF)$ commute with base change of prestacks. In other words, for any map of prestacks $f \colon Y \to X$ and $\sF \in \QCoh(X)^\cn$, there are canonical equivalences of quasi-coherent $\sO_Y$-modules $f^* \Sym_X^n(\sF) \simeq \Sym_Y^n (f^* \sF)$, $f^* \bigwedge_X^n(\sF) \simeq \bigwedge_Y^n (f^* \sF)$, and $f^* \Gamma_X^n(\sF) \simeq \Gamma_Y^n (f^* \sF)$.
		\item Remark \ref{rmk:shvprestacks:properties} \eqref{rmk:shvprestacks:properties-3} implies that in the representable case $X= \Spec A$, there are canonical equivalences of functors $\Sym_{\Spec A}^n \simeq \Sym^n_A$, $\bigwedge_{\Spec A}^n \simeq \bigwedge^n_A$ and $\Gamma^n_{\Spec A} \simeq \Gamma^n_A$.
	\end{enumerate}
\end{remark}

\begin{construction}\label{constr:EandD:Sym:prestacks} 
Let $\shE$ and $\shD$ be the $\infty$-categories defined in \S \ref{sec:univ:fib:seq}. In particular, objects of $\shE$ are pairs $(A , \rho \colon M' \to M)$ where $A \in \CAlgDelta$ and $\rho$ is morphism of connective $A$-modules, and objects of $\shD$ are pairs $(A, \alpha)$ where $A \in \CAlgDelta$ and $\alpha$ is a fiber sequence of connective $A$-modules. We apply the construction of \S \ref{sec:shvprestacks} to the coCartesian fibrations $q_\shE \colon \shE \to \CAlgDelta$ and $q_\shD \colon \shD \to \CAlgDelta$, then we obtain functors 
	$$\Phi(q_\shE), \Phi(q_\shD) \colon \Fun(\CAlgDelta, \shS)^{\op} \to \widehat{\Cat}_\infty$$ 
which carry a prestack $X \in \Fun(\CAlgDelta, \shS)$ to the respective $\infty$-categories
	$$\underline{\shE}(X) = \Fun_{/ \CAlgDelta}^{\rm CCart}\big(\int_{\CAlgDelta} X, \shE\big), \qquad \underline{\shD}(X) = \Fun_{/ \CAlgDelta}^{\rm CCart}\big(\int_{\CAlgDelta} X, \shD\big).$$
Unwinding the definition, we obtain that the objects of $\underline{\shE}(X)$ are morphisms of connective quasi-coherent sheaves $\rho \colon \sM' \to \sM$ on $X$, and objects of  $\underline{\shD}(X)$ are fiber sequences $\alpha \colon \sM' \to \sM \to \sM''$ of connective quasi-coherent sheaves on $X$.
\end{construction}

\begin{construction} \label{constr:Postnikov:Sym:prestacks} Let $n \ge 1$ be a fixed integer. By virtue of Remarks \ref{rmk:left:Sym:bc} and \ref{rmk:right:Sym:bc}, we can apply Remark \ref{rmk:shvprestacks:properties} \eqref{rmk:shvprestacks:properties-2} to the functors $(A, \rho) \in \shE \mapsto (A, d_{i+1,n}(A, \rho)) \in \shE$, $(A, \rho)\in \shE \mapsto (A, \alpha_{i,n}(A, \rho)) \in \shD$ and $(A, \rho)\in \shE \mapsto (A,\beta_{i,n}(A, \rho)) \in \shD$ (where $0 \le i \le n-1$) of Constructions \ref{constr:left:Sym} and \ref{constr:right:Sym}, and obtain natural transformations 
	$$d_{i+1,n} \colon \Phi(q_{\shE}) \to \Phi(q_{\shE}), \qquad \alpha_i, \beta_i \colon \Phi(q_{\shE}) \to \Phi(q_{\shD}).$$
In other words, for any prestack $X \in \Fun(\CAlgDelta, \shS)$, we have functors
	\begin{align*}
	& \underline{\shE}(X) \to \underline{\shE}(X), &\quad (\rho \colon \sM' \to \sM) \mapsto d_{i+1,n}(X,\rho);\\
	& \underline{\shE}(X) \to \underline{\shD}(X), & \quad (\rho \colon \sM' \to \sM) \mapsto \alpha_{i,n}(X, \rho);\\
	&\underline{\shE}(X) \to \underline{\shD}(X), & \quad (\rho \colon \sM' \to \sM) \mapsto \beta_{i,n}(X, \rho).
	\end{align*}
Here, $d_{i+1,n}(X, \rho)$ is a morphism of connective quasi-coherent sheaves on $X$:
		$$d_{i+1,n}(X, \rho) \colon \quad (\bigwedge\nolimits^{i+1}_X \sM') \otimes_{\sO_X} (\Sym_X^{n-i-1} \sM) \to (\bigwedge\nolimits^{i}_X \sM') \otimes_{\sO_X} (\Sym_X^{n-i} \sM)$$
such that there are canonical equivalences $d_{i+1,n}(X, \rho) \circ d_{i,n}(X,\rho) \simeq 0$; $\alpha_{i,n}(X,\rho)$ and $\beta_{i,n}(X, \rho)$ are fiber sequences of connective quasi-coherent sheaves on $X$:
	\begin{align*}
		&\alpha_{i,n}(X, \rho) \colon & \sF_{i+1,n}(X, \rho) \to (\bigwedge\nolimits^i_X \sM' ) \otimes_{\sO_X} (\Sym_X^{n-i} \sM) \to \sF_{i,n}(X, \rho), \\
		&\beta_{i,n}(X, \rho) \colon &  \Sigma^{i} (\bigwedge\nolimits_X^{i+1} \sM') \otimes_{\sO_X} (\Sym_X^{n-i-1} \sM) \to \sG_{i,n}(X, \rho) \to \sG_{i+1,n}(X, \rho)
	\end{align*}
such that the composition of second edge of $\alpha_{i+1,n}(X,\rho)$ and the first edge of $\alpha_{i,n}(X, \rho)$:
	$$(\bigwedge\nolimits^{i+1}_X \sM' ) \otimes_{\sO_X} (\Sym_X^{n-i+1} \sM) \to \sF_{i+1,n}(X, \rho) \to (\bigwedge\nolimits^{i}_X \sM' )$$
is canonically equivalent to $d_{i+1,n}(X, \rho)$, and the composition of first edge of $\beta_{i,n}(X,\rho)$ and the connecting edge of $\beta_{i-1,n}(X, \rho)$:
	$$ \Sigma^{i} (\bigwedge\nolimits_X^{i+1} \sM') \otimes_{\sO_X} (\Sym_X^{n-i-1} \sM) \to \sG_{i,n}(X, \rho)  \to  \Sigma^{i} (\bigwedge\nolimits_X^{i} \sM') \otimes_{\sO_X} (\Sym_X^{n-i} \sM) $$
is canonically equivalent to $\Sigma^i \circ d_{i+1,n}(X, \rho)$. By construction, the formations of $d_{i,n}(X, \rho)$, $\alpha_{i,n}(X,\rho)$, and $\beta_{i,n}(X, \rho)$ commute with sifted colimits of $\rho$ and arbitary base change of prestacks $X' \to X$. Furthermore, there are canonical equivalences:
	\begin{align*}
	&\sF_{0,n}(X, \rho \colon \sM' \to \sM) \simeq \Sym_X^n (\cofib(\rho)), \qquad \sF_{n,n}(X, \rho \colon \sM' \to \sM) \simeq \bigwedge\nolimits_X^n \sM'; \\
	& \sG_{0,n}(X, \rho \colon \sM' \to \sM) \simeq \Sym_X^n \sM, \qquad \sG_{n,n}(X, \rho \colon \sM' \to \sM) \simeq \Sym_X^n (\cofib(\rho)).
	\end{align*}
Moreover, for any integer $n' \ge 0$, there are canonical morphisms of fiber sequences 
	$$\alpha_{i,n}(X, \rho) \otimes_{\sO_X} \Sym_X^{n'} \sM  \to 
\alpha_{i,n+n'}(X, \rho), \qquad   \beta_{i,n}(X, \rho) \otimes_{\sO_X} \Sym_X^{n'} \sM  \to \beta_{i,n+n'}(X, \rho)$$
for which the induced maps on the middle and initial vertices, respectively, coincide with the maps induced by the multiplication maps $\Sym_X^{n-i} \sM \otimes_{\sO_X} \Sym_X^{n'} \sM \to \Sym_X^{n+n'-i} \sM$ and $\Sym_X^{n-i-1} \sM \otimes_{\sO_X} \Sym_X^{n'} \sM \to \Sym_X^{n+n'-i-1} \sM$ of Remark \ref{rmk:CAlgofSym:prestack}, respectively.
\end{construction}

\subsection{Derived symmetric algebras over prestacks} \label{sec:Sym*:prestack} 
\begin{proposition}[Derived Symmetric Algebras]\label{prop:Sym*:prestack} Let $X \in \Fun(\CAlgDelta, \shS)$ be a prestack.
	\begin{enumerate}
		\item \label{prop:Sym*:prestack-1} The forgetful functor $\mathrm{forg} \colon \QCAlgDelta(X) \to \QCoh(X)^{\cn}$ admits a left adjoint functor $\Sym_X^* \colon \QCoh(X)^\cn \to \QCAlgDelta(X)$.  In other words, for all $\sF, \sF' \in \QCoh(X)^\cn$, the composition with the natural morphism quasi-coherent sheaves $\eta_\sF \colon \sF \to \Sym_X^*(\sF)$ induces a homotopy equivalence
 	$$\Map_{\QCAlgDelta(X)}(\Sym_X^* (\sF), \sF') \simeq \Map_{\QCoh(X)^\cn}(\sF, \sF').$$
		\item \label{prop:Sym*:prestack-2} For every map of prestacks $f \colon Y \to X$ in $\Fun(\CAlgDelta, \shS)$, the associated diagram of $\infty$-categories 
			\begin{equation*}
				\begin{tikzcd}
					\QCAlgDelta(X) \ar{r}{\mathrm{forg}} \ar{d}{f^*_{\rm alg}}& \QCoh(X)^\cn \ar{d}{f^*}\\
					\QCAlgDelta(Y) \ar{r}{\mathrm{forg}}&  \QCoh(Y)^\cn
				\end{tikzcd}
			\end{equation*}
		is left adjointable. In other words, the Beck--Chevalley transformation $\lambda_f \colon \Sym_Y^* \circ f^*_{\rm alg} \to f^* \circ \Sym_X^*$ is an equivalence of functors from $\QCoh(X)^\cn$ to $\QCAlgDelta(Y)$.
	\end{enumerate}
\end{proposition}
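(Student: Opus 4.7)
The plan is to reduce both assertions to the affine case, where part \eqref{prop:Sym*:prestack-1} is just Definition \ref{def:dsym*} and part \eqref{prop:Sym*:prestack-2} is exactly the base-change property of Proposition \ref{prop:Sym*}\eqref{prop:Sym*-4}. To do this, I will write $X$ as a colimit of representable functors $X \simeq \varinjlim_{s \in K} \Spec R_s$ in $\Fun(\CAlgDelta, \shS)$. By the right Kan extension description in Remarks \ref{rmk:QCoh:properties} and \ref{rmk:QCAlgDelta:properties}, the functors $\QCoh^\cn$ and $\QCAlgDelta$ carry this colimit to limits of $\infty$-categories; explicitly,
\[
\QCoh(X)^\cn \simeq \varprojlim_{s \in K^\op} \Mod_{R_s}^\cn, \qquad \QCAlgDelta(X) \simeq \varprojlim_{s \in K^\op} \CAlgDelta_{R_s},
\]
and the forgetful functor $\QCAlgDelta(X) \to \QCoh(X)^\cn$ is the limit of the pointwise forgetful functors $\CAlgDelta_{R_s} \to \Mod_{R_s}^\cn$.

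Next I would invoke (the dual of) \cite[Corollary 4.7.4.18]{HA}, which promotes a pointwise adjunction to an adjunction on the limit, provided each transition square in the diagram is left adjointable. For each edge $e \colon s \to s'$ in $K$ with underlying map $\phi_e \colon R_{s'} \to R_s$, the relevant square is
\[
\begin{tikzcd}
\CAlgDelta_{R_{s'}} \ar{r}{\mathrm{forg}} \ar{d}{\phi_e^*} & \Mod_{R_{s'}}^\cn \ar{d}{\phi_e^*} \\
\CAlgDelta_{R_s} \ar{r}{\mathrm{forg}} & \Mod_{R_s}^\cn,
\end{tikzcd}
\]
whose left adjointability is precisely the content of the Beck--Chevalley equivalence $R_s \otimes_{R_{s'}} \Sym^*_{R_{s'}}(M) \xrightarrow{\sim} \Sym^*_{R_s}(R_s \otimes_{R_{s'}} M)$ supplied by Proposition \ref{prop:Sym*}\eqref{prop:Sym*-4}. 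Hence the global forgetful functor admits a left adjoint $\Sym_X^*$, and under the identification of $\QCAlgDelta(X)$ and $\QCoh(X)^\cn$ with their respective limits, this left adjoint is given pointwise by $\sF \mapsto (\Sym_{R_s}^*(\sF(\eta_s)))_{s \in K}$, where $\eta_s \colon \Spec R_s \to X$ is the structure morphism. This proves \eqref{prop:Sym*:prestack-1} and, in the representable case $X = \Spec A$, recovers the classical $\Sym_A^*$.

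For \eqref{prop:Sym*:prestack-2}, given $f \colon Y \to X$, I would write $Y$ as a colimit $Y \simeq \varinjlim_{t \in L} \Spec S_t$ in a way that is compatible with the colimit for $X$: every $t \in L$ comes equipped with an $s(t) \in K$ and a map of simplicial commutative rings $R_{s(t)} \to S_t$ implementing $f|_{\Spec S_t}$. Then the square in the statement is the limit over $L^\op$ of the squares
\[
\begin{tikzcd}
\CAlgDelta_{R_{s(t)}} \ar{r}{\mathrm{forg}} \ar{d} & \Mod_{R_{s(t)}}^\cn \ar{d} \\
\CAlgDelta_{S_t} \ar{r}{\mathrm{forg}} & \Mod_{S_t}^\cn,
\end{tikzcd}
\]
each of which is left adjointable by Proposition \ref{prop:Sym*}\eqref{prop:Sym*-4}. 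Passing to limits via \cite[Corollary 4.7.4.18]{HA} once more shows that the global Beck--Chevalley transformation $\lambda_f \colon \Sym_Y^* \circ f^*_{\rm alg} \to f^* \circ \Sym_X^*$ is an equivalence.

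The main technical obstacle I anticipate is not conceptual but bookkeeping: one must choose the diagrams $K$ for $X$ and $L$ for $Y$ coherently so that $f$ is realized as a natural transformation of colimit diagrams, and then verify that the resulting functor of diagrams satisfies the hypotheses of \cite[Corollary 4.7.4.18]{HA} (namely, that the relevant edges are mapped to left-adjointable squares). All of this follows formally once Proposition \ref{prop:Sym*}\eqref{prop:Sym*-4} is in hand, so no genuinely new input is needed beyond the affine theory of \S \ref{sec:dSym*} and the sheaf-theoretic formalism of \S \ref{sec:shvprestacks}.
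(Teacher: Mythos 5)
Your treatment of part (1) is essentially the paper's argument: write $X$ as a colimit of affines, observe the transition squares are left adjointable via Proposition \ref{prop:Sym*}\eqref{prop:Sym*-4}, and apply \cite[Corollary 4.7.4.18]{HA}.

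For part (2), however, your approach contains a genuine gap, not a bookkeeping issue. You want to exhibit the square of the statement as a limit over $L^\op$ of the pointwise squares
\[
\begin{tikzcd}
\CAlgDelta_{R_{s(t)}} \ar{r} \ar{d} & \Mod_{R_{s(t)}}^\cn \ar{d} \\
\CAlgDelta_{S_t} \ar{r} & \Mod_{S_t}^\cn,
\end{tikzcd}
\]
and then pass left adjointability through the limit. For this to identify the top row of the global square with $\QCAlgDelta(X)$ and $\QCoh(X)^\cn$, you would need $\varprojlim_{t \in L^\op} \CAlgDelta_{R_{s(t)}} \simeq \QCAlgDelta(X)$, i.e., that the induced functor $L^\op \to K^\op$ is cofinal. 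There is no reason for this to hold: if you take $L = \mathrm{Aff}_{/Y}$ and define $s(t)$ by composition with $f$, the assignment $t \mapsto s(t)$ is not cofinal unless $f$ is very special, and the resulting limit over $L^\op$ computes $\QCAlgDelta(Y)$ rather than $\QCAlgDelta(X)$. So the claimed reduction of the whole square to a limit of affine squares over a single index category does not go through as written.

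The paper sidesteps this entirely. The single application of \cite[Corollary 4.7.4.18]{HA} in part (1), taking $K$ to be the full slice of affines over $X$, already gives that the square of (2) is left adjointable whenever $Y$ is \emph{affine} (this is exactly the compatibility of the global left adjoint with the pointwise ones that the cited corollary supplies). For a general $f \colon Y \to X$, one then checks that $\lambda_f(\sF)$ is an equivalence after pullback along every $h \colon \Spec A \to Y$, which suffices since equivalences in $\QCAlgDelta(Y)$ are detected affine-locally. The key input is the compatibility of Beck--Chevalley transformations under composition: $h^*\lambda_f(\sF)$ fits into a commutative triangle with $\lambda_h(f^*\sF)$ and $\lambda_{f \circ h}(\sF)$, and both of the latter are equivalences because they have affine source. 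This two-stage reduction (first affine $Y$, then general $Y$ via naturality) is what your proposal is missing, and it is cleaner than trying to rig compatible colimit presentations of $X$ and $Y$.
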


\begin{proof} We use the same strategy as Lurie's proof of \cite[Proposition 6.3.4.1]{SAG}. Write $X$ as a colimit of a diagram $q \colon K \to \Fun(\CAlgDelta, \shS)$ where $q(s) \simeq \Spec A_s$ is affine, $s \in K$, $K$ is a simplicial set. Then for each edge $e \colon s \to s'$ in $K$, the associated diagram of $\infty$-categories 
			\begin{equation*}
				\begin{tikzcd}
						\QCAlgDelta(\Spec A_{s'}) \simeq \CAlgDelta_{A_{s'}}\ar{r}{\mathrm{forg}}\ar{d}{q(e)^*_{\rm alg}}&    \QCoh(\Spec A_{s'})^\cn \simeq \Mod_{A_{s'}}^\cn\ar{d}{q(e)^*}\\
				\QCAlgDelta(\Spec A_s) \simeq \CAlgDelta_{A_s} \ar{r}{\mathrm{forg}}  & \QCoh(\Spec A_s)^\cn \simeq \Mod_{A_s}^\cn
				\end{tikzcd}
			\end{equation*}
	is left adjointable: this follows from the fact that formation of $\Sym^*_R$ commutes with base change. Since $\QCoh(X) = \varprojlim \QCoh(\Spec A_s)$ and $\QCAlgDelta(X) = \varprojlim \QCAlgDelta(\Spec A_s)$, by virtue of \cite[Corollary 4.7.4.18]{HA}, we obtain that $\mathrm{forg} \colon \QCAlgDelta(X) \to \QCoh(X)^{\cn}$ is left adjointable, and the diagram of \eqref{prop:Sym*:prestack-2} is left adjointable whenever $Y$ is affine. To prove the assertion \eqref{prop:Sym*:prestack-2} for a general morphism of prestacks $f \colon Y \to X$, let $\sF \in \QCoh(X)^\cn$, and we wish to show that the natural map $\lambda_f (\sF) \colon \Sym_Y^*(f^* \sF) \to f^* \Sym_X^*(\sF)$ is an equivalence. It suffices to show that for every map $h \colon \Spec A \to Y$, $h^* \lambda_f (\sF) $ is an equivalence. The composition $\Spec A \xrightarrow{h} Y \xrightarrow{f} X$ induces a commutative diagram 
	$$
	\begin{tikzcd}
		&  & h^* \, \Sym_Y^* (f^* \sF) \ar{rd}{h^*\lambda_{f} (\sF)}& \\
		& \Sym_{\Spec A}^* ( h^* \, f^* \sF) \ar{ru}{ \lambda_{h}(f^* \sF)} \ar{rr}{\lambda_{f\circ h} (\sF)}& & h^* \, f^* \Sym_X^* (\sF)
	\end{tikzcd}
	$$
where $\lambda_{f\circ h} (\sF)$ and $\lambda_{h}(f^* \sF)$ are equivalences since we have proved the assertion \eqref{prop:Sym*:prestack-2} in the case where the source $\Spec A$ is affine. Therefore $h^* \lambda_f (\sF) $ is an equivalence.
\end{proof}

\begin{remark} \label{rmk:Sym^*:globalization} Combining Proposition \ref{prop:Sym*:prestack} \eqref{prop:Sym*:prestack-2} and Proposition \ref{prop:Sym*} \eqref{prop:Sym*-4}, we obtain that the left adjoint functor $\Sym_X^*$ of Proposition \ref{prop:Sym*:prestack} agrees with the functor obtained by ``globalizing" the symmetric algebra $\Sym_A^*(M)$ of Definition \ref{def:dsym*}, that is, the functor obtained by applying Remark \ref{rmk:shvprestacks:properties} \eqref{rmk:shvprestacks:properties-2} to the functor $(A, M) \mapsto (A, \Sym_A^*M) \colon \SCRModcn \to \Fun(\Delta^1, \CAlgDelta)$. In particular,  if $X = \Spec R$, and $\sF \in \QCoh(X)$ is the sheaf associated to $M \in \Mod_R$ under the equivalence $\QCoh(X)\simeq \Mod_R$, then $\Sym_{\Spec R}^*( \sF)$ is canonically equivalent to the quasi-coherent $\sO_X$-algebra associated to the derived symmetric algebra $\Sym_R^* (M)$ under the equivalence $\QCAlgDelta(X) \simeq \CAlgDelta_R$. 
Moreover, for any prestack $X$ and any $\sF \in \QCoh(X)^\cn$, there are canonical equivalences of quasi-coherent sheaves
	$$\Sym_X^*(\sF) \simeq \bigoplus_{n \ge 0} \Sym_X^n (\sF),$$
where $\Sym_X^n(\sF)$ is the derived $n$th symmetric power of $\sF$ over $X$ (Construction \ref{constr:symwedgegamma:prestack}).
\end{remark}

\begin{remark}[The underlying commutative algebra objects of derived symmetric algebras]\label{rmk:CAlgofSym:prestack} Combining 
Remarks \ref{rmk:shvprestacks:properties} and \ref{rmk:CAlgofSym}, we obtain for any prestack $X$ a forgetful functor
	$$\Theta_X \colon \QCAlgDelta(X) \to \CAlg(\QCoh(X)^\cn).$$ 
For a quasi-coherent algebra $\sA \in \QCAlgDelta(X)$, we will refer to the image $\Theta_X(\sA)$ of $\sA$ in $\CAlg(\QCoh(X)^\cn)$ as the {\em underlying commutative algebra object in $\QCoh(X)^\cn$ of $\sA$}. 

For any connective quasi-coherent sheaf $\sF \in \QCoh(X)^\cn$, we will generally abuse notations by using $\Sym_X^*(\sF)$ to denote the derived symmetric algebra of $\sF$ over $X$ (as a quasi-coherent algebra), its underlying commutative algebra object $\Theta_X(\Sym_X^*(\sF))$, and its underlying connective quasi-coherent sheaf over $X$. In particular, the underlying quasi-coherent sheaf $\Sym_X^*(\sF) =  \bigoplus_{n \ge 0} \Sym_X^n (\sF)$ is equipped with a multiplication map
	$$m \colon \Sym_X^*(\sF) \otimes_{\sO_X} \Sym_X^*(\sF) \to \Sym_X^*(\sF)$$
which is unital, graded, and commutative and associative up to homotopy. 
\end{remark}

\begin{remark} Combining the machinery of \S \ref{sec:shvprestacks} (especially Remark \ref{rmk:shvprestacks:properties}) and Propositions \ref{prop:Sym*}, \ref{prop:Sym*:cofib}, we will obtain the global version of all the properties listed in Propositions \ref{prop:Sym*} and \ref{prop:Sym*:cofib} for the derived symmetric algebras over prestacks. For example:
	\begin{itemize}[leftmargin=*]
		\item Proposition \ref{prop:Sym*} \eqref{prop:Sym*-1} implies that if $X$ is a derived scheme, $\sF \in \QCoh(X)^\cn$, and let $X_\cl = (|X|, \pi_0(\sO_X))$ denote its underlying classical scheme. Then there is a canonical isomorphism of discrete quasi-coherent $\pi_0(\sO_X)$-algebras 
				$$\pi_0(\Sym_X^*(\sF)) \simeq \pi_0(\Sym_{X_\cl}^* (\pi_0 \sF)) \simeq S^*_{X_\cl} (\pi_0 \sF),$$
				where $S^*_{X_\cl} (\pi_0 \sF)$ denotes the classical symmetric algebra of the discrete quasi-coherent sheaf $\pi_0(\sF)$ over the classical scheme $X_\cl$.	
		\item Proposition \ref{prop:Sym*} \eqref{prop:Sym*-4} implies that if $f \colon Y \to X$ is a natural transformation of functors $X, Y \colon \CAlgDelta \to \shS$, and $\sF \in \QCoh(X)^\cn$, then there is a canonical equivalence of quasi-coherent algebras $f_{\rm alg}^* \Sym_X^*(\sF) \simeq \Sym_Y^*(f^* \sF)$.
		\item Proposition \ref{prop:Sym*} \eqref{prop:Sym*-5} implies that for any prestacks $X,Y$, and $\sF \in \QCoh(X)^\cn$, $\sG \in \QCoh(Y)^\cn$, there is a canonical equivalence of quasi-coherent algebras $\Sym_{X \times Y}^*(\sF \boxplus \sG) \simeq \Sym_X^*(\sF) \boxtimes \Sym_Y^*(\sG)$. In particular, for any $\sF, \sF' \in \QCoh(X)^\cn$, there is a canonical equivalence $\Sym_X^*(\sF \oplus \sF') \simeq \Sym_X^*(\sF) \otimes_{\sO_X} \Sym_X^*(\sF')$.
	\end{itemize}
We leave the details to the readers (as we will not directly use these statements in this paper). On the other hand, we will study the geometric (and essentially equivalent) version of these properties in the next section.
\end{remark}

\section{Derived projectivizations}
Let $X$ be a scheme and $\sF$ a discrete quasi-coherent sheaf on $X$, then Grothendieck's construction of the (classical) projectivization $\PP_{\cl}(\sF)$ of $\sF$ over $X$ can be described by any of the following equivalent ways:
\begin{enumerate}[label=(\alph*)]
	\item $\PP_{\cl}(\sF)$ is the homogeneous prime spectrum of the graded $\sO_X$-algebra $S_X^*(\sF)$, where $S_X^*(\sF)$ is the classical symmetric algebras of $\sF$ over $X$.
	\item $\PP_{\cl}(\sF)$ is the quotient $[(\VV_{\cl}(\sF) - \mathbf{0}_{\sF})/\GG_{m,X}]$, where $\VV_{\cl}(\sF)$ is the classical affine cone $ \Spec_{\cl, X}(S_X^*(\sF))$ and $\mathbf{0}_{\sF}$ is the zero section.
	\item $\PP_{\cl}(\sF)$ represents the functor $\CAlg^\heartsuit \to \shS{\rm et}$ over $X$ which carries a morphism of classical schemes $\eta \colon T=\Spec R \to X$, where $R \in \CAlg^\heartsuit$, to the set of equivalent classes of surjections $\pi_0(\eta^* \sF) \twoheadrightarrow \sL$, where $\sL$ is a line bundle on $T$.
\end{enumerate}
Consequently, there are three equivalent approaches to defining the derived projectivizations in the framework of derived algebraic geometry. This paper adopts an approach which is the derived counterpart of $(c)$ and close in spirit to Grothendieck's approach in \cite{EGAI}. 

Let $X$ be a prestack and $\sF$ a connective quasi-coherent sheaf over $X$. In \S \ref{sec:def:dproj:qcoh}, we define the {\em affine cone} $\VV(\sF)$ of $\sF$ over $X$ and study their properties (see Propositions \ref{prop:affinecone} and \ref{prop:affinecone:PB}). In \S \ref{sec:def:dproj:qcoh}, we define the {\em derived projectivization} $\PP(\sF)$ of $\sF$ over $X$ via a similar manner as the above $(c)$, and examine their properties, including finiteness (Proposition \ref{prop:proj:finite}) and functoriality (Proposition \ref{prop:proj-4,5}), their behaviors under quotient maps (Corollary \ref{cor:proj:closedimmersion}, Proposition \ref{prop:proj:PB}), and describe their relative cotangent complexes through the Euler fiber sequences which vastly generalize the classical Euler sequences (Theorem \ref{thm:proj:Euler}).

\begin{remark} The readers are referred to Hekking's paper \cite{He21} for a different approach which is more closely related to the above perspectives $(a)$ and $(b)$.
\end{remark}

\begin{remark} We present this section in a way such that all the above constructions and results (especially those in \S \ref{sec:def:dproj:qcoh}) are readily generalized to the situation of Grassmannians, flags, and their variants (see \cite{J20, J21}). See \cite{J22a} for details.
\end{remark}

\subsection{Affine cones} \label{sec:affine-cone} Let $X$ be a prestack $\CAlgDelta \to \shS$ and $\sF$ a connective quasi-coherent sheaf on $X$. For for each $A \in \CAlgDelta$ and each $\eta \colon T=\Spec A \to X$, 
the construction 
	\begin{align} \label{def:functor:vectorbundle}
	(\eta \colon T =\Spec A \to X) \mapsto (\Map_{\QCoh(T)}(\eta^* \sF, \sO_T) \in \shS)
	\end{align}
defines a prestack $F \in \Fun(\CAlgDelta, \shS)_{/X}$ with a natural projection $F \to X$. More informally, the prestack $F$ parametrizes the ``cosections of the sheaf $\sF$ over $X$".

\begin{proposition} Let $X$ be a prestack and let $\sF$ be a connective quasi-coherent sheaf on $X$. Then the functor \eqref{def:functor:vectorbundle} is representable by the relative affine derived scheme over $X$:
	$$\pr_{\VV(\sF)} \colon \VV(\sF) = \Spec (\Sym_X^* (\sF)) \to X.$$
The universal morphism of $\sO_{\VV(\sF)}$-modules $\pr_{\VV(\sF)}^* \sF \to \sO_{\VV(\sF)}$ is induced from the canonical morphism $\sF \to \Sym_X^*(\sF)$ by adjunction.
\end{proposition}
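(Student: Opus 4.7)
The plan is to chain together three already-established equivalences: the affine correspondence of Proposition \ref{prop:affine}\eqref{prop:affine-3}, the base-change property of derived symmetric algebras (Proposition \ref{prop:Sym*:prestack}\eqref{prop:Sym*:prestack-2}), and the $\Sym_X^*\dashv \mathrm{forg}$ adjunction (Proposition \ref{prop:Sym*:prestack}\eqref{prop:Sym*:prestack-1}). Since $\sA := \Sym_X^*(\sF) \in \QCAlgDelta(X)$ is a quasi-coherent $\sO_X$-algebra, the relative spectrum $\VV(\sF) = \Spec_X(\sA)$ is defined (Definition \ref{def:relative:spec}) and the projection $\pr_{\VV(\sF)} \colon \VV(\sF) \to X$ is affine.

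First, for any $\eta \colon T = \Spec A \to X$, Proposition \ref{prop:affine}\eqref{prop:affine-3} (applied with $g = \eta$ and $Z = T$) produces a natural homotopy equivalence
\[
\Map_{\Fun(\CAlgDelta,\shS)_{/X}}(T, \VV(\sF)) \;\simeq\; \Map_{\QCAlgDelta(T)}\!\bigl(\eta^*_{\rm alg}\Sym_X^*(\sF),\, \sO_T\bigr).
\]
Next, Proposition \ref{prop:Sym*:prestack}\eqref{prop:Sym*:prestack-2} yields a canonical equivalence $\eta^*_{\rm alg}\Sym_X^*(\sF) \simeq \Sym_T^*(\eta^*\sF)$ in $\QCAlgDelta(T)$. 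Finally, since $\sO_T$ is connective, the adjunction of Proposition \ref{prop:Sym*:prestack}\eqref{prop:Sym*:prestack-1} gives
\[
\Map_{\QCAlgDelta(T)}\!\bigl(\Sym_T^*(\eta^*\sF),\, \sO_T\bigr) \;\simeq\; \Map_{\QCoh(T)^\cn}(\eta^*\sF,\, \sO_T) \;\simeq\; \Map_{\QCoh(T)}(\eta^*\sF,\, \sO_T),
\]
where the last equivalence uses that both source and target are connective. Composing these three equivalences yields the representability statement, and all equivalences are functorial in $\eta$.

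For the identification of the universal cosection, I would trace the identity element $\id \in \Map_{\Fun(\CAlgDelta,\shS)_{/X}}(\VV(\sF), \VV(\sF))$ through the chain above (with $T = \VV(\sF)$ and $\eta = \pr_{\VV(\sF)}$). Under the affine correspondence, $\id$ corresponds to the counit $\pr_{\VV(\sF), \rm alg}^* \Sym_X^*(\sF) \to \sO_{\VV(\sF)}$; under base change this becomes the counit $\Sym_{\VV(\sF)}^*(\pr_{\VV(\sF)}^*\sF) \to \sO_{\VV(\sF)}$ of the symmetric algebra adjunction over $\VV(\sF)$; and under the final adjunction this transposes to the morphism $\pr_{\VV(\sF)}^*\sF \to \sO_{\VV(\sF)}$ obtained from $\sF \to \Sym_X^*(\sF)$ by pullback and adjunction, as required.

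There is no serious obstacle here: the argument is a formal assembly of adjunctions from earlier sections. The only mild care needed is in the last step, where one must observe that $\Map_{\QCoh(T)^\cn}(\eta^*\sF,\sO_T) \to \Map_{\QCoh(T)}(\eta^*\sF,\sO_T)$ is an equivalence because $\QCoh(T)^\cn \subseteq \QCoh(T)$ is a full subcategory containing both objects.
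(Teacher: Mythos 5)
Your proof is correct and follows exactly the same route as the paper's: the chain of three equivalences via Proposition \ref{prop:affine}\eqref{prop:affine-3}, then Proposition \ref{prop:Sym*:prestack}\eqref{prop:Sym*:prestack-2}, then Proposition \ref{prop:Sym*:prestack}\eqref{prop:Sym*:prestack-1}. The only extra content you supply is the explicit tracing of the universal cosection, which the paper leaves implicit — a reasonable addition, not a deviation.
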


\begin{proof} We prove a slightly more general statement: there are canonical equivalences for all prestacks $T$ and natural transformations $\eta \colon T \to X$:
	\begin{align*}
		\Map_{\Fun(\CAlgDelta, \shS)_{/X}}(T, \VV(\sF)) \simeq \Map_{\QCAlgDelta(T)}(\eta^*_{\rm alg} \Sym_S^*(\sF), \sO_T)  \\
	\simeq \Map_{\QCAlgDelta(T)}(\Sym_T^*(\eta^* \sF), \sO_T) \simeq \Maps_{\QCoh(T)}(\eta^* \sF, \sO_T).
	\end{align*}
Here, the first equivalence follows from Proposition \ref{prop:affine} \eqref{prop:affine-3}, the second from Proposition \ref{prop:Sym*:prestack} \eqref{prop:Sym*:prestack-2}, and the last from Proposition \ref{prop:Sym*:prestack} \eqref{prop:Sym*:prestack-1}.
\end{proof}

\begin{definition} Let $X$ be a prestack and let $\sF$ be a connective quasi-coherent sheaf on $X$. We will refer to $\VV(\sF)$ as the {\em affine cone of $\sF$ over $X$}, and we will refer to the morphism of $\sO_{\VV(\sF)}$-modules $\pr_{\VV(\sF)}^* \sF \to \sO_{\VV(\sF)}$ as the {\em universal cosection morphism}.
\end{definition}

\begin{remark}[Grothendieck's dual convention] \label{rmk:Grothendieck:affinecone} Beware that we are following Grothendieck's convention, so that the affine cone $\VV(\sF)$ parametrizes the {\em cosections} of $\sF$ rather than sections of $\sF$. To put it slightly differently -- for simplicity of exposition, we assume now $\sF$ is connective and perfect -- for any morphism $\eta \colon T= \Spec A \to X$, the space of $T$-valued points of $\VV(\sF)$ over $\eta$ is the space of sections of the {\em dual} $(\sF_T)^\vee$ of $\sF_T$:
	$$\VV(\sF)(\eta) \simeq \Map_{\QCoh(T)}(\sF_T, \sO_T) \simeq \Map_{\QCoh(T)}(\sO_T, (\sF_T)^\vee) \simeq \Omega^{\infty} ((\sF_T)^\vee) \in \shS.$$
(Here, $\sF_T = \eta^* \sF$, and $\Omega^{\infty} \colon \Sp \to \shS$ is the zeroth space functor.) In particular, for any field $\kappa$ and a map $\eta \colon \Spec \kappa \to X$, the space of $\kappa$-valued points of $\VV(\sF)$ over $\eta$,
	$$\VV(\sF)(\eta) \simeq \Map_{\Mod_{\kappa}}(\sF_{\kappa}, \kappa) \simeq \Ext^0_\kappa( \pi_0 (\sF_\kappa), \kappa) \simeq \Map_{\Mod_{\kappa}}(\kappa, \sF_{\kappa}^\vee) \simeq \pi_0 ((\sF_\kappa)^\vee),$$
is (canonically homotopy equivalent to) a $\kappa$-vector space.
\end{remark}


\begin{remark}[Zero sections] \label{rmk:zerosections} By construction, there is a canonical equivalence from the space of sections of the projection $\pr \colon \VV(\sF) \to X$ to the space $\Map_{\QCoh(X)}(\sF, \sO_X)$. In particular, the zero morphism $0 \in \Map_{\QCoh}(\sF, \sO_X)$ corresponds to a canonical section $i_{\mathbf{0}} \colon X \to \VV(\sF)$. We will refer $i_{\mathbf{0}}$ as the {\em zero section} of the vector bundle $\pr \colon \VV(\sF) \to X$ and denote the image of $i_{\mathbf{0}}$ by $\mathbf{0}_{\sF} \subset \VV(\sF)$.
\end{remark}

\begin{example} If $\sF = \sO_X^{\oplus n}$, then $\Sym_X^*(\sO_X^{\oplus n}) = \sO_X[T_1, \ldots, T_n] = \sO_X \otimes_\ZZ \ZZ[T_1, \ldots, T_n]$ (where $T_i$ are  indeterminates) and we have canonical equivalence $\VV(\sO_X^{\oplus n}) \simeq X \times_\ZZ \AA_\ZZ^n$.
\end{example}

\begin{example}[Universal Hom space]  \label{eg:Homspaces} Let $X$ be a prestack, and let $\sF$ and $\sE$ be quasi-coherent sheaves on $X$ such that $\sF$ is connective and $\sE$ is a truncated perfect complex. Then the functor $H \colon \Fun(\CAlgDelta, \shS)_{/X} \to \shS$ which carries $\eta \colon T \to X$ to 
	$$H(\eta) = \Map_{\QCoh(T)}(\eta^* \sF, \eta^* \sE) \simeq \Map_{\QCoh(T)}(\eta^* (\sF \otimes_{\sO_X} \sE^\vee), \sO_T)$$
is representable by the relative affine derived scheme $\pr \colon \VV(\sF \otimes_{\sO_X} \sE^\vee) \to X$. We will let $\pr \colon |\sHom_X(\sF, \sE)| \to X$ denote the relative derived scheme $\VV(\sF \otimes_{\sO_X} \sE^\vee) \to X$ and refer it as the {\em universal Hom spaces of morphisms from $\sF$ to $\sE$ over $X$}. Let $\tau \colon \pr^* \sF \to \pr^* \sE$ denote the canonical map of quasi-coherent sheaves which corresponds to the tautological cosection map $\pr^*(\sF \otimes \sE^\vee) \to \sO_{\VV(\sF \otimes \sE^\vee)}$; We will refer $\tau$ as the {\em tautological morphism}. By construction, given $\eta \colon T \to X$, then each morphism $\sigma \colon \eta^* \sF \to \eta^* \sE$ in $\QCoh(T)$ is classified by a map $f \colon T \to |\sHom_X(\sF, \sE)|$ lifting $\eta$ such that there is a canonical equivalence $\sigma \simeq f^* \tau$.
\end{example}

\begin{definition}\label{def:perfectfp:prestacks} Let $X$ be a prestack and $\sF \in \QCoh(X)$. We say that $\sF$ is {\em perfect to order $n$ (in the sense of \cite[Definition 2.5.7]{DAG}; resp. 
 almost perfect, perfect)} if for every simplicial commutative ring $R$ and every point $\eta \in X(\eta)$, then $R$-module $\sF(\eta)$ has the same property in the sense of Definition \ref{def:perfectfp} (this is well-defined since the above properties are stable under base change \cite[Proposition 3.5.2]{DAG}). Let $f \colon X \to Y$ be a morphism of derived schemes. We say that $f$ is locally of finite presentation to order $n$ (resp. locally almost of finite presentation, locally of ﬁnite presentation), if for every pair of open immersions $\Spec B \subseteq X$, $\Spec A \subseteq Y$, where $A, B \in \CAlgDelta$, such that $f$ restricts to a morphism $\Spec B \to \Spec A$, $B$ is of finite presentation to order $n$ (resp. almost of finite presentation, locally of ﬁnite presentation) over $A$ in the sense of Definition \ref{def:perfectfp}. Let $f \colon X \to Y$ be a morphism of prestacks which is a relative derived scheme. We say that $f$ is {\em locally of finite presentation to order $n$ (resp. locally almost of finite presentation, locally of ﬁnite presentation)} if for every map $\eta \colon \Spec R \to Y$, where $R \in \CAlgDelta$, the induced morphism of derived schemes $X_R = X \times_{\Spec R} Y \to \Spec R$ has the same property in the above sense for derived schemes (this is well defined as these properties are stable under base change and local on the target with respect to {\'e}tale topology; see \cite[Propositions 3.5.2, 3.5.3]{DAG}).
\end{definition}

We summarize the basic properties of affine cones in the following proposition:
\begin{proposition} \label{prop:affinecone} Let $X$ be a prestack and $\sF$ a connective quasi-coherent sheaf on $X$. 
\begin{enumerate}[leftmargin=*] 
	\item \label{prop:affinecone-1} (Underlying classical schemes) If $X$ is a derived scheme, then $\VV(\sF)$ is a derived scheme, and the underlying classical scheme of $\VV(\sF)$ is canonically identified with the classical affine cone $\VV_\cl(\pi_0(\sF)) = \Spec_{X_\cl}^\cl \Sym_{\cl}^*(\pi_0 (\sF)) \to X_\cl$ of the sheaf $\pi_0(\sF) \in \QCoh(X_\cl)^\heartsuit$ over $X_\cl = t_0(X)$.
	\item \label{prop:affinecone-2} (Finiteness conditions) The projection map $\pr_{\VV(\sF)} \colon \VV(\sF) \to X$ is locally of finite presentation to order $n$ (resp., locally almost of ﬁnite presentation, locally of finite presentation) if and only if the quasi-coherent module $\sF \in \QCoh(X)^{\rm cn}$ is perfect to order $n$ (resp. almost perfect, perfect).
	\item \label{prop:affinecone-3} (Relative cotangent complex) The projection $\pr_{\VV(\sF)} \colon \VV(\sF) \to X$ admits a relative cotangent complex given by $L_{\VV(\sF)/X} = \pr_{\VV(\sF)}^* \sF \in \QCoh(\VV(\sF))^\cn$, and the inclusion map of zero section $i_{\mathbf{0}} \colon X \to \VV(\sF)$ (see Remark \ref{rmk:zerosections}) admits a relative cotangent complex given by $L_{X/\VV(\sF)} \simeq \Sigma \, \sF \in \QCoh(X)^\cn$. In particular, the projection $\pr_{\VV(\sF)} \colon \VV(\sF) \to X$ is smooth if and only if $\sF$ is a vector bundle, and is quasi-smooth if and only $\sF$ has perfect-amplitude contained in $[0,1]$.
	\item \label{prop:affinecone-4} (The formation of affine cones commutes with base change) For any maps of prestacks $f \colon X' \to X$, there is a canonical equivalence $\VV(f^* \sF) \simeq \VV(\sF) \times_X X'$. 
	\item \label{prop:affinecone-5}(Direct sums) For any pair of connective quasi-coherent sheaves $\sE, \sF \in \QCoh(X)^\cn$, there is a canonical equivalence $\VV(\sE \oplus \sF) \simeq \VV(\sE) \times_X \VV(\sF)$.
	 \item \label{prop:affinecone-6} Let $\varphi \colon \sE \to \sF$ be a map of connective quasi-coherent sheaves on $X$. Then $\varphi$ induces an affine morphism $\VV(\varphi^*) \colon \VV(\sF) \to \VV(\sE)$ such that the pullback of the universal cosection morphism on $\VV(\sE)$ is canonically equivalent to the universal cosection morphism on $\VV(\sF)$. Moreover, the morphism $\VV(\varphi^*)$ admits a relative cotangent complex which can be described by the formula
	 $$L_{\VV(\sF)/\VV(\sE)} =  \pr_{\VV(\sF)}^*({\rm cofib}\big(\varphi \colon \sE \to \sF)\big).$$
If $\varphi \colon \sE \to \sF$ is surjective (that is, the induced map $\pi_0(\sE) \to \pi_0(\sF)$ is an epimorphism in $\QCoh(X)^\heartsuit$), then $\VV(\varphi^*)$ is a closed immersion. In particular, if the fiber of $\varphi$ is a vector bundle on $X$, then $\iota$ is a quasi-smooth closed immersion (which is given by the canonical cosection of the vector bundle ${\rm fib}(\varphi)$).  
	\end{enumerate}
\end{proposition}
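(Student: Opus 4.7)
The plan is to deduce each assertion from the corresponding property of the derived symmetric algebra $\Sym_X^*(\sF)$, proved in \S \ref{sec:dSym*} in the affine case (Propositions \ref{prop:Sym*} and \ref{prop:Sym*:cofib}) and globalized in \S \ref{sec:Sym*:prestack}, combined with the functorial properties of the relative spectrum functor $\Spec_X$ of Definition \ref{def:relative:spec}. Assertions \eqref{prop:affinecone-4} and \eqref{prop:affinecone-5} are essentially immediate: for \eqref{prop:affinecone-4}, apply base change of $\Spec_X$ along $f$ together with the base change equivalence $f_{\rm alg}^* \Sym_X^*(\sF) \simeq \Sym_{X'}^*(f^*\sF)$ coming from Proposition \ref{prop:Sym*} \eqref{prop:Sym*-4} (globalized as in Remark \ref{rmk:Sym^*:globalization}); for \eqref{prop:affinecone-5}, combine the compatibility of $\Spec_X$ with pushouts of quasi-coherent algebras and the canonical equivalence $\Sym_X^*(\sE \oplus \sF) \simeq \Sym_X^*(\sE) \otimes_{\sO_X} \Sym_X^*(\sF)$ of Proposition \ref{prop:Sym*} \eqref{prop:Sym*-5}.

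Next, for \eqref{prop:affinecone-1}, I would reduce to the affine case by \eqref{prop:affinecone-4}, where it becomes the statement that $\pi_0(\Sym_A^*(M)) \simeq S^*_{\pi_0 A}(\pi_0 M)$ proved in Proposition \ref{prop:Sym*} \eqref{prop:Sym*-1}; passing to $\Spec^\cl$ over $X_\cl$ identifies the underlying classical scheme of $\VV(\sF)$ with Grothendieck's classical affine cone of $\pi_0(\sF)$. For \eqref{prop:affinecone-2}, the finiteness properties of the morphism $\pr_{\VV(\sF)}$ are, by definition (Definition \ref{def:perfectfp:prestacks}), testable after pullback to an affine chart, so \eqref{prop:affinecone-4} again reduces us to Proposition \ref{prop:Sym*} \eqref{prop:Sym*-2}. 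For \eqref{prop:affinecone-3}, the formula $L_{\VV(\sF)/X} \simeq \pr_{\VV(\sF)}^*\sF$ follows from Proposition \ref{prop:Sym*} \eqref{prop:Sym*-3} after identifying $\Sym_X^*(\sF)$-modules with quasi-coherent sheaves on $\VV(\sF)$; the computation $L_{X/\VV(\sF)} \simeq \Sigma\,\sF$ follows from the fiber sequence $i_{\mathbf 0}^* L_{\VV(\sF)/X} \to L_{X/X} \to L_{X/\VV(\sF)}$ associated to the composition $X \xrightarrow{i_{\mathbf 0}} \VV(\sF) \xrightarrow{\pr} X$ (which is the identity), and the observation that $i_{\mathbf 0}^* \pr^*\sF \simeq \sF$. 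The smoothness/quasi-smoothness characterizations are then standard consequences.

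The most substantive assertion is \eqref{prop:affinecone-6}. Given $\varphi \colon \sE \to \sF$, applying $\Sym_X^*$ yields a morphism $\Sym_X^*(\varphi) \colon \Sym_X^*(\sE) \to \Sym_X^*(\sF)$ in $\QCAlgDelta(X)$, and relative $\Spec$ produces the affine morphism $\VV(\varphi^*) \colon \VV(\sF) \to \VV(\sE)$; its compatibility with the universal cosections is built into the adjunction of Proposition \ref{prop:Sym*:prestack} \eqref{prop:Sym*:prestack-1}. For the cotangent complex, the key input is Proposition \ref{prop:Sym*:cofib}: the pushout square
\[
\begin{tikzcd}
\Sym_X^*(\sE) \ar{r}{\Sym_X^*(\varphi)} \ar{d} & \Sym_X^*(\sF) \ar{d} \\
\sO_X \ar{r} & \Sym_X^*(\cofib(\varphi))
\end{tikzcd}
\]
shows that base-changing $L_{\VV(\sE)/X}$ along $\VV(\varphi^*)$ and then taking the relative cotangent complex of $\VV(\varphi^*)$ fits into a fiber sequence whose analysis, using the transitivity triangle $\VV(\sF) \to \VV(\sE) \to X$ and the formula from \eqref{prop:affinecone-3}, yields $L_{\VV(\sF)/\VV(\sE)} \simeq \pr_{\VV(\sF)}^*(\cofib(\varphi))$. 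The main obstacle is the closed immersion claim when $\varphi$ is surjective on $\pi_0$: here I would combine \eqref{prop:affinecone-1} with the fact that surjectivity of $\varphi$ implies surjectivity of the induced map $S^*_{\pi_0 \sO_X}(\pi_0 \sE) \to S^*_{\pi_0 \sO_X}(\pi_0 \sF)$ of classical symmetric algebras (which is classical), so that the underlying classical morphism is a closed immersion; since $\VV(\varphi^*)$ is affine and its relative cotangent complex is connective (being $\pr^* \cofib(\varphi)$ with $\cofib(\varphi)$ connective), this suffices. The quasi-smoothness when $\fib(\varphi)$ is a vector bundle then follows from the shift formula $\cofib(\varphi) \simeq \Sigma\,\fib(\varphi)$, which places $L_{\VV(\sF)/\VV(\sE)}$ in cohomological degree $-1$.
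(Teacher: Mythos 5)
Your proof is correct and follows essentially the same route as the paper: assertions \eqref{prop:affinecone-1}--\eqref{prop:affinecone-5} reduce to the corresponding local statements in Proposition \ref{prop:Sym*}, and for \eqref{prop:affinecone-6} the relative cotangent complex is extracted from the transitivity fiber sequence for $\VV(\sF)\to\VV(\sE)\to X$ together with \eqref{prop:affinecone-3}, while the closed immersion claim follows from $\pi_0$-surjectivity of $\Sym_X^*(\varphi)$ via Proposition \ref{prop:Sym*} \eqref{prop:Sym*-1}. One small note: your invocation of the pushout square from Proposition \ref{prop:Sym*:cofib} and of connectivity of the relative cotangent complex in \eqref{prop:affinecone-6} are both superfluous here -- the transitivity triangle plus \eqref{prop:affinecone-3} yields $L_{\VV(\sF)/\VV(\sE)} \simeq \pr_{\VV(\sF)}^*(\cofib(\varphi))$ directly, and a morphism of prestacks that is affine with $\pi_0$-surjective structure map is already a closed immersion by definition, without any reference to the cotangent complex.
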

\begin{proof} As the assertions \eqref{prop:affinecone-1} through \eqref{prop:affinecone-5} are local, they follow directly from the corresponding properties of symmetric algebras of Proposition \ref{prop:Sym*} \eqref{prop:Sym*-1} through \eqref{prop:Sym*-5}, respectively. For the last assertion \eqref{prop:affinecone-6}, the map of quasi-coherent sheaves  $\varphi$ induces a map of quasi-coherent algebras $\Sym_X^*(\varphi) \colon \Sym_X^* \sE \to \Sym_X^* \sF$. Hence by Proposition \ref{prop:affine}, we obtain an affine map $\VV(\varphi^*) \colon \VV(\sF) \to \VV(\sE)$. By virtue of \cite[Proposition 3.2.12]{DAG}, the composition of maps $\VV(\sF) \to \VV(\sE) \to X$ induces a canonical cofiber sequence
	$$\iota^* L_{\VV(\sE)/X} \to L_{\VV(\sF)/X} \to L_{\VV(\sF)/\VV(\sE)}.$$ 
Hence the desired expression of $L_{\VV(\sF)/\VV(\sE)}$ follows from assertion \eqref{prop:affinecone-3}. If $\varphi \colon \sE \to \sF$ is sujrective, then $\Sym_X^*(\varphi) \colon \Sym_X^* \sE \to \Sym_X^* \sF$ is surjective by virtue of Proposition \ref{prop:Sym*} \eqref{prop:Sym*-1}, hence $\VV(\varphi^*)$ is a closed immersion. Finally, if $\fib(\varphi)$ is a vector bundle, then $L_{\VV(\sF)/\VV(\sE)} \simeq \Sigma \fib(\varphi)$ is a perfect complex of Tor-amplitude concentrated on degree $1$, hence $\VV(\varphi^*)$ is a quasi-smooth closed immersion.
\end{proof}

\begin{proposition} \label{prop:affinecone:PB}  Let $X$ be a prestack and $\sF' \xrightarrow{\rho'} \sF \xrightarrow{\rho''} \sF''$ a fiber sequence of connective quasi-coherent sheaves on $X$. Then there is a pullback diagram of prestacks:
	$$
	\begin{tikzcd}
		\VV(\sF'') \ar{d}[swap]{\pr_{\VV(\sF'')}} \ar{r}{\VV(\rho''^*)}& \VV(\sF) \ar{d}{\VV(\rho'^*)} \\
		X \ar{r}{i_{\mathbf{0}}} & \VV(\sF'),
	\end{tikzcd}
	$$
where $\VV(\rho'^*)$ and $\VV(\rho''^*)$ are the natural induced affine maps (see  Proposition \ref{prop:affinecone} \eqref{prop:affinecone-6}), and $i_{\mathbf{0}} \colon X \to \VV(\sF')$ is the inclusion of the zero section (see Remark \ref{rmk:zerosections}).
\end{proposition}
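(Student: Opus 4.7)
I will argue via the functor of points; the pullback square is then visible after a single unwinding. Let $\eta \colon T \to X$ be an arbitrary morphism from a prestack $T$. By definition of the pullback and of $\VV(-)$, the space of $T$-points of $X \times_{\VV(\sF')} \VV(\sF)$ lying over $\eta$ is the homotopy fiber
\[
\fib\!\left( \Map_{\QCoh(T)}(\eta^*\sF,\sO_T) \xrightarrow{\;\circ\, \eta^*\rho'\;} \Map_{\QCoh(T)}(\eta^*\sF',\sO_T)\right)
\]
taken over the point corresponding to the zero morphism (which, by Remark~\ref{rmk:zerosections}, is precisely the point picked out by $i_{\mathbf{0}} \circ \eta$).

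Since the pullback functor $\eta^* \colon \QCoh(X) \to \QCoh(T)$ preserves small colimits (Remark~\ref{rmk:QCoh:properties}), and the given fiber sequence of connective sheaves is in particular a cofiber sequence in the stable $\infty$-category $\QCoh(X)$, the sequence $\eta^*\sF' \to \eta^*\sF \to \eta^*\sF''$ is again a cofiber sequence in $\QCoh(T)$. Applying the corepresented functor $\Map_{\QCoh(T)}(-,\sO_T)$ turns this cofiber sequence into a fiber sequence of spaces, and identifies the fiber displayed above with $\Map_{\QCoh(T)}(\eta^*\sF'',\sO_T) \simeq \VV(\sF'')(\eta)$. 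Naturality in $\eta$ and compatibility with the map $\VV(\rho''^*)$ are immediate from the construction, yielding the desired pullback square.

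\textbf{Alternative formulation.} One could equally well proceed algebraically: the globalization (via the machinery of \S\ref{sec:shvprestacks} together with Proposition~\ref{prop:Sym*:prestack}\eqref{prop:Sym*:prestack-2}) of Proposition~\ref{prop:Sym*:cofib} produces a pushout square
\[
\begin{tikzcd}
\Sym_X^*(\sF') \ar{r} \ar{d} & \Sym_X^*(\sF) \ar{d} \\
\sO_X \ar{r} & \Sym_X^*(\sF'')
\end{tikzcd}
\]
in $\QCAlgDelta(X)$, and applying $\Spec_X$ --- which by Proposition~\ref{prop:affine}\eqref{prop:affine-2} identifies $\QCAlgDelta(X)^{\op}$ with $\mathrm{Aff}_{/X}$, and thus converts pushouts of algebras to pullbacks of affine $X$-schemes (using that affine morphisms are stable under base change) --- delivers the claimed square.

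\textbf{Expected difficulty.} There is no real obstacle; the content is essentially formal once the right framework is in place. The only point requiring any care is the identification of the basepoint in the corepresented fiber with the zero section $i_{\mathbf{0}}$ of $\VV(\sF')$, which is however exactly Remark~\ref{rmk:zerosections}. I would present the functor-of-points argument in the text, as it is both shorter and makes manifest why the zero section enters.
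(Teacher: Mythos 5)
Your proof is correct, and your primary (functor-of-points) argument is genuinely different from the route taken in the paper. The paper's proof simply reduces to the affine case $X = \Spec R$ and invokes Proposition~\ref{prop:Sym*:cofib}, relying on the fact that $\Spec_X$ converts pushouts of quasi-coherent algebras into pullbacks of affine $X$-schemes --- this is exactly what you sketch as your ``alternative formulation.'' Your main argument instead computes $T$-points of both sides directly: the fiber sequence $\eta^*\sF' \to \eta^*\sF \to \eta^*\sF''$ is also a cofiber sequence in the stable $\infty$-category $\QCoh(T)$, so $\Map_{\QCoh(T)}(-,\sO_T)$ carries the pushout square defining the cofiber to a pullback square of spaces, which is precisely the identification of $\VV(\sF'')(\eta)$ with the homotopy fiber of $\VV(\sF)(\eta) \to \VV(\sF')(\eta)$ over $i_{\mathbf{0}}\circ\eta$. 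This is a cleaner and more elementary argument --- it bypasses the reduction to the affine case and the symmetric-algebra machinery entirely, using only the corepresentability of $\VV(-)$ and the exactness of $\eta^*$. The cost is that it does not, by itself, re-derive the algebra-level pushout statement that the paper reuses elsewhere (e.g.\ in Remark~\ref{rmk:dzero:algebra}), whereas the paper's approach makes that fact visible. Both are valid; your presentation, which leads with the functor-of-points argument and records the algebraic one as an alternative, is reasonable.

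One small point worth stating explicitly if you write this up: the basepoint identification. The zero section $i_{\mathbf{0}}\colon X \to \VV(\sF')$ is, by Remark~\ref{rmk:zerosections}, the section classifying the zero cosection $0 \in \Map_{\QCoh(X)}(\sF',\sO_X)$, and the zero map $\eta^*\sF' \to \sO_T$ is precisely the image of $0$ under $\Map(0,D) = * \to \Map(\eta^*\sF',\sO_T)$ in the corepresented pullback square --- so the basepoints match on the nose, as you note. Your statement that naturality in $\eta$ is ``immediate from the construction'' is also fine, since the identification of the cofiber $\eta^*\sF''$ and the comparison map $\VV(\rho''^{*})$ are both obtained by applying $\eta^*$ (which commutes with everything in sight) to the universal situation over $X$.
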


\begin{proof} We may reduce to the case where $X= \Spec R$, $R \in \CAlgDelta$. Then the assertion follows from Proposition \ref{prop:Sym*:cofib}. (Notice that this proposition provides an alternative proof of the assertions after ``moreover" of Proposition \ref{prop:affinecone} \eqref{prop:affinecone-6}.)
\end{proof}

\begin{remark} \label{rmk:affinecone:PB} In the same situation of Proposition  \ref{prop:affinecone:PB}, it follows from the pullback diagram of  Proposition  \ref{prop:affinecone:PB} that there is a pullback diagram of prestacks:
	$$
	\begin{tikzcd}
		\VV(\sF'') \ar{d}[swap]{\VV(\rho''^*)} \ar{r}{\VV(\rho''^*)}& \VV(\sF) \ar{d}{\iota_{\rho'}} \\
		\VV(\sF)  \ar{r}{\iota_{\mathbf{0}}} & \VV(\sF') \times_X \VV(\sF),
	\end{tikzcd}
	$$
where $\iota_{\mathbf{0}}, \iota_{\rho'}$ are sections of the projection $\VV(\sF') \times_X \VV(\sF) \to \VV(\sF)$ that classify the zero map $\pr_{\VV(\sF)}^* \sF' \to \sO_{\VV(\sF)}$ and the composition $\pr_{\VV(\sF)}^* \sF' \xrightarrow{\pr_{\VV(\sF)}^* \rho'} \pr_{\VV(\sF)}^* \sF \to \sO_{\VV(\sF)}$, respectively. 
 \end{remark}

As an immediate application of the above construction, we can formulate the theory of derived zero loci of a cosection in a relative simple form.

\begin{definition}[Derived zero locus of a cosection] \label{def:dzero} Let $X$ be a prestack, and let $\sF$ be a connective quasi-coherent sheaf on $X$. For a cosection $\rho \colon \sF \to \sO_X$ of $\sF$, we define the {\em derived zero locus of the cosection $\rho$} to be the essentially unique closed sub-prestack $\iota \colon Z (\rho) \to X$ of $X$ that fits into a pullback square of prestacks:
	\begin{equation*}
	\begin{tikzcd}
		Z(\rho) \ar{d}[swap]{\iota} \ar{r}{\iota} & X \ar{d}{i_{\rho}} \\
		X \ar{r}{i_{\mathbf{0}}} & \VV(\sF),
	\end{tikzcd}
	\end{equation*}
where $i_{\mathbf{0}}$ and $i_{\rho}$ are sections of the projection $\VV(\sF) \to X$ which classify the zero map $\sF \to \sO_X$ and the cosection $\rho \colon \sF \to \sO_X$, respectively. If $\sF$ is furthermore perfect, we will generally abuse notations by writing $Z(\rho) = Z(\rho^\vee)$ and referring to it as also the {\em derived zero locus of the section $\rho^\vee \colon \sO_X \to \sF^\vee$}.
 \end{definition}
 
\begin{remark}[Affine cases] Suppose $X = \Spec A$, where $A \in \CAlgDelta$, and $\sF$ corresponds to connective $A$-module $M$. By adjunction, each cosection map $\rho \colon M \to A$ corresponds to an essentially uniquely map of algebras ${\rm ev}_{\rho} \colon \Sym_A^*(M) \to A$. Notice that if $\rho$ is the zero cosection map, then ${\rm ev}_{\mathbf{0}}$ coincides with the canonical augmentation map $\Sym_A^*(M) \to A$. Then we can describe the derived zero locus of $\rho$ by the formula $Z(\rho) = \Spec (A \otimes_{\Sym_A^*(M)} A)$, where $ A \otimes_{\Sym_A^*(M)} A$ fits into a pushout square of simplicial commutative rings
 	$$
	\begin{tikzcd}
		\Sym_A^*(M) \ar{d}[swap]{\mathrm{ev}_{\rho}} \ar{r}{\mathrm{ev}_{\mathbf{0}}}& A \ar{d} \\
		A \ar{r} & A \otimes_{\Sym_A^*(M)} A.
	\end{tikzcd}
	$$
 \end{remark}
 
 \begin{remark}[Derived zero loci as relative spectra] \label{rmk:dzero:algebra} In the situation of Definition \ref{def:dzero}, we can describe the closed immersion (hence an affine morphism) $\iota \colon Z(\rho) \to X$ as the relative spectrum (Definition \ref{def:relative:spec}) of any of the following equivalent quasi-coherent $\sO_X$-algebras: 
 	\begin{align*}
	\sO_X \otimes_{{\rm ev}_0,  \Sym_X^*(\sF), {\rm ev}_\rho} \sO_X & \simeq \Sym_X^*(\cofib(\sF \xrightarrow{\id_\sF} \sF)) \otimes_{\Sym_X^*(\sF), {\rm ev}_\rho} \sO_X \\
	& \simeq \Sym_X^*(\cofib(\sF \xrightarrow{\rho} \sO_X)) \otimes_{\sO_X[T], T \mapsto 1} \sO_X,
	\end{align*}
where the last equivalence follows from the following pullback squares of prestacks:
	$$
	\begin{tikzcd}
		Z(\rho) \ar{r}{\iota} \ar{d}& X \ar{d}{i_{\mathbf{1}}} \\
		\VV(\cofib(\rho)) \ar{d}[swap]{\pr_{\VV(\cofib(\rho))}} \ar{r}{\VV(\rho''^*)}& \VV(\sO_X)\simeq \Spec_X(\sO_X[T]) \ar{d}{\VV(\rho^*)} \\
		X \ar{r}{i_{\mathbf{0}}} & \VV(\sF),
	\end{tikzcd}
	$$	
 where $T$ is an indeterminate, $\rho''$ is the canonical morphism $\sO_X \to \cofib(\rho)$, $i_{\mathbf{1}} \colon X \to \VV(\sO_X)$ is the map classifying the identity cosection $\id \colon \sO_X \to \sO_X$; the bottom square is a pullback square by virtue of Proposition \ref{prop:affinecone:PB}, and the outer rectangle is a pullback square by definition of $Z(\rho)$, hence the top square is also a pullback square.
 \end{remark}

 \begin{lemma}\label{lem:Koszul:fib} Let $X$ be a prestack, and let $\sF$ be a vector bundle on $X$ of (finite) rank $n >0$. Let $\iota \colon Z(\rho) \hookrightarrow X$ be the derived zero locus of a cosection $\rho \colon \sF \to \sO_X$. Then for any $\sE \in \QCoh(X)$, there is a sequence of quasi-coherent sheaves $\sG_i(X, \sE) \in \QCoh(X)$, $0 \le i \le n$, such that $\sG_{0}(X, \sE) = \sE$, $\sG_{n}(X, \sE) \simeq \iota_* \, \iota^* \sE$, and for which there are fiber sequences in $\QCoh(X)$ for all $0 \le i \le n$ (where we set $\sG_{-1}(X, \sE) = 0$):
 	$$\sG_{i-1}(X, \sE) \to \sG_i(X, \sE) \to \Sigma^i (\bigwedge\nolimits^i_X \sF) \otimes_{\sO_X} \sE.$$
Moreover, if $\sE \in \Perf(X)$, then $\sG_i(X, \sE) \in \Perf(X)$.
 \end{lemma}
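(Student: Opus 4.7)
The plan is to construct $\sG_i(X,\sE)$ as $\sG_{i,n}(X,\rho) \otimes_{\sO_X} \sE$, where $\sG_{i,n}(X,\rho)$ are the iterated extensions produced by the globalized Koszul-type filtration of Construction~\ref{constr:Postnikov:Sym:prestacks}.

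First, I apply Construction~\ref{constr:Postnikov:Sym:prestacks} (the global counterpart of Construction~\ref{constr:right:Sym}) to the morphism $\rho \colon \sF \to \sO_X$ with parameter $n := \rank(\sF)$. This produces quasi-coherent sheaves $\sG_{i,n}(X,\rho) \in \QCoh(X)$ for $0 \le i \le n$, together with canonical fiber sequences
\[ \Sigma^i \bigl(\bigwedge\nolimits^{i+1}_X \sF\bigr) \otimes_{\sO_X} \bigl(\Sym_X^{n-i-1}\sO_X\bigr) \longrightarrow \sG_{i,n}(X,\rho) \longrightarrow \sG_{i+1,n}(X,\rho). \]
Since $\Sym_X^j \sO_X \simeq \sO_X$ for all $j \ge 0$, these simplify to $\Sigma^i \bigwedge^{i+1}_X \sF \to \sG_{i,n} \to \sG_{i+1,n}$, with boundary identifications $\sG_{0,n} \simeq \Sym_X^n \sO_X \simeq \sO_X$ and $\sG_{n,n} \simeq \Sym_X^n(\cofib(\rho))$.

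Next, I establish the identification $\Sym_X^n(\cofib(\rho)) \simeq \iota_*\sO_{Z(\rho)}$. Both sides commute with base change of prestacks: the left-hand side by Construction~\ref{constr:symwedgegamma:prestack} (which rests on Proposition~\ref{prop:symwedgegamma}\eqref{prop:symwedgegamma-1}), and the right-hand side because $\iota$ is a quasi-smooth closed immersion by Proposition~\ref{prop:affinecone}\eqref{prop:affinecone-6}, so $\iota_*$ is compatible with pullback. Thus it suffices to check the equivalence affine-locally, where $X = \Spec R$ and $\sF \simeq R^n$ is trivialized with $\rho$ corresponding to a tuple $(f_1,\ldots,f_n) \in R^n$. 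In that case, Corollary~\ref{cor:Illusie:S^n} identifies $\Sym_R^n(\cofib(\rho))$ with the Koszul complex $[\bS^n(R,\rho)] = [\bigwedge^n R^n \to \cdots \to R^n \to R]$ (differentials being contraction with $\rho$), while Remark~\ref{rmk:dzero:algebra} presents $\iota_*\sO_{Z(\rho)} \simeq R \otimes^L_{\Sym_R^*(R^n)} R$, which is computed by precisely the same Koszul resolution of the augmentation $\Sym_R^*(R^n) \to R$.

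Third, I set $\sG_i(X,\sE) := \sG_{i,n}(X,\rho) \otimes_{\sO_X} \sE$. Tensoring the fiber sequences from the first paragraph with $\sE$ (an exact operation in the stable $\infty$-category $\QCoh(X)$) yields $\Sigma^i(\bigwedge^{i+1}_X \sF) \otimes \sE \to \sG_i(X,\sE) \to \sG_{i+1}(X,\sE)$; rotating these sequences and re-indexing via $j = i+1$ gives the claimed sequences $\sG_{j-1}(X,\sE) \to \sG_j(X,\sE) \to \Sigma^j(\bigwedge^j_X \sF) \otimes \sE$. The boundary values $\sG_0(X,\sE) \simeq \sE$ and $\sG_n(X,\sE) \simeq \iota_*\iota^*\sE$ follow from the endpoints of Step~1 combined with the projection formula (Theorem~\ref{thm:Neeman-Lipman-Lurie}\eqref{thm:Neeman-Lipman-Lurie-1ii}): $\iota_*\sO_{Z(\rho)} \otimes_{\sO_X} \sE \simeq \iota_*\iota^*\sE$. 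For the perfectness assertion, whenever $\sE \in \Perf(X)$ each $\bigwedge^j_X \sF \otimes_{\sO_X} \sE$ is perfect (since $\sF$ is a vector bundle), and as $\Perf(X) \subset \QCoh(X)$ is closed under finite extensions, each $\sG_i(X,\sE)$ is perfect.

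The main obstacle is the identification $\Sym_X^n(\cofib(\rho)) \simeq \iota_*\sO_{Z(\rho)}$ in the second paragraph; coherently promoting the classical Koszul identification to the derived prestack setting requires care with base-change compatibility on both sides, and ultimately rests on the quasi-smoothness of the closed immersion $\iota$.
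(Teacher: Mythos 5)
Your proposal is substantively correct and takes a genuinely different route from the paper's. You construct $\sG_i(X,\sE)$ directly as $\sG_{i,n}(X,\rho)\otimes_{\sO_X}\sE$ from Construction~\ref{constr:Postnikov:Sym:prestacks}, whereas the paper introduces a separate variant of Construction~\ref{constr:right:Sym} living over the \emph{full} symmetric algebra $\Sym_A^*(M)$ (the brutal truncations $g_i$ of the ungraded Koszul complex, viewed as $\Sym_A^*(M)$-modules), left Kan extends to get $G_i(A,\varphi)$, and then sets $\sG_i := G_i(A,\id_M)\otimes_{\Sym_A^*(M),\,\mathrm{ev}_\rho}A$. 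The two descriptions coincide --- Remark~\ref{rmk:Koszul:fib} records exactly your formula $\sG_i(X,\sE)\simeq\sG_{i,n}(X,\rho)\otimes\sE$ --- but the paper's detour is not idle: since $G_n(A,\id_M)$ is by construction the Koszul resolution of the augmentation $\Sym_A^*(M)\to A$, the natural comparison map $\theta_A \colon \sG_n \to A\otimes_{\Sym_A^*(M)}A\simeq\iota_*\sO_{Z(\rho)}$ falls out for free, and what remains is only to check $\theta_A$ is an equivalence Zariski-locally. Your route buys economy (no auxiliary construction) at the cost of needing a separate identification $\Sym_X^n(\cofib(\rho))\simeq\iota_*\sO_{Z(\rho)}$.

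That separate identification is where your argument has a genuine, if fixable, gap. In your second paragraph you observe that both sides commute with base change and then assert ``it suffices to check the equivalence affine-locally.'' But this reduction principle applies to a \emph{morphism}: one must first exhibit a natural map and then verify it becomes an equivalence on $A$-points. Two objects that each commute with base change need not be canonically equivalent absent a comparison, and you never write one down. In your framework the map is available --- by Remark~\ref{rmk:dzero:algebra} one has $\iota_*\sO_{Z(\rho)} \simeq \Sym_X^*(\cofib(\rho))\otimes_{\sO_X[T],\,T\mapsto 1}\sO_X$, and composing the inclusion $\Sym_X^n(\cofib(\rho))\hookrightarrow\Sym_X^*(\cofib(\rho))$ with the projection to the relative tensor yields a canonical morphism $\Sym_X^n(\cofib(\rho))\to\iota_*\sO_{Z(\rho)}$ --- but you must make it explicit before invoking local verification. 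Once that map is in hand, your affine-local Koszul computation correctly shows it is an equivalence (and could be sharpened by reducing, as the paper does, to the universal generic case $\ZZ[x_1,\ldots,x_n]$ with the regular sequence $(x_1,\ldots,x_n)$), and the remaining steps --- tensoring with $\sE$, the projection formula giving $\iota_*\sO_{Z(\rho)}\otimes\sE\simeq\iota_*\iota^*\sE$, and closure of $\Perf(X)$ under finite extensions --- are all sound.
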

 
 \begin{proof} It suffices to construct functorially the desired quasi-coherent sheaves and fiber sequences in the case where $X = \Spec A$, $A \in \CAlgDelta$; The general case for prestacks will follow from the procedure of \S \ref{sec:shvprestacks}. Since tensoring with $\sE$ is an exact functor, by setting $\sG_i(X, \sE) = \sG_i(X, \sO_X) \otimes_{\sO_X} \sE$ we may reduce to the case where $\sE = \sO_X$.
 
We consider the following variant of Construction \ref{constr:right:Sym}. Let $R$ be a discrete commutative ring, and suppose we are given a short exact sequence $0 \to M' \xrightarrow{\varphi} M \to M'' \to 0$ of finite generated free $R$-modules. For any integer $0 \le i \le n$, we let $g_{i}(R,\varphi) \in \Mod_{S_R^*M}^\cn$ denote the class of the ``brutal" truncation 
	$$(\bigwedge\nolimits^i_R M' ) \otimes_R (S_R^* M) \xrightarrow{d_i} \cdots \to M' \otimes_R S_R^* M \xrightarrow{d_1} S_R^* M$$
 of the complex \eqref{eqn:Koszul:S^*}. Then $g_i(R, \rho)$ fits into canonical fiber sequences of $S_R^* M$-modules:
	$$g_{i-1}(R, \varphi) \to g_{i}(R, \varphi) \to \Sigma^{i} (\bigwedge\nolimits_R^{i} M') \otimes_R (S_R^* M).$$
By a similar procedure as Construction \ref{constr:right:Sym}, the constructions $(R, \rho) \in \shE_0 \mapsto (S_R^*M, g_{i}(R,\varphi) \in \SCRModcn$ admit essentially unique left Kan extensions $(A, \rho) \in \shE \mapsto (\Sym_A^*(M), G_{i}(A, \varphi) ) \in \SCRModcn$, for which there are fiber sequences of connective $\Sym_A^* M$-modules
	$$G_{i-1}(A, \varphi) \to G_{i}(A, \varphi) \to \Sigma^{i} (\bigwedge\nolimits_A^{i} M') \otimes_A (\Sym_A^* M),$$
and the formation of $G_i(A, \varphi)$ commutes sifted colimits and base changes of rings. Now suppose $M$ is a locally free $A$-module of rank $n$, and we are given a map of $A$-modules $\rho \colon M \to A$ which determines a map of algebras ${\rm ev}_\rho \colon \Sym_A^*(M) \to A$. We set 
	$$\sG_{i}(\Spec A, \sO_{\Spec A}) :  = G_i(A, \id_M \colon M \to M) \otimes_{\Sym_A^*(M), \,\mathrm{ev}_\rho} A  \in \Mod_A^{\rm cn}.$$
Then the formation of $\sG_{i}(\Spec A, \sO_{\Spec A})$ is functorial on $\rho \colon M \to A$ and commutes with base changes. Moreover, there are fiber sequences of connective $A$-modules
	$$\sG_{i-1}(\Spec A, \sO_{\Spec A})  \to \sG_{i}(\Spec A, \sO_{\Spec A})  \to \Sigma^{i} (\bigwedge\nolimits_A^{i} M).$$
It remains to show that the canonical map of $A$-modules $\theta_A \colon \sG_{n}(\Spec A, \sO_{\Spec A}) \to \iota_* \sO_{\Spec A}$ is an equivalence. Since the assertion that $\theta_A$ is an equivalence is Zariski local on $A$, we may assume $M\simeq \ZZ^n \otimes_\ZZ A$, then $\theta_A \simeq \theta_{\ZZ} \otimes_\ZZ A$. It suffices to show that the canonical map $\theta'_\ZZ \colon G_n(\ZZ, \id \colon \ZZ^n \to \ZZ^n) \to \ZZ$ is an equivalence of $\ZZ[x_1, \ldots, x_n]$-modules (which implies $\theta_{\ZZ} \simeq \theta'_\ZZ \otimes_{\ZZ[x_1, \ldots, x_n]} \ZZ$ is an equivalence). But this follows from the exactness of the classical Koszul complex \eqref{eqn:Koszul:S^*} in the case $\rho = \id \colon \ZZ^n \to \ZZ^n$.
 \end{proof}

\begin{remark} \label{rmk:Koszul:fib} Let $A$ be a simplicial commutative ring and $\rho \colon M \to A$ a cosection of a locally free $A$-module $M$ of rank $n \ge 1$. For a map of connective $A$-modules $\varphi \colon N' \to N$, we let $G_i(A, \varphi)$ denote the $\Sym_A^*(N)$-module as defined in the proof of Lemma \ref{lem:Koszul:fib}, and for any integer $m \ge 0$, let $G_{i,m}(A, \varphi)$ denote the $A$-module defined as in Construction \ref{constr:right:Sym}. Since all these modules are constructed from the complex \eqref{eqn:Koszul:S^*}, by construction, for any $m \ge n$, there are canonical equivalences of $A$-modules:
	$$G_{i}(A, M \xrightarrow{\id_M} M) \otimes_{\Sym_A^*(M)} A \simeq G_{i}(A, M \xrightarrow{\rho} A) \otimes_{\Sym_A^*(A)} A \simeq G_{i,m}(A, M \xrightarrow{\rho} A). $$
Consequently, in the situation of Lemma \ref{lem:Koszul:fib}, for any given integer $m$ such that $m \ge n$, we can equivalently describe the quasi-coherent sheaves $\sG_i(X, \sE)$ by the formula
	$$\sG_i(X, \sE) \simeq \sG_{i,m}(X, \rho \colon \sF \to \sO_X) \otimes_{\sO_X} \sE,$$ 
where $\sG_{i,m}(X, \rho \colon \sF \to \sO_X)$ is defined as in Construction \ref{constr:Postnikov:Sym:prestacks}.
\end{remark}

\subsection{Derived projectivizations}\label{sec:def:dproj:qcoh}
By virtue of Proposition \ref{prop:affine} \eqref{prop:affine-2}, we will generally abuse notations by not distinguishing the $\infty$-category $\CAlgDelta$ and the opposite $\infty$-category of affine derived schemes. Similarly, if $X$ is a prestack and $A \in \CAlgDelta$, we will generally not distinguish the space of $A$-points $X(A)$ and the space of maps $T = \Spec A \to X$.

Let $X$ be a prestack $\CAlgDelta \to \shS$ and $\sF$ a connective quasi-coherent sheaf over $X$. For each $A \in \CAlgDelta$ and an $A$-point $\eta \colon T =\Spec A \to X$ of $X$, we let $P_{\sF}(\eta)$ denote the full subcategory of $\QCoh(T)_{\eta^* \sF/} \times_{\QCoh(T)}  \mathrm{Pic}(T)$ spanned by {\em surjective} maps $\eta^*\sF \to \sL$ (that is, the induced map $\pi_0(\eta^* \sF) \to \pi_0(\sL)$ is an epimorphism in $\QCoh(T)^\heartsuit$), where $\sL$ is a line bundle on $T$. Here, $\Pic(T)$ denotes the full subcategory of $\QCoh(T)^{\simeq}$ spanned by line bundles (see \cite[\S 19.2]{SAG} for more about Picard functors). Then $P_{\sF}(\eta)$ is a Kan complex, and the construction 
	$$(A \in \CAlgDelta, \eta \in X(A)) \mapsto (P_{\sF}(\eta) \in \shS)$$
defines a prestack $P_{\sF} \in \Fun(\CAlgDelta, \shS)_{/X}$ over $X$.

\begin{proposition} \label{prop:proj:represent}
Let $X$ be a prestack and let $\sF$ be a connective quasi-coherent sheaf on $X$. Then the prestack $P_{\sF} = \PP(\sF)$ is a relative derived scheme over $X$. 
\end{proposition}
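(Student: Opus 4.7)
The plan is to reduce to an affine base and then exhibit $P_\sF$ as glued from an affine Zariski cover indexed by "homogeneous coordinates", in the spirit of Grothendieck's $D_+(f)$ cover of $\Proj$. First I would verify that the formation of $P_\sF$ is stable under base change in $X$: for any map $\eta \colon \Spec A \to X$, the pullback $P_\sF \times_X \Spec A$ is canonically equivalent to the analogous functor $P_M$ for $M = \eta^*\sF \in \Mod_A^\cn$. Since being a relative derived scheme is local on the base, this reduces the problem to showing that $P_M$ is representable by a derived scheme for every $A \in \CAlgDelta$ and every connective $A$-module $M$.

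For each morphism $\tilde s \colon A \to M$ in $\Mod_A$ (which represents an element $s = \pi_0(\tilde s)(1) \in \pi_0(M)$), I would introduce a sub-prestack $U_{\tilde s} \subseteq P_M$ consisting of those $(\sL, \rho \colon M_B \to \sL) \in P_M(B)$ for which the composite $B \xrightarrow{\tilde s_B} M_B \xrightarrow{\rho} \sL$ is an equivalence of line bundles. I expect $U_{\tilde s}$ to be represented by the affine derived scheme $\Spec_A(C_{\tilde s})$, where $C_{\tilde s} \in \CAlgDelta_A$ fits into the pushout square
\begin{equation*}
\begin{tikzcd}
A[T] \ar{r}{T \mapsto \tilde s} \ar{d}[swap]{T \mapsto 1} & \Sym_A^*(M) \ar{d} \\
A \ar{r} & C_{\tilde s}.
\end{tikzcd}
\end{equation*}
The key point is that when $\rho \circ \tilde s_B$ is an equivalence, it provides a canonical trivialization $\sL \simeq B$; since the space of trivialized line bundles on $B$ is contractible, setting $\sL = B$ with this trivialization loses no information. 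Under this identification, $\rho$ is precisely a map $\rho' \colon M_B \to B$ equipped with a homotopy $\rho' \circ \tilde s_B \simeq \id_B$, and by the universal property of the pushout together with the adjunction $\Sym_A^* \dashv \mathrm{forg}$ of Proposition \ref{prop:Sym*:prestack}, such data are classified by $\Map_{\CAlgDelta_A}(C_{\tilde s}, B)$.

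To conclude, I would verify that each $U_{\tilde s} \hookrightarrow P_M$ is a Zariski-open immersion whose complement is characterized by the vanishing of the image of $s$ in $\pi_0(\sL)$, and that the family $\{U_{\tilde s}\}$ covers $P_M$ as $\tilde s$ ranges over $\Map_{\Mod_A}(A, M)$: the latter follows from the surjectivity hypothesis in the definition of $P_M$, which combined with derived Nakayama implies that for any $(\sL, \rho) \in P_M(B)$, Zariski-locally on $\Spec \pi_0(B)$ some lift of an element in $\pi_0(M)$ produces a generator of $\sL$. Since the defining conditions of $P_M$ are Zariski-local on the test scheme (the surjectivity on $\pi_0$ is Zariski-local, and both $\QCoh$ and $\Pic$ satisfy descent by Proposition \ref{prop:QCAlg:fppf}), the affine opens $U_{\tilde s}$ glue along their pairwise intersections to produce a derived scheme representing $P_M$. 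The main obstacle is rigorously establishing the equivalence $U_{\tilde s} \simeq \Spec_A(C_{\tilde s})$ in the $\infty$-categorical setting, where one must carefully track the higher homotopies implicit in passing from a pair $(\sL, \rho)$ to an $A$-algebra homomorphism out of $C_{\tilde s}$; the contractibility of the space of trivializations of a line bundle, together with the adjunction of Proposition \ref{prop:Sym*:prestack}, is what makes the argument go through.
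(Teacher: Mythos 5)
Your proposal follows essentially the same strategy as the paper's proof: reduce to an affine base, cover $P_M$ by affine open subfunctors indexed by sections $\tilde s \colon A \to M$ on which $\tilde s$ becomes a trivializing section of $\sL$, use the contractibility of the space of trivialized line bundles to identify each such piece with $\Spec$ of the pushout $A \otimes_{A[T], T\mapsto 1} \Sym_A^*(M)$ (the paper phrases the identification via the fiber of $\VV(t_i^*)\colon \VV(\sF)\to\VV(\sO_X)$ over the unit section, which is the same pushout, and records your description verbatim in Remark \ref{remark:proj:Ui}), check openness via the localization $\Spec B[\tfrac{1}{f_i}]$, and check that the opens cover by a residue-field argument. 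The only cosmetic difference is that the paper fixes a generating family $\{t_i\}_{i\in I}$ of $\pi_0$-sections up front rather than letting $\tilde s$ range over all of $\Map_{\Mod_A}(A,M)$, which doesn't change anything.
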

 
 \begin{proof} It suffices to show that for each map of the form $\Spec R \to X$, where $R \in \CAlgDelta$, the pullback $P_{\sF} \times_{X} \Spec R$ is a derived scheme over $\Spec R$. Hence we may reduce to the case where $X = \Spec R$, $R \in \CAlgDelta$, and we wish to prove the functor $P_{\sF}$ is representable by a derived scheme over $X$. Since $\QCoh(X) \simeq \Mod_R$, we may abuse notations and regard $\sF \in \QCoh(X)$ as a connective $R$-module. Let $(t_i)_{i \in I}$ be a family of elements (of possibly infinite members) that generates the $\pi_0(R)$-module $\Ext_R^0(R, \sF)$. Then the family of maps $\{t_i \colon \sO_X \to \sF\}_{i \in I}$ induces a map $ \sO_X^{ \oplus I} \to \sF$ that is surjective on $\pi_0$. For each $i \in I$, we let $U_{i}$ denote the subfunctor of $P_{\sF}$
 such that for every $A \in \CAlgDelta$ and $\eta \colon T=\Spec A \to X$, $U_{i}(\eta)$ is the full subcategory of $P_{\sF}(\eta)$ spanned by those surjections $u \colon \eta^* \sF \to \sL$ for which the composite map 
 	$$\sO_T \xrightarrow{\eta^* t_i} \eta^* \sF \xrightarrow{u} \sL$$
is surjective on $\pi_0$.
The desired result will follow from the following assertions:
	\begin{enumerate}[label=$(\roman*)$, ref=$\roman*$]
		\item \label{proof:prop:proj:represent-i}
		For each $i$, the sub-prestack $U_{i}$ is representable by a derived affine scheme.
		\item \label{proof:prop:proj:represent-ii} 
		For each $i$, the inclusion $U_{i} \subseteq P_{\sF}$ is an open immersion.
		\item \label{proof:prop:proj:represent-iii}
		 The family of maps $\{U_{i} \to P_{\sF} \}_{i \in I}$ is jointly surjective.
	\end{enumerate}
	
We first prove assertion \eqref{proof:prop:proj:represent-i}. Let $U_i'$ denote the subfunctor of $U_i$ such that for each $\eta \colon T = \Spec R \to X$, $U_i'(\eta)$ is the space spanned by surjective cosections $u \colon \eta^* \sF \to \sO_T$ for which the composition $\sO_T \xrightarrow{\eta^* t_i} \eta^* \sF \xrightarrow{u} \sO_T$ is an isomorphism. On the other hand, for each $(u \colon \eta^* \sF \to \sL) \in U_i(\eta)$, the epimorphism $f_i \colon \sO_T \xrightarrow{\eta^* t_i} \eta^* \sF \xrightarrow{u} \sL$ between line bundles is necessarily an isomorphism. Since $\Pic(T)_{\sO_T/}$ is contractible, we obtain that the inclusion $U_i'(\eta) \subseteq U_i(\eta)$ is a homotopy equivalence, and $U_i'(\eta)$ is canonically homotopy equivalent to the fiber space
	$$\mathrm{fib}\left( \Map_{\QCoh(T)} (\eta^* \sF, \sO_T) \xrightarrow{\circ (\eta^* t_i)} \Map_{\QCoh(T)}(\sO_T, \sO_T)\right)$$
over the point $\id_{\sO_T}$. Consequently, we obtain a canonical equivalence between the prestack $U_i$ and the fiber of the morphism $\VV(t_i^*) \colon \VV(\sF) \to \VV(\sO_X) \simeq \AA_X^1$ over the section determined by $\id_{\sO_X}$. Therefore, $U_i$ is representable by an affine closed derived subscheme of $\VV(\sF)$. 

We next prove assertion \eqref{proof:prop:proj:represent-ii}. It suffices to show that for every affine derived scheme $Z = \Spec B$ and every map $g \colon Z \to P_{\sF}$, the functor
	\begin{equation} \label{eqn:proj:rep:Ui-Z}
	(\eta \colon T = \Spec A \to X) \mapsto (U_i(\eta) \times_{P_{\sF}(\eta)} Z(\eta) \in \shS)
	\end{equation}
is representable by a derived open subscheme of $Z$. Let $\sF_Z = g^* \sF$. By the definition of $P_{\sF}$, the morphism $g \colon Z \to P_\sF$ determines a unique surjection $u_Z \colon \sF_Z \to \sL$, where $\sL$ is a line bundle on $Z$. Let $f_{i}$ denote the composition map $\sO_Z \xrightarrow{g^* t_{i}} \sF_Z \xrightarrow{u_{Z}} \sL$. By working Zariski locally over $Z$, we may assume $\sL \simeq \sO_Z$ and regard $f_i$ as an element of $\pi_0(B)$. We let $U_{Z, i} = \Spec B[\frac{1}{f_i}]$ be the derived open subscheme of $Z = \Spec B$ defined by localization of $B$ with respect to $f_i$ (where $B[\frac{1}{f_i}] \to B$ is the localization map of simplicial commutative rings for the localization of $B$ with respect to $f_i \in \pi_0(B)$; see \cite[Proposition 4.1.18]{DAGV}). To show that $U_{i, Z} \subseteq Z$ represents the functor \eqref{eqn:proj:rep:Ui-Z}, it suffices to show the following assertion:
	\begin{enumerate}[label=$(*)$, ref=$*$]
		\item  \label{proof:prop:proj:represent-ii-*}
		Let $h \colon Y=\Spec C \to Z = \Spec B$ be any map of derived affine schemes. Then $h^*(f_i)$ is an epimorphism on $\pi_0$ if and only if $h$ factorizes through $U_{i, Z} \subseteq Z$.
	\end{enumerate}
The above assertion \eqref{proof:prop:proj:represent-ii-*} is a direct consequence of the natural homotopy equivalence 
	$$\Map_{\CAlgDelta}(B[\frac{1}{f_i}], C) \to \Map_{\CAlgDelta}^0(B, C)$$
of \cite[Proposition 4.1.18 (1)]{DAGV}, where $\Map_{\CAlgDelta}^0(B, C)$ denotes the union of those connected components of $\Map_{\CAlgDelta}(B, C)$ spanned by those maps $B \to C$ which carry $f_i \in \pi_0(B)$ to an invertible element of $\pi_0(C) (\simeq \pi_0(\sO_Y))$. 

To prove assertion \eqref{proof:prop:proj:represent-iii}, it suffices to show that for each map $g \colon Z = \Spec B \to P_{\sF}$ from an affine derived scheme $Z = \Spec B$, the family of derived open subschemes $\{U_{i, Z}\}_{i \in I}$ forms an Zariski open cover of $Z$, where $U_{i, Z}$ is the open subscheme of $Z$ that represents the functor \eqref{eqn:proj:rep:Ui-Z} of assertion \eqref{proof:prop:proj:represent-ii}. To prove this, we let $z \in |\Spec B| = |\Spec \pi_0(B)|$ be any point, and let $\kappa$ denote the residue field of $\pi_0(B)$ at $z$. Since the map $(f_{i, Z})_{i \in I} \colon \sO_Z^{\oplus I} \to \sL$ is surjective on $\pi_0$, we obtain that the induced map $(f_{i,Z} \otimes_{\pi_0(B)} \kappa)_{i \in I} \colon \kappa^{\oplus I} \to \kappa$ is a surjection of vector spaces. Therefore, there exists $i \in I$ such that $f_{i, Z} \otimes_{\pi_0(B)} \kappa$ is an isomorphism. It follows from the above property \eqref{proof:prop:proj:represent-ii-*} that $z \in U_{i, Z}$. This proves assertion \eqref{proof:prop:proj:represent-iii}.
 \end{proof}

\begin{definition} We let $\pr \colon \PP(\sF) \to X$ denote the relative derived scheme over $X$ which represents the prestack $P_{\sF}$, and refer to it as the {\em derived projectivization of $\sF$ (over $X$)}. We will let $\sO(1)$ (or $\sO_{\PP(\sF)}(1)$, if we want to address the dependence on $\PP(\sF)$) denote the universal line bundle on $\PP(\sF)$ and let $\rho \colon \pr^* \sF \to \sO(1)$ denote the tautological map that is surjective on $\pi_0$. We will refer $\rho$ as the {\em tautological quotient map} (or {\em tautological surjection}).
\end{definition}

\begin{remark}[Affine open cover $\{U_i\}$] \label{remark:proj:Ui} Let $X$ be a prestack and $\sF$ a connective quasi-coherent sheaf on $X$. The proof of Proposition \ref{prop:proj:represent} shows that whenever there exists a family of sections $\{t_i \colon \sO_X \to \sF\}_{i \in I}$ that is jointly surjective on $\pi_0$ \footnote{Notice by virtue of Proposition \ref{prop:proj-4,5} \eqref{prop:proj-5}, $\PP(\sF) \simeq \PP(\sF \otimes \sL)$ for any line bundle $\sL$, so that it suffices to find a generating family of sections for $\sF \otimes \sL$. For example, if $X$ is a quasi-projective derived scheme, up to twisting $\sF$ by a line bundle, we could always find such a generating family of sections for $\sF$.}, there is a Zariski open cover $\{U_i \subseteq \PP(\sF)\}_{i \in I}$ of $\PP(\sF)$, where $U_i \subseteq \PP(\sF)$ is the open subfunctor which is characterized by the following universal property:
	\begin{enumerate}[label=$(*')$, ref=$*'$]
		\item  \label{remark:prop:proj:represent*'}
		Let $h \colon Y \to \PP(\sF)$ be any map of prestacks. Then $h^*(\rho \circ (\pr^* t_i))$ is an epimorphism on $\pi_0$ if and only if $h$ factorizes through $U_{i} \subseteq \PP(\sF)$.
	\end{enumerate}
In particular, over $U_i$, the composite map of the restrictions
	$$f_i \colon \sO_{U_i} \xrightarrow{(\pr^*t_i)|_{U_i}} \pr_{U_i}^* \sF \xrightarrow{\rho|_{U_i}} \sO(1)|_{U_i}$$
is surjective on $\pi_0$, where $\pr_{U_i}$ is the composite map $U_i \subseteq \PP(\sF) \xrightarrow{\pr} X$. Therefore, $f_i$ is invertible, and we let $f_i^{-1}$ denote an inverse of $f_i$. (We may regard $f_i^{-1}$ as a section of $\sO(-1)|_{U_i}$.) From the proof of assertion \eqref{proof:prop:proj:represent-ii} of Proposition \ref{prop:proj:represent}, the cosection induced by the composition 
	$$\pr_{U_i}^* \sF \xrightarrow{\rho|_{U_i}} \sO(1)|_{U_i} \xrightarrow{f_i^{-1}} \sO_{U_i}$$
is classified by a closed immersion $s_{U_i} \colon U_i \to \VV(\sF)$, 
which fits into a pullback square
 	$$
	\begin{tikzcd}
	U_i \ar{r}{s_{U_i}}  \ar{d}[swap]{\pr_{U_i}} & \VV(\sF) \ar{d}{\VV(t_i^*)}\\
	X \ar{r}{s_X} & \VV(\sO_X),
	\end{tikzcd}
	$$
where $s_X \colon X \to \VV(\sO_X)$ is the section which classifies the identity map. If $X = \Spec R$ is an affine derived scheme, then $U_i$ is equivalent to the spectrum of the simplicial commutative $R$-algebra $R \otimes_{R[T]} \Sym_R^*(\sF)$, where $T$ is an indeterminate, $R[T] \simeq \Sym_R^*(\sO_X) \to \Sym_R^*(\sF)$ is the map induced by $t_i \colon \sO_X \to \sF$, and $R[T] \to R$ is the evaluation map $T \mapsto 1_R$.
\end{remark}

\begin{proposition} \label{prop:proj-classical} If $X$ is a derived scheme and $\sF$ a connective quasi-coherent sheaf on $X$. Then $\PP(\sF)$ is a derived scheme, and the underlying classical scheme of $\PP(\sF)$ is canonically identified with the classical projectivization $\PP_\cl(\pi_0(\sF)) = \Proj_{X_\cl}^\cl \Sym_{\cl}^*(\pi_0 (\sF)) \to X_\cl$ of the discrete sheaf $\pi_0(\sF) \in \QCoh(X_\cl)^\heartsuit$ over the classical scheme $X_\cl = (|X|, \pi_0(\sO_X))$.
\end{proposition}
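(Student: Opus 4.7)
The plan is to proceed in two steps. First, Proposition \ref{prop:proj:represent} already shows that $\pr \colon \PP(\sF) \to X$ is a relative derived scheme; since $X$ is assumed to be a derived scheme, this immediately implies that $\PP(\sF)$ is a derived scheme. Second, to identify the underlying classical scheme, I will compare functors of points on the $\infty$-category of discrete commutative rings $\CAlg^\heartsuit \subseteq \CAlgDelta$, using the fact that for any derived scheme $Y$, the underlying classical scheme $Y_\cl$ represents the restriction of the functor $A \mapsto Y(A)$ to $\CAlg^\heartsuit$.

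The core observation is that for a discrete commutative ring $A$ and a map $\eta \colon T = \Spec A \to X$, the defining space $P_\sF(\eta)$ is canonically a discrete set in bijection with $\PP_\cl(\pi_0 \sF)(\eta_\cl)$, where $\eta_\cl \colon T \to X_\cl$ is the factorization of $\eta$ through the closed immersion $X_\cl \hookrightarrow X$. To see this, note that since $T$ is classical, every line bundle $\sL \in \Pic(T)$ is discrete; and since $\eta^* \sF$ is connective, the mapping space $\Map_{\QCoh(T)}(\eta^* \sF, \sL)$ is discrete and canonically identified with $\Hom_{\QCoh(T)^\heartsuit}(\pi_0(\eta^* \sF), \sL)$. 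Moreover, Proposition \ref{prop:Sym*} \eqref{prop:Sym*-1} (or equivalently, right-exactness of the pullback on $\pi_0$) yields the natural identification $\pi_0(\eta^* \sF) \simeq \eta_\cl^* \pi_0(\sF)$ as discrete sheaves on $T$. Hence $P_\sF(\eta)$ is equivalent to the ordinary groupoid of pairs $(\sL, \eta_\cl^* \pi_0(\sF) \twoheadrightarrow \sL)$ with $\sL \in \Pic(T)$; since any two line bundles receiving the ``same" surjection from $\eta_\cl^* \pi_0(\sF)$ differ by a unique isomorphism, passing to $\pi_0$ gives exactly the set of equivalence classes of rank-one locally free quotients, which is by definition $\PP_\cl(\pi_0 \sF)(\eta_\cl)$.

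Combining these identifications, we obtain a natural equivalence of functors $\PP(\sF)|_{\CAlg^\heartsuit} \simeq \PP_\cl(\pi_0 \sF)$ over $X_\cl$. Since the left-hand side is represented by $(\PP(\sF))_\cl$ and the right-hand side is by construction represented by the classical Grothendieck projectivization, we conclude that there is a canonical isomorphism $(\PP(\sF))_\cl \simeq \PP_\cl(\pi_0 \sF)$ of classical schemes over $X_\cl$. The only points that require genuine care are the natural identification $\pi_0(\eta^* \sF) \simeq \eta_\cl^* \pi_0(\sF)$ and verifying $0$-truncatedness of $P_\sF(\eta)$ for classical $T$; both are straightforward consequences of the connectivity of $\sF$ and the discreteness of $\sO_T$, so I do not anticipate any real obstacle. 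As a sanity check, one can also verify this locally: using the open cover $\{U_i\}$ of Remark \ref{remark:proj:Ui} coming from a generating family of sections and Remark \ref{rmk:Sym^*:globalization}, the classical truncation of each $U_i = \Spec(\sO_X \otimes_{\sO_X[T_i]} \Sym_X^* \sF)$ agrees with the corresponding standard affine chart $\Spec(\pi_0 \sO_X \otimes_{\pi_0 \sO_X[T_i]} S^*_{\pi_0 \sO_X}(\pi_0 \sF))$ of the classical $\Proj$.
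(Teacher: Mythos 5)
Your proof is correct. It takes a different route from the paper's own proof of Proposition~\ref{prop:proj-classical}, which is local: the paper constructs the Zariski cover $\{U_i\}$ of $\PP(\sF)$ from the proof of Proposition~\ref{prop:proj:represent}, invokes Proposition~\ref{prop:affinecone}~\eqref{prop:affinecone-1} for the classical truncation of each affine chart, and observes that the classical truncations $\{(U_i)_\cl\}$ form exactly the standard affine open cover of $\Proj_{X_\cl}^\cl \Sym_\cl^*(\pi_0 \sF)$. Your functor-of-points argument is instead the content of Remark~\ref{rmk:classical:proj:functor}, which the paper itself offers immediately afterward as ``another proof'' of the proposition. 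The two arguments buy slightly different things: the paper's local proof fits the Zariski-gluing machinery it has already set up and extends most naturally to verifications chart-by-chart, while the functor-of-points argument is global, more conceptual, and applies verbatim without assuming $X$ is a derived scheme (it identifies $\PP(\sF)|_{\CAlg^\heartsuit}$ with the classical projectivization functor over any prestack $X$; the hypothesis that $X$ is a derived scheme is only used to conclude that this restricted functor is representable by the classical truncation $(\PP(\sF))_\cl$). Two small points worth tightening in a write-up: the identification $\pi_0(\eta^*\sF) \simeq \eta_\cl^*\pi_0(\sF)$ is the right-exactness of $\pi_0$ on derived tensor products (e.g.\ \cite[Corollary 7.2.1.23]{HA}) rather than a direct consequence of Proposition~\ref{prop:Sym*}~\eqref{prop:Sym*-1}; and the discreteness of $P_\sF(\eta)$ for classical $T$ uses not only that line bundles and mapping spaces are discrete, but also the rigidity you correctly note—that a surjection $\eta_\cl^*\pi_0(\sF)\twoheadrightarrow\sL$ has no nontrivial automorphisms—so the groupoid of such pairs is equivalent to its set of isomorphism classes.
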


\begin{proof} Let $\{U_i\}_{i \in I}$ be the Zariski open cover of $\PP(\sF)$ that constructed in the proof of Proposition \ref{prop:proj:represent}, then each $U_i$ is a relative derived affine schemes over $X$. By virtue of Proposition \ref{prop:affinecone} \eqref{prop:affinecone-1} and the proof of Proposition \ref{prop:proj:represent}, the family of classical relative affines schemes $\{(U_i)_\cl\}_{i \in I}$ over $X_{\cl}$ forms an Zariski open cover of the classical projectivization $\PP_{\cl}(\pi_0(\sF))$.
\end{proof}

\begin{remark}[Classical projectivization functors] \label{rmk:classical:proj:functor} Let $X$ be prestack and $\sF$ a connective quasi-coherent sheaf over $X$. Let $R \in \CAlg^\heartsuit$ be an ordinary commutative ring and let $\eta \colon T=\Spec R \to X$ be a map of prestacks (which necessarily factorizes through the inclusion $X_\cl \subseteq X$ if $X$ is a derived scheme). For any line bundle $\sL$ on $T$, there are canonical homotopy equivalences
	$$\Map_{\QCoh(T)}(\eta^* \sF, \sL) \simeq \Map_{\QCoh(T)^\heartsuit}(\pi_0(\eta^* \sF), \sL) \simeq \Map_{\QCoh(T)^\heartsuit}(\eta_{\cl}^* (\pi_0 \sF), \sL).$$
(Here $\eta_\cl^* = \pi_0 \circ \eta^*|_{\QCoh(X_\cl)^\heartsuit} \colon \QCoh(X_\cl)^\heartsuit \to \QCoh(T)^\heartsuit$ denotes the classical pullback of discrete sheaves.) Therefore, we deduce that the restriction of functor $\PP(\sF)$ to the ordinary category of commutative rings $\PP(\sF)|_{\CAlg^\heartsuit} \colon \CAlg^\heartsuit \to \shS$ is canonically equivalent to the classical projectivization functor $\PP_\cl(\pi_0 \sF)$ that carries each pair $(R \in \CAlg^\heartsuit, \eta \colon \Spec R \to X_\cl)$ to the space of line bundle quotients $\eta_\cl^* (\pi_0 \sF) \twoheadrightarrow \sL$ in $\QCoh(\Spec R)^\heartsuit$. In the case where $X$ is a derived scheme, this provides anther proof of Proposition \ref{prop:proj-classical}. 

Moreover, combined with Remark \ref{rmk:Grothendieck:affinecone}, we see that for any field $\kappa$ and a morphism $\eta \colon \Spec \kappa \to X$, the fiber product $\PP(\sF) \times_X \Spec \kappa$ is canonically equivalent to the classical projective space $\PP_\kappa(\pi_0(\eta^*\sF))$ of the $\kappa$-vector space $\pi_0(\eta^*\sF) \simeq \pi_0(\eta^*(\pi_0\sF))$, whose $\kappa$-valued points canonically correspond to the elements of the quotient set $(\Ext^0(\pi_0(\sF), \kappa) - \{0\})/\kappa^\times$. 
\end{remark}

 \begin{example} Let $X = \Spec \ZZ$ and let $\sF$ be represented by the complex $\ZZ \xrightarrow{0} \ZZ$. Then for any ordinary commutative ring $R \in \CAlg^\heartsuit$ and any map $\eta \colon \ZZ \to R$, the space of $\eta$-value points $\PP(\sF)(\eta)$ is homotopy equivalent to the singleton $\{f \colon \sF \otimes R \to R\}$, where $\pi_1(f) =0$ and $\pi_0(f) = \id_R$, that is, $\Map_{/\Spec \ZZ}(\Spec R, \PP(\sF)) \simeq \{*\}$. In other words, when restricting to the subcategory of ordinary commutative rings, the derived projectivization functor $\PP(\sF)$ is the same as the classical projectivization functor $\PP(\sF)_\cl = \Spec \ZZ$. The difference only happens when we test the values of $\PP(\sF)$ on {\em non-discrete} simplicial commutative rings. For example, let $\ZZ[\varepsilon] = \Sym_\ZZ^*(\Sigma \, \ZZ) = \ZZ \oplus \ZZ \varepsilon$ denote the ring of {\em derived dual numbers}, where we informally think of $\varepsilon$ as a generator at homological degree $1$ (which necessarily satisfies $\varepsilon^2 =0$). Let $\eta \colon \ZZ \to \ZZ[\varepsilon]$ denote the canonical map. Then the space $\PP(\sF)(\eta)$ is homotopy equivalent to the set
 	$$ \left\{\ZZ[\varepsilon] \oplus \Sigma (\ZZ[\varepsilon]) \xrightarrow{(\id, m \cdot \varepsilon)} \ZZ[\varepsilon] \right \}_{m \in \ZZ} \simeq \ZZ$$
which is not homotopy equivalent to the space $\PP(\sF)_\cl(\eta) = \{*\}$. (In fact, we could obtain from the later Example \ref{eg:prop:proj:PB} that there is a canonical equivalence $\PP(\sF) \simeq \Spec (\ZZ[\varepsilon])$.)
 \end{example}

\begin{corollary} Let $X$ be a prestack, and suppose $\sF \in \QCoh(X)^\cn$ is locally of finite type (that is, perfect to order $0$ in the sense of Definition \ref{def:perfectfp:prestacks}). Then $\pr\colon \PP(\sF) \to X$ is proper.
\end{corollary}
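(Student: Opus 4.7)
The plan is to reduce properness to a statement about the underlying classical scheme morphism, which is then standard. Properness of a morphism of (derived) schemes is a condition on the underlying continuous map of topological spaces (separated, quasi-compact, universally closed), and so it depends only on the underlying classical scheme morphism. It is also local on the base. Consequently, since by Proposition \ref{prop:proj-4,5} the formation of $\PP(\sF)$ commutes with arbitrary base change, we may replace $X$ by $\Spec R$ for an arbitrary map $\Spec R \to X$ with $R \in \CAlgDelta$.

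In this affine case $\sF$ corresponds to a connective $R$-module $M$, and the hypothesis that $\sF$ is perfect to order $0$ in the sense of Definition \ref{def:perfectfp} translates into the standard statement that $\pi_0(M)$ is a finitely generated $\pi_0(R)$-module (this is the order-zero case of the usual characterization of perfect-to-order-$n$ modules). Now invoke Proposition \ref{prop:proj-classical}: the underlying classical scheme of $\PP(M) \to \Spec R$ is canonically identified with Grothendieck's projectivization $\PP_{\cl}(\pi_0 M) \to \Spec \pi_0(R)$ of the finitely generated $\pi_0(R)$-module $\pi_0(M)$. By classical algebraic geometry, this is a (relatively) projective morphism, and in particular proper.

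The main (and essentially only) substantive point is the identification of the classical truncation with Grothendieck's classical construction, which is already carried out in Proposition \ref{prop:proj-classical}; everything else is formal reduction along base change and the fact that properness is a classical-topological property. There is no serious obstacle.
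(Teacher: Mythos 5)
Your proof is correct and follows essentially the same route as the paper: reduce to the affine case (using that properness is local on the base and that $\PP(-)$ commutes with base change), observe that perfect to order $0$ means $\pi_0(\sF)$ is a finitely generated $\pi_0(R)$-module, invoke Proposition~\ref{prop:proj-classical} to identify the classical truncation with Grothendieck's $\PP_\cl(\pi_0 \sF)$, and then quote classical properness of $\Proj$ of the symmetric algebra of a finite-type sheaf. One small remark: the paper's own proof asserts that $\pi_0(\sF)$ is \emph{finitely presented}; your "finitely generated" is the accurate translation of perfect to order $0$, and it suffices since the classical projectivization of any finite-type quasi-coherent sheaf on an affine base is proper (locally a closed subscheme of $\PP^N_S$), so the argument goes through either way.
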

\begin{proof} The assertion being local, we may assume $X$ is representable by $\Spec R$, where $R$ is a simplicial commutative ring. The condition of the corollary implies that $\pi_0(\sF)$ is a finitely presented $\pi_0(R)$-module in the sense of classical commutative algebra. Therefore, the classical projectivization $\PP_{\cl}(\pi_0(\sF))$ is proper over $X_\cl$ (see \cite[\href{https://stacks.math.columbia.edu/tag/01WC}{Tag 01WC}]{stacks-project}). We deduce from Proposition \ref{prop:proj-classical} that $\PP(\sF) \to X$ is proper (see \cite[Remark 5.0.0.1]{SAG}).
\end{proof}

\begin{proposition} \label{prop:proj-4,5} Let $X$ be a prestack and $\sF$ a connective quasi-coherent sheaf on $X$.
\begin{enumerate}
	\item \label{prop:proj-4} (The formation of projectivizations commutes with base change) 
	Let $f \colon X' \to X$ be a morphism of prestacks. Then there is a natural equivalence $\PP(f^* \sF) \simeq \PP(\sF) \times_X X'$ such that the pullback of the universal line bundle (resp. the tautological quotient map) of $\PP(\sF)$ along $f \colon X' \to X$ is canonically equivalent to the universal line bundle (resp. the tautological quotient map) of $\PP(f^*\sF)$.
	\item \label{prop:proj-5} (Tensoring with line bundles) 
	Let $\sL$ be a line bundle on $X$. Then there is a natural equivalence $g \colon \PP(\sF) \simeq \PP(\sF \otimes \sL)$ such that there are canonical equivalences $g^* (\sO_{\PP(\sF \otimes \sL)}(1)) \simeq \sO_{\PP(\sF)}(1) \otimes \pr_{\PP(\sF)}^* \sL$ and 
		$$g^*\big(\pr_{\PP(\sF \otimes \sL)}^* (\sF\otimes \sL) \xrightarrow{\rho_{\PP(\sF \otimes \sL)}} \sO_{\PP(\sF \otimes \sL)}(1)\big) \simeq \big(\pr_{\PP(\sF)}^* \sF \xrightarrow{\rho_{\PP(\sF)}} \sO_{\PP(\sF)}(1)\big) \otimes \pr_{\PP(\sF)}^* \sL,$$
where $\rho_{\PP(\sF)}$ and $\rho_{\PP(\sF \otimes \sL)}$ are the tautological quotient maps.
\end{enumerate}
\end{proposition}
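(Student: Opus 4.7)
The plan is to establish both assertions by working with the functor-of-points definition of $\PP(\sF)$ introduced just before Proposition \ref{prop:proj:represent}, and then appealing to the Yoneda lemma, since Proposition \ref{prop:proj:represent} guarantees that all the prestacks involved are representable. The key point is that the compatibility assertions concerning universal line bundles and tautological quotients will come essentially for free: these objects are, by construction, universal with respect to the defining moduli problem, and once the two sides are identified as representing the same functor, the identifications of universal data are forced by Yoneda.

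For \eqref{prop:proj-4}, I would observe that for any $A \in \CAlgDelta$ and any $\eta' \colon T = \Spec A \to X'$, the space of $T$-points of $\PP(\sF) \times_X X'$ lying over $\eta'$ is, by definition of the fibre product and of $\PP(\sF)$, the full subcategory of $\QCoh(T)_{(f\circ\eta')^*\sF/} \times_{\QCoh(T)} \Pic(T)$ spanned by surjections $(f\circ\eta')^*\sF \to \sL$ with $\sL$ a line bundle. Since pullback is compatible with composition, there is a canonical equivalence $(f\circ\eta')^*\sF \simeq \eta'^*(f^*\sF)$ in $\QCoh(T)$, and this identifies the above space with $P_{f^*\sF}(\eta') = \PP(f^*\sF)(\eta')$. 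Naturality in $\eta'$ is immediate. The identification of the universal line bundle and the tautological quotient then follows from the Yoneda lemma, since on each side these correspond under the identification to the \emph{identity} map of $\PP(\sF)\times_X X'$, respectively $\PP(f^*\sF)$.

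For \eqref{prop:proj-5}, given $\eta \colon T \to X$, tensoring with the line bundle $\eta^*\sL$ defines an auto-equivalence of $\QCoh(T)$ that preserves the Picard groupoid $\Pic(T) \subseteq \QCoh(T)^{\simeq}$ and preserves the property of being surjective on $\pi_0$ (since $\eta^*\sL$ is a flat $\sO_T$-module and an isomorphism $\pi_0(\eta^*\sL) \otimes (\blank) \simeq (\blank) \otimes \pi_0(\eta^*\sL)$ detects epimorphisms in $\QCoh(T)^\heartsuit$). Consequently, sending a surjection $u \colon \eta^*\sF \twoheadrightarrow \sM$ to $u \otimes \id_{\eta^*\sL} \colon \eta^*(\sF \otimes \sL) \twoheadrightarrow \sM \otimes \eta^*\sL$ produces a natural transformation $\PP(\sF) \to \PP(\sF \otimes \sL)$ of functors over $X$, with inverse given by tensoring with $\eta^*\sL^{-1}$. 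The formula $g^*\sO_{\PP(\sF \otimes \sL)}(1) \simeq \sO_{\PP(\sF)}(1) \otimes \pr^*\sL$ and the corresponding statement for the tautological surjection are then immediate from the construction, since by definition $g$ sends the universal quotient on $\PP(\sF)$ to the tensor product of that quotient with the pullback of $\sL$.

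I do not expect a serious obstacle here: both assertions are essentially formal consequences of the functor-of-points description and the base-change compatibilities of pullback, line bundles, and the property ``surjective on $\pi_0$''. The only step that is worth double-checking is that tensoring with $\eta^*\sL$ in \eqref{prop:proj-5} indeed preserves surjectivity on $\pi_0$ within $\QCoh(T)^\heartsuit$, but this is standard flatness bookkeeping and does not require any input specific to the derived setting.
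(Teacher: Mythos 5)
Your argument is correct and follows essentially the same route as the paper: in both cases the proof proceeds directly from the functor-of-points definition of $\PP(\sF)$, using the canonical equivalence $(f\circ\eta')^*\sF \simeq \eta'^*(f^*\sF)$ for part (1) and the tensoring-by-$\eta^*\sL$ auto-equivalence of $\QCoh(T)$ (with evident inverse given by $\eta^*\sL^{-1}$) for part (2), with the compatibility of universal data following formally by Yoneda once the functors are identified. Your extra remark justifying that tensoring with the invertible sheaf $\eta^*\sL$ preserves surjectivity on $\pi_0$ is sound, though somewhat overengineered: an invertible sheaf induces an auto-equivalence of $\QCoh(T)^\heartsuit$ and commutes with $\pi_0$ by flatness, so the preservation of epimorphisms is immediate; the paper omits this as obvious.
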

\begin{proof}
We first prove assertion \eqref{prop:proj-4}. For any $A \in \CAlgDelta$ and $\eta' \colon \Spec A \to X'$, we let $\eta \colon \Spec A \to X$ denote the composition $f \circ \eta'$. Then there is a equivalence of spaces from $\PP(f^*\sF)(\eta')$ to $\PP(\sF)(\eta) \times_{\{\eta\}} \{\eta'\}$ given by sending $(\eta'^*(f^* \sF) \to \sL)$ to $(\eta^* \sF \to \sL)$.

Assertion \eqref{prop:proj-5} is proved similarly. For any $A \in \CAlgDelta$ and $\eta \colon \Spec A \to X$, we define a map $g(\eta)$ of spaces from $\PP(\sF)(\eta)$ to $\PP(\sF \otimes \sL)(\eta)$ by sending $(\eta^* \sF \to \sL')$ to $(\eta^* (\sF \otimes \sL) \to (\sL' \otimes \sL))$. Each map $g(\eta)$ is an equivalence of spaces since it has an obvious inverse given by sending $(\eta^* (\sF \otimes \sL) \to \sL'')$ to $(\eta^* \sF \to \sL'' \otimes \sL^{-1})$.
\end{proof}

\begin{proposition}[Finiteness properties] \label{prop:proj:finite}
 Let $X$ be a prestack and $\sF$ a connective quasi-coherent sheaf on $X$. If $\sF$ is perfect to order $n$ for some integer $n \ge 0$, (resp. 
 almost perfect, perfect; see Definition \ref{def:perfectfp:prestacks}), then the morphism $\pr \colon \PP(\sF) \to X$ is locally of finite presentation to order $n$ (resp. locally almost of finite presentation, locally of ﬁnite presentation). 
\end{proposition}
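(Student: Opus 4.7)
The plan is to work Zariski-locally on $X$ and exploit the affine open cover $\{U_i\}_{i\in I}$ of $\PP(\sF)$ produced in Remark~\ref{remark:proj:Ui}, reducing the finiteness assertion for $\pr$ to the corresponding one for the affine cone $\pr_{\VV(\sF)}$ already established in Proposition~\ref{prop:affinecone}\eqref{prop:affinecone-2}.

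First I would reduce to the case where $X = \Spec R$ is affine, which is permitted since the three finiteness properties in play are local on $X$ by Definition~\ref{def:perfectfp:prestacks}. In this affine setting, morphisms $\sO_X \to \sF$ in $\QCoh(X)^\cn$ correspond up to homotopy to elements of $\pi_0(\sF)$, so one can choose a (possibly infinite) family of sections $\{t_i \colon \sO_X \to \sF\}_{i\in I}$ whose images jointly generate $\pi_0(\sF)$ as a module over $\pi_0(\sO_X)$. By Remark~\ref{remark:proj:Ui}, this yields a Zariski open cover $\{U_i\}_{i\in I}$ of $\PP(\sF)$, so it suffices to verify the finiteness property for each projection $\pr_{U_i} \colon U_i \to X$.

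Each $U_i$ sits in a pullback square
$$
\begin{tikzcd}
U_i \ar{r}{s_{U_i}} \ar{d}[swap]{\pr_{U_i}} & \VV(\sF) \ar{d}{\VV(t_i^*)}\\
X \ar{r}{s_X} & \VV(\sO_X),
\end{tikzcd}
$$
where $s_X$ classifies the identity cosection of $\sO_X$. The map $s_X$ is a regular closed immersion of virtual codimension one (with cotangent complex $\Sigma \sO_X$), hence locally of finite presentation. Since the three classes of morphisms in question are stable under base change, the closed immersion $s_{U_i} \colon U_i \hookrightarrow \VV(\sF)$ inherits the property of being locally of finite presentation. By Proposition~\ref{prop:affinecone}\eqref{prop:affinecone-2}, the projection $\VV(\sF) \to X$ is locally of finite presentation to order $n$ (resp.\ locally almost of finite presentation, resp.\ locally of finite presentation) precisely when $\sF$ is perfect to order $n$ (resp.\ almost perfect, resp.\ perfect). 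Composing $s_{U_i}$ with $\VV(\sF) \to X$ then yields that $\pr_{U_i}$ has the required finiteness property, and gluing across the cover $\{U_i\}$ completes the argument.

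The main obstacle is confirming the base-change and composition stability of the order-$n$ version of ``locally of finite presentation'': the ``almost'' and ``full'' versions are completely standard, but the order-$n$ variant is more delicate and depends on the truncated deformation theory and order-$n$ behaviour of the cotangent complex developed in \cite[\S 3.1-3.2]{DAGV}. Once these closure properties are invoked, the geometric reduction to Proposition~\ref{prop:affinecone}\eqref{prop:affinecone-2} via the two fiber squares is essentially immediate.
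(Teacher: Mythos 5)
Your proof is correct and follows essentially the same route as the paper: reduce to the affine case, cover $\PP(\sF)$ by the opens $U_i$ of Remark~\ref{remark:proj:Ui}, and transfer finiteness from $\VV(\sF) \to X$ (Proposition~\ref{prop:affinecone}\eqref{prop:affinecone-2}) through the pullback square. The only superficial difference is that the paper first applies a cancellation step (deducing that $\VV(\sF) \to \VV(\sO_X)$ inherits the order-$n$ property because $\VV(\sO_X) \to X$ is smooth and lfp) and then base changes along $s_X$, whereas you base change $s_X$ to get $s_{U_i}$ lfp and then compose with $\VV(\sF) \to X$ — a cosmetic reshuffling of the same permanence properties.
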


\begin{proof} It suffices to prove in the representable case where $X = \Spec R$. Let $\{U_i \}_{i \in I}$ be the Zariski open cover of $\PP(\sF)$ associated to a generating family of sections $\{t_i \colon \sO_X \to \sF\}_{i \in I}$ constructed in the proof of Proposition \ref{prop:proj:represent}. As the desired assertion for the morphism $\PP(\sF) \to X$ is local on the source with respect to Zariski topology, it suffices to prove the assertion for each $U_i$. Consider the pullback square 
 	$$
	\begin{tikzcd}
	U_i \ar{r}{s_{U_i}}  \ar{d}[swap]{\pr |_{U_i}} & \VV(\sF) \ar{d}{\VV(t_i^*)}\\
	X \ar{r}{s_X} & \VV(\sO_X)
	\end{tikzcd}
	$$
of Remark \ref{remark:proj:Ui}. If $\sF$ is perfect to order $n$, (resp. 
 almost perfect, perfect), then the projection $\VV(\sF) \to X$ is locally of finite presentation to order $n$ (resp. locally almost of finite presentation, locally of ﬁnite presentation), by virtue of Proposition \ref{prop:affinecone} \eqref{prop:affinecone-6}. Since the projection $\VV(\sO_X)\simeq \AA_X^1 \to X$ is smooth and locally of finite presentation, it follows that the morphism $\VV(\sF) \to \VV(\sO_X)$ is locally of finite presentation to order $n$ (resp. locally almost of finite presentation, locally of ﬁnite presentation), whence $U_i \to X$ has the same property from base change.
\end{proof}

\begin{theorem}[Euler fiber sequences] \label{thm:proj:Euler} Let $X$ be a prestack and $\sF$ a connective quasi-coherent sheaf on $X$. Let $\pr \colon \PP(\sF) \to X$ be the derived projectivization of $\sF$. Then the projection $\pr \colon \PP(\sF) \to X$ admits a relative cotangent complex $L_{\PP(\sF)/X} $. Moreover, there is a natural fiber sequence of connective quasi-coherent sheaves 
		$$L_{\PP(\sF)/X}  \otimes_{\sO_{\PP(\sF)}} \sO(1) \to \pr^* \sF \xrightarrow{\rho} \sO(1)$$
on $\PP(\sF)$, where $\rho$ is the tautological quotient map. We will refer to the above sequence as the {\em Euler fiber sequence}. Consequently, if $\sF$ is almost perfect (resp. perfect, of Tor-amplitude $\le n$), then $L_{\PP(\sF)/X}$ is almost perfect (resp. perfect, of Tor-amplitude $\le n$). 
\end{theorem}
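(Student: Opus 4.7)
The plan is to realize $\PP(\sF)$ as an open subprestack of an auxiliary affine cone over $X \times B\GG_m$, and then apply the cotangent formula for affine cones (Proposition \ref{prop:affinecone}\eqref{prop:affinecone-3}) together with the cotangent complex of $B\GG_m$. Explicitly, let $\pi_X$ and $\pi_{B\GG_m}$ denote the two projections out of $X \times B\GG_m$, and let $\sL_{\mathrm{univ}} := \pi_{B\GG_m}^* \sO_{B\GG_m}(1)$ be the tautological line bundle. Since $\pi_X^* \sF \otimes \sL_{\mathrm{univ}}^{-1}$ is connective (being the tensor product of a connective sheaf and a line bundle), we may form the affine cone
\[ \widetilde{\PP}_+ := \VV_{X \times B\GG_m}(\pi_X^* \sF \otimes \sL_{\mathrm{univ}}^{-1}). \]
By the universal property of affine cones (Remark \ref{rmk:Grothendieck:affinecone}, see also Example \ref{eg:Homspaces}), $\widetilde{\PP}_+$ classifies triples $(\eta \colon T \to X,\; \sL \in \Pic(T),\; u \colon \eta^* \sF \to \sL)$ with no surjectivity condition on $u$; the open subprestack where $u$ is surjective on $\pi_0$ is then identified with $\PP(\sF)$ by the same argument used in the proof of Proposition \ref{prop:proj:represent}.

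The cotangent computation proceeds as follows. Proposition \ref{prop:affinecone}\eqref{prop:affinecone-3} gives $L_{\widetilde{\PP}_+/X \times B\GG_m} \simeq \mathrm{pr}^* (\pi_X^* \sF \otimes \sL_{\mathrm{univ}}^{-1})$. The standard computation $L_{B\GG_m/\mathbb{Z}} \simeq \sO_{B\GG_m}[-1]$ (which can be verified directly: deformations of a line bundle from $T$ to $T[M]$ are classified by $\Omega^\infty(M[1])$, matching the universal property of $\sO[-1]$) gives $L_{X \times B\GG_m/X} \simeq \sO[-1]$. Applying the cofiber sequence of cotangent complexes to the composition $\widetilde{\PP}_+ \to X \times B\GG_m \to X$, restricting to the open subprestack $\PP(\sF) \subseteq \widetilde{\PP}_+$ (so $L_{\PP(\sF)/\widetilde{\PP}_+} \simeq 0$ and $L_{\PP(\sF)/X} \simeq L_{\widetilde{\PP}_+/X}|_{\PP(\sF)}$), and identifying $\sL_{\mathrm{univ}}|_{\PP(\sF)} \simeq \sO(1)$, yields the fiber sequence on $\PP(\sF)$:
\[ \sO_{\PP(\sF)}[-1] \to L_{\PP(\sF)/X} \to \pr^* \sF \otimes \sO(-1). \]
A one-step rotation followed by tensoring with $\sO(1)$ then produces the Euler fiber sequence
\[ L_{\PP(\sF)/X} \otimes \sO(1) \to \pr^* \sF \to \sO(1), \]
and the second map is canonically identified with the tautological $\rho$ by tracing through the universal cosection on $\widetilde{\PP}_+$. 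Finiteness follows immediately: the fiber of $\rho$ inherits almost perfectness, perfectness, or bounded Tor-amplitude from $\pr^* \sF$ and the line bundle $\sO(1)$, and tensoring by $\sO(-1)$ preserves these properties.

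The main technical point will be the careful bookkeeping in the first step, verifying both that the open subprestack of $\widetilde{\PP}_+$ genuinely coincides with $\PP(\sF)$ and that the universal cosection on $\widetilde{\PP}_+$ matches (after twisting by $\sO(1)$) the tautological surjection $\rho$. As a more elementary alternative bypassing $B\GG_m$, one could work on the Zariski cover $\{U_i\}$ of Remark \ref{remark:proj:Ui}, where each $U_i$ is a derived pullback $X \times_{\AA_X^1} \VV(\sF)$ and Proposition \ref{prop:affinecone} yields $L_{U_i/X}$ directly as the cofiber of the local map $\sO_{U_i} \to \pr_{U_i}^* \sF$; the cost is that gluing the local fiber sequences into a single global one requires tracking the transition functions for $\sO(1)$, which is combinatorially more involved.
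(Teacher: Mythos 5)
Your proposal takes a genuinely different route from the paper's. The paper proves the theorem by analyzing the moduli prestack $F$ of morphisms $M\to N$ over $\mathrm{QC}^{\rm acn}\times\Perf$ (Proposition~\ref{prop:cotangent:QCPerf}), carefully computing $L_{F/\mathrm{QC}^{\rm acn}}$ and then restricting to the open subfunctor classifying surjections onto line bundles. You instead realize $\PP(\sF)$ as an open sub-prestack of the affine cone $\VV_{X\times B\GG_m}(\pi_X^*\sF\otimes\sL_{\rm univ}^{-1})$ and combine Proposition~\ref{prop:affinecone}\eqref{prop:affinecone-3} with the computation $L_{B\GG_m}\simeq\sO[-1]$ and the transitivity triangle. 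This is the derived-algebraic-geometry version of the classical observation that $\PP(V)=[\VV(V)^{\rm surj}/\GG_m]$, and it is a clean and conceptually attractive alternative. What your approach buys is a shorter computation; what it loses is that it depends on additional input not established in the paper (existence of $L_{B\GG_m}$ in the sense of Definition~\ref{def:relativecotangent}, and the transitivity triangle for general prestacks), and---more importantly---it hides the real work inside a step you explicitly defer.

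The genuine gap is the identification of the connecting map with the tautological quotient $\rho$. After rotating the transitivity triangle you produce a fiber sequence $L_{\PP(\sF)/X}\otimes\sO(1)\to\pr^*\sF\xrightarrow{\partial}\sO(1)$ in which $\partial$ is the connecting map of the cotangent-complex triangle for $\widetilde{\PP}_+\to X\times B\GG_m\to X$, restricted to $\PP(\sF)$ and twisted by $\sO(1)$. Your claim that $\partial\simeq\rho$ is plausible---one can verify it, for example, by pulling back along an atlas and identifying the connecting map with the coadjoint of the $\GG_m$-action, which dualizes to the universal cosection---but you give no argument, only the phrase ``by tracing through the universal cosection.'' Note that this is not a cosmetic point: the paper's analogous identification is exactly the content of the second half of the proof of Proposition~\ref{prop:cotangent:QCPerf}\eqref{prop:cotangent:QCPerf-3}, which occupies substantial effort (analyzing the fiber sequence of mapping spaces $\Omega H_\eta\to G_\eta\to F_\eta$ and pinning down the boundary map on the level of spaces). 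Without an argument of comparable precision, the exact triangle you produce is the correct shape but the second arrow is not yet identified, so the theorem as stated is not proved. Your proposed ``elementary alternative'' via the Zariski cover $\{U_i\}$ faces the gluing difficulty the paper itself singles out in the remark after Theorem~\ref{thm:proj:Euler}, and you are right to regard it as less promising; I would push instead on making the $B\GG_m$ identification rigorous.
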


We will deduce the above theorem from a general result Proposition \ref{prop:cotangent:QCPerf} regarding cotangent complexes for maps of modules. Before stating the result, let us review the definition of relative cotangent complexes of morphisms between prestacks.

\begin{definition}[See {\cite[\S 3.2]{DAG}}] \label{def:relativecotangent} Let $f \colon X \to Y$ be a natural transformation between functors $X, Y \colon \CAlgDelta \to \shS$. We say that {\em $f$ admits a relative cotangent complex} if there exists an almost connective quasi-coherent sheaf $L_{X/Y} \in \QCoh(X)^{\rm acn}$ such that for each $A \in \CAlgDelta$ and $\eta \in X(A)$,  the functor $F_\eta \colon \Mod_A^\cn \to \shS$,
	$$F_\eta(M)  = {\rm fib} (X(A \oplus M)  \to X(A) \times_{Y(A)} Y(A \oplus M))$$
(where the fiber is taken over the point of $X(A) \times_{Y(A)} Y(A \oplus M))$ determined by $\eta$) is corepresentable by $\eta^* L_{X/Y}$. If the above condition is satisfied, the (essentially unique) quasi-coherent sheaf $L_{X/Y} \in \QCoh(X)^{\rm acn}$ is called the {\em relative cotangent complex of $f$}.
\end{definition}

\begin{remark}[Compare with {\cite[Remark 17.2.4.3]{SAG}}] \label{rmk:cotangent:ab} Unwinding the definition, we see that $f \colon X \to Y$ admits a relative cotangent complex if and only if the following conditions are satisﬁed:
	\begin{enumerate}
		\item[$(a)$] For each $A \in \CAlgDelta$ and $\eta \in X(A)$, the above defined functor $F_\eta \colon \Mod_A^{\rm cn} \to \shS$ is corepresented by an almost connective $A$-module $M_\eta$. Notice that by virtue of \cite[ Example 17.2.1.4]{SAG}, the almost connective module $M_\eta$ that corepresents the functor $F_\eta$, if exists, is uniquely determined up to contractible ambiguity. 
		\item[$(b)$] The module $M_\eta$ that corepresents $F_\eta$ is functorial on $A$ in the strong sense: for each map $A \to B$ in $\CAlgDelta$ and $\eta \in X(A)$, let $\eta' \in X(B)$ denote the image of $\eta$. Then the functor $F_{\eta'}$ is corepresented by $B \otimes_A M_\eta$. 
	\end{enumerate}
If both $(a)$ and $(b)$ are satisfied, then the cotangent complex $L_{X/Y} \in \QCoh(Y)^{\rm acn}$ is described by the formula $\eta^* L_X = M_\eta \in \Mod_A$ for $\eta \in X(A)$. 
\end{remark}

\begin{proposition}\label{prop:cotangent:QCPerf}Let $\Perf$ be the functor $(R \in \CAlgDelta) \mapsto ((\Mod_R^{\perf})^{\simeq} \in \shS)$, and let ${\rm QC}^{\rm acn}$ be the functor $(R \in \CAlgDelta) \mapsto ((\Mod_R^{\rm acn})^{\simeq} \in \widehat{\shS})$, where $\Mod_R^{\rm acn}$ is the full subcategory of $\Mod_R$ spanned by almost connective modules, and $(\blank)^\simeq$ is the operation of taking core of an $\infty$-category (that is, discarding non-invertible morphisms). Let $\Mod_{\Delta^1}$ be the functor $(R \in \CAlgDelta) \mapsto (\Fun(\Delta^1, \Mod_R)^{\simeq} \in \widehat{\shS})$ so that the objects of $\Mod_{\Delta^1}(R)$ are morphisms of $R$-modules $M \to N$. Let $F$ be the subfunctor of $\Mod_{\Delta^1}$ whose $R$-points are the full subcategories $F(R) \subseteq \Mod_{\Delta^1}(R)$ spanned by those $R$-module morphisms $M \to N$ for which $M$ is almost connective and $N$ is perfect. Then there is a natural projection map $F \to {\rm QC}^{\rm acn} \times \Perf$ that carries $(M \to N) \in F(R)$ to $(M, N) \in  {\rm QC}^{\rm acn}(R) \times\Perf(R)$.
	\begin{enumerate}
		\item \label{prop:cotangent:QCPerf-1} The projection $\alpha \colon F \to {\rm QC}^{\rm acn} \times \Perf$ admits a relative cotangent complex $L_{F/({\rm QC}^{\rm acn} \times \Perf)}$. Moreover, for each $R$-point $\eta \in F(R)$ that classifies a map of  $R$-modules $M \to N$, $\eta^*L_{F/({\rm QC}^{\rm acn} \times \Perf)}$ can be canonically identified with $M \otimes_R N^\vee$.
		\item \label{prop:cotangent:QCPerf-2} The prestack $\Perf$ admits an absolute cotangent complex $L_{\Perf}$. Moreover, for each $R$-point $\eta \in \Perf(R)$ that classifies a perfect $R$-module $N$, $\eta^*L_{\Perf}$ can be canonically identified with $\Sigma^{-1}\, N \otimes_R N^\vee$.
		\item \label{prop:cotangent:QCPerf-3} The projection $F \to {\rm QC}^{\rm acn}$ admits a relative cotangent complex $L_{F/  {\rm QC}^{\rm acn}}$. Moreover, for each $R$-point $\eta \in F(R)$ that classifies a map of $R$-modules $f\colon M \to N$, $\eta^*L_{F/ {\rm QC}^{\rm acn}}$ can be canonically identified with ${\rm fib}(M \xrightarrow{f} N)\otimes_R N^\vee$.
	\end{enumerate}
\end{proposition}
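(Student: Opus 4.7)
My plan is to verify each of the three assertions by directly computing the deformation functor $F_\eta \colon \Mod_R^{\cn} \to \shS$ of Definition \ref{def:relativecotangent} and identifying it with the functor corepresented by the claimed almost connective module. I would handle \eqref{prop:cotangent:QCPerf-1} and \eqref{prop:cotangent:QCPerf-2} independently and then derive \eqref{prop:cotangent:QCPerf-3} via the cotangent cofiber sequence. In all three cases, condition $(b)$ of Remark \ref{rmk:cotangent:ab} is automatic, since the formulas $M \otimes_R N^\vee$, $\Sigma^{-1}(N \otimes_R N^\vee)$, and $\fib(f) \otimes_R N^\vee$ are manifestly stable under base change of simplicial commutative rings.

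For assertion \eqref{prop:cotangent:QCPerf-1}, I would fix a point $\eta \in F(R)$ classifying $f \colon M \to N$ and a connective module $K \in \Mod_R^\cn$. The basepoint in $({\rm QC}^{\rm acn} \times \Perf)(R \oplus K)$ lying over $(M,N) \in ({\rm QC}^{\rm acn} \times \Perf)(R)$ is the trivial deformation $(M_K, N_K) := (M \otimes_R (R \oplus K), N \otimes_R (R \oplus K))$, and therefore $F_\eta(K)$ is identified with the homotopy fiber at $f$ of the restriction map
\begin{equation*}
\Map_{\Mod_{R \oplus K}}(M_K, N_K) \longrightarrow \Map_{\Mod_R}(M, N).
\end{equation*}
Adjunction along the canonical morphism $R \to R \oplus K$ together with the $R$-module splitting $R \oplus K \simeq R \oplus K$ identifies this fiber with $\Map_{\Mod_R}(M, N \otimes_R K)$; since $N$ is perfect, this is in turn naturally equivalent to $\Map_{\Mod_R}(M \otimes_R N^\vee, K)$, establishing corepresentability by $M \otimes_R N^\vee$.

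For assertion \eqref{prop:cotangent:QCPerf-2}, the description of the cotangent complex of the moduli prestack of perfect modules at $N$ as $\Sigma^{-1}(N \otimes_R N^\vee)$ is a standard result in derived deformation theory; one may cite Lurie's treatment of moduli of perfect modules in \cite{DAGV, SAG} or the foundational work of To\"en--Vaqui\'e \cite{TVa07}. Equivalently, one can verify directly that the deformation space $\Perf_\eta(K) = \fib_N(\Perf(R \oplus K) \to \Perf(R))$ is canonically equivalent to $\Omega^\infty \big(\Sigma(N \otimes_R N^\vee) \otimes_R K \big)$, which by perfectness of $N$ coincides with $\Map_{\Mod_R}(\Sigma^{-1}(N \otimes_R N^\vee), K)$. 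For assertion \eqref{prop:cotangent:QCPerf-3}, the cotangent cofiber sequence \cite[Proposition 3.2.12]{DAG} applied to the composition $F \xrightarrow{\alpha} {\rm QC}^{\rm acn} \times \Perf \xrightarrow{\pr_1} {\rm QC}^{\rm acn}$ produces, after substituting \eqref{prop:cotangent:QCPerf-1} and \eqref{prop:cotangent:QCPerf-2} at the point $\eta$, a fiber sequence
\begin{equation*}
\Sigma^{-1}(N \otimes_R N^\vee) \longrightarrow L_{F/{\rm QC}^{\rm acn}}\big|_\eta \longrightarrow M \otimes_R N^\vee.
\end{equation*}
Rotating this sequence and identifying the boundary morphism $M \otimes_R N^\vee \to N \otimes_R N^\vee$ with $f \otimes \id_{N^\vee}$ by naturality of the cotangent complex under the forgetful map $F \to \Perf$, $(f \colon M \to N) \mapsto N$, one concludes that $L_{F/{\rm QC}^{\rm acn}}|_\eta \simeq \fib(f \otimes \id_{N^\vee}) \simeq \fib(f) \otimes_R N^\vee$.

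The hardest step is assertion \eqref{prop:cotangent:QCPerf-2}: a direct verification requires a careful analysis of derived deformation theory for perfect modules along square-zero extensions, and the cleanest route is to invoke Lurie's work on the moduli of perfect complexes. A secondary technical point arises in \eqref{prop:cotangent:QCPerf-3}, namely the identification of the boundary map of the cotangent cofiber sequence with $f \otimes \id_{N^\vee}$; this is a naturality argument that ultimately reflects the fact that the forgetful map $F \to \Perf$ at the level of first-order deformations at $\eta$ records the induced deformation of the target module, and the compatibility is witnessed by the tautological factorization of $f$ through itself.
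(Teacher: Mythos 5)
Your strategy for parts (1) and (2) essentially reproduces the paper's proof, and the deformation-theoretic computations you sketch are correct. For (1) the paper does precisely the computation you describe: identifying $F_\eta(I)$ as a fiber product of mapping spaces, peeling off the trivial square-zero extension to land in $\Map_{\Mod_R}(M, N\otimes_R I)$, and then using perfectness of $N$ to rewrite this as $\Map_{\Mod_R}(M\otimes_R N^\vee, I)$. For (2) the paper does not merely cite the spectral-setting result but adapts Lurie's proof (using trivial derivations $d_0 \colon R \to R\oplus\Sigma I$ and the descent pullback square from \cite[Proposition 5.6.2]{DAG}), which is roughly the ``direct verification'' you offer as an alternative; this is fine.

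The gap is in part (3). You correctly write down the cotangent cofiber sequence for $F \to {\rm QC}^{\rm acn}\times\Perf \to {\rm QC}^{\rm acn}$, but the crucial claim that the boundary map $g\colon M\otimes_R N^\vee \to N\otimes_R N^\vee$ is homotopic to $f\otimes\id_{N^\vee}$ is asserted via an appeal to ``naturality of the cotangent complex under the forgetful map $F\to\Perf$,'' and this is not yet a proof. The cofiber sequence you invoke arises from the composition through ${\rm QC}^{\rm acn}\times\Perf$, not through $\Perf$, so it is not immediate how the map $F\to\Perf$ controls its boundary arrow; and more to the point, the boundary map of a general cofiber sequence of cotangent complexes is an abstract connecting morphism whose identification with a concrete module map requires tracking through all the identifications made in parts (1) and (2). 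The paper explicitly flags this as the nontrivial content of part (3) and proves it by giving explicit models: it realizes $F_\eta(I)$ as a space of commutative squares in $\Mod_{R\oplus\Sigma I}$ lifting a degenerate square, identifies $G_\eta(I)\simeq\Omega Z$ and $H_\eta(I)\simeq Y$ for suitable spaces $Y,Z$ with a pullback map $h=f^*\colon Y\to Z$, exhibits $F_\eta(I)$ as the homotopy fiber of $h$, and then uses the fiber sequence of pointed spaces $\Omega Y\xrightarrow{-\Omega h}\Omega Z\to{\rm hFib}(h)$ to read off that the connecting map is precomposition with $f$. That is exactly the computation your ``tautological factorization'' heuristic is gesturing at, but it has to actually be carried out; without it, your identification of $L_{F/{\rm QC}^{\rm acn}}|_\eta$ with $\fib(f)\otimes_R N^\vee$ (as opposed to the fiber of some other, possibly unrelated, map $M\otimes_R N^\vee\to N\otimes_R N^\vee$) is unjustified.
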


\begin{proof} We first prove \eqref{prop:cotangent:QCPerf-1}. For each $\eta \in F(R)$ that classifies a map of $R$-modules $M \to N$ for which $M$ is almost connective and $N$ is perfect, and $I \in \Mod_{R}^\cn$, let $G_\eta(I)$ denote the fiber 
	$${\rm fib}\left(F(R \oplus I) \to F(R) \times_{{\rm QC}^{\rm acn}(R) \times \Perf(R)} ({\rm QC}^{\rm acn}(R\oplus I) \times \Perf(R \oplus I)) \right)$$
over the point $(M,N)$. (Here $R \oplus I$ is the trivial square-zero extension of $R$ by $I$, see \cite[Construction 25.3.1.1]{SAG}). Unwinding the definition, we see that 
	\begin{align*}
		G_{\eta}(I)
		& \simeq \Map_{\Mod_{R \oplus I}}(M \otimes_R (R\oplus I), N \otimes_R (R\oplus I)) \times_{\Map_{\Mod_R}(M,N)} \{M \to N\} \\
		&\simeq \Map_{\Mod_{R}}(M, N \otimes_R (R\oplus I)) \times_{\Map_{\Mod_R}(M,N)} \{M \to N\}  \\
		& \simeq \Map_{\Mod_{R}}(M, N \otimes_R I) \simeq \Map_{\Mod_R} (M \otimes_R N^\vee, I).
	\end{align*}
Therefore, the functor $I \mapsto G_{\eta}(I)$ is almost corepresentable by $M \otimes_R N^\vee$. The module $M \otimes_R N^\vee$ is functorial on $R$ in the strong sense: for each $R \to R'$, $(M \otimes_{R} N^\vee) \otimes_{R} R' \simeq (M \otimes_R R') \otimes_R' (N \otimes_R R')^\vee)$. Thus, we have verified the conditions $(a)$ and $(b)$ of Remark \ref{rmk:cotangent:ab}, and we deduce that the projection map $\alpha \colon F \to {\rm QC}^{\rm acn} \times \Perf$ admits a relative cotangent complex which is described by $\eta^* L_{\alpha} \simeq G_{\eta}$. This proves  \eqref{prop:cotangent:QCPerf-1}.

The analogue of assertion \eqref{prop:cotangent:QCPerf-2} in the spectral setting is proved in \cite[Corollary 19.2.2.3, Remark 19.2.2.4]{SAG}. We now show that a similar approach works in the derived context. Let $R \in \CAlgDelta$, $\eta \in \Perf(R)$ that corresponds to a perfect $R$-module $N$. Let $H_\eta \colon \Mod_R^\cn \to \shS$ be the functor that carries $I$ to the space $\Perf(R\oplus I) \times_{\Perf(R)} \{ \eta\}$.
Let $d_0 \colon R \to R \oplus \Sigma I$ be the map of simplicial algebras that corresponds to the trivial derivation of $\Sigma I$ (see \cite[Definition 25.3.1.4]{SAG}), so that there is a pullback diagram
	$$
	\begin{tikzcd}
		R \oplus I \ar{r} \ar{d} & R  \ar{d}{d_0} \\
		R \ar{r}{d_0} & R\oplus \Sigma I.
	\end{tikzcd}
	$$
Since $d_0$ is surjective on $\pi_0$, we can apply \cite[Proposition 5.6.2]{DAG} and deduce that digram of $\infty$-categories induced by tensor functors
	$$
	\begin{tikzcd}
		\Mod_{R \oplus I}^{\rm acn}  \ar{r} \ar{d} & \Mod_R^{\rm acn}  \ar{d}{u^*} \\
		\Mod_R^{\rm acn} \ar{r} & \Mod_{R\oplus \Sigma I}
	\end{tikzcd}
	$$
is a pullback square. In other words, the induced map 
	\begin{equation}\label{eqn:prop:cotangent:pullback}
	\Mod_{R \oplus I}^{\rm acn} \to \Mod_R^{\rm acn} \times_{\Mod_{R \oplus I}} \Mod_R^{\rm acn}
	\end{equation}
is an equivalence. In particular, passing to homotopy fibers over $N$, we obtain that
	\begin{align*}
	 \Mod_{R \oplus I}^\cn \times_{\Mod_R} \{N\} 
		& \simeq \Map_{\Mod_{R \oplus \Sigma I}}(N \otimes_R (R \oplus \Sigma I), N \otimes_R(R \oplus \Sigma I)) \times_{\Map_{\Mod_R(N,N)}}  \{\id_N\}. \\
		&\simeq \Map_{\Mod_{R}}(N, N \otimes_R (R\oplus \Sigma I)) \times_{\Map_{\Mod_R}(N,N)} \{ \id_N \}  \\
		& \simeq \Map_{\Mod_{R}}(N, N \otimes_R \Sigma I) \simeq \Map_{\Mod_R} (\Sigma^{-1} \, N \otimes_R N^\vee, I).
	\end{align*}
(The first equivalence follows from the fact that any lifting of $\id_{N}$ to an endomorphism of $N \otimes_R(R \oplus \Sigma I)$ is automatically invertible; Compare with the argument before \cite[Proposition 19.2.2.2]{SAG}). Therefore the functor $I \mapsto H_{\eta}(I)$ is almost corepresentable by $\Sigma^{-1}\, N \otimes_R N^\vee$. $\Sigma^{-1} \, N \otimes_R N^\vee$ is evidently functorial on $R$ in the strong sense as in  \eqref{prop:cotangent:QCPerf-1}. This proves \eqref{prop:cotangent:QCPerf-2}.

We now prove assertion \eqref{prop:cotangent:QCPerf-3}. Notice that from the fiber sequence
	$$ \alpha^* L_{({\rm QC}^{\rm acn} \times \Perf) / {\rm QC}^{\rm acn}} \to L_{F/ {\rm QC}^{\rm acn}} \to L_{F/({\rm QC}^{\rm acn} \times \Perf)}$$
we immediate obtain that $L_{F/ {\rm QC}^{\rm acn}}$ exists and there is a canonical equivalence $\eta^*L_{F/ {\rm QC}^{\rm acn}} \simeq {\rm fib} (M \otimes_R N^\vee \xrightarrow{g} N \otimes_R N^\vee)$. To show that the above map $g$ coincides with the map $f \otimes \id_{N^\vee}$, we will give explicit descriptions of the above fiber sequence in our case.

Let $R \in \CAlgDelta$ and fix an $R$-point $\eta \in F(R)$ which corresponds to a map of $R$-modules $f \colon M \to N$, where $M$ is almost connective and $N$ is perfect. We let $F_\eta \colon \Mod_R^\cn \to \shS$ denote the functor corepresented by $\eta^*L_{F/ {\rm QC}^{\rm acn}}$, that is, for each $I \in \Mod_{R}^\cn$, $F_\eta(I)$ is the space
	$$F_\eta(I) : = {\rm fib}\left(F(R \oplus I) \to F(R) \times_{{\rm QC}^{\rm acn}(R) } ({\rm QC}^{\rm acn}(R\oplus I)\right).$$
Invoking \eqref{eqn:prop:cotangent:pullback} and arguing as the proof of assertion \eqref{prop:cotangent:QCPerf-2}, we can identify $F_\eta(I)$ with the space of commutative squares $\sigma \colon \Delta^1 \times \Delta^1 \to \Mod_{R \oplus \Sigma I}$, depicted as
	$$
	\begin{tikzcd}
		u^* M \ar{d}[swap]{\id_{u^* M}} \ar{r}{\widetilde{f}}& u^* N \ar{d}{\alpha_N} \\
		u^*M  \ar{r}{\widetilde{f}'}  & u^*N,
	\end{tikzcd}
	$$
which is a lifting of the degenerate commutative square $\sigma_0 \colon \Delta^1 \times \Delta^1 \to \Mod_R$ depicted as
	$$
	\begin{tikzcd}
		 M \ar{d}[swap]{\id_{M}} \ar{r}{f}&  N \ar{d}{\id_N} \\
		M  \ar{r}{f}  & N.
	\end{tikzcd}
	$$
Similarly, invoking \eqref{eqn:prop:cotangent:pullback} again, we can identify the space $Z = \Map_{\Mod_R}(M, N\otimes_R \Sigma I)$ with space of those maps $\widetilde{f} \colon u^* M \to u^* N$ in $\Mod_{R \oplus \Sigma I}$ which is a lifting $f$. If we let $Y = H_\eta(I) \simeq \Map_R(N, N \otimes \Sigma I)$, then there is a natural map of spaces $h = f^*\colon Y \to Z$ induced by pullback along $f$, that is, $h$ carries $\alpha_N \in Y$ to $\alpha_N \circ u^*f \in Z$. Then it follows the above description of $F_\eta(I)$ that it is homotopy equivalent to the homotopy fiber of $h \colon Y \to Z$,
	$${\rm hFib}(h) = Y \times_{\Fun(\{1\}, Z)} \times \Fun(\Delta^1, Z) \times_{\Fun(\{0\}, Z)} \{u^* f\}.$$
Notice that the space $G_\eta(I) \simeq \Map_{\Mod_R}(M, N \otimes_R I)\simeq \Omega \Map_{\Mod_R}(M, N\otimes_R \Sigma I)$ of assertion \eqref{prop:cotangent:QCPerf-1} is naturally identified with the loop space $\Omega Z$ of $Z$ (with base point $z_0$ that corresponds to $u^* f \in \Map_{\Mod_{R \oplus \Sigma I}}(u^* M, u^* N)$). Therefore, the natural inclusion map of spaces $G_\eta(I) \to F_\eta(I)$ coincide with the inclusion map $\Omega Z \to {\rm hFib}(h)$ that carries a loop $\gamma$ to $(y_0, \gamma)$, where $y_0 \in Y$ is the base point that corresponds to $u^* \id_N$. We thus deduce from the fiber sequence of spaces $\Omega Z \to {\rm hFib}(h) \to Y$ that there is a fiber sequence of functors
	$G_\eta \to F_\eta \to H_\eta.$
Since the space $Y= H_\eta(I)$ is naturally pointed (with base point $y_0$), there is a naturally induced fiber sequence of spaces (see, for example, \cite[Chapter 8, \S 6]{May})
	$$\Omega Y \xrightarrow{-\Omega h} \Omega Z \to {\rm hFib}(h),$$
where the map $-\Omega h \colon \Omega H_\eta(I) \to G_\eta(I)=\Omega Z$ is induced from $h=f^* \colon Y \to Z$; We could informally depict the map $-\Omega h$ as
	$$
	\begin{tikzcd}
	u^* N \arrow[r, bend left=50, ""{name=U, below}, "\id_{u^* N}"]
	\arrow[r, bend right=50, ""{name=D}, "\id_{u^* N}"{below}]
	& u^* N
	\arrow[phantom, from=U, to=D, " \tau"]
	\end{tikzcd}
	\mapsto
		\begin{tikzcd}
		u^* M \arrow[r, bend left=50, ""{name=LU, below}, "u^*f"]
	\arrow[r, bend right=50, ""{name=LD}, "u^*f"{below}]
	&u^* N \arrow[r, bend left=50, ""{name=U, below}, "\id_{u^* N}"]
	\arrow[r, bend right=50, ""{name=D}, "\id_{u^* N}"{below}]
	\arrow[phantom, from=LU, to=LD, " u^* \sigma_0"]
	& u^* N.
	\arrow[phantom, from=U, to=D, " \tau"]
	\end{tikzcd}
	$$
Therefore, we obtain a fiber sequence of functors $\Omega H_\eta \xrightarrow{-\Omega f^*} G_\eta \to F_\eta$. This implies the desired equivalence $\eta^*L_{F/ {\rm QC}^{\rm acn}} \simeq {\rm fib}(M \xrightarrow{f} N)\otimes_R N^\vee$ of assertion \eqref{prop:cotangent:QCPerf-3}.
\end{proof}

\begin{proof}[Proof of Theorem \ref{thm:proj:Euler}] Let $F'$ be the subfunctor of $F$ such that for each $R \in \CAlgDelta$, $F'(R)$ is the full subcategory of $F(R)$ spanned by those maps of $R$-modules $M \to N$ for which $N$ is a line bundle. Let $F''$ be the subfunctor of $F'$ such that for each $R \in \CAlgDelta$, $F''(R)$ is spanned by those maps of $R$-modules $M \to N$ which are surjective on $\pi_0$. By virtue of \cite[Proposition 19.2.4.5]{SAG}, we see that $F' \subseteq F$ is an open immersion. By the same argument as the proof of assertion \eqref{proof:prop:proj:represent-ii} of Proposition \ref{prop:proj:represent}, we see that the inclusion $F'' \subseteq F'$ is an open immersion. Therefore we deduce that $F''$ admits a relative cotangent complex over ${\rm QC}^{\rm acn}$ and the relative cotangent complex of $F'' \to {\rm QC}^{\rm acn}$ is given by the restriction of the relative cotangent complex of $F \to {\rm QC}^{\rm acn}$ to the open subfunctor $F'' \subseteq F$.

Let $P_\sF$ denote the functor represented by $\PP(\sF)$. Suppose we are given a pair of objects $(R \in \CAlgDelta, \eta \in P_{\sF}(R))$ which corresponds to a surjective map $\eta^* \sF \to \sL$ in $\Mod_R^\cn$. We let $\eta' \in X(R)$ denote the image of $\eta$ under the projection $P_\sF \to X$. Then $\eta^* \sF \to \sL$ corresponds to a point $\eta'' \in F''(R)$. For each $I \in \Mod_R^\cn$, we let $F''_\eta(I)$ denote the fiber 
	$${\rm fib}\left(P_\sF(R \oplus I) \to P_{\sF}(R) \times_{X(R)} X(R \oplus I) \right).$$ 
Unwinding the definition, we see that $I \mapsto F''_{\eta}(I)$ coincides with the functor 
	$$I \mapsto \Map_{\Mod_R}(\eta''^*L_{F''/{\rm QC}^{\rm acn}}, I).$$
We deduce from Proposition \ref{prop:cotangent:QCPerf}  \eqref{prop:cotangent:QCPerf-3} that $F''_\eta$ is corepresentable by the connective $R$-module $M_\eta = \fib(\eta^* \sF \to \sL) \otimes_R \sL^\vee$. Since the formation of $\eta \mapsto M_\eta$ is functorial on $R$ in the strong sense (that is, for any map of simplicial commutative rings $R \to A$, let $\eta_A \in P_\sF(A)$ denote the image of $\eta$, then $M_{\eta_A} \simeq M_\eta \otimes_R A$), the conditions $(a)$ and $(b)$ of Remark \ref{rmk:cotangent:ab} have been verified. Therefore, we obtain that $P_\sF \to X$ admits a relative cotangent complex which is characterized by the formula 
	$$L_{P_{\sF}/X} \simeq {\rm fib}(\pr^* \sF \to \sO(1)) \otimes_{\sO_{\PP(\sF)}} \sO(-1).$$
This proves Theorem \ref{thm:proj:Euler}.
\end{proof}

\begin{remark}[Cotangent complexes over $U_i$] If $X$ and $\sF$ satisfies the condition of Remark \ref{remark:proj:Ui}, then we let $\{U_i \subseteq \PP(\sF)\}_{i \in I}$ be the corresponding Zariski open cover of $\PP(\sF)$ associated to a generating family of sections $\{t_i \colon \sO_X \to \sF\}_{i \in I}$. As $U_i \to X$ is the base change of the affine morphism $\VV(t^*) \colon \VV(\sF) \to \VV(\sO_X)$ along the section $s_X \colon X \to \VV(\sO_X)$, by virtue of Proposition \ref{prop:affinecone} \eqref{prop:affinecone-6}, we obtain an equivalence
	$${\rm cofib}\big(t_{i, U_i} \colon \sO_{U_i} \to \pr_{U_i}^* \sF\big) \otimes \sO(-1)|_{U_i}  \simeq L_{U_i/X}.$$
Since the composition $f_i = \rho|_{U_i} \circ t_{i, U_i}$ is an isomorphism, by virtue of the property ($TR4$) of the stable $\infty$-category $\QCoh(U_i)$ (see \cite[Theorem 1.1.2.14]{HA}) we obtain a canonical equivalence
	$${\rm fib}\big(\rho|_{U_i} \colon \pr_{U_i}^* \sF \to \sO(1)|_{U_i}\big) \xrightarrow{\sim} {\rm cofib} \big(t_{i, U_i} \colon \sO_{U_i} \to \pr_{U_i}^* \sF\big).$$
Therefore we obtain an equivalence
	$$\varphi_i \colon {\rm fib}\big(\rho|_{U_i} \colon \pr_{U_i}^* \sF \to \sO(1)|_{U_i}\big)  \otimes \sO(-1)|_{U_i} \xrightarrow{\sim} L_{U_i/X}$$
over each $U_i$. However, unlike the classical situation, it seems to be difficult to explicitly glue these equivalences $\varphi_i$ on $U_i$ to an equivalence $\varphi$ on $\PP(\sF)$, as least for two reasons: first, as we are working with $\infty$-categories, the ``cocycle conditions" for gluing $\varphi$ are not conditions but potentially infinite additional data which we need to specify on double, triple, fourfold intersections, etc. Second, since we wish to glue in general non-discrete modules, it is difficult to specify concretely the morphisms $\varphi_i$ and their compatibility condition data. It is due to these difficulties that we adopt the above functorial approach in our proof of Theorem \ref{thm:proj:Euler}.
\end{remark}

\begin{corollary}[Closed immersions] \label{cor:proj:closedimmersion} Let $X$ be a prestack and let $\varphi \colon \sF \to \sE$ be a surjective map of quasi-coherent sheaves on $X$ (that is, the induced map $\pi_0(\sF) \to \pi_0(\sE)$ is an epimorphism in $\QCoh(X)^\heartsuit$). Then 
	\begin{enumerate}
		\item \label{cor:proj:closedimmersion-1} The sujrection $\varphi \colon \sF \to \sE$ induces a canonical closed immersion $\iota \colon \PP(\sE) \to \PP(\sF)$ such that the pullback of the tautological quotient map on $\PP(\sF)$ is canonically equivalent to the tautological quotient map on $\PP(\sE)$. 			
		\item \label{cor:proj:closedimmersion-2}The closed immersion $\iota \colon \PP(\sE) \to \PP(\sF)$ admits a relative cotangent complex $\LL_{\PP(\sE)/\PP(\sF)}$ which can be described by the formula
				$$L_{\PP(\sE)/\PP(\sF)} \simeq \pr_{\PP(\sE)}^*({\rm cofib}(\varphi \colon \sF \to \sE)) \otimes_{\sO_{\PP(\sE)}} \sO_{\PP(\sE)}(-1).$$
	\end{enumerate}
\end{corollary}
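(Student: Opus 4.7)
For assertion \eqref{cor:proj:closedimmersion-1}, the plan is to construct $\iota$ at the level of functors of points. For each $A \in \CAlgDelta$ and each $A$-point $(\eta, u \colon \eta^*\sE \twoheadrightarrow \sL) \in \PP(\sE)(A)$, I send it to $(\eta, u \circ \eta^*\varphi \colon \eta^*\sF \to \sL)$. The composite is surjective on $\pi_0$ since both $\varphi$ and $u$ are; the compatibility $\iota^*\sO_{\PP(\sF)}(1) \simeq \sO_{\PP(\sE)}(1)$ and the factorization of the pulled-back tautological quotient as $\pr^*\sF \xrightarrow{\pr^*\varphi} \pr^*\sE \xrightarrow{\rho_{\PP(\sE)}} \sO_{\PP(\sE)}(1)$ are built into this construction.

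To verify that $\iota$ is a closed immersion, I will work Zariski-locally. Reducing to $X = \Spec R$ affine, I choose a family $\{t_i \colon \sO_X \to \sF\}_{i \in I}$ inducing a surjection $\sO_X^{\oplus I} \to \sF$ on $\pi_0$; since $\varphi$ is surjective on $\pi_0$, the family $\{\varphi \circ t_i\}$ similarly surjects onto $\sE$ on $\pi_0$. These provide affine open covers $\{U_i \subseteq \PP(\sF)\}$ and $\{U_i' \subseteq \PP(\sE)\}$ in the sense of Remark \ref{remark:proj:Ui}, and a direct check of the defining universal property identifies $\iota^{-1}(U_i)$ with $U_i'$. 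Under the explicit presentations $U_i \simeq \Spec(R \otimes_{R[T]} \Sym_R^*\sF)$ and $U_i' \simeq \Spec(R \otimes_{R[T]} \Sym_R^*\sE)$ from Remark \ref{remark:proj:Ui}, the restriction $\iota|_{U_i'}$ corresponds to the morphism of simplicial commutative algebras induced by $\Sym_R^*(\varphi)$, which is surjective on $\pi_0$ by Proposition \ref{prop:Sym*} \eqref{prop:Sym*-1} (since $\varphi$ is). Hence each $\iota|_{U_i'}$ is a closed immersion, and so is $\iota$.

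For assertion \eqref{cor:proj:closedimmersion-2}, the plan is to combine the cotangent fiber sequence
\[
\iota^*L_{\PP(\sF)/X} \to L_{\PP(\sE)/X} \to L_{\PP(\sE)/\PP(\sF)}
\]
attached to the composite $\PP(\sE) \xrightarrow{\iota} \PP(\sF) \to X$ (\cite[Proposition 3.2.12]{DAG}) with the Euler fiber sequences of Theorem \ref{thm:proj:Euler}, applied to $\PP(\sE)$ and, after pullback along $\iota$, to $\PP(\sF)$. Using the compatibility established in (1), tensoring with $\sO_{\PP(\sE)}(1)$ assembles these into a commutative diagram of fiber sequences in the stable $\infty$-category $\QCoh(\PP(\sE))$:
\[
\begin{tikzcd}
\iota^*L_{\PP(\sF)/X} \otimes \sO(1) \ar{r} \ar{d} & \pr_{\PP(\sE)}^*\sF \ar{r} \ar{d}{\pr^*\varphi} & \sO(1) \ar{d}{=} \\
L_{\PP(\sE)/X} \otimes \sO(1) \ar{r} & \pr_{\PP(\sE)}^*\sE \ar{r} & \sO(1).
\end{tikzcd}
\]
The $3\times 3$-lemma in $\QCoh(\PP(\sE))$ then says the cofibers of the vertical maps form a fiber sequence; since the rightmost cofiber is zero, I obtain $L_{\PP(\sE)/\PP(\sF)} \otimes \sO(1) \simeq \pr_{\PP(\sE)}^*\cofib(\varphi)$, and tensoring with $\sO(-1)$ yields the stated formula.

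The main technical subtlety I anticipate is producing the diagram above as a genuine $\Delta^1 \times \Delta^2$-diagram in $\QCoh(\PP(\sE))$ (rather than one commuting only up to unspecified homotopies), so that the cofiber computation actually applies. This rests on the naturality of the Euler fiber sequence in $\sF$, which flows from its origin as the cotangent complex of the universal $F''$ constructed in Proposition \ref{prop:cotangent:QCPerf} during the proof of Theorem \ref{thm:proj:Euler}; concretely, the morphism $\varphi \colon \sF \to \sE$ pulls back to a morphism between the relevant universal objects over $\PP(\sE)$ and thereby induces the diagram functorially.
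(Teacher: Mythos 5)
Your proof is correct and takes essentially the same approach as the paper: part (1) is the same functor-of-points construction followed by the same local reduction to the affine charts of Remark \ref{remark:proj:Ui} (the paper routes the final step through Proposition \ref{prop:affinecone} \eqref{prop:affinecone-6} rather than inspecting $\Sym_R^*(\varphi)$ directly, but the underlying reasoning is identical), and part (2) is the same $3\times 3$/octahedral argument combining the cotangent fiber sequence of $\PP(\sE)\to\PP(\sF)\to X$ with the two Euler fiber sequences, with the commutativity of the left square resting on the same naturality of the Euler sequence that you correctly flag as the technical point.
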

\begin{proof} 
For each $A \in \CAlgDelta$ and $\eta \colon \Spec A \to X$, $\varphi$ induces a natural map of spaces from $\PP(\sE)(\eta)$ to $\PP(\sF)(\eta)$ by sending $(\eta^* \sE \to \sL)$ to the composition $(\eta^* \sF \xrightarrow{\eta^* \varphi} \eta^* \sE \to \sL)$. This defines a natural transform $\iota \colon \PP(\sE) \to \PP(\sF)$ such that $\iota^*$ preserves the tautological quotients. To show $\iota$ is a closed immersion, we may reduce to the case where $X = \Spec R$. Let $\{t_i \colon \sO_X \to \sF\}_{i \in I}$ be a family of sections that is jointly surjective on $\pi_0$, then by our assumption on $\varphi$, $\{\varphi \circ t_i \colon \sO_X \to \sE\}_{i \in I}$ is also jointly surjective on $\pi_0$. As in the proof of Proposition \ref{prop:proj:represent}, we let $\{U_i \}_{i \in I}$ and $\{V_i \}_{i \in I}$ denote the Zariski open covers of $\PP(\sF)$ and $\PP(\sE)$  associated to $\{t_i \colon \sO_X \to \sF\}_{i \in I}$ and $\{\varphi \circ t_i \colon \sO_X \to \sE\}_{i \in I}$, respectively. Then by construction, $V_i$ is the inverse image of $U_i$ from the map $\iota \colon \PP(\sE) \to \PP(\sF)$, and it suffices to show each restriction $\iota|_{V_i} \colon V_i \to U_i$ is a closed immersion. The argument of Remark \ref{remark:proj:Ui} shows that $\iota|_{V_i} \colon V_i \to U_i$ is the base change of the map $\VV(\varphi^*) \colon \VV(\sE) \to \VV(\sF)$ along the section map $s_X \colon X \to \VV(\sO_X)$. By virtue of Proposition \ref{prop:affinecone} \eqref{prop:affinecone-6}, $\VV(\varphi^*)$ is a closed immersion, whence so is $\iota|_{V_i} \colon V_i \to U_i$. This proves assertion \eqref{cor:proj:closedimmersion-1}.

We now prove \eqref{cor:proj:closedimmersion-2}. By virtue of Theorem \ref{thm:proj:Euler}, we have a diagram in $\QCoh(\PP(\sE))$:
	\begin{equation*}
	\begin{tikzcd}
		\iota^* L_{\PP(\sF)/X}  \ar{r} \ar{d} &L_{\PP(\sE)/X} \ar{d} \ar{r} &  L_{\PP(\sF)/\PP(\sE)} \ar[dashed]{d} \\
		 (\pr_{\PP(\sE)}^* \sF) \otimes \sO(-1)  \ar{r}{\pr_{\PP(\sE)}^* \varphi} \ar{d}[swap]{\iota^* \rho_{\PP(\sF)} \otimes \sO(-1)}& (\pr_{\PP(\sE)}^*\sE) \otimes \sO(-1)  \ar{d}{\rho_{\PP(\sE)} \otimes \sO(-1)} \ar{r} &  {\rm cofib} (\pr_{\PP(\sE)}^* \varphi) \otimes \sO(-1)\\
		\iota^*\sO_{\PP(\sF)}	\ar{r}{\simeq} & \sO_{\PP(\sE)}  & 
	\end{tikzcd}
	\end{equation*}
In the above diagram: the first row is the fiber sequence of relative cotangent complexes obtained from the composition $\PP(\sE) \to \PP(\sF) \to X$ (see \cite[Proposition 3.2.12]{DAG}, which furthermore implies the existence of $L_{\PP(\sE)/\PP(\sF)}$). The first and second columns are the fiber sequences of Theorem \ref{thm:proj:Euler} which also implies the two solid squares are commutative. Therefore by virtue of the property ($TR4$) of the stable $\infty$-category $\QCoh(\PP(\sE))$ (see \cite[Theorem 1.1.2.14]{HA}), there exists essentially a unique vertical dashed arrow rending the above diagram commutative and inducing an equivalence between $\LL_{\PP(\sF)/\PP(\sE)}$ and ${\rm cofib} (\pr_{\PP(\sE)}^* \varphi) \otimes \sO(-1) \simeq \pr_{\PP(\sE)}^*({\rm cofib} (\varphi) )\otimes \sO(-1)$. 
\end{proof}

\begin{proposition} \label{prop:proj:PB}  Let $X$ be a prestack and $\sF' \xrightarrow{\varphi'} \sF \xrightarrow{\varphi''} \sF''$ a fiber sequence of connective quasi-coherent sheaves on $X$. Then there is a pullback diagram of prestacks:
	\begin{equation} \label{eqn:proj:PB:V}
	\begin{tikzcd}
		\PP(\sF'') \ar{d}[swap]{\iota_{\varphi''}} \ar{r}{\iota_{\varphi''}} & \PP(\sF) \ar{d}{i_{\varphi'}} \\
		\PP(\sF)  \ar{r}{i_{\mathbf{0}}} & \VV_{\PP(\sF)}(\pr_{\PP(\sF)}^* \sF' \otimes \sO_{\PP(\sF)}(-1)),
	\end{tikzcd}
	\end{equation}
where $\iota_{\varphi''} \colon \PP(\sF'')  \to \PP(\sF)$ is the closed immersion induced by $\varphi'' \colon \sF \to \sF''$ (see Corollary \ref{cor:proj:closedimmersion} \eqref{cor:proj:closedimmersion-1});  $i_{\mathbf{0}}$ and $i_{\varphi'}$ are sections of the projection $\VV_{\PP(\sF)}(\pr_{\PP(\sF)}^* \sF' \otimes \sO_{\PP(\sF)}(-1)) \to \PP(\sF)$ that classify the zero map $\pr_{\PP(\sF)}^* \sF' \otimes \sO_{\PP(\sF)}(-1) \to \sO_{\PP(\sF)}$ and the map $\rho_{\varphi'}$ obtained by tensoring the composition $\pr_{\PP(\sF)}^* \sF' \xrightarrow{\pr_{\PP(\sF)}^* \varphi'} \pr_{\PP(\sF)}^* \sF \xrightarrow{\rho_{\PP(\sF)}} \sO_{\PP(\sF)}(1)$ with $\sO_{\PP(\sF)}(-1)$, respectively. In other words, $\iota_{\varphi''}$ identifies $\PP(\sF'')$ as the derived zero locus of the canonical cosection $\rho_{\varphi'} \colon \pr^*_{\PP(\sF)} \sF' \otimes \sO_{\PP(\sF)}(-1) \to \sO_{\PP(\sF)}$ determined by $\varphi'$ in the sense of Definition \ref{def:dzero}.
\end{proposition}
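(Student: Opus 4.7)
\emph{Proof proposal.} The strategy is a functor-of-points computation. Write $P$ for the pullback of $i_{\mathbf{0}}$ and $i_{\varphi'}$ along the right and bottom arrows of \eqref{eqn:proj:PB:V}, and let $\kappa \colon \PP(\sF'') \to P$ denote the canonical comparison map induced by the (to-be-exhibited) commutativity of that square. Fixing any test prestack $T$ equipped with a morphism $g \colon T \to \PP(\sF)$ that classifies a pair $(\eta \colon T \to X,\, u \colon \eta^*\sF \twoheadrightarrow \sL)$, I will identify the fibers of $\iota_{\varphi''}$ and of $P \to \PP(\sF)$ over $g$ with the same space --- the space of null-homotopies of $u \circ \eta^*\varphi'$ in $\Map_{\QCoh(T)}(\eta^*\sF', \sL)$ --- in a manner compatible with $\kappa$.

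For the first identification, a lift of $g$ along $\iota_{\varphi''}$ consists of a map $v \colon \eta^*\sF'' \to \sL$ together with an equivalence $v \circ \eta^*\varphi'' \simeq u$. (The $\pi_0$-surjectivity of $v$ is automatic since $\pi_0(u)$ is already surjective and factors through $\pi_0(v \circ \eta^*\varphi'')$.) The space of such lifts is the fiber over $u$ of the postcomposition map
\[
(\eta^*\varphi'')^* \colon \Map_{\QCoh(T)}(\eta^*\sF'', \sL) \to \Map_{\QCoh(T)}(\eta^*\sF, \sL).
\]
Applying $\Map_{\QCoh(T)}(-, \sL)$ to the cofiber sequence $\eta^*\sF' \to \eta^*\sF \to \eta^*\sF''$ produces a fiber sequence of mapping spaces, from which this fiber is readily identified with the space of null-homotopies of $u \circ \eta^*\varphi'$.

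For the second identification, a lift of $g$ to the pullback $P$ amounts to an equivalence $i_{\mathbf{0}} \circ g \simeq i_{\varphi'} \circ g$ between the two induced sections of $\VV_{\PP(\sF)}(\pr^*\sF' \otimes \sO(-1)) \to \PP(\sF)$ over $g$. By Remark \ref{rmk:zerosections}, such sections correspond to cosections $g^*(\pr^*\sF' \otimes \sO(-1)) \simeq \eta^*\sF' \otimes \sL^{-1} \to \sO_T$, and tensoring with $\sL$ identifies this mapping space with $\Map_{\QCoh(T)}(\eta^*\sF', \sL)$. Under this identification $i_{\mathbf{0}} \circ g$ corresponds to the zero map and $i_{\varphi'} \circ g$ to $u \circ \eta^*\varphi'$, so the space of equivalences is again the space of null-homotopies of $u \circ \eta^*\varphi'$.

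The final step, which I expect to be the main technical obstacle, is to verify that $\kappa$ induces the identity on these two presentations. The commutativity of \eqref{eqn:proj:PB:V} defining $\kappa$ is supplied by the canonical null-homotopy of $\iota_{\varphi''}^*\rho_{\varphi'}$: by Corollary \ref{cor:proj:closedimmersion} this pullback factors as $\rho_{\PP(\sF'')} \circ \pr^*(\varphi'' \circ \varphi')$ twisted by $\sO(-1)$, which vanishes canonically because $\varphi'' \circ \varphi' \simeq 0$ in the fiber sequence. The compatibility then reduces to the assertion that this canonical null-homotopy matches the one furnished by the universal property of $\eta^*\sF''$ as cofiber of $\eta^*\varphi'$ --- a coherence statement about the $\infty$-categorical cofiber structure that I plan to handle by tracing the Yoneda description of $\kappa$. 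Alternatively, one may reduce directly to the affine-cone pullback of Proposition \ref{prop:affinecone:PB} by covering $\PP(\sF)$ Zariski-locally by the opens $\{U_i\}$ of Remark \ref{remark:proj:Ui} (using $\PP(\sF) \simeq \PP(\sF \otimes \sL)$ to secure generating sections locally on $X$), on which $\sO(1)$ trivializes and each $U_i$ sits inside $\VV(\sF)$ as the derived zero locus of its defining section.
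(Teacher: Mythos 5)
Your primary route — identify both fibers over a test map $g \colon T \to \PP(\sF)$ with the space of null-homotopies of $u \circ \eta^*\varphi'$ — is a genuinely different strategy from the paper's. The paper first observes (exactly as you do) that $\iota_{\varphi''}^*\rho_{\varphi'}$ is canonically null-homotopic via $\varphi'' \circ \varphi' \simeq 0$, so the square commutes, and then verifies the pullback property not by a functor-of-points computation but by reducing to $X = \Spec R$, restricting to the Zariski cover $\{U_i\}$ (resp.\ $\{V_i\}$) of $\PP(\sF)$ (resp.\ $\PP(\sF'')$) from Remark~\ref{remark:proj:Ui} where $\sO(\pm 1)$ trivializes, and recognizing each restricted square as a base change of the affine-cone square from Remark~\ref{rmk:affinecone:PB}. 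So the ``alternative route'' you mention in your final two sentences is essentially the paper's proof verbatim, and the paper never needs to confront the coherence-matching you flag.

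That coherence step is precisely where your primary approach is incomplete, and you are right to single it out. Both of your fiber identifications are computed through a fiber sequence of mapping spaces induced by the cofiber sequence $\eta^*\sF' \to \eta^*\sF \to \eta^*\sF''$, but they pass through the universal property of the cofiber in two \emph{a priori} different guises: once as the factorization data defining the lift along $\iota_{\varphi''}$, and once via the null-homotopy of $\iota_{\varphi''}^*\rho_{\varphi'}$ that you use to build $\kappa$. Showing these agree — i.e.\ that $\kappa_g$ is homotopic to the identity under the two identifications — requires unwinding the comparison map carefully, and your phrase ``I plan to handle by tracing the Yoneda description of $\kappa$'' is a promissory note rather than an argument. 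This is not a fatal flaw: the claim is plausible and can be made precise, and you lose nothing by discharging it locally exactly as the paper does. But as written, the functor-of-points branch of your argument has this one step genuinely open.

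One minor inaccuracy worth fixing if you flesh out the first identification: a lift of $g$ along $\iota_{\varphi''}$ is an object $(\sL', v)$ of $\PP(\sF'')(\eta)$ together with a path from $(\sL', v \circ \eta^*\varphi'')$ to $(\sL, u)$ in $\PP(\sF)(\eta)$; the path component in $\Pic(T)$ lets you contract to $\sL' = \sL$, after which the rest of your description is correct. Your observation that $\pi_0$-surjectivity of $v$ is automatic (since $\pi_0(u)$ factors through $\pi_0(v)$) is correct.
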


\begin{proof} Since the composition map $\pr_{\PP(\sF'')}^* \sF' \to \pr_{\PP(\sF'')}^* \sF \to \pr_{\PP(\sF'')}^* \sF'' \to \sO_{\PP(\sF'')}(1)$ is nullhomotopic, we see that  the maps $i_{\varphi'} \circ \iota_{\varphi''}$ and $i_{\mathbf{0}} \circ \iota_{\varphi''}$ both classify the zero map $\pr_{\PP(\sF'')}^* \sF' \otimes  \iota_{\varphi''}^*\sO_{\PP(\sF)}(-1) \to \sO_{\PP(\sF'')}$, hence the diagram \eqref{eqn:proj:PB:V} commutes. 

To show that \eqref{eqn:proj:PB:V} is a pullback diagram, it suffices to prove the assertion in the case where $X = \Spec R$, $R \in \CAlgDelta$. Let $\{U_i  \subseteq \PP(\sF)\}_{i \in I}$ and $\{V_i \subseteq \PP(\sF'') \}_{i \in I}$ be the Zariski open covers as in the proof of Corollary \ref{cor:proj:closedimmersion}, that is, they are Zariski open covers of $\PP(\sF)$ and $\PP(\sF'')$ associated to some generating family of sections $\{t_i \colon \sO_X \to \sF\}_{i \in I}$ and $\{\varphi'' \circ t_i \colon \sO_X \to \sF''\}_{i \in I}$, respectively. Since the assertion is local on $\PP(\sF)$ with respect to Zariski topology, it suffices to show for each $i \in I$, the restriction of the diagram \eqref{eqn:proj:PB:V} to each $U_i$,
	\begin{equation*}
	\begin{tikzcd}
		V_i \ar{d}[swap]{\iota_{\varphi''}|V_i} \ar{r}{\iota_{\varphi''}|V_i} & U_i\ar{d}{i_{\varphi'}|U_i} \\
		U_i \ar{r}{i_{\mathbf{0}}|U_i} & \VV_{U_i}(\pr_{U_i}^* \sF' \otimes \sO_{\PP(\sF)}(-1)|_{U_i}),
	\end{tikzcd}
	\end{equation*}
is a pullback square. By virtue of Remark \ref{remark:proj:Ui}, $\sO_{\PP(\sF)}(-1)|_{U_i}$ is trivial, and the above diagram is a base change of the commutative square
	$$
	\begin{tikzcd}
		\VV(\sF'') \ar{d}[swap]{\VV(\rho''^*)} \ar{r}{\VV(\rho''^*)}& \VV(\sF) \ar{d}{i_{\varphi'}} \\
		\VV(\sF)  \ar{r}{i_{\mathbf{0}}} & \VV(\sF') \times_X \VV(\sF),
	\end{tikzcd}
	$$
which is a pullback square by virtue of Remark \ref{rmk:affinecone:PB}. This proves the desired assertion. 
\end{proof}

\begin{example} Let $X$ be a prestack, and let $\sF$ be a connective quasi-coherent sheaf on $X$, and let $\rho \colon \sF \to \sO_X$ be a cosection. Then the closed immerison $\PP(\cofib(\rho)) \to \PP(\sO_X)\simeq X$ identifies $\PP(\cofib(\rho))$ as the derived zero locus $Z(\rho)$ of the cosection $\rho$ (Definition \ref{def:dzero}).
\end{example}

\begin{example}\label{eg:prop:proj:PB} Let $X$ be a prestack, let $\sV$ be a vector bundle on $X$, and let $\sF = \cofib(\sV \xrightarrow{0} \sO_X)$ denote the cofiber of the zero cosection map. Then Proposition \ref{prop:proj:PB} implies that there is a canonical closed immersion $\iota \colon \PP(\sF) \hookrightarrow X$ which fits into a pullback diagram of prestacks:
	$$
	\begin{tikzcd}
		\PP(\sF) \ar{d}[swap]{\iota} \ar{r}{\iota}& X \ar{d}{i_{\mathbf{0}}} \\
		X \ar{r}{i_{\mathbf{0}}} & \VV(\sV).
	\end{tikzcd}
	$$
Combining with Proposition \ref{prop:affinecone:PB}, we obtain a canonical equivalence $\PP(\sF) \simeq \VV(\Sigma\,\sV)$. More generally, let $\sE$ be any connective quasi-coherent sheaf on $X$, and let $\sF = \Sigma \sV \oplus \sE$, where $\sV$ is a vector bundle on $X$. Then by virtue of Proposition \ref{prop:proj:PB}, the canonical map $\sE \to \sF$ induces a closed immersion  $\iota \colon \PP(\sF) \hookrightarrow \PP(\sE)$ which fits into a pullback diagram of prestacks:
	$$
	\begin{tikzcd}
		\PP(\sF) \ar{d}[swap]{\iota} \ar{r}{\iota}& \PP(\sE) \ar{d}{i_{\mathbf{0}}} \\
		\PP(\sE) \ar{r}{i_{\mathbf{0}}} & \VV_{\PP(\sE)}(\pr_{\PP(\sE)}^*\sV \otimes \sO_{\PP(\sE)}(-1)).
	\end{tikzcd}
	$$
Combining with Proposition \ref{prop:affinecone:PB}, we obtain a canonical equivalence 
	$$\PP(\sF) \simeq \VV_{\PP(\sE)}(\Sigma\, \sV \boxtimes \sO(-1))$$
where $\sV \boxtimes \sO(-1)$ denote the vector bundle $\pr_{\PP(\sE)}^*\sV \otimes \sO_{\PP(\sE)}(-1)$ on $\PP(\sE)$.
\end{example}

\section{Generalized Serre's theorem} 
Let $\kappa$ be a commutative ring, and $V = \kappa^{\oplus n+1}$ a free $\kappa$-module of rank $(n+1)$, where $n \ge 0$ is an integer. We let $\PP_\kappa^n = \PP_{\Spec \kappa}(V)$ denote the projective space over $\kappa$ of dimension $n$. Let $\sO(1)$ denote the universal line bundle on $\PP_\kappa^n$, let $\sO(d) = \sO(1)^{\otimes d}$ if $d \ge 0$ and $\sO(d) = (\sO(1)^\vee)^{\otimes -d}$ if $d <0$. A theorem of Serre explicitly computes the cohomologies of line bundles $\sO(d)$ over $\PP_\kappa^n$. More concretely: 

\begin{theorem}[Serre] \label{thm:Serre:Pn}
\begin{enumerate}[leftmargin=*]
			\item If $d \ge 0$, then there is a canonical equivalence $\H^*(\PP_\kappa^n; \sO(d)) \simeq S^d_\kappa(V)$. In particular, $\H^*(\PP_\kappa^n; \sO(d))$ is a free $\kappa$-module of rank $\binom{n+d}{n}$ concentrated in degree $0$.  (Here, $\H^*(\PP_\kappa^n; \sO(d))\in \Mod_\kappa$ denotes the total cohomology of $\sO(d)$ over $\PP_\kappa^n$.)
			\item If $d <0$, then there are canonical equivalences 
				$$\H^*(\PP_\kappa^n; \sO(d)) \simeq \Sigma^{-n}(S_\kappa^{-d-n-1} V)^\vee \simeq \Sigma^{-n} \Gamma_\kappa^{-d-n-1}(V^\vee)$$
				 (where $\Gamma_\kappa^m(V^\vee)$ is the divided power). In particular, if $-n \le d \le -1$, then $\H^*(\PP_\kappa^n; \sO(d))$ vanishes; if $d \le -n-1$, then $\H^*(\PP_\kappa^n; \sO(d)) $ is a degree shift of a free $\kappa$-module of rank $\binom{-d-1}{n}$ placed in cohomological degree $n$.
		\end{enumerate}
\end{theorem}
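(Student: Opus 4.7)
My plan is to prove the theorem by a direct Čech cohomology computation on the standard affine open cover of $\PP_\kappa^n$, invoking Grothendieck--Serre duality only to upgrade the basis-dependent $\H^n$ identification to the claimed canonical form.

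First I would fix a basis $\{e_0, \ldots, e_n\}$ of $V$ and, applying Remark~\ref{remark:proj:Ui} to the corresponding generating sections of $V$, obtain the open cover $\{U_i = D_+(x_i)\}_{i=0}^n$ where each $U_i \cong \AA_\kappa^n$ and $\sO(d)|_{U_i}$ is free of rank one on $x_i^d$. On intersections $U_{i_0 \cdots i_k}$, the degree-$d$ sections of $\sO(d)$ are identified with monomials $x_0^{a_0} \cdots x_n^{a_n}$ satisfying $\sum a_j = d$ and $a_j \geq 0$ for $j \notin \{i_0, \ldots, i_k\}$. Since each $U_i$ and their intersections are affine, Čech cohomology computes $\H^*(\PP_\kappa^n; \sO(d))$.

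For each degree-$d$ monomial $m$, let $S(m) = \{j : a_j < 0\}$. The subcomplex of the Čech complex spanned by $m$ is (up to sign) isomorphic to the augmented cochain complex of the simplex on vertex set $\{0, \ldots, n\} \setminus S(m)$, placed in cohomological degree $|S(m)|-1$; hence this subcomplex is acyclic unless $S(m) = \emptyset$ (contributing to $\H^0$) or $S(m) = \{0, \ldots, n\}$ (contributing to $\H^n$). The former requires $d \geq 0$ and identifies $\H^0$ canonically with the degree-$d$ homogeneous polynomials in $V$, giving $S_\kappa^d V$. The latter requires $d \leq -n-1$, and the substitution $b_j = -a_j - 1 \geq 0$ with $\sum b_j = -d-n-1$ puts the basis of $\H^n(\PP_\kappa^n; \sO(d))$ in bijection with the basis of $\Gamma_\kappa^{-d-n-1}(V^\vee)$. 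In the intermediate range $-n \leq d \leq -1$, neither extreme of $S(m)$ can be realized by a degree-$d$ monomial, so all cohomology vanishes.

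To upgrade the basis-dependent $\H^n$ identification to the canonical equivalence $\H^*(\PP_\kappa^n; \sO(d)) \simeq \Sigma^{-n} (S_\kappa^{-d-n-1} V)^\vee \simeq \Sigma^{-n} \Gamma_\kappa^{-d-n-1}(V^\vee)$, I would invoke Grothendieck--Serre duality (Theorem~\ref{thm:Neeman-Lipman-Lurie}\eqref{thm:Neeman-Lipman-Lurie-2vii}) for the smooth proper morphism $\pr \colon \PP_\kappa^n \to \Spec \kappa$, combined with the identification $\omega_{\pr} \simeq \sO(-n-1)[n]$ (which follows from the Euler sequence of Theorem~\ref{thm:proj:Euler} by a determinant calculation on $L_{\PP_\kappa^n/\kappa}$) and the canonical duality $(S^m V)^\vee \simeq \Gamma^m(V^\vee)$ for finite-rank free modules (Proposition~\ref{prop:symwedgegamma}). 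The main obstacle is notational rather than conceptual: ensuring that the basis-dependent Čech identifications assemble into genuinely canonical, $\GL(V)$-equivariant equivalences. The duality route handles this cleanly, at the cost of first computing $\omega_{\pr}$; a purely Čech-theoretic alternative would explicitly track $\GL(V)$-equivariance in both the $\H^0$ and $\H^n$ computations throughout.
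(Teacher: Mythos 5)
The paper does not supply its own proof of this theorem: it cites Lurie's spectral version \cite[Theorem 5.4.2.6]{SAG}, which proves the result over an arbitrary $\EE_\infty$-ring and in particular over $\kappa$. Your proposal instead reproduces the classical \v{C}ech argument (Serre, Hartshorne) in this paper's conventions, which is a genuinely different and more elementary route. The monomial decomposition of the \v{C}ech complex, the identification of each monomial piece with a shift of the (augmented) cochain complex of the simplex on $\{0,\ldots,n\} \setminus S(m)$, and the resulting trichotomy on the sign pattern of the exponent vector are all correct, and they give the module-theoretic answer with no input beyond the agreement of \v{C}ech and sheaf cohomology on a quasi-compact separated scheme with an affine cover.

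The step that needs more care is the upgrade of the basis-dependent $\H^n$ identification to a canonical one. You propose to invoke Grothendieck--Serre duality with $\omega_{\pr} \simeq \Sigma^n\, \sO(-n-1)$, citing the Euler sequence and a determinant calculation. The Euler sequence does give $\det L_{\PP_\kappa^n/\kappa} \simeq \sO(-n-1)$ canonically, but passing from this to $\omega_{\pr} \simeq \Sigma^n \det L_{\PP_\kappa^n/\kappa}$ requires the general identification of the dualizing sheaf of a smooth proper morphism of relative dimension $n$ with the $n$-fold suspension of the top exterior power of the cotangent complex, which is never established in the paper. The nearest statement, Theorem~\ref{thm:Serre:bundle}~\eqref{thm:Serre:bundle-2ii}, is itself proved in \S\ref{sec:Serre:bundle} by reducing to Theorem~\ref{thm:Serre:Pn}, so citing it here would be circular. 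To close the gap, either import the smooth-duality formula $\omega_f \simeq \Sigma^n \det L_{X/Y}$ as a separate external input, or carry out your second suggested option self-containedly: the cup product $\H^0(\PP_\kappa^n; \sO(e)) \otimes_\kappa \H^n(\PP_\kappa^n; \sO(-e-n-1)) \to \H^n(\PP_\kappa^n; \sO(-n-1)) \simeq \kappa$ is a visibly perfect pairing of monomial bases, the whole \v{C}ech complex is naturally a complex of graded $S^*_\kappa(V)$-modules, and this structure carries the $\GL(V)$-equivariance needed to make both the $\H^0$ and $\H^n$ identifications canonical.
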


In \cite[Theorem 5.4.2.6]{SAG}, Lurie extends the above Serre's result in the framework of spectral algebraic geometry to projective spaces over all $\EE_\infty$-rings $\kappa$, which, in particular, implies the classical Serre's theorem \ref{thm:Serre:Pn} in the form as we formulate above. 

In this section, we generalize the above Serre's result in a different direction, that is, we show it holds {\em beyond} the cases of projective bundles. Let $X$ be a prestack and $\sF$ a quasi-coherent sheaf over $X$ of perfect-amplitude contained in $[0,1]$. We show that Serre's theorem holds for the derived projectivization $\pr \colon \PP(\sF) \to X$ (Theorem \ref{thm:Serre:O(d)}). Our version of Serre's theorem also includes the cases where the sheaves $\sF$ have non-positive ranks; in this case, the formula for the ``global sections" of $\sO_{\PP(\sF)}(d)$ reproduces (a derived version of) the classical Eagon--Northcott complexes (Remark \ref{rmk:ENcomplexes}).

\subsection{Serre's theorem for projective bundles}  \label{sec:Serre:bundle}
In this subsection, we present a global version of Serre's Theorem \ref{thm:Serre:Pn} for projectivizations of vector bundles; Our formulation closely follows Grothendieck's in the classical setting \cite[III, 2.1.15 \& 2.1.16]{EGA}. 

\begin{theorem}[Serre, Grothendieck]  \label{thm:Serre:bundle}
Let $X$ be a prestack, let $\sF$ be a vector bundle of rank $r \ge 1$ over $X$, and let $\det \sF = \bigwedge\nolimits^r \sF \in \Pic(X)$ denote the determinant line bundle of $\sF$ over $X$. Let $\pr \colon \PP(\sF) \to X$ denote the derived projectivization of $\sF$ over $X$, and let $\sO(1)$ denote the universal line bundle on $\PP(\sF)$. Let $d$ be an integer, and we set $\sO(d) = \sO(1)^{\otimes d}$ if $d \ge 0$ and $\sO(d) = (\sO(1)^\vee)^{\otimes -d}$ if $d <0$ as usual. Then:
\begin{enumerate}[leftmargin=*]
			\item \label{thm:Serre:bundle-1} If $d \ge 0$, then there is a canonical equivalence 
				$$\varphi_d \colon \Sym_X^d (\sF) \xrightarrow{\sim} \pr_*(\sO(d)),$$
				where $\Sym_X^d(\sF)$ denotes the $d$th symmetric power of $\sF$ over $X$ (and is locally free).
			\item \label{thm:Serre:bundle-2} If $d <0$, then the following holds:
				\begin{enumerate}
					\item \label{thm:Serre:bundle-2i}
					If $-r+1 \le d \le -1$, then $\pr_*(\sO(d)) \simeq 0$.
					\item \label{thm:Serre:bundle-2ii}
					If $d = -r$, then there is a canonical equivalence
							$$\pr_*(\Sigma^{r-1} \pr^* (\det \sF) \otimes \sO(-r) ) \xrightarrow{\sim} \sO_{X}$$
							which exhibits $\Sigma^{r-1} \pr^* (\det \sF) \otimes \sO(-r) $ as (canonically equivalent to) the relative dualizing sheaf $\omega_{\pr}$ of the projection $\pr \colon \PP(\sF) \to X$ (Theorem \ref{thm:Neeman-Lipman-Lurie} \eqref{thm:Neeman-Lipman-Lurie-2vii}).
					\item \label{thm:Serre:bundle-2iii}
					 More generally, for every integer $d \le -r$, there is a canonical equivalence
						$$\psi_d \colon \pr_*(\sO(d))  \xrightarrow{\sim} \Sigma^{1-r} (\Sym_X^{-d-r} \sF)^\vee \otimes (\det \sF)^\vee \simeq \Sigma^{1-r} (\Gamma_X^{-d-r} (\sF^\vee)) \otimes (\det \sF)^\vee.$$
			\end{enumerate}
		\end{enumerate}
\end{theorem}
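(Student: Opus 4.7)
First I will construct the canonical maps $\varphi_d$ and $\psi_d$ of the statement, verify that the relative dualizing sheaf $\omega_{\pr}$ has the asserted form by taking determinants in the Euler fiber sequence of Theorem \ref{thm:proj:Euler}, and then reduce the assertions that $\varphi_d$ and $\psi_d$ are equivalences, and the vanishing in (2i), to the universal case of the trivial rank-$r$ bundle on $\Spec \ZZ$ by descent along the frame bundle of $\sF$. In that universal case $\PP(\sF) \simeq \PP^{r-1}_{\ZZ}$, and the claim reduces to (Lurie's spectral form of) the classical Serre theorem on projective space, \cite[Theorem 5.4.2.6]{SAG}.

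\textbf{Constructions.} Define $\varphi_d$ as the adjoint of the composition
$$\pr^{*} \Sym^{d}_{X}(\sF) \;\simeq\; \Sym^{d}_{\PP(\sF)}(\pr^{*} \sF) \;\xrightarrow{\Sym^{d}(\rho)}\; \sO(d),$$
where the first equivalence comes from the base-change property of derived symmetric powers (Construction \ref{constr:symwedgegamma:prestack}, Proposition \ref{prop:symwedgegamma}\eqref{prop:symwedgegamma-1}) and $\rho \colon \pr^{*} \sF \to \sO(1)$ is the tautological quotient. Since $\sF$ is a vector bundle of rank $r$, the Euler fiber sequence
$$L_{\PP(\sF)/X} \otimes \sO(1) \;\to\; \pr^{*} \sF \;\to\; \sO(1)$$
exhibits $L_{\PP(\sF)/X} \otimes \sO(1)$ as a vector bundle of rank $r-1$; in particular $\pr$ is smooth of relative dimension $r-1$, so $\omega_{\pr} \simeq \Sigma^{r-1} \det L_{\PP(\sF)/X}$, and taking determinants in the Euler sequence yields $\det L_{\PP(\sF)/X} \simeq \pr^{*} \det \sF \otimes \sO(-r)$. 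Grothendieck--Serre duality (Theorem \ref{thm:Neeman-Lipman-Lurie}\eqref{thm:Neeman-Lipman-Lurie-2vii}) then produces, for every $d \le -r$, a canonical equivalence
$$(\pr_{*} \sO(d))^{\vee} \;\simeq\; \pr_{*}(\sO(-d) \otimes \omega_{\pr}) \;\simeq\; \Sigma^{r-1} \det\sF \otimes \pr_{*} \sO(-d-r);$$
composing its dual with the dual of $\varphi_{-d-r}$ defines $\psi_d$, and specializing to $d = -r$ yields (2ii) via the projection formula.

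\textbf{Equivalences and main obstacle.} The formation of each side of $\varphi_d$ and $\psi_d$ commutes with arbitrary base change on $X$: the derived symmetric powers by Proposition \ref{prop:symwedgegamma}\eqref{prop:symwedgegamma-1}, and the pushforwards $\pr_{*} \sO(d)$ by Theorem \ref{thm:Neeman-Lipman-Lurie}\eqref{thm:Neeman-Lipman-Lurie-1} combined with the smoothness and properness of $\pr$ (together with Proposition \ref{prop:proj-4,5}\eqref{prop:proj-4} for the compatibility of $\PP(\sF)$ itself with base change). Let $\mathrm{Fr}(\sF) \to X$ be the frame bundle of $\sF$, a $\GL_r$-torsor and hence an fpqc cover, on which $\sF$ becomes canonically trivial. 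By fpqc descent for quasi-coherent sheaves (Proposition \ref{prop:QCAlg:fppf}) it suffices to verify (1), (2i), and (2iii) after pullback to $\mathrm{Fr}(\sF)$; a further base change and application of the projection formula reduce to the universal case $X = \Spec \ZZ$ with $\sF = \ZZ^{r}$, where $\PP(\sF) \simeq \PP^{r-1}_{\ZZ}$ and all three assertions are exactly Lurie's spectral form of Serre's theorem. The main obstacle is checking that every construction in sight is functorial in the strong sense of commuting with arbitrary pullback of prestacks, so that the descent argument actually applies; once this is verified---and the referenced propositions essentially do so---the descent argument produces the required equivalences and vanishings.
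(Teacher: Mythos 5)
Your construction of $\varphi_d$ and your descent strategy for parts (1) and (2i) are essentially the paper's: the paper reduces first to $X = \Spec R$, then to the free case by Zariski locality, then to $\Spec \ZZ$, whereas you reduce in one step along the frame bundle; both reductions rely on the same base-change compatibilities (Propositions \ref{prop:symwedgegamma}\eqref{prop:symwedgegamma-1}, \ref{prop:proj-4,5}\eqref{prop:proj-4}, Theorem \ref{thm:Neeman-Lipman-Lurie}\eqref{thm:Neeman-Lipman-Lurie-1}) and both terminate in Lurie's spectral Serre theorem for $\PP^{r-1}_\ZZ$.

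The genuine departure is in part \eqref{thm:Serre:bundle-2ii}, and there is a gap there. You write that since $\pr$ is smooth of relative dimension $r-1$, ``so $\omega_{\pr} \simeq \Sigma^{r-1}\det L_{\PP(\sF)/X}$,'' and then conclude by taking determinants in the Euler sequence. The implication you invoke — that the relative dualizing complex $\pr^!\sO_X$ of a smooth proper morphism is $\Sigma^{n}\det L_{\pr}$ — is not a formal consequence of smoothness. It is a substantive theorem (relative duality for smooth morphisms / the Verdier isomorphism), and the paper's Theorem \ref{thm:Neeman-Lipman-Lurie} deliberately stops short of stating it: it gives existence, base-change compatibility, and the formula $f^!(\blank) \simeq f^*(\blank)\otimes\omega_f$, but not the explicit identification of $\omega_f$ with a shifted determinant of the cotangent complex. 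Worse, the standard proof of that identification in the quasi-coherent/spectral setting is by reduction to projective space via the Serre theorem itself, so unless you carefully structure the argument you risk circularity. You also never produce a \emph{canonical map} $\Sigma^{r-1}\pr^*(\det\sF)\otimes\sO(-r) \to \omega_{\pr}$ commuting with base change, which is what a descent reduction to $\Spec\ZZ$ would actually need.

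The paper avoids this entirely: for \eqref{thm:Serre:bundle-2ii} it applies Construction \ref{constr:Postnikov:Sym:prestacks} to the twisted tautological quotient $\rho(-1)$, obtaining a Koszul filtration $\sO_{\PP(\sF)} = \sG_{0,r} \to \sG_{1,r} \to \cdots$ with graded pieces $\Sigma^i \bigwedge^{i+1}(\pr^*\sF)\otimes\sO(-i-1)$ and top term $\sG_{r,r} \simeq \Sigma^r\bigwedge^r\fib(\rho(-1)) \simeq 0$ (since $\fib(\rho(-1))$ has rank $r-1$). Pushing forward and using the vanishing from \eqref{thm:Serre:bundle-2i} gives the canonical equivalence $\pr_*(\Sigma^{r-1}\pr^*(\det\sF)\otimes\sO(-r)) \simeq \sO_X$ \emph{directly}, from which the identification with $\omega_{\pr}$ follows by adjunction against the counit $\pr_*\pr^! \to \id$. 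If you want to salvage your approach, you could: (a) prove the equivalence $\pr_*(\Sigma^{r-1}\pr^*(\det\sF)\otimes\sO(-r)) \simeq \sO_X$ first (e.g.\ via the Koszul filtration, or by constructing a canonical map and checking it on $\Spec\ZZ$), then deduce the dualizing-sheaf identification by adjunction; or (b) cite the smooth-dualizing-sheaf theorem from \cite{SAG} explicitly and verify it is base-change compatible. As written, the step is asserted rather than justified, and the remainder of your argument for \eqref{thm:Serre:bundle-2iii} is built on it.
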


\begin{proof} Let $\rho \colon \pr^* \sF \to \sO(1)$ denote the tautological quotient morphism, then the fiber of $\rho$ is locally free of rank $r-1$. If $d \ge 0$, then the map 
	$$\pr^* \Sym_{X}^d(\sF)\simeq \Sym_{\PP(\sF)}^d(\pr^* \sF) \xrightarrow{\Sym^d(\rho)}  \Sym_{\PP(\sF)}^d(\sO(1)) \simeq \sO(d)$$
induces a canonical map
	$$\varphi_d \colon \Sym_{X}^d (\sF) \to \pr_*(\sO(d)).$$
We wish to show that the cofiber of $\varphi_d$ is equivalent to zero. Since the formation of $\varphi_d$ commutes with base change (\S \ref{sec:sym:prestack} and Theorem \ref{thm:Neeman-Lipman-Lurie} \eqref{thm:Neeman-Lipman-Lurie-1}), we may reduce to the case where $X = \Spec R$, $R \in \CAlgDelta$. Since the condition ``the cofiber of $\varphi_d$ is equivalent zero" is local on $X$ with respect to Zariski topology, we may further reduce to the case where $\sF = R^{\oplus r}$. Using the fact that the formation of $\varphi_d$ commutes with base change again, we could further reduce to the case where $X = \Spec \ZZ$; in this case, the assertion follows from Theorem \ref{sec:Serre:bundle}. This proves the assertion \eqref{thm:Serre:bundle-1}. The assertion \eqref{thm:Serre:bundle-2i} can be proved in the same way. 

To prove assertion \eqref{thm:Serre:bundle-2ii}, we apply Construction \ref{constr:Postnikov:Sym:prestacks} to the twisted tautological quotient
	$$\rho(-1) = \rho \otimes \sO(-1) \colon \pr^* \sF \otimes \sO(-1) \to \sO$$
in the case where $n=r$. Then for all $0 \le i \le r$, there are canonically defined quasi-coherent sheaves $\sG_{i,r}(\PP(\sF), \rho(-1))$ which fit into canonical fiber sequences $\beta_{i,r}(\PP(\sF), \rho(-1))$:
	$$\Sigma^{i} \,\bigwedge\nolimits^{i+1} (\pr^* \sF) \otimes \sO(-i-1)  \to \sG_{i,r}(\PP(\sF), \rho(-1)) \to \sG_{i+1,r}(\PP(\sF), \rho(-1)),$$
for which there are canonical equivalences $\sG_{0,r}(\PP(\sF), \rho(-1)) \simeq \sO_{\PP(\sF)}$ and 
	$$\sG_{r,r}(\PP(\sF), \rho(-1)) \simeq \Sym^r (\cofib(\rho(-1))) \simeq \Sigma^r \bigwedge\nolimits^r \fib(\rho(-1)) \simeq 0.$$
In particular, combining with assertion \eqref{thm:Serre:bundle-2i}, the application of the exact functor $\pr_*$ to the fiber sequences $\{\beta_{i,r}(\PP(\sF), \rho(-1))\}_{0 \le i \le r-2}$ induces canonical equivalences
	$$\sO_{X} \simeq  \pr_* \sG_{0,r}(\PP(\sF), \rho(-1)) \simeq \pr_* \sG_{1,r}(\PP(\sF), \rho(-1)) \simeq \cdots \simeq \pr_* \sG_{r-1,r}(\PP(\sF), \rho(-1));$$
and the application of $\pr_*$ to $\beta_{r-1,r}(\PP(\sF), \rho(-1))$ induces a canonical equivalence 
	$$\pr_*(\Sigma^{r-1} (\bigwedge\nolimits^{r} \pr^* \sF) \otimes \sO(-r) ) \xrightarrow{\sim} \pr_* \sG_{r-1,r}(\PP(\sF), \rho(-1)) \simeq \sO_{X}.$$
This proves assertion \eqref{thm:Serre:bundle-2ii}. Finally, for any $d \le -r$, we let $e = -r -d \ge 0$. By applying \eqref{thm:Serre:bundle-1} to $\sO(e)$, we obtain a canonical equivalence:
	$$\varphi_e^\vee \colon (\pr_* \sO(e))^\vee \xrightarrow{\sim} (\Sym_X^e(\sF))^\vee.$$
By virtue of Theorem \ref{thm:Neeman-Lipman-Lurie} (\ref{thm:Neeman-Lipman-Lurie-2iv}, \ref{thm:Neeman-Lipman-Lurie-2vii}), there is a canonical equivalence
	$$ (\pr_* \sO(e))^\vee \simeq \pr_* (\sO(-e) \otimes \omega_{\pr}).$$
Composing with the equivalence of \eqref{thm:Serre:bundle-2ii}, we obtain a canonical equivalence
	\begin{align*}
	\psi_d \colon \pr_*(\sO(d)) & \xrightarrow{\sim}  \pr_* (\sO(-e) \otimes \omega_{\pr}) \otimes \Sigma^{1-r} (\det \sF)^\vee   \\
	&\simeq   (\pr_* \sO(e))^\vee \otimes  \Sigma^{1-r} (\det \sF)^\vee \xrightarrow{\varphi_e^\vee} (\Sym_X^e(\sF))^\vee  \otimes  \Sigma^{1-r} (\det \sF)^\vee.
	\end{align*}
This proves assertion \eqref{thm:Serre:bundle-2iii}.
\end{proof}

\begin{remark}[Grothendieck--Serre duality for projective bundles] \label{rmk:GSforbundle} The above proof of assertion Theorem \ref{thm:Serre:bundle} \eqref{thm:Serre:bundle-2iii} implies that, for any $d \ge 0$, the composite map
	$$\pr_*(\sO(d)) \otimes \pr_*( \Sigma^{r-1} \sO(-d-r) \otimes \pr^* (\det \sF) ) \to \pr_*(\Sigma^{r-1} \pr^* (\det \sF) \otimes \sO(-r)) \xrightarrow{\sim} \sO_X$$
is a duality datum in the symmetric monoidal $\infty$-category $\QCoh(X)^\otimes$ (in the sense of {\cite[Definition 4.6.1.7]{HA}}). This statement is usually referred to as the {\em Grothendieck--Serre duality} for the projective bundle $\pr \colon \PP(\sF) \to X$.
\end{remark}

\subsection{Generalized Serre's theorem} \label{sec:Serre}
In this subsection, we generalize the above Theorem \ref{thm:Serre:bundle} of Serre and Grothendieck beyond the cases of projective bundles. 

\begin{lemma}\label{lem:Serre:O(d)} Let $X$ be a prestack, let $\sigma \colon \sW \to \sV$ be a map between vector bundles on $X$, where $\sW$ and $\sV$ are vector bundles of ranks $m$ and $n$, respectively. Let $\sF$ denote the cofiber of $\sigma$ and set $r = n - m$. Let $\pr \colon \PP(\sF) \to X$ and $q \colon \PP(\sV) \to X$ denote the derived projectivizations. Let $\varphi \colon q^* \sW \to \sO_{\PP(\sV)}(1)$ denote the composite map of $q^* \sigma$ with the tautological quotient $\rho_{\sV} \colon q^* \sV \to \sO_{\PP(\sV)}(1)$, and let $\iota \colon \PP(\sF) \to \PP(\sV)$ denote the closed immersion induced by the surjection $\sV \to \sF$ (Corollary \ref{cor:proj:closedimmersion}). Then:
\begin{enumerate}[leftmargin=*]
	\item \label{lem:Serre:O(d)-1} The projection $\pr \colon \PP(\sF) \to X$ is proper, quasi-smooth of relative dimension $r$.
	\item \label{lem:Serre:O(d)-2} For any integer $d$, we let $\sG_i(\PP(\sV);d)$ denote the quasi-coherent sheaf 
		$$\sG_{i,m}(\PP(\sV), q^* \sW \xrightarrow{\varphi} \sO_{\PP(\sV)}(1)) \otimes \sO_{\PP(\sV)}(d-m)$$
		for all $0 \le i \le m$, where $\sG_{i,m}(\PP(\sV), \varphi)$ is defined as in Construction \ref{constr:Postnikov:Sym:prestacks}. Then there are canonical equivalences $\sG_{0}(\PP(\sV); d) \simeq \sO_{\PP(\sV)}(d)$ and $\sG_{m}(\PP(\sV); d) \simeq \iota_*(\sO_{\PP(\sF)}(d))$. Furthermore, there are canonical fiber sequences $\beta_{i}(\PP(\sV); d)$:
 	$$\sG_{i-1}(\PP(\sV); d) \to \sG_i(\PP(\sV); d) \to \Sigma^i (\bigwedge\nolimits^i q^*\sW) \otimes \sO(d-i).$$
	\item \label{lem:Serre:O(d)-3} Let $\omega_{\pr}$ denote the relative dualizing sheaf of $\pr$. There is a canonical equivalence
		$$\Sigma^{r-1} {\pr}^*((\bigwedge\nolimits^m \sW)^\vee \otimes \bigwedge\nolimits^n \sV) \otimes \sO_{\PP(\sF)}(-r) \xrightarrow{\sim} \omega_{\pr}.$$
\end{enumerate}
\end{lemma}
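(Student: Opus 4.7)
The plan is to treat the three assertions in order; the second is the main technical step, while the first and third follow fairly directly from results already established in the paper. For assertion (1), since $\sF = \cofib(\sigma)$ is a perfect complex of Tor-amplitude in $[0,1]$ and rank $r$, and $\pi_0 \sF \simeq \Coker(\pi_0 \sigma)$ is finitely presented, Proposition \ref{prop:proj-classical} identifies the underlying classical scheme of $\PP(\sF)$ with $\PP_{\cl}(\pi_0 \sF)$, which is classically proper over $X_\cl$; since properness is detected on the classical truncation, $\pr$ is proper. Quasi-smoothness follows from the Euler fiber sequence of Theorem \ref{thm:proj:Euler},
$$L_{\PP(\sF)/X} \otimes \sO(1) \to \pr^*\sF \to \sO(1),$$
which displays $L_{\PP(\sF)/X}$ as perfect of Tor-amplitude in $[0,1]$.

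For assertion (2), I apply Construction \ref{constr:Postnikov:Sym:prestacks} to the morphism $\varphi \colon q^*\sW \to \sO_{\PP(\sV)}(1)$ with $n = m$. This yields connective sheaves $\sG_{i,m}(\PP(\sV), \varphi)$ on $\PP(\sV)$ fitting into canonical fiber sequences
$$\sG_{i-1,m}(\PP(\sV),\varphi) \to \sG_{i,m}(\PP(\sV), \varphi) \to \Sigma^i \bigl(q^*\bigwedge\nolimits^i \sW\bigr) \otimes \sO_{\PP(\sV)}(m-i),$$
with $\sG_{0,m} \simeq \Sym^m \sO(1) \simeq \sO(m)$ and $\sG_{m,m} \simeq \Sym_{\PP(\sV)}^m(\cofib(\varphi))$. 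After tensoring with $\sO(d-m)$, these become the required $\sG_i(\PP(\sV); d)$ and fiber sequences, and the identification $\sG_0(\PP(\sV); d) \simeq \sO(d)$ is then immediate. The essential step is $\sG_m(\PP(\sV); d) \simeq \iota_*\sO_{\PP(\sF)}(d)$, which by the projection formula reduces to $\sG_{m,m}(\PP(\sV), \varphi) \simeq \iota_*\sO_{\PP(\sF)}(m)$. By Proposition \ref{prop:proj:PB}, the closed immersion $\iota$ realizes $\PP(\sF)$ as the derived zero locus of the cosection $\rho_\sigma \colon q^*\sW \otimes \sO(-1) \to \sO_{\PP(\sV)}$ of the rank-$m$ vector bundle $q^*\sW \otimes \sO(-1)$. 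The Koszul complex $\bS^m(\PP(\sV), \varphi)$ coincides, up to a twist by $\sO(m)$, with the Koszul resolution attached to $\rho_\sigma$; the latter resolves $\iota_*\sO_{\PP(\sF)}$ by Lemma \ref{lem:Koszul:fib}, and the uniqueness in Lemma \ref{lem:right:convolution} then identifies the top term $\sG_{m,m}(\PP(\sV), \varphi) \simeq \Sym_{\PP(\sV)}^m(\cofib(\varphi))$ with $\iota_*\sO_{\PP(\sF)}(m)$.

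For assertion (3), I compute $\omega_\pr$ via the composition $\pr = q \circ \iota$: Theorem \ref{thm:Neeman-Lipman-Lurie} \eqref{thm:Neeman-Lipman-Lurie-2vii}, applied to both $q$ and $\iota$, yields $\omega_\pr \simeq \iota^*\omega_q \otimes \omega_\iota$. Theorem \ref{thm:Serre:bundle} \eqref{thm:Serre:bundle-2ii} gives $\omega_q \simeq \Sigma^{n-1}q^*(\det \sV) \otimes \sO_{\PP(\sV)}(-n)$, and Corollary \ref{cor:proj:closedimmersion} \eqref{cor:proj:closedimmersion-2} gives $L_\iota \simeq \Sigma(\pr^*\sW \otimes \sO_{\PP(\sF)}(-1))$, so the conormal bundle of $\iota$ is $\pr^*\sW \otimes \sO(-1)$; hence $\omega_\iota \simeq \Sigma^{-m}\pr^*(\det\sW)^\vee \otimes \sO_{\PP(\sF)}(m)$ by the standard formula $\omega_\iota \simeq \det(N_\iota)[-m]$ for a quasi-smooth closed immersion of codimension $m$. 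Multiplying these expressions produces the claimed formula $\Sigma^{r-1}\pr^*\bigl((\bigwedge^m \sW)^\vee \otimes \bigwedge^n \sV\bigr) \otimes \sO(-r)$. The main obstacle will be in part (2): verifying carefully that the Koszul complex $\bS^m$ attached to $\varphi$ and the Koszul resolution of $\iota_*\sO_{\PP(\sF)}$ supplied by Lemma \ref{lem:Koszul:fib} agree up to the twist by $\sO(m)$, so that Lemma \ref{lem:right:convolution} supplies the identification of their top terms.
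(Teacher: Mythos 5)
Your computational structure is correct, and you rightly flag assertion (2) as the delicate step, but the two auxiliary results you invoke do not quite work in the generality the lemma demands (an arbitrary prestack $X$).

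In assertion (2), you propose to identify $\sG_{m,m}(\PP(\sV),\varphi) \simeq \Sym^m_{\PP(\sV)}(\cofib(\varphi))$ with $\iota_*\sO_{\PP(\sF)}(m)$ by observing that both sit atop iterated fiber sequences with the same graded pieces, and then invoking the uniqueness supplied by Lemma \ref{lem:right:convolution}. That lemma, however, requires the vanishing $\pi_k\,\Map_{\QCoh(\PP(\sV))}(C_{i+j}, C_i) \simeq 0$ for $k\ge 1$, where $C_i = (\bigwedge^i q^*\sW)\otimes\sO(d-i)$. Over an arbitrary prestack $X$ — in particular over a non-classical derived scheme, where $\sO_X$ has higher homotopy — the derived global sections of these vector bundles on $\PP(\sV)$ can have positive homotopy groups, so the hypotheses of Lemma \ref{lem:right:convolution} generally fail. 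The paper instead cites Remark \ref{rmk:Koszul:fib}, which identifies the two constructions \emph{at the source}: both $\sG_i$ (from Lemma \ref{lem:Koszul:fib}) and $\sG_{i,m}$ (from Construction \ref{constr:Postnikov:Sym:prestacks}) are left Kan extensions of brutal truncations of the same universal Koszul complex $\eqref{eqn:Koszul:S^*}$, so they agree by construction rather than by an a posteriori uniqueness argument. You should replace the appeal to Lemma \ref{lem:right:convolution} with Remark \ref{rmk:Koszul:fib}.

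In assertion (3), your computation of $\omega_{\pr}$ produces the right answer, but the ingredient $\omega_\iota \simeq \det(N_\iota)[-m]$ for the quasi-smooth closed immersion $\iota$ is cited in the literature (SAG Example 6.4.2.9) only for spectral algebraic spaces, not for relative derived schemes over an arbitrary prestack, and the same caveat applies to the multiplicativity $\omega_{\pr}\simeq\iota^*\omega_q\otimes\omega_\iota$ from SAG Corollary 6.4.2.8. The paper therefore argues differently: it uses the fiber sequence $\beta_m(\PP(\sV); -r)$ from part (2), together with $q_*$ and Theorem \ref{thm:Serre:bundle} \eqref{thm:Serre:bundle-2ii}, to manufacture a canonical map $\theta$ from $\Sigma^{r-1}\pr^*\big((\bigwedge^m\sW)^\vee\otimes\bigwedge^n\sV\big)\otimes\sO(-r)$ to $\omega_{\pr}$ whose formation commutes with base change; it then verifies that $\theta$ is an equivalence after reducing (via base change and Zariski-localization, as in the proof of Theorem \ref{thm:Serre:bundle} \eqref{thm:Serre:bundle-1}) to the case where $X$ is classical and $\sW$, $\sV$ are trivial — and only at that point are the dualizing-sheaf formulas for $q$ and for the Koszul-regular immersion $\iota$ applied. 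You should adopt this construct-then-check-locally pattern rather than applying the $\det(N_\iota)$ formula globally.
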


\begin{remark} \label{rmk:lemma:Serre:O(d)} We could informally think of assertion \eqref{lem:Serre:O(d)-2} as the heuristic  statement that ``$\iota_*(\sO_{\PP(\sF)}(d))$ is canonically resolved by the Koszul complex $\bS^m(\PP(\sV), \varphi) \otimes \sO_{\PP(\sV)}(m-d)$:
	$$ \bigwedge\nolimits^m q^*\sW \otimes \sO_{\PP(\sV)}(-m+d) \xrightarrow{d_m'} \cdots  \to q^*\sW \otimes \sO_{\PP(\sV)}(d-1) \xrightarrow{d_1'} \sO_{\PP(\sV)}(d)."$$
This heuristic statement is rigirous whenever $X$ is classical and the cosection $\varphi \colon q^* \sW \to \sO_{\PP(\sV)}(1)$ is Koszul-regular. In general, we could think of the canonical fiber sequences of \eqref{lem:Serre:O(d)-2} as ``remembering" the extra information of how the complex is ``assembled" from $d_i'$.
\end{remark}

\begin{proof}[Proof of Lemma \ref{lem:Serre:O(d)}]
Assertion \eqref{lem:Serre:O(d)-1} is a consequence of Theorem \ref{thm:proj:Euler}. Assertion \eqref{lem:Serre:O(d)-2} is a consequence of Proposition \ref{prop:proj:PB}, Lemma 
\ref{lem:Koszul:fib} and Remark \ref{rmk:Koszul:fib}. Finally, the fiber sequence $\beta_{m}(\PP(\sV); -r)$ induces a canonical map
	$$\iota_* \sO_{\PP(\sF)}(-r) \simeq \sG_{m}(\PP(\sV);-r) \to \Sigma^m \bigwedge\nolimits^m(q^*\sW) \otimes \sO_{\PP(\sV)}(-n).$$
Applying the exact functor $q_*$ and using Theorem \ref{thm:Serre:bundle} \eqref{thm:Serre:bundle-2ii}, we obtain a canonical map
	$$\pr_*(\Sigma^{r-1} \sO_{\PP(\sF)}(-r)) \to \bigwedge\nolimits^m \sW \otimes q_*(\Sigma^{n-1} \sO_{\PP(\sV)}(-n)) \xrightarrow{\sim} \bigwedge\nolimits^m \sW \otimes (\bigwedge\nolimits^n \sV)^\vee$$
which determines a canonical map
	$$\theta \colon \Sigma^{r-1} {\pr}^*((\bigwedge\nolimits^m \sW)^\vee \otimes \bigwedge\nolimits^n \sV) \otimes \sO_{\PP(\sF)}(-r) \to \omega_{\pr}.$$
As the formation of $\theta$ commutes with base change, arguing as the proof of Theorem \ref{thm:Serre:bundle} \eqref{thm:Serre:bundle-1}, we may reduce to case where $X$ is classical, $\sV = \sO_X^{\oplus n}$, $\sW = \sO_X^{\oplus m}$, and $\varphi \colon q^* \sW \to \sO_{\PP(\sV)}(1)$ is Koszul-regular. Then the desired equivalence follows from $\omega_{\pr} \simeq \omega_{\iota} \otimes \omega_{q}$ (\cite[Corollary 6.4.2.8]{SAG}), Theorem \ref{thm:Serre:bundle} \eqref{thm:Serre:bundle-2ii}, and the formula for relative dualizing sheaves of  Koszul-regular closed immersions (see \cite[\href{https://stacks.math.columbia.edu/tag/0BR0}{Tag 0BR0}]{stacks-project} or \cite[Example 6.4.2.9]{SAG}).
\end{proof}

The following theorem\footnote{Zhao has also obtained a version of the generalized Serre's theorem  \ref{thm:Serre:O(d)} in \cite[Lemma 3.9]{Z20} (in the form the equivalence $[\mathbf{EN}_d(X, \sigma)] \simeq \pr_* (\sO(d))$ of Remark \ref{rmk:ENcomplexes}).} generalize Serre's Theorem \ref{thm:Serre:bundle} to the case of derived projectivizations of perfect complexes of Tor-amplitude $[0,1]$:

\begin{theorem}[Generalized Serre's theorem]
 \label{thm:Serre:O(d)}
Let $X$ be a prestack, let $\sF$ be a quasi-coherent sheaf on $X$ of perfect-amplitude contained in $[0,1]$ and constant rank $r$. Let $\pr \colon \PP(\sF) \to X$ denote the derived projectivization of $\sF$ over $X$, let $\sO(1)$ denote the universal line bundle on $\PP(\sF)$, and let $\rho \colon \pr^* \sF \to \sO(1)$ denote the tautological quotient map. Then the projection $\pr \colon \PP(\sF) \to X$ is proper, quasi-smooth of relative virtual dimension $r-1$. Furthermore, let $d$ be an integer, and we set $\sO(d) = \sO(1)^{\otimes d}$ if $d \ge 0$ and $\sO(d) = (\sO(1)^\vee)^{\otimes -d}$ if $d <0$. Then
\begin{enumerate}[leftmargin=*]
			\item \label{thm:Serre:O(d)-1} If $r \ge 1$, then the following holds:
				\begin{enumerate}
					\item  \label{thm:Serre:O(d)-1i}
					If $d \ge 0$, then there is a canonical equivalence 
							$$\varphi_d \colon \Sym_X^d (\sF) \xrightarrow{\sim} \pr_*(\sO(d)),$$
				where $\Sym_X^d(\sF)$ denotes the $d$th derived symmetric power of $\sF$ over $X$.
					\item  \label{thm:Serre:O(d)-1ii} 
					If $-r+1 \le d \le -1$, then $\pr_*(\sO(d)) \simeq 0$.
					\item  \label{thm:Serre:O(d)-1iii} 
					If $d = -r$, then 
							$\pr_*(\Sigma^{r-1} \sO(-r))$
							is a line bundle on $X$. We \textbf{define} the determinant line bundle of $\sF$ over $X$ by the formula 
								$$\det \sF = (\pr_*(\Sigma^{r-1} \sO(-r)))^\vee \in \Pic(X).$$
							Then the canonical equivalence
							$$\pr_*(\Sigma^{r-1} \pr^* (\det \sF) \otimes \sO(-r) ) \simeq  \sO_{X}$$
		exhibits $\Sigma^{r-1} \pr^* (\det \sF) \otimes \sO(-r) $ as canonically equivalent to the relative dualizing sheaf $\omega_{\pr}$ of the projection $\pr \colon \PP(\sF) \to X$ (Theorem \ref{thm:Neeman-Lipman-Lurie} \eqref{thm:Neeman-Lipman-Lurie-2vii}).		
					\item \label{thm:Serre:O(d)-1iv}   More generally, for every integer $d \le -r$, there are canonical equivalences
						$$\psi_d \colon \pr_*(\sO(d))  \xrightarrow{\sim} \Sigma^{1-r} (\Sym_X^{-d-r} \sF)^\vee \otimes (\det \sF)^\vee. 
						$$
				\end{enumerate}
						\item \label{thm:Serre:O(d)-2} If $r \le 0$, the following holds:
				\begin{enumerate}
					\item	 \label{thm:Serre:O(d)-2i} 
					If $d \ge -r +1$, then there is a canonical equivalence
				$$\varphi_d \colon \Sym_{X}^d (\sF) \xrightarrow{\sim}  \pr_*(\sO(d)).$$
					\item \label{thm:Serre:O(d)-2ii}
					if $d = -r$, then the cofiber of the canonical map $\Sigma^{r-1} \varphi_{-r}$ is a line bundle on $X$, where 
						$\varphi_{-r}$ is the canonical map $\Sym_X^{-r} \sF \to  \pr_*(\sO(-r))$. We \textbf{define} the determinant line bundle of $\sF$ over $X$ by the formula 
						$$\det \sF = \Sigma^{1-r} \cofib\big( \Sym_X^{-r} \sF \xrightarrow{\varphi_{-r}} \pr_*(\sO(-r)) \big)^\vee \in \Pic(X).$$
					Then the canonical map
						$$\pr_*(\Sigma^{r-1} \pr^* (\det \sF) \otimes \sO(-r) ) \to  \sO_{X}$$
					exhibits $\Sigma^{r-1} \pr^* (\det \sF) \otimes \sO(-r) $ as canonically equivalent to the relative dualizing sheaf $\omega_{\pr}$ of the projection $\pr \colon \PP(\sF) \to X$ (Theorem \ref{thm:Neeman-Lipman-Lurie} \eqref{thm:Neeman-Lipman-Lurie-2vii}).	
					\item \label{thm:Serre:O(d)-2iii}
					In general, if $0 \le d \le -r$, then there is a canonical fiber sequence
				$$\Sym_{X}^d (\sF) \xrightarrow{\varphi_d} \pr_*(\sO(d)) \xrightarrow{\psi_d} \Sigma^{1-r} (\Sym_X^{-r-d} \sF )^\vee \otimes_{\sO_X}  (\det \sF)^{\vee}.$$
					\item \label{thm:Serre:O(d)-2iv}
					If $d \le -1$, then there is a canonical equivalence
				$$\psi_d \colon \pr_*(\sO(d)) \xrightarrow{\sim}  \Sigma^{1-r} (\Sym_X^{-r-d} \sF )^\vee \otimes_{\sO_X}  (\det \sF)^{\vee}.$$
			\end{enumerate}
		\end{enumerate}
\end{theorem}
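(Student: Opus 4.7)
The plan is to reduce each assertion to a local computation, where $\sF \simeq \cofib(\sigma \colon \sW \to \sV)$ for free modules $\sW, \sV$ of ranks $m, n$ with $r = n - m$, and then use the Koszul-type resolution provided by Lemma \ref{lem:Serre:O(d)}. Both sides of every asserted equivalence commute with arbitrary base change: for the pushforwards along $\pr$ this follows from Theorem \ref{thm:Neeman-Lipman-Lurie} \eqref{thm:Neeman-Lipman-Lurie-1}, and for $\Sym_X^d\sF$ this is built into the global construction of \S \ref{sec:sym:prestack}. Consequently I may restrict to the affine setting, and by the equivalence $\shP_\Sigma(\shE_0) \simeq \shE$ of Notation \ref{notation:EandD} together with the fact that every relevant construction commutes with sifted colimits, I may further reduce to the universal case on $\shE_0$, where $\sigma$ is a genuine injection of finitely generated free modules over a polynomial ring.

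In this local setting, the surjection $\sV \twoheadrightarrow \sF$ gives (via Corollary \ref{cor:proj:closedimmersion}) a closed immersion $\iota \colon \PP(\sF) \hookrightarrow \PP(\sV)$, and Lemma \ref{lem:Serre:O(d)} provides a tower $\sG_0 \to \sG_1 \to \cdots \to \sG_m$ in $\QCoh(\PP(\sV))$ with $\sG_0 \simeq \sO(d)$, $\sG_m \simeq \iota_*\sO_{\PP(\sF)}(d)$, and successive cofibers $\Sigma^i(\bigwedge^i q^*\sW) \otimes \sO_{\PP(\sV)}(d-i)$, where $q \colon \PP(\sV) \to X$ is the projective-bundle projection. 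Applying the exact functor $q_*$, I identify $\pr_*\sO(d) \simeq q_*\sG_m$ with the iterated cofiber of the pushed-down tower, whose entries are computed by the classical Serre's theorem \ref{thm:Serre:bundle}: $q_*\sO_{\PP(\sV)}(d-i)$ equals $\Sym^{d-i}\sV$ when $d-i \geq 0$, vanishes when $-n+1 \leq d-i \leq -1$, and equals $\Sigma^{1-n}(\Sym^{-d+i-n}\sV)^\vee \otimes (\det \sV)^\vee$ when $d-i \leq -n$.

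A case analysis governed by which range each $d - i$ lies in then completes the proof. The crucial numerical observation is that when $r \geq 1$ and $d \geq 0$, or when $r \leq 0$ and $d \geq -r + 1$, the ``dual'' range $d - i \leq -n$ is never reached for $0 \leq i \leq m$; the surviving terms $\Sigma^i \bigwedge^i \sW \otimes \Sym^{d-i}\sV$ are precisely the entries of the Koszul complex $\bS^d(A, \sigma)$ of Construction \ref{constr:Koszul:S^n}, and by Construction \ref{constr:right:Sym} together with Corollary \ref{cor:Illusie:S^n} this tower assembles to $\Sym^d_A(\cofib \sigma) \simeq \Sym^d_X\sF$. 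This proves \eqref{thm:Serre:O(d)-1i} and \eqref{thm:Serre:O(d)-2i}, and \eqref{thm:Serre:O(d)-1ii} follows by direct vanishing of every term. At the boundary $d = -r$ a single extra ``dual'' contribution at $i = m$ enters, namely $\Sigma^{1-r}(\det \sW \otimes (\det \sV)^\vee)$, which exhibits $\pr_*(\Sigma^{r-1}\sO(-r))$ locally as the line bundle $(\det \sV) \otimes (\det \sW)^\vee$ (respectively identifies the cofiber of $\varphi_{-r}$ locally with $\Sigma^{1-r}$ of the same line bundle); the local identifications are canonical by the universality of the construction in $\shE_0$, so they glue to the globally defined $\det \sF$ of \eqref{thm:Serre:O(d)-1iii} and \eqref{thm:Serre:O(d)-2ii}, and the identification with $\omega_\pr$ is then immediate from Lemma \ref{lem:Serre:O(d)} \eqref{lem:Serre:O(d)-3}.

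For the remaining negative-degree assertions \eqref{thm:Serre:O(d)-1iv} and \eqref{thm:Serre:O(d)-2iv}, I invoke Grothendieck--Serre duality: writing $d = -e - r$ with $e \geq 0$, the identities $\pr_*(\blank \otimes \omega_\pr) = \pr_!(\blank) = (\pr_*(\blank^\vee))^\vee$ from Theorem \ref{thm:Neeman-Lipman-Lurie} \eqref{thm:Neeman-Lipman-Lurie-2vii} reduce the computation of $\pr_*\sO(d)$ to the positive-regime case $\pr_*\sO(e) \simeq \Sym^e_X\sF$ already established. The mixed intermediate case \eqref{thm:Serre:O(d)-2iii} is handled by the same tower argument: the dual contributions from indices $d + n \leq i \leq m$ now survive and, by the dual form of Construction \ref{constr:right:Sym}, assemble to $\Sigma^{1-r}(\Sym^{-d-r}\sF)^\vee \otimes (\det \sF)^\vee$, yielding the claimed fiber sequence. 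The principal obstacle throughout is verifying that all local identifications are natural in $(A, \sigma)$ in the strong sense required to extend to arbitrary prestacks; this relies essentially on the universal property $\shP_\Sigma(\shE_0) \simeq \shE$ from Proposition \ref{prop:nonab:derived} and on the uniqueness of convolutions in Lemma \ref{lem:right:convolution}, which together guarantee that the two canonical sources of each equivalence---a Koszul-type pushforward on one side and a derived symmetric power or its dual on the other---can only agree in an essentially unique way.
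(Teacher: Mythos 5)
Your overall strategy---local reduction, the closed immersion $\iota \colon \PP(\sF) \hookrightarrow \PP(\sV)$, the Koszul tower of Lemma \ref{lem:Serre:O(d)}, pushing forward along $q$, the case analysis on which range $d - i$ falls into, and Grothendieck--Serre duality for the negative-degree assertions---matches the paper's architecture closely. The final case analysis and numerical bookkeeping are correct. However, there are two genuine gaps.

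The first concerns your reduction. You invoke $\shP_\Sigma(\shE_0) \simeq \shE$ to reduce to objects of $\shE_0$, asserting that ``every relevant construction commutes with sifted colimits.'' This is not justified and is not obviously true: the assignment $(A, \rho) \mapsto \pr_*\sO(d)$ on $\PP(\cofib\rho) \to \Spec A$ is a global cohomological operation, and while pushforward along a \emph{fixed} $\pr$ preserves small colimits, commutation with sifted colimits \emph{in the variable $(A, \rho)$} is a separate claim requiring proof (the affine cover of $\PP(\cofib\rho)$ depends on $\rho$, and one must verify the Čech computations track coherently along a sifted diagram). Moreover, on objects of $\shE_0$ the cofiber of $\rho$ is \emph{free} by definition of $\shE_0$, so the statement you wish to prove there is literally Theorem \ref{thm:Serre:bundle} and the Koszul-tower argument is redundant. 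The paper instead reduces via base change to the single object $X = |\sHom_{\ZZ}(\ZZ^m, \ZZ^n)| \simeq \AA^{mn}_\ZZ$ with $\sF$ the cokernel of the tautological map (Example \ref{eg:Homspaces}), which is \emph{not} an object of $\shE_0$ but rather the universal corepresenting case; the Koszul-regularity of $\varphi$ holds there, and base change (already verified) transports the conclusion. This is the key point your reduction loses.

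The second gap is that the theorem asserts the equivalence is realized by the specific canonical map $\varphi_d$ (adjoint to $\Sym^d(\rho) \colon \pr^*\Sym_X^d\sF \to \sO(d)$), not merely that the two sides are abstractly equivalent. Your Koszul tower gives you an object with the right cofibers, and you appeal to ``uniqueness of convolutions'' (Lemma \ref{lem:right:convolution}) and vague canonicity. But Lemma \ref{lem:right:convolution} gives uniqueness of the \emph{tower} given the underlying chain complex; it does not automatically identify the pushed-down Koszul differentials on $\PP(\sV)$ with the differentials $d_i$ of $\bS^d(R,\sigma)$, nor does it identify the resulting total equivalence with $\varphi_d$. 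The paper handles this with the explicit commutative diagram comparing $q^*\sigma$ with $\varphi$ and the resulting map of complexes $\bS^d(\PP(\sV), q^*\sigma) \to \bS^m(\PP(\sV),\varphi)\otimes\sO(m-d)$, which after applying $q_*$ and Theorem \ref{thm:Serre:bundle} pins down the equivalence as $\varphi_d$. You should either reproduce that compatibility check or give a genuinely different argument; the appeal to convolution uniqueness alone does not close the gap.
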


\begin{remark}[Determinant line bundles] Let $X$ be a prestack, and $\sF$ a quasi-coherent sheaf on $X$ of perfect-amplitude contained in $[0,1]$. It is well known that $\sF$ has a well-defined line bundle $\det \sF$. However, the usual methods of defining $\det \sF$ are either by an {\em ad hoc} gluing process or by deducing its existence from the theory of determinant of vector bundles via the equivalence of the $K$-theories of $\Vect(X)$ and $\Perf(X)$. We could regard our definition of $\det{\sF}$ in \eqref{thm:Serre:O(d)-1iii} and \eqref{thm:Serre:O(d)-2ii} as providing an {\em explicit} and {\em definite} model for the determinant line bundle of $\sF$. Our definition is justified by the following fact: by virtue of Lemma \ref{lem:Serre:O(d)} \eqref{lem:Serre:O(d)-3}, for any morphism $f \colon Y \to X$ of prestacks such that $f^* \sF$ is represented by the cofiber of a map $\sigma \colon \sW \to \sV$ between vector bundles over $Y$, there is a canonical equivalence
	$$(\det \sW)^\vee \otimes_{\sO_Y} (\det \sV)  \to f^* (\det \sF) \simeq \det(f^* \sF).$$
 \end{remark}

\begin{proof}[Proof of Theorem \ref{thm:Serre:O(d)}]
Let $\rho \colon \pr^* \sF \to \sO(1)$ denote the tautological quotient morphism, then for all $d \ge 0$, the map 
	$$\pr^* \Sym_{X}^d(\sF)\simeq \Sym_{\PP(\sF)}^d(\pr^* \sF) \xrightarrow{\Sym^d(\rho)}  \Sym_{\PP(\sF)}^d(\sO(1)) \simeq \sO(d)$$
induces a canonical map
	$$\varphi_d \colon \Sym_{X}^d (\sF) \to \pr_*(\sO(d)).$$
We first prove assertion \eqref{thm:Serre:O(d)-1i}. By the same argument of the proof of Theorem \ref{thm:Serre:bundle} \eqref{thm:Serre:bundle-1}, we may reduced to the case where $X = \Spec R$, where $R \in \CAlgDelta$, and $\sF$ is represented by the cofiber of a map $\sigma \colon \sW \to \sV$, where $\sW =R \otimes \ZZ^{\oplus m}$ and $\sV = R \otimes \ZZ^{\oplus n}$. By virtue of the fact that the formation of $\varphi_d$ commutes with base change, and Example \ref{eg:Homspaces}, we may reduce to the case where $X = |\sHom_\ZZ(\ZZ^{\oplus m}, \ZZ^{\oplus n})| \simeq \AA_\ZZ^{mn}$, and $\sF$ is represented by the cokernel of the tautological map $\sigma \colon \sO_X^{\oplus m} \to \sO_X^{\oplus n}$. Let $q \colon \PP(\sV) \to X$ denote the projection and let $\iota \colon \PP(\sF) \to \PP(\sV)$ denote the closed immersion induced by $\sV \twoheadrightarrow \sF$ as in Lemma \ref{lem:Serre:O(d)}. In particular, the composite map $\varphi \colon q^* \sW \to q^*\sV \to \sO_{\PP(\sV)}(1)$ is Koszul-regular, and for any $d \in \ZZ$, the discrete sheaf $\iota_*(\sO(d))$ is resolved by the Koszul complex $\bS^m(\PP(\sV), \varphi) \otimes \sO(m-d)$, which, in this case, is a genuine acyclic complex of vector bundles (Remark \ref{rmk:lemma:Serre:O(d)}):
	\begin{equation}\label{eqn:thm:Serre:O(d):Koszul}
	(\bigwedge\nolimits^m q^*\sW) \otimes \sO_{\PP(\sV)}(-m+d) \xrightarrow{d_m'} \cdots  \to (q^*\sW) \otimes \sO_{\PP(\sV)}(d-1) \xrightarrow{d_1'} \sO_{\PP(\sV)}(d).
	\end{equation}
To prove assertion \eqref{thm:Serre:O(d)-1i}, we observe that the canonical map $\rho \colon \pr^* \sF \to \sO(1)$ induces a composite map $q^* \sF \to \iota_* \pr^* \sF \to \iota_* \sO(1)$ which fits into a commutative diagram:
	$$
	\begin{tikzcd}
		q^*\sW \ar[equal]{d} \ar{r}{q^*\sigma}& q^*\sV \ar{d} \ar{r} & q^* \sF \ar{d} \\
		q^*\sW \ar{r}{\varphi} & \sO_{\PP(\sV)}(1) \ar{r} & \iota_* \sO(1).
	\end{tikzcd}
	$$	
Therefore, the composite map $q^* \Sym^d_X(\sF) \to \iota_* (\Sym^d (\pr^* \sF)) \to \iota_* \sO(d)$ induced from the canonical map $\Sym^d (\pr^* \sF) \to \sO(d)$ is compatible with the map of complexes
	$$\bS^d(\PP(\sV), q^*\sigma) \to \bS^d(\PP(\sV), \varphi) \to \bS^m(\PP(\sV),\varphi) \otimes \sO(m-d).$$
If we assume $d \ge 0$ and $r = n- m \ge 1$, then $d -m \ge -m \ge -n +1$. If we pushforward along $q$, then by virtue of Theorem \ref{thm:Serre:bundle}\eqref{thm:Serre:bundle-1}, \eqref{thm:Serre:bundle-2i} we obtain an equivalence
	$$\bS^d(X, \sigma \colon \sW \to \sV) \simeq q_* \left((\bS^m(\PP(\sV), q^* \sW \to \sO_{\PP(\sV)}(1))) \otimes \sO(m-d) \right)$$
which is compatible with the map $\varphi_d \colon \Sym_X^d (\sF) \to q_* \iota_* (\sO(d)) = \pr_* (\sO(d))$. Since $X$ is classical, the natural map $\bS^d(X, \sigma) \to \Sym^d_X(\sF)$ induces an equivalence $[\bS^d(X, \sigma) ] \simeq \Sym^d_X(\sF)$ (Corollary \ref{cor:Illusie:S^n}), we obtain that $\varphi_d$ is an equivalence. This proves assertion \eqref{thm:Serre:O(d)-1i}; the same argument proves \eqref{thm:Serre:O(d)-1ii} and \eqref{thm:Serre:O(d)-2i}. 
 
We next prove assertion \eqref{thm:Serre:O(d)-1iii}. Since the condition that ``$\pr_*(\Sigma^{r-1} \sO(-r))$ is a line bundle" is local on $X$, and the formation of $\pr_*(\Sigma^{r-1} \sO(-r))$ commutes with base change, we may reduce to the above case where $X = \AA_\ZZ^{mn}$ and $\sF$ is represented by the cofiber of the tautological map between vector bundles $\sigma \colon \sW = \sO_X^{\oplus m} \to \sV = \sO_X^{\oplus n}$. Setting $d = -r$ in \eqref{eqn:thm:Serre:O(d):Koszul} and using Theorem \ref{thm:Serre:bundle} \eqref{thm:Serre:bundle-2i}, \eqref{thm:Serre:bundle-2ii}, we obtain a canonical equivalence
		$$\pr_*(\Sigma^{r-1} \sO(-r)) \xrightarrow{\sim}  (\bigwedge\nolimits^m \sW) \otimes q_*(\Sigma^{n-1} \sO_{\PP(\sV)}(-n)) \xrightarrow{\sim} (\bigwedge\nolimits^m \sW) \otimes (\bigwedge\nolimits^n \sV)^\vee$$
This proves that $\pr_*(\Sigma^{r-1} \sO(-r))$ is a line bundle on $X$. The rest of assertion \eqref{thm:Serre:O(d)-1iii} follows from Lemma \ref{lem:Serre:O(d)} \eqref{lem:Serre:O(d)-3}.

Similarly, to prove assertion \eqref{thm:Serre:O(d)-2ii}, we may also reduce to the above case where $X = \AA_\ZZ^{mn}$ and $\sF$ is the cofiber of the tautological map $\sigma \colon \sO_X^m \to \sO_X^n$, and we set $d = -r$ in \eqref{eqn:thm:Serre:O(d):Koszul}. Let $\sG_i(\PP(\sV); -r)$ denote the sheaf constructed in Lemma \ref{lem:Serre:O(d)} \eqref{lem:Serre:O(d)-2} (which in this case canonically represents the $i$th brutal truncation of \eqref{eqn:thm:Serre:O(d):Koszul}), then the proof of assertion \eqref{thm:Serre:O(d)-1i} implies that there is a canonical equivalence
	$q_* (\sG_{-r}(\PP(\sV); -r)) \simeq \Sym_X^{-r}(\sF).$
By virtue of Theorem \ref{thm:Serre:bundle} \eqref{thm:Serre:bundle-2i} and Lemma \ref{lem:Serre:O(d)} \eqref{lem:Serre:O(d)-2}, there are canonical equivalences 
	$$q_* (\sG_{m-1}(\PP(\sV); -r)) \simeq \cdots q_* (\sG_{-r}(\PP(\sV); -r)) \qquad q_* (\sG_{m}(\PP(\sV); -r)) \simeq \pr_*(\sO(-r)),$$
and a canonical fiber sequence
	$$q_* (\sG_{m-1}(\PP(\sV); -r))  \to q_* (\sG_{m}(\PP(\sV); -r))  \to q_*(\Sigma^m (\bigwedge\nolimits^m q^*\sW) \otimes \sO(-n))$$
for which the first map is canonically equivalent to $\varphi_{-r}$. Therefore, invoking Theorem \ref{thm:Serre:bundle} \eqref{thm:Serre:bundle-2ii}, we obtain a canonical equivalence
	$$\cofib\big(\varphi_{-r} \colon \Sym_X^{-r}(\sF) \to \pr_*(\sO(-r))\big) \simeq \Sigma^{1-r} (\bigwedge\nolimits^m \sW) \otimes (\bigwedge\nolimits^n \sV)^\vee.$$
The rest of assertion \eqref{thm:Serre:O(d)-2ii} now follows from Lemma \ref{lem:Serre:O(d)} \eqref{lem:Serre:O(d)-3}.

The assertions \eqref{thm:Serre:O(d)-1iv} and \eqref{thm:Serre:O(d)-2iv} can be proved in the same way as the proof of Theorem \ref{thm:Serre:bundle} \eqref{thm:Serre:bundle-2iii}, that is, by setting $e = -r -d \ge 0$, we let $\psi_d$ denote the composite map:
	\begin{align*}
	 \pr_*(\sO(d)) & \xrightarrow{\sim}  \pr_* (\sO(-e) \otimes \omega_{\pr}) \otimes \Sigma^{1-r} (\det \sF)^\vee   \\
	&\simeq   (\pr_* \sO(e))^\vee \otimes  \Sigma^{1-r} (\det \sF)^\vee \xrightarrow{\varphi_e^\vee} (\Sym_X^e(\sF))^\vee  \otimes  \Sigma^{1-r} (\det \sF)^\vee
	\end{align*}
where the first equivalence follows from the expressions of $\omega_\pr$ of \eqref{thm:Serre:O(d)-1ii} and \eqref{thm:Serre:O(d)-2iii}, and the second equivalence is the Grothendieck duality (Theorem \ref{thm:Neeman-Lipman-Lurie} \eqref{thm:Neeman-Lipman-Lurie-2iv}\eqref{thm:Neeman-Lipman-Lurie-2vii}). Then $\psi_d$ is equivalence since $\varphi_e^\vee$ is an equivalence, which follow from assertions \eqref{thm:Serre:O(d)-1i} and \eqref{thm:Serre:O(d)-2i}, respectively.

Finally, it only remains to prove assertion \eqref{thm:Serre:O(d)-2iii}. By setting $e = -r -d \ge 0$, we let $\psi_d$ be defined by the same formula as above. Then we have a composable pair of maps
		$$\Sym_{X}^d (\sF) \xrightarrow{\varphi_d} \pr_*(\sO(d)) \xrightarrow{\psi_d} \Sigma^{1-r} (\Sym_X^{-r-d} \sF )^\vee \otimes_{\sO_X}  (\det \sF)^{\vee}$$
	whose formation commutes with base change. To prove that the above sequence forms a fiber sequence, we may again reduce to the above case where $X = \AA_\ZZ^{mn}$, $\sF$ is the cofiber of the tautological map $\sigma \colon \sO_X^m \to \sO_X^n$, and $\iota_*(\sO(d))$ is resolved by the Koszul complex \eqref{eqn:thm:Serre:O(d):Koszul}. We consider the following (shifted) truncations of the Koszul complex \eqref{eqn:thm:Serre:O(d):Koszul}:
\begin{align*}
	&\mathbf{G}_d(\PP(\sV), \varphi) \colon   &(\bigwedge\nolimits^{d} q^*\sW) \ \xrightarrow{d_{d}'} \cdots  \xrightarrow{d_2'} (q^*\sW) \otimes \sO_{\PP(\sV)}(d-1) \xrightarrow{d_1'}  \sO_{\PP(\sV)}(d), \\
	& \mathbf{E}_d(\PP(\sV), \varphi)\colon  & (\bigwedge\nolimits^{d+n-1} q^*\sW) \otimes \sO_{\PP(\sV)}(-n+1) \xrightarrow{d_{d+n-1}'} \cdots \xrightarrow{d_{d+1}'}  (\bigwedge\nolimits^{d+1} q^*\sW) \otimes \sO_{\PP(\sV)}(-1)\\
	&\mathbf{H}_e(\PP(\sV), \varphi)\colon  & (\bigwedge\nolimits^{m} q^*\sW) \otimes \sO_{\PP(\sV)}(d-m) \xrightarrow{d_{m}'} \cdots \xrightarrow{d_{d+n+1}'}  (\bigwedge\nolimits^{d+n} q^*\sW) \otimes \sO_{\PP(\sV)}(-n).
\end{align*}
(Here, the first terms of all three complexes are placed in degree $0$.) Then there are canonical isomoprhisms of complexes of vector bundles:
	\begin{align*}
	\mathbf{G}_d(\PP(\sV), \varphi) \simeq  \bS^{d}(\PP(\sV), \varphi) \qquad
	(\mathbf{H}_e(\PP(\sV), \varphi))^\vee \simeq \Sigma^{-e} \, (q^* \det \sW)^\vee \otimes \bS^{e}(\PP(\sV), \varphi) \otimes \sO(n)
	\end{align*}
(where $\bS^{d}(\PP(\sV), \varphi)$ denotes the Koszul complex defined by formula \eqref{eqn:Koszul:S^n} as usual, and we let $(\mathbf{H}_e(\PP(\sV), \varphi))^\vee$ denote the term-wise dual of the complex $\mathbf{H}_e(\PP(\sV), \varphi)$). Moreover, there are canonical fiber sequences determined by the original complex \eqref{eqn:thm:Serre:O(d):Koszul}:
\begin{align*} 
	& \mathbf{G}_d(\PP(\sV), \varphi) \to \widetilde{\mathbf{G}}_d(\PP(\sV), \varphi)\to \Sigma^{d+1} \,\mathbf{E}_d(\PP(\sV), \varphi), \\	
	& \widetilde{\mathbf{G}}_d(\PP(\sV), \varphi) \to \iota_* (\sO(d)) \to \Sigma^{d+n} \,\mathbf{H}_e(\PP(\sV), \varphi).
	\end{align*}
By virtue of Theorem \ref{thm:Serre:bundle} \eqref{thm:Serre:bundle-2i}, we obtain a canonical equivalence 
	$$q_*[\mathbf{G}_d(\PP(\sV), \varphi) ] \simeq q_*[ \widetilde{\mathbf{G}}_d(\PP(\sV), \varphi)]$$
 and hence a canonical fiber sequence
	\begin{align*} 
	q_*\left[\mathbf{G}_d(\PP(\sV),\varphi)\right] \xrightarrow{\varphi_d'} \pr_*(\sO(d)) \xrightarrow{\psi_d'} q_* \left[\Sigma^{n+d} \,\mathbf{H}_e(\PP(\sV), \varphi) \right].
	\end{align*}
The proof of assertion \eqref{thm:Serre:O(d)-1i} implies that there is a canonical equivalence $q_*[\mathbf{G}_d(\PP(\sV), \varphi)] \simeq \Sym_X^d(\sF)$ such that the map $\varphi_d'$ is canonically equivalent to the map $\varphi_d \colon \Sym_X^d(\sF) \to \pr_*(\sO(d))$. Similarly, combing with Grothendieck duality (Theorem \ref{thm:Neeman-Lipman-Lurie} \eqref{thm:Neeman-Lipman-Lurie-2iv}\eqref{thm:Neeman-Lipman-Lurie-2vii})
	$$(q_* \left[\Sigma^{n+d} \,\mathbf{H}_e(\PP(\sV), \varphi) \right])^\vee \simeq q_*(\Sigma^{e-m} [\mathbf{H}_e(\PP(\sV), \varphi)]^\vee \otimes \omega_q) \simeq  \Sigma^{r-1} \det \sF \otimes q_*[\bS^e(\PP(\sV), \varphi)],$$
there is a canonical equivalence 
	$$(q_* \left[\Sigma^{n+d} \,\mathbf{H}_e(\PP(\sV), \varphi) \right])^\vee \simeq  \Sigma^{r-1} \det \sF \otimes \Sym_X^e(\sF)$$
for which the composite map
	\begin{align*}
	\pr_*(\sO(d)) &\simeq (\pr_* \sO(e))^\vee \otimes  \Sigma^{1-r} (\det \sF)^\vee \\&\xrightarrow{\varphi_e^\vee} \Sigma^{1-r} (\det \sF)^\vee \otimes (\Sym_X^e(\sF))^\vee \simeq  q_* \left[\Sigma^{n+d} \,\mathbf{H}_e(\PP(\sV), \varphi) \right]
	\end{align*}
is canonically equivalent to $\psi_d'$. This proves assertion \eqref{thm:Serre:O(d)-2iii}. \end{proof}

\begin{remark}[{Grothendieck--Serre duality}] Let $\sF$ be a quasi-coherent sheaf on a prestack $X$ of perfect-amplitude in $[0,1]$ and rank $r$, and let $\pr \colon \PP(\sF) \to X$ denote the derived projectivization. Then the above proof of assertions \eqref{thm:Serre:O(d)-1iv} and \eqref{thm:Serre:O(d)-2iv} of Theorem \ref{thm:Serre:O(d)} shows that, for any $d \in \ZZ$, the composite map
	$$\pr_*(\sO(d)) \otimes \pr_*\big( \Sigma^{r-1} \sO(-d-r) \otimes \pr^* (\det \sF)\big) \to \pr_*(\Sigma^{r-1} \pr^* (\det \sF) \otimes \sO(-r)) \xrightarrow{\sim} \sO_X$$
is a duality datum in the symmetric monoidal $\infty$-category $\QCoh(X)^\otimes$ (in the sense of {\cite[Definition 4.6.1.7]{HA}}; notice this statement implies that if one of the two tensor factors is zero then the other is zero). We could regard this statement as the generalization of the {Grothendieck--Serre duality} for the projective bundles (Remark \ref{rmk:GSforbundle}). \end{remark}

\begin{remark}[Eagon--Northcott complexes]\label{rmk:ENcomplexes}  Let $X$ be a prestack, and let $\sF$ be a quasi-coherent sheaf on $X$ of perfect-amplitude contained in $[0,1]$. If $\sF$ has rank $r \le 0$, then we could regard the fiber sequences of Theorem \ref{thm:Serre:O(d)} \eqref{thm:Serre:O(d)-2iii} as the {\em derived version} of the classical Eagon--Northcott complexes. 

More concretely, if $X$ is a classical scheme, and $\sF$ is represented by the cofiber of a map $\sigma \colon \sW \to \sV$ between vector bundles on $X$, where $\rk \sW = m$, $\rk \sV = n$, and $r = n -m$. Then for any integer $d$, we define a complex $\mathbf{EN}_d(X, \sigma \colon \sW \to \sV)$ of vector bundles on $X$ by the following formula: if $r = n - m >0$, then we define
			$$
			\mathbf{EN}_d(X, \sigma) =
			\begin{cases}
				\bS^{-r-d}(X, \sigma)^\vee  \otimes_{\sO_X} (\det \sF)^\vee [1-r]& \text{if} \quad d \le -1;\\
				\bS^d(X, \sigma)& \text{if}\quad d \ge 0.
			\end{cases}
			$$	
(Here, $\bS^{d}(X, \sigma)$ denotes the complex defined by formula \eqref{eqn:Koszul:S^n} as usual, $[\blank]$ denotes the degree shift, and $(\blank)^\vee$ denotes the term-wise dual of a complex.) If $r = n - m \le 0$, then we define
			$$
			\mathbf{EN}_d(X, \sigma) =
			\begin{cases}
				\bS^{-r-d}(X, \sigma)^\vee  \otimes_{\sO_X} (\det \sF)^\vee [1-r]& \text{if} \quad d \le -1;\\
				 \bS^{-r-d}(X, \sigma)^\vee  \otimes_{\sO_X} (\det \sF)^\vee  [-r] \xrightarrow{\varepsilon} \bS^d(X, \sigma) &  \text{if} \quad 0 \le d \le -r; \\
				\bS^d(X, \sigma)& \text{if}\quad d \ge -r+1,
			\end{cases}
			$$	
where in the second formula, the rightmost term of the complex
	$\bS^{-r-d}(X, \sigma)^\vee  \otimes_{\sO_X} (\det \sF)^\vee [-r]$,
	$$\bigwedge\nolimits^{-r-d} \sW^\vee \otimes \det \sW \otimes (\det \sV)^\vee  \simeq \bigwedge\nolimits^{n + d} \sW \otimes (\det \sV)^\vee,$$
is placed at homological degree $d+1$, and $\varepsilon$ is the composite map
	$$\bigwedge\nolimits^{n + d} \sW \otimes (\det \sV)^\vee \xrightarrow{\wedge^n (\sigma^\vee)}  \bigwedge\nolimits^{n + d} \sW \otimes \bigwedge\nolimits^n (\sW^\vee) \xrightarrow{\llcorner} \bigwedge\nolimits^{d} \sW,$$
where the last map is the inner product of exterior algebras (\cite[Chapter III, \S 11.7]{Bou}). The complexes $\{\mathbf{EN}^d(X, \sigma)\}_{d \in \ZZ}$ are usually referred to as the {\em Eagon--Northcott complexes}. By our construction, the sequence of complexes $\{\mathbf{EN}_d(X, \sigma)\}_{d \in \ZZ}$ satisfies the following duality:
	$$\mathbf{EN}_d(X, \sigma) \otimes_{\sO_X} \det \sF \simeq  \left(\mathbf{EN}_{-r-d}(X, \sigma) \right)^\vee.$$
By virtue of the equivalences $[\bS^d(X, \sigma)] \simeq \Sym_X^d(\sF)$ and $[\bS^{-r-d}(X, \sigma)] \simeq \Sym_X^{-r-d}(\sF)$ of Corollary \ref{cor:Illusie:S^n}, and our proof of Theorem \ref{thm:Serre:O(d)} \eqref{thm:Serre:O(d)-2iii},
it is not hard to see that the complex $\mathbf{EN}_d(X, \sigma)$ canonically represents the cofiber of the map 
		$$\Sigma^{-r} (\Sym_X^{-r-d} \sF )^\vee \otimes_{\sO_X}  (\det \sF)^{\vee} \to \Sym_{X}^d (\sF) $$
induced by the fiber sequence of Theorem \ref{thm:Serre:O(d)} \eqref{thm:Serre:O(d)-2iii}. In particular, Theorem \ref{thm:Serre:O(d)} \eqref{thm:Serre:O(d)-2iii} implies that (in the case where $X$ is a classical scheme) there are canonical equivalences
	$$[\mathbf{EN}_d(X, \sigma)] \simeq \pr_* (\sO(d))$$
for all $d \in \ZZ$. Historically, in Kempf's approach (reviewed in \cite[\S B.2]{Laz04}), the above equivalence is taken as the {\em definition} of the Eagon--Northcott complex. We refer the readers to \cite[\S A.2.6.1]{Ei} and \cite[\S B.2]{Laz04} for more about Eagon--Northcott complexes. 
\end{remark}

The Generalized Serre's theorem is {\em not} true in general for classical projectivizations, even in the case where $\sF$ has positive rank and homological dimension one, and in degree $d=1$, as is shown in the following example:

\begin{example}[{\cite[\href{https://stacks.math.columbia.edu/tag/01OC}{Tag 01OC}]{stacks-project}}] Let $X = \Spec A$, $A= \kappa[u, v, s_1, s_2, t_1, t_2]/I$, where $\kappa$ is a field and $I = (-us_1 + vt_1 + ut_2, vs_1 + us_2 - vt_2, vs_2, ut_1)$.  Let $\overline{u}$ and $\overline{v}$ denote the class of $u$ and $v$ in $A$, respectively. Let $\sF$ be the cofiber of the map
	$$\sO_X \xrightarrow{(\overline{u}, \overline{v})} \sO_X^{\oplus 2}.$$
(Equivalently, $\sF$ is the sheaf corresponds to the module $M = (Ax \oplus Ay)/A(\overline{u}x + \overline{v}y)$.) Then the computation of {\cite[\href{https://stacks.math.columbia.edu/tag/01OC}{Tag 01OC}]{stacks-project}} shows that the classical projectivization $\PP_\cl(\sF)$ over $X$ violates the Serre's theorem. More concretely, {\em loc. cit.} shows that the canonical map
	$$\sH^0(\sF) \to  \RR^0 \pr_{\PP_\cl(\sF) \,*}(\sO_{\PP_\cl(\sF)}(1))$$
is not injective, which, in particular implies that the canonical map 
	$$\sF \to \pr_{\PP_\cl(\sF) \,*}(\sO_{\PP_\cl(\sF)}(1))$$
is not an equivalence. On the other hand, we know from Theorem \ref{thm:Serre:O(d)} \eqref{thm:Serre:O(d)-1i} that for the derived projectivization $\pr_{\PP(\sF)} \colon \PP(\sF) \to X$, the canonical map
	$$\varphi_d \colon \Sym_X^d(\sF) \to \pr_{\PP(\sF) \,*} (\sO_{\PP(\sF)}(d))$$
is an equivalence for all $d \ge 0$.
\end{example}

The next example shows that, the Generalized Serre's theorem does not hold for ``three-term complexes of vector bundles", even in very nice cases such as Koszul complexes:

\begin{example} Let $X = \AA^2 = \Spec \ZZ[x,y]$, let $p$ be the zero locus of defined by the maximal ideal $(x,y)$, and let $\sF = \sO_p$ denote the structure sheaf of $p$. Then $\sF$ can be resolved by the Koszul complex on $(x,y)$. The classical projectivization $\PP_\cl(\sF) \simeq \{p\}$, however, $\PP(\sF)$ is equipped with a non-trivial derived structure. Therefore $\pr_*(\sO_{\PP(\sF)}(1))$ is not isomorphic to $\sF \simeq \sO_p$ but is equipped with non-trivial higher homological information.
\end{example}

The above two examples seem to suggest that the generalized Serre's Theorem as we formulate in Theorem \ref{thm:Serre:O(d)} for sheaves of perfect amplitude contained in $[0,1]$ might be the most general form we could expect. However, there are possible ways to generalizing these results to sheaves of higher Tor-amplitude, if we consider variants of the derived projectivization constructions. For example, in the case of three-term complexes, if a given complex possesses certain symmetries, then we might consider the {\em reduced} derived structure on the projectivization (similar to the reduced obstruction theory in Donaldson--Thomas theory); it is then reasonable to expect that (variants of ) Serre's theorem to hold in these cases.

\section{Beilinson's relations}
Let $\kappa$ be a field and $V = \kappa^{\oplus n+1}$ a $\kappa$-vector space of rank $(n+1)$, where $n \ge 0$ is an integer. We let $\PP_\kappa^n = \PP_{\Spec \kappa}(V)$ denote the projective space over $\kappa$ of dimension $n$. Let $\sO(1)$ denote the universal line bundle on $\PP_\kappa^n$, and for an integer $d$, we let set $\sO(d) = \sO(1)^{\otimes d}$ if $d \ge 0$ and $\sO(d) = (\sO(1)^\vee)^{\otimes -d}$ if $d <0$ as usual. We let $\Omega^1 \simeq L_{\PP_\kappa^n/\kappa}$ denote algebraic cotangent bundle of $\PP_\kappa^n$, let $\Omega^i = \bigwedge\nolimits^i \Omega^1$ for $i \ge 0$, and let $\Omega^i(j) = \Omega^i \otimes \sO(j)$ for $i \ge 0$ and $j \in \ZZ$.

We may formulate Beilinson's computations in \cite{Be} in the following way:

\begin{description}[leftmargin=*]		
	\item[Beilinson's relations] The two sequences $\{\sO, \sO(1), \ldots, \sO(n)\}$ and $\{\Sigma^{n} \Omega^n(n), \ldots, \Sigma\,\Omega^1(1), \sO\}$ form a pair of (full) dual exceptional collections over $\kappa$. Moreover, for all integers $0 \le i,j \le n$, the following ``Beilinson's relations" hold:
	\begin{align*}
		& \Hom_\kappa^*(\sO(i), \sO(j)) = S_\kappa^{j-i} (V) \qquad
		 \Hom_\kappa^*(\Omega^i(i), \Omega^j(j)) =\big(\bigwedge\nolimits^{i-j}_{\kappa} V\big)^\vee\\
		&\Hom_\kappa^*(\sO(i), \Sigma^j \,\Omega^j(j)) = \delta_{i,j} \cdot \kappa,
	\end{align*}
where $\Hom_\kappa^*(\blank, \blank)$ denotes the $\Mod_\kappa$-valued $\Hom$ objects for quasi-coherent sheaves on $\PP_\kappa^n$, and $\delta_{i,j}$ is Kronecker delta function, that is, $\delta_{i,j} = 1$ if $i=j$ and $\delta_{i,j}=0$ if $i \ne j$.
\end{description}

\begin{remark} A generalization of Beilinson's result to the cases of all projectivizations of vector bundles over quasi-compact, quasi-separated schemes (together with a generalized version of Orlov's Theorem) can be found in \cite[Theorem B.3]{J21}.
\end{remark}

In this section, we prove a derived version of Beilinson's relations for all derived projectivizations of quasi-coherent sheaves of perfect-amplitude contained in $[0,1]$ over a perfect stack $X$ (Proposition \ref{prop:PG:dualexc}, Corollary \ref{cor:PVdot:relations}). The results and proofs of \S \ref{sec:beilinson} are most suitably formulated using the framework of relative exceptional sequences and mutation theory, as developed in the classical absolute cases in \cite{Bo, BK, Go} (see also \cite[\S 3.1]{BLM+} and \cite[\S 3.11]{J21} for the classical relative cases), which we discuss in \S \ref{sec:exc:mut}.

\subsection{Exceptional sequences and mutation functors} \label{sec:exc:mut} In this subsection, we will study the theory of relative exceptional sequences and mutations for linear $\infty$-categories.

\begin{definition} Let $X$ be a prestack. We will refer to the $\infty$-category $\Mod_{\QCoh(X)}(\Pr^\St)$ as the {\em $\infty$-category of linear stable $\infty$-categories over $X$} (where $\Pr^\St$ is the $\infty$-category of presentable stable $\infty$-categories). 
We will refer to the objects of $\Mod_{\QCoh(X)}(\Pr^\St)$ as {\em $X$-linear stable $\infty$-categories}, and morphisms in $\Mod_{\QCoh(X)}(\Pr^\St)$ as {\em $X$-linear functors}. The $\infty$-category $\Mod_{\QCoh(X)}(\Pr^\St)$ is equipped with a symmetric monoidal structure given by the relative tensor product $\otimes_{\QCoh(X)}$ over $\QCoh(X)$, where $\QCoh(X)$ is regarded as a commutative algebra object of $\Pr^\St$. For any $\shC \in \Mod_{\QCoh(X)}(\Pr^\St)$, we denote the action functor of $\QCoh(X)$ on $\shC$ by $\otimes \colon \QCoh(X) \times \shC \to \shC$, $(\sF, C) \mapsto \sF \otimes C$. 
\end{definition} 

 For technical reason, it is helpful to restrict our attention to {\em perfect stacks} $X$, which guarantees that we have a well behaved theory of $X$-linear categories.
\begin{enumerate}
	\item[$(\star)$] Throughout this subsection, we will assume that $X$ is a {\em perfect stack}.
\end{enumerate}

\begin{definition}[{Compare with \cite[Definition 9.4.4.1]{SAG}}] \label{def:perfstacks} We say a prestack $X \colon \CAlgDelta \to \shS$ is a {\em perfect stack} if it satisfies the following conditions:
	\begin{enumerate}
		\item $X$ is a quasi-geometric stack in the sense of (the derived version) of \cite[Definition 9.1.0.1]{SAG} (that is, $X$ satisfies descent with respect to the fpqc topology, the diagonal map $\delta \colon X \to X \times X$ is quasi-affine, and there exists a simplicial commutative ring $A \in \CAlgDelta$ and a faithful flat morphism $u \colon \Spec A \to X$).
		\item The $\infty$-category $\QCoh(X)$ is compactly generated and the structure sheaf $\sO_X$ is a compact object of $\QCoh(X)$.
	\end{enumerate}
Notice that given condition $(1)$, condition $(2)$ is equivalent to:
\begin{enumerate}
	\item[$(2')$] The inclusion $\Perf(X) \subseteq \QCoh(X)$ 
	 extends to an equivalence of $\infty$-categories 
	 	$$\Ind(\Perf(X)) \simeq \QCoh(X).$$
\end{enumerate}
\end{definition}

\begin{remark}
Our Definition \ref{def:perfstacks} is slightly different from the definition of a perfect stack in \cite{BFN10, GRI}. All these notions of perfect stacks work equally fine for all the discussions of this section; we choose the above Definition \ref{def:perfstacks} so that the following is true:
\begin{itemize}
	\item Any quasi-compact, quasi-separated derived scheme 
	$X$ is a perfect stack.
\end{itemize}
The above assertion can be deduced from \cite[Proposition 9.6.1.1]{SAG}, since the underlying spectral scheme $X^\circ$ of $X$ is a quasi-compact, quasi-separated spectral algebraic space. On the other hand, \cite{BFN10} requires semi-separateness as part of the definition of a perfect stack.
\end{remark}

\begin{proposition} \label{prop:perfectstacks} Let $X$ be a perfect stack (Definition \ref{def:perfstacks}). Then $\QCoh(X)$ is locally rigid (in the sense of {\cite[Definition D.7.4.1]{SAG}}). In particular, the following holds:
	\begin{enumerate}
		\item \label{prop:perfectstacks-1} The functor $\lambda = \Mapsp_{\QCoh(X)}(\sO_X, \blank) \colon \QCoh(X) \to \Sp$ exhibits $\QCoh(X)$ as a Frobenius algebra object of  $\Pr^\St$ (in the sense of \cite[Definition 4.6.5.1]{HA}, that is, the composition $\QCoh(X) \otimes \QCoh(X) \to \QCoh(X) \xrightarrow{\lambda} \Sp$ is a duality datum in the symmetric monoidal $\infty$-category $\Pr^\St$).
		\item \label{prop:perfectstacks-2} $\QCoh(X)$ is smooth when regarded as an algebra object of $\Pr^\St$ (in the sense of \cite[Definition 4.6.4.13]{HA}, that is, $\QCoh(X)$ is dualizable when regarded as a module over $\QCoh(X)^\op \otimes \QCoh(X)$).
		\item \label{prop:perfectstacks-3} The forgetful functor $\Mod_{\QCoh(X)}(\Pr^\St) \to \Pr^\St$ is compatible with duality. In other words, suppose we are given an object $\shC \in\Mod_{\QCoh(X)}(\Pr^\St)$. Then any duality datum $e \colon \shC \otimes_{\QCoh(X)} \shD \to \QCoh(X)$ in $\Mod_{\QCoh(X)}(\Pr^\St)$ induces a duality datum in $\Pr^\St$ through the composition
			$$\shC \otimes \shD \to \shC \otimes_{\QCoh(X)} \shD \xrightarrow{e} \QCoh(X) \xrightarrow{\lambda} \Sp;$$
		On the other hand, for any duality datum $\overline{e} \colon \shC \otimes \shD \to \Sp$ in $\Pr^\St$, the $\infty$-category $\shD$ admits an action of $\QCoh(X)$ for which $\overline{e}$ factorizes as a composition 
			$$\shC \otimes \shD \to \shC \otimes_{\QCoh(X)} \shD \xrightarrow{e} \QCoh(X) \xrightarrow{\lambda} \Sp,$$
		where $e$ is a duality datum in $\Mod_{\QCoh(X)}(\Pr^\St)$.
	\end{enumerate}
\end{proposition}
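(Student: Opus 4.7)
The plan is to first establish the local rigidity of $\QCoh(X)$ in the sense of \cite[Definition D.7.4.1]{SAG}, and then deduce assertions (1)--(3) as formal consequences of Lurie's general machinery for locally rigid presentable stable symmetric monoidal $\infty$-categories developed in \cite[\S D.7]{SAG} and \cite[\S 4.6]{HA}. The entire content of the proposition is thus to verify local rigidity in our setup, after which the three named properties are standard.

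To verify local rigidity, I would check that $\QCoh(X)$ is compactly generated, that the unit $\sO_X$ is compact, and that every compact object is dualizable. The first two conditions hold by the very definition of a perfect stack; condition $(2')$ moreover identifies the full subcategory of compact objects of $\QCoh(X)$ with $\Perf(X)$. It therefore remains to show that every perfect object $\sF \in \Perf(X)$ is dualizable in the symmetric monoidal $\infty$-category $\QCoh(X)^{\otimes}$. This is a descent question: by the definition of a quasi-geometric stack, $X$ admits a faithfully flat map $u \colon \Spec A \to X$ with $A \in \CAlgDelta$, and since the pullback $u^* \colon \QCoh(X) \to \Mod_A$ is a symmetric monoidal, conservative, and limit-preserving functor that sends $\Perf(X)$ to $\Perf(A)$, dualizability of $\sF$ can be checked after applying $u^*$. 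Every perfect $A$-module is dualizable in $\Mod_A$ with dual $\RHom_A(\blank, A)$, so this reduces to a classical (local) computation. The dualizing data thus descends along $u$ by fpqc descent for $\QCoh$ (Proposition \ref{prop:QCAlg:fppf}).

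Granting local rigidity, assertion (1) is an immediate application of the general result \cite[Proposition D.7.4.2]{SAG} (or its analogue in \cite[\S 4.6.5]{HA}): in any locally rigid presentable stable symmetric monoidal $\infty$-category $\mathcal{A}$, the functor $\Mapsp_{\mathcal{A}}(\mathbf{1}, \blank)$ canonically exhibits $\mathcal{A}$ as a Frobenius algebra object of $\Pr^{\St}$. For assertion (2), local rigidity implies that $\QCoh(X)$, regarded as a module over itself via left multiplication, is self-dual through the pairing furnished by $\lambda$; combined with the adjunction between relative and absolute tensor products, this forces $\QCoh(X)$ to be dualizable as a module over $\QCoh(X)^{\op} \otimes \QCoh(X)$, which is the definition of smoothness \cite[Definition 4.6.4.13]{HA}. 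Assertion (3) is the statement that duality in $\Mod_{\QCoh(X)}(\Pr^{\St})$ and duality in $\Pr^{\St}$ agree under the forgetful functor: the forward implication is formal, obtained by composing an $X$-linear counit with $\lambda$; the reverse implication uses that the $\QCoh(X)$-action on $\shC$ transports across a duality datum in $\Pr^{\St}$ to equip $\shD$ with a compatible action, essentially by the argument of \cite[Remark D.7.4.4]{SAG}.

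The main obstacle will be bookkeeping rather than substance: the cited results in \cite{SAG} are stated for the spectral setting, and I will need to verify that the corresponding statements transport to the derived setting of simplicial commutative rings used here. Since $\QCoh(X)$ depends on $X$ only through its underlying spectral prestack for purposes of its symmetric monoidal structure (the forgetful functor $\CAlgDelta \to \CAlg$ induces $\QCoh$-compatible comparisons), this transport is harmless, but it should be explicitly noted. The genuinely nontrivial input is the identification $\Ind(\Perf(X)) \simeq \QCoh(X)$ built into the definition of a perfect stack, without which neither the compactness of $\sO_X$ nor the dualizability of compact objects would be available.
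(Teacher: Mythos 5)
Your proof is correct and follows essentially the same route as the paper: first verify that $\QCoh(X)$ is locally rigid in the sense of \cite[Definition D.7.4.1]{SAG}, then observe that (1)--(3) are formal consequences of Lurie's general results on locally rigid symmetric monoidal $\infty$-categories. The paper's own proof is terser — it asserts local rigidity ``follows from our definition of a perfect stack'' without elaboration and then cites \cite[Propositions D.7.5.1, D.7.7.1]{SAG} for (1) and (2) and \cite[Remark 4.6.5.15]{HA} for (3), also pointing to \cite[Proposition 9.4.2.1, Corollaries 9.4.3.4--9.4.3.6]{SAG} for a weakly-perfect strengthening. Your proposal spells out the local rigidity verification (compact generation and compactness of the unit from the definition, dualizability of perfect objects by descent along a flat affine cover), and explicitly flags the transport from the spectral setting of \cite{SAG} to the simplicial-commutative-ring setting used here; neither point is a substantive departure, but both are worth noting. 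One minor caveat: the paper deduces (3) from (1) and (2) via the formal compatibility of Frobenius and smooth structures with module duality, whereas you argue (3) somewhat more directly from local rigidity — the logical dependence is slightly different, though both land on the same standard reference material.
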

\begin{proof} It follows from our definition of a perfect stack that $\QCoh(X)$ is locally rigid. Then the assertions \eqref{prop:perfectstacks-1} and \eqref{prop:perfectstacks-2} are formal consequences of the locally rigidity of $\QCoh(X)$ (\cite[Propositions D.7.5.1, D.7.7.1]{SAG}). The assertion \eqref{prop:perfectstacks-3} is a consequence of \eqref{prop:perfectstacks-1} and \eqref{prop:perfectstacks-2} (\cite[Remark 4.6.5.15]{HA}). Notice that the assertions \eqref{prop:perfectstacks-1} through \eqref{prop:perfectstacks-3} are true even under the a priori weaker assumption that $X$ is weakly perfect: see \cite[Proposition 9.4.2.1, Corollaries 9.4.3.4, 9.4.3.5, 9.4.3.6]{SAG}.
\end{proof}

\begin{construction} \label{constr:mappingobject}Let $X$ be a perfect stack, and let $\shC \in \Mod_{\QCoh(X)}(\Pr^\St)$ be an $X$-linear stable $\infty$-category. Assume that $\shC$ is compactly generated. Therefore by virtue of \cite[Proposition D.7.2.3]{SAG}, $\shC$ is dualizable as an object of $\Pr^\St$, and we can identify its dual with the $\infty$-category $\Ind(\shC_c^\op)$, where $\shC_c$ is the full subcategory of $\shC$ spanned by compact objects. Then Proposition \ref{prop:perfectstacks} \eqref{prop:perfectstacks-3} implies that $\shC$ is also dualizable as a $\QCoh(X)$-module; We let 
	$$e \colon \Ind(\shC_c^\op) \otimes_{\QCoh(X)} \shC \to \QCoh(X)$$
denote the duality datum in $\Mod_{\QCoh(X)}(\Pr^\St)$ obtained from Proposition \ref{prop:perfectstacks} \eqref{prop:perfectstacks-3}. In particular, the restriction of $e$ induces a functor
	$$\Mapsp_{\shC/X} \colon \shC_c^{\op} \times \shC \to \QCoh(X) \qquad (C, D) \mapsto \Mapsp_{\shC/X}(C,D)$$
	which is characterized by the following universal property: there is a canonical {\em evaluation} map $\Mapsp_{\shC/X}(C,D) \otimes C \to D$ such that, for any $\sF \in \QCoh(X)$, the composition
	$$\Map_{\QCoh(X)}(\sF, \Mapsp_{\shC/X}(C,D)) \to  \Map_{\shC}(\sF \otimes C, \Mapsp_{\shC/X}(C,D)\otimes C) \to \Map_{\shC}(\sF \otimes C, D)$$
is a homotopy equivalence. For any pair of object $(C, D) \in \shC_c^{\op} \times \shC$, we will refer to $\Mapsp_{\shC/X}(C,D)$ as the {\em mapping object} of $C$ and $D$ in $\QCoh(X)$. We will sometimes simplify the notation by denoting $\Mapsp_{X}(C,D) = \Mapsp_{\shC/X}(C,D)$. Notice that from our construction, given any compact object $C \in \shC_c$, the induced functor 
	$$\Mapsp_{\shC/X}(C, \blank) \colon \shC \to \QCoh(X)$$
 is $\QCoh(X)$-linear and preserves all small colimits; For any $D \in \shC$, the induced functor 
 	$$\Mapsp_{\shC/X}(\blank, D) \colon \shC_c^\op \to \QCoh(X)$$
 is $\Perf(X)$-linear and preserves all finite colimits (hence is an exact functor). Moreover, for any triple of objects $\sF \in \Perf(X)$, $C \in \shC_c$ and $D \in \shC$, there is a canonical equivalence
 	$$\Mapsp_{\shC/X}(\sF \otimes C, D) \simeq \Mapsp_{\shC/X}(C, \sF^\vee \otimes D)$$
for which the application of the functor $\Map_{\QCoh(X)}(\sO_X, \blank)$ induces a homotopy equivalence
	$$\Map_{\shC}(\sF \otimes C, D) \simeq \Map_{\shC}(C, \sF^\vee \otimes D)$$
which exhibits $\sF^\vee \otimes D$ as an exponential object of $D$ by $\sF$ (see \cite[Lemma 4.6.1.5]{HA}). 
\end{construction}

\begin{example} \label{eg:qcqs:mappingobject} Let $f \colon Y \to X$ be a map between perfect stacks which is a relative quasi-compact, quasi-separated derived scheme. Then the pullback functor $f^* \colon \QCoh(X) \to \QCoh(Y)$ is symmetric monoidal and exhibits $\QCoh(Y)$ as an $X$-linear stable $\infty$-category. We let $f_* \colon \QCoh(Y) \to \QCoh(X)$ denote the right adjoint of $f^*$. For any perfect object $\sF \in \Perf(Y)$, we let $\sF^\vee$ denote the dual of $\sF$ in $\QCoh(Y)$. Then, for all triple of objects $\sF \in \Perf(Y)$, $\sG \in \QCoh(Y)$ and $\sE \in \QCoh(X)$, the composite map 
	$$f^*f_*(\sF^\vee \otimes_{\sO_Y} \sG) \otimes_{\sO_Y} \sF \to \sF^\vee \otimes_{\sO_Y} \sG \otimes_{\sO_Y} \sF \simeq  \sF^\vee \otimes_{\sO_Y} \sF \otimes_{\sO_Y} \sG  \to \sG $$
together with the canonical homotopy equivalences
	$$\Map_{\QCoh(X)}(\sE, f_*(\sF^\vee \otimes_{\sO_Y} \sG)) \simeq \Map_{\QCoh(Y)}(f^* \sE, \sF^\vee \otimes_{\sO_Y} \sG) \simeq \Map_{\QCoh(Y)}(f^*\sE \otimes_{\sO_Y} \sF, \sG)$$
exhibit $f_*(\sF^\vee \otimes_{\sO_Y} \sG)$ as a mapping object $\Mapsp_{\QCoh(Y)/X}(\sF, \sG) \in \QCoh(X)$. 
\end{example}

\begin{definition}[Relative exceptional objects] \label{def:relexcseq} Let $\shC$ be an $X$-linear stable $\infty$-category which is compactly generated. Let $E$ be a compact object of $\shC$, and let $\Mapsp_{\shC/X}(E,E)$ be the mapping object of $E$ with itself (Construction \ref{constr:mappingobject}), and let $\sO_X \to \Mapsp_{\shC/X}(E,E)$ be the canonical map in $\QCoh(X)$ which classifies the identity map $\id_E \colon E \to E$. We say an object $E \in \shC$ is a {\em relative exceptional object (over $X$)} if $E$ is compact and the canonical map $\sO_X \to \Mapsp_{\shC/X}(E,E)$ is an equivalence. We say a sequence of objects $(E_1, E_2, \ldots, E_n)$ in $\shC$ is a {\em relative exceptional sequence (of length $n$ over $X$)} if each $E_i \in \shC$ is a relative exceptional object, and $\Mapsp_{\shC/X}(E_i, E_j) \simeq 0$ for all $i > j$. We will also refer to a relative exceptional sequence $(E_1, E_2)$ of $\shC$ of length $2$ as a {\em relative exceptional pair (over $X$)}. 
\end{definition}

\begin{lemma} Let $\shC$ be a $X$-linear stable $\infty$-category which is compactly generated, and let $E \in \shC$ be a relative exceptional object over $X$. Then the functor $\alpha_E \colon (\sF \in \QCoh(X)) \mapsto (\sF \otimes E \in \shC)$ is fully faithful and induces fully faithful embeddings of $\infty$-categories
	$$\alpha_E|{\Perf(X)} \colon \Perf(X) \hookrightarrow \shC_c \qquad \alpha_E \colon \QCoh(X) \hookrightarrow \shC.$$
Moreover, $\alpha_E$ admits a right adjoint given by the functor $\Mapsp_{\shC/X}(E, \blank) \colon \shC \to \QCoh(X)$.
\end{lemma}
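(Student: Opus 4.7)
The plan is first to identify the right adjoint, then deduce fully faithfulness from the relative exceptionality hypothesis, and finally handle the compactness statement via a standard adjoint argument. The main technical ingredient is the $\QCoh(X)$-linearity of the mapping-object functor $\Mapsp_{\shC/X}(E,\blank)$, which is stated in Construction \ref{constr:mappingobject} and relies on the compactness of $E$.

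To establish the adjunction, apply the defining universal property of the mapping object from Construction \ref{constr:mappingobject} with $C=E$: for every $\sF \in \QCoh(X)$ and every $D \in \shC$, there is a natural homotopy equivalence
$$\Map_{\QCoh(X)}\bigl(\sF, \Mapsp_{\shC/X}(E,D)\bigr) \simeq \Map_{\shC}(\sF \otimes E, D),$$
which exhibits $\Mapsp_{\shC/X}(E,\blank)$ as a right adjoint of $\alpha_E$. The unit of this adjunction is a natural map $\eta_\sF \colon \sF \to \Mapsp_{\shC/X}(E, \sF \otimes E)$.

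Next, to see that $\alpha_E$ is fully faithful as a functor $\QCoh(X) \to \shC$, it suffices to show $\eta_\sF$ is an equivalence for every $\sF \in \QCoh(X)$. Since $E$ is compact, Construction \ref{constr:mappingobject} says that $\Mapsp_{\shC/X}(E,\blank) \colon \shC \to \QCoh(X)$ is $\QCoh(X)$-linear and preserves small colimits. In particular, there is a canonical equivalence
$$\Mapsp_{\shC/X}(E, \sF \otimes E) \simeq \sF \otimes \Mapsp_{\shC/X}(E,E),$$
and the hypothesis that $E$ is relative exceptional (Definition \ref{def:relexcseq}) provides an equivalence $\sO_X \xrightarrow{\sim} \Mapsp_{\shC/X}(E,E)$ classifying $\id_E$. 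Tensoring with $\sF$ gives $\sF \simeq \sF \otimes \Mapsp_{\shC/X}(E,E)$. Unwinding the definitions shows that the composite of these two equivalences is precisely the unit $\eta_\sF$, which is therefore an equivalence. This verifies fully faithfulness of $\alpha_E \colon \QCoh(X) \hookrightarrow \shC$.

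Finally, for the restriction to compact objects: because $E \in \shC_c$, Construction \ref{constr:mappingobject} implies that the right adjoint $\Mapsp_{\shC/X}(E,\blank)$ preserves all small colimits, and in particular filtered colimits. By the standard adjoint-functor criterion, $\alpha_E$ therefore carries compact objects to compact objects. Since $X$ is a perfect stack (Definition \ref{def:perfstacks}), the compact objects of $\QCoh(X)$ are precisely the perfect complexes $\Perf(X)$, so $\alpha_E$ restricts to a functor $\Perf(X) \to \shC_c$, which is fully faithful as a consequence of fully faithfulness on all of $\QCoh(X)$. The only non-formal step in the whole argument is the identification of the concretely defined unit with the $\QCoh(X)$-linearity equivalence used above; this is a routine unwinding of the evaluation map $\Mapsp_{\shC/X}(E,D) \otimes E \to D$ that defines the mapping object, and should be the one point meriting explicit verification in the write-up.
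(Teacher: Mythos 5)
Your proof is correct, but it takes a genuinely different route from the paper's.

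The paper first computes the mapping object $\Mapsp_{\shC/X}(\sF \otimes E, \sG \otimes E)$ directly for \emph{perfect} $\sF, \sG$, using the dualizability of $\sF$ to pull it through as a twist $\Mapsp_{\shC/X}(E, \sF^\vee \otimes \sG \otimes E) \simeq \sF^\vee \otimes \sG$; this establishes fully faithfulness of $\alpha_E|_{\Perf(X)}$. It then shows $\alpha_E$ preserves compact objects via the exponential-object equivalence $\Map_{\shC}(\sF \otimes E, \blank) \simeq \Map_{\shC}(E, \sF^\vee \otimes \blank)$, again using dualizability of perfect $\sF$, and finally invokes \cite[Proposition 5.3.5.11(1)]{HTT} to extend fully faithfulness from the compact objects to the Ind-completion $\QCoh(X) \simeq \Ind(\Perf(X))$. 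The right adjoint is identified last, by exactly the same universal-property argument you give. Your approach inverts the order: you identify the right adjoint $\Mapsp_{\shC/X}(E,\blank)$ first, then show the adjunction unit $\eta_\sF$ is an equivalence for \emph{all} $\sF$ directly, by exploiting the $\QCoh(X)$-linearity and colimit-preservation of $\Mapsp_{\shC/X}(E,\blank)$ together with exceptionality — this bypasses both the restriction to perfect objects and the citation of \cite{HTT}, at the cost of the one technical check you explicitly flag: verifying that the composite of the linearity and exceptionality equivalences really is the adjunction unit (or, equivalently, that the adjunction is an adjunction of $\QCoh(X)$-modules so that the unit is $\QCoh(X)$-linear and hence determined by $\eta_{\sO_X}$). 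Your argument for preservation of compacts via the right adjoint preserving filtered colimits is also different from, and arguably cleaner than, the paper's dualizability trick, though both are standard. Both proofs are sound; yours is more self-contained (no external citation), while the paper's avoids the unit-identification subtlety by working at the level of mapping objects on $\Perf(X)$ where everything can be computed explicitly.
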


\begin{proof} For any pair of perfect objects $\sF, \sG \in \QCoh(X)$, by applying the functor $\Map_{\shC/X}(\sO_X, \blank)$ to the canonical equivalences
	$$\Mapsp_{\shC/X}(\sF \otimes E, \sG \otimes E) \simeq \Mapsp_{\QCoh(X)}(\sF, \sG) \otimes_{\sO_X} \Mapsp_{\shC/X}(E,E) \simeq \Mapsp_{\QCoh(X)}(\sF, \sG),$$
we see that the exact functor $\alpha_E|\Perf(X)$ is fully faithful. Moreover, by virtue of the equivalence $\Map_\shC(\sF \otimes E, \blank) \simeq \Map_\shC(E, \sF^\vee \otimes \blank)$, we see that $\alpha_E$ preserves compact objects. Therefore, by virtue of \cite[Proposition 5.3.5.11 (1)]{HTT}, we deduce that $\alpha_E \colon \QCoh(X) \to \shC$ is fully faithful. Furthermore, for any pair of objects $C \in \shC$ and $\sF \in \QCoh(X)$, the evaluation map $\Mapsp_{\shC/X}(E,C) \otimes E \to C$ together with the induced homotopy equivalence 
		$$\Map_{\QCoh(X)}(\sF, \Mapsp_{\shC/X}(E,C)) \simeq \Map_{\shC}(\sF \otimes E, C)$$
exhibits $\Mapsp_{\shC/X}(E, \blank)$ as a right adjoint functor of $\alpha_E$. 
\end{proof}

\begin{definition}[Proper linear $\infty$-categories]Let $\shC$ be an $X$-linear stable $\infty$-category (where $X$ is a fixed perfect stack). We say that  $\shC$ is {\em proper over $X$} (or  $\shC$ is a {\em proper $X$-linear $\infty$-category}), if $\shC$ is compactly generated and, for each pair of compact objects $C,D \in \shC$, the mapping object $\Mapsp_{\shC/X}(C,D)$ (Construction \ref{constr:mappingobject}) is a perfect object of $\QCoh(X)$.
\end{definition}

By virtue of \cite[Proposition 11.1.3.3]{SAG}, (by applying the global section functor) the above definition is equivalent to Lurie's definition of a proper quasi-coherent stack in the case where $X$ is a quasi-compact, quasi-separated derived algebraic space. 

\begin{example}\label{eg:qperf:mappingobject:dual} In the situation of Example \ref{eg:qcqs:mappingobject}, if we furthermore assume that $f$ is proper, locally almost of ﬁnite presentation and locally of finite Tor-amplitude. Then $f_*$ preserves perfect objects (see \cite[Theorem 6.1.3.2]{SAG}). In particular, we deduce that $\QCoh(Y)$ is a {\em proper} $X$-linear stable $\infty$-category. Furthermore, by virtue of Theorem \ref{thm:Neeman-Lipman-Lurie} \eqref{thm:Neeman-Lipman-Lurie-2i}\eqref{thm:Neeman-Lipman-Lurie-2iii}\eqref{thm:Neeman-Lipman-Lurie-2vii}, 
$f_*$ admits a right adjoint $f^!$ which satisfies $f^!(\blank) \simeq f^*(\blank) \otimes \omega_f$, where $\omega_f = f^! (\sO_X) \in \QCoh(Y)$ is the {relative dualizing sheaf}, and the relative Serre--Grothendieck duality implies that for all perfect objects $\sF, \sG \in \Perf(Y)$, there is a canonical equivalence
	$$\Mapsp_{\QCoh(Y)/X}(\sF, \sG \otimes \omega_f) \simeq \Mapsp_{\QCoh(Y)/X}(\sG,\sF)^\vee$$
of quasi-coherent sheaves on $X$ (see, for example, \cite[Remark 11.1.5.4]{SAG}).
\end{example}

\begin{definition}[Mutation functors] \label{def:mutations} Let $\shC$ be a $X$-linear stable $\infty$-category which is compactly generated, and let $E \in \shC$ be a relative exceptional object over $X$. The {\em left mutation functor (passing through $E$)}, denoted by $\LL_E \colon \shC \to \shC$, is the colimit-preserving functor which carries each object $A \in \shC$ to the cofiber of the evaluation map $\Mapsp_{\shC/E}(E, A) \otimes E \to A$. In other words, for each $A \in \shC$, there is a canonical fiber sequence
		\begin{equation}\label{eqn:left:mut}
			\Mapsp_{\shC/X}(E,A) \otimes E \to A \to \LL_E(A).		
		\end{equation}
If we furthermore assume that $\shC$ is proper over $X$, then $\LL_E$ restricts to an exact functor $\shC_c \to \shC_c$, which we shall also denote by $\LL_E$. Moreover, for each compact object $A \in \shC$, the evaluation map $\Mapsp_{\shC/X}(A,E) \otimes A \to E$ classifies a coevaluation map $A \to \Mapsp_{\shC/X}(A,E)^\vee \otimes E$. The {\em right mutation functor (passing through $E$)}, denoted by $\RR_E \colon \shC_c \to \shC_c$, is the exact functor which carries each compact object $A \in \shC$ to the fiber of the coevaluation map $A \to \Mapsp_{\shC/X}(A,E)^\vee \otimes E$. In other words, there is a canonical fiber sequence
		\begin{equation}\label{eqn:right:mut}
			\RR_E(A) \to A \to \Mapsp_{\shC/X}(A,E)^\vee \otimes E.	
		\end{equation}
\end{definition}

The following is a list of fundamental properties about mutation functors (compare with the classical results in \cite{Bo, BK, Go} and our exposition in \cite[\S 3.11]{J21}):

\begin{lemma}\label{lem:mut:relexc} Let $\shC$ be a proper $X$-linear stable $\infty$-category, let $E \in \shC$ be a relative exceptional object over $X$, and let $A$ be any compact object of $\shC$. We let $\Mapsp_{X} = \Mapsp_{\shC/X}$ denote the mapping object functor of Construction \ref{constr:mappingobject}. Then:
\begin{enumerate} 
	\item \label{lem:mut:relexc-1} There are canonical equivalences 
		$$\Mapsp_{X}(E, \LL_E(A)) \simeq 0 \qquad  \Mapsp_{X}(\RR_E(A), E) \simeq 0.$$
	\item \label{lem:mut:relexc-2} If $\Mapsp_{X}(A,E) \simeq 0$, then there is a canonical equivalence 
		$$\Mapsp_{X}(\LL_E(A), E) \simeq \Sigma^{-1} \Mapsp_{X}(E,A)^\vee.$$
	Furthermore, for any compact object $B \in \shC$, there are bi-functorial equivalences 
	 	$$\Mapsp_{X}(A,B) \xrightarrow{\sim} \Mapsp_{X}(A, \LL_E(B)) \xleftarrow{\sim} \Mapsp_{X}(\LL_E(A), \LL_E(B)).$$
	\item \label{lem:mut:relexc-3} If $\Mapsp_{X}(E,A) \simeq 0$, then there is a canonical equivalence 
		$$\Mapsp_{X}(E,\RR_E(A)) \simeq \Sigma^{-1} \Mapsp_{X}(A,E)^\vee.$$
	Furthermore, for any compact object $B \in \shC$, there are bi-functorial equivalences 
	 	$$\Mapsp_{X}(B,A) \xrightarrow{\sim} \Mapsp_{X}(\RR_E(B), A) \xleftarrow{\sim} \Mapsp_{X}(\RR_E(B), \RR_E(A)).$$
	\item \label{lem:mut:relexc-4} If $\Mapsp_{X}(A,E) \simeq 0$, then there is a canonical equivalence $\RR_E \circ \LL_E (A) \simeq A$. Similarly, if $\Mapsp_{X}(E,A) \simeq 0$, then there is a canonical equivalence $\LL_E \circ \RR_E (A) \simeq A$.
\end{enumerate}
\end{lemma}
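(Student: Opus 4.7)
The plan is to deduce everything mechanically from the defining fiber sequences \eqref{eqn:left:mut} and \eqref{eqn:right:mut}, using only two properties of the mapping object functor $\Mapsp_X = \Mapsp_{\shC/X}$: it is exact in each variable, and for any $\sF \in \Perf(X)$ and compact $C, D \in \shC$ there are canonical equivalences $\Mapsp_X(\sF \otimes C, D) \simeq \sF^\vee \otimes \Mapsp_X(C,D)$ and $\Mapsp_X(C, \sF \otimes D) \simeq \sF \otimes \Mapsp_X(C,D)$, together with the exceptional hypothesis $\Mapsp_X(E,E) \simeq \sO_X$. Since $\shC$ is proper over $X$, all the relevant mapping objects are perfect on $X$, so these identities apply throughout.

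For assertion \eqref{lem:mut:relexc-1}, apply $\Mapsp_X(E,\blank)$ to \eqref{eqn:left:mut}; the resulting fiber sequence has the form $\Mapsp_X(E,A) \otimes \Mapsp_X(E,E) \to \Mapsp_X(E,A) \to \Mapsp_X(E,\LL_E(A))$, and one checks that the first arrow is the identity (it is induced by the evaluation map, which becomes the adjunction counit after using $\Mapsp_X(E,E)\simeq \sO_X$), whence $\Mapsp_X(E,\LL_E(A)) \simeq 0$. Dually, applying $\Mapsp_X(\blank, E)$ to \eqref{eqn:right:mut} and using $\Mapsp_X(\Mapsp_X(A,E)^\vee \otimes E, E) \simeq \Mapsp_X(A,E)$ gives the second vanishing.

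For \eqref{lem:mut:relexc-2}, apply $\Mapsp_X(\blank, E)$ to \eqref{eqn:left:mut} to obtain the fiber sequence
\[
\Mapsp_X(\LL_E(A),E) \to \Mapsp_X(A,E) \to \Mapsp_X(E,A)^\vee,
\]
which under the hypothesis $\Mapsp_X(A,E)\simeq 0$ yields $\Mapsp_X(\LL_E(A),E) \simeq \Sigma^{-1}\Mapsp_X(E,A)^\vee$. For the bifunctorial equivalences, apply $\Mapsp_X(A,\blank)$ to the fiber sequence \eqref{eqn:left:mut} for $B$: its first term becomes $\Mapsp_X(E,B) \otimes \Mapsp_X(A,E) \simeq 0$, giving $\Mapsp_X(A,B) \xrightarrow{\sim} \Mapsp_X(A,\LL_E(B))$; and apply $\Mapsp_X(\blank, \LL_E(B))$ to \eqref{eqn:left:mut} for $A$: the last term becomes $\Mapsp_X(E,A)^\vee \otimes \Mapsp_X(E,\LL_E(B))$, which vanishes by \eqref{lem:mut:relexc-1}, giving $\Mapsp_X(\LL_E(A),\LL_E(B)) \xrightarrow{\sim} \Mapsp_X(A, \LL_E(B))$. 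Assertion \eqref{lem:mut:relexc-3} is obtained by the strictly dual argument, applying $\Mapsp_X(\blank, E)$ and $\Mapsp_X(E, \blank)$ to \eqref{eqn:right:mut}.

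Finally, \eqref{lem:mut:relexc-4} combines \eqref{lem:mut:relexc-1} and \eqref{lem:mut:relexc-2}. Assume $\Mapsp_X(A,E) \simeq 0$; by \eqref{lem:mut:relexc-2}, $\Mapsp_X(\LL_E(A),E)^\vee \simeq \Sigma\,\Mapsp_X(E,A)$, so the defining fiber sequence \eqref{eqn:right:mut} for $\RR_E(\LL_E(A))$ takes the form
\[
\RR_E(\LL_E(A)) \to \LL_E(A) \to \Sigma\,\Mapsp_X(E,A)\otimes E,
\]
and the second arrow is identified with the canonical connecting map of \eqref{eqn:left:mut}, whose fiber is $A$. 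The analogous argument using \eqref{lem:mut:relexc-3} yields $\LL_E\circ \RR_E(A) \simeq A$ when $\Mapsp_X(E,A) \simeq 0$. The main delicate point is verifying that the maps arising from applying $\Mapsp_X$ to \eqref{eqn:left:mut} and \eqref{eqn:right:mut} are indeed the identity or the canonical connecting maps (rather than merely abstract equivalences up to some automorphism); this requires tracing through the adjunction identities implicit in Construction \ref{constr:mappingobject}, but is forced by the universal property of the evaluation and coevaluation morphisms.
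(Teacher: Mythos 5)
Your proof is correct and takes essentially the same route as the paper: each assertion is deduced mechanically by applying $\Mapsp_X$ in one variable to the defining fiber sequences \eqref{eqn:left:mut} and \eqref{eqn:right:mut}, using exactness, $\Perf(X)$-linearity of the mapping object, and the exceptional hypothesis $\Mapsp_X(E,E)\simeq\sO_X$. The only cosmetic difference is in \eqref{lem:mut:relexc-2}, where the paper first rotates \eqref{eqn:left:mut} to the sequence $A \to \LL_E(A) \to \Sigma\,\Mapsp_X(E,A)\otimes E$ and then applies $\Mapsp_X(\blank, E)$, whereas you apply it directly; these are equivalent, and your closing remark correctly identifies the one real subtlety (verifying the relevant maps are the canonical ones and not merely abstract equivalences), which the paper likewise handles by appealing to the universal property behind the evaluation and coevaluation morphisms.
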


\begin{proof} The first equivalence of \eqref{lem:mut:relexc-1} follows from applying the exact functor $\Mapsp_{X}(E, \blank)$ to the fiber sequence \eqref{eqn:left:mut}, and the second equivalence of \eqref{lem:mut:relexc-1} follows from applying the exact functor $\Mapsp_{X}(\blank, E)$ to the fiber sequence \eqref{eqn:right:mut}. We now prove \eqref{lem:mut:relexc-2}; the proof of \eqref{lem:mut:relexc-3} is similar. Since $\shC$ is a stable $\infty$-category, \eqref{eqn:left:mut} induces a canonical fiber sequence
	\begin{equation}\label{eqn:left:mut:Sigma}
	A \to \LL_E(A) \to \Sigma \, \Mapsp_{X}(E,A) \otimes E
	\end{equation}
(which is determined up to a contractible space of choices) for which the application of the exact functor $\Mapsp_X(\blank, E)$ induces an equivalence $\Mapsp_{X}(\LL_E(A), E) \simeq \Sigma^{-1} \Mapsp_{X}(E,A)^\vee$. Applying the exact functor $\Mapsp_{X}(A, \blank)$ to the fiber sequence
	$$\Mapsp_X(E,B) \otimes E \to B \to \LL_E(B),$$
we obtain a canonical equivalence $\Mapsp_{X}(A,B) \simeq \Mapsp_{X}(A, \LL_E(B))$; By applying the exact functor $\Mapsp_X(\blank, \LL_E(B))$ to \eqref{eqn:left:mut} and using the equivalence $\Mapsp_{X}(E, \LL_E(B)) \simeq 0$ of  \eqref{lem:mut:relexc-1}, we obtain a canonical equivalence $\Mapsp_{X}(A,\LL_E(B)) \simeq \Mapsp_{X}(\LL_E(A), \LL_E(B))$.

Finally, we prove \eqref{lem:mut:relexc-4}. We will prove in the case where $\Mapsp_{X}(A,E) \simeq 0$; The other case is similar. From the way how we obtain the equivalence $\Mapsp_{X}(\LL_E(A), E) \simeq \Sigma^{-1} \Mapsp_{X}(E,A)^\vee$ of \eqref{lem:mut:relexc-2}, we see that the coevaluation map $\LL_E(A) \to \Mapsp_X(\LL_E(A), E)^\vee \otimes E$ is equivalent to the morphism $\LL_E(A) \to \Sigma \Mapsp_X(E,A) \otimes E$ in the fiber sequence \eqref{eqn:left:mut:Sigma}. Therefore, the fiber sequence \eqref{eqn:left:mut:Sigma}  exhibits $A$ as the right mutation of $\LL_E(A)$ passing through $E$.
\end{proof}

\begin{remark} In the situation of Lemma \ref{lem:mut:relexc}, if we let ${}^\perp \langle E \rangle \subseteq \shC_c$ (resp., $\langle E \rangle^\perp \subseteq \shC_c$) denote the full subcategory of $\shC_c$ spanned by those objects $C$ for which $\Mapsp_{X}(C,E) \simeq 0$ (resp., $\Mapsp_{X}(E,C) \simeq 0$). Then ${}^\perp \langle E \rangle$ and $\langle E \rangle^\perp$ are $\Perf(X)$-linear stable subcategories of $\shC_c$. Therefore  Lemma \ref{lem:mut:relexc} implies that the restrictions of the mutation functors $\LL_E$ and $\RR_E$ induces mutually inverse $\Perf(X)$-linear equivalences of stable $\infty$-categories
	$$\LL_{E}\,|_{{}^\perp \langle E \rangle} \colon {}^\perp \langle E \rangle  \xrightarrow{\sim} \langle E \rangle^\perp \qquad \RR_{E}\,|_{\langle E \rangle^\perp} \colon \langle E \rangle^\perp \xrightarrow{\simeq} {}^\perp \langle E \rangle.$$
\end{remark}

\begin{lemma} \label{lem:mut:relexcpair} Let $\shC$ be a proper $X$-linear stable $\infty$-category, let $(E,F)$ be a relative exceptional pair of objects of $\shC$ over $X$. We let $\Mapsp_{X} = \Mapsp_{\shC/X}$ denote the mapping object functor of Construction \ref{constr:mappingobject}. Then:
	\begin{enumerate}
	\item \label{lem:mut:relexcpair-1} Both $(\LL_E(F), E)$ and $(F, \RR_F (E))$  are relative exceptional pairs over $X$.
	\item \label{lem:mut:relexcpair-2} There are canonical equivalences
		$$\Mapsp_{X}(\LL_E(F), E) \simeq \Sigma^{-1} \Mapsp_{X}(E,F)^\vee \simeq \Mapsp_{X}(F, \RR_F(E)).$$
	\item \label{lem:mut:relexcpair-3} There are canonical equivalences
		$$\Mapsp_{X}(F, \LL_E(F)) \simeq \sO_X \qquad \Mapsp_{X}(\RR_F(E), E)\simeq \sO_X.$$
	\item \label{lem:mut:relexcpair-4} There are canonical natural isomorphisms of functors from $\shC_c$ to itself:
		$$\LL_{\LL_E(F)} \circ \LL_E \simeq \LL_E \circ \LL_F \qquad \RR_{\RR_F(E)} \circ \RR_F \simeq \RR_F \circ \RR_E.$$
	\end{enumerate}
\end{lemma}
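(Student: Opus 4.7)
The plan is to deduce all four assertions from Lemma~\ref{lem:mut:relexc} by systematically exploiting the vanishing $\Mapsp_X(F,E)\simeq 0$ that characterizes the relative exceptional pair $(E,F)$. Concretely, I would first prove (2): applying Lemma~\ref{lem:mut:relexc}(2) with $A=F$ (whose hypothesis $\Mapsp_X(F,E)\simeq 0$ holds by assumption on the pair) yields the canonical equivalence $\Mapsp_X(\LL_E(F),E)\simeq \Sigma^{-1}\Mapsp_X(E,F)^\vee$, and applying Lemma~\ref{lem:mut:relexc}(3) with $A=E$ yields $\Mapsp_X(F,\RR_F(E))\simeq \Sigma^{-1}\Mapsp_X(E,F)^\vee$; the two expressions are then matched through their common description.

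For (3), I would apply the exact functor $\Mapsp_X(F,\blank)$ to the fiber sequence \eqref{eqn:left:mut} with $A=F$:
\[\Mapsp_X(F,\Mapsp_X(E,F)\otimes E)\to \Mapsp_X(F,F)\to \Mapsp_X(F,\LL_E(F)).\]
The first term is $\Mapsp_X(E,F)\otimes \Mapsp_X(F,E)\simeq 0$ using the exceptionality vanishing, so the middle map is an equivalence and gives $\Mapsp_X(F,\LL_E(F))\simeq \sO_X$. The statement $\Mapsp_X(\RR_F(E),E)\simeq\sO_X$ is obtained in exactly the same way using the dual fiber sequence \eqref{eqn:right:mut}. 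Assertion (1) then follows: $E$ is already exceptional; the exceptionality of $\LL_E(F)$ is Lemma~\ref{lem:mut:relexc}(2) applied with $A=B=F$ (giving $\Mapsp_X(\LL_E(F),\LL_E(F))\simeq\Mapsp_X(F,F)\simeq\sO_X$); the vanishing condition for the pair $(\LL_E(F),E)$ is $\Mapsp_X(E,\LL_E(F))\simeq 0$, which is Lemma~\ref{lem:mut:relexc}(1). The pair $(F,\RR_F(E))$ is handled dually.

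For (4), the idea is to build, for every $A\in\shC_c$, an exact triangle in $\shC_c$ whose terms coincide with those defining the two iterated mutations $\LL_{\LL_E(F)}\circ\LL_E(A)$ and $\LL_E\circ\LL_F(A)$. Applying the colimit-preserving (hence $\QCoh(X)$-linear) functor $\LL_E$ to the defining fiber sequence $\Mapsp_X(F,A)\otimes F\to A\to \LL_F(A)$, and using that $\LL_E(\sF\otimes F)\simeq \sF\otimes \LL_E(F)$ for all $\sF\in\QCoh(X)$, produces
\[\Mapsp_X(F,A)\otimes \LL_E(F)\to \LL_E(A)\to \LL_E(\LL_F(A)).\]
On the other hand, the defining fiber sequence for $\LL_{\LL_E(F)}(\LL_E(A))$ is
\[\Mapsp_X(\LL_E(F),\LL_E(A))\otimes \LL_E(F)\to \LL_E(A)\to \LL_{\LL_E(F)}(\LL_E(A)),\]
and by the first bi-functorial equivalence in Lemma~\ref{lem:mut:relexc}(2) (with $A$ replaced by $F$ and $B$ by $A$), $\Mapsp_X(\LL_E(F),\LL_E(A))\simeq\Mapsp_X(F,A)$. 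Matching the two triangles along this identification produces the required equivalence $\LL_{\LL_E(F)}\circ\LL_E\simeq \LL_E\circ \LL_F$; the right-mutation identity is obtained by the dual argument using \eqref{eqn:right:mut} and Lemma~\ref{lem:mut:relexc}(3).

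The main obstacle I expect is making the identification in (4) genuinely functorial and compatible at the level of $\infty$-categories, rather than merely object-by-object. The delicate point is that the two ``middle maps'' in the two fiber sequences must be shown to agree (not just their sources and targets), which requires tracing that the Lemma~\ref{lem:mut:relexc}(2) equivalence $\Mapsp_X(\LL_E(F),\LL_E(A))\simeq \Mapsp_X(F,A)$ is compatible with the evaluation maps into $\LL_E(A)$; this in turn relies on the fact that the equivalence is induced by applying $\Mapsp_X(\blank,\LL_E(A))$ to the defining triangle of $\LL_E(F)$, so the compatibility is essentially formal but must be set up carefully in the $\QCoh(X)$-linear functor $\infty$-category. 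Once this is verified, the cofiber comparison upgrades to a canonical equivalence of functors, and the dual assertion follows symbolically.
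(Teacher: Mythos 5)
Your proof is correct and follows essentially the same strategy as the paper: assertions (1)--(3) are deduced from Lemma~\ref{lem:mut:relexc} in the same way, and assertion (4) comes down to comparing two fiber sequences that share the same middle term and first map.

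The one place your route differs in presentation is (4). The paper sets up a commutative square of evaluation maps
\[
\begin{tikzcd}
\Mapsp_{X}(F,A)\otimes\Mapsp_{X}(E,F)\otimes E \ar{d}\ar{r} & \Mapsp_{X}(F,A)\otimes F \ar{d}\\
\Mapsp_{X}(E,A)\otimes E \ar{r} & A
\end{tikzcd}
\]
extends the rows to cofiber sequences via (TR2), and then invokes (TR4) to produce a fiber sequence on cofibers which it identifies with the defining triangle of $\LL_E(\LL_F(A))$. You instead apply the exact, $\QCoh(X)$-linear functor $\LL_E$ directly to the defining cofiber sequence of $\LL_F(A)$; this produces precisely the right-hand column of the paper's $3\times 3$ diagram, so the two arguments coincide after rotation. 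Both hinge on the same key identification---that $\LL_E$ of the evaluation map $\Mapsp_X(F,A)\otimes F\to A$ agrees, under the equivalence $\Mapsp_X(F,A)\simeq\Mapsp_X(\LL_E(F),\LL_E(A))$ of Lemma~\ref{lem:mut:relexc}(2), with the evaluation map for $\LL_E(F)$ into $\LL_E(A)$---and both treat it at roughly the same level of rigor (the paper asserts the identification of the last vertical arrow without elaboration). Your approach arguably makes the compatibility issue more visible, which is a minor expository gain, though the $3\times 3$ version more readily records the extra conclusion that the cofiber sequence of cofibers is the $\eqref{eqn:left:mut}$ triangle for $\LL_F(A)$ through $E$.
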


\begin{proof} We first prove assertion \eqref{lem:mut:relexcpair-1}. It follows from the canonical equivalence 
	$$\Mapsp_{X}(\LL_E(F), \LL_E(F)) \simeq \Mapsp_{X}(F,F) \simeq \sO_X$$
of Lemma \ref{lem:mut:relexc} \eqref{lem:mut:relexc-2} that $\LL_E(F)$ is a relative exceptional object; similarly, Lemma \ref{lem:mut:relexc} \eqref{lem:mut:relexc-3} implies that $\RR_F(E)$ is a relative exceptional object. Moreover, Lemma \ref{lem:mut:relexc} \eqref{lem:mut:relexc-1} implies $(\LL_E(F), E)$ and  $(F, \RR_F (E))$ are relative exceptional pairs. 
The assertion \eqref{lem:mut:relexcpair-2} follows from Lemma \ref{lem:mut:relexc} \eqref{lem:mut:relexc-2} and \eqref{lem:mut:relexc-3}.
To prove assertion \eqref{lem:mut:relexcpair-3}, if we apply the second equation of Lemma \ref{lem:mut:relexc} \eqref{lem:mut:relexc-2} to the case $A= B = F$, then we obtain canonical equivalences
	$$\Mapsp_{X}(F, \LL_E(F)) \xleftarrow{\sim} \Mapsp_{X}(F, F) \xleftarrow{\sim} \sO_X;$$
Similarly, by applying Lemma \ref{lem:mut:relexc} \eqref{lem:mut:relexc-3},  
we obtain canonical equivalences
	$$\Mapsp_{X}(\RR_F(E), E) \xleftarrow{\sim} \Mapsp_{X}(E, E) \xleftarrow{\sim} \sO_X.$$
We now prove the equivalence $\LL_{\LL_E(F)} \circ \LL_E \simeq \LL_E \circ \LL_F$ of assertion  \eqref{lem:mut:relexcpair-4}; the other case is similar. For any $A \in \shC_c$, consider the following commutative diagram of evaluation maps
	$$
	\begin{tikzcd}
		\Mapsp_{X}(F,A) \otimes  \Mapsp_{X}(E,F) \otimes E \ar{d} \ar{r} & \Mapsp_{X}(F,A) \otimes F \ar{d}  		\\
		\Mapsp_{X}(E,A) \otimes E \ar{r} & A. 
	\end{tikzcd}
	$$
By virtue of $(TR2)$ of the stable $\infty$-category $\shC_c$ (see \cite[Theorem 1.1.2.14]{HA}), the above commutative diagram extends to a commutative diagram of fiber sequences
	$$
	\begin{tikzcd}
		\Mapsp_{X}(F,A) \otimes  \Mapsp_{X}(E,F) \otimes E \ar{d} \ar{r}& \Mapsp_{X}(F,A) \otimes F \ar{d}  \ar{r}  & \Mapsp_{X}(F,A)  \otimes \LL_E(F) \ar{d} \\
		\Mapsp_{X}(E,A) \otimes E \ar{r} & A \ar{r} & \LL_E(A)
	\end{tikzcd}
	$$
By virtue of the canonical equivalence 
	$$\Mapsp_{X}(E, \Mapsp_{X}(F,A) \otimes F) \simeq \Mapsp_{X}(F,A) \otimes  \Mapsp_{X}(E,F),$$ 
we can identify the first vertical arrow of the above diagram with the composition of the functor $\Mapsp_{X}(E, \blank)$ and the evaluation map $\Mapsp_{X}(F,A) \otimes F \to A$. By virtue of the equivalence $\Mapsp_{X}(\LL_E(F), \LL_E(A)) \simeq \Mapsp_{X}(F,A)$ of  Lemma \ref{lem:mut:relexc} \eqref{lem:mut:relexc-2}, we can identify the last vertical arrow of the above diagram with the evaluation map  
	$$\Mapsp_{X}(\LL_E(F), \LL_E(A)) \otimes \LL_E(F) \to \LL_E(A).$$
By virtue of $(TR4)$ of the stable $\infty$-category $\shC_c$ (see \cite[Theorem 1.1.2.14]{HA}), we obtain that the above commutative diagram induces a fiber sequence
	$$\Mapsp_{X}(E, \LL_F(A)) \otimes E \to \LL_F(A) \to \LL_{\LL_E(F)}(\LL_E(A))$$
which exhibits $\LL_{\LL_E(F)}(\LL_E(A))$ as the left mutation of $\LL_F(A)$ passing through $E$. \end{proof}

\begin{definition} \label{def:dualrelexc} Let $\shC$ be a proper $X$-linear stable $\infty$-category, and let $E_\bullet=(E_1, E_2, \ldots, E_n)$ be a relative exceptional sequence of $\shC$ over $X$. We define the {\em left dual (relative exceptional) sequence $D_\bullet$ (over $X$)} of $E_\bullet$ by the following iterated left mutation process:
	$$D_1 := E_1, \qquad D_{i} := \LL_{E_1} \circ \ldots \circ \LL_{E_{i-1}} (E_i) \quad \text{for $2 \le i \le n$}.$$
Similarly, we define the {\em right dual (relative exceptional) sequence $D_\bullet$ (over $X$)} of $E_\bullet$ by the following iterated right mutation process: 
	$$F_n := E_n, \qquad F_{i} := \RR_{E_{n}} \circ \ldots \circ \RR_{E_{i+1}} (E_i)  \quad \text{for $1 \le i \le n-1$}. $$
\end{definition}

 \begin{corollary}\label{cor:dulexcseq} Let $\shC$ be a proper $X$-linear stable $\infty$-category, and let $E_\bullet=(E_1, E_2, \ldots, E_n)$ be a relative exceptional sequence of $\shC$ over $X$. Then both the left dual sequence $D_\bullet = (D_n ,\ldots, D_1)$ and the right dual sequence $F_\bullet =(F_1, \ldots, F_n)$ of $E_\bullet$ are relative exceptional sequences    over $X$, for which there are canonical equivalences (``relations of dual basis"):
  	$$
	\Mapsp_{\shC/X}(E_i, D_j) \simeq  \begin{cases} \sO_X & \text{if $i = j$}. \\
	0 & \text{ if $i\ne j$}. \end{cases}
	\qquad
	\Mapsp_{\shC/X}(F_i, E_j) \simeq \begin{cases} \sO_X &\text{if $i = j$}. \\
	0 & \text{if $i = j$}. \end{cases}
	$$
 \end{corollary}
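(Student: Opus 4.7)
The plan is to first establish the ``dual basis'' relations $\Mapsp_{\shC/X}(E_i, D_j) \simeq \delta_{ij}\cdot \sO_X$, and then derive the exceptionality of $D_\bullet$ from these relations together with the key bi-functorial equivalence of Lemma \ref{lem:mut:relexc} \eqref{lem:mut:relexc-2}; the arguments for $F_\bullet$ will be formally dual. Throughout, I will abbreviate $\Mapsp = \Mapsp_{\shC/X}$ and set $D_j^{(k)} := \LL_{E_k} \circ \cdots \circ \LL_{E_{j-1}}(E_j)$ for $1 \le k \le j$, so that $D_j = D_j^{(1)}$ and $D_j^{(j)} = E_j$.

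The first technical step is to prove the auxiliary vanishing
$$(\star) \qquad \Mapsp(D_j^{(k)}, E_l) \simeq 0 \qquad \text{for all } l < k \le j.$$
This is proved by induction on $j-k$: the base case $k=j$ is the exceptionality of $E_\bullet$, and the inductive step follows by applying the exact functor $\Mapsp(\blank, E_l)$ to the defining fiber sequence $\Mapsp(E_k, D_j^{(k+1)}) \otimes E_k \to D_j^{(k+1)} \to D_j^{(k)}$, observing that both flanking terms vanish using the inductive hypothesis and $\Mapsp(E_k, E_l) = 0$ (which holds since $k > l$). From $(\star)$, the dual basis relations follow: for $i = j$, the equivalence $\Mapsp(E_j, \LL_{E_{j-1}}(E_j)) \simeq \sO_X$ of Lemma \ref{lem:mut:relexcpair} \eqref{lem:mut:relexcpair-3} is preserved under each further mutation $\LL_{E_k}$ with $k < j$, because applying $\Mapsp(E_j, \blank)$ to the fiber sequence for $\LL_{E_k}$ and using $\Mapsp(E_j, E_k) = 0$ (since $j > k$) leaves the mapping object unchanged; the off-diagonal vanishing $\Mapsp(E_i, D_j) = 0$ for $i \neq j$ is similarly handled in two subcases—for $i < j$ we use Lemma \ref{lem:mut:relexc} \eqref{lem:mut:relexc-1} (since $\LL_{E_i}$ occurs as one of the mutations in the composition defining $D_j$) and preserve this vanishing under the remaining $\LL_{E_k}$ for $k < i$, while for $i > j$ we iterate the fiber sequence starting from $\Mapsp(E_i, E_j) = 0$.

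To prove that each $D_j$ is a relative exceptional object, I proceed by downward induction on $k$ to show that every $D_j^{(k)}$ is exceptional: the base case $k = j$ is clear, and for the inductive step, $(\star)$ with $l = k$ provides exactly the hypothesis $\Mapsp(D_j^{(k+1)}, E_k) \simeq 0$ needed to apply Lemma \ref{lem:mut:relexcpair} \eqref{lem:mut:relexcpair-1} to the exceptional pair $(E_k, D_j^{(k+1)})$, yielding that $D_j^{(k)} = \LL_{E_k}(D_j^{(k+1)})$ is exceptional. For the sequence condition $\Mapsp(D_a, D_b) \simeq 0$ with $a < b$, the strategy is to ``peel off'' the common left mutations $\LL_{E_1}, \ldots, \LL_{E_{a-1}}$: by $(\star)$ with $l = k$, we have $\Mapsp(D_a^{(k+1)}, E_k) \simeq 0$ for each $1 \le k \le a-1$, so Lemma \ref{lem:mut:relexc} \eqref{lem:mut:relexc-2} gives the bi-functorial equivalence $\Mapsp(D_a^{(k)}, D_b^{(k)}) \simeq \Mapsp(D_a^{(k+1)}, D_b^{(k+1)})$ at every stage. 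Iterating this $a-1$ times reduces the problem to
$$\Mapsp(D_a, D_b) \simeq \Mapsp(E_a, D_b^{(a)}) = \Mapsp\bigl(E_a, \LL_{E_a} \circ \LL_{E_{a+1}} \circ \cdots \circ \LL_{E_{b-1}}(E_b)\bigr),$$
which vanishes by Lemma \ref{lem:mut:relexc} \eqref{lem:mut:relexc-1} (as the outermost mutation is $\LL_{E_a}$, assuming $a < b$).

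The arguments for the right dual sequence $F_\bullet$ are obtained by reversing the roles of $\LL$ and $\RR$ and the direction of the indices: one proves the right-mutation analogue of $(\star)$, then the identity $\Mapsp(F_i, E_i) \simeq \sO_X$ via iteration of Lemma \ref{lem:mut:relexcpair} \eqref{lem:mut:relexcpair-3}, and finally the exceptionality of the sequence $(F_1, \ldots, F_n)$—which amounts to $\Mapsp(F_a, F_b) \simeq 0$ for $a > b$—by peeling off the common right mutations $\RR_{E_n}, \ldots, \RR_{E_{b+1}}$ using the right-mutation version of Lemma \ref{lem:mut:relexc} \eqref{lem:mut:relexc-3}. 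The principal technical challenge is the bookkeeping of orthogonality conditions through the iterated mutations, but the lower-triangular structure built into the original exceptional sequence $E_\bullet$ ensures that the required hypothesis for invoking Lemma \ref{lem:mut:relexc} \eqref{lem:mut:relexc-2} is automatically satisfied at every stage of the iteration; the proof then becomes a careful but essentially mechanical induction.
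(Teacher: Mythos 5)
Your proof is correct and is exactly the "iterated application of Lemmas \ref{lem:mut:relexc} and \ref{lem:mut:relexcpair}" that the paper's one-line proof invokes; your auxiliary claim $(\star)$ is precisely the bookkeeping device that makes the iteration go through, and all the verifications (compactness is preserved under mutations through exceptional objects, the three index-comparison cases in the dual-basis relations, the downward induction on $k$ for exceptionality, and the peeling argument for semiorthogonality of $D_\bullet$) are sound.
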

\begin{proof} This follows from an iterated application of Lemmas \ref{lem:mut:relexc} and \ref{lem:mut:relexcpair}.
\end{proof}

\begin{remark} \label{rmk:excseqspan} In the situation of Corollary \ref{cor:dulexcseq}, we let $\langle \Perf(X) \otimes E_\bullet \rangle$ denote the smallest stable subcategory of $\shC_c$ which contains all elements of the form $\sF \otimes E_i$, where $\sF \in \Perf(X)$ and $1 \le i \le n$, and let $\langle \Perf(X) \otimes D_\bullet \rangle$ and $\langle \Perf(X) \otimes F_\bullet \rangle$ be defined similarly. Then there are canonical identifications of subcategories
	$$\langle \Perf(X) \otimes E_\bullet \rangle = \langle \Perf(X) \otimes D_\bullet \rangle = \langle \Perf(X) \otimes F_\bullet \rangle \subseteq \shC_c.$$
This follows from an iterated application of the fiber sequences \eqref{eqn:left:mut} and \eqref{eqn:right:mut}.
\end{remark}

\subsection{Beilinson's relations}\label{sec:beilinson}
In this subsection, we present the derived version of Beilinson's relations for projectivizations of sheaves of perfect-amplitude contained in $[0,1]$.

\begin{proposition}[Dual exceptional sequences] \label{prop:PG:dualexc} Let $X$ be a perfect stack in the sense of Definition \ref{def:perfstacks}, and let $\sF$ be a quasi-coherent sheaf on $X$ of perfect-amplitude contained in $[0,1]$ and rank $\delta \ge 1$. We let $\pr \colon \PP(\sF) \to X$ denote the derived projectivization of $\sF$ and let $\sO(1)$ denote the universal line bundle. Then the sequence of line bundles on $\PP(\sF)$:
	$$\sO, \sO(1), \cdots, \sO(\delta-1),$$
forms a relative exceptional sequence on $\PP(\sF)$ over $X$ (in the sense of Definition \ref{def:relexcseq}), whose left dual relative exceptional sequence (Definition \ref{def:dualrelexc}) is given by
	$$\Sym_{\PP(\sF)}^{\delta-1}(\Sigma \, L_{\PP(\sF)/X}(1)) , \cdots, \Sym_{\PP(\sF)}^{i}(\Sigma \,L_{\PP(\sF)/X}(1)), \cdots, \Sigma \, L_{\PP(\sF)/X}(1), \sO,$$
where $L_{\PP(\sF)/X}$ is the relative cotangent complex and $L_{\PP(\sF)/X}(1) =L_{\PP(\sF)/X} \otimes \sO(1)$. 
\end{proposition}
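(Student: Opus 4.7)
The plan has two parts. First, for the exceptional sequence property, observe that $\pr \colon \PP(\sF) \to X$ is proper, quasi-smooth, and locally of finite presentation (by Theorem \ref{thm:Serre:O(d)} and Proposition \ref{prop:proj:finite}), so Example \ref{eg:qperf:mappingobject:dual} gives $\Mapsp_{\PP(\sF)/X}(\sO(i), \sO(j)) \simeq \pr_*\sO(j-i)$. The generalized Serre's theorem (Theorem \ref{thm:Serre:O(d)}\eqref{thm:Serre:O(d)-1i}--\eqref{thm:Serre:O(d)-1ii}) evaluates this to $\sO_X$ when $i=j$, to $\Sym_X^{j-i}\sF$ when $j>i$, and to $0$ when $j<i$ (since $-\delta+1 \le j-i \le -1$ lies in the Serre vanishing range), which is exactly the relative exceptional sequence condition.

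For the identification of the left dual, set $M_q := \Sym_{\PP(\sF)}^q(\Sigma L_{\PP(\sF)/X}(1))$. The base case $D_2 \simeq M_1$ is immediate: by Serre, $\pr_*\sO(1) \simeq \sF$, so the defining fiber sequence for $\LL_\sO(\sO(1))$ reads $\pr^*\sF \to \sO(1) \to \LL_\sO(\sO(1))$, which coincides with the rotated Euler fiber sequence $\pr^*\sF \to \sO(1) \to \Sigma L_{\PP(\sF)/X}(1)$ from Theorem \ref{thm:proj:Euler}. For general $q$, I would prove by induction on $q \in \{0,1,\ldots,\delta-1\}$ the two statements: (a) the dual basis relations $\Mapsp_{\PP(\sF)/X}(\sO(p), M_q) \simeq \delta_{p,q}\,\sO_X$ for $0 \le p \le \delta-1$, and (b) the containment $M_q \in \langle \Perf(X) \otimes \{\sO, \sO(1), \ldots, \sO(q)\}\rangle$. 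The main tool is the Postnikov-type filtration of \S\ref{sec:filtration:fib:seq} applied to the rotated Euler fiber sequence $\pr^*\sF \to \sO(1) \to M_1$ of connective sheaves: using Illusie's identification $\Sym^i(M_1) \simeq M_i$ (Proposition \ref{prop:symwedgegamma}\eqref{prop:symwedgegamma-4}), this produces a filtration $\pr^*\Sym^q\sF = \overline{F}^0 \subseteq \cdots \subseteq \overline{F}^q = \sO(q)$ with $\overline{F}^i/\overline{F}^{i-1} \simeq \pr^*\Sym^{q-i}\sF \otimes M_i$ together with a fiber sequence $\overline{F}^{q-1} \to \sO(q) \to M_q$. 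Claim (b) follows immediately by induction from this fiber sequence; claim (a) follows by applying the exact functor $\Mapsp(\sO(p), -)$ to the filtration, which annihilates all successive quotients except possibly at $i=p$ (by the inductive hypothesis and Serre), and comparing with Serre's value of $\Mapsp(\sO(p), \sO(q))$.

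With (a) and (b) established, the identification $D_{i+1} \simeq M_i$ follows from the uniqueness of the left dual sequence within the semiorthogonal decomposition generated by $E_1, \ldots, E_{i+1}$: by Corollary \ref{cor:dulexcseq} and Remark \ref{rmk:excseqspan}, $D_{i+1}$ is characterized up to essentially unique equivalence by the same properties (a) and (b) that $M_i$ satisfies. The main obstacle I anticipate is the subtle case $p<q$ of (a): the filtration analysis abstractly yields $\Mapsp(\sO(p), \overline{F}^{q-1}) \simeq \Sym^{q-p}\sF \simeq \Mapsp(\sO(p), \sO(q))$, and one must show the induced map is an equivalence. This will follow because the tautological multiplication $\pr^*\Sym^{q-p}\sF \otimes \sO(p) \to \overline{F}^p \hookrightarrow \sO(q)$, upon applying $\Mapsp(\sO(p), -)$, recovers Serre's equivalence $\Sym^{q-p}\sF \xrightarrow{\sim} \pr_*\sO(q-p)$; as a backup, the compatibility of all constructions with base change (Proposition \ref{prop:symwedgegamma}\eqref{prop:symwedgegamma-1}) reduces the check pointwise to the classical Beilinson relations on $\PP^{\delta-1}_\kappa$.
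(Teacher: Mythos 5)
Your first part (the exceptional sequence property from Serre's theorem and $\Mapsp_{\PP(\sF)/X}(\sO(i),\sO(j)) \simeq \pr_*\sO(j-i)$) matches the paper and is correct. Your second part is a genuinely different route from the paper's: you build the left dual using the filtration of \S\ref{sec:filtration:fib:seq} on $\Sym^q$ of the fiber sequence $\pr^*\sF \to \sO(1) \to \Sigma L_{\PP(\sF)/X}(1)$, whose graded pieces are $\pr^*\Sym^{q-i}\sF \otimes M_i$, and then appeal to an abstract uniqueness of the dual basis. The paper instead applies Construction \ref{constr:Postnikov:Sym:prestacks} (the Koszul-type filtration $\beta_{i,n}$ with graded pieces $\Sigma^i \bigwedge^{i+1}(\pr^*\sF)\otimes\sO(n-i-1)$), and identifies each $\beta_{i,n}(\PP(\sF),\rho)$ \emph{directly} with the corresponding mutation fiber sequence by induction, using (i) the multiplication compatibility $\beta_{i,n}\otimes\sO(1)\simeq\beta_{i,n+1}$ already built into that construction, and (ii) the comparison $\pr_*\beta_{i,n}(\PP(\sF),\rho) \simeq \beta_{i,n}(X,\id_\sF)$, the latter being degenerate. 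The paper's choice of filtration buys two things at once: the graded pieces involve only nonnegative twists $\sO(n-i-1)$, so Serre controls all pushforwards without any induction on the $M_i$'s, and the multiplication structure needed for the comparison step is already supplied by Constructions \ref{constr:left:Sym} and \ref{constr:right:Sym}.

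Your route, while morally correct, has two soft spots that you would need to fill. First, the $p<q$ step requires a multiplication map $\pr^*\Sym^{q-p}\sF \otimes \sO(p) \to F^p$ and the compatibility of this map with the quotient $F^p \to \pr^*\Sym^{q-p}\sF\otimes M_p$; you assert this exists as a ``tautological multiplication,'' but \S\ref{sec:filtration:fib:seq} does not construct it (unlike the paper's Koszul-type constructions, which do come with the analogous structure maps), so you would have to run the non-abelian derived functor machinery on the classical multiplication $\Sym^{q-p}M' \otimes \Sym^p M \to \overline{F}^p$ yourself. Second, your ``backup'' of reducing to classical Beilinson relations on $\PP^{\delta-1}_\kappa$ by base change to a point is not quite available: over a geometric point, $\sF$ pulls back to $V_0\oplus\Sigma V_1$, and $\PP(V_0\oplus\Sigma V_1)$ carries a nontrivial derived structure (Example \ref{eg:prop:proj:PB}) when $V_1 \ne 0$, so the classical statement does not directly apply. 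Finally, the abstract uniqueness claim for the dual basis — that $D_{i+1}$ is characterized by properties (a) and (b) — is true but needs a short argument: properties (a) and (b) place both $D_{q+1}$ and $M_q$ in the component $\shB_q = \langle\shA_0,\ldots,\shA_{q-1}\rangle^{\perp_r}$ of the semiorthogonal decomposition $\shC_0 = \langle\shB_q,\shA_0,\ldots,\shA_{q-1}\rangle$, which is equivalent to $\Perf(X)$ via $\Mapsp(D_{q+1},-)$, and one checks $\Mapsp(D_{q+1},M_q)\simeq\Mapsp(E_q,M_q)\simeq\sO_X$ using $\cofib(E_q\to D_{q+1})\in\langle\shA_0,\ldots,\shA_{q-1}\rangle$; this is not stated in Corollary \ref{cor:dulexcseq} or Remark \ref{rmk:excseqspan} as you cite.
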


\begin{remark} From the canonical equivalences
	$$\Sym_{\PP(\sF)}^{i}(\Sigma \,L_{\PP(\sF)/X}(1)) \simeq \Sigma^i \bigwedge\nolimits_{\PP(\sF)}^i (L_{\PP(\sF)/X}(1))$$
for all $i \ge 0$, we obtain that the following sequence 
 	$$\bigwedge\nolimits_{\PP(\sF)}^{\delta-1}(L_{\PP(\sF)/X}(1)), \cdots, \bigwedge\nolimits_{\PP(\sF)}^i (L_{\PP(\sF)/X}(1)), \cdots,  L_{\PP(\sF)/X}(1), \sO$$
also forms a relative exceptional sequence on $\PP(\sF)$ over $X$.
\end{remark}

\begin{proof} It follows directly from Theorem \ref{thm:Serre:O(d)} \eqref{thm:Serre:O(d)-1i} and \eqref{thm:Serre:O(d)-1ii} that the sequence $\sO, \sO(1), \cdots, \sO(\delta-1)$ forms a relative exceptional sequence of $\PP(\sF)$ over $X$; It only remains to compute its left dual relative exceptional sequence. Let $\rho$ denote the tautological quotient $\pr^* \sF \to \sO(1)$. Then by virtue of Theorem \ref{thm:proj:Euler}, there is a canonical equivalence $\cofib(\rho) \simeq \Sigma \, L_{\PP(\sF)/X}(1)$. If we apply Construction \ref{constr:Postnikov:Sym:prestacks} to the map $\rho$, then we obtain quasi-coherent sheaves $\sG_{i,n}(\PP(\sF), \rho)$ for all integers $n \ge 0$ and $0 \le i \le n$ which fit into canonical fiber sequences
	$$\beta_{i,n}(\PP(\sF), \rho) \colon \qquad \Sigma^{i} \,\bigwedge\nolimits^{i+1} (\pr^* \sF) \otimes \sO(n-i-1)  \to \sG_{i,n}(\PP(\sF), \rho) \to \sG_{i+1,n}(\PP(\sF), \rho),$$
and for which there are canonical equivalences:
	$$\sG_{0,n}(\PP(\sF), \rho) \simeq \sO(n), \qquad \sG_{n,n}(\PP(\sF), \rho) \simeq \Sym^n (\Sigma \,  L_{\PP(\sF)/X}(1)).$$
Moreover, for any integer $n' \ge 0$, since the functor $\otimes \sO(n')$ is an exact equivalence, the canonical morphisms from Construction \ref{constr:Postnikov:Sym:prestacks},
	\begin{align}\label{eqn:prop:dual:PVdot:beta:O(1)}
	\beta_{i,n}(\PP(\sF), \rho) \otimes \sO(n') \to \beta_{i,n+n'}(\PP(\sF), \rho),
	\end{align}
are equivalences of fiber sequences. Now for each $n \ge 1$, the canonical natural transformation $\id_{\pr^* \sF} \to \rho$, which can be depicted as a commutative diagram:
	$$
	\begin{tikzcd}
		\pr^* \sF \ar{r}{\id} \ar{d}{\id} & \pr^* \sF \ar{d}{\rho} \\
		\pr^* \sF \ar{r}{\rho} & \sO(1),
	\end{tikzcd}
	$$
induces canonical morphisms of fiber sequences 
	$$\beta_{i,n}(\PP(\sF), \pr^* \id_{\sF}) \to \beta_{i,n}(\PP(\sF), \rho).$$
By virtue of Theorem \ref{thm:Serre:O(d)} \eqref{thm:Serre:O(d)-1i} and the functoriality of the fiber sequences $\beta_{i,n}$, by applying the exact functor $\pr_*$ we obtain canonical equivalences of fiber sequences:
	\begin{align}\label{eqn:prop:dual:X:beta:equiv}
		\beta_{i,n}(X,  \sF \xrightarrow{\id_\sF} \sF) \simeq \pr_* \left(\beta_{i,n}(\PP(\sF), \pr^* \xrightarrow{\rho} \sO(1)) \right).
	\end{align}
To finish the proof of the proposition, we only need to show the following assertion:
	\begin{itemize}
		\item[(*)] For for every pair of integers $n \ge 1$ and $0 \le i \le n-1$, the above fiber sequence $\beta_{i,n}(\PP(\sF), \rho)$ is canonically equivalent to the mutation fiber sequence \eqref{eqn:left:mut} for the left mutation of $\sG_{i,n}(\PP(\sV_\bullet), \rho)$ passing through $\sO(n-i-1)$.
	\end{itemize}
	
We prove $(*)$ by induction on $n$. Assume $n=1$, $i=0$, then by construction the fiber sequence $\beta_{0,1}(\PP(\sV_\bullet), \rho)$ is canonically equivalent with the shifted Euler fiber sequence 
	$$\pr^* \sF \xrightarrow{\rho} \sO(1) \to \Sigma \,  L_{\PP(\sF)/X}(1).$$
By virtue of Theorem \ref{thm:Serre:O(d)} \eqref{thm:Serre:O(d)-1i}, there is a canonical equivalence of fiber sequences
	$$\pr_*(\beta_{0,1}(\PP(\sV_\bullet), \rho)) \simeq (\sF \xrightarrow{\id} \sF \to 0)$$
which canonically identifies $\beta_{0,1}(\PP(\sV_\bullet), \rho)$ with the fiber sequence \eqref{eqn:left:mut} for the left mutation of $\sO(1)$ through $\sO$.

Assume that the assertion $(*)$ holds for an integer $n \ge 1$ and all integers $0 \le i \le n-1$. We wish to show that the assertion $(*)$ holds for $n+1$ and all integers $0 \le i \le n$. From the equivalences \eqref{eqn:prop:dual:PVdot:beta:O(1)} for the case $n'=1$, we deduce from induction hypothesis that the assertion $(*)$ holds for all $0 \le i \le n-1$; In particular, there are canonical equivalences
	$$\sG_{1, n+1}(\PP(\sF), \rho) \simeq \sG_{0,n}(\PP(\sF), \rho) \otimes \sO(1) \simeq \Sym^n(\Sigma \,  L_{\PP(\sF)/X}(1)) \otimes \sO(1).$$
It remains to prove the case where $i=n$, that is, to show that the fiber sequence
	$$\beta_{n,n+1}(\PP(\sF), \rho) \colon \quad \Sigma^n \bigwedge\nolimits^{n+1}(\pr^* \sF) \to \Sym^n(\Sigma \,  L_{\PP(\sF)/X}(1)) \otimes \sO(1) \to \Sym^{n+1}(\Sigma \,  L_{\PP(\sF)/X}(1))$$
can be canonically identified with the mutation fiber sequence \eqref{eqn:left:mut} for the left mutation of $\Sym^n(\Sigma \,  L_{\PP(\sF)/X}(1)) \otimes \sO(1)$ passing through $\sO$. Invoking the equivalence \eqref{eqn:prop:dual:X:beta:equiv}, we obtain that the fiber sequence $\pr_*(\beta_{n,n+1}(\PP(\sF), \rho))$ is canonically equivalent to
	$$\beta_{n,n+1}(X, \id_{\sF}) \colon \qquad \Sigma^n \bigwedge\nolimits^{n+1}(\sF) \xrightarrow{\sim} \sG_{n,n+1}(X, \id_{\sF}) \to 0.$$ 
From the adjunction between $\pr^*$ and $\pr_*$, we obtain from the above equivalence that the fiber sequence $\beta_{n,n+1}(\PP(\sF), \rho) $ exhibits $\Sym^{n+1}(\Sigma\, L_{\PP(\sF)/X}(1))$ as the left mutation of $\sG_{1, n+1}(\PP(\sF), \rho)$ through $\sO$. By induction, the assertion $(*)$ holds for all $n \ge 1$, $0 \le i \le n-1$.
\end{proof}

\begin{corollary}[Beilinson's relations] \label{cor:PVdot:relations} Assume we are in the same situation as Proposition \ref{prop:PG:dualexc}, then for all integers $0 \le i,j \le \delta-1$, there are canonical equivalences:
	\begin{equation}\label{eqn:cor:PVdot:relations}
	\left\{
	\begin{split}
		& \Mapsp_X \big(\sO(i), \sO(j) \big) \simeq \Sym_X^{j-i} (\sF), \\ 
		& 
		\Mapsp_X\big(\sO(i), \Sigma^j \bigwedge\nolimits_{\PP(\sF)}^j (L_{\PP(\sF)/X}(1))\big) \simeq \delta_{i,j} \cdot \sO_X, \\
		& \Mapsp_X\big(\bigwedge\nolimits_{\PP(\sF)}^i ( L_{\PP(\sF)/X}(1)), \bigwedge\nolimits_{\PP(\sF)}^j ( L_{\PP(\sF)/X}(1))\big) \simeq (\bigwedge\nolimits_X^{i-j} \sF)^\vee.
	\end{split}
	\right.
	\end{equation}
Here, $\Mapsp_X = \Mapsp_{\PP(\sF)/X}$ denotes the mapping object functor on $\PP(\sF)$ over $X$ of Construction \ref{constr:mappingobject}, $(\blank)^\vee$ denotes the dual of a perfect object in $\QCoh(X)$, the quasi-coherent sheaves $\Sym^d(\sE)$ and $\bigwedge^d(\sE)$ are understood as zero whenever $d<0$, and $\delta_{i,j}$ denotes the Kronecker delta function, that is, $\delta_{i,j} = 1$ if $i=j$ and $\delta_{i,j}=0$ if $i \ne j$.
\end{corollary}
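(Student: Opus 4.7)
The first equation will follow essentially by adjunction: since $\sO(i)$ is a line bundle whose dual is $\sO(-i)$, Example~\ref{eg:qperf:mappingobject:dual} gives $\Mapsp_X(\sO(i), \sO(j)) \simeq \pr_*(\sO(j-i))$, and the range $0 \le i,j \le \delta-1$ forces $-\delta+1 \le j-i \le \delta-1$, which is precisely where Theorem~\ref{thm:Serre:O(d)}\,\eqref{thm:Serre:O(d)-1i} and~\eqref{thm:Serre:O(d)-1ii} evaluate $\pr_*(\sO(d))$ as $\Sym_X^d\sF$ for $d \ge 0$ and as $0$ otherwise. The second equation is likewise a direct consequence of Proposition~\ref{prop:PG:dualexc} combined with Corollary~\ref{cor:dulexcseq}: setting $E_{k+1} = \sO(k)$ and $D_{k+1} = \Sigma^k \bigwedge^k L_{\PP(\sF)/X}(1)$ in the indexing of Definition~\ref{def:dualrelexc}, Proposition~\ref{prop:PG:dualexc} identifies $D_\bullet$ as the left dual of $E_\bullet$ (its reversed listing in the statement of that proposition corresponding to the convention of Corollary~\ref{cor:dulexcseq}), after which Corollary~\ref{cor:dulexcseq} furnishes the Kronecker pairing directly.

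The content of the third equation is concentrated in the case $i > j$, since $i = j$ yields $\sO_X$ (each $D_{k+1}$ being relative exceptional, so its self-mapping object is $\sO_X$) and $i < j$ yields zero (because $(D_n, \ldots, D_1)$ is itself a relative exceptional sequence by Corollary~\ref{cor:dulexcseq}, forcing the mapping object to vanish for $i < j$). For $i > j$, I plan to argue by induction on $j$ using the exterior-power filtration of \S\ref{sec:filtration:fib:seq} applied to the Euler fiber sequence $L_{\PP(\sF)/X}(1) \to \pr^*\sF \to \sO(1)$, which produces, for each $n \ge 1$, a canonical fiber sequence
\[
\bigwedge\nolimits^n L(1) \to \pr^*\bigwedge\nolimits^n \sF \to \bigwedge\nolimits^{n-1} L(1) \otimes \sO(1).
\]
Applying $\Mapsp_X(\bigwedge^i L(1), \blank)$ to this for $n = j$ converts the middle term, via the projection formula, into $(\bigwedge^i\sF)^\vee \otimes \bigwedge^j\sF$ once the base case $\Mapsp_X(\bigwedge^i L(1), \sO) \simeq (\bigwedge^i \sF)^\vee$ is known; this base case I would establish either by Grothendieck-Serre duality (Example~\ref{eg:qperf:mappingobject:dual}) together with equation~(2) and the expression $\omega_\pr \simeq \Sigma^{\delta-1}\pr^*\det\sF \otimes \sO(-\delta)$ from Theorem~\ref{thm:Serre:O(d)}\,\eqref{thm:Serre:O(d)-1iii}, or by dualising the Koszul-type resolutions $\sG_{i,n}(\PP(\sF), \rho)$ constructed in the proof of Proposition~\ref{prop:PG:dualexc} and invoking Theorem~\ref{thm:Serre:O(d)} term by term.

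The principal obstacle will be the bookkeeping of the twists by $\sO(\pm 1)$ that accumulate in the recursive step: each application of the fiber sequence above to lower $j$ produces a term $\Mapsp_X(\bigwedge^i L(1), \bigwedge^{j-1}L(1) \otimes \sO(1))$, equivalently $\Mapsp_X(\bigwedge^i L(1) \otimes \sO(-1), \bigwedge^{j-1}L(1))$, so one must simultaneously use a companion filtration of $\bigwedge^i L(1)$ to close the recursion. A cleaner alternative is to substitute the $\beta_{i,n}(\PP(\sF), \rho)$ resolutions of $\Sigma^n\bigwedge^n L(1)$ from the proof of Proposition~\ref{prop:PG:dualexc} directly into $\Mapsp_X(\bigwedge^i L(1), \bigwedge^j L(1))$ and invoke Theorem~\ref{thm:Serre:O(d)} term by term; the resulting expression becomes an alternating combination of $\Sym$'s of $\sF$ and Eagon--Northcott-type contributions (Remark~\ref{rmk:ENcomplexes}) that should collapse to $(\bigwedge^{i-j}\sF)^\vee$ via a $\Sym$/$\bigwedge$ Koszul-duality identity, which is the derived incarnation of Beilinson's original computation on projective space.
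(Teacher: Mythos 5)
Your treatment of the first two relations matches the paper exactly: adjunction plus Theorem~\ref{thm:Serre:O(d)} for the first, and Proposition~\ref{prop:PG:dualexc} with Corollary~\ref{cor:dulexcseq} for the second.

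For the third relation your ideas are close, but you stop just short of the step that makes the argument close. You correctly identify the fiber sequence $\bigwedge^n L(1) \to \pr^*\bigwedge^n\sF \to \bigwedge^{n-1}L(1)\otimes\sO(1)$ (this is $\beta_{n-1,n}(\PP(\sF),\rho)$ up to rotation and suspension), and you correctly diagnose the bookkeeping difficulty: applying $\Mapsp_X(\bigwedge^i L(1),\blank)$ for a fixed $i$ produces the stray twist $\Mapsp_X(\bigwedge^i L(1)\otimes\sO(-1), \bigwedge^{j-1}L(1))$, and you sense that a ``companion filtration'' is needed. The paper carries this out as a literal diagonal reduction $(i,j)\to(i-1,j-1)$, and the hinge is a vanishing you do not isolate: normalizing $\sR = L_{\PP(\sF)/X}(1)$, one first shows
\[
\Mapsp_X\big(\Sym^{i-1}(\Sigma\,\sR)\otimes\sO(1),\,\sO\big) \simeq 0,
\]
which follows because $\Sym^{i-1}(\Sigma\,\sR)$ lies in the stable subcategory generated by $\pr^*\Perf(X)\otimes\sO, \ldots, \pr^*\Perf(X)\otimes\sO(i-1)$ (Remark~\ref{rmk:excseqspan}) while $\Mapsp_X(\sO(k),\sO(-1))\simeq 0$ for $0\le k\le i-1$ by the generalized Serre theorem. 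With this in hand the induction goes: apply $\Mapsp_X(\blank,\Sym^j(\Sigma\,\sR))$ to the filtration sequence for $i$ (the term $\pr^*\Sym^i_X(\Sigma\sF)$ dies because $\Mapsp_X(\sO,\Sym^j(\Sigma\sR))\simeq 0$ for $j\ge 1$, by the second Beilinson relation), reducing to $\Mapsp_X(\Sym^{i-1}(\Sigma\sR)\otimes\sO(1),\Sym^j(\Sigma\sR))$; then apply $\Mapsp_X(\Sym^{i-1}(\Sigma\sR)\otimes\sO(1),\blank)$ to the filtration sequence for $j$ (whose third term dies by the vanishing lemma above), reducing to $\Mapsp_X(\Sym^{i-1}(\Sigma\sR)\otimes\sO(1),\Sym^{j-1}(\Sigma\sR)\otimes\sO(1))$, at which point the $\sO(1)$'s cancel and the inductive hypothesis applies. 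The $j=0$ base within each step follows from the same vanishing plus Theorem~\ref{thm:Serre:O(d)}.

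Your proposed ``cleaner alternative'' --- expanding both arguments into Koszul-type resolutions and invoking Serre's theorem term by term --- is not actually simpler here. Since $\delta\ge 1$, all the Eagon--Northcott-type contributions you mention are absent (they only appear when $\rk\sF\le 0$; cf.\ Theorem~\ref{thm:Serre:O(d)}\eqref{thm:Serre:O(d)-2} and Remark~\ref{rmk:ENcomplexes}), so the collapse you invoke would just re-derive the diagonal cancellation in a less structured way, without the benefit of the vanishing lemma that the paper's induction makes explicit. Likewise, your proposed Grothendieck--Serre duality route to the base case $\Mapsp_X(\bigwedge^i L(1),\sO)\simeq(\bigwedge^i\sF)^\vee$ would require evaluating $\Mapsp_X(\sO(\delta),\bigwedge^i L(1))$, which sits outside the range $[0,\delta-1]$ covered by the dual exceptional sequence relations, so it does not reduce as directly as you suggest; the paper instead extracts this identity as the $j=0$ instance of the same fiber-sequence argument.
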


\begin{proof} For simplicity of notations, in this proof, we set 
	$$\sR =  L_{\PP(\sF)/X} \otimes \sO(1)$$
and omit the subscript $\PP(\sF)$ in all the expressions of the form $\Sym_{\PP(\sF)}^i(\sE)$.

The first relation of \eqref{eqn:cor:PVdot:relations} follows from Theorem \ref{thm:Serre:O(d)} \eqref{thm:Serre:O(d)-1i}. The second relation of \eqref{eqn:cor:PVdot:relations} is a consequence of Proposition \ref{prop:PG:dualexc} and Corollary \ref{cor:dulexcseq}. To prove the third relation of \eqref{eqn:cor:PVdot:relations}, we only need to show that, for all $0 \le i,j \le \delta-1$, there are canonical equivalences
	\begin{equation}\label{eqn:cor:PVdot:relation:Sym}
	\Mapsp_X(\Sym^i(\Sigma \, \sR), \Sym^j(\Sigma \, \sR)) \simeq (\Sym_X^{i-j} (\Sigma \, \sF))^\vee
	\end{equation}
We prove \eqref{eqn:cor:PVdot:relation:Sym} by induction on $i$. The base case $i=0$ follows from Proposition \ref{prop:PG:dualexc} and Corollary \ref{cor:dulexcseq}. Now assume that \eqref{eqn:cor:PVdot:relation:Sym} holds for the case $(i-1)$, where $i \ge 1$ (thus we may assume $\delta \ge 2$). We wish to prove \eqref{eqn:cor:PVdot:relation:Sym} for the case $i$. By virtue of Remark \ref{rmk:excseqspan} and Proposition \ref{prop:PG:dualexc}, $\Sym^{i-1}(\Sigma \, \sR)$ belongs to the full subcategory
	$$\big\langle \pr^* (\Perf(X)) \otimes \sO, \cdots, \pr^*( \Perf(X)) \otimes \sO(i-1) \big \rangle.$$
From the equivalences $\Mapsp_X(\sO, \sO(-1)) \simeq \cdots \simeq \Mapsp_X(\sO(i-1), \sO(-1)) \simeq 0$, we obtain
	$$\Mapsp_X(\Sym^{i-1}(\Sigma \, \sR) \otimes \sO(1), \sO) \simeq \Mapsp_X(\Sym^{i-1}(\Sigma \, \sR), \sO(-1)) \simeq 0.$$
The fiber sequence $\beta_{i-1, i}(\PP(\sF), \rho)$ of the proof of Proposition \ref{prop:PG:dualexc} induces a fiber sequence
	\begin{equation}\label{eqn:cor:PVdot:relation:fib}
	\Sym^{i-1}(\Sigma \, \sR) \otimes \sO(1) \to \Sym^{i} (\Sigma \, \sR) \to \pr^* (\Sym_X^{i} (\Sigma \, \sF )).
	\end{equation}
By applying the exact functor $\Mapsp_X(\blank, \sO)$ to \eqref{eqn:cor:PVdot:relation:fib}, we obtain a canonical equivalence
	$$\Mapsp_X(\Sym^{i} (\Sigma \, \sR), \sO) \simeq ( \Sym_X^{i} (\Sigma \, \sF ))^\vee.$$	
This proves \eqref{eqn:cor:PVdot:relation:Sym} in the case where $j=0$. Assume $j \ge 1$, then $\Mapsp_X(\sO, \Sym^j(\Sigma \, \sR)) \simeq 0$. By applying $\Mapsp_X(\blank, \Sym^j(\Sigma \, \sR))$ to \eqref{eqn:cor:PVdot:relation:fib}, we obtain a canonical equivalence
	$$\Mapsp_X(\Sym^i(\Sigma \, \sR), \Sym^j(\Sigma \, \sR)) \simeq \Mapsp_X(\Sym^{i-1}(\Sigma \, \sR) \otimes \sO(1) ,  \Sym^j(\Sigma \, \sR)).$$
By virtue of the vanishing
	\begin{align*}
	&\Mapsp_X(\Sym^{i-1}(\Sigma \, \sR) \otimes \sO(1),  \pr^* (\Sym_X^{j} (\Sigma \, \sF )))  \\
	\simeq \,& \Sym_X^{j} (\Sigma \, \sF ) \otimes_{\sO_X} \Mapsp_X(\Sym^{i-1}(\Sigma \, \sR) \otimes \sO(1), \sO) \simeq 0,
	\end{align*}
if we apply the exact functor $\Mapsp_X(\Sym^{i-1}(\Sigma \, \sR) \otimes \sO(1), \blank)$ to the fiber sequence	
	$$\Sym^{j-1}(\Sigma \, \sR) \otimes \sO(1) \to \Sym^{j} (\Sigma \, \sR) \to \pr^* (\Sym_X^{j} (\Sigma \, \sF ))$$
(which is \eqref{eqn:cor:PVdot:relation:fib} but with $i$ replaced by $j$), we obtain canonical equivalences
	\begin{align*}
	&\Mapsp_X(\Sym^{i-1}(\Sigma \, \sR) \otimes \sO(1) ,  \Sym^j(\Sigma \, \sR)) \\
	 \simeq \, &\Mapsp_X(\Sym^{i-1}(\Sigma \, \sR) \otimes \sO(1) ,  \Sym^{j-1}(\Sigma \, \sR) \otimes \sO(1)) \\
	\simeq \, &\Mapsp_X(\Sym^{i-1}(\Sigma \, \sR),  \Sym^{j-1}(\Sigma \, \sR)). 
	\end{align*}
By induction, we obtain that \eqref{eqn:cor:PVdot:relation:Sym} holds for $i$ and all $1 \le j \le \delta-1$. 
\end{proof}

\section{Semiorthogonal decompositions}
For a prestack $Y$, we let $\Dqc(Y)$ denote the homotopy category of the $\infty$-category $\QCoh(Y)$. Since $\QCoh(Y)$ is a stable $\infty$-category, $\Dqc(Y)$ is a triangulated category. The category $\Dqc(Y)$ is often referred to as the derived category of quasi-coherent sheaves on $Y$.

Let $X$ be a (classical) Cohen--Macaulay scheme, and let $\sF$ be a discrete finite type quasi-coherent sheaf on $X$ of homological dimension $\le 1$ and rank $\delta \ge 1$. 
Assume furthermore that condition \eqref{eqn:condition:PK} of \S \ref{sec:classical} is satisfied (so that $\PP(\sF) \simeq \PP_\cl(\sF)$), then it follows immediate from Proposition \ref{prop:PG:dualexc} of last section that, for any $i \in \ZZ$, the functor
	$$\Psi_i =\pr^*(\blank) \otimes \sO(i) \colon \Dqc(X) \to \Dqc(\PP(\sF))$$
is fully faithful, and for any given integer $k \in \ZZ$, the essential images of $\Psi_i$ induced from the relative exceptional sequence $\sO(k+1), \ldots, \sO(k+\delta)$ of $\PP(\sF)$ over $X$,
	$$\Dqc(X) \otimes \sO(k+1), \cdots, \Dqc(X) \otimes \sO(k+\delta),$$
form a semiorthogonal sequence which spans a full triangulated subcategory of $\Dqc(\PP(\sF))$.

\begin{enumerate}[leftmargin=*]
	\item (Orlov's theorem) If $\sF$ is locally free of rank $\delta \ge 1$, then a theorem of Orlov \cite{Orlov92} implies that the above semiorthogonal sequence spans the whole category $\Dqc(\PP(\sF))$. In other words, there is a semiorthogonal decomposition
		$$\Dqc(\PP(\sF)) = \big\langle \Dqc(X) \otimes \sO(k+1), \cdots, \Dqc(X) \otimes \sO(k+\delta) \big\rangle.$$
	(See also \cite[Theorem 6.7]{BS},  \cite[Theorem 3.3]{Kh20}, \cite[Theorem B.3]{J21}.)
	\item (The projectivization formula) In general, if $\sF$ is not locally free, the above semiorthogonal sequence no longer spans the whole category $\Dqc(\PP(\sF))$. In \cite{JL18}, we show that the ``correction" is precisely provided by the category of another projectivization $\PP_\cl(\sExt^1_{\sO_X}(\sF, \sO_X))$, whose support is the non-locally-free locus of $\sF$. More precisely, under the above condition \eqref{eqn:condition:PK} (so that $\PP(\sF) \simeq \PP_\cl(\sF)$ and $\PP(\sExt^1_{\sO_X}(\sF, \sO_X)) \simeq \PP_\cl(\sExt^1_{\sO_X}(\sF, \sO_X))$), there is a semiorthogonal decomposition
		$$\Dqc(\PP(\sF)) = \big\langle \Dqc(\PP(\sExt^1_{\sO_X}(\sF, \sO_X))), ~ \Dqc(X) \otimes \sO(1), \cdots, \Dqc(X) \otimes \sO(\delta) \big\rangle.$$

\end{enumerate}

\begin{remark} The projectivization formula in the above form can be found in \cite[Theorem 3.4]{JL18}, \cite[Theorem 6.16]{J21}; see also \cite[Theorem 5.5]{Kuz07} and \cite[Theorem 4.6.11]{Tod3}. 
\end{remark}

In this section, we prove a generalization of the above projectivization formula (Theorem \ref{thm:structural}), which completely describes the $\infty$-category $\QCoh(\PP(\sF))$ for a quasi-coherent sheaf $\sF$ of perfect-amplitude contained in $[0,1]$ and rank $\delta \ge 0$ over a prestack $X$, in terms of the $\infty$-categories $\QCoh(X)$ and $\QCoh(\PP(\Sigma \sF^\vee))$, where $\Sigma \sF^\vee$ is the ``shifted dual" of $\sF$.

Theorem \ref{thm:structural} has many new applications even in the classical situation, and we consider such examples in \S \ref{sec:classical}.  For the purpose of applications, in \S \ref{sec:classical}, we first provide criteria for the derived projectivizations and derived symmetric products to be classical (Lemmata  \ref{lem:criterion:P:classical} and \ref{lem:criterion:Sym:classical}). Then we apply the results in each situation; we only the list lower dimensional examples in each situation, but all these results have analogues in higher dimensions.

\subsection{The Fourier--Mukai functors and semiorthogonal decompositions}
In this subsection, we present the main result of this section 

\begin{notation}\label{notation:SOD:setup}
Let $X$ be a prestack, and let $\sF$ be a quasi-coherent sheaf of perfect-amplitude contained in $[0,1]$ on $X$ of rank $\delta \ge 0$. We consider the following derived projectivizations
	$$\pr_+ \colon \PP_+ = \PP(\sF) \to X \qquad \pr_- \colon \PP_- = \PP(\Sigma\,\sF^\vee) \to X.$$
We let $\rho_+ \colon \pr_+^* \sF \to \sO_+(1)$ and $\rho_- \colon \pr_-^*( \Sigma \sF^\vee) \to \sO_-(1)$ denote the tautological quotient maps on $\PP_+$ and $\PP_-$, respectively. We denote the (derived) fiber product of $\PP_\pm$ over $X$ by
	$$Z = \PP(\sF) \times_X \PP(\Sigma \sF^\vee)$$
and let $\pr_{Z,+} \colon Z \to \PP_+$ and $\pr_{Z,-} \colon Z \to \PP_-$ denote the respective projections. For a pair of quasi-coherent sheaves $\sM_+ \in \QCoh(\PP_+)$ and $\sM_- \in \QCoh(\PP_-)$, we let $\sM_+ \boxtimes_X \sM_-$ denote the external tensor product $\pr_{Z,+}^* \sM_+ \otimes_{\sO_Z} \pr_{Z,-}^* \sM_- \in \QCoh(Z)$. 
\end{notation}

\begin{lemma}\label{lem:dproj:incidence} In the situation of Notation \ref{notation:SOD:setup}, the following constructions of a pair $(\widehat{Z}, \iota_{\widehat{Z}})$, where $\widehat{Z}$ is a prestack and  $\iota_{\widehat{Z}} \colon \widehat{Z} \to Z$ is a quasi-smooth closed immersion, are equivalent:
	\begin{enumerate}
		\item We let $\widehat{Z}$ denote the derived projectivization $\PP_{\PP_+} (\Sigma \, \fib(\rho_+)^\vee)$, and let $\iota_{\widehat{Z}} \colon \widehat{Z} \to Z$ denote the closed immersion induced by the surjective map $\Sigma (\pr_+^* \sF)^\vee \to \Sigma \, \fib(\rho_+)^\vee$ on $\PP_+$ (Corollary \ref{cor:proj:closedimmersion}), where $Z$ is canonically identified with $\PP_{\PP_+}(\Sigma (\pr_+^* \sF)^\vee)$.
		\item We let $\widehat{Z}$ denote the derived projectivization $\PP_{\PP_-}(\Sigma \, \fib(\rho_-)^\vee)$, and let $\iota_{\widehat{Z}} \colon \widehat{Z} \to Z$ denote the closed immersion induced by the surjective map $ \pr_-^* \sF \to \Sigma \, \fib(\rho_-)^\vee$ on $\PP_-$ (Corollary \ref{cor:proj:closedimmersion}), where $Z$ is canonically identified with $\PP_{\PP_-}(\pr_-^* \sF)$.
		\item We let $\widehat{Z}$ together with the closed immersion $\iota_{\widehat{Z}} \colon \widehat{Z} \to Z$ be defined by the following pullback square:
	\begin{equation*} 
	\begin{tikzcd}
		\widehat{Z}\ar{d}[swap]{\iota_{\widehat{Z}}} \ar{r}{\iota_{\widehat{Z}}} & Z \ar{d}{i_{\mathbf{1}}} \\
		Z \ar{r}{i_{\mathbf{0}}} & \VV_Z(\Sigma \,\sO_+(-1) \boxtimes_X \sO_-(-1)),
	\end{tikzcd}
	\end{equation*}
where $i_{\mathbf{0}}$ is the section of $\VV_Z(\Sigma \, \sO_+(-1) \boxtimes_X \sO_-(-1)) \to Z$ that classifies the zero map, and $i_{\mathbf{1}}$ is the section of $\VV_Z(\Sigma \, \sO_+(-1) \boxtimes_X \sO_-(-1)) \to Z$ that classifies the composition
	$$\Sigma \, \sO_+(-1) \boxtimes_X \sO_-(-1) \xrightarrow{\Sigma \, \rho_+^\vee \boxtimes \rho_-^\vee} \pr_+^*(\sF^\vee) \boxtimes_X \pr_-^*(\sF) \simeq (\pr_{Z}^*\sF)^\vee \otimes_{\sO_Z} (\pr_Z^*\sF) \xrightarrow{\rm ev} \sO_Z$$
(here, $\pr_{Z} \colon Z \to X$ is the natural projection, and ${\rm ev}$ is the evaluation map).
	\end{enumerate}
\end{lemma}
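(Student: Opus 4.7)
The plan is to prove the three constructions are equivalent by applying Proposition \ref{prop:proj:PB} (the projectivization pullback square) twice: once on $\PP_+$ to reduce (1) to (3), and once on $\PP_-$ to reduce (2) to (3). Before doing so I would check well-definedness: since $\sF$ has perfect-amplitude in $[0,1]$, so do $\Sigma\sF^\vee$, $\fib(\rho_+)$, $\fib(\rho_-)$, $\Sigma\fib(\rho_\pm)^\vee$, and the shifted line bundles $\Sigma\sO_\pm(-1)$; thus all fiber sequences in sight consist of connective sheaves. The relevant quotient maps $\Sigma(\pr_+^*\sF)^\vee \to \Sigma\fib(\rho_+)^\vee$ and $\pr_-^*\sF \to \Sigma\fib(\rho_-)^\vee$ arise by $\Sigma$-shifting the dual of the Euler-type fiber sequences $\fib(\rho_\pm) \to \pr_\pm^*(\bullet) \to \sO_\pm(1)$, and the cofibers of the shifted duals are $\Sigma^2\sO_\pm(-1)$, which have vanishing $\pi_0$; hence the quotient maps are surjective on $\pi_0$, so Corollary \ref{cor:proj:closedimmersion} produces the desired closed immersions into $\PP_{\PP_\pm}(\pr_\pm^*(\bullet))\simeq Z$ (using Proposition \ref{prop:proj-4,5}\eqref{prop:proj-4} to identify base-change of derived projectivizations).

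For (1) $\Leftrightarrow$ (3), I would apply Proposition \ref{prop:proj:PB} to the fiber sequence on $\PP_+$
\[
\Sigma\sO_+(-1) \xrightarrow{\Sigma\rho_+^\vee} \pr_+^*(\Sigma\sF^\vee) \to \Sigma\fib(\rho_+)^\vee.
\]
Proposition \ref{prop:proj:PB} then yields a pullback square whose top-left corner is $\PP_{\PP_+}(\Sigma\fib(\rho_+)^\vee)$, whose other three corners are copies of $Z$ and $\VV_Z(\pr_{Z,+}^*(\Sigma\sO_+(-1)) \otimes \sO_{Z,-}(-1))$, and whose non-zero section $i_{\varphi'}$ classifies the cosection obtained by composing $\pr_{Z,+}^*(\Sigma\rho_+^\vee) \colon \Sigma\sO_+(-1)\boxtimes_X\sO_- \to \pr_Z^*(\Sigma\sF^\vee)$ with $\rho_-$ and twisting by $\sO_-(-1)$. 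Proposition \ref{prop:proj-4,5}\eqref{prop:proj-5} and the identification of universal line bundles give $\pr_{Z,+}^*\sO_+(-1)\otimes \sO_{-}(-1)\simeq \sO_+(-1)\boxtimes_X\sO_-(-1)$, which matches the affine cone appearing in (3).

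The main obstacle is the symmetric identification of the cosection in Step 2 with the one in construction (3). I would argue this as follows: the cosection $\rho_- \otimes \sO_-(-1) \colon \pr_-^*(\Sigma\sF^\vee) \otimes \sO_-(-1) \to \sO$ on $\PP_-$ corresponds by duality in $\Perf(\PP_-)$ to $\Sigma\rho_-^\vee \colon \Sigma\sO_-(-1) \to \pr_-^*\sF$. Therefore the full cosection from Step 2 factors as
\[
\Sigma\sO_+(-1)\boxtimes_X\sO_-(-1) = \sO_+(-1)\boxtimes_X \Sigma\sO_-(-1) \xrightarrow{\rho_+^\vee \boxtimes \Sigma\rho_-^\vee} \pr_+^*\sF^\vee \boxtimes_X \pr_-^*\sF \xrightarrow{\mathrm{ev}} \sO_Z,
\]
using the bifunctoriality of $\boxtimes_X$ and the defining adjunction property of the evaluation map; this is precisely the cosection in (3). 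The required compatibility reduces to the identity that a map $f\boxtimes g$ followed by evaluation can be evaluated by composing in either order ($f$ first then $g$, or vice versa), which holds in any symmetric monoidal $\infty$-category.

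For (2) $\Leftrightarrow$ (3), I would run the symmetric argument using the fiber sequence $\Sigma\sO_-(-1) \xrightarrow{\Sigma\rho_-^\vee} \pr_-^*\sF \to \Sigma\fib(\rho_-)^\vee$ on $\PP_-$ and Proposition \ref{prop:proj:PB}. The resulting cosection factors through $\rho_-^\vee \boxtimes \Sigma\rho_+^\vee$, which after swapping tensor factors agrees with the one in (3). Composing the two equivalences with (3) gives an equivalence between (1) and (2), and all three descriptions of $(\widehat{Z}, \iota_{\widehat{Z}})$ agree canonically. As a bonus, since $\Sigma\sO_+(-1)\boxtimes\sO_-(-1)$ has perfect-amplitude in $[1,1]$, Proposition \ref{prop:affinecone}\eqref{prop:affinecone-6} applied to the zero-locus description shows that $\iota_{\widehat{Z}}$ is automatically a quasi-smooth closed immersion.
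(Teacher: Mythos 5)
Your proof is correct and follows essentially the same strategy as the paper's: identify the appropriate fiber sequences on $\PP_\pm$, apply Proposition \ref{prop:proj:PB}, and then match the resulting cosection with the one in construction (3) via a duality-compatibility identity. The one place I would sharpen your wording: the "main obstacle" you identify is handled in the paper by a precise statement — for a map $f\colon C \to D$ between dualizable objects, the two composites $e_C \circ (f^\vee \otimes \id)$ and $e_D \circ (\id \otimes f)$ from $D^\vee \otimes C$ to the unit are canonically homotopic — whereas your phrasing "a map $f\boxtimes g$ followed by evaluation can be evaluated by composing in either order" reads more like the interchange law and somewhat obscures that it is this duality identity (applied once with $f = \rho_-^\vee$ and once with $f = \rho_+^\vee$) that is doing the work. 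The extra checks you supply (connectivity of the terms in the fiber sequences, surjectivity on $\pi_0$ of the quotient maps, quasi-smoothness of $\iota_{\widehat{Z}}$) are correct and useful sanity checks that the paper leaves implicit.
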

 
 \begin{proof} Notice that there are canonical fiber sequences
	$$\Sigma \sO_+(-1) \to \Sigma (\pr_+^* \sF)^\vee \to \Sigma \, \fib(\rho_+)^\vee \quad \text{and} \quad \Sigma \sO_-(-1) \to  \pr_-^* \sF \to \Sigma \, \fib(\rho_-)^\vee,$$
on $\PP_+ = \PP(\sF)$ and $\PP_- = \PP(\Sigma \, \sF^\vee)$, respectively. Therefore, both the equivalences of the constructions ``$(1) \iff (3)$" and ``$(2) \iff (3)$" follow from Proposition \ref{prop:proj:PB} and the following general fact: let $\shC$ be a symmetric monoidal $\infty$-category with unit $1 \in \shC$. Let $C, D \in \shC$ be dualizable objects with (right) duals $C^\vee$ and $D^\vee$, and evaluation maps $e_C \colon C^\vee \otimes C \to 1$ and $e_D \colon D^\vee \otimes D \to 1$, respectively. Let $f \colon C \to D$ be a map in $\shC$, and let $f^\vee \colon D^\vee \to C^\vee$ be the dual map. Then the following two composite maps
		\begin{align*}
			D^\vee \otimes C \xrightarrow{f^\vee \otimes \id} C^\vee \otimes C \xrightarrow{e_C} 1 \quad \text{and} \quad
			D^\vee \otimes C \xrightarrow{\id \otimes f} D^\vee \otimes D \xrightarrow{e_D} 1
		\end{align*}
	are canonically homotopy equivalent to each other.
 \end{proof}

 \begin{notation} \label{notation:hatZ} In the situation of Notation \ref{notation:SOD:setup}, we let $(\widehat{Z}, \iota_{\widehat{Z}} \colon \widehat{Z} \hookrightarrow Z)$ denote the closed sub-prestack of $Z$ constructed in Lemma \ref{lem:dproj:incidence}, and refer to is as the {\em universal incidence locus} for $\PP(\sF)$ and $\PP(\Sigma \, \sF^\vee)$ over $X$. We let $r_+  \colon \widehat{Z} \to \PP_+=\PP(\sF)$ and $r_- \colon \widehat{Z} \to \PP_- =\PP(\Sigma \, \sF^\vee)$ denote the respective projections, and we will refer to the following commutative diagram
	\begin{equation} \label{diagram:Corr}
	\begin{tikzcd}[row sep= 2.5 em, column sep = 4 em]
		\widehat{Z} \ar{rd}{\widehat{\pr}}\ar{r}{r_+}  \ar{d}[swap]{r_-} &  \PP_{+} \ar{d}{\pr_+}\\
		\PP_{-} \ar{r}{\pr_-} & X
	\end{tikzcd}
	\end{equation}
as the {\em incidence diagram} (for the derived projectivizations $\PP(\sF)$ and $\PP(\Sigma \, \sF^\vee)$ over $X$).
 \end{notation}
 
The main result of this section is the following:

\begin{theorem}\label{thm:structural} 
Let $X$ be a prestack and $\sF$ a quasi-coherent sheaf of perfect-amplitude contained in $[0,1]$ on $X$ of rank $\delta \ge 0$, and we let everything be defined as in Notations \ref{notation:SOD:setup} and \ref{notation:hatZ}. Then the exact functor induced by the incidence diagram \eqref{diagram:Corr},
	$$\Phi = r_{+,*} \, r_-^* \colon \QCoh(\PP(\Sigma \, \sF^\vee)) \to \QCoh(\PP(\sF)),$$
and its restriction to the subcategory of perfect objects,
	$$\Phi|\Perf (\PP(\Sigma \, \sF^\vee))  \colon \Perf(\PP(\Sigma \, \sF^\vee)) \to \Perf(\PP(\sF)),$$
are fully faithful. If $\delta = 0$, then $\Phi$ and $\Phi|\Perf (\PP(\Sigma \, \sF^\vee)) $ induce equivalences of $\infty$-categories
	$$\QCoh(\PP(\Sigma \, \sF^\vee)) \simeq \QCoh(\PP(\sF)) \qquad  \Perf(\PP(\Sigma \, \sF^\vee)) \simeq \Perf(\PP(\sF)).$$
If $\delta \ge 1$, then for any  $i \in \ZZ$, the functor 
	$$\Psi_i = \pr_{\PP(\sF)}^*(\blank) \otimes \sO_{\PP(\sF)}(i) \colon \QCoh(X) \to \QCoh(\PP(\sF))$$
and its restriction to the subcategory of perfect object,
	$$\Psi_i |\Perf(X) =  \pr_{\PP(\sF)}^*(\blank) \otimes \sO_{\PP(\sF)}(i) \colon \Perf(X) \to \Perf(\PP(\sF)),$$
are fully faithful. Furthermore, there are induced semiorthogonal decompositions
 	\begin{align*}
		\QCoh(\PP(\sF)) & = \big\langle \Phi (\QCoh(\PP(\Sigma \, \sF^\vee))), ~ \Psi_1(\QCoh(X)), \ldots, \Psi_{\delta}(\QCoh(X)) \big\rangle \\
		\Perf(\PP(\sF)) &= \big\langle \Phi (\Perf(\PP(\Sigma \, \sF^\vee))), ~ \Psi_1(\Perf(X)), \ldots, \Psi_{\delta}(\Perf(X)) \big\rangle.
	\end{align*}
 \end{theorem}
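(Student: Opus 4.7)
The plan is to decompose the proof into four steps: (I) fully faithfulness of the $\Psi_i$ and their mutual semiorthogonality; (II) semiorthogonality between $\Phi(\QCoh(\PP_-))$ and each $\Psi_i(\QCoh(X))$; (III) fully faithfulness of $\Phi$; and (IV) generation of $\QCoh(\PP(\sF))$ by the semiorthogonal pieces. Steps (I) and (II) should be routine consequences of the generalized Serre's theorem, while (III) combined with (IV) will be the main obstacle.

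For step (I), the projection formula gives
$$\Mapsp_{\PP_+/X}(\Psi_i\sA, \Psi_j\sB) \simeq \Mapsp_X(\sA, \sB) \otimes_{\sO_X} \pr_{+,*}\sO_+(j-i),$$
and Theorem \ref{thm:Serre:O(d)} supplies $\pr_{+,*}\sO_+ \simeq \sO_X$ (giving fully faithfulness) together with $\pr_{+,*}\sO_+(j-i) \simeq 0$ for $1\le j<i\le\delta$ (giving semiorthogonality). For step (II), Lemma \ref{lem:dproj:incidence} identifies $\widehat{Z} \simeq \PP_{\PP_-}(\Sigma\fib(\rho_-)^\vee)$ with $r_+^*\sO_+(1) \simeq \sO_{\widehat Z/\PP_-}(1)$; adjunction and the projection formula then reduce semiorthogonality to the vanishing of $r_{-,*}\sO_{\widehat Z/\PP_-}(-i)$ for $1\le i\le\delta$. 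Since $\Sigma\fib(\rho_-)^\vee$ has perfect-amplitude $[0,1]$ and rank $\delta+1 \ge 1$, this vanishing follows from Theorem \ref{thm:Serre:O(d)}\eqref{thm:Serre:O(d)-1ii}.

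The principal obstacle is step (III). Since $r_+$ is proper, quasi-smooth, and of finite Tor-amplitude (by Lemma \ref{lem:dproj:incidence} together with Proposition \ref{prop:proj:finite} and Theorem \ref{thm:proj:Euler}), $\Phi$ admits a right adjoint $\Phi^R = r_{-,*} r_+^!$, and fully faithfulness amounts to showing that the unit $\id \to \Phi^R\Phi$ is an equivalence. My plan is to apply proper base change along the Cartesian square $\widehat{Z} \times_{\PP_+} \widehat{Z} \rightrightarrows \widehat{Z}$ to rewrite $r_+^* r_{+,*}$ as a pushforward along the first projection, and then exploit the observation that $\widehat{Z} \times_{\PP_+} \widehat{Z}$ is itself a derived projectivization over each factor, concretely $\PP_{\widehat Z}(r_+^*\Sigma\fib(\rho_+)^\vee)$. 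Combined with the relative dualizing sheaf formula $\omega_{r_+} \simeq \Sigma^{-\delta} r_+^*(\det\Sigma\fib(\rho_+)^\vee) \otimes r_-^*\sO_-(\delta-1)$ coming from Theorem \ref{thm:Serre:O(d)}, and a fiberwise application of generalized Serre's theorem to this auxiliary projectivization, I expect the composition $r_{-,*}r_+^! r_{+,*} r_-^*$ to collapse onto $\id_{\QCoh(\PP_-)}$.

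Step (IV), generation, will require a ``resolution of the diagonal'' on $\PP(\sF) \times_X \PP(\sF)$. The natural candidate is assembled by iterating the Euler fiber sequence (Theorem \ref{thm:proj:Euler}) together with the mutation identities from Proposition \ref{prop:PG:dualexc} and Corollary \ref{cor:PVdot:relations}, producing a Koszul-type complex whose terms involve external tensor products of $\sO_+(i)$ with $(\bigwedge\nolimits^i L_{\PP_+/X})(i)[i]$ and a ``remainder'' supported on the incidence locus, identified with the $\Phi$-image of the structure sheaf on $\PP_-\times_X\PP_-$. Since every ingredient is compatible with arbitrary base change by Propositions \ref{prop:proj-4,5} and \ref{prop:affinecone}, I would first verify this resolution in the universal situation where $\sF$ is the cofiber of the tautological map $\sO^{\oplus m} \to \sO^{\oplus n}$ over $|\sHom_\ZZ(\ZZ^{\oplus m}, \ZZ^{\oplus n})|$ (see Example \ref{eg:Homspaces}), and then descend to arbitrary $(X,\sF)$ by pullback.
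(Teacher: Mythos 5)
Your decomposition into (I) full faithfulness and mutual semiorthogonality of the $\Psi_i$, (II) semiorthogonality against $\Phi$, (III) full faithfulness of $\Phi$, (IV) generation, is the right shape, and your step (II) is in fact a cleaner route than the paper's: using $r_+^*\sO_+(1) \simeq \sO_{\widehat Z/\PP_-}(1)$ via Lemma \ref{lem:dproj:incidence}, adjunction and projection reduce the whole thing to $r_{-,*}\sO_{\widehat Z/\PP_-}(-j) \simeq 0$ for $1\le j\le\delta$, which is exactly Theorem \ref{thm:Serre:O(d)}\eqref{thm:Serre:O(d)-1ii} applied to a sheaf of rank $\delta+1\ge 1$. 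Step (I) is routine, as you say. Your closing remark that one can reduce to the tautological $\sF = \cofib(\sO^{\oplus m}\to\sO^{\oplus n})$ over $|\sHom(\ZZ^m,\ZZ^n)|$ by base-change compatibility is also in line with the paper's strategy, which reduces to the affine case $X=\Spec R$ with $\sF$ a cofiber of a map of free modules and then works there.

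The genuine gap is step (III). You write that you ``expect'' $r_{-,*}\,r_+^!\,r_{+,*}\,r_-^*$ to collapse onto the identity after proper base change along $\widehat Z\times_{\PP_+}\widehat Z \rightrightarrows \widehat Z$ and a fiberwise application of Serre's theorem. But the auxiliary projectivization $p_1\colon \widehat Z\times_{\PP_+}\widehat Z = \PP_{\widehat Z}(r_+^*\Sigma\,\fib(\rho_+)^\vee) \to \widehat Z$ involves a sheaf of rank $1-\delta$, which is $\le 0$ precisely in the range $\delta\ge 1$ that you need. In that regime the generalized Serre theorem does not give vanishing or clean identifications of pushforwards of line bundles; it gives the two-term fiber sequences of Theorem \ref{thm:Serre:O(d)}(2), whose two ends do not individually vanish. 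After the base change you are left with $r_{-,*}p_{1,*}(p_1^*\omega_{r_+}\otimes p_2^*r_-^*\sB)$, and since $r_-p_1 \ne r_-p_2$, this does not formally split into anything identity-shaped; you would need to show that the relevant kernel is concentrated near the diagonal, which is exactly the substance of the statement, not a consequence of Serre's theorem. The paper avoids this by passing to the left adjoint $\Phi^L = r_{-,!}\,r_+^*$ and, in the affine local model, evaluating $\Phi^L\Phi$ on the explicit generating family $\iota_-^*(\bigwedge^i\sR_-)$ via the Koszul filtrations of Lemma \ref{lem:key}: the terms that survive are governed by the vanishing $\Phi^L(\iota_+^*(\bigwedge^{m+j}\sR_+^\vee))\simeq 0$ for $0\le j\le\delta-1$, which is established by the same Koszul structure. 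Step (IV) has a parallel issue: you name a ``natural candidate'' resolution of the diagonal assembled from Euler sequences and Beilinson relations, but you do not construct it or verify that its remainder term is the $\Phi$-image of anything, and the object you propose (``the $\Phi$-image of the structure sheaf on $\PP_-\times_X\PP_-$'') lives in the wrong category — $\Phi$ has source $\QCoh(\PP_-)$, not $\QCoh(\PP_-\times_X\PP_-)$. The paper's generation argument is instead by inverse induction on the same Koszul filtration of $\Phi(\iota_-^*\bigwedge^i\sR_-)$, which shows directly that the generators $\iota_+^*(\bigwedge^i\sR_+^\vee)$ of $\QCoh(\PP_+)$ (Lemma \ref{lem:P_pm:generator}) lie in the span of the pieces. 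To close your step (III) you would need either an actual resolution of the diagonal (making (III) and (IV) one problem, which can be done but is itself a substantial computation) or a generator-by-generator check as in the paper.
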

 
 \begin{remark} By virtue of Theorem \ref{thm:Neeman-Lipman-Lurie} \eqref{thm:Neeman-Lipman-Lurie-2} and \eqref{thm:Neeman-Lipman-Lurie-3}, the functors $\Phi$ and $\Psi_i$ of Theorem \ref{thm:structural} admit both left and right adjoints, and they and their left and right adjoints preserve perfect objects, almost perfect objects, and locally bounded almost perfect objects. Consequently, there are also induced similar semiorthogonal decompositions for the subcategories of almost perfect objects, and locally bounded almost perfect objects.
 \end{remark}
 
 \begin{remark}[Additive invariants] \label{rmk:additive} As a consequence of Theorem \ref{thm:structural}, if $\EE$ is any additive invariant (in the sense of \cite[Definition 6.1]{BGT}, from the $\infty$-category of stable $\infty$-categories, with exact functors between them, to a stable presentable $\infty$-category $\shD$), then there is a canonical equivalence
 	$$\EE(\Perf(\PP(\sF))) \simeq \EE(\Perf(\PP(\Sigma \sF^\vee))) \oplus \big(\bigoplus_{i=1}^\delta \nolimits
\EE(\Perf(X)) \big).$$
For example, we could take $\EE$ to be algebraic
K-theory, topological Hochschild homology, or topological cyclic homology; see \cite{BGT} for more details about additive invariants.
 \end{remark}
 
 \begin{remark}[Derived blowups] \label{rmk:blowup} In the situation of Theorem \ref{thm:structural}, if $\rank \sF = 1$, then the derived projectivization $\PP(\sF)$ could be regarded as the {\em derived blowup} (in the sense of \cite{KR18, He21}) of $X$ along the derived first degeneracy locus of $\sF$. This point of view is justified in the classical situation (\cite[\S 3.1.2]{JL18}, \cite[Lemma 5.1]{J20}, and \cite[Lemma 2.25]{J21}), and we expect similar results to be true in the derived context. 
 \end{remark}
 
 \begin{proof}[Proof of Theorem \ref{thm:structural}]
We let $\Phi_0 = \Phi$ and $\Phi_i = \Psi_i$ where $1 \le i \le \delta$, let $\Phi_i^L$ and $\Phi_i^R$ denote the left and right adjoint functors of $\Phi_i$, respectively. Then to prove Theorem \ref{thm:structural}, it suffices to prove the following assertions:
\begin{enumerate}
	\item (Fully-faithfulness) The cofiber of the unit $\id \to \Phi_i^R \circ \Phi_i$ is equivalent to zero for all $i$.
	\item (Semiorthogonality) The composite map $\Phi_i^R \circ \Phi_j$ is equivalent to zero for all $i >j$.
	\item (Generation) For every object $\sE \in \QCoh(\PP(\sF))$, the cofiber of the canonical map from $\sE$ to its left mutation passing through $\{\Im \Phi_i\}_{0 \le i \le \delta}$ is equivalent to zero.
\end{enumerate}
The formations of the above assertions $(1)$ through $(3)$, regarded as conditions on the pair $(X,\sF)$, commute with base change and are local on $X$ with respect to Zariski topology; The analogous statement is also true for the subcategories of perfect objects. Therefore, to prove Theorem \ref{thm:structural}, we might reduce to the local case where $X = \Spec R$, $R \in \CAlgDelta$, and $\sF$ is represented by the cofiber of a map of vector bundles $\sigma \colon \sW \to \sV$ on $X$. The proof of Theorem \ref{thm:structural} in the local situation is given in the next subsection.
\end{proof}

\begin{remark} The above conditions $(1)$ through $(3)$ of the pair $(X, \sF)$ are local on $X$ even with respect to {\em flat} topology. Consequently, the formation of the semiorthogonal decomposition of Theorem \ref{thm:structural} commutes with arbitrary base change and satisfies flat descent. See also \cite{Kuz11} and \cite{AE, BS} for general results on base change and flat descent of semiorthogonal decompositions.
\end{remark}
 
 \subsection{The local situation} In this subsection, we present the proof of Theorem \ref{thm:structural} in the local situation. Assume that we are in the situation of Notations \ref{notation:SOD:setup} and \ref{notation:hatZ}, and assume furthermore that  $X = \Spec R$, $R \in \CAlgDelta$, and $\sF$ is represented by the cofiber of a map of vector bundles $\sigma \colon \sW \to \sV$ on $X$, where $n = \rank \sV$, $m = \rank \sW$, $\delta = n - m \ge 0$. By virtue of Proposition \ref{prop:proj:PB}, there are closed immersions 
 	$$\iota_+ \colon \PP_+ =\PP(\sF) \hookrightarrow \PP(\sV) \qquad \iota_- \colon \PP_- = \PP(\Sigma \sF^\vee) \hookrightarrow \PP(\sW^\vee).$$
By virtue of Corollary \ref{cor:proj:closedimmersion}, the projections $r_\pm \colon \widehat{Z} \to \PP_\pm$ of diagram \eqref{diagram:Corr} are quasi-smooth, and factorize as the respective compositions
	\begin{equation}\label{diag:fact}
	\begin{tikzcd} \widehat{Z} \ar[hook]{r}{j_-} \ar{d}{r_-} & \PP_- \times_X \PP(\sV)\ar{dl}{q_-} \\
	\PP_-
	\end{tikzcd}	
	\qquad\text{and}\qquad
	\begin{tikzcd} \widehat{Z} \ar[hook]{r}{j_+} \ar{d}{r_+} & \PP_+ \times_X \PP(\sW^\vee) \ar{dl}{q_+} \\
	\PP_+
	\end{tikzcd}
	\end{equation}
Furthermore, by virtue of Proposition \ref{prop:proj:PB}, there are respective pullback squares:
	\begin{equation*} 
	\begin{tikzcd}
		\widehat{Z} \ar{d}[swap]{j_-} \ar{r}{j_-} & \PP_- \times_X \PP(\sV) \ar{d} \\
		\PP_- \times_X \PP(\sV)  \ar{r} & \VV_{\PP_- \times_X \PP(\sV)}(\iota_-^*\sR_-^\vee \boxtimes_X \sO_{\PP(\sV)}(-1)),
	\end{tikzcd}
	\end{equation*}
and 
	\begin{equation*} 
	\begin{tikzcd}
		\widehat{Z} \ar{d}[swap]{j_+} \ar{r}{j_+} & \PP_+ \times_X \PP(\sW^\vee) \ar{d} \\
		\PP_+ \times_X \PP(\sW^\vee)  \ar{r} & \VV_{\PP_+ \times_X \PP(\sW^\vee)}(\iota_+^*\sR_+^\vee \boxtimes_X \sO_{\PP(\sW^\vee)}(-1) ).
	\end{tikzcd}
	\end{equation*}

\begin{definition}
Let $\shC$ be a stable $\infty$-category. We say a collection $\shE$ of objects of $\shC$ {\em generates} $\shC$ if it satisfies the following condition: let $C \in \shC$ be an object having the property that $\Ext_\shC^n(E,C)\simeq 0$ for all $E \in \shE$ and $n \in \ZZ$, then $C \simeq 0$. Consequently, let $\shC_0$ be a stable subcategory of $\shC$ for which the inclusion $i \colon \shC_0 \hookrightarrow \shC$ admits a right adjoint. If $\shC_0$ contains a collection $\shE$ of objects which generates $\shC$ in the above sense, then $\shC_0 = \shC$.
\end{definition}
	 
\begin{lemma} \label{lem:P_pm:generator} In the above situation, we fix any $k\in \ZZ$, and let $\sR_+ =\Omega_{\PP(\sV)/X}(1)$ and $\sR_- = \Omega_{\PP(\sW^\vee)/X}(1)$ denote the twisted relative cotangent bundles of $\PP(\sV)$ and $\PP(\sW^\vee)$, respectively. 
\begin{enumerate}[leftmargin=*]
	\item $\QCoh(\PP_+)$ is generated by any of the following collections of vector bundles:
		$$\{\sO_+(i)\}_{k \le i \le k+n-1}, \qquad \{ \iota_+^*(\bigwedge\nolimits^i\sR_+)\}_{0 \le i \le n-1}, \qquad \text{or}  \qquad\{\iota_+^*(\bigwedge\nolimits^i \sR_+^\vee) \}_{0 \le i \le n-1}.$$
	\item $\QCoh(\PP_-)$ is generated by any of the following collections of vector bundles:
		$$\{\sO_-(i)\}_{k \le i \le k+n-1}, \qquad \{\iota_-^*(\bigwedge\nolimits^i \sR_-)\}_{0 \le i \le m-1},  \qquad \text{or}  \qquad \{\iota_-^*(\bigwedge\nolimits^i \sR_-^\vee) \}_{0 \le i \le m-1}.$$
\end{enumerate}
\end{lemma}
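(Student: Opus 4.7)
The plan is to reduce both assertions to the corresponding generation statements for the projective bundles $\PP(\sV) \to X$ and $\PP(\sW^\vee) \to X$, and then transfer along the closed immersions $\iota_\pm$ of Corollary \ref{cor:proj:closedimmersion} via a formal adjunction argument.

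First I would handle the transfer step. Since $\iota_+$ is an affine closed immersion, $\iota_{+,*} \colon \QCoh(\PP_+) \to \QCoh(\PP(\sV))$ is fully faithful and conservative. If $\{E_j\}$ generates $\QCoh(\PP(\sV))$ and $C \in \QCoh(\PP_+)$ satisfies $\Map_{\PP_+}(\iota_+^* E_j, C[n]) \simeq 0$ for all $j$ and $n \in \ZZ$, then the adjunction $\iota_+^* \dashv \iota_{+,*}$ gives $\Map_{\PP(\sV)}(E_j, \iota_{+,*} C[n]) \simeq 0$, forcing $\iota_{+,*} C \simeq 0$ and hence $C \simeq 0$. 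This reduces each of the three $\PP_+$-assertions to the corresponding statement for $\PP(\sV)$; the identical argument with $\iota_- \colon \PP_- \hookrightarrow \PP(\sW^\vee)$ reduces the three $\PP_-$-assertions to statements for $\PP(\sW^\vee)$, noting that $\{\sO_-(i)\}_{k \le i \le k+n-1}$ contains the length-$m$ standard collection restricted from $\PP(\sW^\vee)$ since $n \ge m$, so a fortiori it generates.

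Second, I would treat the projective bundle case for $\pi \colon \PP(\sV) \to X$ (the case of $\PP(\sW^\vee)$ being identical). Proposition \ref{prop:PG:dualexc} applied to $\sV$ (so $\delta = n$) exhibits $\{\sO(i)\}_{k \le i \le k+n-1}$ and $\{\Sigma^i \bigwedge^i \sR_+\}_{0 \le i \le n-1}$ as dual relative exceptional sequences, which by Remark \ref{rmk:excseqspan} span the same $\QCoh(X)$-linear subcategory of $\QCoh(\PP(\sV))$; hence they generate simultaneously. Since $\bigwedge^i \sR_+^\vee \simeq \bigwedge^{n-1-i} \sR_+ \otimes (\det \sR_+)^{-1}$ and tensoring with a line bundle preserves generating families, the third collection also generates simultaneously with the second. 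It remains to prove that $\{\sO(i)\}_{k \le i \le k+n-1}$ generates $\QCoh(\PP(\sV))$, and for this I would construct the derived Beilinson resolution of the diagonal $\Delta \colon \PP(\sV) \hookrightarrow \PP(\sV) \times_X \PP(\sV)$. By Theorem \ref{thm:proj:Euler} and Proposition \ref{prop:proj:PB}, $\Delta$ is the derived zero locus of the natural section of $\pi_1^* \sR_+^\vee \otimes \pi_2^* \sO(1)$, so Lemma \ref{lem:Koszul:fib} supplies a canonical filtration of $\Delta_* \sO$ with graded pieces $\bigwedge^i \sR_+ \boxtimes \sO(-i)$, $0 \le i \le n-1$. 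Applying the Fourier--Mukai identity $C \simeq \pi_{1,*}(\pi_2^* C \otimes \Delta_* \sO)$, combined with flat base change and the projection formula, yields a filtration of any $C \in \QCoh(\PP(\sV))$ whose graded pieces have the form $\bigwedge^i \sR_+ \otimes \pi^*(\pi_*(C(-i)))$ up to cohomological shifts. Thus---after first twisting $C$ by $\sO(-k)$---vanishing of $\pi_*(C(-i))$ for $k \le i \le k+n-1$ forces all graded pieces of the filtration of $C \otimes \sO(-k)$ to vanish, so $C \simeq 0$.

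The main technical obstacle is assembling the derived Koszul resolution of the diagonal from Lemma \ref{lem:Koszul:fib} with the correct twisting conventions to identify the successive quotients. Once this filtration is in hand, the rest is a purely formal combination of Serre's Theorem \ref{thm:Serre:bundle}, Proposition \ref{prop:PG:dualexc}, and the adjunction between $\iota_\pm^*$ and $\iota_{\pm,*}$.
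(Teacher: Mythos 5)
Your proof is correct, and it takes a genuinely different route from the paper at the decisive step. The paper's proof is a one-liner: it invokes the affineness of $\iota_{\pm}$ for the transfer, the fact that $\QCoh(\Spec R)$ is generated by $\sO_X$, and then cites the classical fullness of Beilinson's relative exceptional sequence on projective bundles via Orlov's theorem (with a pointer to \cite[Proposition B.3]{J21}). You replace that external citation with an internal construction: you realize the diagonal $\Delta \colon \PP(\sV) \hookrightarrow \PP(\sV) \times_X \PP(\sV)$ as a derived zero locus via Theorem \ref{thm:proj:Euler} and Proposition \ref{prop:proj:PB} (applied to the fiber sequence $\sR_+ \to \pi^*\sV \to \sO(1)$ regarded relatively over the first factor), use the Koszul filtration of Lemma \ref{lem:Koszul:fib} to filter $\Delta_*\sO$ by graded pieces $\Sigma^i\,\bigwedge^i \sR_+ \boxtimes \sO(-i)$ for $0 \le i \le n-1$, and then run the Fourier--Mukai identity $C \simeq \pi_{1,*}(\pi_2^*C \otimes \Delta_*\sO)$ to produce a finite filtration of any $C$ whose graded pieces are $\Sigma^i\,\bigwedge^i\sR_+ \otimes \pi^*\pi_*(C(-k-i))$. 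This yields a self-contained, DAG-native proof of the projective-bundle generation that avoids both Orlov's theorem and the reference to \cite{J21}; the paper's version gains brevity in exchange for that reliance. Two small points worth making explicit in your write-up: the passage from $\Ext^*_{\PP(\sV)}(\sO(i), C) = 0$ to $\pi_*(C(-i)) = 0$ is where you silently use that $X = \Spec R$ in the local reduction, so that $\sO_X$ generates $\Mod_R$ (this is exactly the paper's citation of \cite[Theorem 7.1.2.1]{HA}); and in the transfer step you only need $\iota_{\pm,*}$ to be conservative rather than fully faithful, though both hold for a closed immersion. Your reductions between the three generating families via Proposition \ref{prop:PG:dualexc}, Remark \ref{rmk:excseqspan}, and the determinant identification $\bigwedge^i\sR_+^\vee \simeq \bigwedge^{n-1-i}\sR_+ \otimes (\det\sR_+)^{-1}$ match the paper's intended argument and are fine.
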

\begin{proof} This follows from the fact that $\QCoh(X)$ is generated by $\sO_X$ in the case where $X = \Spec R$ (\cite[Theorem 7.1.2.1]{HA}), the Beilinson's relative exceptional sequences of Proposition \ref{prop:PG:dualexc} are full in the case of projective bundles (which follows from the usual Orlov's theorem; see also \cite[Proposition B.3]{J21}), and that the morphisms $\iota_\pm$ are affine.
\end{proof}

The following is the analogue of the key lemma \cite[Lemma 5.6]{J21} in the case of projectivizations (Here, we only present half of all the cases, for these we will need in our proof).

\begin{lemma} \label{lem:key} In the above situation (where $\delta = n -m \ge 0$):
\begin{enumerate}
	\item \label{lem:key-1} Let $\Phi = r_{+\,*} \, r_{-}^* \colon \QCoh(\PP_-) \to \QCoh(\PP_+)$. Then for any $\sE \in \Perf(\PP(\sW^\vee))$, there is a series of fiber sequences in $\QCoh(\PP_+)$ for all $0 \le i \le n-1$:
	$$ \sG_{i-1}(\sE) \to \sG_{i}(\sE) \to \iota_+^*(\bigwedge\nolimits^i \sR_+^\vee) \otimes_{\sO_{\PP_+}} \pi_+^* (\sM_i(\sE)),$$
where $\sG_i(\sE) \in \Perf(\PP_+)$, $\sG_{-1}(\sE) = 0$, $\sG_{n-1}(\sE) \simeq \Phi(\iota_-^* \sE)$, and
	$$\sM_i(\sE) = \Sigma^i \, \Mapsp_{X}(\sO_{\PP(\sW^\vee)}(i), \sE) \in \Perf(X).$$
	\item  \label{lem:key-2} Let $\Phi^L = r_{-\,!} \, r_{+}^* \colon \QCoh(\PP_+) \to \QCoh(\PP_-)$ denote the left adjoint of $\Phi$. Then for any $\sE \in \Perf(\PP(\sV))$, there is a series of fiber sequences in $\QCoh(\PP_-)$ for all $0 \le i \le m-1$:
	$$ \iota_-^*(\bigwedge\nolimits^i \sR_-) \otimes_{\sO_{\PP_-}} \pi_-^* (\sN^i(\sE)) \to \sG^{i}(\sE) \to \sG^{i-1}(\sE),$$
where $\sG^i(\sE) \in \Perf(\PP_-)$, $\sG^{-1}(\sE) = 0$, $\sG^{m-1}(\sE) \simeq \Phi^L(\iota_+^* \sE)$, and
	$$\sN^i(\sE) = \Omega^i (\Mapsp_{X}(\sO_{\PP(\sV)}(i), \sE^\vee)^\vee) \in \Perf(X).$$
	In particular:
		\begin{enumerate}
			\item  \label{lem:key-2-1}If we set $\sE = \bigwedge^i \sR_+^\vee$, $0 \le i \le n-1$, then  there are canonical equivalences
					$$\Phi^L(\iota_+^* (\bigwedge^i \nolimits \sR_+^\vee)) \simeq \begin{cases}   \iota_-^*(\bigwedge\nolimits^i \sR_-) & \text{if} \quad  0 \le i \le m-1; \\  0, & \text{if} \quad m \le i \le n-1. 
		\end{cases} $$
			\item  \label{lem:key-2-2} If an object $\sE \in \Perf(\PP(\sV))$ having the property that 
		$$\Mapsp_X(\sO_{\PP(\sV)}(j), \sE^\vee) \simeq 0 \quad \text{for} \quad j=0, \ldots, m-1.$$
	Then $\Phi^L(\iota_+^*\sE) \simeq 0$.	
		\end{enumerate}
\end{enumerate}
\end{lemma}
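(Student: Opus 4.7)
\medskip

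\noindent\textbf{Proof proposal.} The plan is to prove part (1) by a direct computation using the factorization of $r_+$ through $\PP_+ \times_X \PP(\sW^\vee)$, and to deduce part (2) from part (1) by Grothendieck duality. For part (1), observe from the pullback square appearing before the statement that we have a factorization $r_+ = q_+ \circ j_+$ with $q_+ \colon \PP_+ \times_X \PP(\sW^\vee) \to \PP_+$ the projection and $j_+ \colon \widehat{Z} \hookrightarrow \PP_+ \times_X \PP(\sW^\vee)$ the derived zero locus of a cosection of the rank $n-1$ vector bundle $\iota_+^*\sR_+^\vee \boxtimes_X \sO_{\PP(\sW^\vee)}(-1)$. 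Moreover, by the universal properties of $\widehat{Z}$, the composition $\iota_- \circ r_- \colon \widehat{Z} \to \PP(\sW^\vee)$ coincides with $\pi_{\PP(\sW^\vee)} \circ j_+$, so that
\[
\Phi(\iota_-^*\sE) \;=\; r_{+,*}\,(\iota_-\circ r_-)^*\sE \;=\; q_{+,*}\bigl(j_{+,*}\,j_+^*\,\pi_{\PP(\sW^\vee)}^*\sE\bigr).
\]

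Apply Lemma \ref{lem:Koszul:fib} to $j_+$ with coefficient $\pi_{\PP(\sW^\vee)}^*\sE$ to obtain quasi-coherent sheaves $\sG_i^{\mathrm{pre}}$ on $\PP_+\times_X\PP(\sW^\vee)$ ($0\le i\le n-1$) with $\sG_{n-1}^{\mathrm{pre}} \simeq j_{+,*}j_+^*\pi_{\PP(\sW^\vee)}^*\sE$, fitting into fiber sequences with successive quotients
\[
\Sigma^i\,\iota_+^*(\bigwedge\nolimits^i\sR_+^\vee) \boxtimes_X (\sO_{\PP(\sW^\vee)}(-i)\otimes\sE).
\]
Now apply the exact functor $q_{+,*}$; by base change for the smooth projection $\pr_{\PP(\sW^\vee)}\colon \PP(\sW^\vee)\to X$ (which is the pullback of $q_+$) and the projection formula, the pushforward of each quotient becomes $\Sigma^i\,\iota_+^*(\bigwedge^i\sR_+^\vee)\otimes \pr_{\PP_+}^*\,\pr_{\PP(\sW^\vee),*}(\sO_{\PP(\sW^\vee)}(-i)\otimes\sE)$. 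Using the canonical identification $\pr_{\PP(\sW^\vee),*}(\sO_{\PP(\sW^\vee)}(-i)\otimes\sE) \simeq \Mapsp_X(\sO_{\PP(\sW^\vee)}(i),\sE)$ (Example \ref{eg:qcqs:mappingobject}) and setting $\sG_i(\sE) := q_{+,*}\sG_i^{\mathrm{pre}}$, we obtain exactly the fiber sequences claimed, with $\sM_i(\sE) = \Sigma^i\Mapsp_X(\sO_{\PP(\sW^\vee)}(i),\sE)$ absorbing the suspension.

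For part (2), the key observation is that on perfect objects Theorem \ref{thm:Neeman-Lipman-Lurie} \eqref{thm:Neeman-Lipman-Lurie-2iv} gives $f_!(\blank) \simeq (f_*(\blank^\vee))^\vee$, so
\[
\Phi^L \;=\; r_{-,!}\,r_+^* \;\simeq\; \bigl((r_{-,*}\,r_+^*)(\blank^\vee)\bigr)^\vee \;=\; \bigl(\Psi(\blank^\vee)\bigr)^\vee \quad \text{on }\Perf,
\]
where $\Psi := r_{-,*}r_+^*$ is the functor in the opposite direction. Applying part (1) with the roles of $(+,-)$ and $(\sV,\sW^\vee)$ interchanged — this time using the rank $m-1$ vector bundle $\iota_-^*\sR_-^\vee\boxtimes_X\sO_{\PP(\sV)}(-1)$ — yields a filtration of $\Psi(\iota_+^*\sF)$ for $\sF\in\Perf(\PP(\sV))$ with quotients $\Sigma^i\,\iota_-^*(\bigwedge^i\sR_-^\vee)\otimes\pi_-^*\Mapsp_X(\sO_{\PP(\sV)}(i),\sF)$. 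Substituting $\sF = \sE^\vee$ and taking duals reverses the filtration and gives quotients $\iota_-^*(\bigwedge^i\sR_-)\otimes\pi_-^*\sN^i(\sE)$ with $\sN^i(\sE) = \Omega^i\Mapsp_X(\sO_{\PP(\sV)}(i),\sE^\vee)^\vee$, as required. For corollaries \eqref{lem:key-2-1} and \eqref{lem:key-2-2}, Beilinson's relations (Corollary \ref{cor:PVdot:relations}) applied to the projective bundle $\PP(\sV)\to X$ give $\Mapsp_X(\sO_{\PP(\sV)}(i),\bigwedge^j\sR_+) \simeq \delta_{i,j}\,\Omega^j\sO_X$ for $0\le i,j\le n-1$, whence $\sN^i(\bigwedge^j\sR_+^\vee)$ is concentrated at $i=j$ within $0\le i\le m-1$: this collapses the filtration to $\iota_-^*(\bigwedge^j\sR_-)$ when $0\le j\le m-1$ and to $0$ when $m\le j\le n-1$. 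Part \eqref{lem:key-2-2} follows immediately since the hypothesis forces all $\sN^i(\sE)$ to vanish in the relevant range.

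The main technical point to verify carefully is the identity $\iota_-\circ r_- = \pi_{\PP(\sW^\vee)}\circ j_+$ (and its analog for part (2)), which underlies the whole reduction: one needs to unwind the two equivalent descriptions of $\widehat{Z}$ in Lemma \ref{lem:dproj:incidence} and match the universal closed immersions into the two intermediate products. Everything else is a formal combination of Lemma \ref{lem:Koszul:fib}, base change (Theorem \ref{thm:Neeman-Lipman-Lurie} \eqref{thm:Neeman-Lipman-Lurie-1}\eqref{thm:Neeman-Lipman-Lurie-2}), and Beilinson's relations on $\PP(\sV)$.
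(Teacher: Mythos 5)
Your proof of part (1) is essentially identical to the paper's: factor $r_+ = q_+ \circ j_+$, use the commutation $\iota_- \circ r_- \simeq p_+ \circ j_+$ coming from the factorization diagram \eqref{diag:fact}, apply Lemma \ref{lem:Koszul:fib} to the quasi-smooth closed immersion $j_+$ with coefficient $\sO_{\PP_+}\boxtimes_X\sE$, and push forward along $q_+$ using base change and Example \ref{eg:qcqs:mappingobject} to identify the graded pieces. The paper's proof stops here, remarking only that ``the proof of assertion (2) is similar (and simpler),'' i.e.\ it envisions a direct analogous argument via the other factorization $r_-=q_-\circ j_-$, which requires handling the left adjoints $j_{-!}$ and $q_{-!}$ (and hence relative dualizing sheaves). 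You instead deduce part (2) from part (1) by the duality $\Phi^L \simeq \bigl((r_{-,*}r_+^*)((\blank)^\vee)\bigr)^\vee$ on perfect objects (Theorem \ref{thm:Neeman-Lipman-Lurie}(\ref{thm:Neeman-Lipman-Lurie-2iv})), applying the swapped version of (1) to $\sE^\vee$ and dualizing the filtration --- this both reverses the order of the tower (accounting for the $\sG^{i}\to\sG^{i-1}$ indexing pattern in (2)) and produces the $\Omega^i(\cdots)^\vee$ formula for $\sN^i(\sE)$. This is a genuinely different and somewhat cleaner route: it replaces explicit computation with shriek functors by a single application of the dualization identity, at no cost since everything is perfect anyway. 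Your derivations of (2a) and (2b) via the $\delta_{i,j}$ collapse from Corollary \ref{cor:dulexcseq}/\ref{cor:PVdot:relations} are also correct. Both proofs hinge on the same commutativity of the factorization diagrams for $\widehat Z$, which you are right to flag as the nontrivial bookkeeping step.
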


\begin{proof} We will only prove assertion \eqref{lem:key-1}; the proof of assertion  \eqref{lem:key-2} is similar (and simpler). There are commutative diagrams:
	$$
	\begin{tikzcd} \widehat{Z} \ar[hook]{r}{j_+} \ar{d}[swap]{r_+} & \PP_+ \times_X \PP(\sW^\vee) \ar{dl}{q_+} \\
	\PP_+
	\end{tikzcd}
	\qquad \text{and} \qquad
	\begin{tikzcd}
		\widehat{Z} \ar{d}[swap]{r_-} \ar[hook]{r}{j_+}& \PP_+ \times_X \PP(\sW^\vee) \ar{d}{p_+} \\
		\PP_- \ar{r}{\iota_-} & \PP(\sW^\vee),
	\end{tikzcd}
	$$
where $p_+ \colon \PP_+ \times_X \PP(\sW^\vee) \to \PP(\sW^\vee)$ denotes the natural projection. Therefore, we obtain
	$$\Phi(\iota_-^* \sE) = r_{+\,*} r_{-}^* \sE \simeq q_{+\,*} j_{+\,*} j_+^* (\sO_{\PP_+} \boxtimes_X \sE).$$
Apply Lemma \ref{lem:Koszul:fib} to the quasi-smooth immersion $j_+$ (which corresponds to a cosection of $\iota_+^*\sR_+^\vee \boxtimes_X \sO_{\PP(\sW^\vee)}(-1)$), we obtain a sequence of fiber sequences on $\PP_+ \times_X \PP(\sW^\vee)$, $0 \le i \le n-1$:
$$\overline{\sG}_{i-1}(\sE) \to \overline{\sG}_{i}(\sE) \to \Sigma^i \,\iota_+^*(\bigwedge\nolimits^i \sR_+^\vee) \boxtimes_X (\sO_{\PP(\sW^\vee)}(-i) \otimes_{\sO_{\PP(\sW^\vee)}} \sE)$$
for which $\overline{\sG}_{-1}(\sE)\simeq 0 $ and $\overline{\sG}_{n-1}(\sE) \simeq j_{+\,*} \, j_+^* (\sO_{\PP_+} \boxtimes_X \sE)$. From the pullback square
	$$
	\begin{tikzcd}
		\PP_+ \times_X \PP(\sW^\vee) \ar{d}[swap]{q_+} \ar{r}{p_+} & \PP(\sW^\vee) \ar{d}{\pr_{\PP(\sW^\vee)}} \\
		\PP_+ \ar{r}{\pi_+} & X,
	\end{tikzcd}
	$$
we obtain that there are canonical equivalences
	\begin{align*}
		& q_{+*} \big(\Sigma^i \, \iota_+^*(\bigwedge\nolimits^i \sR_+^\vee) \boxtimes_X (\sO_{\PP(\sW^\vee)}(-i) \otimes_{\sO_{\PP(\sW^\vee)}} \sE) \big)  \\
		& \simeq  \Sigma^i   \iota_+^*(\bigwedge\nolimits^i \sR_+^\vee)  \otimes_{\sO_{\PP_+}} \pi_+^* \big(\pr_{\PP(\sW^\vee) \,*} (\sO_{\PP(\sW^\vee)}(-i) \otimes_{\sO_{\PP(\sW^\vee)}} \sE) \big) \\
		& \simeq \iota_+^*(\bigwedge\nolimits^i \sR_+^\vee)  \otimes_{\sO_{\PP_+}}  \pi_+^*  \big( \Sigma^i\, \Mapsp_X(\sO_{\PP(\sW^\vee)}(i),\sE) \big).
	\end{align*} 
Therefore, by pushing-forward the above fiber sequence along $q_{+}$ and setting $\sG_i(\sE) = q_{+ \,*} \overline{\sG}_i(\sE) \in \Perf(\PP_+)$,  we obtain the desired fiber sequences of assertion \eqref{lem:key-1}.
\end{proof}

\begin{proof}[Proof of Theorem \ref{thm:structural} in the local situation]
We first show that the functor $\Phi$ is fully faithful. By virtue of Lemma \ref{lem:P_pm:generator}, it suffices to show that the natural transformation 
	$\Phi^L \circ \Phi (\sE_i) \to \sE_i$
is an equivalence for all $\sE_i = \iota_-^* (\bigwedge^i \sR_-)$, where $0 \le i \le m-1$. By virtue of Lemma \ref{lem:key} \eqref{lem:key-1} (applied to $\sE = \bigwedge^i \sR_-$) we obtain canonical equivalences $\sG_{n-1}(\bigwedge^i \sR_-) \simeq \Phi(\sE_i)$ and 
		\begin{align*}
		& \sG_0( \bigwedge\nolimits^i \sR_-) \simeq \ldots \simeq \sG_{i-1}(\bigwedge\nolimits^i \sR_-) \simeq 0; \\
		& \sG_i( \bigwedge\nolimits^i \sR_-) \simeq \ldots \simeq \sG_{m-1}(\bigwedge\nolimits^i \sR_-) \simeq \iota_+^*(\bigwedge\nolimits^i \sR_+^\vee).
		\end{align*}
	Furthermore, for $0 \le j \le n-m-1$, there are fiber sequences in $\QCoh(\PP_+)$:
	$$ \sG_{m-1+j}(\bigwedge\nolimits^i \sR_-) \to \sG_{m+j}(\bigwedge\nolimits^i \sR_-) \to \iota_+^*(\bigwedge\nolimits^{m+j} \sR_+^\vee) \otimes_{\sO_{\PP_+}}  \pi_+^* (\sM_{m+j}(\bigwedge\nolimits^i \sR_-)).$$	
By virtue of Lemma \ref{lem:key} \eqref{lem:key-2-1} (applied to the case where $\sE = \bigwedge\nolimits^{m+j} \sR_+^\vee$), we obtain 
	$$\Phi^L \big( \iota_+^*(\bigwedge\nolimits^{m+j} \sR_+^\vee) \otimes_{\sO_{\PP_+}}  \pi_+^* (\sM_{m+j}(\bigwedge\nolimits^i \sR_-))\big) \simeq 0 \quad \text{for} \quad 0 \le j \le n-m-1.$$
Therefore, we obtain canonical equilvalences
	$$\Phi^L \circ \Phi(\sE_i) = \Phi^L( \sG_{n-1}(\bigwedge^i \sR_-)) \simeq \ldots \simeq \Phi^L(\sG_{m-1}(\bigwedge^i \sR_-)) \simeq \Phi^L(\iota_+^*(\bigwedge\nolimits^i \sR_+^\vee)) \simeq \sE_i,$$
where the last equivalence follows from Lemma \ref{lem:key} \eqref{lem:key-2-1}. This proves that $\Phi$ is fully faithful.

Next, we prove that the essential images of $\Phi$ and $\Psi_j$ are semiorthogonal, where $1 \le j \le \delta$. For all $\sA \in \QCoh(X)$ and $\sB \in \QCoh(\PP_-)$, we wish to show that the mapping space
	$$\Map_{\QCoh(\PP_+)}(\Psi_j(\sA), \Phi(\sB)) \simeq \Map_{\QCoh(\PP_-)} (\Phi^L (\pi_+^*(\sA) \otimes \sO_+(j)), \sB) $$
is contractible. Since $\Phi^L (\pi_+^*(\sA) \otimes \sO_+(j) \simeq \pi_+^*(\sA) \otimes \Phi^L(\iota_+^* (\sO_{\PP(\sV)}(j)))$, it suffices to show  
	$$\Phi^L(\iota_+^* (\sO_{\PP(\sV)}(j))) \simeq 0 \qquad \text{for all} \qquad 1 \le j \le n-m.$$
This follows directly from Lemma \ref{lem:key} \eqref{lem:key-2-2}. 

Finally, let $\shC$ denote the smallest stable subcategory of $\QCoh(\PP_+)$ that contains the essential images $\Im \Phi$ and $\{\Im \Psi_j\}_{1 \le j \le \delta}$. We wish to show that $\shC = \QCoh(\PP_+)$. By virtue of Lemma \ref{lem:P_pm:generator}, it suffices to show that for each $0 \le i \le n-1$, $\iota_+^* (\bigwedge^i \sR_+^\vee) \in \shC$. Since $\sR_+$ is a vector bundle on $\PP(\sV)$ of rank $(n-1)$, we obtain that for all $0 \le j \le \delta-1$, 
	$$\bigwedge\nolimits^{m+j} \sR_+^\vee \simeq (\bigwedge\nolimits^{\delta-1-j} \sR_+) \otimes \sO_{\PP(\sV)}(1) \otimes (\pr_{\PP(\sV)}^* \det \sV).$$
By virtue of Proposition \ref{prop:PG:dualexc}, the sequence of vector bundles 
	$$\bigwedge\nolimits^{\delta-1} \sR_+, \ldots, \sR_+, \sO_{\PP(\sV)}$$
is left dual to the relative exceptional sequence $\sO_{\PP(\sV)}, \sO_{\PP(\sV)}(1), \ldots, \sO_{\PP(\sV)}(\delta-1)$. Therefore, by virtue of Remark \ref{rmk:excseqspan}, each $\bigwedge\nolimits^{m+j} \sR_+^\vee$ is an iterated extensions of objects of the form $\sO_{\PP(\sV)}(s) \otimes (\pr_{\PP(\sV)}^* \sF_s)$, where $1 \le s \le \delta$ and $\sF_s \in \QCoh(X)$. In particular, we obtain that for all $\sM \in \QCoh(X)$,
	$$\iota_+^* (\bigwedge\nolimits^{m+j} \sR_+^\vee) \otimes_{\sO_{\PP_+}} (\pi_+^* \sM) \in \shC \quad \text{for} \quad 0 \le j \le \delta-1. $$
On the other hand, if we consider $\Phi(\sE_i)$, where $\sE_i = \iota_-^*(\bigwedge^i \sR_-)$, then $\sG_{n-1}(\bigwedge^i \sR_-) \simeq \Phi(\sE_i) \in \shC$ by  construction. By inverse induction and the fact that $\shC$ is stable under the formation of fibers, we obtain that $\sG_{m+k-1} (\bigwedge^i \sR_-) \in \shC$ for all $0 \le k \le \delta$. In particular, we obtain 
	$$\iota_+^*(\bigwedge\nolimits^i \sR_+^\vee) \simeq \sG_{m-1} (\bigwedge\nolimits^i \sR_-)  \in \shC$$
for all $0 \le i \le m-1$. This proves $\shC = \QCoh(\PP_+)$. Combined with the proof of the preceding subsection, this completes the proof of Theorem \ref{thm:structural}.
\end{proof}

\subsection{Applications to classical examples} \label{sec:classical}
In this subsection, we apply Theorem \ref{thm:structural} to classical examples. We only list examples in lower dimensions in each situation; all these results have direct analogues in higher dimensions. 

\subsubsection{Criteria for derived projectivizations and symmetric powers being classical} We first give criteria for the derived projectivizations and symmetric powers being classical. For simplicity of exposition, we make the following assumption throughout this subsection:

\begin{enumerate}[label=$(\star)$, ref=$\star$]
	\item \label{assumption:CM} We let $X$ be an integral Cohen--Macaulay scheme, let $\sW$ and $\sV$ be vector bundles on $X$ of rank $m$ and $n$ where $1 \le m \le n$, and let $\sigma \colon \sW \to \sV$ be a map of $\sO_X$-modules which is injective at the generic point of $X$. Let $\sF = \cofib(\sW \xrightarrow{\sigma} \sV) \in \Perf(X)^\cn$. 
\end{enumerate}
Under assumption \eqref{assumption:CM}, $\Sigma \sF^\vee$ is represented by the shifted dual complex 
	$$\Sigma \sF^\vee \simeq  \cofib(\sV^\vee \xrightarrow{\sigma^\vee} \sW^\vee) \in \Perf(X)^\cn.$$
(Thus by possibly replacing $\sF$ with $\Sigma \sF^\vee$, there is no loss of generality by assuming $m \le n$.)

\begin{remark}The assumption \eqref{assumption:CM} is {\em not} necessary: all the results of this section remain true for a general scheme $X$ and a discrete finite type quasi-coherent sheaf $\sF$ on $X$ of homological dimension $\le 1$, as long as one replaces the conditions on codimensions of degeneracy loci with conditions on polynomial depths of the corresponding Fitting ideals; see \cite[\S 2]{J21}.
\end{remark}

\begin{notation}\label{notation:deg:loci}  Under assumption \eqref{assumption:CM}, for each integer $i \ge 0$, we let $X_i \subseteq X$ denote the closed subscheme of $X$ defined by all $(m+1-i) \times (m+1-i)$-minors of the map $\sigma$; Equivalently, $X_i$ is {\em $i$-th degeneracy locus} of the discrete sheaf $\Coker(\sigma)$, characterized by the following property: for each point $x \in X_i$ (closed or not), $\rank (\Coker(\sigma)|_{x}) \ge n - m + i$. Therefore we have a sequence of closed subschemes of $X$:
	$$ \emptyset = X_{m+1}  \subseteq X_m \subseteq \cdots \subseteq X_{i} \subseteq \cdots \subseteq X_{1} \subseteq X.$$
For a nonempty closed subscheme $Z \subseteq X$, we let $\codim_Z(X)$ denote the minimal of the codimensions of irreducible components of $Z$ inside $X$ (since $X$ is Noetherian and equidimensional, $\codim_Z(X)$ is well-defined). In particular, for an integer $d$, $\codim_X(Z) \ge d$ holds if and only if $\codim_{X}(Z_j) \ge d$ holds for each irreducible component $Z_j$ of $Z$. 
\end{notation}

\begin{remark}[Expected codimensions] In the situation of Notation \ref{notation:deg:loci}, a theorem of Macaulay--Eagon--Northcott implies that for any irreducible component $Z$ of $X_i$, 
	$$\codim_X(Z) \le (n-m+i)i \qquad i=1,2,\ldots, m.$$
Moreover, a theorem of Hochster--Eagon implies that, if $\codim_X(X_i)$ achieves the maximal possible value $(n-m+i)i$ (that is, $X_i$ is equidimensional and each irreducible component of $X_i$ has codimension $(n-m+ i) i$ in $X$), then $X_i$ is a Cohen-Macaulay subscheme (see, for example, \cite[Exercise 10.9, \S 18.5, \& Theorem 18.18]{Ei} for these statements for commutative rings, and \cite[\S 2.2]{J21} for a summary in the geometric situation). The maximal possible value $(n-m+i)i$ for $\codim_X(X_i)$ is often referred to as the {\em expected codimension of $i$-th degneracy locus $X_i$ in $X$}; it is achieved in the situation when $\sigma$ is a ``generic" map.
\end{remark}

First, we give criteria for the derived projectivizations to be classical.

\begin{lemma} \label{lem:criterion:P:classical} Under assumption \eqref{assumption:CM} and let $X_i$ be defined as in Notation \ref{notation:deg:loci}. Then:
\begin{enumerate}
	\item $\PP(\sF)$ is classical if and only if the following condition is satisfied:
		\begin{equation} \label{eqn:condition:PF} \tag{$a$}
		\codim_X(X_i) \ge i \quad \text{for} \quad i=1, 2, \ldots, m. 
		\end{equation}
		If $\rk \sF = n -m \ge 1$, then $\PP(\sF)$ is classical and irreducible if and only if the following condition is satisfied:
		\begin{equation}  \label{eqn:condition:irr:PF} \tag{$a'$}
		\codim_X(X_i) \ge i +1 \quad \text{for} \quad i=1, 2, \ldots, m.
		\end{equation}	

	\item $\PP(\Sigma \sF^\vee)$ is classical if and only if the following condition is satisfied:
		\begin{equation} \label{eqn:condition:PK}  \tag{$b$}
		\codim_X(X_i) \ge n-m+i  \quad \text{for} \quad i=1, 2, \ldots, m.
		\end{equation}
		$\PP(\Sigma \sF^\vee)$ is classical and irreducible if and only if the following condition is satisfied:
		\begin{equation} \label{eqn:condition:irr:PK}  \tag{$b'$}
		\codim_X(X_1) = n-m+ 1 \quad \text{and} \quad \codim_X(X_i) \ge n-m+i+1 \quad \text{for} \quad i=2, \ldots, m.
		\end{equation}		
	\item Let $\widehat{Z}$ be the universal incidence locus of $\PP(\sF)$ and $\PP(\Sigma \sF^\vee)$ over $X$ of Notation \ref{notation:hatZ}. Then $\widehat{Z}$ is classical if and only if the following condition is satisfied:
		\begin{equation} \label{eqn:condition:hatZ}  \tag{$c$}
		\codim_X(X_i) \ge n-m+2i - 1  \quad \text{for} \quad i=1, 2, \ldots, m.
		\end{equation}
		$\widehat{Z}$ is classical and irreducible if and only if the following condition is satisfied:		
		\begin{equation}  \label{eqn:condition:irr:hatZ}  \tag{$c'$}
		\codim_X(X_1) = n-m+ 1 \quad \text{and} \quad \codim_X(X_i) \ge n-m+2i \quad \text{for} \quad i=2, \ldots, m.
		\end{equation}	
		(Notice that if $\widehat{Z}$ is classical, then $\PP(\sF)$ and $\PP(\Sigma \sF^\vee)$ are classical, and $\widehat{Z}$ is canonically equivalent to the classical fiber product $(\PP(\sF) \times_X \PP(\Sigma \sF^\vee))_\cl$.) 
\end{enumerate}
\end{lemma}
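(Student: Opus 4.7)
The plan is to realize each of the three derived schemes as the derived zero locus, in a smooth ambient, of a cosection of a vector bundle, and then to invoke the standard criterion: on a Cohen--Macaulay ambient, the derived zero locus of a cosection of a rank-$r$ vector bundle is classical if and only if the underlying classical zero locus has codimension $\ge r$ at every point (equivalently, the cosection forms a regular sequence, with the other inequality automatic by Krull's height theorem). When classical, the resulting quasi-smooth closed subscheme is Cohen--Macaulay of pure dimension equal to its virtual one, so irreducibility reduces to the existence of a unique top-dimensional irreducible component. All three statements then follow from uniform stratum-wise dimension counts over $X_i \setminus X_{i+1}$.

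Concretely, Proposition \ref{prop:proj:PB} presents $\iota_+ \colon \PP(\sF) \hookrightarrow \PP(\sV)$ as the derived zero locus of a rank-$m$ cosection of $\pr^*\sW \otimes \sO(-1)$, and $\iota_- \colon \PP(\Sigma\sF^\vee) \hookrightarrow \PP(\sW^\vee)$ as the derived zero locus of a rank-$n$ cosection of $\pr^*\sV \otimes \sO(-1)$. The classical loci $\PP_\cl(\Coker\sigma)$ and $\PP_\cl(\Coker\sigma^\vee)$ carry $\PP^{n-m+i-1}$ and $\PP^{i-1}$ fibres over $X_i \setminus X_{i+1}$, hence have local codimension $c_i + m - i$ in each respective ambient, where $c_i = \codim_X X_i$. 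Requiring this to be $\ge m$ (resp.\ $\ge n$) unwinds to \eqref{eqn:condition:PF} and \eqref{eqn:condition:PK}. For irreducibility in (1), assuming $\rk \sF \ge 1$, the dominant component is the closure of the $\PP^{n-m-1}$-bundle over the integral open $X \setminus X_1$, so every further stratum must lie strictly below, giving the strict inequalities of \eqref{eqn:condition:irr:PF}. In (2), the dominant component comes from $X_1 \setminus X_2$, and the Macaulay--Eagon--Northcott upper bound $c_1 \le n-m+1$ (when $X_1 \ne \emptyset$) pins down $c_1 = n-m+1$ exactly, with the analogous strict inequalities for $i \ge 2$ yielding \eqref{eqn:condition:irr:PK}.

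For (3), the key step is to realize $\widehat{Z}$ as a quasi-smooth closed subscheme of the smooth product $Y = \PP(\sV) \times_X \PP(\sW^\vee)$ of virtual codimension $n + m - 1$. I would use the factorization \eqref{diag:fact} together with $\iota_-$: $\widehat{Z} \hookrightarrow \PP(\Sigma\sF^\vee) \times_X \PP(\sV) \hookrightarrow Y$, where the outer immersion is cut by the rank-$n$ cosection for $\iota_-$ and the inner, by the pullback square following \eqref{diag:fact}, is cut by the rank-$(m-1)$ cosection of $\iota_-^*\sR_-^\vee \boxtimes \sO(-1)$, for a combined rank $n + m - 1$. The classical truncation $\widehat{Z}_\cl$ agrees with the classical fibre product $\PP_\cl(\Coker\sigma) \times_X \PP_\cl(\Coker\sigma^\vee)$: the ``extra'' shifted-line-bundle cosection in Lemma \ref{lem:dproj:incidence}(3) becomes trivial on $\pi_0$ via Illusie's identity $\Sym^*(\Sigma L) \simeq \sO \oplus \Sigma L$ for a line bundle $L$. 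Over $X_i \setminus X_{i+1}$ this carries a $\PP^{n-m+i-1} \times \PP^{i-1}$-fibre, so local codimension $c_i + 2(m-i)$ in $Y$; the inequality $c_i + 2(m-i) \ge n + m - 1$ unwinds to \eqref{eqn:condition:hatZ}, and the same dominant-component analysis gives \eqref{eqn:condition:irr:hatZ}. The closing parenthetical observation -- that $\widehat{Z}$ is canonically the classical fibre product when classical -- then follows from the universality of the fibre product.

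The main hurdle will be the bookkeeping in (3): pinning down that the combined cosection on $Y$ has rank $n + m - 1$ rather than $n + m$ (reflecting the ``virtual codimension $-1$'' of $\widehat{Z} \hookrightarrow Z$ coming from the shifted line bundle), and carefully computing $\widehat{Z}_\cl$ via the same pullback square. Once these are in place, the dimension counts across all three cases are uniform, with the Macaulay--Eagon--Northcott bound recurring as the reason $c_1$ is pinned at the minimum value $n - m + 1$ whenever $X_1$ is non-empty.
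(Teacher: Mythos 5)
Your proposal is correct and follows essentially the same approach as the paper: realize each space as the derived zero locus of a regular-enough cosection of a vector bundle inside a smooth projective bundle over $X$, use the Cohen--Macaulay property and Krull's height theorem to reduce classicality to a codimension bound, and translate that bound to the conditions on $\codim_X(X_i)$ via the stratum-wise fibre dimensions over $X_i \setminus X_{i+1}$. The paper gives this argument explicitly only for $\PP(\sF)$ and then asserts that $\PP(\Sigma\sF^\vee)$ and $\widehat{Z}$ are handled ``in a similar manner'' via Proposition \ref{prop:proj:PB} and Lemma \ref{lem:dproj:incidence}; your part (3) — realizing $\widehat{Z}$ inside $\PP(\sV)\times_X\PP(\sW^\vee)$ as the combined zero locus of a rank-$(n+m-1)$ cosection via the factorization \eqref{diag:fact}, and noting that the shifted line-bundle cosection is invisible on classical truncations — is exactly the intended elaboration and the dimension count $c_i + 2(m-i) \ge n+m-1$ correctly reproduces \eqref{eqn:condition:hatZ}.
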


\begin{proof} We first prove the assertions about $\PP(\sF)$. By virtue of Proposition \ref{prop:proj:PB}, the closed immersion $\PP(\sF) \hookrightarrow \PP(\sV)$ is the derived zero locus of a section $s$ of the vector bundle $\sW^\vee \otimes \sO_{\PP(\sV)}(1)$. Since $\PP(\sV)$ is an integral Cohen--Macaulay scheme, therefore for each irreducible component $Z$ of $\PP_\cl(\sF)$, $\codim_{\PP(\sV)}(Z) \le m$. Moreover, $\PP(\sF)$ is classical if and only if $s$ is a regular section, if and only if $\codim_{\PP(\sV)}(\PP_\cl(\sF)) \ge m$.
Since the restriction of the projection from the classical projectivization $\PP_\cl(\sF) \to X$ is smooth with fiber $\PP^{n-m+i-1}$ over the subscheme $X_i \backslash X_{i+1}$ (Remark \ref{rmk:classical:proj:functor}), it follows that $\codim_{\PP(\sV)}(\PP_\cl(\sF)) \ge m$ if and only if condition \eqref{eqn:condition:PF} is satisfied. If we assume $n - m \ge 1$ and $\PP(\sF)$ is classical, then $\pr^{-1}(X \backslash X_1)$ is a $\PP^{n-m-1}$-bundle over $X \backslash X_1$. Hence $\pr^{-1}(X \backslash X_1)$ has codimension $m$ inside $\PP(\sV)$. Therefore, in this case, $\PP(\sF)$ is irreducible if and only if \eqref{eqn:condition:irr:PF} is satisfied. The same argument proves the assertions about $\PP(\Sigma \sF^\vee)$. By virtue of Lemma \ref{lem:dproj:incidence}, the assertions about $\widehat{Z}$ can be proved in a similar manner.
\end{proof}

\begin{remark} In the situation of Lemma \ref{lem:criterion:P:classical}:
\begin{itemize}
	\item If $\rk \sF = n - m  \ge 1$, then we have the following implications:
	$$\eqref{eqn:condition:PF} \impliedby \eqref{eqn:condition:irr:PF} \impliedby \eqref{eqn:condition:PK} \impliedby \eqref{eqn:condition:irr:PK} \impliedby \eqref{eqn:condition:hatZ} \impliedby \eqref{eqn:condition:irr:hatZ}.$$
In general, each of the above implications ``$\impliedby$" is {\em strict} (for example, if $\rk \sF$ and $m$ are large enough, say, if $\rk \sF \ge 2$ and $m \ge 3$). 
	\item If $\rk \sF = n-m = 0$,  then we have the following implications:	$$\eqref{eqn:condition:PF} \iff \eqref{eqn:condition:PK} \impliedby \eqref{eqn:condition:irr:PK} \impliedby \eqref{eqn:condition:hatZ} \impliedby \eqref{eqn:condition:irr:hatZ}.$$	
In general, each of the above implications ``$\impliedby$" is {\em strict} (for example, if $m \ge 3$). 
\end{itemize}
	It is easy to construct numerous counter-examples for the inverse of eahc of the above implications ``$\impliedby$", and we will consider some of such examples in the next subsection.
\end{remark}

Next, we give criteria for the derived symmetric powers to be classical.

\begin{lemma} \label{lem:criterion:Sym:classical} Under assumption \eqref{assumption:CM} and let $X_i$ be defined as in Notation \ref{notation:deg:loci}. Then:
\begin{enumerate}
	\item \label{lem:criterion:Sym:classical-1} Let $d \ge 0$ be an integer. Then the canonical map 
		$$\Sym^d_{X}(\sF) \to \pi_0 (\Sym^d_{X}(\sF)) \simeq S^d_{X}(\Coker(\sigma))$$ 
	is an equivalence if and only if the following condition is satisfied:
		$$\codim_X(X_i) \ge i \quad \text{for} \quad i=1, 2, \ldots, \min\{m, d\}.$$
	\item \label{lem:criterion:Sym:classical-2} Let $d \ge 0$ be an integer.
		\begin{enumerate}
			\item \label{lem:criterion:Sym:classical-2i}If $d \ge n-m+1$. Then the canonical map 
		$$\Sym^d_{X}(\Sigma \sF^\vee) \to \pi_0 (\Sym^d_{X}(\Sigma \sF^\vee)) \simeq S^d_X(\Coker(\sigma^\vee))$$ 
	is an equivalence if and only if the following condition is satisfied:
		$$\codim_X(X_i) \ge n-m+i \quad \text{for} \quad  i=1, 2, \ldots, \min\{m, d\}.$$
			\item \label{lem:criterion:Sym:classical-2ii} If $0 \le d \le n-m$. Assume that the following condition is satisfied:
		$$\codim_X(X_1) = n-m+1.$$	 
	Then there are canonical equivalences
		$$
		\pi_i(\Sym^d_X(\Sigma \sF^\vee)) \simeq 
		\begin{cases} 
		 \Sym^d_{\cl}(\Coker(\sigma^\vee)) & i=0. \\
		 \bigwedge\nolimits^{n-m-d}_{\cl}(\Coker(\sigma)) \otimes (\det \sF)^\vee & i = d.\\
		 0 & \text{otherwise}.
		\end{cases}
		$$
		\end{enumerate}
\end{enumerate}
\end{lemma}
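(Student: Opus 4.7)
The plan is to represent each derived symmetric power by a Koszul-type complex and then to apply classical acyclicity and depth arguments. I would first invoke Corollary~\ref{cor:Illusie:S^n} to obtain canonical equivalences
\[
\Sym^d_X(\sF)\simeq[\bS^d(X,\sigma)]\quad\text{and}\quad\Sym^d_X(\Sigma\sF^\vee)\simeq[\bS^d(X,\sigma^\vee)],
\]
where $\bS^d(X,\cdot)$ is the Koszul-type complex of Construction~\ref{constr:Koszul:S^n}. In each case the canonical map to $\pi_0$ corresponds to the projection from the complex onto its classical $0$-th cohomology, which by right-exactness of classical symmetric powers equals $S^d_X(\Coker\sigma)$, respectively $S^d_X(\Coker\sigma^\vee)$. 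Thus assertions (1) and (2i) will each reduce to characterizing when the corresponding Koszul complex is acyclic in positive degrees on the Cohen--Macaulay scheme $X$, while (2ii) will require a more precise computation of its cohomology in the short regime $d\le n-m$.

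For assertions (1) and (2i), the plan is to apply the Buchsbaum--Eisenbud acyclicity criterion: over the Cohen--Macaulay scheme $X$, a bounded complex of finite locally free sheaves is acyclic in positive degrees iff at each step $k\ge 1$ the differential has its expected generic rank and the ideal of its maximal-rank minors has codimension at least $k$. The key input will be a local identification of the radicals of these Fitting ideals with the ideals defining the degeneracy loci: for $\bS^d(X,\sigma)$ at step $k$ this ideal cuts out $X_k$, giving the condition $\codim X_k\ge k$ for $1\le k\le\min\{d,m\}$ (the upper bound reflecting the length of the complex). The analogous identification for $\bS^d(X,\sigma^\vee)$ in the regime $d\ge n-m+1$ will carry an additional shift by $n-m$ in the rank threshold, reflecting that $\sigma^\vee$ has generic rank $m$ as a map from the rank-$n$ bundle $\sV^\vee$; this will yield the conditions $\codim X_k\ge n-m+k$ of (2i). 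The necessity of each condition will follow by localizing at a generic point of any offending $X_i$ and observing that the Koszul complex cannot be acyclic there.

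For assertion (2ii), the complex $\bS^d(X,\sigma^\vee)$ has length exactly $d\le n-m$, so is too short for full acyclicity; its cohomology must instead be concentrated in degrees $0$ and $d$. The plan is to first work over the open locus $U=X\setminus X_1$, where $\sigma^\vee|_U$ is a surjection of vector bundles with kernel canonically $\sF^\vee|_U$; then Proposition~\ref{prop:symwedgegamma}\eqref{prop:symwedgegamma-4} gives
\[\Sym^d_U(\Sigma\sF^\vee|_U)\simeq\Sigma^d\,\bigwedge\nolimits^d_U(\sF^\vee|_U),\]
which via the classical exterior-power duality identifies with $\Sigma^d\,\bigwedge\nolimits^{n-m-d}_U(\Coker\sigma|_U)\otimes(\det\sF|_U)^\vee$. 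The hypothesis $\codim X_1=n-m+1$ will then upgrade this locally generic picture globally: any homotopy sheaf $\pi_i$ of $\Sym^d_X(\Sigma\sF^\vee)$ with $i\notin\{0,d\}$ is supported on $X_1$, and a local application of Buchsbaum--Eisenbud to the short complex $\bS^d(X,\sigma^\vee)$ (of length $d<\codim X_1$) will force such $\pi_i$ to vanish; the same depth estimate will propagate the generic identifications of $\pi_0$ and $\pi_d$ globally.

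The main obstacle will be the precise linear-algebra identification of the Fitting ideals of the Koszul differentials with the ideals of the degeneracy loci $X_k$, sharp enough for Buchsbaum--Eisenbud to produce the exact (and asymmetric) codimension bounds appearing in (1) and (2i). Once that identification is secured in both the $\sigma$ and $\sigma^\vee$ settings, the vanishing halves of the lemma, together with the cohomological computation of (2ii), will reduce to routine depth and local-cohomology estimates on the Cohen--Macaulay scheme $X$.
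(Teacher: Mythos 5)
Your handling of assertions (1) and (2i) follows the same route as the paper: represent $\Sym^d$ by the Koszul complexes $\bS^d(X,\sigma)$ and $\bS^d(X,\sigma^\vee)$ via Corollary~\ref{cor:Illusie:S^n}, identify the radicals of the Fitting ideals of the differentials with the ideals of the degeneracy loci $X_i$ (the paper cites the proof of Theorem~A2.1 of Eisenbud, with the $n-m$ shift for $\sigma^\vee$), use the Cohen--Macaulay hypothesis to replace depth by codimension, and apply Buchsbaum--Eisenbud. No substantive difference there.

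For assertion (2ii) there is a gap. You propose to conclude vanishing of $\pi_i$ for $0<i<d$ by ``a local application of Buchsbaum--Eisenbud to the short complex $\bS^d(X,\sigma^\vee)$ (of length $d<\codim X_1$).'' But the Buchsbaum--Eisenbud criterion requires, at every step $k$, the rank identity $\rank F_k = \rank(d_k)+\rank(d_{k+1})$, and at the top step $k=d$ this forces $\rank(d_d)=\rank F_d=\binom{n}{d}$. In the regime $0\le d\le n-m$ the actual generic rank of $d_d$ is $\binom{n}{d}-\binom{n-m}{d}$ --- precisely because $\pi_d$ is generically a nonzero vector bundle of rank $\binom{n-m}{d}$, as your own computation over $U=X\setminus X_1$ shows. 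So the rank hypothesis of the criterion fails and Buchsbaum--Eisenbud cannot be invoked directly on $\bS^d(X,\sigma^\vee)$; it simply tells you the complex is not exact, which you already know. Relatedly, your plan to ``propagate the generic identification of $\pi_d$'' from $U$ to all of $X$ is not automatic: $\pi_d=\Ker(d_d)$ is torsion-free, but you still need to exhibit a global map to $\bigwedge^{n-m-d}_{\cl}(\Coker\sigma)\otimes(\det\sF)^\vee$ and show it is an isomorphism, which is a nontrivial identification. The paper addresses both points at once by working with the Eagon--Northcott family $\mathbf{EN}_i(X,\sigma)$ of Remark~\ref{rmk:ENcomplexes}, i.e.\ the complexes $\shC^i$ of Eisenbud's Theorem~A2.1, which are built so that the Buchsbaum--Eisenbud rank conditions do hold and which simultaneously produce the Koszul bottom piece (giving $\pi_0$) and the dualized Koszul top piece (giving $\pi_d$). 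To repair your argument you would need to either pass to these complexes, or first split off $\pi_d$ and show the resulting quotient complex of locally free sheaves satisfies the criterion --- which in effect reconstructs Eisenbud's argument.
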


\begin{proof}
We first prove assertion \eqref{lem:criterion:Sym:classical-1}. We will assume $n > m$, since the case $n=m$ can be reduced to assertion \eqref{lem:criterion:Sym:classical-2}. By virtue of Corollary \ref{cor:Illusie:S^n}, $\Sym_X^d (\sF)$ is canonically represented by the complex of vector bundles $\bS^{d}(X, \sW \xrightarrow{\sigma} \sV)$ (defined by formula \eqref{eqn:Koszul:S^n}). For each $0 \le i \le \min\{m,d\}$, we let $F_i$ denote the $i$th term of the complex $\bS^{d}(X, \sigma)$, let $d_i \colon F_i \to F_{i-1}$ denote the differential of $\bS^{d}(X, \sigma)$, and let $r_i = \sum_{j=i}^d (-1)^{j-i} \rank F_j$ be the ``expected rank" of $d_i$. Let $\sI_{X_i}$ denote the ideal of $X_i \subseteq X$, and let $I_{r_i}(d_i)$ denote the ideal generated by $(r_i \times r_i)$-minors of the differential $d_i$ (see \cite[\S 20.2]{Ei}). From the same argument as the proof of \cite[Theorem A2.1]{Ei}, we know that
	$$\sqrt{I_{r_i}(d_i)} = \sqrt{\sI_{X_i}} \quad \text{for} \quad i=1, 2, \ldots, \min\{m, d\}.$$
(This can also be deduced from the general result \cite[Theorem 6.1$(b)$]{AT19}). In particular, 
	$$\depth(I_{r_i}(d_i)) = \depth(\sI_{X_i}) = \codim_X(X_i)$$
since $X$ is Cohen--Macaulay. On the other hand, from Buchsbaum--Eisenbud's acyclicity criterion (\cite[Theorem 20.9]{Ei}), $\bS^{d}(X, \sigma)$ is a resolution of $S^d_X(\Coker(\sigma))$ if and only if
	$$\depth(I_{r_i}(d_i)) \ge i \quad \text{for} \quad i=1,2,\ldots, \min\{m,d\}.$$
This proves assertion \eqref{lem:criterion:Sym:classical-1} in the case where $n > m$.

We next prove assertion \eqref{lem:criterion:Sym:classical-2i}.
By virtue of Corollary \ref{cor:Illusie:S^n}, $\Sym_X^d (\Sigma \sF^\vee)$ is canonically represented by $\bS^{d}(X, \sV^\vee \xrightarrow{\sigma^\vee} \sV^\vee)$. For each $0 \le i \le \min\{m,d\}$, we let $F_i'$ denote the $i$th term of the complex $\bS^{d}(X, \sigma^\vee)$, let $d_i' \colon F_i' \to F_{i-1}'$ denote the differential, and let $r_i' = \sum_{j=i}^d (-1)^{j-i} \rank F_j'$. From the proof of \cite[Theorem A2.1]{Ei}, we know that
	$$
	\sqrt{I_{r_i'}(d_i')} = \begin{cases}
		\sqrt{\sI_{X_1}} &  \text{if} \quad 1 \le i \le n-m+1 \\
		\sqrt{\sI_{X_{i-n+m}}} &  \text{if} \quad n-m+2 \le i \le \min\{m, d\}.
	\end{cases}
	$$
(This can also be deduced from \cite[Theorem 6.1 $(a)$ \& $(b)$]{AT19}.) Therefore,
	$$
	\depth(I_{r_i'}(d_i')) = \begin{cases}
		\codim_X(X_1) &  \text{if} \quad 1 \le i \le n-m+1 \\
		\codim_X(X_{i-n+m})&  \text{if} \quad n-m+2 \le i \le \min\{m, d\},
	\end{cases}
	$$
since $X$ is Cohen--Macaulay. By Buchsbaum--Eisenbud's acyclicity criterion again, we know that $\bS^{d}(X, \sigma^\vee)$ is a resolution of $S_X^d(\Coker(\sigma^\vee))$ if and only if
	$$\depth(I_{r_i'}(d_i')) \ge i \quad \text{for} \quad i=1,2,\ldots, \min\{m,d\}.$$
This proves assertion \eqref{lem:criterion:Sym:classical-2i} (and assertion \eqref{lem:criterion:Sym:classical-1} in the case where $n = m$).

Finally, to prove assertion \eqref{lem:criterion:Sym:classical-2ii}, we consider the Eagon--Northcott complex $\mathbf{EN}_d(X, \sigma)$ of Remark \ref{rmk:ENcomplexes}. Similarly as above, the desired assertions follow from \cite[Theorem A2.1]{Ei} (where the complex $\shC^i$ there is our $\mathbf{EN}_i(X, \sigma)$) and Buchsbaum--Eisenbud's acyclicity criterion.
\end{proof}

\subsubsection{Examples violating \eqref{eqn:condition:PF}} By virtue of Lemma \ref{lem:criterion:P:classical}, as long as the pair $(X, \sF)$ in the situation of \eqref{assumption:CM} violates the condition \eqref{eqn:condition:PF}, $\PP(\sF)$ is not classical. For example:

\begin{example} \label{eg:A^1:PFcl} Let $X = \AA^1 = \Spec \ZZ[x]$, and $\sF = \sO_{\mathbf{0}}^{\oplus k}$, where $\mathbf{0} \in \AA^1$ is the closed point define by the equation $x=0$, $k \ge 1$ is an integer, and $\sO_{\mathbf{0}}$ is the structure sheaf of $\mathbf{0}$ regarded as a discrete $\sO_X$-module. In this case,
	$(X_1)_{\red} = \cdots = (X_{k-1})_{\red} = X_k = \{\mathbf{0}\}.$
Therefore, by virtue of Lemma \ref{lem:criterion:P:classical} and Lemma \ref{lem:criterion:Sym:classical}, we obtain:

\begin{itemize}
	\item If $k =1$, then
	$\PP(\sO_{\mathbf{0}})\simeq \PP_\cl(\sO_{\mathbf{0}}) \simeq \{\mathbf{0}\} \subseteq \AA^1$
is classical, and there is a canonical isomorphism 
	$\Sym_{\AA^1}^d(\sO_{\mathbf{0}}) \simeq \Sym_{\cl, \AA^1}^d(\sO_{\mathbf{0}})$
for all integers $d \ge 0$;
	\item If $k \ge 2$, then $\PP(\sO_{\mathbf{0}})$ is not classical, and the canonical map 
	$\Sym_{\AA^1}^d(\sO_{\mathbf{0}}^{\oplus k}) \to \Sym_{\cl, \AA^1}^d(\sO_{\mathbf{0}}^{\oplus k})$
is not an equivalence as long as $d \ge 2$. 
\end{itemize}
More concretely, we can identify the classical projectivization with a projective space
	$$\PP_\cl(\sO_{\mathbf{0}}^{\oplus k}) \simeq \PP_\ZZ^{k-1} \to \{\mathbf{0}\} \subseteq \AA^1.$$
On the other hand, by virtue of Proposition \ref{prop:proj:PB}, we can identify the derived projectivization $\PP(\sO_{\mathbf{0}}^{\oplus k})$ as the derived zero locus of $\AA^1 \times \PP^{k-1} = \Spec \ZZ[x] \times \Proj \ZZ[X_1, \ldots, X_k]$ defined by the $k$ equations $xX_1 = x X_2 = \cdots = x X_k = 0$. This derived zero locus has  underlying classical scheme $\{\mathbf{0}\} \times \PP^{k-1}$ defined by the equation $x=0$; but as long as $k \ge 2$, it is equipped with a non-trivial derived structure.
\end{example}

\subsubsection{Examples satisfying \eqref{eqn:condition:PF} but violating \eqref{eqn:condition:irr:PF} and \eqref{eqn:condition:PK}} 
By virtue of Lemma \ref{lem:criterion:P:classical}, as long as the pair $(X, \sF)$ in the situation of \eqref{assumption:CM} satisfies condition \eqref{eqn:condition:PF} but violate condition \eqref{eqn:condition:irr:PF} and \eqref{eqn:condition:PK}, $\PP(\sF)$ is classical but reducible, and $\PP(\Sigma \sF^\vee)$ is not classical. There are numerous examples. 

\begin{example}\label{eg:A^1:PKcl} Let $X = \AA^1 = \Spec \ZZ[x]$, and let $\sF$ denote the cokernel of the map 
	$$\sO_{\AA^1} \xrightarrow{(x,x)^T} \sO_{\AA^1}^{\oplus 2}.$$
\begin{itemize}[leftmargin=*]
	\item By virtue of Lemma \ref{lem:criterion:P:classical}, $\PP(\sF)$ is classical but $\PP(\Sigma \sF^\vee)$ is not. 
	\item By virtue of Lemma \ref{lem:criterion:Sym:classical}, the canonical map
		$\Sym_{\AA^1}^d(\sF) \to \Sym_{\cl, \AA^1}^d(\sF)$
		is an equivalence for every $d \ge 0$, but the canonical map $\Sym^d(\Sigma \sF^\vee) \to \Sym_\cl^d(\sO_{\mathbf{0}})$ is {\em not} an equivalence as long as $d \ge 1$ (the case $d \ge 2$ is Lemma \ref{lem:criterion:Sym:classical}  \eqref{lem:criterion:Sym:classical-2i} and the case $d=1$ is obvious). 
\end{itemize}
More concretely, $\PP(\sF) \simeq \AA^1 \coprod_{\rm pt} \PP^1$ is a nodal curve which is the union of $\AA^1$ and $\PP^1$ along a point, and $\PP(\Sigma \sF^\vee)$ is isomorphic the derived zero locus in $\AA^1$ which is ``cut out by the equation $(x=0)$ {\em twice}" (Proposition \ref{prop:proj:PB}). By virtue of Example \ref{eg:prop:proj:PB}, $\PP(\Sigma \sF^\vee)\simeq \Spec \ZZ[\varepsilon]$, where $\ZZ[\varepsilon] = \Sym_\ZZ^*(\Sigma \, \ZZ) \simeq \ZZ \oplus \ZZ \varepsilon$ is the ring of {\em derived} dual numbers (where we informally think of $\varepsilon$ as the generator in degree $1$; beware that $\ZZ[\varepsilon]$ is {\em not} isomorphic to the classical dual number $\ZZ[x]/(x^2)$). In particular, $\PP(\Sigma \sF^\vee)$ has underlying classical scheme $\{\mathbf{0}\} = \Spec \ZZ \simeq \Spec \ZZ[x]/(x)$, but with a non-trivial derived structure given by the extra equation $(x=0)$. 

Moreover, by virtue of Theorem \ref{thm:structural}, we obtain semiorthogonal decompositions
	\begin{align*}
	\QCoh\big(\AA^1 \coprod\nolimits_{\rm pt} \PP^1\big)  & = \big\langle \QCoh(\AA^1), ~\QCoh(\Spec  \ZZ[\varepsilon] ) \big \rangle, \\
	\Perf\big(\AA^1 \coprod\nolimits_{\rm pt} \PP^1\big)  & = \big\langle \Perf(\AA^1), ~\Perf(\Spec \ZZ[\varepsilon]) \big\rangle.
	\end{align*}
\end{example}

We could consider the following global version of Example \ref{eg:A^1:PKcl}:

\begin{example}[Reducible schemes]\label{eg:DinX}
Let $X$ be a scheme, let $D \subseteq X$ be an effective Cartier divisor, and let $f \in \H^0(X, \sO(D))$ be the section determined by $D$. Let $\sF = \sO_X \oplus \sO_D$. Then:
	\begin{itemize}
		\item By virtue of Proposition \ref{prop:proj:PB}, $\PP(\sF) \subseteq X \times \PP^1 = X \times \Proj \ZZ[X_0, X_1]$ is the divisor defined by the equation $(X_1 f = 0)$. Spelling this out, we see that $\PP(\sF)$ is classical and is union of $X$ and $\PP_D^1 = D \times \PP^1$ along the divisor $D$.
		\item By virtue of Example \ref{eg:prop:proj:PB}, we obtain $\PP(\Sigma \sF^\vee) \simeq \VV_{D}(\Sigma \, \sO_D(-D))$ is not classical (and is equipped with a nontrivial derived structure everywhere).
		More concretely, in any open subset $U \subseteq X$ for which $\sO_X(-D)|_U$ is a trivial line bundle, there is an equivalence $\PP_U(\Sigma \sF^\vee) \simeq (D \cap U) \times \Spec \ZZ[\varepsilon]$, where $\ZZ[\varepsilon] = \Sym_\ZZ^*(\Sigma \ZZ)$ is the ring of derived dual numbers.
		More informally, we could think of $\VV_{D}(\Sigma \, \sO_D(-D))$ as the total space ${\rm Tot}(\shN_{D/X}[-1])$ of $(-1)$-shifted normal bundle of $D$ in $X$, in the sense \cite{ACH, PTVV}. 
	\end{itemize} 
By virtue of Theorem \ref{thm:structural}, we obtain semiorthogonal decompositions
	\begin{align*}
	\QCoh\big(X \coprod\nolimits_{D} \PP_D^1\big)  & = \big\langle \QCoh(X), ~\QCoh(\VV_{D}(\Sigma \, \sO_D(-D))) \big \rangle, \\
	\Perf\big(X \coprod\nolimits_{D} \PP_D^1\big)  & = \big\langle \Perf(X), ~\Perf(\VV_{D}(\Sigma \, \sO_D(-D))) \big\rangle.
	\end{align*}
\end{example}	
	
\begin{example}[Attaching $\PP^1$'s to smooth points of curves]\label{eg:rational_tail}
In the above Example \ref{eg:DinX}, if we let $X=C$ be a reduced curve over a field $\kappa$ (that is, a reduced separated finite type scheme over $\kappa$ whose irreducible components all have dimension $1$), and let $p \in C(\kappa)$ be a non-singular rational point, and let $\sF = \sO_C \oplus \sO_p$, then we obtain semiorthogonal decompositions
	\begin{align*}
	\QCoh\big(C \coprod\nolimits_{p} \PP^1\big)  & = \big\langle \QCoh(C), ~\QCoh(\Spec \kappa[\varepsilon] ) \big \rangle, \\
	\Perf\big(C \coprod\nolimits_{p} \PP^1\big)  & = \big\langle \Perf(C), ~\Perf(\Spec \kappa[\varepsilon] ) \big\rangle,
	\end{align*}
where $\kappa[\varepsilon] = \Sym_\kappa(\Sigma \, \kappa) = \ZZ[\varepsilon] \otimes_\ZZ \kappa$ is the ring of derived dual numbers over $\kappa$; These semiorthogonal decompositions agree with the results in \cite[Example 5.2, Theorem 5.12]{Kuz21}. 

We could also repeat the above process: set $C_0 = C$, $p_0 = p$, and let $C_i$ be a curve obtained by attaching a $\PP^1$ to a nonsingular $\kappa$-point $p_{i-1} \in C_{i-1}(\kappa)$ of $C_{i-1}$. Then for an integer $n \ge 1$, $C_n$ is the union of $C$ with finite number of trees of $\PP^1$'s for which the total number of $\PP^1$ is $n$. We therefore obtain semiorthogonal decompositions:
\begin{align*}
	\QCoh(C_n)  & = \big\langle \QCoh(C), ~\text{$n$-copies of $\QCoh(\Spec \kappa[\varepsilon])$} \big \rangle, \\
	\Perf(C_n)  & = \big\langle \Perf(C), ~\text{$n$-copies of $\Perf(\Spec \kappa[\varepsilon])$} \big\rangle.
	\end{align*}
However, the morphism spaces of the latter components depend on the configurations of the trees of $\PP^1$'s. For example, in the case where $C_n$ is the union of $C$ and a chain $\Gamma_{n}$ of $\PP^1$'s, the endomorphism algebra of the latter component (of a choice of tilting bundle on $\Gamma_n$) is  described explicitly in \cite[Theorem 2.1]{Bur}. In general, the endomorphism algebras of ``the contributions" of the new attached trees of $\PP^1$'s can be described similarly; see \cite[Theorem 4.9]{KPS}.
\end{example}

We could also consider the following variant\footnote{I thank Michel van Garrel for asking about the relationship between the above example and the schemes in the construction of deformation to normal cones.} of Example \ref{eg:DinX} :
 \begin{variant}[Deformation to normal cones] \label{eg:normalcones}
 Similar to Example \ref{eg:DinX}, we let $X$ be a scheme and let $D \subseteq X$ be an effective Cartier divisor. Now we consider $\sF' = \sO_X \oplus \sO_D(D)$ instead of $\sF = \sO_X \oplus \sO_D$. Then $\PP(\sF') = X \coprod_{D} \PP_D(\shN \oplus \sO_D)$, where $\shN = \shN_{D/X}$ is the normal bundle, and $\PP(\Sigma \, \sF'^\vee) = D \times \Spec \ZZ[\varepsilon]$. Therefore, Theorem \ref{thm:structural} implies there are semiorthogonal decompositions 
	\begin{align*}
	\QCoh\big(X \coprod\nolimits_{D} \PP_D(\shN \oplus \sO_D)\big)  & = \big\langle \QCoh(X), ~\QCoh(D) \otimes \QCoh(\Spec \ZZ[\varepsilon])  \big \rangle, \\
	\Perf\big(X \coprod\nolimits_{D} \PP_D(\shN \oplus \sO_D)\big)  & = \big\langle \Perf(X), ~\Perf(D) \otimes \Perf(\Spec \ZZ[\varepsilon]) \big\rangle.
	\end{align*}
Notice that the reducible scheme $X \coprod_{D} \PP_D(\shN \oplus \sO_D)$ in the above formulae is precisely the special fiber appearing in the construction of deformation to normal cones applied to $D \subseteq X$. More concretely, let $M = \Bl_{D \times \{\infty\}} (X \times \PP^1)$ be the total space in the construction of deformation to normal cones of \cite[\S 5.1]{Ful}, and let $\rho \colon M \to \PP^1$ be the natural projection. Then for $t \ne \infty \in \PP^1$, $\rho^{-1}(t) \simeq X$, and $\rho^{-1}(\infty) \simeq X \coprod_{D} \PP_D(\shN \oplus \sO_D) \simeq \PP_X(\sF')$.
\end{variant}

\begin{remark}[Derived blowups] \label{rmk:egs:blowup} In view of Remark \ref{rmk:blowup}, we could view the semiorthogonal decompositions in Examples \ref{eg:DinX} and \ref{eg:normalcones} as {\em blowup formulae}, where $\PP(\sF)=X \coprod\nolimits_{D} \PP_D^1$ and $\PP(\sF')=X \coprod\nolimits_{D} \PP_D(\shN \oplus \sO_D)$ are the {derived blowups} (in the sense of \cite{KR18, He21}) of $X$ along the virtual codimension two closed derived subschemes $\PP(\Sigma \sF^\vee)={\rm Tot}(\shN_{D/X}[-1]) \subseteq X$ and $\PP(\Sigma \sF'^\vee)=D \times \Spec \ZZ[\varepsilon] \subseteq X$, respectively.
\end{remark}

If a variety $X$ over a field $\kappa$ has locally complete intersection singularities, then the cotangent complex $L_{X/\kappa}$ has Tor-amplitude $\le 1$. In many situations, the pair $(X, \sE= L_{X/\kappa})$ satisfies \eqref{eqn:condition:PF}. For example, we could consider the cases of nodal curves:

\begin{example}[Contracting rational bridges]\label{eg:rational_bridge}
Let $\kappa$ be a field, and let $C \subseteq \AA_\kappa^2 = \Spec \kappa [x,y]$ be the nodal curve defined by the equation $xy= 0$. Then the cotangent complex $\sF = L_{C/\kappa} \simeq \Omega_{C/\kappa}^1$ is represented by the complex of locally free sheaves:
	$$\left[\sO_C \xrightarrow{(y,x)^T} \sO_C {\rm d}x \oplus \sO_C {\rm d}y\right].$$
In this case, by virtue of Proposition \ref{prop:proj:PB}, $\PP(L_{C/\kappa})$ is the classical reducible scheme defined the equation $yX_0 + x X_1 =0$ in $C \times \Proj \kappa[X_0, X_1]$. 
In particular, $\widetilde{C} = \PP(\Omega_{C/\kappa}^1)$ is the total transform of $C \subseteq \AA^2$ along the blowup $\Bl_{0} \AA^2 \to \AA^2$, where $o =(0,0)$ is the origin. Then projection morphism $\pi \colon \widetilde{C} \to C$ is an isomorphism over $C \backslash \{o\}$ and the exceptional locus $E = \pi^{-1}(\o)$
is a projective line $\PP^1_\kappa$ (called the rational bridge) with multiplicity {\em two}\footnote{I thank Martin Kalck for bringing my attention to the multiplicity issue.}. The underlying morphism of reduced schemes $\widetilde{C}^{\rm red} =\PP(\Omega_{C/\kappa}^1)^{\rm red} \to C$ is the contraction of the rational bridge $\PP_\kappa^1$. 
On the other hand, $\PP(\Sigma L_{C/\kappa}^\vee)$ is the derived zero locus in $\AA^2$ cut out by the three equations $x=0, y=0$ and $xy=0$, hence we obtain $\PP(\Sigma L_{C/\kappa}^\vee)\simeq \Spec (\kappa[\varepsilon] \otimes_{\kappa} \kappa[\delta])$, where $\kappa[\varepsilon] = \Sym_\kappa(\Sigma \, \kappa) = \ZZ[\varepsilon] \otimes_\ZZ \kappa$ is the ring of derived dual numbers over $\kappa$ and  $\kappa[\delta] = \kappa[\delta]/(\delta^2)$ denote the ring of usual dual numbers. Theorem \ref{thm:structural} implies semiorthogonal decompositions
	\begin{align*}
	\QCoh(\widetilde{C})  & = \big\langle \pi^* \QCoh(C), ~\QCoh\big(\Spec (\kappa[\varepsilon] \otimes_{\kappa} \kappa[\delta])\big) \big \rangle, \\
	\Perf(\widetilde{C})  & = \big\langle \pi^* \Perf(C), ~\Perf\big(\Spec (\kappa[\varepsilon] \otimes_{\kappa} \kappa[\delta])\big) \big\rangle.
	\end{align*}
One can generalize this example by considering any nodal curve $C/\kappa$ (or a family of nodal curves $\sC/S$) and the projectivization $\widetilde{C}:=\PP(\Omega_{C/\kappa}^1) \to C$ (or $\widetilde{\sC}:=\PP(L_{\sC/S}) \to \sC$).
\end{example}

\begin{remark}[Stabilization maps] 
We could regard the formulae of Examples \ref{eg:A^1:PKcl} and \ref{eg:rational_tail} as describing the behavior of the derived categories under ``contracting rational tails" (\cite[\href{https://stacks.math.columbia.edu/tag/0E3G}{Tag 0E3G}]{stacks-project}).
On the other hand, the formula of Example \ref{eg:rational_bridge} describes the behavior of the derived categories under ``contracting non-reduced rational bridges" (see \cite[\href{https://stacks.math.columbia.edu/tag/0E7M}{Tag 0E7M}]{stacks-project} for more about the contraction of rational bridges). Notice that a recent result of Kuznetsov and Shinder \cite[Corollary 6.8]{KS22b} implies that there are no semiorthogonal decompositions for the underlying {\em reduced} scheme $\widetilde{C}^{\rm red}$ in Example \ref{eg:rational_bridge} (if $C$ is projective). 
These results altogether provide a comprehensive picture of the behavior of derived categories of nodal curves under stabilization maps,  since the general stabilization maps of nodal curves are composed of operations of contracting rational tails and rational bridges (see, for example, \cite[\href{https://stacks.math.columbia.edu/tag/0E7Q}{Tag 0E7Q}]{stacks-project}).
\end{remark}

We can also consider higher-dimensional variants of Example \ref{eg:A^1:PKcl}. For example:

\begin{example}\label{eg:A^2:(x,y)} Let $X = \AA^2 = \Spec \ZZ[x,y]$, and let $\sF$ denote the cokernel of the map 
	$$\sO_{\AA^2}^{\oplus 2} \xrightarrow{ \big(\begin{smallmatrix}
x & 0  \\
0 & y \\
x & y
\end{smallmatrix} \big)}  \sO_{\AA^2}^{\oplus 3}.$$
Then $\rank \sF = 1$, $\codim_X(X_1) = 1$, $\codim_X(X_2) = 2$, so by virtue of  Lemma \ref{lem:criterion:P:classical}, $\PP(\sF)$ is classical but $\PP(\Sigma \sF^\vee)$ is not. More concretely:
	\begin{itemize}
		\item  $\PP(\sF) \subseteq \AA^2 \times \PP^2 = \Spec \ZZ[x,y] \times \Proj \ZZ[X,Y,Z]$ is the classical closed subscheme defined by the equations $xX + xZ = 0$ and $y Y + y Z = 0$. Unwinding the definition, we see that $\PP(\sF)$ has four irreducible components:
			$$\PP(\sF) = \AA^2 \cup (\AA^1_{x} \times \PP^1) \cup  (\{(0,0)\} \times \PP^2) \cup (\AA^1_{y} \times \PP^1).$$
		\item $\PP(\Sigma^\vee \sF) \subseteq \AA^2 \times \PP^1 = \Spec \ZZ[x,y] \times \Proj \ZZ[U, V]$ is the derived closed subscheme defined by the quations $xU = 0$, $y V=0$, and $xU+yV = 0$.  Unwinding the definition, we see that the underlying classical scheme of $\PP(\Sigma^\vee \sF)$ is the blowup of the nodal curve defined by $(xy = 0)$ along the nodal point, that is,
			$$\PP(\Sigma^\vee \sF)_\cl = \AA^1 \coprod_{\rm pt} \PP^1 \coprod_{\rm pt} \AA^1.$$
			But over each closed point, $\PP(\Sigma^\vee \sF)$ is equipped with a non-trivial derived structure which is isomorphic to that of the ring of derived dual numbers $\Spec \ZZ[\varepsilon]$: 
			$$\PP(\Sigma^\vee \sF) \simeq (\AA^1 \coprod_{\rm pt} \PP^1 \coprod_{\rm pt} \AA^1) \times \Spec \ZZ[\varepsilon].$$
	\end{itemize}
Theorem \ref{thm:structural} implies there are semiorthogonal decompositions
	\begin{align*}
	&\QCoh \big( \AA^2 \cup (\AA^1 \times \PP^1) \cup  (\{(0,0)\} \times \PP^2) \cup (\AA^1 \times \PP^1) \big)\\
	  & \qquad  \qquad = \big\langle \QCoh(\AA^2), ~\QCoh\big( \AA^1 \coprod_{\rm pt} \PP^1 \coprod_{\rm pt} \AA^1\big) \otimes \QCoh(\Spec  \ZZ[\varepsilon] ) \big \rangle, \\
	&\Perf \big( \AA^2 \cup (\AA^1 \times \PP^1) \cup  (\{(0,0)\} \times \PP^2) \cup (\AA^1 \times \PP^1) \big) \\
	&  \qquad  \qquad= \big\langle \Perf(\AA^2), ~\Perf\big(  \AA^1 \coprod_{\rm pt} \PP^1 \coprod_{\rm pt} \AA^1\big) \otimes \Perf(\Spec \ZZ[\varepsilon]) \big\rangle.
	\end{align*}
\end{example}

\subsubsection{Examples satisfying \eqref{eqn:condition:irr:PF} but violating  \eqref{eqn:condition:PK}}  
By virtue of Lemma \ref{lem:criterion:P:classical}, as long as the pair $(X, \sF)$ in the situation of \eqref{assumption:CM} satisfies condition \eqref{eqn:condition:irr:PF} but violates condition \eqref{eqn:condition:PK}, $\PP(\sF)$ is classical and irreducible, but $\PP(\Sigma \sF^\vee)$ is not classical. For example:

\begin{example} \label{eg:A^2:(x,y,0)} 
Let $X = \AA^2 = \Spec \ZZ[x,y]$, and let $\sF$ denote the cokernel of the map 
	$$\sO_{\AA^2} \xrightarrow{(x,y,0)^T} \sO_{\AA^2}^{\oplus 3}.$$
Then $\PP(\sF)$ is the quadric cone $Q =\{xX + y Y =0\}  \subseteq \AA^2 \times \PP^2 = \Spec \ZZ[x,y] \times \Proj \ZZ[X,Y,Z]$, which is an irreducible threefold with a generic $\PP^1$-bundle structure over $X = \AA^2$ and with an $A_1$-singularity at the point $((0,0), [0,0,1]) \in Q$. By virtue of Proposition \ref{prop:proj:PB} again, $\PP(\Sigma \sF^\vee) \simeq \Spec \ZZ[\varepsilon]$, where $\ZZ[\varepsilon] = \Sym_\ZZ^*(\Sigma \,\ZZ)$ is the ring of derived dual numbers. Therefore, by virtue of Theorem \ref{thm:structural} we obtain semiorthogonal decompositions
	\begin{align*}
	\QCoh(Q)  & = \big\langle \QCoh(\Spec  \ZZ[\varepsilon] ), ~~ \QCoh(\AA^2) \otimes \sO_Q(1), \QCoh(\AA^2) \otimes \sO_Q(2)  \big \rangle, \\
	\Perf(Q)  & = \big\langle \Perf(\Spec \ZZ[\varepsilon]), ~ ~ \Perf(\AA^2) \otimes \sO_Q(1), \Perf(\AA^2) \otimes \sO_Q(2) \big\rangle.
	\end{align*}
We may replace $X=\AA^2$ by any reduced surface $S$ over a field $\kappa$ and $\sF$ by $\sO_S \oplus \sI_p$, where $\sI_p$ is the ideal sheaf of a non-singular rational point $p \in S(\kappa)$. The above semiorthogonal decompositions are compatible with results in \cite[\S 7.1]{Kaw19}, \cite[Theorem 4.6]{Xie21} and \cite[Theorem 3.6]{PS21} on nodal threefolds. 
\end{example}

Similar to Example \ref{eg:DinX}, we may consider the global version of Example \ref{eg:A^2:(x,y,0)}:
\begin{example}\label{eg:YinX} Let $X$ be a scheme and $Y \subseteq X$ be a regularly immersed closed subscheme of codimension $2$. Let $\sF = \sO_X \oplus \sI_Y$, where $\sI_Y$ is the ideal sheaf of $Y$. 
\begin{itemize}
	\item $\PP(\sF)$ is the classical scheme which is a $\PP^1$-bundle over $X \backslash Y$, $\PP^2$-bundle over $Y$, and has $\AA_1$-singularity along a unique section of the $\PP^2$-bundle over $Y$. More concretely, let $U \subseteq X$ be any open subscheme for which $Y \cap U \subset U$ is defined by the equation $(f_1 = f_2 = 0)$, where $(f_1, f_2) \in \sO_X(U)$ is a regular sequence. Then by virtue of Proposition \ref{prop:proj:PB},  $\PP_U(\sF) \subseteq U \times \PP^2 = U \times \Proj \ZZ[X_0, X_1, X_2]$ is an effective divisor defined by the equation $(f_1 X_1 + f_2 X_2) = 0$.
	\item By virtue of Example \ref{eg:prop:proj:PB}, $\PP(\Sigma \sF^\vee) \simeq \VV_Y(\Sigma (\det \shN_{Y/X})^\vee)$ which we could informally think of as the total space $(\det \NN_{Y/X}) [-1]$ of the $(-1)$-shifted determinant bundle of normal bundle, where $\shN_{Y/X}$ is the normal bundle of $Y$ inside $X$. More concretely, over the open subsets $U \subseteq X$ of the previous bullet,
	$\PP_U(\Sigma \sF^\vee)$ is equivalent to $(Y \cap U) \times \Spec \ZZ[\varepsilon]$, where $\ZZ[\varepsilon] = \Sym_\ZZ^*(\Sigma \ZZ)$ as usual. 
\end{itemize}
By virtue of Theorem \ref{thm:structural} we obtain semiorthogonal decompositions
	\begin{align*}
	\QCoh(\PP(\sF))  & = \big\langle \QCoh(\VV_Y(\Sigma (\det \shN_{Y/X})^\vee)), ~~ \QCoh(X) \otimes \sO(1), \QCoh(X) \otimes \sO(2)  \big \rangle, \\
	\Perf(\PP(\sF))  & = \big\langle \Perf(\VV_Y(\Sigma (\det \shN_{Y/X})^\vee)), ~ ~ \Perf(X) \otimes \sO(1), \Perf(X) \otimes \sO(2) \big\rangle.
	\end{align*}
\end{example}

\subsubsection{Examples satisfying \eqref{eqn:condition:PK} or even \eqref{eqn:condition:irr:PK} but violating  \eqref{eqn:condition:hatZ}} 
By virtue of Lemma \ref{lem:criterion:P:classical}, as long as the pair $(X, \sF)$ in the situation of \eqref{assumption:CM} satisfies condition \eqref{eqn:condition:PK} (resp. \eqref{eqn:condition:irr:PK}) but violates condition  \eqref{eqn:condition:hatZ},   $\PP(\sF)$ and $\PP(\Sigma \sF^\vee)$ are classical (resp. classical and irreducible) but the correspondence scheme $\widehat{Z}$ is not classical. 

\begin{example} Let $X= \AA^2 = \Spec \ZZ[x, y]$, and let $\sF$ be the cokernel of the map 
	$$\sO_X^{\oplus 2} \xrightarrow{ \big(\begin{smallmatrix}
x & 0  \\
0 & y \\
\end{smallmatrix} \big)} \sO_X^{\oplus 2}.$$
By virtue of Lemma \ref{lem:criterion:P:classical}, both $\PP(\sF)$ and $\PP(\Sigma \sF^\vee)$ are classical and reducible (in fact, they are both equivalent to the blowup of $\{xy = 0\} \subseteq \AA^2$ along the origin $\mathbf{0}$), but the universal incidence space $\widehat{Z}$ is {\em not} classical. Furthermore, Theorem \ref{thm:structural} implies that the diagram $\PP(\sF) \xleftarrow{r_+} \widehat{Z} \xrightarrow{r_-}  \PP(\Sigma \sF^\vee)$ induces equivalences of $\infty$-categories
	$$r_{+,*} \, r_-^* \colon \QCoh(\PP(\Sigma \, \sF^\vee)) \simeq \QCoh(\PP(\sF)) \qquad r_{+,*} \, r_-^* \colon \Perf(\PP(\Sigma \, \sF^\vee)) \simeq \Perf(\PP(\sF)).$$
(Moreover, the same argument of \cite[Theorem 4.1]{JL18} shows that the composition of equivalences $(r_{-,*} \, r_+^*) \circ (r_{+,*} \, r_-^*)$ is a non-trivial spherical twist in the sense of \cite{ST, AL}.)
\end{example}

\begin{example}\label{eg:A^4:birational} Let $X =\AA^4 = \Spec \ZZ[x,y,z,w]$ and let $\sF$ be the cokernel of the map 
	$$\sO_X^{\oplus 3} \xrightarrow{\sigma = \left(\begin{smallmatrix} x & y & w \\ z & x & y \\ w & z & 0 \end{smallmatrix} \right)} \sO_X^{\oplus 3}.$$
Then $\codim X_1 = 1$ and $\codim X_2 = X_3 = 4$. By virtue of Lemma \ref{lem:criterion:P:classical}, $\PP(\sF)$ and $\PP(\Sigma \sF^\vee)$ are classical, irreducible threefolds (which are both birational equivalent to the determinantal variety $X_1 = \{ - xw^2 + y^2 w+ z^2w -x y z =0\} \subseteq \AA^4$), and $\widehat{Z}$ is {\em not} classical. Furthermore, the diagram $\PP(\sF) \xleftarrow{r_+} \widehat{Z} \xrightarrow{r_-}  \PP(\Sigma \sF^\vee)$ induces equivalences of $\infty$-categories
	$$r_{+,*} \, r_-^* \colon \QCoh(\PP(\Sigma \, \sF^\vee)) \simeq \QCoh(\PP(\sF)) \qquad r_{+,*} \, r_-^* \colon \Perf(\PP(\Sigma \, \sF^\vee)) \simeq \Perf(\PP(\sF)).$$
(Moreover, the same argument of \cite[Theorem 4.1]{JL18} shows that the composition of equivalences $(r_{-,*} \, r_+^*) \circ (r_{+,*} \, r_-^*)$ is a non-trivial spherical twist in the sense of \cite{ST, AL}.)
\end{example}

There are also similar examples where $X$ is an irreducible sixfold, $\sigma$ is a $4 \times 4$-matrix, and $\PP(\sF)$ and $\PP(\Sigma \sF^\vee)$ are classical, irreducible fivefolds which are connected by a flop, but the universal incidence space $\widehat{Z}$ which induces the derived equivalence is not classical.

\subsubsection{Examples satisfying \eqref{eqn:condition:hatZ} but possibly violating \eqref{eqn:condition:irr:hatZ}}

\begin{example}[Symmetric matrices] The examples of generic symmetric matrices of size $k \times k$, where $k \ge 3$, are typical situations where condition \eqref{eqn:condition:hatZ} is satisfied but condition \eqref{eqn:condition:irr:hatZ} is not. We consider a concrete case, where $X = \AA^{6}= \Spec \ZZ[a,b,c,e,d,f]$ is the space of symmetric $3 \times 3$-matrices, and $\sigma \colon \sO_X^{\oplus 3} \to \sO_X^{\oplus 3}$ is the tautological map
	$$\sigma = \left( \begin{array}{ccc} a & b & c \\ b & d & e \\ c & e & f \end{array} \right).$$
Then $\codim_X (X_1) = 1$, $\codim_X(X_2) = 3$, and $\codim_X(X_3) = 6$. Let $Z = X_1$ denote the symmetric determinantal variety. Then both $\PP(\sF)$ and $\PP(\Sigma \sF^\vee)$ are (classical, irreducible) resolutions of $Z$. More concretely, they are both isomorphic to the subscheme $\widetilde{Z}$ of $\AA^6 \times \PP^2 =  \Spec \ZZ[a,b,c,e,d,f] \times \Proj \ZZ[X,Y,Z]$ defined by the equations 
	\begin{align*}
		aX + bY + cZ = 0, \qquad
		bX + dY + e Z = 0, \qquad
		c X + eY + f Z = 0.
	\end{align*}
The universal incidence space $\widehat{Z}$ is classical, with two irreducible components, and isomorphic to the classical fiber product $\widetilde{Z} \times_Z^\cl \widetilde{Z}$. Moreover, by virtue of Theorem \ref{thm:structural}, the diagram $\widetilde{Z} \xleftarrow{r_+} \widetilde{Z} \times_Z^\cl \widetilde{Z} \xrightarrow{r_-}  \widetilde{Z}$ induces auto-equivalences of $\infty$-categories
	$$r_{+,*} \, r_-^* \colon \QCoh(\widetilde{Z} ) \simeq \QCoh(\widetilde{Z} ) \qquad r_{+,*} \, r_-^* \colon \Perf(\widetilde{Z} ) \simeq \Perf(\widetilde{Z} ).$$
By virtue of \cite[Theorem 4.1]{JL18}, the above auto-equivalences are non-trivial, and the composition $(r_{-,*} \, r_+^*) \circ (r_{+,*} \, r_-^*)$ is a (non-trivial) spherical twist in the sense of \cite{ST, AL}.
\end{example}

\begin{example}[Symmetric products of curves, {\cite[\S 5.5]{Tod2}}] Let $C$ be a complex smooth projective curve of genus $g \ge 1$ and let $0 \le \delta \le g-1$ be an integer. Let $C^{(g-1-\delta)}$ and $C^{(g-1+\delta)}$ denote the $(g-1-\delta)$-th and $(g-1+\delta)$-th symmetric products of $C$, respectively. Let $X$ denote Picard variety $\Pic^{g-1+\delta}(C)$ of line bundles of degree ($g-1+\delta$) on $C$. By virtue of \cite[\S 3.1.3]{JL18}, there exists a quasi-coherent sheaf $\sF$ of perfect-amplitude contained in $[0,1]$ and rank $\delta$ over $X$, for which there are canonical identifications 
	$$\pr_+ ={\rm AJ} \colon C^{(g-1+\delta)} = \PP(\sF)  \to X \qquad \pr_- ={\rm AJ}^\vee \colon C^{(g-1-\delta)} = \PP(\Sigma^\vee \sF) \to X,$$
where ${\rm AJ}$ is the Abel--Jacobi map which carries an effective divisor $D$ on $C$ to $\sO(D)$, and ${\rm AJ}^\vee$ is the Serre-dual version of ${\rm AJ}$ which carries $D$ to $\sO(K_C - D)$.
\begin{itemize}
	\item By virtue of \cite[Lemma B.1]{JL18}, the condition \eqref{eqn:condition:hatZ} is always satisfied by the pair $(X,\sF)$, that is, the universal incidence space $\widehat{Z}$ is classical, and is isomorphic to the classical fiber product of $C^{(g-1-\delta)}$ and $C^{(g-1+\delta)}$ over $X$.
	\item Suppose $g \ge 3$. Then by virtue of the proof of \cite[Corollary 5.1]{J19}, we see that the condition \eqref{eqn:condition:irr:hatZ} is satisfied (that is, $\widehat{Z}$ is irreducible) if $C$ is not hyperelliptic. 
\end{itemize}
The converse of the second assertion is almost true in the following sense: 

\begin{itemize}
	\item Suppose $g \ge 3$, then as long as $C$ is hyperelliptic and $0 \le \delta \le g-3$, the condition \eqref{eqn:condition:irr:hatZ} is violated, that is, $\widehat{Z}$ is reducible.
	(To prove this assertion, observe that any complete linear series $g_{g-1-\delta}^1$ on the hyperelliptic curve $C$ is of the form $g_{2}^1 + p_1 + \cdots + p_{g-\delta-3}$, where $p_i \in C(\CC)$. Therefore the Brill--Noether locus $W_{g-1-\delta}^1(C)\neq \emptyset$ and $\dim(W_{g-1-\delta}^1(C))= g-\delta-3$, that is, the degeneracy locus $X_2 \neq \emptyset$ satisfies $\codim_X(X_2)= \delta + 3 < \delta + 2\cdot 2$, which violates \eqref{eqn:condition:irr:hatZ}. See \cite[Chapter IV, \S 3]{ACGH} for the involved notations.) 
\end{itemize}

Since the condition \eqref{eqn:condition:hatZ} is satisfied by all curves $C$, Theorem \ref{thm:structural} in this case reproduces Toda's result \cite[Corollary 5.11]{Tod2} (see also \cite[Corollary 3.10]{JL18}, \cite{BK19}):
	\begin{align*}
	\QCoh(C^{(g-1+\delta)}) &= \big \langle \Phi(\QCoh(C^{(g-1-\delta)})), ~\QCoh(\Pic^{g-1+\delta}(C))(1), \cdots, \QCoh(\Pic^{g-1+\delta}(C))(\delta)\big \rangle, \\
	\Perf(C^{(g-1+\delta)}) &= \big \langle \Phi(\Perf(C^{(g-1-\delta)})), ~\Perf(\Pic^{g-1+\delta}(C))(1), \cdots, \Perf(\Pic^{g-1+\delta}(C)(\delta)\big \rangle
	\end{align*}
where $\Phi = r_{+,*} \, r_-^*$ is the functor induced by the diagram $C^{(g-1+\delta)} \xleftarrow{r_+} \widehat{Z}  \xrightarrow{r_-} C^{(g-1-\delta)}$ of the classical fiber product. Therefore, to understand the $\infty$-category $\QCoh(C^{(g-1+\delta)})$ is equivalent to understand the $\infty$-category $\QCoh(C^{(g-1-\delta)})$, the morphism algebra of the relative exceptional sequence $\{\sO_{C^{(g-1+\delta)}}(i)\}_{1 \le i \le \delta}$, and the morphism spaces from $\QCoh(C^{(g-1-\delta)})$ to $\sO_{C^{(g-1+\delta)}}(i)$. By a result of Lin \cite{Lin}, the $\infty$-category $\QCoh(C^{(g-1-\delta)})$ is indecomposable. On the other hand, the results on Beilinson's relations (Corollary \ref{cor:PVdot:relations}) completely describe the morphism algebra of the part $\{\sO_{C^{(g-1+\delta)}}(i)\}_{1 \le i \le \delta}$: for all $1 \le i, j \le \delta$,
	$$\Mapsp_{\Pic^{g-1+\delta}(C)}(\sO_{C^{(g-1+\delta)}}(i), \sO_{C^{(g-1+\delta)}}(j)) \simeq \Sym^{j-i}_{\Pic^{g-1+\delta}(C)} ( \sF) \simeq S^{j-i}_{\Pic^{g-1+\delta}(C)} (\pi_0(\sF))$$
(the last equivalence follows from Lemma \ref{lem:criterion:Sym:classical}). Moreover, we can describe the dual relative exceptional sequences in terms of the sheaf of relative differentials $\Omega_{{\rm AJ}}^i$ (Proposition \ref{prop:PG:dualexc}).
\end{example}

\begin{remark}Notice that in the situation of \eqref{assumption:CM}, as long as the pair $(X, \sF)$ satisfies condition \eqref{eqn:condition:hatZ}, all the three spaces $\PP(\sF)$, $\PP(\Sigma \sF^\vee)$ and $\widehat{Z}$ are classical. Therefore, the classical results in \cite[Theorem 3.4]{JL18}, \cite[Theorem 6.16]{J21} are already applicable to these cases. Moreover,  \eqref{eqn:condition:hatZ} (or even \eqref{eqn:condition:irr:hatZ}) is satisfied by a generic map $\sigma \colon \sW \to \sV$ if $\sW^\vee \otimes \sV$ is globally generated. The new information we get in these cases from the current paper is the generalized Serre's theorem \ref{thm:Serre:O(d)}, and Beilison's relations (Proposition \ref{prop:PG:dualexc}, Corollary \ref{cor:PVdot:relations}). Furthermore, by virtue of Lemmata  \ref{lem:criterion:P:classical} and \ref{lem:criterion:Sym:classical}, we can formulate the output of these results in terms of classical algebraic geometry (although the  proofs require the machinery of derived algebraic geometry).
\end{remark}

\section{Hecke correspondence moduli as derived projectivizations}\label{sec:Hecke}

Many moduli problems naturally fit into the framework of derived projectivizations.  
We will consider one such example in this section: the Hecke correspondence moduli. To motivate our discussion, we first consider the following two examples:
	
\begin{example}[Hilbert schemes of points on surfaces] \label{eg:Hilbn}
Let $X$ be a smooth complex surface, and for an integer $n \ge 0$, let $\Hilb_n$ denote the moduli space of Hilbert scheme of points on $X$, that is, $\Hilb_n$ parametrizes ideal sheaves $\sI_n \subseteq \sO_X$ for which $\sO_X/\sI_n$ is a zero dimensional sheaf of length $n$. We let $\Hilb_{n,n+1}$ denote the {\em nested} Hilbert scheme which parametrizes the pairs $(\sI_n \supseteq \sI_{n+1})$, where $\sI_n \in \Hilb_n$, $\sI_{n+1} \in \Hilb_{n+1}$, for which there is an exact sequence $0 \to \sI_{n+1} \to \sI_n  \to \CC_{x} \to 0$ for some closed point $x \in X(\CC)$. There is a diagram:
		$$
		\begin{tikzcd}
				& \Hilb_{n,n+1} \ar{ld}[swap]{p_+} \ar{d}{p_X} \ar{rd}{p_-} & \\
				\Hilb_{n+1} & X & \Hilb_{n}
		\end{tikzcd}
		$$		
where $p_\pm$ are the forgetful maps which carry $(\sI_n \supseteq \sI_{n+1})$ to $\sI_{n+1}$ and $\sI_n$, respectively, and $p_X$ is the map which carries $(\sI_n \supseteq \sI_{n+1})$ to $x \in X$, where $\sI_n/\sI_{n+1} \simeq \CC_x$. We let $Z_n \subseteq \Hilb_n \times X$ denote the universal subscheme. From the work of \cite{ES}, we know that
	\begin{itemize}
		\item The projection $\pi_- =(p_- \times p_X) \colon \Hilb_{n,n+1} \to \Hilb_n \times X$ identifies $\Hilb_{n,n+1}$ as the projectivization $\PP(\sI_{Z_n})$, where $\sI_{Z_n}$ is the ideal sheaf of $Z_n$.
		\item The projection $\pi_+= (p_+ \times p_X) \colon \Hilb_{n-1,n} \to \Hilb_n \times X$ identifies $\Hilb_{n-1,n}$ as the projectivization $\PP(\sExt^1(\sI_{Z_n}, \sO_{\Hilb_n \times X}) \simeq \PP(\omega^\cl_{Z_n})$, where $\omega^\cl_{Z_n}$ is the classical dualizing sheaf of the Cohem--Macaulay scheme $Z_n$.
	\end{itemize}
In particular, the situation fits into the picture of the classical version (\cite[Theorem 3.1]{JL18}) of the projectivization formula Theorem \ref{thm:structural}, and there is a semiorthogonal decomposition:
	$$\Dqc(\Hilb_{n,n+1})  = \langle \Dqc(\Hilb_{n-1,n}), ~ \Dqc(\Hilb_{n} \times X) \otimes \sO(1) \rangle.$$
See \cite{BK19} (or \cite[\S 3.1.4]{JL18}, \cite[\S 5.2]{J19}) for more details.
\end{example}

\begin{example}[One-point modifications of Gieseker stable sheaves on surfaces \cite{NegShuffle, NegHecke}] \label{eg:Gieseker} Let $X$ be a smooth complex projective surface with an ample divisor $H \subseteq X$. We fix a pair of invariants $(r, c_1) \in \NN \times H^2(X,\ZZ)$ such that $r >0$ and $\gcd(r, c_1 \cdot H)=1$, and let $\shM$ be the disjoint union $\bigsqcup_{c_2 \ge \frac{r-1}{2r} c_1^2} \shM_{r,c_1,c_2}(X)$, where $\shM_{r,c_1,c_2}(X)$ is the moduli space of Gieseker stable (discrete) sheaves with numerical invariants $(r, c_1, c_2)$. We consider the moduli space $\foZ$ which parametrizes pairs of Gieseker stable sheaves $(\sF' \subset \sF)$ for which $\sF, \sF' \in \shM$ and $\sF/\sF' \simeq \CC_x$, where $x \in X(\CC)$ is a closed point and $\CC_x$ is the skyscraper sheaf supported at $x$. Then the moduli $\foZ$ fits into a similar diagram as in Example \ref{eg:Hilbn},
			$$
			\begin{tikzcd}
				& \foZ \ar{ld}[swap]{p_+} \ar{d}{p_X} \ar{rd}{p_-} & \\
				\shM & X & \shM,
			\end{tikzcd}
			$$
where $p_+, p_-$ and $p_X$ are the forgetful maps which carry $(\sF' \subset \sF)$ to $\sF'$, $\sF$ and $x$, respectively. Similar to Example \ref{eg:Hilbn}, the forgetful maps $\pi_\pm =(p_\pm \times_S p_X)$ are (derived) projectivizations of sheaves of perfect-amplitude in $[0,1]$. See \cite{NegShuffle, NegHecke} for more details.
\end{example}

This section considers a simultaneous generalization of the above two examples. More concretely, let $X \to S$ be a family of derived schemes over a fixed prestack $S$, and let $\foM$ denote the moduli prestack of connective quasi-coherent sheaves on $X$ over $S$.

In \S \ref{sec:Hecke:one-step}, we will define a prestack $\foZ$ of ``one-point modifications of connective sheaves on $X$ over $S$", which parametrizes exact triangles of the form
	$$\sF' \to \sF \to x_* \sL$$
where $\sF', \sF \in \foM$, $\sL$ is a line bundle, and $x \colon S \to X$ is a section of $X \to S$. We show that under mild assumptions ($X \to S$ is separated), the forgetful map $\pi_- \colon \foZ \to \foM \times_S X$ is always a {\em derived projectivization} (Proposition \ref{prop:pi_-:foZ:proj}). Moreover, if we restrict our attention to the moduli of  ``one-point modifications of perfect objects on $X$ over $S$ with a specified Tor-amplitude", and suppose that $X \to S$ is smooth and separated, then both the restrictions of the forgetful functors $\pi_\pm \colon  \foZ \to \foM \times_S X$ are derived projectivizations (Proposition \ref{prop:pi_pm:foZ:proj}). 

In \S \ref{sec:Hecke:general}, we consider the moduli $\foZ_\lambda$ of general ``flags of one-point modifications of sheaves on $X$ over $S$", where $\lambda$ is a set partition which encodes the combinatorial information of the flags. We show that the moduli of general flags also fit into similar derived projectivization pictures as above (Propositions \ref{prop:pi_-:foZ_lambda:proj} and \ref{prop:pi_pm:foZ_lambda:proj}). 

In \S 
\ref{sec:Hecke:surface}, we focus on the case where $X \to S$ is a family of smooth, separated derived surfaces. In this case, the above forgetful functors are derived projectivizations of sheaves of perfect-amplitude in $[0,1]$. In particular, we could apply all the results of this paper to this situation; see Corollary \ref{cor:Hecke:surface} for more detail.

\begin{remark}
This section's exposition and notations closely follow Negu{\c{t}}'s works \cite{NegW, NegHecke, NegShuffle} which study the shuffle algebra structures and Hecke correspondences for the moduli of Gieseker stable sheaves on surfaces; see also \cite{PZ20, Z21} for more results in this direction. We expect the results of this paper to be helpful for the study of similar algebraic structures for the general prestacks of perfect objects of Tor-amplitude $[0,1]$ on a surface $X$ or a family $X \to S$ of surfaces. Furthermore, the general framework of this section allows us to consider the similar structures on the moduli spaces with other types of stability conditions. For example, it allows us to study algebraic structures on the moduli spaces $\foM_{\sigma}(X/S)$ of Bridgeland (semi-)stable objects on a family $X \to S$ of surfaces, where $X \to S$ is equipped with (a family of) Bridgeland stability conditions $\sigma$ in the sense of 
 \cite{Bri07, Bri08, AB12, BM11, BM14, BLM+}. We wish to explore these aspects in the future.
\end{remark}

\subsection{Hecke correspondences: the case of one-step flags}  \label{sec:Hecke:one-step}
Let $X$ be a prestack over the fixed base prestack $S$. In this subsection, we construct moduli functors $\foZ$ (resp. $\foZ^{\perf}$) of  ``one-step flag of one-point modifications of connective quasi-coherent sheaves (resp. perfect coherent sheaves)" on $X$ and study their basic properties. 

\begin{construction} \label{construction:foZ} Let $X$ be a prestack over $S$. Consider the following constructions:
	\begin{enumerate}[leftmargin=*]
		\item We let $\foM$ denote the functor $\CAlgDelta \to \shS$ over $S$ which carries each map $\eta \colon T = \Spec R \to S$ to the space $\foM(\eta) =\QCoh(X \times_S T)^{\cn, \,\simeq}$. We let $\sU \in \QCoh(X \times_S \foM)^\cn$ denote the universal connective quasi-coherent sheaf on $X \times_S \foM$. 
		\item We let $\foZ \colon \CAlgDelta \to \shS$ denote the moduli functor over $S$ which carries each map $\eta \colon T = \Spec R \to S$ to the full subspace of 
	$$\Fun(\Delta^1, \QCoh(X \times_S T)^\cn)^{\simeq} \times \Maps_{/T}(T, X \times_S T) \times \Pic(T)^{\simeq}$$
spanned by those elements 
	$$((\sF' \xrightarrow{\phi} \sF); \, x \colon T  \to X \times_S T ; \, \sL \in \Pic(T))$$
for which there is a fiber sequence 
	$$\sF' \xrightarrow{\phi} \sF \to x_* \sL$$
in $\QCoh(X \times_S T)$. (Here, $\Maps_{/T}(T, X \times_S T)$ denotes the space of sections $x \colon T \to X \times_S T$ of the natural projection $X \times_S T \to T$.) We could informally think of $\foZ$ as the ``moduli of one-step flag of one-point modifications of connective quasi-coherent sheaves on $X$ over $S$". We denote the universal element of $\foZ$ by 
	$$((\sU' \xrightarrow{\phi_{\rm univ}} \sU); \, x_{\rm univ};  \, \sL_{\rm univ}),$$
where $\phi_{\rm univ} \colon \sU' \to \sU$ is a morphism in $\QCoh(X \times_S \foZ)^{\cn}$, $x_{\rm univ} \colon \foZ \to X \times_S \foZ$ is a section of the  projection $X \times_S \foZ \to \foZ$, $\sL_{\rm univ} \in \Pic(X \times_S \foZ)$ is a line bundle, and for which there is a universal fiber sequence in $\QCoh(X \times_S \foZ)$:
		$$\sU' \xrightarrow{\phi_{\rm univ}} \sU \to (x_{\rm univ})_* \sL_{\rm univ}.$$
		\item We consider the following natural forgetful maps
		$$
		\begin{tikzcd}
				& \foZ \ar{ld}[swap]{p_+} \ar{d}{p_X} \ar{rd}{p_-} & \\
				\foM & X & \foM,
		\end{tikzcd}
		$$		
		where $p_+$, $p_-$, and $p_X$ are the respective forgetful maps which carry each element
			$$\zeta = ((\sF' \xrightarrow{\phi} \sF); \, x \colon T  \to X \times_S T ; \, \sL \in \Pic(T)) \in \foZ(\eta),$$
		where $\eta \colon T=\Spec R \to S$, to the elements $p_+(\zeta) = \sF' \in \foM(\eta)$, $p_-(\zeta) =\sF \in \foM(\eta)$, and the composition $p_X(\zeta) = (T \xrightarrow{x} X \times_S T \xrightarrow{\pr_X} X) \in X(\eta)$, respectively.
		
		\item We consider the following maps:
		\begin{align*}
			\pi_+ = (p_+ \times_S p_X) \colon \foZ \to \foM \times_S X, \qquad
			\pi_- = (p_- \times_S p_X) \colon \foZ \to \foM \times_S X.
		\end{align*}
(Therefore $\pi_+$ is the map which carries the element $\zeta=((\sF' \xrightarrow{\phi} \sF); x; \sL )$ to $(\sF', \pr_X \circ x)$, and $\pi_+$ is the map which carries $\zeta=((\sF' \xrightarrow{\phi} \sF); \, x ; \, \sL)$ to $(\sF, \pr_X \circ x)$.)

	\end{enumerate}
\end{construction}

The following result relates the functors $\foZ$ with derived projectivizations.

\begin{proposition}[Compare with {\cite[Definition 2.5, Proposition 2.8]{NegShuffle}}] \label{prop:pi_-:foZ:proj} In the situation of Construction \ref{construction:foZ}, suppose that the map $X \to S$ is separated and a relative derived scheme, then the projection map $\pi_- \colon \foZ \to \foM \times_S X$ identifies $\foZ$ with the derived projectivization $\PP_{\foM \times_S X} (\sU)$ and for which there is a canonical identification $\sO(1) \simeq \sL_{\rm univ}$.
\end{proposition}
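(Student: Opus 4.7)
The plan is to construct mutually inverse natural transformations between $\foZ$ and $\PP_{\foM \times_S X}(\sU_s)$, where $\sU_s$ is the following connective quasi-coherent sheaf on $Y := \foM \times_S X$: setting $\widetilde{\sU}$ to be the pullback of the universal sheaf on $X \times_S \foM$ along the projection $X \times_S Y \to X \times_S \foM$, and letting $s \colon Y \to X \times_S Y$ denote the tautological section induced by the $X$-factor (which is a closed immersion because $X \to S$ is separated), one sets $\sU_s := s^* \widetilde{\sU}$. Unwinding the definitions, a $T$-point $(\sF, x) \in Y(T)$ pulls $\sU_s$ back to $x^* \sF \in \QCoh(T)^{\cn}$. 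I would verify at the outset that this $\sU_s$ is the sheaf appearing in the proposition statement, so that $\PP_Y(\sU_s)$ classifies pairs $((\sF, x), q' \colon x^* \sF \to \sL)$ with $\sL$ a line bundle on $T$ and $q'$ surjective on $\pi_0$.

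Next, I construct a morphism $\Theta \colon \foZ \to \PP_Y(\sU_s)$ over $Y$. Given a $T$-point $((\sF' \xrightarrow{\phi} \sF \xrightarrow{q} x_*\sL), x, \sL)$ of $\foZ$, pull the fiber sequence back along $x$ to obtain a fiber sequence $x^*\sF' \to x^* \sF \xrightarrow{x^* q} x^* x_* \sL$ in $\QCoh(T)^{\cn}$. Using that $x$ is a closed immersion and hence $x_*$ is fully faithful, the counit equivalence $x^* x_* \sL \simeq \sL$ identifies $x^* q$ with a map $x^* \sF \to \sL$; since $x^* \sF$ and $x^* \sF'$ are both connective, this map is surjective on $\pi_0$ and therefore classifies a $T$-point of $\PP_Y(\sU_s)$ lying over $(\sF, x) = \pi_-(\zeta)$. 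Functoriality of pullback gives naturality in $T$.

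Conversely, I construct $\Xi \colon \PP_Y(\sU_s) \to \foZ$ using the adjunction $x^* \dashv x_*$. Given $((\sF, x), q' \colon x^* \sF \to \sL)$ with $q'$ surjective on $\pi_0$, let $q \colon \sF \to x_* \sL$ be its adjoint. The key verification is that $q$ is surjective on $\pi_0$: since $x$ is a closed immersion, $x_*$ is $t$-exact, so $\pi_0(x_* \sL) \simeq x_* \pi_0 \sL$, and $\pi_0(q)$ is adjoint to the map $x^* \pi_0(\sF) \to \pi_0 \sL$ obtained from $\pi_0(q')$ — which is surjective by hypothesis. It follows that $\sF' := \mathrm{fib}(q)$ is connective, so $((\sF' \to \sF \to x_*\sL), x, \sL)$ is a $T$-point of $\foZ$.

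Finally, I verify that $\Theta$ and $\Xi$ are homotopy-inverse: the composite $\Theta \circ \Xi$ sends $q'$ to $(x^* q)$, which returns $q'$ via the triangle identity and $x^* x_* \simeq \mathrm{id}$, while $\Xi \circ \Theta$ recovers $q$ from $x^* q$ by adjunction, and the fiber $\sF'$ is reconstructed up to contractible choice from $q$. By construction, the composite $\pi_- \circ \Xi$ is the projection $\PP_Y(\sU_s) \to Y$, and the tautological surjection $\mathrm{pr}^* \sU_s \twoheadrightarrow \sO(1)$ on $\PP_Y(\sU_s)$ pulls back under $\Theta$ to the universal map $\widehat{\sU}^* \to \sL_{\mathrm{univ}}$, yielding the identification $\sO(1) \simeq \sL_{\mathrm{univ}}$. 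The main technical obstacle is the careful bookkeeping around $t$-exactness and the $\pi_0$-surjectivity translation under the adjunction $x^* \dashv x_*$ for the closed immersion $x$, which hinges crucially on the separatedness hypothesis; everything else reduces to standard properties of fiber/cofiber sequences of connective sheaves and the triangle identities.
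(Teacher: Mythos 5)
Your overall strategy coincides with the paper's: identify the sheaf on $\foM \times_S X$ whose derived projectivization is to be compared with $\foZ$ (the paper calls this $\sU$; your $\sU_s$ is exactly the intended pullback along the tautological section), and then match $T$-points by trading a surjection $\varphi \colon \sF \to x_*\sL$ in $\QCoh(X\times_S T)$ against a surjection $\rho \colon x^*\sF \to \sL$ in $\QCoh(T)$ via the adjunction $x^* \dashv x_*$. The paper does not build two maps $\Theta, \Xi$ and verify they are inverse; it directly identifies the mapping spaces using the adjunction (so the only nontrivial point is that the ``surjective on $\pi_0$'' condition corresponds on both sides). In substance, though, you are after the same thing.

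There is, however, a genuine error at the center of your argument. You assert that because $x$ is a closed immersion, $x_*$ is fully faithful and hence the counit $x^* x_* \sL \to \sL$ is an equivalence. This is false for derived schemes: the counit of $x^* \dashv x_*$ for a closed immersion is \emph{not} an equivalence, and correspondingly $x_*$ is \emph{not} fully faithful. For example, for $x \colon \Spec k \to \AA^1_k$ and $\sL = k$, the object $x^* x_* k \simeq k \otimes_{k[t]} k$ is the two-term Koszul complex with $\pi_0 = \pi_1 = k$, so the counit $x^* x_* k \to k$ has a nontrivial fiber. This failure is exactly why the analogue of Proposition \ref{prop:pi_-:foZ:proj} requires a real argument: if the counit were invertible, the equivalence of the two $\pi_0$-surjectivity conditions would be tautological. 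As written, your construction of $\Theta$ and the check that $\rho = (\text{counit}) \circ x^*\varphi$ is surjective on $\pi_0$ both break: the fiber of $\rho$ is not $x^*\sF'$, so connectivity of $x^*\sF'$ alone does not give $\pi_0$-surjectivity of $\rho$.

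The repair is what the paper does in assertion $(*)$: (i) from the cofiber sequence $x^*\sF' \to x^*\sF \to x^*x_*\sL$ one gets that $x^*\varphi$ is surjective on $\pi_0$ (pullback preserves $\pi_0$-surjectivity); (ii) separately, the counit $x^*x_*\sL \to \sL$ is surjective on $\pi_0$ because $x$ is \emph{affine} (not because it is an equivalence); composing gives that $\rho$ is surjective on $\pi_0$. For the reverse direction your $t$-exactness argument is sound and is essentially the same content as the paper's: $x_*$ is $t$-exact for an affine morphism, $\pi_0$ commutes with the adjunction, and the key input is that for a closed immersion the unit $\sF \to x_*x^*\sF$ is surjective on $\pi_0$ (equivalently, the classical adjoint of a surjection along a closed immersion remains a surjection), which the paper checks locally via $\pi_0(M\otimes_A B)\simeq \Tor_0^{\pi_0 A}(\pi_0 M, \pi_0 B)$. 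Finally, for $\Theta \circ \Xi \simeq \id$ and $\Xi\circ\Theta\simeq\id$ you should appeal only to the triangle identities of the adjunction (which are all you need), not to the nonexistent counit equivalence.
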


\begin{proof} For any map $\eta \colon T = \Spec R \to S$, where $R \in \CAlgDelta$,  and a map $\widetilde{\eta} = (f \times_S g) \colon T \to \foM \times_S X$ over $\eta$, we set $\sF = (\id_X \times_S f)^* (\sU) \in \QCoh(X \times_S T)^\cn$ and $x = (g \times_S \id_T) \colon T \to X \times_S T$, so that $\widetilde{\eta}^* \,\sU \simeq x^* \sF$. Then the space of liftings $T \to \foZ$ of $\widetilde{\eta}$ is the space of pairs $( (\sF' \xrightarrow{\phi} \sF) \in \QCoh(X \times_S T)^\cn_{/\sF}, \sL \in \Pic(T))$ for which there is a fiber sequence $\sF' \xrightarrow{\phi} \sF \to x_* \sL$. Such a space is homotopy equivalent to the space of pairs $(\sL \in \Pic(T), (\sF \xrightarrow{\varphi} x_* \sL) \in \QCoh(X \times_S T)_{\sF/})$ for which $\varphi$ is surjective on $\pi_0$ (and a homotopy inverse map is given by setting $\sF'$ to be the fiber of $\varphi$). We claim that the above space of pairs $(\sL, \varphi)$ is homotopy equivalent to the space of pairs $(\sL \in \Pic(T), x^*\sF \xrightarrow{\rho} \sL)$ for which $\rho$ is surjective on $\pi_0$, where $\rho$ is the map induced by $\varphi$ from the adjunction 
	$$\Map_{\QCoh(X \times_S T)}(\sF, x_* \sL) \simeq \Map_{\QCoh(T)}(x^* \sF, \sL).$$	
In other words, the space of liftings $T \to \foZ$ of $\widetilde{\eta}$ along $\pi_-$ is canonical homotopy equivalent to the space $\PP_{\foM \times_S X}(\sU)(\widetilde{\eta})$ over $\widetilde{\eta}$, where $\PP_{\foM \times_S X}(\sU)$ is the derived projectivization (\S \ref{sec:def:dproj:qcoh}), which will prove the desired assertion. 

Let $\rho \colon x^*\sF \to \sL$ be a morphism which corresponds to $\varphi \colon \sF \to x_* \sL$ through the above adjunction. Then to prove the above claim, it suffices to show the following:
	\begin{enumerate}[label=$(*)$, ref=$*$]
	\item \label{prop:pi_-:foZ:proj:assertion*} $\rho$ is surjective on $\pi_0$ if and only if $\varphi$ is surjective on $\pi_0$.
	\end{enumerate}
We now prove \eqref{prop:pi_-:foZ:proj:assertion*}. First,  if $\varphi$ is surjective on $\pi_0$, then the counit map $x^* x_* \sL \to \sL$ is surjective on $\pi_0$ (since $x$ is an affine morphism), and $x^*\varphi$ is surjective on $\pi_0$ (since the property ``surjection on $\pi_0$" is stable under pullback). Therefore, we obtain that the composition $\rho \colon x^* \sF \xrightarrow{x^* \varphi} x^* x_* \sL \to \sL$ is surjective on $\pi_0$. On the other hand, if $\rho$ is surjective on $\pi_0$, then $x_* \rho$ is surjective on $\pi_0$ (since $x$ is affine). To show $\varphi \colon \sF \to x_*x^* \sF \xrightarrow{x_* \rho} x_* \sL$ is surjective on $\pi_0$, it suffices to show that the unit map $\sF \to x_* x^* \sF$ is surjective on $\pi_0$ for all closed immersions $x$ and connective quasi-coherent sheaves $\sF$. As this assertion is local, it suffices to show that if $A \to B$ is map of simplicial commutative rings which induces a surjective map of discrete commutative rings $\pi_0(A) \twoheadrightarrow\pi_0(B)$, and $M$ is a connective $A$-module, then the natural map $M \to M \otimes_A B$ induces an epimorphism $\pi_0(M) \twoheadrightarrow \pi_0(M \otimes_A B)$. But this is evident by virtue of  $\pi_0(M \otimes_A B) \simeq \Tor_0^{\pi_0(A)} (\pi_0(M),  \pi_0(B))$ (see \cite[Corollary 7.2.1.23]{HA}).
\end{proof}

Next, we restrict our attention to the ``one-step flag of one-point modifications" of quasi-coherent sheaves with a specified range of perfect-amplitude.

\begin{notation}\label{notation:foZ:perf}
Suppose that  the map of prestacks $X \to S$ is smooth, separated and a relative derived scheme of constant relative dimension $d \ge 1$. Then the smooth morphism $X \to S$ is equipped with a relative canonical line bundle given by the formula $\sK_{X/S} = \bigwedge^d L_{X/S}$, where $L_{X/S}$ is the relative cotangent complex (Definition \ref{def:relativecotangent}). In this situation:
	\begin{enumerate}[leftmargin=*]
		\item  For any prestack $Y$, we let $\Perf(Y)_{[0,d-1]}$ denote the full subcategory of $\QCoh(Y)$ spanned by objects of perfect-amplitude contained in $[0,d-1]$ (that is, spanned by connective perfect objects of Tor-amplitude $\le d-1$). 
		\item We let $\foM^{\perf}_{[0,d-1]}$ denote the subfunctor of $\foM$ such that for each map $\eta \colon T = \Spec R \to S$, $\foM^\perf_{[0,d-1]}(\eta) = \Perf(X \times_S T)_{[0,d-1]}^{\simeq}$ is the full subcategory of $\foM(\eta)= \QCoh(X \times_S T)^{\cn,\,\simeq}$ spanned by objects of perfect-amplitude contained in $[0,d-1]$. By abuse of notation, we also let $\sU \in \Perf(X \times_S \foM^{\perf}_{[0,d-1]})_{[0,d-1]}$ denote the universal perfect object.	
		\item Similarly, we let $\foZ^\perf_{[0,d-1]}$ denote the subfunctor of $\foZ$ such that for each map $\eta \colon T = \Spec R \to S$, $\foZ^\perf_{[0,d-1]}(\eta)$ is the full subcategory of $\foZ(\eta)$ spanned by those
		$$\zeta = ((\sF' \xrightarrow{\phi} \sF); \, x \colon T  \to X \times _ST ; \, \sL \in \Pic(T)) \in \foZ(\eta)$$
for which $\sF, \sF' \in \foM^\perf_{[0,d-1]}(T)$ (that is, $\sF$ and $\sF'$ are of perfect-amplitude in $[0,d-1]$). By abuse of notation, we use the same notation as Construction \ref{construction:foZ},
	$$((\sU' \xrightarrow{\phi_{\rm univ}} \sU); \, x_{\rm univ};  \, \sL_{\rm univ}),$$
to denote the universal element of $\foZ^\perf_{[0,d-1]}$, where $\sU', \sU \in \Perf(\foZ^\perf_{[0,d-1]} \times_S X)_{[0,d-1]}$.	
\end{enumerate}
\end{notation}

The following result shows that when restricted to perfect objects with a specified range of Tor-amplitudes, the projection $\pi_+$ also fits in the picture of derived projectivizations. 

\begin{proposition}[Compare with {\cite[Proposition 2.8]{NegShuffle}}] \label{prop:pi_pm:foZ:proj} 
In the situation of Notation \ref{notation:foZ:perf} and Construction \ref{construction:foZ}, the following is true:
	\begin{enumerate}
		\item  \label{prop:pi_pm:foZ:proj-1} The restriction $\pi_-|_{\foZ^{\perf}_{[0,d-1]}}  \colon \foZ^{\perf}_{[0,d-1]}  \to \foM^\perf_{[0,d-1]} \times_S X$ of the projection map $\pi_-$ identifies $\foZ^{\perf}_{[0,d-1]}$ with the derived projectivization $\PP_{\foM^\perf_{[0,d-1]} \times_S X} (\sU)$, and for which there is a canonical identification of line bundles $\sO(1) \simeq \sL_{\rm univ}$.
		\item \label{prop:pi_pm:foZ:proj-2} The restriction $\pi_+|_{\foZ^{\perf}_{[0,d-1]}} \colon \foZ^\perf_{[0,d-1]} \to \foM^\perf_{[0,d-1]} \times_S X$ of the projection map $\pi_+$ identifies $\foZ^\perf_{[0,d-1]} $ with the derived projectivization 
	$\PP_{\foM^{\perf}_{[0,d-1]} \times_S X} (\Sigma^{d-1} \, \sU^\vee \otimes \sK_{X/S})$, and for which there is a canonical identification of line bundles $\sO(1) \simeq \sL_{\rm univ}^\vee$.
	\end{enumerate}
\end{proposition}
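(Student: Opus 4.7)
The argument has two parts; part (1) is essentially a refinement of Proposition \ref{prop:pi_-:foZ:proj}. Given a $T$-point of $\foZ$ represented by a fiber sequence $\sF' \to \sF \to x_*\sL$ with $\sF \in \foM^\perf_{[0,d-1]}(T)$, one must check that $\sF' \in \foM^\perf_{[0,d-1]}(T)$ as well. Since $x$ is a section of the smooth morphism $X \to S$ of relative dimension $d$, the induced closed immersion $x \colon T \to X \times_S T$ is regular of codimension $d$, so $x_*\sL$ has Tor-amplitude contained in $[0,d]$. Then $\sF'$ is automatically connective (from the surjectivity on $\pi_0$ of $\sF \to x_*\sL$), and the long exact sequence obtained by tensoring the fiber sequence with an arbitrary discrete quasi-coherent sheaf $M$ yields $\pi_i(\sF' \otimes M) = 0$ for all $i \ge d$, since the neighboring terms $\pi_{i+1}(x_*\sL \otimes M)$ and $\pi_i(\sF \otimes M)$ both vanish. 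Combined with Proposition \ref{prop:pi_-:foZ:proj}, this proves part (1).

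For part (2), I first describe the fiber of $\pi_+$ over a $T$-point $(\sF', x)$ as the space of pairs $(\sL, \alpha \colon x_*\sL[-1] \to \sF')$, with $\sL \in \Pic(T)$, for which $\sF := \cofib(\alpha)$ lies in $\foM^\perf_{[0,d-1]}(T)$. (Connectivity of $\sF$, and hence surjectivity on $\pi_0$ of the resulting boundary map $\sF \to x_*\sL$, is automatic from the long exact sequence once $\sF'$ is connective.) Next, I invoke Grothendieck--Serre duality: since $\omega_{X/S} \simeq \Sigma^d \sK_{X/S}$ for the smooth family $X \to S$ of relative dimension $d$, the identity $\sO_S \simeq \omega_{\mathrm{id}_S} \simeq x^*\omega_{X/S} \otimes \omega_x$ forces the relative dualizing sheaf of $x$ to be $\omega_x \simeq \Sigma^{-d}(x^*\sK_{X/S})^\vee$. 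The adjunction $x_* \dashv x^!$ together with dualization of the invertible $\sL$ then yields canonical equivalences
\[
\Map(x_*\sL[-1], \sF') \simeq \Map(\sL, \Sigma^{1-d}(x^*\sF' \otimes x^*\sK_{X/S}^\vee)) \simeq \Map(\Sigma^{d-1}(x^*\sF')^\vee \otimes x^*\sK_{X/S},\, \sL^\vee).
\]
Since the pullback of $\Sigma^{d-1}\sU^\vee \otimes \sK_{X/S}$ to the $T$-point $(\sF', x)$ is precisely $\Sigma^{d-1}(x^*\sF')^\vee \otimes x^*\sK_{X/S}$, these pairs $(\sL, \alpha)$ correspond to points of the derived projectivization $\PP_{\foM^\perf_{[0,d-1]} \times_S X}(\Sigma^{d-1}\sU^\vee \otimes \sK_{X/S})$ via $\sL \simeq \sO(1)^\vee$, delivering the canonical identification $\sO(1) \simeq \sL_{\rm univ}^\vee$.

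The main obstacle is to match the two open conditions on the parameter space: that $\sF = \cofib(\alpha)$ has Tor-amplitude contained in $[0,d-1]$ on the $\foZ^\perf$ side, and that the corresponding dual map $\beta \colon \Sigma^{d-1}(x^*\sF')^\vee \otimes x^*\sK_{X/S} \to \sL^\vee$ is surjective on $\pi_0$ on the projectivization side. For $d=1$ both conditions reduce to pointwise non-vanishing of a section, so the equivalence is immediate from the classical projective-bundle picture. For general $d$, I plan to argue locally by resolving $x_*\sL$ via the Koszul complex associated to $x$ and analyzing the long exact sequence of $\pi_*(\blank \otimes M)$ for the cofiber triangle $x_*\sL[-1] \to \sF' \to \sF$ applied to discrete sheaves $M$ supported on the section $x(T)$: the potential obstruction $\pi_d(\sF \otimes M)$ is governed by the boundary $\pi_d(x_*\sL \otimes M) \to \pi_{d-1}(\sF' \otimes M)$, and this boundary should translate, under the duality identification above, into the map induced by $\pi_0(\beta)$ on the fibers of $M$, making the Tor-amplitude condition equivalent to the pointwise surjectivity of $\pi_0(\beta)$. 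Once this final matching of open conditions is in place, part (2) follows.
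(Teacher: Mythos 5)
Your proof of part (1) is correct and amounts to the same thing the paper does: the paper shows $\Sigma^{d-1}\sF'^\vee$ is connective using the dualized fiber sequence $\Sigma^{d-1}\sF^\vee \to \Sigma^{d-1}\sF'^\vee \to \Sigma^d(x_*\sL)^\vee$, while you verify the equivalent condition $\pi_i(\sF'\otimes M)=0$ for $i\ge d$ directly; the two are interchangeable.

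For part (2) your setup is also correct: the description of the fiber of $\pi_+$ over $(\sF',x)$ as pairs $(\sL,\alpha\colon x_*\sL[-1]\to\sF')$ (equivalently, $\psi\colon x_*\sL\to\Sigma\sF'$ in the paper's notation), and the chain of adjunction/duality equivalences computing $\Map(x_*\sL[-1],\sF')$ in terms of $\Map(\Sigma^{d-1}(x^*\sF')^\vee\otimes x^*\sK_{X/S},\sL^\vee)$, is exactly right (your $\omega_x \simeq \Sigma^{-d}(x^*\sK_{X/S})^\vee$ is correct). However, you explicitly defer the crucial step --- showing that the Tor-amplitude condition on $\sF=\cofib(\alpha)$ corresponds to surjectivity of $\pi_0(\beta)$ --- saying only "I plan to argue" via Koszul resolutions and boundary maps, with "should translate" where a proof needs to go. This is a genuine gap: the statement is precisely the identification of the two open conditions, and without it part (2) is not established.

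The gap can be closed without any Koszul computation, following the pattern of your own part (1). Write $\Sigma^d\psi^\vee\colon \Sigma^{d-1}\sF'^\vee\to\Sigma^d(x_*\sL)^\vee$ for the $\Sigma^d$-shifted dual of $\psi$. The map $\beta=\rho$ you produced is, after unwinding the adjunction equivalences, the adjoint of $\Sigma^d\psi^\vee$ under $x^*\dashv x_*$ (with a twist by $\sK_{X/S}$). Now invoke the claim $(\ast)$ from the proof of Proposition \ref{prop:pi_-:foZ:proj}: for $x$ a closed immersion, a map into $x_*(\text{line bundle})$ is surjective on $\pi_0$ if and only if its $x$-adjoint is. This gives: $\beta$ is surjective on $\pi_0$ if and only if $\Sigma^d\psi^\vee$ is. Next, if $\sF\in\Perf_{[0,d-1]}$, then $\Sigma^{d-1}\sF^\vee=\fib(\Sigma^d\psi^\vee)$ is connective, forcing $\Sigma^d\psi^\vee$ (and hence $\beta$) to be surjective on $\pi_0$. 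Conversely, if $\beta$ is surjective on $\pi_0$, then $\fib(\Sigma^d\psi^\vee)$ is connective; but $\sF:=\fib(\psi)$ is automatically connective and perfect (from the original fiber sequence), and $\fib(\Sigma^d\psi^\vee)\simeq\Sigma^{d-1}\sF^\vee$ being connective is exactly the statement that $\sF$ has Tor-amplitude $\le d-1$. This closes the equivalence of the two open conditions cleanly and uniformly in $d$, avoiding the case split and the unfinished Koszul analysis.
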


\noindent {\em Notation.} In the above assertion \eqref{prop:pi_pm:foZ:proj-2} and for the rest of this section, by abuse of notations, we shall use the same notation $\sK_{X/S}$ to denote the line bundle $f^* \sK_{X/S}$ for any given map of prestacks $f \colon Y \to X$. For example, in the expression of \eqref{prop:pi_pm:foZ:proj-2}, $\sU^\vee \otimes \sK_{X/S}$ denotes the object $\sU^\vee \otimes \pr_2^* \sK_{X/S}$, where $\pr_2 \colon \foM^{\perf}_{[0,d-1]} \times_S X \to X$ is the natural projection.

\begin{proof} For any map $\eta \colon T  = \Spec R \to S$, $R \in \CAlgDelta$, since $X \to S$ is a smooth, separated relative derived scheme, any section $x \colon T \to X \times_S T$ of the projection $X \times_S T \to T$ is a quasi-smooth closed immersion. In particular, $x_* \colon \QCoh(T) \to \QCoh(X \times T)$ carries perfect complexes to perfect complexes (Theorem \ref{thm:Neeman-Lipman-Lurie} \eqref{thm:Neeman-Lipman-Lurie-3}). Let $x_!$ denote the left adjoint of the pullback functor $x^*$, then there is a canonical equivalence $x_!(\sE) \simeq \Sigma^{-d} x_*(\sE \otimes \sK_{X/S}^\vee)$ for all $\sE \in \QCoh(X \times_S T)$ (Theorem \ref{thm:Neeman-Lipman-Lurie} \eqref{thm:Neeman-Lipman-Lurie-2vii}). In particular, for any $\sL \in \Pic(T)$, we have canonical equivalences $(x_* \sL)^\vee \simeq x_!(\sL^\vee) \simeq \Sigma^{-d} x_*(\sL^\vee \otimes \sK_{X/S}^\vee)$ (Theorem \ref{thm:Neeman-Lipman-Lurie} \eqref{thm:Neeman-Lipman-Lurie-2iv}), so that $x_* \sL$ and $\Sigma^d \, (x_* \sL)^\vee$ both have perfect-amplitude contained in $[0,d]$. 

We first prove assertion \eqref{prop:pi_pm:foZ:proj-1}. By virtue of Proposition \ref{prop:pi_-:foZ:proj}, we only need to show for any $\sF \in \Perf(X \times_S T)_{[0, d-1]}$, $\sL \in \Pic(T)$, and a map $\varphi \colon \sF \to x_* \sL$ which is surjective on $\pi_0$, the fiber of $\varphi$ is of perfect-amplitude in $[0, d-1]$. Let $\sF'  = \fib(\varphi)$, then $\sF'$ is clearly a connective perfect object. On the other hand, from the fiber sequence $\Sigma^{d-1} \sF^\vee \to \Sigma^{d-1} \sF'^\vee \to \Sigma^d (x_* \sL)^\vee$ we obtain that $\Sigma^{d-1} \sF'^\vee$ is a connective perfect object. Therefore $\sF'$ is of perfect-amplitude contained in $[0, d-1]$. This proves assertion \eqref{prop:pi_pm:foZ:proj-1}.

We next prove assertion \eqref{prop:pi_pm:foZ:proj-2}. For any given map $\widetilde{\eta} = (f' \times_S g) \colon T \to \foM^{\perf}_{[0,d-1]} \times_S X$, we set $\sF' = (\id_X \times_S f')^* (\sU) \in \Perf(X \times_S T)^\cn_{[0,d-1]}$ and $x = (g \times_S \id_T) \colon T \to X \times_S T$, so that $\widetilde{\eta}^* \,\sU \simeq x^* \sF'$. Then the space of liftings of $\widetilde{\eta}$ along $\pi_+|_{\foZ^{\perf}_{[0,d-1]}}$ is canonically identified with the space of pairs
	$$((\sF' \xrightarrow{\phi} \sF) \in (\Perf(X \times_S T)_{[0,d-1]})_{\sF'/}, \sL \in \Pic(T))$$
for which there is a fiber sequence $\sF' \xrightarrow{\phi} \sF \to x_* \sL$. This space is homotopy equivalent to the space of pairs $(\sL \in \Pic(T), x_* \sL \xrightarrow{\psi} \Sigma \sF')$ for which ${\fib}(\psi)$ is of perfect-amplitude in $[0, d-1]$ (and a homotopy inverse of the above map is given by setting $\sF = {\rm fib}(\psi)$). There are canonical homotopy equivalences of spaces
	\begin{align*}
	&\Map_{\QCoh(X \times_S T)}(x_*\sL, \Sigma \sF') \\
	&\simeq \Map_{\QCoh(X \times_S T)}(\Sigma^{-1} \, \sF'^\vee, (x_*\sL)^\vee) \\
	&\simeq \Map_{\QCoh(X \times_S T)}(\Sigma^{-1} \, \sF'^\vee, x_! (\sL^\vee))  \\		&\simeq \Map_{\QCoh(X \times_S T)}(\Sigma^{-1} \, \sF'^\vee, \Sigma^{-d} x_* (\sL^\vee \otimes \sK_{X/S}^\vee))  \\
	&\simeq  \Map_{\QCoh(T)}(x^*(\Sigma^{d-1} \, \sF'^\vee \otimes \sK_{X/S}), \sL^\vee).
	\end{align*}
Let $\rho \colon x^*(\Sigma^{d-1} \, \sF'^\vee \otimes \sK_{X/S}) \to \sL^\vee$ denote the map which corresponds to $\psi \colon x_*\sL \to \Sigma \, \sF'$ via the above equivalences. To prove the assertion \eqref{prop:pi_pm:foZ:proj-2}, it remains to show 
	\begin{enumerate}[label=$(*')$, ref=$*'$]
	\item \label{proof:prop:pi_pm:foZ:proj:*'}  $\rho$ is surjective on $\pi_0$ if and only if $\fib(\psi)$ is of perfect-amplitude in $[0, d-1]$.
	\end{enumerate}
 On the one hand, the assertion \eqref{prop:pi_-:foZ:proj:assertion*} of Proposition \ref{prop:pi_-:foZ:proj} implies that
	\begin{enumerate}[label=$(**')$, ref=$**'$]
	\item \label{proof:prop:pi_pm:foZ:proj:**'} $\rho$ is surjective on $\pi_0$ if and only if $\Sigma^d \psi^\vee \colon \Sigma^{d-1} \sF'^\vee \to \Sigma^d(x_*\sL)^\vee$ is surjective on $\pi_0$.
	\end{enumerate}
We now deduce assertion \eqref{proof:prop:pi_pm:foZ:proj:*'} from \eqref{proof:prop:pi_pm:foZ:proj:**'} as follows. If we assume $\sF : =\fib(\psi)$ is of perfect-amplitude in $[0,d-1]$, then $\fib(\Sigma^d \psi^\vee) \simeq \Sigma^{d-1} \sF^\vee$ is connective, which implies that $\rho$ is surjective on $\pi_0$, by virtue of assertion \eqref{proof:prop:pi_pm:foZ:proj:**'}. On the other hand, if $\rho$ is surjective on $\pi_0$, then $\sM: = \fib(\Sigma^d \psi^\vee)$ is connective by virtue of assertion \eqref{proof:prop:pi_pm:foZ:proj:**'}. Let $\sF:=\fib(\psi)$, then from the fiber sequence $\sF' \to \sF \to x_* \sL$ we obtain that $\sF$ is connective and perfect. By virtue of the equivalence $\sM \simeq \Sigma^{d-1} \sF^\vee$ again and the connectivity of $\sM$, we obtain $\sF$ is of perfect-amplitude contained in $[0,d-1]$. This proves assertion \eqref{proof:prop:pi_pm:foZ:proj:*'}.
\end{proof}

\subsection{Hecke correspondences: the case of general flags} \label{sec:Hecke:general} 
Let $X \to S$ be a map of prestacks. In this section, we generalize the constructions and results of the preceding \S\ref{sec:Hecke:one-step} to the case of general ``flags of one-point modifications" of quasi-coherent sheaves on $X$ over $S$. 

\begin{construction}[Compare with {\cite[Definition 2.13]{NegHecke}}]\label{construction:foZ_lambda} Let $X$ be a prestack over $S$. A {\em set partition $\lambda$ of size $n$} is an equivalence relation on the set $\{1, 2, \ldots, n\}$. Let $\lambda$ a set partition of size $n \ge 1$, and we let $\foZ_\lambda \colon \CAlgDelta \to \shS$ denote the functor which carries each map $\eta \colon T = \Spec R \to S$ to the full subspace of 
	$$\Fun(\Delta^n, \QCoh(X \times_S T)^\cn)^{\simeq} \times \Maps_{/T}(T, X \times_S T)^{\times n} \times (\Pic(T)^{\simeq})^{\times n}$$
spanned by those elements 
	$$\zeta = ((\sF_0 \xrightarrow{\phi_1} \sF_1 \xrightarrow{\phi_2} \cdots \xrightarrow{\phi_n} \sF_n), \{x_i \colon T  \to X \times_S T\}_{1 \le i \le n}, \{\sL_i\}_{1 \le i \le n})$$
 for which the following conditions are satisfied:
	\begin{itemize}
			\item The sections satisfy $x_i = x_j$ if $i \sim j$ in $\lambda$; 
			\item There are equivalences ${\rm cofib}(\sF_{i-1} \xrightarrow{\phi_i} \sF_{i}) \simeq (x_i)_* \sL_i$.
	\end{itemize}
We denote the universal element of $\foZ_\lambda$ by 
	$$\zeta_{\rm univ} =((\sU_0 \xrightarrow{\phi_{1, \rm univ}} \sU_1 \xrightarrow{\phi_{2, \rm univ}}  \cdots \xrightarrow{\phi_{n, \rm univ}} \sU_n), \{x_{i, \rm univ}\}_{1 \le i \le n}, \{\sL_{i, \rm univ}\}_{1 \le i \le n}),$$
where $\sU_i \in \QCoh(X \times_S \foZ_\lambda)^\cn$, $x_{i, \rm univ} \in \Map_{/\foZ_\lambda}(\foZ_\lambda, X \times_S \foZ_\lambda)$, $\sL_{i, \rm univ} \in \Pic(\foZ_\lambda)$. 
\end{construction}

Notice that if $n=1$, then $\foZ_\lambda = \foZ$. By convention, we set $\foZ_\lambda = \foM$ if $n=0$.

\begin{variant} In the situation of Construction \ref{construction:foZ_lambda}, we let $\widetilde{\foZ}_\lambda$ be the functor which carries each map $\eta \colon T = \Spec R \to S$ to the space of elements
	$$\widetilde{\zeta} = (\zeta,  \{\varphi_i\}_{1 \le i \le n})$$
where
	$$\zeta = ((\sF_0 \xrightarrow{\phi_1} \sF_1 \xrightarrow{\phi_2} \cdots \to \sF_n), \{x_i \colon T  \to X \times_S T\}_{1 \le i \le n}, \{\sL_i\}_{1 \le i \le n}) \in \foZ_\lambda(\eta),$$
	$$(\varphi_i \colon \sF_i \to (x_i)_* \sL_i) \in \Fun(\Delta^1, \QCoh(X \times_S T))^{\simeq},$$
and for which the following conditions are satisfied:
	\begin{itemize}
		\item The sections satisfy $x_i = x_j$ if $i \sim j$ in $\lambda$;
		\item The sequences $\sF_{i-1}\xrightarrow{\phi_i} \sF_i \xrightarrow{\varphi_i} (x_i)_* \sL_i$ are fiber sequences for all $1 \le i \le n$. 
	\end{itemize}
Then by virtue of \cite[Remark 1.1.1.7]{HA}, the natural forgetful map $\widetilde{\foZ}_\lambda(\eta) \to \foZ_\lambda(\eta)$ which carries $\widetilde{\zeta}=(\zeta, \{\varphi_i\}_{1 \le i \le n})$ to $\zeta$ is a trivial Kan fibration. Therefore, we obtain that the forgetful map $\widetilde{\foZ}_\lambda \to \foZ_\lambda$ is a natural isomorphism of functors. From now on, we will fix an inverse of the isomorphism $\widetilde{\foZ}_\lambda \to \foZ_\lambda$. Unwinding the definitions, this means that we will fix a choice of fiber sequences of the universal quasi-coherent sheaves
	$$\sU_{i-1} \xrightarrow{\phi_{i, \rm univ}} \sU_i \xrightarrow{\varphi_{i, \rm univ}} (x_{i, \rm univ})_* ( \sL_{i, \rm univ})$$
in $\QCoh(X \times_S \foZ_\lambda)$ for all $1 \le i \le n$.
\end{variant}

We also consider the natural forgetful maps between the functors of the form $\foZ_\lambda$:

\begin{notation}[{Compare with \cite[\S 2.14]{NegHecke}}] \label{notation:foZ_lambda:pi}
In the situation of Construction \ref{construction:foZ_lambda}:
	\begin{enumerate}[leftmargin=*]
	\item Let $\lambda$ be a set partition of size $n \ge 1$. We let $\lambda|$ denote the set partition of $\{1, \ldots, n-1\}$ which is obtained by dropping the last element of $\lambda$, and let $|\lambda$ denote the set partition of $\{2, \ldots, n\}$ which is obtained by dropping the first element of $\lambda$. 
	\item  We consider the following maps:
		$$
		\begin{tikzcd}
				& \foZ_\lambda \ar{ld}[swap]{p_+} \ar{d}{p_{X,i}} \ar{rd}{p_-} & \\
				\foZ_{\lambda|} & X & \foZ_{|\lambda},
		\end{tikzcd}
		$$		
		where $p_+$, $p_-$, and $p_X$ are the forgetful maps which carry each element
		$$\zeta = ((\sF_0 \xrightarrow{\phi_1} \sF_1 \xrightarrow{\phi_2} \cdots \xrightarrow{\phi_n} \sF_n), \{x_i \colon T  \to X \times_S T\}_{1 \le i \le n}, \{\sL_i\}_{1 \le i \le n}) \in \foZ_\lambda(\eta),$$
	 where $\eta \colon T = \Spec R \to S$ is any given map, to the elements
		\begin{align*}
		p_+(\zeta) &= ((\sF_0 \xrightarrow{\phi_1} \sF_1 \to  \cdots \xrightarrow{\phi_{n-1}} \sF_{n-1}), \{x_i \colon T  \to X \times_S T\}_{1 \le i \le n-1}, \{\sL_i\}_{1 \le i \le n-1}) \in \foZ_{\lambda|}(\eta), \\
		p_-(\zeta) &= ((\sF_1 \xrightarrow{\phi_2} \sF_2 \to \cdots \xrightarrow{\phi_n}  \sF_{n}), \{x_i \colon T  \to X \times_S T\}_{2 \le i \le n}, \{\sL_i\}_{2 \le i \le n}) \in \foZ_{|\lambda}(\eta),
		\end{align*}
and the composition $p_{X,i}(\zeta) = (T \xrightarrow{x_i} T \times_S X \xrightarrow{\pr_X} X) \in X(\eta)$, respectively. 

	\item We define a forgetful map $\pi_-$ with source $\foZ_\lambda$ by the following rules: we let
	\begin{itemize}[leftmargin=*]
		 \item $\pi_- = (p_- \times_S p_{X,1}) \colon \foZ_\lambda \to \foZ_{|\lambda} \times_S X$, if $1$ is not equivalent to any $i \in \{2, \ldots, n\}$ in $\lambda$.
		 \item $\pi_-  = p_- \colon \foZ_\lambda \to \foZ_{|\lambda}$, if $1$ is equivalent to some $i  \in \{2, \ldots, n\}$ in $\lambda$.
	\end{itemize}
Similarly, we define a forgetful map $\pi_+$ with source $\foZ_\lambda$ by the following rules:	we let
	\begin{itemize}[leftmargin=*]
		 \item $\pi_+ = (p_+ \times_S p_{X,n}) \colon \foZ_\lambda \to \foZ_{\lambda|} \times_S X$, if $n$ is not equivalent to any $i \in \{1, \ldots, n-1\}$ in $\lambda$.
		 \item $\pi_+  = p_+ \colon \foZ_\lambda \to \foZ_{\lambda|}$, if $1$ is equivalent to some $i  \in \{1, \ldots, n-1\}$ in $\lambda$.
	\end{itemize}
\end{enumerate}
\end{notation}

The following result is a direct generalization of Proposition \ref{prop:pi_-:foZ:proj}:

\begin{proposition}[Compare with {\cite[Proposition 2.16]{NegHecke})}] \label{prop:pi_-:foZ_lambda:proj} 
In the situation of Construction \ref{construction:foZ_lambda} and Notation \ref{notation:foZ_lambda:pi}, and suppose that the map of prestacks $X \to S$ is separated and a relative derived scheme.
	\begin{enumerate}
			\item \label{prop:pi_-:foZ_lambda:proj-1} 
			If $1$ is not equivalent to any $i \in \{2, \ldots, n\}$ in $\lambda$, then the map $\pi_- \colon \foZ_\lambda \to \foZ_{|\lambda} \times_S X$ identifies $\foZ_\lambda$ with the derived projectivization $\PP_{\foZ_{|\lambda} \times_S X}(\sU_1)$, and for which there is a canonical identification $\sO(1) \simeq \sL_{1, \rm univ}$.
			\item \label{prop:pi_-:foZ_lambda:proj-2} 
			If $1$ is equivalent to some $i  \in \{2, \ldots, n\}$ in $\lambda$, then the map $\pi_-  \colon \foZ_\lambda \to \foZ_{|\lambda}$ identifies $\foZ_\lambda$ with the derived projectivization $\PP_{\foZ_{|\lambda}}(x_{i, \mathrm{univ}}^*(\sU_1))$, and for which there is a canonical identification $\sO(1) \simeq \sL_{1, \rm univ}$.
		\end{enumerate}
\end{proposition}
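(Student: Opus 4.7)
The strategy is to reduce both cases to the $n=1$ argument already carried out in the proof of Proposition \ref{prop:pi_-:foZ:proj}. Given a map $\eta \colon T = \Spec R \to S$ and a lift $\widetilde{\eta}$ of $\eta$ to the target of $\pi_-$, I will identify the fiber space of liftings of $\widetilde{\eta}$ to $\foZ_\lambda$ with the space of $T$-points of the claimed derived projectivization lying over $\widetilde{\eta}$; naturality in $T$ then upgrades this to the desired equivalence of functors, since all constructions involved commute with base change.

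For case \eqref{prop:pi_-:foZ_lambda:proj-1}, the lift $\widetilde{\eta}$ specifies a flag of the form $(\sF_1 \xrightarrow{\phi_2} \sF_2 \to \cdots \to \sF_n)$ with its data $(\{x_i\}_{2 \le i \le n}, \{\sL_i\}_{2 \le i \le n})$, together with an extra section which we shall call $x_1 \colon T \to X \times_S T$ coming from the $X$-factor. A lift to $\foZ_\lambda$ along $\pi_-$ then amounts to extending the flag to the left by a morphism $\phi_1 \colon \sF_0 \to \sF_1$ whose cofiber has the form $(x_1)_* \sL_1$ for some $\sL_1 \in \Pic(T)$. Passing to the fiber, this space is canonically homotopy equivalent to the space of pairs $(\sL_1, \varphi_1 \colon \sF_1 \to (x_1)_* \sL_1)$ with $\varphi_1$ surjective on $\pi_0$. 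Applying the adjunction $(x_1)^* \dashv (x_1)_*$ and invoking assertion $(*)$ from the proof of Proposition \ref{prop:pi_-:foZ:proj} (which shows that surjectivity on $\pi_0$ is preserved in both directions under this adjunction, since $x_1$ is a closed immersion into a separated derived scheme over $T$), this space is canonically identified with the space of pairs $(\sL_1, \rho_1 \colon x_1^* \sF_1 \to \sL_1)$ for which $\rho_1$ is surjective on $\pi_0$. By definition this is the space of $T$-points of $\PP_{\foZ_{|\lambda} \times_S X}(\sU_1)$ over $\widetilde{\eta}$, and the tautological quotient is identified with $\sL_{1,\mathrm{univ}}$.

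For case \eqref{prop:pi_-:foZ_lambda:proj-2}, the lift $\widetilde{\eta}$ gives the truncated flag with its sections and line bundles, but now the equivalence $1 \sim i$ in $\lambda$ forces $x_1 = x_i$, so $x_1$ is no longer extra data but is determined by the section $x_i$ already present in the datum parametrized by $\foZ_{|\lambda}$. The same adjunction-plus-epimorphism argument then identifies the space of liftings with the space of pairs $(\sL_1, \rho_1 \colon x_i^* \sF_1 \to \sL_1)$ with $\rho_1$ surjective on $\pi_0$, which is $\PP_{\foZ_{|\lambda}}(x_{i,\mathrm{univ}}^*\, \sU_1)(\widetilde{\eta})$, and once more $\sO(1) \simeq \sL_{1,\mathrm{univ}}$.

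The only step requiring genuine attention is the naturality of the above identifications in $T$, needed to pass from a pointwise homotopy equivalence of spaces to an equivalence of prestacks over $\foZ_{|\lambda} \times_S X$ (respectively over $\foZ_{|\lambda}$). This is routine because each ingredient --- adjunction between $x_*$ and $x^*$, the condition of being surjective on $\pi_0$, and the operations of taking fibers and cofibers --- commutes with arbitrary base change along $T' \to T$. Hence I do not anticipate any substantive obstacle beyond careful bookkeeping of the extra flag data; the main content of the argument is genuinely that of Proposition \ref{prop:pi_-:foZ:proj}, applied fiberwise to the additional step of the flag.
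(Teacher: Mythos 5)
Your proof is correct and follows essentially the same route as the paper: reduce pointwise over $T$ to the adjunction-and-$\pi_0$-surjectivity argument already established in the proof of Proposition \ref{prop:pi_-:foZ:proj}, with the only new observation being that in case \eqref{prop:pi_-:foZ_lambda:proj-2} the relation $1\sim i$ forces $x_1=x_i$, so the section is already determined by the $\foZ_{|\lambda}$-datum rather than being an independent $X$-factor. The paper's own proof is precisely this, and your remark on naturality in $T$ is the correct (if implicit in the paper) justification for upgrading the pointwise identifications to an equivalence of prestacks.
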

\begin{proof} The assertion \eqref{prop:pi_-:foZ_lambda:proj-1} can be proved in the same way as Proposition \ref{prop:pi_-:foZ:proj}. To prove assertion  \eqref{prop:pi_-:foZ_lambda:proj-2}, assume that $1 \sim i$ in $\lambda$, where $2 \le i \le n$, so that $x_1 = x_i$, then for any $\eta \colon T = \Spec R \to S$, the fiber of $\pi_-$ over a point
	$$\zeta = ((\sF_1 \xrightarrow{\phi_2} \cdots \xrightarrow{\phi_n} \sF_n), \{x_2 \colon T  \to X \times_S T\}_{2 \le i \le n}, \{\sL_2\}_{1 \le 2 \le n}) \in \foZ_{|\lambda}(\eta)$$
is canonically identified with the space of pairs $(\sF_1 \to x_{1*}\,\sL_1, \sL_1 \in \Pic(T))$ for which the map $\sF_1 \to x_{1*}\, \sL_1$ is surjective on $\pi_0$. By virtue of the adjunction
	$$\Mapsp_{\QCoh(X \times_S T)}(\sF_1, x_{1\,*} \sL_1) \simeq \Mapsp_{\QCoh(T)}(x_i^* \sF_1, \sL_1)$$
and the same argument of the proof of Proposition \ref{prop:pi_-:foZ:proj}, the fiber of $\pi_-$ over $\zeta$ is canonically homotopy equivalent to $\PP_{\foZ_{|\lambda}}(x_i^* \sU_1)(\eta)$, which proves the desired assertion.
\end{proof}

Next, we focus our attention to general ``flags of one-point modifications" of quasi-coherent sheaves with a specified range of perfect-amplitude. 

\begin{notation} \label{notation:foZ_lambda:perf} In the situation of Construction \ref{construction:foZ_lambda} and Notation \ref{notation:foZ:perf}, we let $(\foZ_\lambda)^\perf_{[0,d-1]}$ denote the subfunctor of $\foZ_\lambda$ such that, for each $\eta \colon T = \Spec R \to S$, $(\foZ_\lambda)^\perf_{[0,d-1]}(\eta)$ is the full subcategory of $\foZ_\lambda(\eta)$ spanned by those elements
	$$\zeta=((\sF_0 \xrightarrow{\phi_1} \sF_1 \xrightarrow{\phi_2} \cdots \xrightarrow{\phi_n} \sF_n), \{x_i \colon T  \to X \times_S T\}_{1 \le i \le n}, \{\sL_i\}_{1 \le i \le n}) \in \foZ_\lambda(\eta)$$
for which $\sF_i \in \foM^\perf_{[0,d-1]}(\eta) = \Perf(X \times_S T)_{[0, d-1]}^{\simeq}$, $0 \le i \le n$. By abuse of notation, we denote the universal element of $(\foZ_\lambda)^\perf_{[0,d-1]}$ by the same notation as Construction \ref{construction:foZ_lambda}:
	$$\zeta_{\rm univ} =((\sU_0 \xrightarrow{\phi_{1, \rm univ}} \sU_1 \xrightarrow{\phi_{2, \rm univ}}  \cdots \xrightarrow{\phi_{n, \rm univ}} \sU_n), \{x_{i, \rm univ}\}_{1 \le i \le n}, \{\sL_{i, \rm univ}\}_{1 \le i \le n}),$$
where $\sU_i \in \Perf(X \times_S (\foZ_\lambda)^\perf_{[0,d-1]})_{[0,d-1]}$, $x_{i, \rm univ}$ are sections of the projection $X \times_S (\foZ_\lambda)^\perf_{[0,d-1]} \to (\foZ_\lambda)^\perf_{[0,d-1]}$, $\sL_{i, \rm univ} \in \Pic((\foZ_\lambda)^\perf_{[0,d-1]})$. Then there are fiber sequences on $(\foZ_\lambda)^\perf_{[0,d-1]} \times_S X$:
	$$\sU_{i-1} \xrightarrow{\phi_{i, \rm univ}} \sU_i \xrightarrow{\varphi_{i, \rm univ}} (x_{i, \rm univ})_* \sL_{i, \rm univ}.$$
\end{notation}

The following is a generalization of Proposition \ref{prop:pi_pm:foZ:proj} and \cite[Proposition 2.16]{NegHecke}:

\begin{proposition}\label{prop:pi_pm:foZ_lambda:proj} 
Suppose that $X \to S$ is map of prestacks which is smooth, separated, and a relative derived scheme of constant relative dimension $d \ge 1$. We let $\sK_{X/S}$ denote the relative canonical line bundle of $X \to S$ and let $\lambda$ be a set partition of size $n \ge 2$. Consider the moduli functors and maps defined in Notations \ref{notation:foZ_lambda:pi} and \ref{notation:foZ_lambda:perf}. Then:

\begin{enumerate}[leftmargin=*]
	\item The restriction map $\pi_-|_{(\foZ_\lambda)^{\perf}_{[0,d-1]}}$ is a derived projectivization of a quasi-coherent sheaf of perfect-amplitude contained in $[0,d-1]$. More concretely: 
	\begin{enumerate}[leftmargin=*]
			\item \label{prop:pi_pm:foZ_lambda:proj-1i} If $1$ is not equivalent to any element $i \in \{2, \ldots, n\}$ in $\lambda$, then the restriction 
				$$\pi_-|_{(\foZ_\lambda)^{\perf}_{[0,d-1]}} \colon (\foZ_\lambda)^{\perf}_{[0,d-1]} \to (\foZ_{|\lambda})^{\perf}_{[0,d-1]} \times_S X$$
				 identifies $(\foZ_\lambda)^{\perf}_{[0,d-1]}$ with the derived projectivization 
				 	$\PP_{(\foZ_{|\lambda})^{\perf}_{[0,d-1]} \times_S X}(\sU_1)$, and for which there is a canonical identification $\sO(1) \simeq \sL_{1, \rm univ}$.
			\item \label{prop:pi_pm:foZ_lambda:proj-1ii} If $1$ is equivalent to some $i  \in \{2, \ldots, n\}$ in $\lambda$, then the restriction 
				$$\pi_-|_{(\foZ_\lambda)^{\perf}_{[0,d-1]}} \colon (\foZ_\lambda)^{\perf}_{[0,d-1]} \to (\foZ_{|\lambda})^{\perf}_{[0,d-1]}$$
				 identifies $(\foZ_\lambda)^{\perf}_{[0,d-1]}$ with the derived projectivization 
				 	$\PP_{(\foZ_{|\lambda})^{\perf}_{[0,d-1]}}(x_{i, \mathrm{univ}}^*(\sU_1))$, and for which there is a canonical identification $\sO(1) \simeq \sL_{1, \rm univ}$.
		\end{enumerate}	
	\item The restriction map $\pi_+|_{\foZ^{\perf}_{[0,d-1]}}$ is a derived projectivization of a quasi-coherent sheaf of perfect-amplitude contained in $[0,d-1]$. More concretely: 
	\begin{enumerate}[leftmargin=*]
			\item \label{prop:pi_pm:foZ_lambda:proj-2i} If $1$ is not equivalent to any element $i \in \{2, \ldots, n\}$ in $\lambda$, then the restriction  
				$$\pi_+|_{(\foZ_\lambda)^{\perf}_{[0,d-1]}} \colon (\foZ_\lambda)^{\perf}_{[0,d-1]} \to (\foZ_{\lambda|})^{\perf}_{[0,d-1]} \times_S X$$
				 identifies $(\foZ_\lambda)^{\perf}_{[0,d-1]}$ with the derived projectivization 
				 	$\PP_{(\foZ_{\lambda|})^{\perf}_{[0,d-1]} \times_S X}(\Sigma^{d-1} \,\sU_1^\vee \otimes \sK_{X/S})$,
					 and for which there is a canonical identification $\sO(1) \simeq \sL_{n, \rm univ}^\vee$;
			\item \label{prop:pi_pm:foZ_lambda:proj-2ii} If $1$ is equivalent to some $i  \in \{2, \ldots, n\}$ in $\lambda$, then the restriction 
				$$\pi_+|_{(\foZ_\lambda)^{\perf}_{[0,d-1]}} \colon (\foZ_\lambda)^{\perf}_{[0,d-1]} \to (\foZ_{\lambda|})^{\perf}_{[0,d-1]}$$
				 identifies $(\foZ_\lambda)^{\perf}_{[0,d-1]}$ with the derived projectivization 
				 	$\PP_{(\foZ_{\lambda|})^{\perf}_{[0,d-1]}}(\Sigma^{d-1} x_{i, \mathrm{univ}}^*(\sU_1^\vee) \otimes \sK_{X/S})$,
					and for which there is a canonical identification $\sO(1) \simeq \sL_{n, \rm univ}^\vee$.
		\end{enumerate}
\end{enumerate}
\end{proposition}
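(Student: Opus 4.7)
The plan is to combine the unrestricted identifications of Proposition~\ref{prop:pi_-:foZ_lambda:proj} with the perfect-amplitude bookkeeping of Proposition~\ref{prop:pi_pm:foZ:proj}, applied term-by-term to the universal flag. The key observation is that $(\foZ_\lambda)^{\perf}_{[0,d-1]} \subseteq \foZ_\lambda$ is cut out by the condition that every member $\sF_0 \to \sF_1 \to \cdots \to \sF_n$ of the flag has perfect-amplitude in $[0,d-1]$, while each of $\pi_\pm$ alters only one extremal member. Hence the analysis of fibers over the corresponding perfect-amplitude subfunctor reduces to the one-step case of Proposition~\ref{prop:pi_pm:foZ:proj}.

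For the $\pi_-$ assertions \eqref{prop:pi_pm:foZ_lambda:proj-1i} and \eqref{prop:pi_pm:foZ_lambda:proj-1ii}, I would begin from the equivalences $\foZ_\lambda \simeq \PP_{\foZ_{|\lambda} \times_S X}(\sU_1)$ and $\foZ_\lambda \simeq \PP_{\foZ_{|\lambda}}(x_{i,\mathrm{univ}}^*\sU_1)$ supplied by Proposition~\ref{prop:pi_-:foZ_lambda:proj}, and show that they restrict to equivalences on the perfect-amplitude subfunctors. Concretely, given a $T$-point of the base for which $\sF_1, \ldots, \sF_n$ all lie in $\Perf(X \times_S T)_{[0,d-1]}$, the fiber of $\pi_-$ classifies pairs $(\sL_1, \phi_1 \colon \sF_0 \to \sF_1)$ with $\cofib(\phi_1) \simeq x_{1*}\sL_1$; the argument of Proposition~\ref{prop:pi_pm:foZ:proj}\eqref{prop:pi_pm:foZ:proj-1} then applies verbatim, since the fiber sequence $\Sigma^{d-1}\sF_1^\vee \to \Sigma^{d-1}\sF_0^\vee \to \Sigma^d(x_{1*}\sL_1)^\vee$ has connective outer terms (the rightmost one because the quasi-smooth closed immersion $x_1$ has virtual codimension $d$, so $\Sigma^d(x_{1*}\sL_1)^\vee \simeq x_{1*}(\sL_1^\vee \otimes \sK_{X/S}^\vee)$), forcing $\Sigma^{d-1}\sF_0^\vee$ to be connective and hence $\sF_0$ to lie in $\Perf(X \times_S T)_{[0,d-1]}$.

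For the $\pi_+$ assertions \eqref{prop:pi_pm:foZ_lambda:proj-2i} and \eqref{prop:pi_pm:foZ_lambda:proj-2ii}, I would imitate the second half of the proof of Proposition~\ref{prop:pi_pm:foZ:proj}\eqref{prop:pi_pm:foZ:proj-2} almost word for word. On $(\foZ_{\lambda|})^{\perf}_{[0,d-1]}$ the universal flag is $\sU_0 \to \cdots \to \sU_{n-1}$, and a $T$-point of $(\foZ_{\lambda|})^{\perf}_{[0,d-1]} \times_S X$ (resp.\ $(\foZ_{\lambda|})^{\perf}_{[0,d-1]}$) together with a lift to $(\foZ_\lambda)^{\perf}_{[0,d-1]}$ corresponds to a pair $(\sL_n, \phi_n \colon \sF_{n-1} \to \sF_n)$ with $\cofib(\phi_n) \simeq x_{n*}\sL_n$ and $\sF_n \in \Perf_{[0,d-1]}$. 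Replacing $\phi_n$ by the boundary $\psi \colon x_{n*}\sL_n \to \Sigma\sF_{n-1}$ (whose fiber is $\sF_n$), the adjunction $x_n^* \dashv x_{n*}$ together with the duality $\Sigma^d(x_{n*}\sL_n)^\vee \simeq x_{n*}(\sL_n^\vee \otimes \sK_{X/S}^\vee)$ identify this space with the space of maps $\rho \colon x_n^*(\Sigma^{d-1}\sU_{n-1}^\vee \otimes \sK_{X/S}) \to \sL_n^\vee$; the condition ``$\sF_n \in \Perf_{[0,d-1]}$'' translates, exactly as in \eqref{prop:pi_pm:foZ:proj-2}, into the condition that $\rho$ be surjective on $\pi_0$, which is the universal property of the derived projectivization $\PP(\Sigma^{d-1}\sU_{n-1}^\vee \otimes \sK_{X/S})$ over the relevant base.

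The main obstacle is organizational rather than mathematical: distinguishing the unconstrained case (the fresh section gives an honest factor of $X$ in the base) from the constrained case (the fresh section is forced to coincide with some pre-existing $x_i$, so only the flag is remembered on the base and the relevant sheaf must be pulled back along $x_{i,\mathrm{univ}}$). Once the base-change compatibility of $\PP(\blank)$ from Proposition~\ref{prop:proj-4,5} is used to pass between these two forms, all four assertions follow uniformly.
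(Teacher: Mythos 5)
Your proof is correct and follows the same route as the paper's, whose own argument is a one-line reference to the proofs of Propositions~\ref{prop:pi_pm:foZ:proj} and \ref{prop:pi_-:foZ_lambda:proj}; you have simply spelled out the fiber-sequence and adjunction computations that reference invokes. Note that your use of $\sU_{n-1}$ in the $\pi_+$ assertions (the top of the truncated universal flag on $\foZ_{\lambda|}$) is what the argument actually produces, so the proposition statement's $\sU_1$ there appears to be a typographical slip.
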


\begin{proof}
The assertions \eqref{prop:pi_pm:foZ_lambda:proj-1i} and \eqref{prop:pi_pm:foZ_lambda:proj-2i} follow from the same argument of Proposition \ref{prop:pi_pm:foZ:proj}; The assertions \eqref{prop:pi_pm:foZ_lambda:proj-1ii} and \eqref{prop:pi_pm:foZ_lambda:proj-2ii} are proved similarly, in view of the proof of Proposition \ref{prop:pi_-:foZ_lambda:proj} \eqref{prop:pi_-:foZ_lambda:proj-2}.
\end{proof}

\subsection{Hecke correspondences for surfaces} \label{sec:Hecke:surface}
In the situation of Proposition \ref{prop:pi_pm:foZ_lambda:proj}, if we set $d =2$, that is, if $X \to S$ is a family of smooth, separated derived surfaces, then both the projections $\pi_\pm|_{(\foZ_\lambda)^{\perf}_{[0,1]}}$ are derived projectivizations of sheaves of perfect-amplitude contained in $[0,1]$. In particular, we could apply the generalized Serre's Theorem \ref{thm:Serre:O(d)}, the results on Beilinson's relations (Proposition \ref{prop:PG:dualexc}, Corollary \ref{cor:PVdot:relations}), and the projectivization formula (Theorem \ref{thm:structural}) to these situations. For simplicity of exposition, we only include here part of all these results for the ``one-step flags" $\foZ^{\perf}_{[0,1]}$; The generalizations of these results to the ``general flags" $(\foZ_\lambda)^{\perf}_{[0,1]}$ are straightforward, by virtue of Proposition \ref{prop:pi_pm:foZ_lambda:proj}.

\begin{notation} \label{construction:foZ:perf:surface} 
Let $X \to S$ be map of prestacks which is smooth, separated, and a relative derived surface, with relative canonical line bundle $\sK_{X/S} = \bigwedge^2 L_{X/S}$, where $L_{X/S}$ is the relative cotangent complex (Definition \ref{def:relativecotangent}). Let the functors $\foZ$, $\foZ^\perf_{[0,1]}$, $\foM$, $\foM^\perf_{[0,1]}$, $\foZ_\lambda$, $\foZ_\lambda)^\perf_{[0,1]}$ be defined as in Constructions \ref{construction:foZ} and \ref{construction:foZ_lambda}, and Notations \ref{notation:foZ:perf} and \ref{notation:foZ_lambda:perf}.
	\begin{enumerate}[leftmargin=*]
		\item We let $\foZ_2^\bullet$ (resp. $(\foZ_2^\bullet)^{\perf}_{[0,1]}$) denote the prestack $\foZ_\lambda$ (resp. $(\foZ_\lambda)^\perf_{[0,1]}$) in the case where $\lambda$ is the set partition of $\{1,2\}$ for which $1 \sim 2$. We let
		$$((\sU_0 \xrightarrow{\phi_{1, \rm univ}} \sU_1 \xrightarrow{\phi_{2, \rm univ}}  \sU_2); \, x_{\rm univ};  \, (\sL_{1, \rm univ}, \sL_{2, \rm univ}))$$
denote the universal element of $(\foZ_2^\bullet)^\perf_{[0,1]}$, where $\sU_i \in \Perf(X \times_S (\foZ_2^\bullet)^\perf_{[0,1]})_{[0,1]}$, $x_{\rm univ} \colon (\foZ_2^\bullet)^\perf_{[0,1]} \to X \times_S (\foZ_2^\bullet)^\perf_{[0,1]}$ is a section of the natural projection $X \times_S (\foZ_2^\bullet)^\perf_{[0,1]} \to (\foZ_2^\bullet)^\perf_{[0,1]} $, and $\sL_{i, \rm univ} \in \Pic((\foZ_2^\bullet)^\perf_{[0,1]})$ are line bundles, and for which there are universal fiber sequences of quasi-coherent sheaves on $(\foZ_2^\bullet)^\perf_{[0,1]} \times_S X$:
	$$\sU_{0} \xrightarrow{\phi_{1, \rm univ}} \sU_1 \to (x_{\rm univ})_* \sL_{1, \rm univ}, \qquad \sU_{1} \xrightarrow{\phi_{2, \rm univ}} \sU_2 \to (x_{\rm univ})_* \sL_{2, \rm univ}.$$
These fiber sequences induce canonical maps 
	$$\rho_{1, \rm univ} \colon x_{\rm univ}^* (\sU_1) \to \sL_{1, \rm univ}, \qquad \rho_{2, \rm univ} \colon \Sigma \, x_{\rm univ}^* (\sU_1^\vee) \to \sL_{2, \rm univ}^\vee \otimes \sK_{X/S}^\vee.$$
		\item We let $(\widehat{\foZ}_2^\bullet)^{\perf}_{[0,1]}$ denote the universal incidence locus (Notation \ref{notation:hatZ}) for the derived projectivizations $\pi_-|\foZ^\perf_{[0,1]}$ and $\pi_+|\foZ^\perf_{[0,1]}$ of Proposition \ref{prop:pi_pm:foZ_lambda:proj} over $\foM^\perf_{[0,1]} \times_S X$. Unwinding the definitions, $(\widehat{\foZ}_2^\bullet)^{\perf}_{[0,1]}$ is the closed subfunctor of $(\foZ_2^\bullet)^\perf_{[0,1]}$ defined by the pullback square
	\begin{equation*} 
	\begin{tikzcd}
	(\widehat{\foZ}_2^\bullet)^{\perf}_{[0,1]} \ar{d}[swap]{\iota} \ar{r}{\iota} & (\foZ_2^\bullet)^\perf_{[0,1]} \ar{d}{i_{\mathbf{1}}} \\
		(\foZ_2^\bullet)^\perf_{[0,1]} \ar{r}{i_{\mathbf{0}}} & \VV_{(\foZ_2^\bullet)^\perf_{[0,1]}}(\Sigma \,(\sL_{1, \rm univ}^\vee \boxtimes \sL_{2, \rm univ}) \otimes \sK_{X/S} ).
	\end{tikzcd}
	\end{equation*}
Here, $i_{\mathbf{0}}$ is the inclusion of the zero section, and $i_{\mathbf{1}}$ is the section of the natural projection $\VV_{(\foZ_2^\bullet)^\perf_{[0,1]}}(\Sigma (\sL_{1, \rm univ}^\vee \boxtimes \sL_{2, \rm univ}) \otimes \sK_{X/S} ) \to (\foZ_2^\bullet)^\perf_{[0,1]}$ that classifies the composite map
	$$\Sigma \,\sL_{1, \rm univ}^\vee \boxtimes (\sL_{2, \rm univ} \otimes \sK_{X/S}) \xrightarrow{\Sigma \, \rho_{1, \rm univ}^\vee \boxtimes \rho_{2,\rm univ}^\vee} x_{\rm univ}^*(\sU_1) \otimes x_{\rm univ}^* ( \sU_1^\vee) \xrightarrow{\rm ev} \sO_{(\foZ_2^\bullet)^{\perf}_{[0,1]}}.$$
	\item Let $\delta$ be an integer. For a prestack $Y$, we let $\Perf(Y)_{[0,1], \rk = \delta}$ denote the full subcategory of $\Perf(Y)$ spanned by perfect objects of Tor-amplitude in $[0,1]$ and rank $\delta$. Similarly, we let $\foM^{\perf}_{[0,1], \rk=\delta}$, $\foZ^{\perf}_{[0,1], \rk=\delta}$, $(\foZ_2^\bullet)^\perf_{[0,1], \rk=\delta}$ and $(\widehat{\foZ}_2^\bullet)^{\perf}_{[0,1], \rk=\delta}$ denote the  subfunctors of $\foM^{\perf}_{[0,1]}$, $\foZ^{\perf}_{[0,1]}$, $(\foZ_2^\bullet)^\perf_{[0,1]}$ and $(\widehat{\foZ}_2^\bullet)^{\perf}_{[0,1]}$ such that their spaces of $T$-point over a map $\eta \colon T \to S$ are respectively spanned by those elements for which the components $\sF_i$ all have rank $\delta$, that is, $\sF_i \in \Perf(X \times_S T)_{[0,1], \rk=\delta}$. (These notations are justified by the following fact: if $x$ is a quasi-smooth closed immersion and $\sL$ is a line bundle, then $x_* \sL$ is perfect of rank $0$. Therefore, if a pair of perfect quasi-coherent sheaves $(\sF_{i-1}, \sF_i)$ fits into a fiber sequence $\sF_{i-1} \to \sF_i \to x_* \sL$, then $\rk \sF_{i-1} = \rk \sF_i$.)
	\item In particular, for each $\delta \ge 0$, we have a pullback square and a commutative square:
	$$
	\begin{tikzcd}
		(\foZ_2^\bullet)^\perf_{[0,1], \rk=\delta} \ar{d}[swap]{r_+'} \ar{r}{r_-'}& \foZ^\perf_{[0,1], \rk=\delta}  \ar{d}{\pr_-} \\
		\foZ^\perf_{[0,1], \rk=\delta} \ar{r}{\pr_+} & \foM^\perf_{[0,1], \rk=\delta} \times_S X
	\end{tikzcd} \quad \text{and} \quad
	\begin{tikzcd}
		(\widehat{\foZ}_2^\bullet)^\perf_{[0,1], \rk=\delta} \ar{d}[swap]{r_+} \ar{r}{r_-}& \foZ^\perf_{[0,1], \rk=\delta}  \ar{d}{\pr_-} \\
		\foZ^\perf_{[0,1], \rk=\delta} \ar{r}{\pr_+} & \foM^\perf_{[0,1], \rk=\delta} \times_S X,
	\end{tikzcd}
	$$
respectively, where $\pr_+ = \pi_-|\foZ^\perf_{[0,1], \rk=\delta}$ and $\pr_- = \pi_+|\foZ^\perf_{[0,1], \rk=\delta}$ are the forgetful maps (notice the change of signs due to the different conventions) and the maps $r_\pm$ are given by the compositions $r_{\pm}' \circ \iota$. 
	\end{enumerate}
\end{notation} 
	
\begin{corollary} \label{cor:Hecke:surface} Let $X \to S$ be map of prestacks which is smooth, separated, and a relative derived surface, with relative canonical line bundle $\sK_{X/S}$. Consider the moduli functors and the maps between them defined in Notation \ref{construction:foZ:perf:surface}. For simplicity of expressions, we introduce the following notations:	
	$$\foX := \foM^\perf_{[0,1], \rk=\delta} \times_S X, \qquad \sW := \Sigma\,\sU^\vee \otimes \sK_{X/S} \in \Perf(\foX).$$	
 In particular, $\sL_{\rm univ}$ denotes the universal line bundle on $\foZ^\perf_{[0,1], \rk=\delta}$, $\sU \in \Perf(\foX)$ denotes the universal perfect object over $\foX=\foM^\perf_{[0,1], \rk=\delta} \times_S X$, and the natural projections
	$$\pr_\pm = \pi_{\mp} |_{\foZ^\perf_{[0,1], \rk=\delta}} \colon  \foZ^\perf_{[0,1], \rk=\delta} \to \foX$$
are the maps which, for any given $\eta \colon T = \Spec R \to S$, $R \in \CAlgDelta$, carry the element
		$$\zeta = ((\sF' \xrightarrow{\phi} \sF); \, x \colon T  \to X \times_S T ; \, \sL \in \Pic(T)) \in \foZ^\perf_{[0,1], \rk=\delta}(\eta)$$
to the respective elements
	\begin{align*}
	\pr_+(\zeta) & = (\sF, (T \xrightarrow{x} X \times_S T \xrightarrow{\pr_X} X)) \in \foX(\eta), \\
	\pr_-(\zeta)  &=  (\sF', (T \xrightarrow{x} X \times_S T \xrightarrow{\pr_X} X)) \in \foX(\eta).
	\end{align*}
		\begin{enumerate}[leftmargin=*]
		\item \label{cor:Hecke:surface-1}
		 If the rank $\delta \ge 1$. Then:
		\begin{enumerate}[label=(\theenumi\roman*), ref=\theenumi\emph{\roman*}]
			\item \label{cor:Hecke:surface-1i}
			For any $d \in \ZZ$, there are canonical equivalences
			$$
			 (\pr_{+})_* (\sL_{\rm univ}^{\otimes d})  \simeq 
	 			\begin{cases}
				\Sym_{\foX}^d (\sU)  &  \text{if~}  d \ge 0; \\
				0 &   \text{if ~} -\delta+1 \le d \le -1; \\
				\Sigma^{1-\delta} \big(\Sym_{\foX}^{-d-\delta} (\sU) \otimes_{\sO_{\foX}}  \det \sU\big)^{\vee}	&  \text{if~} d \le -\delta
				\end{cases}
			$$
	and canonical fiber sequences
			\begin{align*}
			\Sym_{\foX}^d  (\sW) \to 
			(\pr_{-})_{*}(\sL_{\rm univ}^{\otimes -d})  \to  
			  \Sigma^{1+\delta} \big(\Sym_{\foX}^{\delta-d} (\sW) \otimes \det \sW \big)^\vee. 
			\end{align*}
			\item \label{cor:Hecke:surface-1ii}
			For any perfect stack $T$ (see Definition \ref{def:perfstacks}; for example, for any quasi-compact, quasi-separated derived scheme $T$) and any morphism $T \to \foX$, we let $\pr_{+,T} \colon \foZ_{T} \to T$ denote the base-change of $\pr_+ \colon \foZ^\perf_{[0,1], \rk=\delta} \to \foX$ along $T \to \foX$, let $\sL_T$ denote the base-change of $\sL_{\rm univ}$, and let $\sF_T$, $x_T$ be similarly defined. Then the sequence of line bundles $\sO_{\foZ_T} =\sL_T^0, \sL_{T}, \cdots, \sL_{T}^{\otimes \delta-1}$ is a relative exceptional sequence on $\foZ_T$ over $T$, whose left dual relative exceptional sequence is given by 
			$\Sigma^{\delta-1} \bigwedge^{\delta-1}(L_{\pr_{+,T}} \otimes \sL_{T}), \cdots, \Sigma (L_{\pr_{+,T}} \otimes \sL_{T}), \sO_{\foZ_T}$. (Notice that, by virtue of Euler's fiber sequence Theorem \ref{thm:proj:Euler}, $L_{\pr_{+,T}} \otimes \sL_{T}$ is canonically equivalent to $\fib(x_{T}^*(\sF_T) \to \sL_{T})$.) Moreover, these two relative exceptional sequences satisfy Beilinson's relations Corollary \ref{cor:PVdot:relations}.
			\item \label{cor:Hecke:surface-1iii}
			The functors 
				$$\Phi = r_{+\,*} \, r_-^* \colon \QCoh(\foZ^\perf_{[0,1], \rk=\delta}) \to \QCoh(\foZ^\perf_{[0,1], \rk=\delta})$$
			and the functors
				$$\Psi_i = \pr_{+}^*(\blank) \otimes \sL_{\rm univ}^{\otimes i}  \colon \QCoh(\foX) \to \QCoh(\foZ^\perf_{[0,1], \rk=\delta}),$$
			 for all $i \in \ZZ$, and their restrictions to the subcategories of perfect objects, are all fully faithful. Furthermore, there are an induced semiorthogonal decompositions
			\begin{align*}
			\QCoh(\foZ^\perf_{[0,1], \rk=\delta})  &= \big\langle \Phi( \QCoh(\foZ^\perf_{[0,1], \rk=\delta})), 
		~ \Psi_1( \QCoh(\foX) ), \cdots, \Psi_\delta (\QCoh(\foX))  \big\rangle.\\	
			\Perf(\foZ^\perf_{[0,1], \rk=\delta})  &= \big\langle \Phi( \Perf(\foZ^\perf_{[0,1], \rk=\delta})),  
			~ \Psi_1( \Perf(\foX) ), \cdots, \Psi_\delta (\Perf(\foX))  \big\rangle.	
			\end{align*}
		\end{enumerate} 
		
		\item \label{cor:Hecke:surface-2}
		If the rank $\delta =0$, then for each $d \in \ZZ$, there are canonical equivalences
			\begin{align*}
			(\pr_{+})_{*} (\sL_{\rm univ}^{\otimes d})  &\simeq \cofib\Big( \big( \Sym_{\foX}^{-d} (\sU) \otimes_{\sO_{\foX}}  \det \sU\big)^{\vee} \to 	\Sym_{\foX}^d (\sU) 
			\Big). \\
			(\pr_{-})_{*} (\sL_{\rm univ}^{\otimes -d}) & \simeq \cofib\Big( \big(\Sym_{\foX}^{-d} (\sW) \otimes_{\sO_{\foX}} \det \sW)^\vee 
		 	\to \Sym_{\foX}^d (\sW)
			\Big).		
			\end{align*}
		Moreover, the functors 
				$$r_{+\,*} \, r_-^* \colon \QCoh(\foZ^\perf_{[0,1], \rk=0}) \to \QCoh(\foZ^\perf_{[0,1], \rk=0}) \qquad r_{+\,*} \, r_-^* \colon \Perf(\foZ^\perf_{[0,1], \rk=0}) \to \Perf(\foZ^\perf_{[0,1], \rk=0})$$
				$$r_{-\,*} \, r_+^* \colon \QCoh(\foZ^\perf_{[0,1], \rk=0}) \to \QCoh(\foZ^\perf_{[0,1], \rk=0}) \qquad r_{-\,*} \, r_+^* \colon \Perf(\foZ^\perf_{[0,1], \rk=0}) \to \Perf(\foZ^\perf_{[0,1], \rk=0})$$
			are exact equivalences of stable $\infty$-categories.
		\item \label{cor:Hecke:surface-3}
		If the rank $\delta \le -1$. Then:
		\begin{enumerate}[label=(\theenumi\roman*), ref=\theenumi\emph{\roman*}]
			\item \label{cor:Hecke:surface-3i}
			For all $d \in \ZZ$, there are canonical equivalences
			$$
			 (\pr_{-})_* (\sL_{\rm univ}^{\otimes -d})  \simeq 
	 			\begin{cases}
				\Sym_{\foX}^d (\sW)  &  \text{if~}  d \ge 0; \\
				0 &   \text{if ~} \delta+1 \le d \le -1; \\
				\Sigma^{1+\delta} \big(\Sym_{\foX}^{-d+\delta} (\sW) \otimes_{\sO_{\foX}}  \det \sW\big)^{\vee}	&  \text{if~} d \le \delta,
				\end{cases}
			$$
	and canonical fiber sequences
			\begin{align*}
			\Sym_{\foX}^d  (\sU) \to 
			(\pr_{+})_{*}(\sL_{\rm univ}^{\otimes d})  \to  
			  \Sigma^{1-\delta} \big(\Sym_{\foX}^{-\delta-d} (\sU) \otimes_{\sO_{\foX}} \det \sU\big)^\vee. 
			\end{align*}
			\item \label{cor:Hecke:surface-3ii}
			For any perfect stack $T$ (for example, for any quasi-compact, quasi-separated derived scheme $T$) and any morphism $T \to \foX$, we let $\pr_{-,T} \colon \foZ_{T} \to T$ denote the base-change of $\pr_- \colon \foZ^\perf_{[0,1], \rk=\delta} \to \foX$ along $T \to \foX$, let $\sL_T$ denote the base-change of $\sL_{\rm univ}$, and let $\sF'_T$, $x_T$ be similarly defined. 
			Then the sequence of line bundles $\sO_{\foZ_T} =\sL_T^0, \sL_{T}^\vee, \cdots, \sL_{T}^{\otimes -\delta-1}$ is a relative exceptional sequence on $\foZ_T$ over $T$, whose left dual relative exceptional sequence is given by 
			$\Sigma^{\delta-1} \bigwedge^{\delta-1}(L_{\pr_{-,T}} \otimes \sL_{T}^\vee), \cdots, \Sigma (L_{\pr_{-,T}} \otimes \sL_{T}^\vee), \sO_{\foZ_T}$. 
			(Notice that, by virtue of Euler fiber sequence Theorem \ref{thm:proj:Euler}, $L_{\pr_{-,T}} \otimes \sL_{T}^\vee$ is canonically equivalent to $\fib(x_{T}^* (\Sigma \, \sF_T'^\vee \otimes \sK_{X_T/T}) \to \sL_{T}^\vee)$.) Moreover, these two relative exceptional sequences satisfy Beilinson's relations Corollary \ref{cor:PVdot:relations}.		

			\item \label{cor:Hecke:surface-3iii}
			The functors 
				$$\Phi' = r_{-\,*} \, r_+^* \colon \QCoh(\foZ^\perf_{[0,1], \rk=\delta}) \to \QCoh(\foZ^\perf_{[0,1], \rk=\delta})$$
			and the functor
				$$\Psi_i' = \pr_{-}^*(\blank) \otimes \sL_{\rm univ}^{\otimes -i}  \colon \QCoh(\foX) \to \QCoh(\foZ^\perf_{[0,1], \rk=\delta}),$$
			 for all $i \in \ZZ$, and their restrictions to the subcategories of perfect objects, are all fully faithful. Furthermore, there are an induced semiorthogonal decompositions
			\begin{align*}
			\QCoh(\foZ^\perf_{[0,1], \rk=\delta})  &= \big\langle \Phi'( \QCoh(\foZ^\perf_{[0,1], \rk=\delta})),  
			~\Psi_1'( \QCoh(\foX) ), \cdots, \Psi_{-\delta}' (\QCoh(\foX))  \big\rangle.	 \\
			\Perf(\foZ^\perf_{[0,1], \rk=\delta})  &= \big\langle \Phi'( \Perf(\foZ^\perf_{[0,1], \rk=\delta})),  ~ \Psi_1'( \Perf(\foX) ), \cdots, \Psi_{-\delta}' (\Perf(\foX))  \big\rangle.	
			\end{align*}
		\end{enumerate} 
	\end{enumerate}
\end{corollary}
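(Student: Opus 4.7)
The plan is to reduce everything to the general derived-projectivization machinery developed earlier in the paper, using Proposition \ref{prop:pi_pm:foZ:proj} as the bridge. Fix $\delta \in \ZZ$ and restrict all the moduli functors of Notation \ref{construction:foZ:perf:surface} to rank $\delta$. By the rank-convention remark, the forgetful maps preserve rank, so this restriction is well defined. Since $X \to S$ is a smooth separated relative derived surface ($d = 2$), Proposition \ref{prop:pi_pm:foZ:proj} identifies $\pr_+ = \pi_-|{\foZ^\perf_{[0,1],\rk=\delta}}$ with the derived projectivization $\PP_{\foX}(\sU)$ (with $\sO(1) \simeq \sL_{\rm univ}$), and $\pr_- = \pi_+|{\foZ^\perf_{[0,1],\rk=\delta}}$ with $\PP_{\foX}(\Sigma\,\sU^\vee \otimes \sK_{X/S}) = \PP_{\foX}(\sW)$ (with $\sO(1) \simeq \sL_{\rm univ}^\vee$). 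In particular, $\sU$ and $\sW$ have perfect-amplitude in $[0,1]$, $\rk \sU = \delta$, and $\rk \sW = -\delta$, so after twisting $\sW$ by the line bundle $\sK_{X/S}$ (which by Proposition \ref{prop:proj-4,5}\eqref{prop:proj-5} does not change the projectivization, only the universal line bundle) we may identify $\PP_{\foX}(\sW)$ with $\PP_{\foX}(\Sigma \,\sU^\vee)$ in the notation of Theorem \ref{thm:structural}, set $\sF := \sU$ when $\delta \ge 0$, and set $\sF := \sW$ (swapping the roles of $\pr_+$ and $\pr_-$) when $\delta < 0$.

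The second step is to identify $(\widehat{\foZ}_2^\bullet)^\perf_{[0,1],\rk=\delta}$ with the universal incidence locus $\widehat{Z}$ of Notation \ref{notation:hatZ} attached to the pair $(\PP_{\foX}(\sU), \PP_{\foX}(\sW))$ over $\foX$. Unwinding the classifying property of $(\foZ_2^\bullet)^\perf_{[0,1],\rk=\delta}$ shows that it represents the fiber product $\foZ^\perf_{[0,1],\rk=\delta} \times_\foX \foZ^\perf_{[0,1],\rk=\delta}$ over $\foX$, i.e.\ it represents $Z = \PP_+ \times_\foX \PP_-$ in the notation of \S 7.1, with $\rho_{1,{\rm univ}}$ and $\rho_{2,{\rm univ}}$ (tensored with $\sK_{X/S}$) playing the roles of the tautological quotients $\rho_\pm$. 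The composite cosection cutting out $\widehat{Z}$ inside $Z$ in construction $(3)$ of Lemma \ref{lem:dproj:incidence} is then exactly the one used in Notation \ref{construction:foZ:perf:surface} to define the closed immersion $\iota$, so $(\widehat{\foZ}_2^\bullet)^\perf_{[0,1],\rk=\delta} \simeq \widehat{Z}$ together with the correspondence maps $r_\pm$. Verifying this identification carefully, being attentive to the $\sK_{X/S}$-twists and the sign change $\pr_\pm \leftrightarrow \pi_\mp$, is the one slightly delicate bookkeeping step; once it is done, everything is formal.

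Given these identifications, the three cases of the corollary follow by direct application of the three main theorems of the paper. For assertion \eqref{cor:Hecke:surface-1}: \eqref{cor:Hecke:surface-1i} is the generalized Serre's Theorem \ref{thm:Serre:O(d)} applied to $\PP_{\foX}(\sU)$ (positive-rank case \eqref{thm:Serre:O(d)-1} for $(\pr_+)_*$) and to $\PP_{\foX}(\sW)$ (non-positive rank case \eqref{thm:Serre:O(d)-2} for $(\pr_-)_*$, using $\sO_{\PP(\sW)}(1) \simeq \sL_{\rm univ}^\vee$ and $\det \sW$ absorbing the relevant line-bundle twist); \eqref{cor:Hecke:surface-1ii} is Proposition \ref{prop:PG:dualexc} and Corollary \ref{cor:PVdot:relations} applied after pulling back along $T \to \foX$, which preserves the derived projectivization structure by Proposition \ref{prop:proj-4,5}\eqref{prop:proj-4}, together with the identification of $L_{\pr_{+,T}} \otimes \sL_T$ with $\fib(x_T^*\sF_T \to \sL_T)$ coming from Theorem \ref{thm:proj:Euler}; \eqref{cor:Hecke:surface-1iii} is precisely Theorem \ref{thm:structural} applied with $\sF = \sU$ of rank $\delta \ge 1$, the incidence locus being the one identified in step two. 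Assertion \eqref{cor:Hecke:surface-2} is Theorem \ref{thm:structural} in the rank-zero case, where the semiorthogonal decomposition collapses to an equivalence induced by $r_{+*}r_-^*$ (and similarly $r_{-*}r_+^*$ by symmetry), combined with the Eagon--Northcott fiber sequences of Theorem \ref{thm:Serre:O(d)}\eqref{thm:Serre:O(d)-2iii} applied to both $\sU$ and $\sW$. Assertion \eqref{cor:Hecke:surface-3} is obtained from \eqref{cor:Hecke:surface-1} by swapping the roles of $\pr_+$ and $\pr_-$, applying the same three theorems to $\sF = \sW$ now of rank $-\delta \ge 1$. The main obstacle is really only the verification in step two that the two incidence constructions coincide; the remaining steps are then a direct invocation of Theorems \ref{thm:Serre:O(d)}, \ref{thm:structural}, Proposition \ref{prop:PG:dualexc}, and Corollary \ref{cor:PVdot:relations} with careful tracking of the line-bundle twist by $\sK_{X/S}$.
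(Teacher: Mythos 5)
Your proposal is correct and follows essentially the same route as the paper: identify $\pr_+$ and $\pr_-$ with derived projectivizations via Proposition \ref{prop:pi_pm:foZ:proj}, and then invoke Theorem \ref{thm:Serre:O(d)}, Proposition \ref{prop:PG:dualexc}, Corollary \ref{cor:PVdot:relations}, and Theorem \ref{thm:structural}. The one place you add genuine value is the explicit verification that $(\widehat{\foZ}_2^\bullet)^\perf_{[0,1],\rk=\delta}$ coincides with the universal incidence locus $\widehat{Z}$ of Notation \ref{notation:hatZ} once the $\sK_{X/S}$-twist and the sign convention $\pr_\pm=\pi_\mp$ are tracked carefully; the paper leaves this bookkeeping implicit.
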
	
	
\begin{proof} In view of Proposition \ref{prop:pi_pm:foZ:proj}, the desired results follow from the Generalized Serre's Theorem \ref{thm:Serre:O(d)}, the results on Beilinson's relations (Proposition \ref{prop:PG:dualexc}, Corollary \ref{cor:PVdot:relations}), and the projectivization formula (Theorem \ref{thm:structural}).
\end{proof}	

\begin{remark} In the situation of assertion \ref{cor:Hecke:surface-2}, the same argument of \cite[Theorem 4.1]{JL18} shows that the equivalence functors are non-trivial, and the composition of equivalences $(r_{-,*} \, r_+^*) \circ (r_{+,*} \, r_-^*)$ is a spherical twist in the sense of \cite{ST, AL}.
\end{remark}	

\begin{remark} As a consequence, we could ``decategorify" all the formulae of  Corollary \ref{cor:Hecke:surface} and obtain the corresponding formulae on the level of any additive invariant $\EE$ in the sense of \cite[Definition 6.1]{BGT}; see Remark \ref{rmk:additive}. For example, we could take $\EE$ to be algebraic K-theory, topological Hochschild homology, or topological cyclic homology.
\end{remark}

\appendix
\section{Simplicial Models for Symmetric and Koszul Algebras}
	In this appendix, we provide the concrete simplicial models for symmetric algebras and Koszul algebras, especially in the case of two-term complexes of locally free modules. These constructions are useful for computations in concrete examples. (However, they do not play a role in the proofs of this paper, so we might delete them in the final version.)
	
\subsection{Dold--Puppe and Quillen's construction} \label{sec:Dold-Puppe}
In this subsection, we show that Lurie's framework of non-abelian derived functors recovers the construction of Dold--Puppe \cite{DP} and Quillen \cite{Quillen} using simplicial resolutions.
Let $R$ be an ordinary commutative ring, and suppose that $F \colon \Mod_R^\cn \to \Mod_R^\cn$ is a functor which preserves sifted colimits and projective $R$-modules (for example, if $F$ is one of the functors $\Sym^n_R$, $\bigwedge^n_R$, or $\Gamma_R^n$ of Construction \ref{constr:sym.wedge.gamma}). We let $f = F|\Mod_R^\proj$ be the restriction of $F$ to projective $R$-modules. From the canonical equivalence $\Mod_R^\cn \simeq \Nhc(\Fun(\bDelta, \Mod_R^\heartsuit)^\circ)$, we obtain that each $R$-module $M \in \Mod_R^\cn$ corresponds to a simplicial $R$-module $P_\bullet$ for which each $P_i$ is a projective $R$-module. We could regard $P_\bullet$ as a simplicial object of $\Mod^\cn_R$ via the map $\bDelta^\op \to \Mod_R^{\proj} \subseteq \Mod^\cn_R$. By virtue of \cite[Theorem 4.2.4.1]{HTT} and \cite[Proposition 5.5.9.14]{HTT}, we can identify $M$ with the geometric realization $|P_\bullet|$ of $P_\bullet$ (in other words, $P_\bullet$ is a simplicial resolution of $M$). Since $F$ preserves sifted colimits, we obtain canonical equivalences 
	$$F(M) \simeq F(|P_\bullet|) \simeq |f(P_\bullet)|,$$
where $f(P_\bullet)$ is the simplicial object obtained by applying $f$ level-wisely to $P_\bullet$, that is,
	$$f(P_\bullet) \colon \bDelta^\op \xrightarrow{P_\bullet} \Mod_R^{\proj} \xrightarrow{f = F| \Mod_R^{\proj} } \Mod_R^{\proj} \subseteq \Mod^\cn_R.$$
Similarly, the symmetric algebra functor $\Sym_R^* \colon \Mod_R^{\cn} \to \CAlgDelta_R$ could also be computed by choosing a simplicial resolution $P_\bullet$, where $P_n$ are projective $R$-modules.  We could summarize the above discussion by saying that Lurie's construction of  non-abelian derived functors extends the construction of Dold--Puppe \cite{DP} and Quillen \cite{Quillen} using simplicial resolutions.

\subsection{Simplicial symmetric algebras} 
By virtue of the preceding subsection, in the case where $R$ is a discrete commutative ring and $N \in \Mod_R^\cn$ is a connective $R$-module which is represented by a complex $P_*$ of discrete projective $R$-modules, we could compute the derived symmetric algebra $\Sym_R^*(N)$ by the Dold--Kan construction ${\rm DK}_\bullet(P_*)$ (\cite[Construction 1.2.3.5]{HA}). In this subsection, we spell out this construction in the case where $N$ is represented by a cofiber of a map $\rho \colon M' \to M$ between finite locally free $R$-modules.

\begin{construction} Let $R$ be a discrete commutative ring, and let $\rho \colon M' \to M$ be a morphism of finitely generated locally free modules. The {\em simplicial symmetric algebras of $\rho \colon M' \to M$ over $R$} is a simplicial object of discrete commutative rings
	$$\Sym_{\Delta}^*(R, \rho \colon M' \to M) \in \Fun(\bDelta^\op, \CAlg_R^\heartsuit)$$
defined as follows: The terms of $\Sym_{\Delta}^*(R, \rho \colon M' \to M)$ are given by for all $n \ge 0$,
	$$(\Sym_{\Delta}^*(R, \rho \colon M' \to M))_n = S^*_R(M) \otimes S_R^*(M')^{\otimes n};$$
The face maps $d_i \colon (\Sym_{\Delta}^*(R, \rho \colon M' \to M))_n \to (\Sym_{\Delta}^*(R, \rho \colon M' \to M))_{n-1}$, $0 \le i \le n$, and the degeneracy maps $s_i \colon (\Sym_{\Delta}^*(R, \rho \colon M' \to M))_n \to (\Sym_{\Delta}^*(R, \rho \colon M' \to M))_{n+1}$, $0 \le i \le n$, are respectively given by the formulae: for $f \in S_R^*(M)$ and $g_i \in S_R^*(M')$,
	\begin{align*}
		& d_{i}(f \otimes g_1 \otimes \cdots \otimes g_n) = 
		\begin{cases}
		(f \cdot (S_R^*(\rho)(g_1))) \otimes g_2 \otimes \cdots \otimes g_n & \text{if~} i=0; \\
		f \otimes g_1 \otimes \cdots \otimes g_{i-1} \otimes (g_i \cdot g_{i+1}) \otimes g_{i+1} \otimes \cdots \otimes g_n & \text{if~} 1 \le i \le n-1; \\
		(f \cdot (S_R^*(0)(g_n))) \otimes g_1 \otimes \cdots \otimes g_{n-1}& \text{if~} i = n.
		\end{cases}  \\
		&s_{i}(f \otimes g_1 \otimes \cdots \otimes g_n) = f \otimes g_1 \otimes \cdots g_i \otimes 1 \otimes g_{i+1} \otimes \cdots  \otimes g_n.	
	\end{align*}
Here $S_R^*(\rho), S_R^*(0) \colon S_R^*(M') \to S_R^*(M)$ denote the canonical morphisms of algebras induced by the morphisms $\rho \colon M' \to M$ and $0 \colon M' \to M$, respectively. Notice $S_R^*(0)$ by definition factorizes as a composition $S_R^*(M') \twoheadrightarrow R \hookrightarrow S_R^*(M)$. There is a natural grading of $\Sym_{\Delta}^*(R, \rho \colon M' \to M)$,
	$$\Sym_{\Delta}^*(R, \rho \colon M' \to M)  = \bigoplus_{d \ge 0} \Sym_{\Delta}^d(R, \rho \colon M' \to M)$$
where for each $d \ge 0$, $\Sym_{\Delta}^d(R, \rho \colon M' \to M)$ is the simplicial module that has terms 
	$$(\Sym_{\Delta}^d(R, \rho \colon M' \to M))_n = \bigoplus_{d_0 + d_1 + \cdots + d_n = d, d_i \ge 0} S_R^{d_0} M \otimes_R S_R^{d_1} M' \otimes_R \cdots \otimes_R S_R^{d_n} M'.$$
The face maps and degeneracy maps are given by the same formulae.
\end{construction}

\subsection{Simplicial Koszul algebras}
The concrete models of the preceding subsection provides simplicial models for derived zero loci of a cosection. Let $R$ be a discrete commutative ring, and let $\rho \colon M \to R$ be a cosection of a connective $R$-module $M$. By virtue of Remark \ref{rmk:dzero:algebra} and \S \ref{sec:Dold-Puppe}, we could compute the simplicial commutative ring $R \otimes_{{\rm ev}_0, \Sym_R^*(M), {\rm ev}_\rho} R$ (whose spectrum is the derived zero locus of $\rho$) using either the simplicial model $\Sym_\Delta^*(R, \id_M) \otimes_{\Sym_\Delta^*(M), {\rm ev}_\rho} R$ or the simplicial model $\Sym_\Delta^*(R, \rho \colon M \to R) \otimes_{R[T], T \mapsto 1} R$.

In this subsection, we spell out the above simplicial model for $R \otimes_{{\rm ev}_0, \Sym_R^*(M), {\rm ev}_\rho} R$ in the case where $M$ is a finite free $R$-module. Let $R$ be a discrete commutative ring, and let $\mathbf{r} = (r_1, \cdots, r_m)$ be a sequence of elements $r_i \in R$, which we may regard as a section $\mathbf{r} \colon R \to M^\vee$, where $M = R^m$, or dually, a cosection map $\mathbf{r}^\vee \colon M = R^{m} \to R$. Therefore, by adjunction, the cosection maps $\mathbf{r}^\vee$ and $\mathbf{0}^\vee$ induce maps of discrete commutative algebras $\mathrm{ev}_{\mathbf{r}} \colon S_R^*(M) \to R$ and $\mathrm{ev}_{\mathbf{0}} \colon S_R^*(M) \to R$, respectively.

\begin{construction} \label{constr:Koszul_simplicial} Let $R$ be a discrete commutative ring, and let $\mathbf{r} = (r_1, \cdots, r_m)$ be a sequence of elements $r_i \in R$. Then {\em simplicial Koszul algebras of the sequence $\mathbf{r}$}, which we shall denote by $\mathrm{Kos}_{\Delta}^*(R, \mathbf{r})$, is the following simplicial object of the ordinary category of discrete commutative rings
	$$\mathrm{Kos}_{\Delta}^*(R, \mathbf{r}) = \Sym_{\Delta}^*(R,  \id_M \colon M  \to M) \otimes_{S_R^*(M)} R \in \Fun(\bDelta^\op, \CAlg_R^\heartsuit).$$
Here, the $S_R^*(M)$-algebra structure of $R$ is given by the map $\mathrm{ev}_{\mathbf{r}} \colon S_R^*(M) \to R$. Unwinding the definition, the simplicial Koszul algebra takes the form:
	$$\mathrm{Kos}_{\Delta}^*(R, \mathbf{r})_n = S^*_R(M)^{\otimes n} = S_R^*(M) \otimes_R S_R^*(M) \otimes_R \cdots \otimes_R S_R^*(M).$$
The face maps 
	$$d_i \colon S^*_R(M)^{\otimes n}  \to S^*_R(M)^{\otimes (n-1)}$$
 and the degeneracy maps 
	$$s_i \colon S^*_R(M)^{\otimes n}  \to S^*_R(M)^{\otimes (n+1)}$$
are respectively given by the formulae: for all $g_i \in S_R^*(M)$,
	\begin{align*}
		& d_{i}(g_1 \otimes \cdots \otimes g_n) = 
		\begin{cases}
		\mathrm{ev}_{\mathbf{r}}(g_1) \cdot (g_2 \otimes \cdots \otimes g_n) & \text{if~} i=0; \\
		 g_1 \otimes \cdots \otimes g_{i-1} \otimes (g_i \cdot g_{i+1}) \otimes g_{i+1} \otimes \cdots \otimes g_n & \text{if~} 1 \le i \le n-1; \\
		\mathrm{ev}_{\mathbf{0}} (g_n) \cdot (g_1 \otimes \cdots \otimes g_{n-1}) & \text{if~} i = n.
		\end{cases}  \\
		&s_{i}(g_1 \otimes \cdots \otimes g_n) =g_1 \otimes \cdots g_i \otimes 1 \otimes g_{i+1} \otimes \cdots  \otimes g_n.	
	\end{align*}	
In particular, the face maps 
	$d_0, d_1 \colon \mathrm{Kos}_{\Delta}^*(R, \mathbf{r})_1 = S_R^*(M) \to \mathrm{Kos}_{\Delta}^*(R, \mathbf{r})_{0} = R$
 are respectively given by
	$d_0(g) = \mathrm{ev}_{\mathbf{r}}(g)$ and $d_1(g) = \mathrm{ev}_{\mathbf{0}}(g).$
Hence, there is an augmentation map $\mathrm{Kos}_{\Delta}^*(R, \mathbf{r})_{0} \to R/(\mathbf{r} - \mathbf{0}) R = R/ \mathbf{r} R$.
\end{construction}

\begin{remark}[Bar convention] Since $S_R^*(M) = R[X_1, \ldots, X_m]$, therefore we can write
	$$\mathrm{Kos}_{\Delta}^*(R, \mathbf{r})_n = R[X_1, \ldots, X_m]^{\otimes n}.$$
The morphisms $\mathrm{ev}_{\mathbf{r}} \colon R[X_1, \ldots, X_m] \to R$ and $\mathrm{ev}_{\mathbf{0}} \colon R[X_1, \ldots, X_m] \to R$ are respectively given by the formulae: $\mathrm{ev}_{\mathbf{r}} \colon X_i \mapsto r_i$, and $\mathrm{ev}_{\mathbf{0}} \colon X_i \mapsto 0$. If we use the ``bar convention" and write elements of $R[X_1, \ldots, X_m]^{\otimes n}$ in the form $f [g_1 | g_2 | \cdots | g_n]$, where $f \in R$ and $g_i \in R[X_1, \ldots, X_m]$, then we can rewrite the   formulae of face and degeneracy maps as
\begin{align*}
		& d_{i}[g_1 | \cdots | g_n] = 
		\begin{cases}
		\mathrm{ev}_{\mathbf{r}}(g_1)  [g_2 | \cdots | g_n] & \text{if~} i=0; \\
		 [g_1 | \cdots | g_{i-1} | g_i \cdot g_{i+1} | g_{i+2} | \cdots | g_n] & \text{if~} 1 \le i \le n-1; \\
		\mathrm{ev}_{\mathbf{0}} (g_n) [g_1 | \cdots | g_{n-1}] & \text{if~} i = n.
		\end{cases}  \\
		&s_{i}[g_1 | \cdots | g_n] =[g_1 | \cdots | g_i | \,1\, |  g_{i+1}|  \cdots  | g_n].
	\end{align*}	
\end{remark}
	
In the situation of Construction \ref{constr:Koszul_simplicial}, by virtue of Remark \ref{rmk:dzero:algebra} and \S \ref{sec:Dold-Puppe}, we obtain that the geometric realization of the simplicial Koszul algebra $\mathrm{Kos}_{\Delta}^*(R, \mathbf{r})$ (when regarded as a simplicial diagram in $\CAlgDelta_R$) is canonically equivalent to the simplicial commutative ring $R \otimes_{{\rm ev}_0, \Sym_R^*(M), {\rm ev}_{\mathbf{r}^\vee}} R$. Next, we compare it with another common description:
	
\begin{lemma} Let $R$ be a discrete commutative ring, and $\mathbf{r} = (r_1, \ldots, r_m)$ be a sequence of elements of $R$. Then $\mathbf{r}$ determines a canonical ring homomorphism $\ZZ[x_1, \ldots, x_m] \to R$ that carries $x_i$ to $r_i$. If we let $\psi$ denote the canonical equivalence 
	$$\psi \colon \mathrm{N}_\bullet^{\rm hc}(\Fun(\bDelta^\op, \CAlg_R^\heartsuit)^{\circ}) \simeq \CAlgDelta_R,$$ 
then there is a canonical equivalence of simplicial commutative $R$-algebras:
	$$\psi(\mathrm{Kos}_{\Delta}^*(R, \mathbf{r})) \simeq R \otimes_{\ZZ[x_1, \ldots, x_m]} \frac{\ZZ[x_1, \ldots, x_m]}{(x_1, \ldots, x_m)}.$$
\end{lemma}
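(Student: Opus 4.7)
The plan is to recognize both sides of the equivalence as models for the same derived pushout in $\CAlgDelta$, namely the one representing the ``derived vanishing of the sequence $\mathbf{r}$'', and then to compare them either directly or through a universal-property argument.

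First, I would identify $\psi(\mathrm{Kos}_{\Delta}^*(R,\mathbf{r}))$ with the derived pushout $R \otimes^L_{R[x_1,\ldots,x_m]} R$, in which the two structure maps to $R$ are $\mathrm{ev}_{\mathbf{0}}$ and $\mathrm{ev}_{\mathbf{r}}$. Indeed, by the discussion of \S\ref{sec:Dold-Puppe}, the equivalence $\psi$ transports geometric realizations of simplicial objects in $\CAlg_R^\heartsuit$ to colimits of the corresponding diagrams in $\CAlgDelta_R$. Unwinding Construction \ref{constr:Koszul_simplicial}, the simplicial object $\mathrm{Kos}_\Delta^*(R,\mathbf{r})$ is precisely the two-sided bar construction $B_\bullet(R, R[x_1,\ldots,x_m], R)$, where the outer $R$'s are viewed as $R[x_1,\ldots,x_m]$-algebras via $\mathrm{ev}_{\mathbf{r}}$ on the left and $\mathrm{ev}_{\mathbf{0}}$ on the right, while the inner face maps are induced by multiplication. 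A standard argument identifies the realization of this bar construction in $\CAlgDelta_R$ with the derived pushout above; this is also the content of Remark \ref{rmk:dzero:algebra} applied to the cosection $\mathbf{r}^\vee \colon R^{\oplus m}\to R$, using $\Sym_R^*(R^{\oplus m})\simeq R[x_1,\ldots,x_m]$ from Proposition \ref{prop:Sym*}\eqref{prop:Sym*-4}.

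Second, I would identify the derived pushout $R \otimes^L_{R[x_1,\ldots,x_m]} R$ with $R \otimes^L_{\ZZ[x_1,\ldots,x_m]} \ZZ$; the latter is exactly the right-hand side of the lemma, since $\ZZ[x_1,\ldots,x_m]/(x_1,\ldots,x_m)\simeq \ZZ$. The cleanest approach is via the universal property in $\CAlgDelta$. A map $R \otimes^L_{R[x_1,\ldots,x_m]} R \to S$ amounts (up to contractible ambiguity) to a pair of $R$-algebra maps $g_1,g_2\colon R\to S$ together with an equivalence of the composites $g_1\circ \mathrm{ev}_{\mathbf{0}}\simeq g_2\circ \mathrm{ev}_{\mathbf{r}}$ as maps $R[x_1,\ldots,x_m]\to S$. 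Restricting to the scalars $R$ forces $g_1\simeq g_2$, call their common value $g$, and the remaining data reduces to a choice of nullhomotopy of each $g(r_i)\in S$. On the other hand, using the factorization $R[x_1,\ldots,x_m]\simeq R\otimes_\ZZ \ZZ[x_1,\ldots,x_m]$ from Proposition \ref{prop:Sym*}\eqref{prop:Sym*-4}, a map $R \otimes^L_{\ZZ[x_1,\ldots,x_m]} \ZZ \to S$ in $\CAlgDelta$ is exactly a map $g\colon R\to S$ together with a path between the composite $\ZZ[x_1,\ldots,x_m]\xrightarrow{\phi} R \xrightarrow{g} S$ (sending $x_i\mapsto g(r_i)$) and the composite through the augmentation $\ZZ[x_1,\ldots,x_m]\to \ZZ \to S$ (sending $x_i\mapsto 0$), i.e.\ the same package of nullhomotopies. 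Matching these universal properties produces the desired canonical equivalence.

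The main obstacle will be the careful verification in the first step that passing through the equivalence $\psi$ correctly turns the simplicial $R$-algebra $\mathrm{Kos}_\Delta^*(R,\mathbf{r})$ into the derived pushout in $\CAlgDelta$, rather than only into a derived tensor product at the level of modules. For this one needs the fact that $\psi$ arises from the identification $\Nhc(\Fun(\bDelta^\op,\CAlg_R^\heartsuit)^\circ)\simeq \CAlgDelta_R$, so that multiplicative structure on the simplicial object is transported to a genuine algebra realization; once this is granted, the interpretation of the two-sided bar construction as computing a pushout in $\CAlgDelta$ is formal, and step two is a routine unwinding.
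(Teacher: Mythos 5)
Your proposal is correct, and it takes a genuinely different route from the paper. The paper's own proof first uses the compatibility of both sides with base change of discrete rings to reduce to the universal case $R = \ZZ[x_1,\ldots,x_m]$, $r_i = x_i$; it then invokes Illusie's result (Corollary \ref{cor:Illusie:S^n}) to identify the underlying simplicial module of $\mathrm{Kos}_\Delta^*(R,\mathbf{x})$ with the classical Koszul complex $\bS^*(R,\id)$, which resolves $\ZZ[\mathbf{x}]/(\mathbf{x})$ since $(\mathbf{x})$ is a regular sequence; the augmentation is therefore a weak equivalence. You instead take the identification $\psi(\mathrm{Kos}_\Delta^*(R,\mathbf{r})) \simeq R\otimes_{\mathrm{ev}_0,\,R[\mathbf{x}],\,\mathrm{ev}_{\mathbf{r}}}R$ (which is indeed what Remark \ref{rmk:dzero:algebra} together with the bar-construction reading of Construction \ref{constr:Koszul_simplicial} gives; the paper records this identification in the paragraph immediately preceding the lemma, though its own proof does not use it) and then compare the two derived pushouts. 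For the second step, your universal-property unwinding is correct, but the cleanest version of your argument is the pasting of pushout squares: the square with corners $\ZZ[\mathbf{x}]$, $\ZZ$, $R[\mathbf{x}]$, $R$ (augmentation and $\mathrm{ev}_{\mathbf{0}}$) is a pushout because $R[\mathbf{x}] \simeq R\otimes_\ZZ\ZZ[\mathbf{x}]$ by Proposition \ref{prop:Sym*}\eqref{prop:Sym*-4}, and pasting it on the left of the defining square for $R\otimes_{R[\mathbf{x}]}R$ exhibits the latter as $\ZZ\otimes_{\ZZ[\mathbf{x}]}R$ along the map $x_i\mapsto r_i$, which is the right-hand side. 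The tradeoff: your argument avoids the reduction to the universal case and does not rely on Illusie's comparison between the simplicial Koszul algebra and the chain-level Koszul complex, at the cost of carrying the bar-construction identification through $\psi$ carefully; the paper's argument is shorter given that Corollary \ref{cor:Illusie:S^n} is already in place.
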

 
 \begin{proof} Since the formation of $\mathrm{Kos}_{\Delta}^*(R, \mathbf{r})$ commutes with base change of discrete commutative rings $R' \to R$, we may reduce to the case where $R = \ZZ[x_1, \ldots, x_m]$, $r_i = x_i$. It follows from Illusie's result Corollary \ref{cor:Illusie:S^n} that $\mathrm{Kos}_{\Delta}^*(R, \mathbf{r})$ is quasi-isomorphism to the Koszul complex $\mathrm{Kos}^*(R, \mathbf{x})$; On the other hand, we know the latter complex resolves the module $\ZZ[x_1, \ldots, x_m]/(x_1, \ldots, x_m)$. Hence the augmentation map $\mathrm{Kos}_{\Delta}^*(R, \mathbf{x}) \to \ZZ[\mathbf{x}]/(\mathbf{x})$ is a homotopy equivalence of simplicial commutative rings.
 \end{proof}


\addtocontents{toc}{\vspace{\normalbaselineskip}}

\end{document}